\documentclass[11pt,a4paper,reqno]{amsart}
\usepackage{amsmath,amssymb,amsfonts,epsfig,mathrsfs,cite}
\usepackage[T1]{fontenc}
\usepackage{color}
\usepackage{array}
\usepackage{amsthm}
\usepackage{amstext}
\usepackage{graphicx}
\usepackage{setspace}
\usepackage{floatrow} 
\usepackage[title]{appendix}

\usepackage{hyperref}

\usepackage{subfigure}
\makeatletter
\@namedef{subjclassname@2020}{%
	\textup{2020} Mathematics Subject Classification}
\makeatother

\usepackage[margin=2.5cm]{geometry}
\usepackage{color}
\usepackage{enumitem}
\usepackage{amscd,psfrag}
\usepackage{yhmath}
\usepackage[mathscr]{eucal}
\numberwithin{equation}{section}
\usepackage{comment}

\allowdisplaybreaks[4]
\usepackage{appendix}
\usepackage{slashed}
\usepackage{bbm}
\makeatletter
\pdfpageheight\paperheight
\pdfpagewidth\paperwidth

\usepackage{epstopdf}
\usepackage{chngcntr}
\counterwithin{figure}{section}
\usepackage{mathrsfs}

\usepackage{indentfirst}	

\usepackage[normalem]{ulem}
\theoremstyle{plain}

\newtheorem{theorem}{Theorem}[section] 
\newtheorem{definition}[theorem]{Definition} 
\newtheorem{lemma}{Lemma}[section]
\newtheorem{corollary}[theorem]{Corollary}

\newtheorem{remark}[theorem]{Remark}

\def\XXint#1#2#3{{\setbox0=\hbox{$#1{#2#3}{\int}$ }
		\vcenter{\hbox{$#2#3$ }}\kern-.6\wd0}}

\newcommand{\R}{{\mathbb{R}}}

\title{Global well-posedness for one-dimensional compressible Navier--Stokes system in dynamic combustion with small $BV\cap L^1$ initial data}

\author{Siran Li}

\address{Siran Li: School of Mathematical Sciences $\&$ CMA-Shanghai, Shanghai Jiao Tong University, No.~6 Science Buildings,
	800 Dongchuan Road, Minhang District, Shanghai, China (200240)}

\email{\texttt{siran.li@sjtu.edu.cn}}

\author{Haitao Wang}
\address{Haitao Wang: School of Mathematical Sciences, Institute of Natural Sciences, MOE-LSC, CMA-Shanghai, Shanghai Jiao Tong University, Shanghai, China (200240)}

\email{\texttt{haitallica@sjtu.edu.cn}}

\author{Jianing Yang}

\address{Jianing Yang: School of Mathematical Sciences, Shanghai Jiao Tong University, No.~6 Science Buildings,
	800 Dongchuan Road, Minhang District, Shanghai, China (200240)}

\email{\texttt{jnyang22@sjtu.edu.cn}}

\keywords{}

\subjclass[2020]{}
\date{\today}

\begin{document}
	
	\begin{abstract}
	We establish the global well-posedness theory of small BV weak solutions to a one-dimensional compressible Navier--Stokes model for reacting gas mixtures in dynamic combustion. The unknowns of the PDE system consist of the specific volume, velocity, temperature, and mass fraction of the reactant. For initial data that are small perturbations around the constant equilibrium state $(1, 0, 1, 0)$ in the $L^1(\R) \cap {\rm BV}(\R)$-norm, we  establish the local-in-time existence of weak solutions via an iterative scheme, show the stability and uniqueness of local weak solutions, and prove the global-in-time existence of solutions for initial data with small BV-norm via an analysis of the Green's function of the linearised system. The large-time behaviour of the global  BV weak solutions is also characterised. This work is motivated by and extends the recent global well-posedness theory for BV weak solutions to the one-dimensional isentropic Navier--Stokes and Navier--Stokes--Fourier systems developed in [T.-P. Liu, S.-H. Yu, Commun. Pure Appl. Math. 75 (2022), 223--348] and [H. Wang, S.-H. Yu, X. Zhang, Arch. Ration. Mech. Anal. 245 (2022), 375--477].
	\end{abstract}
	\maketitle
	
	\tableofcontents
	\section{Introduction}\label{sec: intro}

We are concerned with the global well-posedness theory of a one-dimensional (1D) compressible Navier--Stokes model for a reacting gas mixture in dynamic combustion. The system of partial differential equations (PDE)  in the Lagrangian coordinates reads as follows:
	\begin{equation}\label{PDE,1}
		\left\{\begin{array}{l}
			v_t-u_x=0, \\
			u_t+p_x=\left(\frac{\mu u_x}{v}\right)_x, \\
			E_t+\left(p u\right)_x=\left(\frac{\mu u u_x}{v}\right)_x+\left(\frac{\nu \theta_x}{v}\right)_x+\left(\frac{qD z_x}{v^2}\right)_x,\\
			z_t+K \phi(\theta) z=\left(\frac{D}{v^2} z_x\right)_x,
		\end{array}\right.
	\end{equation}
See, \emph{e.g.}, G.-Q. Chen \cite{Chen1992}. Throughout this work, the physical variables $v\equiv \frac{1}{\rho}$, $u$, $\theta$, and $z$ denote the specific volume (namely, the inverse of density), velocity, temperature, and the mass fraction of the reactant, respectively.

Let us first describe the physics of the PDE system~\eqref{PDE,1}. The reacting gas mixture in consideration has total specific energy 
	\begin{align}\label{E energy}
		E = e + \frac{u^2}{2} + qz,
	\end{align} 
where $e$ is the specific internal energy, and $q$ is the difference between the heat of the reactant and the product. For ideal gases, we have the constitutive relations:
\begin{equation}\label{constitutive relations}
    e = c_v\theta,\qquad p=\frac{a\theta}{v}
\end{equation}
where $c_v$ is the specific heat capacity at constant volume, and $a$ equals the Boltzmann's constant times the molecular weight. The components of the gas mixture in consideration is assumed to obey the same \emph{$\gamma$-law} with
	\begin{equation*}
		\gamma = \frac{c_p}{c_v} > 1,
	\end{equation*}
where $c_p$ is the specific heat capacity taken at constant pressure. The function $\phi=\phi(\theta):[0,\infty) \to [0,\infty)$ describes the rate of chemical reaction at temperature $\theta$. Throughout this paper, it is assumed to be Lipschitz continuous. The positive constants $\mu$, $\nu$, $D$, and $K$ are the bulk viscosity, heat conductivity, diffusivity, and reactant rate coefficient, respectively.

In view of the discussions in the previous paragraph, we may recast Eq.~\eqref{PDE,1} as follows: 
	\begin{equation}\label{PDE,2}
		\left\{\begin{array}{l}
			v_t-u_x=0, \\
			u_t+p_x=\left(\frac{\mu u_x}{v}\right)_x, \\
			\theta_t+\frac{p}{c_v}u_x-\frac{\mu}{c_v v}u_x^2=\left(\frac{\nu \theta_x}{c_v v}\right)_x+\frac{q}{c_v}K\phi(\theta)z,\\
			z_t+K \phi(\theta) z=\left(\frac{D}{v^2} z_x\right)_x .
		\end{array}\right.
	\end{equation}
The pressure $p$ is given by the constitutive relations in Eq.~\eqref{constitutive relations}. 

There is abundant literature on well-posedness and large-time behaviour of solutions to Eq.~\eqref{PDE,1}, or equivalently, Eq.~\eqref{PDE,2}. Our list of references here is by no means exhaustive. Chen \cite{Chen1992} established the global existence of weak solutions for \eqref{PDE,1} on a bounded spatial domain $x\in [0,1]$ with more general reaction rate functions $\phi$ (\emph{e.g.}, for $\phi$ satisfying the Arrenhius' law, with a jump discontinuity on $(0,\infty)$); large-time behaviour of weak solutions has also been obtained. Moreover, in the case of Lipschitz continuous reaction rate functions, Chen--Hoff--Trivisa \cite{Chen2002, Chen2003} proved the existence for large discontinuous initial data.  Li ~\cite{Li2017} extended the above results to unbounded domains $\R$ or $\R_+$ with suitable boundary conditions. See also Ducomet \cite{Ducomet1999}, Wang \cite{dehua}, Jiang--Zheng \cite{Jiang2014}, and many others on the well-posedness theory and asymptotic behaviour of classical solutions to PDEs for 1D reactive and/or radiative gas dynamics. In addition, for the 1D combustion dynamic PDE models, detailed asymptotic analysis for the large-time behaviour has been carried out for the interactions of specific types of elementary waves. See \cite{fzz, peng, pl, xf} among other references.

We also mention Jenssen--Lyng--Williams~\cite{Jenssen2005}, Feng--Hong--Zhu~\cite{Feng2021}, Wang--Wen \cite{ww22}, Wang--Wu \cite{ww23} and Gao--Huang--Kuang--Wang--Xiang for the study of dynamic combustion PDE in multi-dimensions, as well as Qin--Zhang--Su--Cao  \cite{QinYM2016}, Zhang \cite{ZhangJL2017}, Liao--Wang--Zhao \cite{lwz}, Wan--Zhang \cite{wz23}, and Zhu \cite{zhu}, etc. for refined results under assumptions of spherical and cylindrical symmetry of solutions. It is interesting to investigate whether the results and methods in our work can be extended to the above settings.

In the above, by \emph{weak solutions} we mean the functions $(v,u,\theta,z)$ satisfying 
\begin{equation}\label{weak sol, def, Nov25}
    \begin{cases}
v-1 \in L^\infty_t H^1_x,\\
(u, \theta-1, z) \in L^\infty_t H^1_x \cap L^2_t H^2_x,\\
(u_t,\theta_t,z_t) \in L^2_t L^2_x.
    \end{cases}
\end{equation}
The well-posedness theory for weak solutions has been established on bounded or unbounded 1D domains for initial data $(v_0, u_0, \theta_0, z_0)\equiv (u,v,\theta,z)\big|_{\{t=0\}}$ satisfying 
\begin{equation}\label{initial data, weak sol, Nov25}
\begin{cases}
    0 < m_0 \leq v_0(x), \theta_0(x) \leq M_0,\quad |u_0(x)| \leq M_1<\infty, \quad 0 \leq z_0(x) \leq 1\qquad\text{for a.e. } x;\\
    \left( v_0-1, u, \theta_0-1, z_0 \right) \in H^1_x,\qquad z_0 \in L^1_x,
    \end{cases} 
\end{equation}
where $m_0$, $M_0$, and $M_1$ are positive constants. See  \cite{Chen1992, Chen2002, Chen2003, Li2017}. In contrast, this paper aims to establish the weak solution theory in a weaker topology than that in Eq.~\eqref{weak sol, def, Nov25}. In particular, the specific density $v=\rho^{-1}$ is now taken to be a perturbation of the constant background state:
\begin{align*}
    v-1 \in C_t\, {\rm BV}_x\qquad \text{in } \R^2_+ = \R \times [0,\infty).
\end{align*}
(See Definition~\ref{new def, weak sol, Nov25} for precise formulation.) This shall be established for initial data satisfying $$\|v_0-1\|_{(L^1\cap {\rm BV})(\R)}\ll 1$$ together with similar conditions on $u_0, \theta_0$, and $z_0$.

Our work is primarily motivated by the recent seminal work \cite{LiuTP2022} by T.-P. Liu and S.-H. Yu, which establishes the global existence theory of 1D compressible Navier--Stokes equations for small BV initial data, through a delicate study on the Green's function for 1D heat equation in divergence-form with BV-coefficients. It echoes the classical theory for the system of conservation laws in one spatial dimension via Glimm's scheme of random choice (\cite{glimm}; also \emph{cf.} Dafermos \cite{daf} and  Bressan \cite{bres} for comprehensive treatment of hyperbolic conservation laws). More precisely, Liu--Yu \cite{LiuTP2022} considered the isentropic Navier--Stokes equations
	\begin{equation}
		\begin{cases}
			\label{isen NS}
			v_t-u_x=0,\\
			u_t+p(v)_x=\left(\frac{\mu u_x}{v}\right)_x,
		\end{cases}
	\end{equation}
	with initial data small in the $L^1\cap {\rm BV}$-norm, provided that $p^{\prime}(v)<0$. One of the key ingredients of the proof in \cite{LiuTP2022} is the construction of the fundamental solution $H(x,t;y;f)$ to the heat equation with BV coefficients ($(x,t)\in \R^2_+$ are the spacetime variables and $y,f$ are parameters):
	\begin{equation}\label{heat with BV coeff, Nov25}
		\begin{cases}
\partial_tH(x,t;y;f) -\partial_x \Big(f\partial_xH(x,t;y;f)\Big)=0,\\
			H(x,0;y;f)=\delta(x-y),
		\end{cases}
	\end{equation}
	where $\delta(z)$ is the Dirac delta measure on $\R$ supported at $z=0$, and $f$ is a $BV$ function with 
	\begin{equation*}
\inf_{x\in\mathbb{R}}f(x)>0,\quad \|f\|_{BV}\ll 1.
	\end{equation*}
The analytical properties of $H(x,t;y;f)$ together with an iteration scheme lead to the local  existence and continuous dependence on initial data of the weak solutions. In addition, by introducing an ``\emph{effective kernel function}'' that interpolates between the short-time heat kernel and long-time Green's function, Liu-Yu derived integral representations of weak solutions, and thus proved that weak solutions exist globally in time and decay to the constant equilibrium state at the optimal rate of $t^{-1/2}$ for polytropic gases, \emph{i.e.}, when $p(v) = A v^{-\gamma}$ with $1 \leq \gamma < e$.

The pointwise construction
of the Green’s function in 1D was initiated in Zeng \cite{z1}. It has been combined with the classical time-asymptotic analysis of the pointwise convergence of smooth solutions to nonlinear waves in Liu--Zeng \cite{z2}. The particular construction of the Green's function here and in  \cite{LiuTP2022, WangHT2022} is motivated by that for the Boltzmann equation, especially the ``particlelike'' and ``wavelike'' decomposition in Liu--Yu \cite{ly'}. See Deng--Yu \cite{dy} for related developments.

Wang--Yu--Zhang~\cite{WangHT2022} extended the well-posedness theory from the 1D isentropic Navier--Stokes Eq.~\eqref{isen NS} to the full Navier--Stokes--Fourier system:
	\begin{equation}
		\label{full NS}
		\begin{cases}
			v_t-u_x=0,\\
			u_t+p(v,\theta)_x=\left(\frac{\mu u_x}{v}\right)_x,\\
			\left(e+\frac{1}{2}u^2\right)_t+(pu)_x=\left(\frac{\kappa}{v}\theta_x+\frac{\mu}{v}u u_x\right)_x,
		\end{cases}
	\end{equation}
also under the small $BV\cap L_x^1$ assumption for the initial data. Based on the heat kernel analysis in \cite{LiuTP2022}, the authors developed new H\"{o}lder-in-time estimates for the heat kernel and Lipschitz in space estimates for the fluxes of $u$ and $\theta$.

In this work, we extend the global well-posedness theory for small BV-solutions laid down in \cite{LiuTP2022} and further developed in \cite{WangHT2022} to the 1D Navier--Stokes combustion model for $\gamma$-law gas mixtures, namely Eqs.~\eqref{PDE,1} or \eqref{PDE,2}. We first state our framework of weak solutions.

\begin{definition}\label{new def, weak sol, Nov25}
The quadruplet $(v,u,\theta,z):=[0,t_\sharp) \times \R \to \R^4$ is a weak solution to Eqs.~\eqref{PDE,1} or \eqref{PDE,2} if the following holds:
\begin{enumerate}
    \item 
The PDE holds in the distributional sense: for any test function $\varphi\in C_0^{\infty}([0,t_\sharp)\times \mathbb{R})$,
\begin{equation}
\label{def distri}
\left\{\begin{aligned}
&\int_0^{+\infty} \int_{\mathbb{R}}\left[\varphi_x u-\varphi_t v\right]\,\mathrm{d}x \,\mathrm{d}t=\int_{\mathbb{R}} \varphi(x, 0) v(x, 0)\,\mathrm{d}x, \\
				&\int_0^{+\infty} \int_{\mathbb{R}}\left[\varphi_x\left(\frac{\mu u_x}{v}-p\right)-\varphi_t u\right]\,\mathrm{d}x \,\mathrm{d} t=\int_{\mathbb{R}}\varphi(x,0)u(x, 0) \,\mathrm{d}x,\\
				&\int_0^{+\infty} \int_{\mathbb{R}}\left[\varphi_x\left(\frac{\nu}{c_v v}\theta_x\right)+\varphi\left(\frac{p}{c_v} u_x-\frac{\mu}{c_v v}\left(u_x\right)^2-\frac{q}{c_v}K\phi(\theta)z\right)-\varphi_t \theta\right]\,\mathrm{d}x \,\mathrm{d}t\\
				&\qquad\qquad =\int_{\mathbb{R}}\varphi(x,0) \theta(x,0)\,\mathrm{d}x,\\
				&\int_0^{+\infty} \int_{\mathbb{R}}\left[\varphi_x\left(\frac{D z_x}{v^2}\right)+\varphi K\phi(\theta)z-\varphi_t z\right]\,\mathrm{d}x \,\mathrm{d} t=\int_{\mathbb{R}}\varphi(x,0)z(x, 0) \,\mathrm{d}x;
			\end{aligned}\right.
		\end{equation}
\item 
The following regularity conditions are verified:
\begin{equation}
			\label{weak solu}
			\left\{\begin{array}{l}
				v(x, t)-1 \in C\left(\left[0, t_{\sharp}\right) ; L^1(\mathbb{R}) \cap {\rm BV}(\mathbb{R})\right); \\
				u(x, t) \in L^{\infty}\left(0, t_{\sharp} ; W^{1,1}(\mathbb{R}) \cap L^{\infty}(\mathbb{R})\right), \quad \sqrt{t} u_x(x, t) \in L^{\infty}\left(0, t_{\sharp} ; L^{\infty}(\mathbb{R})\right), \\
				\sqrt{t} u_t(x, t) \in L^{\infty}\left(0, t_{\sharp} ; L^1(\mathbb{R})\right), \quad t u_t(x, t) \in L^{\infty}\left(0, t_{\sharp} ; L^{\infty}(\mathbb{R})\right); \\
				\theta(x, t)-1 \in L^{\infty}\left(0, t_{\sharp} ; W^{1,1}(\mathbb{R}) \cap L^{\infty}(\mathbb{R})\right), \quad \sqrt{t} \theta_x(x, t) \in L^{\infty}\left(0, t_{\sharp} ; L^{\infty}(\mathbb{R})\right), \\
				\sqrt{t} \theta_t(x, t) \in L^{\infty}\left(0, t_{\sharp} ; L^1(\mathbb{R})\right), \quad t \theta_t(x, t) \in L^{\infty}\left(0, t_{\sharp} ; L^{\infty}(\mathbb{R})\right);\\
				z(x, t) \in L^{\infty}\left(0, t_{\sharp} ; W^{1,1}(\mathbb{R}) \cap L^{\infty}(\mathbb{R})\right), \quad \sqrt{t} z_x(x, t) \in L^{\infty}\left(0, t_{\sharp} ; L^{\infty}(\mathbb{R})\right), \\
				\sqrt{t} z_t(x, t) \in L^{\infty}\left(0, t_{\sharp} ; L^1(\mathbb{R})\right), \quad t z_t(x, t) \in L^{\infty}\left(0, t_{\sharp} ; L^{\infty}(\mathbb{R})\right).
			\end{array}\right.
		\end{equation}
\end{enumerate} 
\end{definition}

The main results of our current work are to establish the global existence, stability, uniqueness, and large-time behaviour of solutions to Eqs.~\eqref{PDE,1} or \eqref{PDE,2} in the sense of Definition~\ref{new def, weak sol, Nov25}, provided that the initial data is a small ${L^1}\cap {\rm BV}$-perturbation of the constant background state $[1, 0, 1, 0]^\top$. Recall that $$\|w\|_{\rm BV} := \|w\|_{L^\infty_x} + {\rm Tot.\,Var.} (w;\R),$$ where $ {\rm Tot.\,Var.} (w;\R)$ is the total variation of $w$ over $x$. Unless otherwise specified, all the norms with respect to the spatial variable $x$ are taken over $\R$. The key smallness condition for the initial data in this paper reads as follows: 

\begin{quote}
\emph{There exists a small positive constant $\delta \ll 1$ such that}
\begin{align}\label{ini}
	&\left\|v_0-1\right\|_{L_x^1}+\left\|v_0\right\|_{B V}+\left\|u_0\right\|_{L_x^1}+\left\|u_0\right\|_{B V}+\left\|\theta_0-1\right\|_{L_x^1}\nonumber\\
    &\qquad \qquad +\left\|\theta_0\right\|_{B V} +\left\|z_0\right\|_{L_x^1}+\left\|z_0\right\|_{B V}<\delta.
	\end{align}
\end{quote}

	Our main theorems are as follows:
	\begin{theorem}[Local existence and regularity]\label{main thm: local exi}
There exists a universal constant $\delta>0$ such that the following holds. Suppose that the initial data $(v_0, u_0, \theta_0, z_0)\equiv (u,v,\theta,z)\big|_{\{t=0\}}$ satisfy \eqref{ini}. Then there exists $t_{\sharp}>0$ such that Eq.~\eqref{PDE,2} (with Lipschitz continuous reacting rate function $\phi$) admits a weak solution $(v,u,\theta,z)$ in the sense of Definition~\ref{new def, weak sol, Nov25} over $\R\times [0,t_\sharp)$. Moreover, for some constant $C_\sharp>0$ we have the following:
		\begin{itemize}
			\item Regularity estimates:
			\begin{equation}
				\left\{\begin{array}{l}
					\max \left\{\begin{array}{l}
						\|u(\cdot, t)\|_{L_x^1},\|u(\cdot, t)\|_{L_x^{\infty}},\left\|u_x(\cdot, t)\right\|_{L_x^1}, \sqrt{t}\left\|u_x(\cdot, t)\right\|_{L_x^{\infty}}, \\
						\sqrt{t}\left\|u_t(\cdot, t)\right\|_{L_x^1}, t\left\|u_t(\cdot, t)\right\|_{L_x^{\infty}}
					\end{array}\right\} \leq 2 C_{\sharp} \delta, \\
					\max \left\{\begin{array}{l}
						\|\theta(\cdot, t)-1\|_{L_x^1},\|\theta(\cdot, t)-1\|_{L_x^{\infty}},\left\|\theta_x(\cdot, t)\right\|_{L_x^1}, \sqrt{t}\left\|\theta_x(\cdot, t)\right\|_{L_x^{\infty}}, \\
						\sqrt{t}\left\|\theta_t(\cdot, t)\right\|_{L_x^1}, t\left\|\theta_t(\cdot, t)\right\|_{L_x^{\infty}}
					\end{array}\right\} \leq 2 C_{\sharp} \delta, \\
					\max \left\{\begin{array}{l}
						\left.\|v(\cdot, t)\|_{B V},\|v(\cdot, t)-1\|_{L_x^1},\|v(\cdot, t)-1\|_{L_x^{\infty}}, \sqrt{t}\left\|v_t(\cdot, t)\right\|_{L_x^{\infty}}\right\} \leq 2 C_{\sharp} \delta, \\
						v-1=v_{\tilde{a}}^*+v_j^*, \quad v_j^*(x, t)=\sum\limits_{\omega<x, \omega \in \mathcal{D}}\left.v(y)\right|_{y=\omega^-}^{y=\omega^+}  h(x-\omega), \quad v_{\tilde{a}}^* \text { is continuous, }
					\end{array}\right. \\
					\max \left\{\begin{array}{l}
						\|z(\cdot, t)\|_{L_x^1}, \|z(\cdot, t)\|_{L_x^{\infty}},\left\|z_x(\cdot, t)\right\|_{L_x^1},\sqrt{t}\left\|z_x(\cdot, t)\right\|_{L_x^{\infty}},\\
						\sqrt{t}\left\|z_t(\cdot, t)\right\|_{L_x^1}, t\left\|z_t(\cdot, t)\right\|_{L_x^{\infty}}
					\end{array}\right\} \leq 2 C_{\sharp} \delta, \\
					\left|\left.v(\cdot, t)\right|_{x=\omega^{-}}^{x=\omega^{+}}\right|\leq 2\left|\left. v_0^*(\cdot)\right|_{x=\omega^{-}} ^{x=\omega^{+}} \right|, \quad \omega \in \mathcal{D},
				\end{array}\right.
			\end{equation}
			where $h(x)$ is the Heaviside step function, $\mathcal{D}$ is the set of discontinuous points of $v_0$.
			\item The fluxes of $u$, $\theta$ and $z$, defined respectively as 
			$$\frac{\mu u_x}{v}-p,\quad\frac{\nu}{c_v v}\theta_x-\int_{-\infty}^{x}(\frac{p}{c_v}u_y-\frac{\mu}{c_v v}u_y^2-\frac{q}{c_v}K\phi(\theta)z)\,\mathrm{d}y,\quad \text{and} \quad \frac{D}{v^2}z_x,$$
			are globally Lipschitz continuous with respect to $x$ for any $0<t<t_{\sharp}$.
			\item The specific volume $v$ has the following continuous properties: For $0\leq s<t<t_{\sharp}$,
			\begin{equation}\label{minus v}
				\left\{\begin{array}{l}
					\|v(\cdot, t)-v(\cdot, s)\|_{B V} \leq 2 C_{\sharp} \delta \, \frac{(t-s)|\log (t-s)|}{\sqrt{t}}, \\
					\|v(\cdot, t)-v(\cdot, s)\|_{L_x^{\infty}} \leq 2 C_{\sharp}\delta\,\frac{t-s}{\sqrt{t}}, \\
					\|v(\cdot, t)-v(\cdot, s)\|_{L_x^1} \leq 2 C_{\sharp} \delta\,(t-s).
				\end{array}\right.
			\end{equation}
		\end{itemize}
	\end{theorem}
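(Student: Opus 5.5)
We follow the iterative scheme of Liu--Yu \cite{LiuTP2022} and Wang--Yu--Zhang \cite{WangHT2022}, treating the mass fraction $z$ as one more parabolic unknown whose equation has the same divergence structure as the temperature equation.

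\emph{Iteration.} Set $(v^0,u^0,\theta^0,z^0)=(v_0,u_0,\theta_0,z_0)$ extended constantly in time. Given the $n$-th iterate, we define the $(n+1)$-th one by freezing the diffusion coefficients at $v^n$ and solving three linear equations:
\[
u^{n+1}_t=\Big(\tfrac{\mu}{v^n}u^{n+1}_x\Big)_x-p(v^n,\theta^n)_x,
\]
\[
\theta^{n+1}_t=\Big(\tfrac{\nu}{c_vv^n}\theta^{n+1}_x\Big)_x+\mathcal{S}^n,
\]
\[
z^{n+1}_t=\Big(\tfrac{D}{(v^n)^2}z^{n+1}_x\Big)_x-K\phi(\theta^n)z^n .
\]
Here $\mathcal{S}^n$ collects $-\frac{p}{c_v}u^n_x+\frac{\mu}{c_vv^n}(u^n_x)^2+\frac{q}{c_v}K\phi(\theta^n)z^n$. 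The specific volume is then updated by $v^{n+1}(x,t)=v_0(x)+\int_0^t u^{n+1}_x(x,s)\,ds$.

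\emph{Solution formulas and estimates.} Each linear equation is written in Duhamel form using the BV-coefficient heat kernel $H(x,t;y;f)$ of \eqref{heat with BV coeff, Nov25}, with $f=\mu/v^n$, $\nu/(c_vv^n)$ and $D/(v^n)^2$ respectively. These coefficients are BV with small variation as long as $\|v^n-1\|_{BV}\le 2C_\sharp\delta$. The kernel estimates of \cite{LiuTP2022,WangHT2022} supply $L^1$ and $L^\infty$ bounds on $H$ and $H_x$, H\"older-in-time bounds, and Lipschitz bounds on the flux $f\partial_xH$. They give, for $0<t<t_\sharp$, the $W^{1,1}\cap L^\infty$ bounds together with $\sqrt t$-weighted bounds on $u_x,\theta_x,z_x,u_t,\ldots$ and $t$-weighted bounds on the time derivatives.

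The new source terms are all harmless over short times:
\begin{itemize}[label={}]
\item $K\phi(\theta^n)z^n$ is bounded in $L^1\cap L^\infty$ by $C\delta$, since $\phi$ is Lipschitz and $\theta^n$ is near $1$.
\item The quadratic term $(u^n_x)^2$ is handled exactly as in \cite{WangHT2022}: its $L^1$ norm is at most $\|u_x\|_{L^1}\|u_x\|_{L^\infty}\lesssim\delta^2t^{-1/2}$, which is integrable in time.
\end{itemize}
Each source therefore contributes $O(\delta t^{1/2})$ or $O(\delta^2)$, so taking $t_\sharp$ small keeps the iterates in the ball of radius $2C_\sharp\delta$.

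For the flux of $\theta$ we subtract the primitive of the source. The combination $\frac{\nu}{c_vv}\theta_x-\int_{-\infty}^x(\cdots)\,dy$ then satisfies an equation whose $x$-derivative is controlled, and its Lipschitz continuity follows from the kernel flux estimates. The fluxes of $u$ and $z$ are handled the same way.

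\emph{Specific volume.} We use $v_t=u_x=\frac{v}{\mu}\big(\text{flux}_u+p\big)$. The $u$-flux $\sigma$ is Lipschitz and $p=a\theta/v$, so along each fixed $x$ we have the ODE $v_t=\frac{v\sigma}{\mu}+\frac{a\theta}{\mu}$, with $\sigma,\theta$ continuous in $x$. Consequently:
\begin{itemize}[label={}]
\item Jumps of $v$ occur only at the points $\omega\in\mathcal D$ where $v_0$ jumps.
\item Their sizes evolve by a linear ODE with $O(\delta)$ coefficients, which gives $|[v](\omega,t)|\le2|[v_0](\omega)|$ for $t$ small.
\item The decomposition $v-1=v^*_{\tilde a}+v^*_j$ follows directly.
\end{itemize}
The continuity estimates \eqref{minus v} come from integrating $v_t$ in time:
\begin{itemize}[label={}]
\item The $L^\infty$ bound is $\int_s^t\|u_x\|_\infty\lesssim\delta(t-s)/\sqrt t$.
\item The $L^1$ bound uses $\|u_x\|_{L^1}\lesssim\delta$.
\item The BV bound uses the Lipschitz estimate $\|\sigma_x(\cdot,\tau)\|_\infty\lesssim\delta/\tau$, cut off near $t-s$. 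This cutoff produces the logarithm in $(t-s)|\log(t-s)|/\sqrt t$.
\end{itemize}

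\emph{Contraction and passage to the limit.} We estimate differences of consecutive iterates in a weighted norm, possibly weaker (e.g.\ $L^1$-type for $v$). This requires the stability of $H(\cdot;f)$ with respect to $f$ in BV from \cite{LiuTP2022}. The term $K(\phi(\theta^n)z^n-\phi(\theta^{n-1})z^{n-1})$ is Lipschitz in the differences. Smallness of $\delta$ and $t_\sharp$ then yields a contraction, and we pass to the limit in the distributional formulation \eqref{def distri}.

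\emph{Main obstacle.} We expect the main obstacle to be this difference estimate for the $\theta$-equation. There the kernel depends on the BV-coefficient $v^n$ and the source contains $(u_x)^2$ with only $\sqrt t$-weighted $L^\infty$ control. The plan is to mimic the H\"older-in-time kernel comparison of \cite{WangHT2022}; the added $z$ coupling is lower order and should not change the structure of that argument.
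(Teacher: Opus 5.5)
Your overall architecture matches the paper's Section~3: an Itaya iteration with coefficients frozen at $v^n$, Duhamel formulas with the Liu--Yu kernel, uniform bounds, a contraction via Lemmas~\ref{lemma: comparison}--\ref{lemma: comparison2}, and $z$-terms treated as lower order. Several steps, however, fail as written.

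\emph{The scheme cannot start.} With the data extended constantly in time, the first $\theta$-source $\mathcal S^0$ contains $(u^0_x)^2=(u_0')^2$. This is meaningless for $u_0\in BV$, since $u_0'$ is a measure, possibly with atoms. The bound $\sqrt t\|u^0_x\|_{L^\infty_x}\lesssim\delta$ needed for the induction is also false. Start instead from the zero perturbation, so that the first step is the constant-coefficient problem \eqref{eq: n=0}.

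\emph{The jump argument must run along the scheme, not only at the limit.} The hypotheses \eqref{f} (small BV norm, time-invariant discontinuity set) must hold for every frozen coefficient. So you need $\partial_t v^{n+1}=\mu^{-1}v^n\sigma^{n+1}+\mu^{-1}a\theta^n$, where $\sigma^{n+1}=\frac{\mu}{v^n}u^{n+1}_x-p(v^n,\theta^n)$ is continuous in $x$ (Remark~\ref{re: conti}). This gives $[v^{n+1}](\omega,t)=[v_0](\omega)+\mu^{-1}\int_0^t\sigma^{n+1}(\omega,s)[v^n](\omega,s)\,ds$, which is Lemma~\ref{lemma: lip V}. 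Note that the coefficient $\sigma/\mu\approx-a/\mu$ is $O(1)$, not $O(\delta)$; the factor $2$ comes from the smallness of $t_\sharp$.

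\emph{The contraction cannot be run in an $L^1$-type norm for $v$.} Since $\mathcal S^n$ contains $u^n_x$ and $(u^n_x)^2$, you need the differences $u^{n+1}_x-u^n_x$. The comparison bounds for $H_x(\cdot;f^n)-H_x(\cdot;f^{n-1})$ cost $\|f^n-f^{n-1}\|_{BV}$, $|\log(t-t_0)|\,\|f^n-f^{n-1}\|_{L^\infty}$, and a $\frac{\sqrt\tau}{|\log\tau|}$-weighted sup of $\partial_\tau(f^n-f^{n-1})$. This is why $\mathcal F$ in \eqref{map F} carries BV norms and $|\log\tau|$ weights, with contraction factor $C_2(\sqrt{t_\sharp}|\log t_\sharp|+\delta)$.

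\emph{The regularity part is missing an ingredient.} Lipschitz continuity of the three fluxes is equivalent to $u_t,\theta_t,z_t\in L^\infty_x$, because $\partial_x(\frac{\mu u_x}{v}-p)=u_t$, $\partial_x\bar\theta=\theta_t$, and $\partial_x(\frac{D}{v^2}z_x)=z_t+K\phi(\theta)z$. The weighted bounds on these time derivatives do not follow from kernel estimates alone. The bound $|H_t|\lesssim(t-\tau)^{-3/2}e^{-(x-y)^2/(C_*(t-\tau))}$ leaves a non-integrable $(t-\tau)^{-1}$ in the Duhamel integral. One must:
\begin{itemize}
\item split at $\tau=t/2$;
\item freeze the source at time $t$ on $(t/2,t)$ and use \eqref{es: int H_t};
\item control $S(y,\tau)-S(y,t)$ through time-H\"older continuity of the solution itself, namely of $p(v,\theta)$ (which jumps in $x$) and of $K\phi(\theta)z$.
\end{itemize}
This is the content of Lemmas~\ref{lemma: z holder}, \ref{lemma: theta holder} and \ref{lemma: t deri}. ``Subtracting the primitive of the source'' does not supply it.

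\emph{The BV bound in \eqref{minus v} needs a different mechanism.} A sup bound on $\sigma_x$ cannot control total variation over $\R$. Use instead $u_x=\mu^{-1}v\sigma+\mu^{-1}a\theta$ and $\sigma_x=u_t$:
\[
\mathrm{TV}\big(u_x(\cdot,\tau)\big)\le\mu^{-1}\big(\mathrm{TV}(v)\|\sigma\|_{L^\infty_x}+\|v\|_{L^\infty_x}\|u_t\|_{L^1_x}\big)+a\mu^{-1}\|\theta_x\|_{L^1_x}\lesssim\delta\big(1+\tau^{-1/2}\big).
\]
Integrating over $[s,t]$ gives $\delta(t-s)/\sqrt t$, even without the logarithm. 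In the paper the logarithm is simply inherited from the ansatz \eqref{estimates n}, not produced by a cutoff.
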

	
	\begin{theorem}[Stability and uniqueness]\label{main thm: sta}
    There exists a universal constant $\delta>0$ (possibly smaller than that in Theorem~\ref{main thm: local exi} above) such that the following holds.    Let  $(v_0^{\epsilon},u_0^{\epsilon},\theta_0^{\epsilon},z_0^{\epsilon})$ and $(v_0^{\iota},u_0^{\iota},\theta_0^{\iota},z_0^{\iota})$  be two initial data, both verifying the smallness condition~\eqref{ini}. Then, there exists $t_*\ll 1$ such that the corresponding weak solutions $(v^{\epsilon},u^{\epsilon},\theta^{\epsilon},z^{\epsilon})$ and $(v^{\iota},u^{\iota},\theta^{\iota},z^{\iota})$  of Eq.~\eqref{PDE,2} both exist in $\R\times[0,t_*)$, and they satisfy the stability estimate:
		\begin{equation}
			\begin{aligned}
				&\left\|v^{\epsilon}-v^{\iota}\right\|_{L_x^1}+\left\| u^{\epsilon}-u^{\iota}\right\|_{L_x^1}+\left\| \theta^{\epsilon}-\theta^{\iota} \right\|_{L_x^1}+\left\| z^{\epsilon}-z^{\iota}\right\|_{L_x^1}\\
				&\qquad \leq C_2\left(\left\|v_0^{\epsilon}-v_0^{\iota}\right\|_{L_x^1}+\left\|v_0^{\epsilon}-v_0^{\iota}\right\|_{L_x^{\infty}}+\left\|v_0^{\epsilon}-v_0^{\iota}\right\|_{B V}+\left\|u_0^{\epsilon}-u_0^{\iota}\right\|_{L_x^{\infty}}\right.\\
				&\left.\qquad+\left\|u_0^{\epsilon}-u_0^{\iota}\right\|_{L_x^1}+\left\|\theta_0^{\epsilon}-\theta_0^{\iota}\right\|_{L_x^{\infty}}+\left\|\theta_0^{\epsilon}-\theta_0^{\iota}\right\|_{L_x^1}+\left\|z_0^{\epsilon}-z_0^{\iota}\right\|_{L_x^{\infty}}+\left\|z_0^{\epsilon}-z_0^{\iota}\right\|_{L_x^1}\right)
			\end{aligned}
		\end{equation}
for some $C_2>0$. In particular, for a fixed initial datum satisfying the condition~\eqref{ini}, weak solutions to Eq.~\eqref{PDE,2} in the sense of Definition~\ref{new def, weak sol, Nov25} are unique.
	\end{theorem}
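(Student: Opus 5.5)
We follow the stability argument of Liu--Yu and Wang--Yu--Zhang: write the difference of the two solutions through Duhamel representations built on the BV-coefficient heat kernel $H(x,t;y;f)$, and close a Gronwall-type estimate on a short time interval. By Theorem~\ref{main thm: local exi}, both solutions exist on a common interval $[0,t_\sharp)$. Shrinking it to $[0,t_*)$, both satisfy the regularity bounds there with constant $2C_\sharp\delta$. Both also admit the decomposition $v-1=v^*_{\tilde a}+v^*_j$, with jumps located at the discontinuity points of the respective initial data.

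We introduce the differences $\bar v=v^\epsilon-v^\iota$, $\bar u=u^\epsilon-u^\iota$, $\bar\theta=\theta^\epsilon-\theta^\iota$, $\bar z=z^\epsilon-z^\iota$. For the parabolic components we freeze the coefficient of one solution, e.g.\ $f^\epsilon=\mu/v^\epsilon$, and write
\begin{equation*}
\bar u_t-\big(f^\epsilon\bar u_x\big)_x=\Big(\big(\tfrac{\mu}{v^\epsilon}-\tfrac{\mu}{v^\iota}\big)u^\iota_x\Big)_x-\big(p^\epsilon-p^\iota\big)_x .
\end{equation*}
There are analogous equations for $\bar\theta$, with coefficient $\nu/(c_vv^\epsilon)$ and source terms $\frac{p}{c_v}u_x$, $\frac{\mu}{c_v v}u_x^2$, $\frac{q}{c_v}K\phi(\theta)z$. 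For $\bar z$ the coefficient is $D/(v^\epsilon)^2$ and there is the reaction term $K\phi(\theta)z$. Each difference is represented by Duhamel's formula with kernel $H(\cdot;f^\epsilon)$; divergence terms are integrated by parts onto $H_y$. The kernel bounds from the local theory give $\|H_y(\cdot,t;\cdot,s)\|_{L^1}\lesssim (t-s)^{-1/2}$. Combined with $\sqrt{s}\|u^\iota_x\|_{L^\infty}\le 2C_\sharp\delta$, they yield, for $X(t):=\sup_{s\le t}\big(\|\bar u\|_{L^1}+\|\bar\theta\|_{L^1}+\|\bar z\|_{L^1}+\|\bar u\|_{L^\infty}+\dots\big)$, a bound of the form
\begin{equation*}
\|\bar u(t)\|_{L^1}\le C\big(\|\bar u_0\|_{L^1}\big)+C\int_0^t\frac{\delta\,\|\bar v(s)\|_{L^\infty\cap L^1}}{\sqrt{s}\sqrt{t-s}}\,\mathrm{d}s+C\int_0^t\frac{\|\bar v(s)\|_{L^1}+\|\bar\theta(s)\|_{L^1}}{\sqrt{t-s}}\,\mathrm{d}s .
\end{equation*}
The first integral is $O(\delta)\sup\|\bar v\|$, and the last is $O(\sqrt{t_*})$ times the supremum. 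The reaction terms are handled using the Lipschitz continuity of $\phi$ and the $L^\infty$ smallness of $z$, $\theta-1$. They contribute $\int_0^t(\|\bar z\|_{L^1}+\delta\|\bar\theta\|_{L^1})\,\mathrm{d}s$. The quadratic term $u_x^2$ is controlled by $\|\bar u_x\|_{L^1}\,\delta/\sqrt{s}$, so we also carry a weighted bound on $\bar u_x$ in $X$.

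The $\bar v$ component is not parabolic. We use $v_t=u_x$ together with the identity $\mu(\log v)_t=\sigma+p$, where $\sigma=\mu u_x/v-p$ is the flux. This expresses
\begin{equation*}
\log v^\epsilon-\log v^\iota=\log\frac{v_0^\epsilon}{v_0^\iota}+\frac1\mu\int_0^t\big(\bar\sigma+p^\epsilon-p^\iota\big)\,\mathrm{d}s .
\end{equation*}
The flux difference $\bar\sigma$ is estimated in $L^1\cap L^\infty$ through the Lipschitz-flux representation from Theorem~\ref{main thm: local exi}, by roughly $(t-s)^{-1/2}$ times the data and $X$. This gives $\|\bar v(t)\|_{L^1\cap L^\infty}\lesssim \|\bar v_0\|_{L^1\cap L^\infty}+\sqrt{t}\,X(t)$. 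The $\|\bar v_0\|_{\rm BV}$ term on the right of the stability estimate enters where the kernels with different BV coefficients are compared: $H(\cdot;f^\epsilon)-H(\cdot;f^\iota)$ is controlled by the BV distance of the coefficients. This is needed because the jump locations of $v^\epsilon$ and $v^\iota$ may differ, so the two solutions cannot share one kernel exactly. Combining all components gives $X(t)\le C\cdot(\text{data})+(C\delta+C\sqrt{t_*})X(t)$, and choosing $\delta,t_*$ small absorbs the last term, yielding the estimate with $C_2$. Uniqueness follows by taking identical data.

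The main obstacle is the singular coupling term $\big((1/v^\epsilon-1/v^\iota)u^\iota_x\big)_x$. It carries only $L^\infty$ information on $\bar v$ against a $t^{-1/2}$-singular $u_x$ and a derivative. The time integral $\int_0^t s^{-1/2}(t-s)^{-1/2}\,\mathrm{d}s$ is bounded but not small, so closure relies on the smallness factor $\delta$ rather than on $t_*$. I would first try to close using only $L^1\cap L^\infty$ norms of $\bar v$. If that fails, I would add a weighted BV-type norm of $\bar v$ to $X$, tracking jump strengths at $\mathcal D$ via the bound on $v|_{\omega^-}^{\omega^+}$ from Theorem~\ref{main thm: local exi}.
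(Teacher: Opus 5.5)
The bootstrap does not close as you set it up. Your zeroth-order bounds for $\bar u,\bar\theta,\bar z$ with a single frozen kernel do match the paper's \eqref{eq: U n+1-n}--\eqref{eq: Z n+1-n}.

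\textbf{Derivative estimates and the BV norm.} The $L^\infty$ bound on $\bar v$ (through $\bar\sigma$) and the differences of the sources $\frac{p}{c_v}u_x$, $\frac{\mu}{c_vv}u_x^2$ all require estimates on $\bar u_x$, and you never say how to obtain them. The frozen kernel cannot supply them. Differentiating your Duhamel formula produces $\int_0^t\int H_{xy}(x,t;y,s;\mu/v^\epsilon)\,(\mu/v^\epsilon-\mu/v^\iota)\,u^\iota_x\,\mathrm{d}y\,\mathrm{d}s$, whose $y$-integrated size is $(t-s)^{-1}$, not integrable in $s$. The cancellation in Lemma~\ref{lemma: 2 derivative} would need a time-H\"older bound on $u^\iota_x$, which is not part of the solution class.

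The paper instead subtracts the two separate Duhamel representations. Then only $H_x(\cdot;f^\epsilon)-H_x(\cdot;f^\iota)$ and time integrals of $H_{xy}(\cdot;f^\epsilon)-H_{xy}(\cdot;f^\iota)$ appear, and these are controlled by Lemmas~\ref{lemma: comparison}--\ref{lemma: comparison2}. Their error terms contain $\sup_\tau\|\bar v(\tau)\|_{BV}$, $|\log(t-t_0)|\sup_\tau\|\bar v(\tau)\|_{L^\infty}$, $\sup_\tau\|\bar v(\tau)\|_{L^1}$ and $\sup_\tau\frac{\sqrt\tau}{|\log\tau|}\|\bar u_x(\tau)\|_{L^\infty}$ over the whole time interval, not merely $\|\bar v_0\|_{BV}$.

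The quantity to bootstrap is therefore $\mathcal F$ in \eqref{map F}. It includes $\||v^\epsilon-v^\iota|\|_{BV}$ and $|\log\tau|$-weighted norms of $\bar u_x$, $\bar\theta$, $\bar\theta_x$, $\bar z_x$. It must be paired with a BV-difference estimate for $\bar v$ as in Lemma~\ref{lemma: V n diff}, including the jump bound \eqref{Vn cauchy} on $\mathcal D_\epsilon\cup\mathcal D_\iota$. Absorption then requires $C_2(\delta+\sqrt{t_\sharp}|\log t_\sharp|)<1$, not $C\delta+C\sqrt{t_*}$. So your fallback is the actual proof, not an optional refinement.

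Your $\log v$ idea is a good tool for that BV step. Differentiating $\mu(\log v)_t=\sigma+p$ in $x$ and using $\sigma_x=u_t$ gives $\mu(\log v)_x(t)=\mu(\log v_0)_x+u(t)-u_0+\int_0^tp_x\,\mathrm{d}s$. This bounds $\|\bar v(t)\|_{BV}$ by the data, $\|\bar u(t)\|_{L^1}$, $\int_0^t\|\bar\theta_x\|_{L^1}\,\mathrm{d}s$ and a Gronwall term.

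\textbf{Uniqueness in the full class.} The uniqueness claim concerns arbitrary weak solutions in the sense of Definition~\ref{new def, weak sol, Nov25}. You use the bounds with constant $2C_\sharp\delta$ from Theorem~\ref{main thm: local exi}, but these are only known for the solution produced by the iteration. With identical data, your argument therefore only compares that solution with itself.

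The paper first proves Lemma~\ref{lemma: smallness}. For any solution in the class with data satisfying \eqref{ini}, the continuity $v-1\in C([0,t_\sharp);L^1\cap BV)$ (Remark~\ref{re: v regularity}) makes $\mu/v$, $\nu/(c_vv)$, $D/v^2$ satisfy \eqref{f} on a short interval $[0,t_*)$. The BV-coefficient kernels then exist, the Duhamel representations hold, and one recovers \eqref{smallness for any}. For $z$ this uses the $L^1$ identity with $K\phi\ge0$ together with Lemma~\ref{lemma: Liu}. Only after this can the stability estimate, with $\mathcal D=\mathcal D_\epsilon\cup\mathcal D_\iota$, be applied to two arbitrary solutions.
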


	\begin{theorem}[Global existence and large-time behaviour]\label{main thm: global} 
    There exists a universal constant $\delta>0$ (possibly smaller than that in Theorem~\ref{main thm: sta} above) such that the following holds. Suppose that the initial datum $(v_0,u_0,\theta_0,z_0)$ satisfies the smallness condition \eqref{ini}. 		Then the unique local solution to Eq.~\eqref{PDE,2} constructed in Theorems \ref{main thm: local exi} and \ref{main thm: sta} extends globally in time. Moreover, there exists a positive constant $C_3$ such that
		\begin{equation*}
			\begin{aligned}
				& \|\sqrt{t+1}(v(\cdot, t)-1)\|_{L_x^{\infty}}+\|\sqrt{t+1} u(\cdot, t)\|_{L_x^{\infty}}+\|\sqrt{t+1}(\theta(\cdot, t)-1)\|_{L_x^{\infty}}+\|\sqrt{t+1}z(\cdot, t)\|_{L_x^{\infty}} \\
				& \quad+\left\|\sqrt{t} u_x(\cdot, t)\right\|_{L_x^{\infty}}+\left\|\sqrt{t} \theta_x(\cdot, t)\right\|_{L_x^{\infty}}+\left\|\sqrt{t} z_x(\cdot, t)\right\|_{L_x^{\infty}} \leq C_3 \delta\qquad\text{for any }  t \in(0,+\infty).
			\end{aligned}
		\end{equation*}
	\end{theorem}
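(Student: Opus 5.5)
The global existence in Theorem~\ref{main thm: global} will follow from a continuation argument built on the local theory. Theorems~\ref{main thm: local exi} and~\ref{main thm: sta} give a unique local solution together with all the regularity bounds, so it suffices to prove an \emph{a priori} estimate. Assume the solution exists on $[0,T)$ and satisfies the ansatz
\begin{equation*}
\sup_{0<t<T}\Big(\|\sqrt{t+1}(v-1,u,\theta-1,z)(\cdot,t)\|_{L^\infty_x}+\|\sqrt{t}(u_x,\theta_x,z_x)(\cdot,t)\|_{L^\infty_x}\Big)\le 2C_3\delta,
\end{equation*}
together with $\|(v-1,u,\theta-1,z)(\cdot,t)\|_{L^1_x\cap{\rm BV}}\le 2C_3\delta$. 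The goal is to improve $2C_3\delta$ to $C_3\delta$ once $\delta$ is small. Restarting the local theorem at times near $T$ then extends the solution, and the solution is global by the usual open–closed argument.

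\textbf{Step 1: integral representation.} Linearise \eqref{PDE,2} around $(1,0,1,0)$. The $z$-equation has a damping term: since $\phi\ge0$ is Lipschitz, we have $K\phi(\theta)z=K\phi(1)z+O(|\theta-1||z|)$. When $\phi(1)>0$ this gives exponential decay. When $\phi(1)=0$, $z$ behaves like a heat mode with a quadratic source. In both cases we treat $z$ with the heat kernel and the reaction term as a source. The coupling enters the $\theta$-equation only through $\frac{q}{c_v}K\phi(\theta)z$. The $(v,u,\theta)$ block is exactly the linearised Navier--Stokes--Fourier system of \cite{WangHT2022}. Following \cite{LiuTP2022,WangHT2022}, we write the solution through the effective kernel $\mathbb G$. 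This kernel equals the BV-coefficient heat kernel $H(x,t;y;f)$ for short times and the Green's function of the linearised system (two diffusion waves at speeds $\pm\sqrt{\gamma}$ plus the entropy/heat mode, and an exponentially decaying singular part carrying the $v$-jumps) for long times. The result is a Duhamel formula in which the nonlinear terms appear as quadratic sources. These include $(\mu/v-\mu)u_x$, $p-p_{\rm lin}$, $u_x^2$, $\phi(\theta)z$, and $(1/v^2-1)z_x$, all in conservative or non-conservative form.

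\textbf{Step 2: decay estimates.} Insert the ansatz into the sources. The conservative source terms give convolutions of $\partial_y\mathbb G$ against $O(\delta^2)(1+s)^{-1/2}s^{-1/2}$-type bounds. The non-conservative term $u_x^2$ has $L^1_x$-norm bounded by $\|u_x\|_{L^\infty}\|u_x\|_{L^1}\lesssim \delta^2 s^{-1/2}$. The standard integrals $\int_0^t (t-s)^{-1/2}(1+s)^{-1/2}s^{-1/2}\,ds$ and their $(t-s)^{-1}$ counterparts then close with bounds $C\delta^2(1+t)^{-1/2}$ and $C\delta^2 t^{-1/2}$. The linear part is bounded by $C\delta(1+t)^{-1/2}$ because the initial data is in $L^1$. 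The $L^1\cap{\rm BV}$ bounds are propagated as in \cite{WangHT2022}. The $v$-jump part decays through the damping $\exp(-\frac{p'}{\mu}t)$ along the jump curves. The continuous part is controlled by the Lipschitz flux estimates of Theorem~\ref{main thm: local exi}.

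\textbf{Main obstacle.} I expect the hardest part to be the BV/jump component of $v$ interacting with the new reaction coupling. The source $\phi(\theta)z$ in the energy equation is not in divergence form, so it lacks the extra $(t-s)^{-1/2}$ gain. Its $L^1$-norm must therefore decay on its own. If $\phi(1)>0$, this comes from the exponential decay of $z$. If $\phi(1)=0$, one gets $\|\phi(\theta)z\|_{L^1}\lesssim\|\theta-1\|_{L^\infty}\|z\|_{L^1}\lesssim\delta^2(1+s)^{-1/2}$. That rate is borderline and gives only a logarithmic loss. To remove the loss, I would first use the monotone decay of $\|z\|_{L^1}$ (since $z\ge0$ and the reaction term has a sign, so $\int z$ is nonincreasing). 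Combined with the heat-kernel bound $\|z\|_{L^\infty}\lesssim\delta(1+s)^{-1/2}$, this makes the source $O(\delta^2(1+s)^{-1})$ in $L^1$ with an extra $L^\infty$ decay, which is enough to close. Choosing $\delta$ small so that $C\delta^2\le \frac12 C_3\delta$ completes the bootstrap and yields the stated bounds.
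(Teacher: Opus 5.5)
The gap is in the energy equation. Because you work with $\theta$, you keep non-divergence sources, and under your bootstrap hypotheses their $L^1_x$-norms decay no faster than $s^{-1/2}$.

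For $u_x^2$, your ansatz gives only $\|u_x\|_{L^1}\lesssim\delta$. Hence $\|u_x^2\|_{L^1}\lesssim\delta^2 s^{-1/2}$, and the factor $(1+s)^{-1/2}$ in your model integral is not available. For the reaction term with $\phi(1)=0$, you correctly get $\|\phi(\theta)z\|_{L^1}\lesssim\delta^2(1+s)^{-1/2}$. Against $\|\mathbb{G}(\cdot,t-s)\|_{L^\infty}\lesssim(t-s)^{-1/2}$, either bound gives $\int_0^{t/2}(t-s)^{-1/2}(1+s)^{-1/2}\,ds\sim 1$. That is no decay at all, not a logarithmic loss.

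Your proposed repair does not help. Monotonicity of $\int z$ together with $\|z\|_{L^\infty}\lesssim\delta(1+s)^{-1/2}$ only yields $\int|\theta-1|\,z\,dy\le\min\{\|\theta-1\|_{L^\infty}\|z\|_{L^1},\|\theta-1\|_{L^1}\|z\|_{L^\infty}\}\lesssim\delta^2(1+s)^{-1/2}$. Even an $L^1$-rate $(1+s)^{-1}$ would still leave $\int_0^{t/2}(t-s)^{-1/2}(1+s)^{-1}\,ds\sim t^{-1/2}\log t$. Your $\phi(1)>0$ branch (exponential decay of $z\ge0$ by comparison) is fine, but the theorem must also cover $\phi(1)=0$.

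The paper avoids non-divergence energy sources by working with the total energy $E=c_v\theta+\frac{u^2}{2}+qz$. In \eqref{PDE,1} its equation is a pure conservation law, with no reaction term and with $uu_x$ inside the flux. So the representation \eqref{E rep} involves only $\partial_y\mathbb{G}_{3k}$ (and $H_y$) against flux terms, which carry the extra $(t-\tau)^{-1/2}$ gain you noted is missing. The temperature is then recovered from $c_v(\theta-1)=(E-c_v)-\frac{u^2}{2}-qz$.

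The reaction then appears only in the $z$-equation (Lemma~\ref{lemma: z rep}). There the decisive input is the identity you already wrote, but used as \emph{time integrability}. Set $M(\tau)=\int K\phi(\theta)z\,dy$; then $\int_0^t M(\tau)\,d\tau\le\|z_0\|_{L^1}$. On $[0,t/2]$, against $\|\mathbb{G}_{44}(\cdot,t-\tau)\|_{L^\infty}\lesssim t^{-1/2}$, this gives $\delta^* t^{-1/2}$. On the late interval the paper uses the pointwise bound $M(\tau)\lesssim\delta^*/(1+\tau)$, which it obtains from a monotonicity argument for $M$ on $\tau>\nu_0$.

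There is a second point you miss. Interpolating $H$ and $\mathbb{G}$ does not leave only quadratic sources. On $[t-2\nu_0,t]$ there are linear remainders such as $\frac{1}{\nu_0}\mathcal{X}'\,(\mathbb{G}_{44}-H)\,z$, $\frac{qa}{c_v}z$, $(\frac{q\nu}{c_v}-qD)z_y$ and $a(\theta-v)/v$. These are small only through $\nu_0$; the paper's bound carries a term $O(1)\sqrt{\nu_0}\,\delta$. So the closure must fix $\nu_0$ first, then the bootstrap level $\delta$, then the data size $\delta^*$. The single condition $C\delta^2\le\frac12 C_3\delta$ is not enough.
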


    The $\mathcal{O}(t^{-1/2})$-decay rate  as $t \to \infty$ of the weak solution established in Theorem~\ref{main thm: global} is optimal, since it agrees with the decay rate for the heat kernel.

In this work, building on the  framework in \cite{LiuTP2022, WangHT2022}, we overcome additional analytical difficulties caused by the reactant mass fraction term $z$. On the one hand, for the local existence of weak solutions, we adopt several technical estimates for $z$ to ensure the convergence of the relevant  iteration scheme. See the proof of Lemmata~\ref{lemma: V U Theta Z k+1} and \ref{lemma: Z n diff}. On the other hand, to extend the local solution to the global one, one needs to further develop the Green's matrix analysis in \cite{LiuTP2022, WangHT2022}. In particular, the presence of $z$ makes Eq.~\eqref{new, balance law, Dec25}, the linearisation of Eq.~\eqref{PDE,2}, a $4 \times 4$ partially degenerate hyperbolic system. Hence, the Green's matrix $\mathbb{G}$ is  a $4 \times 4$ matrix, with a mode $\lambda_4$ arisen from the $z$-equation. The bound for $z$ requires several delicate novel estimates for $\mathbb{G}_{4j}$, $1\leq j \leq 4$, as well as $\partial_x\mathbb{G}_{4k}$ and 
$\partial_{xx}\mathbb{G}_{4k}$, $k=2,3$. See, \emph{e.g.},  Lemma~\ref{lemma: z rep} for details.

For future investigations, an important question is to determine the behaviour of the weak and/or strong solutions in the singular limit when all or some of the parameters $\mu, \nu, D$ tend to zero. We expect that the weak solutions are \emph{unstable} in the limit when all the parameters $\mu, \nu, D\to 0$, as it has been observed that in dimension one, steady planar detonation waves are unstable and may evolve into oscillating waves, known as pulsating denotation waves. See \cite[1. Introduction]{cw} by Chen--Wagner and the references therein. On the other hand, the analysis in D. Wang \cite{dehua} suggests that shock wave,
turbulence, vacuum, mass or heat concentration shall not develop in finite time for the partially dissipative system with $D=0$ while $\mu, \nu$ remain positive.

	\section{Preliminaries}\label{sec: pre}
	This section collects several analytical tools that will be used throughout the paper. 

    \subsection{Heat kernel with constant conductivity coefficient}

	\begin{definition}
	We denote by $\mathbf{K}(x,t)$ the standard heat kernel on $\R^2_+$, namely the solution to
		\begin{equation}
			\left\{\begin{array}{l}
				\partial_t \mathbf{K}(x,t)-\partial_{xx} \mathbf{K}(x,t)=0,\\
				\lim\limits_{t\rightarrow 0+}\mathbf{K}(x,t)=\delta(x),
			\end{array}\right.
		\end{equation}
		where $\delta(x)$ is the Dirac delta measure supported at $x =0$.
	\end{definition}
The following property of $\mathbf{K}(x,t)$ is relevant to our later developments.
    
	\begin{lemma}[Lemma 2.1 \cite{ChenK2024}]\label{o kernel}
		For any $m, j \in \{0, 1, 2\}$, $t\geq 0$, and $p\in[1,+\infty]$, one has that
		\begin{equation*}
        \begin{cases}
			\left|\partial_t^j \partial_x^m \mathbf{K}(t,x)\right|\leq  \frac{O(1)}{\left(t^{\frac{1}{2}}+|x|\right)^{1+2 j+m}},\\
            \left(\int_{\mathbb{R}}\left|\partial_x^m \mathbf{K}(t,x)\right|^p \,{\rm d} x\right)^{\frac{1}{p}}\leq O(1) t^{\frac{1}{2}\left(\frac{1}{p}-1-m\right)}.
            \end{cases}
		\end{equation*}
	\end{lemma}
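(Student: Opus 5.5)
The statement to prove is Lemma~\ref{o kernel}, which concerns the explicit kernel
$$\mathbf{K}(x,t)=\frac{1}{\sqrt{4\pi t}}\,e^{-x^2/(4t)}.$$
The plan is to prove both bounds by direct computation with this formula, using parabolic scaling. Write
$$\mathbf{K}(x,t)=t^{-1/2}\,g(\xi),\qquad \xi=x/\sqrt t,\qquad g(\xi)=(4\pi)^{-1/2}e^{-\xi^2/4}.$$

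\textbf{Reducing time derivatives to space derivatives.} Since $\partial_t\mathbf{K}=\partial_{xx}\mathbf{K}$, we have $\partial_t^j\partial_x^m\mathbf{K}=\partial_x^{m+2j}\mathbf{K}$. It therefore suffices to bound $\partial_x^n\mathbf{K}$ for $0\le n\le 6$.

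\textbf{Scaling form of space derivatives.} By induction,
$$\partial_x^n\mathbf{K}(x,t)=t^{-(1+n)/2}\,g^{(n)}(\xi),$$
where $g^{(n)}(\xi)=P_n(\xi)e^{-\xi^2/4}$ and $P_n$ is a polynomial of degree $n$ (a rescaled Hermite polynomial).

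\textbf{Pointwise bound.} The function $(1+|\xi|)^{1+n}P_n(\xi)e^{-\xi^2/4}$ is bounded on $\R$, because Gaussian decay beats any polynomial growth. Hence
$$|g^{(n)}(\xi)|\le C_n(1+|\xi|)^{-(1+n)}.$$
Substituting $\xi=x/\sqrt t$ gives
$$t^{-(1+n)/2}(1+|x|/\sqrt t)^{-(1+n)}=(\sqrt t+|x|)^{-(1+n)}.$$
With $n=m+2j$ this is exactly the first claimed estimate, with exponent $1+2j+m$.

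\textbf{$L^p$ bound.} Change variables $x=\sqrt t\,\xi$, so that $\mathrm{d}x=\sqrt t\,\mathrm{d}\xi$. This gives
$$\|\partial_x^m\mathbf{K}(\cdot,t)\|_{L^p}=t^{-(1+m)/2}\,t^{1/(2p)}\,\|g^{(m)}\|_{L^p}.$$
The norm $\|g^{(m)}\|_{L^p}$ is finite for every $p\in[1,\infty]$, since $g^{(m)}$ is a Schwartz function. The exponent is $\tfrac12(\tfrac1p-1-m)$, as claimed. For $p=\infty$ the same formula holds with $1/p=0$.

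\textbf{Degenerate case and expected difficulty.} At $t=0$ both sides are interpreted as limits (or the estimates are read for $t>0$). Otherwise there is no genuine obstacle. The only point needing care is tracking the exponent of $t$ under the scaling, and noting that the constants $O(1)$ depend only on $m$ and $j$, which range over a finite set.
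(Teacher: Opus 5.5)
Your argument is correct and is the standard one. The paper gives no proof of its own and simply imports the estimate from \cite{ChenK2024}; the explicit scaling $\partial_t^j\partial_x^m\mathbf{K}=t^{-(1+m+2j)/2}\,g^{(m+2j)}(x/\sqrt t)$ that you use is exactly what underlies it.

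The only point worth tightening is your closing remark. A priori $\|g^{(m)}\|_{L^p}$ depends on $p$. It is, however, bounded uniformly in $p\in[1,\infty]$ by $\max\{\|g^{(m)}\|_{L^1},\|g^{(m)}\|_{L^\infty}\}$, via $\|f\|_{L^p}\le\|f\|_{L^1}^{1/p}\|f\|_{L^\infty}^{1-1/p}$. So the constants do indeed depend only on $m$ and $j$.
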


	
    
\subsection{BV functions on $\R$}
We recall some properties of BV functions in dimension one; \emph{cf}. \cite{BV}. 

For any local Borel measure $\lambda$ on an open set $\Omega\subset \mathbb{R}$, the Radon--Nikod\'{y}m theorem guarantees a unique decomposition  $\lambda=\lambda^a +\lambda^s$, where $\lambda^a$ and $\lambda^s$ are the absolutely continuous and singular parts of $\lambda$ with respect to the Lebesgue measure on $\R$, respectively. The singular part $\lambda^s$ can be further decomposed into a purely atomic one $\lambda^j$ and a diffuse singular one $\lambda^c$.

    Thus, for  $f\in {\rm BV}(\mathbb{R})$ one has the decomposition $$Df=D^a f+D^c f+D^j f$$ for the distributional derivative $Df$, where $D^a f$, $D^c f$, and $D^j f$ are the absolutely continuous part, singularly continuous part, and purely atomic part of $Df$, respectively. Here $D^jf$ contributes to the \emph{jump part of $f$}, denoted as 
	\begin{equation}
		f_j(x):= \sum_{\omega\in\mathcal{D},\omega<x} \left.f(y)\right|_{y=\omega^-}^{y=\omega^+} h(x-\omega),
	\end{equation}
	where $\mathcal{D}$ is the set of discontinuous points of $f(x)$, and $h(\cdot)$ is the Heaviside function, namely $h(t)=1$ for $t>0$ and $h(t)=0$ for $t\leq 0$. The total variation of $f\in BV(\mathbb{R})$ is thus
	\begin{equation}
		\begin{aligned}
			\left\|f\right\|_{BV}=&|D^a f|(\mathbb{R})+|D^c f|(\mathbb{R})+|D^j f|(\mathbb{R})\\
			=&\int_{\mathbb{R}}|\partial_x f_a|\,\mathrm{d}x+\int_{\mathbb{R}}|\mathrm{d} f_c|+\sum_{\omega\in \mathcal{D}}\left|\left.f_j(x)\right|_{x=\omega^-}^{x=\omega^+}\right|.
		\end{aligned}
	\end{equation}

For our purpose of establishing ${\rm BV}$-estimates for $f$, it has been shown in \cite[Lemma 3.4]{WangHT2022} that the estimates for the absolutely continuous and singular parts are similar. Hence we shall focus only on the absolutely continuous part in what follows. Write $f_{\tilde{a}} \equiv f_a + f_c$.

Note that for $f\in BV\cap L_x^1(\mathbb{R})$, one has that $f(x) \rightarrow 0$ as $|x|\rightarrow \infty$.  Also,
	\begin{equation}
		\|f\|_{L_x^{\infty}(\mathbb{R})}=\sup_{x\in\mathbb{R}} \left|f(x)-\lim\limits_{|x|\rightarrow \mathbb{R}}f(x)\right|\leq \|f\|_{BV(\mathbb{R})}.
	\end{equation}

\subsection{Heat kernel with BV-conductivity coefficient}

Now, let us define the fundamental solution  $H(x,t;y,t_0;f)$ for the heat equation with time-dependent BV conductivity coefficients. Here $x,t$ are variables and $y, t_0, f$ are parameters for the function $H$.
	\begin{definition}[Fundamental solution]\label{def funda}
		Assume a function $f(x,t)$ satisfies the following conditions: there exist positive constants $\bar{f}$ and $\delta_{*}\ll 1$ such that
		\begin{equation}\label{f}
			\left\{\begin{array}{l}
				\|f(\cdot)-\bar{f}\|_{L_x^1} \leq \delta_*, \quad\|f(\cdot, t)\|_{B V} \leq \delta_*, \quad\left\|f_t(\cdot, t)\right\|_{L_x^{\infty}} \leq \delta_* \max \left(\frac{1}{\sqrt{t}}, 1\right),\\
				\mathcal{D} \equiv\{\omega \mid f(\omega, t) \text { is not continuous at } \omega\} \text { is invariant with respect to } t .
			\end{array}\right.
		\end{equation}
		The fundamental solution $H(x,t;y,t_0;f)$ is the weak solution to the initial value problem
		\begin{equation}\label{eq: H}
			\left\{\begin{array}{l}
			\partial_t H(x,t;y,t_0;f)-\partial_x \Big(f(x,t)\partial_x H(x,t;y,t_0;f)\Big) =0 \quad\text{for } t_0<t, \\
				H(x,t_0;y,t_0;f)=\delta(x-y).
			\end{array}\right.
		\end{equation}
		That is, for any smooth test function $\psi \in C_c^\infty(\mathbb{R} \times [t_0, \infty))$, 
		\begin{equation}\label{eq: weak}
			\int_{t_0}^{\infty} \int_{\mathbb{R}} H(x,t;y,t_0;f) \Big\{ -\partial_t \psi(x,t) - \partial_x \left( f(x,t) \partial_x \psi(x,t) \right) \Big\} \, \mathrm{d}x  \mathrm{d}t = \psi(y, t_0).
		\end{equation}
	\end{definition}

\begin{lemma}[Lemma~2.6 in \cite{LiuTP2022}]\label{int H}
The fundamental solution $H$ of the system~\eqref{eq: H} satisfies
		\begin{equation}
			\left\{\begin{array}{l}
				\int_{\mathbb{R}} H(x,t;y,\tau;f)\,\mathrm{d}x=\int_{\mathbb{R}} H(x,t;y,\tau;f)\,\mathrm{d}y=1, \\
				\int_{\mathbb{R}} H_x(x,t;y,\tau;f)\,\mathrm{d}x=\int_{\mathbb{R}} H_x(x,t;y,\tau;f)\,\mathrm{d}y=0, \\
				\int_{\mathbb{R}} H_y(x,t;y,\tau;f)\,\mathrm{d}x=\int_{\mathbb{R}} H_y(x,t;y,\tau;f)\,\mathrm{d}y=0, \\
				\int_{\mathbb{R}} H_t(x,t;y,\tau;f)\,\mathrm{d}x=\int_{\mathbb{R}} H_t(x,t;y,\tau;f)\,\mathrm{d}y=0, \\
				\int_{\mathbb{R}} H_\tau(x,t;y,\tau;f)\,\mathrm{d}x=\int_{\mathbb{R}} H_\tau(x,t;y,\tau;f)\,\mathrm{d}y=0 .
			\end{array}\right.
		\end{equation}
	\end{lemma}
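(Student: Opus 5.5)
The identities in Lemma~\ref{int H} follow from two facts: conservation of mass for the divergence-form equation \eqref{eq: H}, and the adjoint (backward) equation that $H$ satisfies in the parameters $(y,\tau)$. The plan is to establish first the two basic integral identities
\[
\int_{\mathbb{R}} H(x,t;y,\tau;f)\,\mathrm{d}x = 1,\qquad \int_{\mathbb{R}} H(x,t;y,\tau;f)\,\mathrm{d}y = 1,
\]
and then obtain all remaining identities by differentiating these in $x,y,t,\tau$. To justify moving a derivative past the integral we use the pointwise Gaussian-type bounds on $H$ and its derivatives available from the construction in \cite{LiuTP2022}. These bounds are of the form $|H|\le O(1)(t-\tau)^{-1/2}e^{-(x-y)^2/(C(t-\tau))}$, with one extra factor $(t-\tau)^{-1/2}$ for each spatial derivative (and $(t-\tau)^{-1}$ for a time derivative, away from the jump set). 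Dominated convergence then applies for each fixed $t>\tau$.

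\emph{Step 1 (integral in $x$).} In the weak formulation \eqref{eq: weak}, take $\psi(x,s)=\chi_R(x)\eta(s)$, where $\chi_R$ is a cutoff equal to $1$ on $[-R,R]$ with $|\chi_R'|\le C/R$, $|\chi_R''|\le C/R^2$, and $\eta$ is a time cutoff that approximates the indicator of $[\tau,t]$. The diffusion term $\int H\,\partial_x(f\chi_R')$ involves $f'\chi_R'$ as well as $f\chi_R''$. Since $f\in{\rm BV}$, the factor $\partial_x f$ is a measure. We therefore integrate by parts instead, moving one derivative onto $H$: the term becomes $\int f\,H_x\,\chi_R'$, which is $O(1/R)$ times $\|H_x\|_{L^1_x}$. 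Letting $R\to\infty$ gives $\frac{\mathrm{d}}{\mathrm{d}t}\int H\,\mathrm{d}x=0$. Together with the initial condition $H\to\delta(x-y)$ as $t\downarrow\tau$, this yields $\int H\,\mathrm{d}x=1$.

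\emph{Step 2 (integral in $y$).} This needs the adjoint relation: as a function of $(y,\tau)$, $H$ solves the backward problem $\partial_\tau H+\partial_y\big(f(y,\tau)\partial_y H\big)=0$ with $H\to\delta(x-y)$ as $\tau\uparrow t$. We obtain it by the standard duality argument, pairing $H(\cdot,s;y,\tau)$ with the backward fundamental solution and using uniqueness of weak solutions from \cite{LiuTP2022}. Repeating the cutoff argument of Step 1 in the variable $y$ then gives $\int H\,\mathrm{d}y=1$. Equivalently, the constant function $1$ solves the forward equation, so $1=\int H(x,t;y,\tau)\cdot 1\,\mathrm{d}y$ by the representation formula.

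\emph{Step 3 (derivative identities).} Differentiating $\int H\,\mathrm{d}x=1$ with respect to $y$, $t$, and $\tau$ gives the $x$-integral identities for $H_y$, $H_t$, and $H_\tau$. The $x$-integral of $H_x$ vanishes because $H$ decays as $|x|\to\infty$. The $y$-integral identities follow symmetrically from $\int H\,\mathrm{d}y=1$. The main obstacle is justifying these interchanges of derivatives and integrals, especially for $H_t$ and $H_\tau$ when $f$ jumps across $\mathcal{D}$. Near a jump point, $H_t=\partial_x(fH_x)$ holds only in the distributional sense. We handle this by differentiating in time in the weak sense: pair with a time test function and use that the flux $fH_x$ is continuous in $x$ and decays. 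This gives $\int H_t\,\mathrm{d}x=[fH_x]_{-\infty}^{\infty}=0$ directly, and the analogous flux argument in $y$ handles $H_\tau$.
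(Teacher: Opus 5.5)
Your argument is correct and follows the standard proof behind Lemma~2.6 of \cite{LiuTP2022}, which the paper cites without proof. That proof uses conservation of $\int H\,\mathrm{d}x$ from the divergence form, conservation of $\int H\,\mathrm{d}y$ from the backward equation in $(y,\tau)$ (cf.\ Remark~\ref{re: conti}), and the derivative identities by differentiating under the integral with the Gaussian bounds of Lemmas~\ref{lemma: Liu} and \ref{lemma: 2 derivative}.

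Two points need tightening. First, those bounds are stated only for $0<t-\tau<t_\sharp$, so larger time gaps require the semigroup property of $H$. Second, your ``equivalently, the constant $1$ solves the equation'' shortcut needs uniqueness among bounded weak solutions, not the $L^1$-type uniqueness available, so keep the duality/backward-equation route as the actual argument.
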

The following lemmas provide pointwise, derivative, and integral bounds for the fundamental solution $H(x,t;y,t_0;f)$.
	\begin{lemma}[Liu--Yu \cite{LiuTP2022}]\label{lemma: Liu}
		Suppose $f$ satisfies the conditions in \eqref{f}. Then there exist positive constants $C_{*}$ and $t_{\sharp}\ll 1$ such that the weak solution to Eq.~\eqref{eq: H} exists
		and satisfies 
		\begin{align*}
			& \left|H(x,t;y,t_0;f)\right| \leq C_* \frac{e^{-\frac{(x-y)^2}{C_*(t-t_0)}}}{\sqrt{t-t_0}},\\
			& \left|H_x(x,t;y,t_0; f)\right|+\left|H_y(x,t;y,t_0; f)\right| \leq C_* \frac{e^{-\frac{(x-y)^2}{C_*(t-t_0)}}}{t-t_0},\\
			& \left|\int_{t_0}^t H_x(x,\tau;y,t_0; f) \,\mathrm{d}\tau\right|,\left|\int_{t_0}^t H_x(x,t;y,s;f)\,\mathrm{d} s\right| \leq C_* e^{-\frac{(x-y)^2}{C_*(t-t_0)}}\qquad\text{for $t\in (t_0,t_0+t_{\sharp})$}.
		\end{align*}
	\end{lemma}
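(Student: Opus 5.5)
The plan is to follow the Liu--Yu construction: build $H$ by a parametrix/perturbation argument around the frozen-coefficient heat kernel, localised in space so as to respect the jump set $\mathcal{D}$ of $f$. As a first step, fix $(y,t_0)$ and take the parametrix
\[
H_0(x,t;y,t_0)=\frac{1}{\sqrt{f(y,t_0)}}\,\mathbf{K}\!\left(\frac{x-y}{\sqrt{f(y,t_0)}},t-t_0\right)
\]
away from discontinuities. Near a jump point $\omega\in\mathcal{D}$, replace it by the explicit fundamental solution of the two-phase heat equation with piecewise constant conductivity $f(\omega^\pm,t_0)$. This kernel is a sum of Gaussians with transmission and reflection coefficients that are bounded uniformly, because $f$ is bounded from below. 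Write $H=H_0+\int_{t_0}^t\!\int H_0(x,t;z,s)\,R(z,s)\,\mathrm{d}z\,\mathrm{d}s$, where the remainder $R$ satisfies a Volterra integral equation driven by the commutator term $\partial_x\big((f(x,t)-f(y,t_0))\partial_xH_0\big)$. Existence of the weak solution in the sense of \eqref{eq: weak} then follows once the Neumann series for $R$ is shown to converge in a weighted space with Gaussian weight $e^{-(x-y)^2/(C(t-t_0))}$.

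The second step is the convergence of that series. Its size is controlled by two quantities. The first is $|f(x,t)-f(y,t_0)|$, which splits into a time part and a space part. The time part is bounded by $\int_{t_0}^t\|f_t\|_{L^\infty}\le 2\delta_*\sqrt{t-t_0}$ using \eqref{f}. The space part is bounded by the total variation of $f$ between $y$ and $x$, which is at most $\delta_*$. The second quantity is the Gaussian derivative bound $|\partial_x H_0|\lesssim (t-t_0)^{-1}e^{-(x-y)^2/C(t-t_0)}$ from Lemma~\ref{o kernel}.

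After integrating by parts once in $z$, each iterate gains a factor $O(\delta_*)$, up to convolution integrals of the form
\[
\int_{s}^{t}(t-\tau)^{-1/2}(\tau-s)^{-1/2}\,\mathrm{d}\tau=\pi.
\]
The series therefore converges geometrically when $\delta_*\ll1$, and the resulting bound on $H$ is $C_*(t-t_0)^{-1/2}e^{-(x-y)^2/C_*(t-t_0)}$. Restricting to $t-t_0<t_\sharp\ll1$ keeps the $\max(1/\sqrt t,1)$ factor in \eqref{f} harmless and absorbs lower-order growth.

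The third step is the derivative bounds, which I expect to be the main obstacle. $H_x$ itself is discontinuous at $\omega\in\mathcal{D}$, while the flux $fH_x$ is continuous there. The estimate should therefore be proved first for $fH_x$ by differentiating the integral representation. The singular kernel $\partial_x^2H_0$ is not integrable in time near $s=t$, so one must use the cancellation $\int\partial_x H_0\,\mathrm{d}z=0$ from Lemma~\ref{int H}. Combined with the BV control of $f$, this replaces the difference $R(z,s)-R(x,s)$ by a quantity bounded in terms of the variation. Dividing by $f\ge\inf f>0$ then gives the bound on $H_x$. The $H_y$ bound follows by duality: $H(x,t;y,t_0;f)$ is the fundamental solution of the adjoint backward problem in $(y,t_0)$, which has the same divergence structure, so the same argument applies.

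The last estimate is on the time integrals $\int_{t_0}^t H_x\,\mathrm{d}\tau$. The pointwise bound only gives a logarithmically divergent integral, so a different route is needed. Integrating the equation $H_\tau=(fH_x)_x$ in $\tau$ gives
\[
f\!\int H_x\,\mathrm{d}\tau=\int_{-\infty}^x\big(H(\cdot,t)-\delta(\cdot-y)\big)
\]
plus a correction from the time dependence of $f$. The right-hand side is bounded by the Gaussian pointwise bound on $H$ and is integrable. The correction is $O(\delta_*)$ after integrating in $\tau$ against $|f_t|\lesssim\delta_*\tau^{-1/2}$. The estimate for $\int H_x(x,t;y,s)\,\mathrm{d}s$ is obtained in the same way from the adjoint equation in the variable $s$.
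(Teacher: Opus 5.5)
The paper gives no proof of this lemma (it is quoted from Liu--Yu). Measured against what the estimates require, your outline has a genuine gap at the step you flagged, and your second and third steps are circular as organised. Because $f$ is only BV, $|f(x,t)-f(z,s)|$ gains nothing from $|x-z|$, so the Levi kernel has $z$-integral of size $\delta_*(t-s)^{-1}$, which is not integrable in $s$. The integration by parts in your second step (leading to $\int_s^t(t-\tau)^{-1/2}(\tau-s)^{-1/2}\,\mathrm{d}\tau=\pi$) presupposes $|\partial_z(\cdot)|\lesssim(\tau-s)^{-1}e^{-(z-y)^2/C(\tau-s)}$ for the previous iterate. Propagating that gradient bound is exactly the singular integral of your third step, and there the proposed cancellation does not close. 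At fixed $s$, bounding $R(z,s)-R(x,s)$ through the variation of $R(\cdot,s)$ gives at best $O(\delta_*)(t-s)^{-1}$, and this is sharp. With $\mathbf K_a(x,r):=\mathbf K(x,ar)$, a jump $J$ of $R(\cdot,s)$ at $\omega$ contributes exactly $J\,\partial_x\mathbf K_a(x-\omega,t-s)$, which has size $\sim|J|/(t-s)$ when $|x-\omega|\sim\sqrt{t-s}$. Any estimate that controls each time slice only through the variation therefore ends with $\int^t\mathrm{d}s/(t-s)=\infty$. Putting absolute values inside the $z$-integral and then integrating in time still loses a factor $\log(1/|x-\omega|)$ near every point of $\mathcal D$.

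The missing idea is to integrate in $s$ \emph{before} integrating against the variation, keeping the exact identity $\int_{z>\omega}\partial_x^2\mathbf K_a(x-z,r)\,\mathrm{d}z=\partial_x\mathbf K_a(x-\omega,r)$. On $s\in(\frac{t+t_0}{2},t)$, freeze the integrand at $s=t$. For the coefficient this uses $\|f_t\|_{L^\infty}\lesssim\delta_*s^{-1/2}$, which costs only $\delta_*\sqrt{t-s}$. For the unknown (e.g.\ the flux $fH_z$) it requires a H\"older-in-time bound, with the jumps sitting at the $s$-independent set $\mathcal D$ of \eqref{f}. Since $\bigl|\int_0^T\partial_x\mathbf K_a(d,r)\,\mathrm{d}r\bigr|\le\frac{1}{2a}$ uniformly in $d$, the frozen part then costs at most $\frac{1}{2a}$ times its total variation. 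The other half, $s\in(t_0,\frac{t+t_0}{2})$, is harmless because $\int|\partial_zH(z,s;y,t_0)|\,\mathrm{d}z\lesssim(s-t_0)^{-1/2}$. Thus, in a perturbative construction like yours, the pointwise gradient bounds, the time-integrated bounds and a time-regularity bound for the flux must be propagated together through the iteration, not derived one after another.

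Your last step also covers only one of the two time integrals. Integrating the forward equation in $\tau$ does give $\int_{t_0}^tH_x(x,\tau;y,t_0)\,\mathrm{d}\tau$ once the pointwise bounds are known. Integrating the backward equation $\partial_sH=-\partial_y(f\partial_yH)$ in $s$, however, controls $\int_{t_0}^tf(y,s)H_y(x,t;y,s)\,\mathrm{d}s$, not $\int_{t_0}^tH_x(x,t;y,s)\,\mathrm{d}s$. For the latter, $F:=\int_{t_0}^tH(\cdot,t;y,s)\,\mathrm{d}s$ satisfies $F_t=(fF_x)_x+\delta(\cdot-y)$. Hence $f(x,t)F_x(x,t)=\int_{-\infty}^x\int_{t_0}^tH_t(z,t;y,s)\,\mathrm{d}s\,\mathrm{d}z$, and you must control $\int_{t_0}^tH_t(\cdot,t;y,s)\,\mathrm{d}s$. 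That means comparing $H$ with the kernel of the time-frozen coefficient $f(\cdot,t)$, as in \eqref{es: int H_t}. The two integrals coincide when $f$ is independent of $t$, so the discrepancy is exactly what the hypothesis on $f_t$ has to pay for. Finally, placing a two-phase kernel at every $\omega\in\mathcal D$ is not well defined, since $\mathcal D$ may be infinite or even dense. As every jump is at most $\delta_*$, the jumps have to be handled by the time-integration mechanism above anyway.
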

	\begin{lemma}[Lemma~2.2 in Wang--Yu--Zhang \cite{WangHT2022}]\label{lemma: 2 derivative}
		Under the same assumptions in Lemma \ref{lemma: Liu}, there exists a constant $C_*>0$ such that for any $t\in (t_0,t_0+t_{\sharp})$, the weak solution $H$ of Eq.~\eqref{eq: H} satisfies the following:
		\begin{align}
			& \label{es: H_t infty}\left|H_{x y}(x,t;y,t_0; f)\right|+\left|H_t(x,t;y,t_0;f)\right| \leq C_* \frac{e^{-\frac{(x-y)^2}{C_*(t-t_0)}}}{(t-t_0)^{\frac{3}{2}}},\\
			&\left|H_{t y}(x,t;y,t_0 ;f)\right| \leq C_* \frac{e^{-\frac{(x-y)^2}{C_*(t-t_0)}}}{(t-t_0)^2},\nonumber\\
			& \left|\int_{t_0}^t H_{xy}(x,\tau;y,t_0;f)\mathrm{d}\tau-\frac{\delta(x-y)}{f(x, t_0)}\right. \nonumber\\
			& \left.\quad-\int_{t_0}^t \frac{f(x, t_0)-f(x,\tau)}{f(x, t_0)} H_{xy}(x,\tau;y,t_0; f)\,\mathrm{d}\tau \right|\leq C_* \frac{e^{-\frac{(x-y)^2}{C_*(t-t_0)}}}{\sqrt{t-t_0}},\nonumber\\
			& \left|\int_{t_0}^t H_{x y}(x,t;y,s;f)\,\mathrm{d}s+\frac{\delta(x-y)}{f(y, t)}\right. \nonumber\\
			& \left.\quad-\int_{t_0}^t \frac{f(y, t)-f(y, s)}{f(y, t)} H_{x y}(x,t;y,s;f)\,\mathrm{d}s \right| \leq C_* \frac{e^{-\frac{(x-y)^2}{C_*\left(t-t_0\right)}}}{\sqrt{t-t_0}},\nonumber\\
			& \int_{t_0}^t H_{xx}\left(x,\tau;y,t_0; f\right)\,\mathrm{d}\tau=-\frac{\delta(x-y)}{f\left(x, t_0\right)} \nonumber\\
			& \quad-\frac{1}{f\left(x, t_0\right)} \partial_x\left[\int_{t_0}^t\left(f(x, \tau)-f(x, t_0)\right) H_x(x,\tau;y,t_0;f)\,\mathrm{d} \tau\right]\nonumber \\
			& \quad+O(1)\left(\left|\partial_x f(x, t_0)\right| e^{-\frac{(x-y)^2}{C_*(t-t_0)}}+\frac{e^{-\frac{(x-y)^2}{C_*(t-t_0)}}}{\sqrt{t-t_0}}\right), \quad \text { for } x \notin \mathcal{D},\nonumber\\
			& \int_{t_0}^t H_{xxy}(x,\tau;y,t_0 ; f)\,\mathrm{d} \tau=\frac{1}{f(x, t_0)} \nonumber\\
			& \quad \times\left[\delta^{\prime}(x-y)-\int_{t_0}^t \partial_x\left[\left(f(x, \tau)-f(x, t_0)\right) H_{x y}(x,\tau;y,t_0; f)\right]\,\mathrm{d}\tau\right]\nonumber \\
			& \quad-\frac{\partial_x f(x, t_0)}{f^2(x, t_0)}\left[\delta(x-y)-\int_{t_0}^t\left(f(x, \tau)-f(x, t_0)\right) H_{x y}(x,\tau;y,t_0; f)\,\mathrm{d}\tau\right]\nonumber \\
			& \quad+O(1)\left(\left|\partial_x f\left(x, t_0\right)\right| \frac{e^{-\frac{(x-y)^2}{C_*(t-t_0)}}}{\sqrt{t-t_0}}+\frac{e^{-\frac{(x-y)^2}{C_*(t-t_0)}}}{t-t_0}\right), \quad \text { for } x \notin \mathcal{D}, \nonumber\\
			& \label{es: int H_t}\int_{t_0}^t H_t(x,t;y,s;f)\,\mathrm{d} s=H(x,t-t_0;y;f(\cdot,t))-\delta(x-y)+O(1) \delta_* e^{-\frac{(x-y)^2}{C_*(t-t_0)}}.
		\end{align}
	\end{lemma}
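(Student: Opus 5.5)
The estimates will be built from Lemma~\ref{lemma: Liu} using three tools: the semigroup (Chapman--Kolmogorov) property $H(x,t;y,t_0)=\int_{\mathbb{R}}H(x,t;z,s;f)H(z,s;y,t_0;f)\,\mathrm{d}z$ for $t_0<s<t$; the fact that, as a function of $(y,t_0)$, $H$ solves the adjoint backward equation $\partial_{t_0}H=-\partial_y\big(f(y,t_0)\partial_yH\big)$; and exact integration of the equation in time, $H_\tau=\partial_x(fH_x)$.

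\textbf{Pointwise bounds \eqref{es: H_t infty} and the $H_{ty}$ bound.} For $H_{xy}$ I choose $s=(t+t_0)/2$ and differentiate the semigroup identity:
\begin{equation*}
H_{xy}(x,t;y,t_0)=\int_{\mathbb{R}}H_x(x,t;z,s)\,H_y(z,s;y,t_0)\,\mathrm{d}z .
\end{equation*}
Each factor is bounded by Lemma~\ref{lemma: Liu}, with the $H_y$ bound coming from the adjoint equation. Convolving two Gaussians of scale $t-t_0$ then gives $(t-t_0)^{-3/2}$.

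For $H_t$, set $w=H_t$. It solves $w_t=(fw_x)_x+(f_tH_x)_x$. By Duhamel from time $s$,
\begin{equation*}
w(x,t)=\int_{\mathbb{R}}H(x,t;z,s)\,w(z,s)\,\mathrm{d}z-\int_s^t\!\int_{\mathbb{R}}H_z(x,t;z,\tau)\,f_t(z,\tau)\,H_z(z,\tau;y,t_0)\,\mathrm{d}z\,\mathrm{d}\tau .
\end{equation*}
I will close this with a bootstrap in the weighted norm $\sup (t-t_0)^{3/2}e^{(x-y)^2/C(t-t_0)}|w|$. The forcing term is small because $|f_t|\le\delta_*/\sqrt{\tau}$ and $t_\sharp\ll1$. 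The data $w(\cdot,s)$ are treated by moving the time derivative onto the kernel in the first slot. The bound for $H_{ty}$ follows in the same way after applying $\partial_y$.

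\textbf{Time-integrated identities.} Integrating $H_\tau=\partial_x(f(x,\tau)H_x)$ over $(t_0,t)$ and splitting $f(x,\tau)=f(x,t_0)+(f(x,\tau)-f(x,t_0))$ gives
\begin{equation*}
f(x,t_0)\int_{t_0}^tH_x\,\mathrm{d}\tau=\int_{-\infty}^x\big(H(z,t;y,t_0)-\delta(z-y)\big)\,\mathrm{d}z-\int_{t_0}^t\big(f(x,\tau)-f(x,t_0)\big)H_x\,\mathrm{d}\tau .
\end{equation*}
The identities then follow by differentiating this relation.
\begin{itemize}
\item Applying $\partial_y$: the Heaviside term produces $\delta(x-y)/f(x,t_0)$. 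The remaining piece $\partial_y\int_{-\infty}^xH\,\mathrm{d}z$ is bounded by $e^{-(x-y)^2/C_*(t-t_0)}/\sqrt{t-t_0}$, using the $H_y$ bound integrated in $z$. This gives the third estimate.
\item Applying $\partial_x$ for $x\notin\mathcal{D}$: this gives the $H_{xx}$ identity. The derivative falling on $1/f(x,t_0)$ produces the $|\partial_xf(x,t_0)|$ term.
\item Applying $\partial_x\partial_y$: this gives the $H_{xxy}$ identity.
\item The versions with integration in the initial time $s$: these use the backward adjoint equation and the same argument in the $y$ variable.
\end{itemize}

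\textbf{Last estimate \eqref{es: int H_t}.} If $f$ were time independent, then $H_t(x,t;y,s)=-H_s(x,t;y,s)$, and
\begin{equation*}
\int_{t_0}^tH_t\,\mathrm{d}s=H(x,t;y,t_0)-\delta(x-y)
\end{equation*}
exactly. I will therefore compare $H(x,t;y,s;f)$ with the frozen kernel $H(x,t-s;y;f(\cdot,t))$. Their difference satisfies a heat equation with source $\partial_x\big((f(\cdot,\tau)-f(\cdot,t))\partial_xH\big)$, and $|f(\tau)-f(t)|\lesssim\delta_*\sqrt{t-\tau}$. Duhamel combined with Lemma~\ref{lemma: Liu} bounds the error, after integration in $s$, by $O(1)\delta_*e^{-(x-y)^2/C_*(t-t_0)}$.

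\textbf{Main obstacle.} I expect the $H_t$ and $H_{ty}$ bootstrap to be the hardest step. Because $f$ is only BV, $w=H_t$ has no pointwise control at jump points, so the estimate must be carried for the flux $fH_x$ (which is continuous across $\mathcal{D}$) rather than for $H_{xx}$. The Duhamel iteration must also keep the Gaussian weight while using the non-integrable-looking $1/\sqrt{\tau}$ bound on $f_t$. My first attempt is to run the weighted-norm bootstrap on the short interval $t-t_0<t_\sharp$ and absorb constants using $\delta_*\ll1$.
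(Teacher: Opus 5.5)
The paper gives no proof of this lemma; it is quoted from \cite{WangHT2022}, so your outline has to stand on its own. Most of it does, but the argument for the last estimate \eqref{es: int H_t} does not close as written.

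Set $D(x,\tau;y,s):=H(x,\tau;y,s;f)-H(x,\tau-s;y;f(\cdot,t))$. The error you must control is $\int_{t_0}^t\partial_\tau D(x,\tau;y,s)\big|_{\tau=t}\,\mathrm{d}s$. At $\tau=t$ the source vanishes, so $\partial_\tau D=\partial_x\big(f(x,t)D_x\big)$ there. This is a second-order quantity, and first-derivative bounds from Lemma~\ref{lemma: Liu} cannot control it. Duhamel gives
\begin{equation*}
\partial_\tau D(x,\tau;y,s)\big|_{\tau=t}=-\int_s^t\!\int_{\mathbb{R}}\partial_r\partial_zH(x,r;z;f(\cdot,t))\big|_{r=t-\sigma}\,\big(f(z,\sigma)-f(z,t)\big)\,H_z(z,\sigma;y,s)\,\mathrm{d}z\,\mathrm{d}\sigma .
\end{equation*}
Even with your $H_{ty}$-type bound $|\partial_r\partial_zH|\lesssim r^{-2}e^{-(x-z)^2/Cr}$ and the H\"older bound $|f(\sigma)-f(t)|\lesssim\delta_*\sqrt{t-\sigma}$ that you quote, the $\sigma$-integrand has size $\delta_*(t-\sigma)^{-1}(\sigma-s)^{-1/2}(t-s)^{-1/2}$. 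This is not integrable at $\sigma=t$. Your route can be rescued with the Lipschitz-type bound $|f(z,\sigma)-f(z,t)|\le\delta_*(t-\sigma)\max(\sigma^{-1/2},1)$ near $\sigma=t$ and a split of the $s$-integral at $t/2$.

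A cleaner argument is to write $H_t=-H_s+(H_t+H_s)$.
\begin{itemize}
\item The first part integrates exactly to $H(x,t;y,t_0;f)-\delta(x-y)$.
\item Since $H(x,t+h;y,s+h;f)=H(x,t;y,s;f(\cdot,\cdot+h))$, the second part is the derivative of the kernel in the direction $f_t$ of the coefficient. By Duhamel,
\begin{equation*}
(H_t+H_s)(x,t;y,s)=-\int_s^t\!\int_{\mathbb{R}}H_z(x,t;z,\sigma)\,f_\sigma(z,\sigma)\,H_z(z,\sigma;y,s)\,\mathrm{d}z\,\mathrm{d}\sigma .
\end{equation*}
Lemma~\ref{lemma: Liu}, together with $|f_\sigma|\le\delta_*\max(s^{-1/2},1)$ for $\sigma>s$, bounds this by $C\delta_*\max(s^{-1/2},1)(t-s)^{-1/2}e^{-(x-y)^2/C(t-s)}$. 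This is integrable over $s\in(t_0,t)$, since $\int_0^t\mathrm{d}s/\sqrt{s(t-s)}=\pi$.
\item Finally, apply the first estimate of Lemma~\ref{lemma: comparison} with the frozen coefficient $f(\cdot,t)$, using $\sup_{t_0<\tau<t}\|f(\cdot,\tau)-f(\cdot,t)\|_{L^\infty}\lesssim\delta_*\sqrt{t-t_0}$. This replaces $H(x,t;y,t_0;f)$ by $H(x,t-t_0;y;f(\cdot,t))$ at a cost of $O(1)\delta_*e^{-(x-y)^2/C_*(t-t_0)}$.
\end{itemize}
This argument uses only first-order kernel bounds.

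There are three smaller points.

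\emph{Gaussian decay in the time-integrated identities.} The remainders $\int_{-\infty}^xH_y(z,t;y,t_0)\,\mathrm{d}z$ and $\int_{-\infty}^x\big(H(z,t;y,t_0)-\delta(z-y)\big)\,\mathrm{d}z$ have Gaussian decay for $x>y$ only after you rewrite them over $(x,\infty)$. To do this, use $\int_{\mathbb{R}}H_y\,\mathrm{d}z=0$ and $\int_{\mathbb{R}}H\,\mathrm{d}z=1$ from Lemma~\ref{int H}. The pointwise $H_y$ bound alone gives $(t-t_0)^{-1/2}$ with no decay in $x-y$.

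\emph{Sign in the $s$-integrated identity.} If you carry out your backward-equation argument there, you will get $-\delta(x-y)/f(y,t)$ on the left-hand side, not $+$. Already for $f\equiv1$ one has $\int_{t_0}^tH_{xy}(x,t;y,s)\,\mathrm{d}s=\delta(x-y)-\mathbf{K}(x-y,t-t_0)$. The displayed sign is a slip, and the corrected form is the one consistent with Lemma~\ref{lemma: comparison2}.

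\emph{The $H_t$ bound needs no bootstrap.} Since $w(z,s)=\partial_z\big(f(z,s)H_z(z,s;y,t_0)\big)$, one integration by parts turns the data term into $-\int_{\mathbb{R}}H_z(x,t;z,s)f(z,s)H_z(z,s;y,t_0)\,\mathrm{d}z=O\big((t-t_0)^{-3/2}\big)e^{-(x-y)^2/C(t-t_0)}$. The forcing term is directly $O\big(\delta_*(t-t_0)^{-1}\big)e^{-(x-y)^2/C(t-t_0)}$. Only the continuous flux $fH_x$ ever enters, so the jump points you flagged as the main obstacle cause no difficulty.
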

	Next, we set the following norms
	\begin{align}
		\label{norm sup t}
		\left\|\left|f\right|\right\|_{\infty} & \equiv \sup _{t \in\left(0, t_{\sharp}\right)}\left\|f(\cdot, t)\right\|_{L_x^{\infty}}, \nonumber\\
		\left\|\left|f\right|\right\|_1 & \equiv \sup _{t \in\left(0, t_{\sharp}\right)}\left\|f(\cdot, t)\right\|_{L_x^1}, \\
		\left\|\left|f\right|\right\|_{BV} & \equiv \sup _{t \in\left(0, t_{\sharp}\right)}\left\|f(\cdot, t)\right\|_{B V},\nonumber
	\end{align}
	and the like. We have the following comparison estimates for heat kernels with different conductivity coefficients.

	\begin{lemma}[Corollaries~4.4 and 4.5 in \cite{LiuTP2022}]\label{lemma: comparison} Suppose that $f^a$ and $f^b$ both satisfy the conditions in \eqref{f}. Then there are positive constants $t_{\sharp}\ll 1$ and $C_*$ such that for any $t\in(t_0,t_0+t_{\sharp})$, 
		\begin{align*}
			& \left|H(x,t;y,t_0;f^b)-H(x,t;y,t_0; f^a)\right| \leq C_* \frac{e^{-\frac{(x-y)^2}{C_*(t-t_0)}}}{\sqrt{t-t_0}}\left\|\left|f^a-f^b\right|\right\|_{\infty},\\
			& \left|H_x(x,t;y,t_0;f^a)-H_x(x,t;y,t_0; f^b)\right|\\
			& +\left|H_y(x,t;y,t_0;f^a)-H_y(x,t;y,t_0; f^b)\right| \\
			& \quad \leq C_* \frac{e^{-\frac{(x-y)^2}{C_*\left(t-t_0\right)}}}{t-t_0}\left[|\log(t-t_0)\left\|\left|f^a-f^b\right|\right\| _{\infty}+\left\|\left|f^a-f^b\right|\right\|_{BV}\right. \\
			& \left.\qquad+\sqrt{t-t_0}\left(\left\|\left|f^a-f^b\right|\right\|_1+|\log t|\left\|\left| \frac{\sqrt{\tau}}{|\log \tau|} \partial_\tau\left[f^a-f^b\right]\right|\right\|_{\infty}\right)\right],\\
			& \left|\int_{t_0}^t\left[H_x(x,\tau;y,t_0; f^a)-H_x(x,\tau;y,t_0;f^b)\right]\,\mathrm{d}\tau\right|\\
			& \quad \leq C_* e^{-\frac{(x-y)^2}{C_*(t-t_0)}}\left[\left\|\left|f^a-f^b\right|\right\|_{\infty}+\left\|\left|f^a-f^b\right|\right\|_{BV}+\left\|\left|f^a-f^b\right|\right\|_{1}\right. \\
			& \left.\qquad+\left\|\left|\frac{\sqrt{\tau}}{|\log \tau|} \partial_\tau\left(f^a-f^b\right)\right|\right\|_{\infty}\right].
		\end{align*}
	\end{lemma}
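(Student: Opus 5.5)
The plan is to compare the two kernels through a Duhamel (parametrix) identity, and then to estimate the resulting space-time convolution with the pointwise bounds of Lemma~\ref{lemma: Liu} and Lemma~\ref{lemma: 2 derivative}. Set $w(x,t):=H(x,t;y,t_0;f^a)-H(x,t;y,t_0;f^b)$. Subtracting the two equations \eqref{eq: H} gives
\begin{equation*}
w_t-\partial_x\big(f^a w_x\big)=\partial_x\Big(\big(f^a-f^b\big)(x,t)\,H_x(x,t;y,t_0;f^b)\Big),\qquad w(\cdot,t_0)=0 .
\end{equation*}
Testing the weak formulation \eqref{eq: weak} for $H(\cdot;\cdot;f^a)$ against this equation and integrating by parts once in the source variable, I expect the representation
\begin{equation*}
w(x,t)=-\int_{t_0}^t\int_{\mathbb R} H_{y}(x,t;\xi,s;f^a)\,\big(f^a-f^b\big)(\xi,s)\,H_x(\xi,s;y,t_0;f^b)\,\mathrm d\xi\,\mathrm ds .
\end{equation*}
The first estimate then follows directly. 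Insert $|H_y|\le C_*e^{-(x-\xi)^2/C_*(t-s)}/(t-s)$ and $|H_x|\le C_*e^{-(\xi-y)^2/C_*(s-t_0)}/(s-t_0)$, and pull out $\|\,|f^a-f^b|\,\|_\infty$. The Gaussian convolution in $\xi$ produces
\begin{equation*}
\sqrt{\frac{(t-s)(s-t_0)}{t-t_0}}\;e^{-(x-y)^2/C(t-t_0)} .
\end{equation*}
The remaining time integral $\int_{t_0}^t \mathrm ds/\sqrt{(t-s)(s-t_0)}$ is bounded, which yields $C_* e^{-(x-y)^2/C_*(t-t_0)}/\sqrt{t-t_0}$.

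For the $H_x$ and $H_y$ differences, I would differentiate the representation in $x$ (or $y$). This puts $H_{xy}$ on the first kernel, and the naive bound $(t-s)^{-3/2}(s-t_0)^{-1}$ is not integrable near either endpoint. I therefore split the time interval at $s=(t+t_0)/2$ and treat the two halves separately.
\begin{itemize}
\item[(i)] On the half near $s=t$, I freeze the coefficient. Write
\begin{equation*}
(f^a-f^b)(\xi,s)=(f^a-f^b)(x,t)+\big[(f^a-f^b)(\xi,s)-(f^a-f^b)(x,t)\big].
\end{equation*}
For the frozen part I use the $\mathrm ds$-integrated identities of Lemma~\ref{lemma: 2 derivative} for $\int H_{xy}\,\mathrm ds$, whose leading $\delta$-term is harmless. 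For the bracket, the spatial increment is controlled by $\|\,|f^a-f^b|\,\|_{BV}$ (jumps are handled as in \cite[Lemma 3.4]{WangHT2022}), and the temporal increment by $\sqrt{\tau}|\log\tau|^{-1}\partial_\tau(f^a-f^b)$.
\item[(ii)] On the half near $s=t_0$, I integrate by parts in $\xi$ to move the derivative from $H_y(f^a)$ onto $(f^a-f^b)H_x(f^b)$. This produces one term with $\partial_\xi(f^a-f^b)$, a measure controlled by the BV norm, and one term with $H_{xx}(f^b)$, which is treated by the second-derivative formulas of Lemma~\ref{lemma: 2 derivative}.
\end{itemize}
The marginally non-integrable pieces give the factor $|\log(t-t_0)|\,\|\,|f^a-f^b|\,\|_\infty$. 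The $L^1$ norm enters through the tail, where a Gaussian is bounded by $1/\sqrt{t-t_0}$.

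For the third estimate, I integrate the differentiated representation in $\tau\in(t_0,t)$ before estimating. This lets me use the time-integrated bounds $|\int H_x\,\mathrm d\tau|\le C_*e^{-(x-y)^2/C_*(t-t_0)}$ of Lemma~\ref{lemma: Liu}, together with the mass identities of Lemma~\ref{int H}. These remove one power of $(t-t_0)^{-1}$ and the logarithm, giving the bound with all four norms and no singular prefactor.

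The main obstacle is the derivative estimate: obtaining only a logarithmic loss at the endpoints requires careful cancellation via freezing, together with Lemma~\ref{int H}, and a careful treatment of the jump set $\mathcal D$ when integrating by parts against BV coefficients.
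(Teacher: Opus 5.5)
The paper gives no proof of this lemma; it is quoted from \cite{LiuTP2022}. Your Duhamel comparison is therefore an independent route, and its first part is correct: the $\xi$-convolution of the two Gaussians, together with $\int_{t_0}^t \mathrm{d}s/\sqrt{(t-s)(s-t_0)}=\pi$, gives the bound on $H(f^a)-H(f^b)$ directly from Lemma~\ref{lemma: Liu}.

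The derivative bound, however, does not follow from what you describe. First, you have misplaced the singularity. After the $\xi$-convolution, the $x$-differentiated integrand is $O\big((t-s)^{-1}(s-t_0)^{-1/2}(t-t_0)^{-1/2}\big)\,e^{-(x-y)^2/C(t-t_0)}$. So only $s\to t$ diverges, and only logarithmically; for the $y$-derivative only $s\to t_0$ diverges. Your step (ii) is therefore unnecessary, and it is harmful. Moving $\partial_\xi$ onto $(f^a-f^b)H_\xi(\xi,s;y,t_0;f^b)$ creates $H_{\xi\xi}(\cdot\,;f^b)$. This has no pointwise bound for a BV coefficient, since it carries a Dirac mass wherever $H_\xi$ jumps. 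Even at the heuristic size $(s-t_0)^{-3/2}$ it yields $(t-t_0)^{-1}(s-t_0)^{-1}$, a new divergence at the endpoint that was harmless.

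Second, step (i) does not remove the divergence at $s=t$. Freezing only $f^a-f^b$ at $(x,t)$ leaves
$(f^a-f^b)(x,t)\int_{t_0}^t\!\int H_{xy}(x,t;\xi,s;f^a)H_\xi(\xi,s;y,t_0;f^b)\,\mathrm{d}\xi\,\mathrm{d}s$, which has exactly the original $(t-s)^{-1}$ behaviour. The $\mathrm{d}s$-identity of Lemma~\ref{lemma: 2 derivative} concerns $\int_{t_0}^t H_{xy}(x,t;\xi,s;f^a)\,\mathrm{d}s$ alone. It cannot be applied while the $s$-dependent factor $H_\xi(\xi,s;y,t_0;f^b)$ sits inside the integral.

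Nor can the spatial increment be charged to the BV norm by absolute values. Against $|H_{xy}|$, a jump of size $J$ of $f^a-f^b$ at $\omega$ contributes roughly $J\log\big(1+(t-t_0)/|x-\omega|^2\big)\,e^{-(x-y)^2/C(t-t_0)}/(t-t_0)$, which is unbounded as $x\to\omega$. Nothing in the sketch cuts the divergence off at scale $t-t_0$, so the claimed $|\log(t-t_0)|$ is also unsupported.

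The missing idea is a cancellation for the \emph{whole} source $g=(f^a-f^b)H_\xi(\cdot\,;y,t_0;f^b)$ at the singular endpoint. For example, freeze $g(\xi,s)$ at $s=t$ with $\xi$ fixed. Then the identity for $\int_{t_0}^tH_{xy}(x,t;\xi,s;f^a)\,\mathrm{d}s$ acts on $g(\cdot,t)$, and its $\delta$-term gives $-g(x,t)/f^a(x,t)$. The increment $g(\xi,s)-g(\xi,t)$ must then be controlled by the $\partial_\tau(f^a-f^b)$ norm together with a time-regularity bound for $H_\xi(\cdot\,;y,t_0;f^b)$. That bound is not among the quoted lemmas and would have to be proved. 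The $y$-derivative needs the mirror argument at $s=t_0$.

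Your third estimate needs the same care. The $\delta$-term of the $\int H_{xy}\,\mathrm{d}\tau$ identity leaves $\int_{t_0}^t \frac{(f^a-f^b)(x,s)}{f^a(x,s)}H_x(x,s;y,t_0;f^b)\,\mathrm{d}s$, which is log-divergent at $x=y$ if bounded termwise. It is controlled once you freeze $f^a-f^b$ at $s=t_0$ and invoke the time-integrated bound of Lemma~\ref{lemma: Liu}.
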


	\begin{lemma}[Lemma~2.5 in \cite{WangHT2022}]\label{lemma: comparison2}Under the assumptions in Lemma \ref{lemma: comparison}, there exist positive constants $t_{\sharp}\ll 1$ and $C_*$ such that for any $t\in (t_0,t_0+t_{\sharp})$, it holds that
		\begin{align}
			& \left|H_{x y}(x,t;y,t_0;f^a)-H_{x y}(x,t;y,t_0; f^b)\right| \nonumber\\
			& +\left|H_t(x,t;y,t_0;f^a)-H_t(x,t;y,t_0; f^b)\right|\nonumber \\
			& \quad \leq C_* \frac{e^{-\frac{(x-y)^2}{C_*(t-t_0)}}}{(t-t_0)^{3/2}}\left[|\log (t-t_0)|\left\|\left|f^a-f^b\right| \right\|_{\infty}+\left\|\left|f^a-f^b\right|\right\|_{B V}\right.\nonumber\\
			& \left.\qquad+\sqrt{t-t_0}\left(\left\|\left|f^a-f^b\right|\right\|_1+|\log t|\left\|\left| \frac{\sqrt{\tau}}{|\log \tau|} \partial_\tau\left[f^a-f^b\right]\right|\right\|_{\infty}\right)\right],\nonumber \\
			& \left|\int_{t_0}^t\left[H_y(x,t;y,s; f^a)-H_y(x,t;y,s; f^b)\right]\,\mathrm{d}s\right|\nonumber\\
			& \quad \leq C_* e^{-\frac{(x-y)^2}{C_*(t-t_0)}}\left[\left\|\left|f^a-f^b\right|\right\|_{\infty}+\left\|\left|f^a-f^b\right|\right\|_{B V}+\left\|\left|f^a-f^b\right|\right\|_1\right. \nonumber\\
			& \left.\qquad+\left\|\left|\frac{\sqrt{\tau}}{|\log \tau|} \partial_\tau(f^a-f^b)\right|\right\|_{\infty}\right],\nonumber \\
			& \left|\int_{t_0}^t\left[H_x(x,t;y,s;f^a)-H_x(x,t;y,s;f^b)\right] \,\mathrm{d}s\right|,\nonumber\\
			& \left|\int_{t_0}^t\left[H_y(x,\tau;y,t_0; f^a)-H_y(x, \tau;y,t_0;f^b)\right]\,\mathrm{d}\tau\right| \nonumber\\
			& \quad\leq C_* e^{-\frac{(x-y)^2}{C_*\left(t-t_0\right)}}\left[\left\|\left|f^a-f^b\right|\right\|_{\infty}+\left\|\left|f^a-f^b\right|\right\|_{B V}+\left|\left\|f^a-f^b\right|\right\|_1\right. \nonumber\\
			& \left.\qquad+\left|\left\|\frac{\sqrt{\tau}}{|\log \tau|} \partial_\tau(f^a-f^b)\right|\right\|_{\infty}\right], \nonumber\\
			& \int_{t_0}^t\left[H_{x y}(x,\tau;y,t_0;f^a)-H_{x y}(x, \tau;y,t_0;f^b)\right]\,\mathrm{d}\tau\nonumber \\
			=&\left[\frac{1}{f^a(x, t_0)}-\frac{1}{f^b(x, t_0)}\right] \delta(x-y)-\int_{t_0}^t\left[\frac{f^a(x, \tau)-f^a(x, t_0)}{f^a(x, t_0)} H_{x y}(x,\tau;y,t_0;f^a)\right. \nonumber\\
			& \left.-\frac{f^b(x, \tau)-f^b(x, t_0)}{f^b(x, t_0)} H_{x y}(x,\tau;y,t_0;f^b)\right]\,\mathrm{d} \tau+O(1) \frac{e^{-\frac{(x-y)^2}{C_*(t-t_0)}}}{\sqrt{t-t_0}}\left[|\log(t-t_0)\left\|\left|f^a-f^b \right|\right\|_{\infty}\right. \nonumber\\
			& \left.+\left\|\left|f^a-f^b\right|\right\|_{B V}+\left\|\left|f^a-f^b\right|\right\|_1+\left\|\left|\frac{\sqrt{\tau}}{|\log \tau|} \partial_\tau(f^a-f^b)\right|\right\|_{\infty}\right],\nonumber \\
			& \int_{t_0}^t\left[H_{x y}(x,t;y,s;f^a)-H_{x y}(x,t;y,s; f^b)\right]\,\mathrm{d}s \nonumber\\
			=&\left[\frac{1}{f^a(y, t)}-\frac{1}{f^b(y,t)}\right] \delta(x-y)+\int_{t_0}^t\left[\frac{f^a(y,t)-f^a(y,s)}{f^a(y, t)} H_{x y}(x,t;y,s;f^a)\right.\nonumber \\
			& \left.-\frac{f^b(y,t)-f^b(y,s)}{f^b(y,t)} H_{x y}(x,t;y,s;f^b)\right]\,\mathrm{d}s+O(1) \frac{e^{-\frac{(x-y)^2}{C_*(t-t_0)}}}{\sqrt{t-t_0}}\left[|\log(t-t_0)|\left\|\left|f^a-f^b\right|\right\|_{\infty}\right.\nonumber\\
			&\left.+\left\|\left|f^a-f^b\right|\right\|_{B V}+\left\|\left|f^a-f^b\right|\right\|_1+\left\|\left|\frac{\sqrt{\tau}}{|\log \tau|} \partial_\tau\left(f^a-f^b\right)\right|\right\|_{\infty}\right] \nonumber.
		\end{align}
	\end{lemma}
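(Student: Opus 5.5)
The plan is to derive everything from a Duhamel representation of the difference $w(x,t):=H(x,t;y,t_0;f^a)-H(x,t;y,t_0;f^b)$. Subtracting the two equations \eqref{eq: H} gives
\begin{equation*}
w_t-\partial_x\big(f^a w_x\big)=\partial_x\Big(\big(f^a-f^b\big)(x,t)\,H_x(x,t;y,t_0;f^b)\Big),\qquad w(x,t_0)=0 .
\end{equation*}
Solving with the fundamental solution $H(\cdot;f^a)$ and integrating by parts in the intermediate variable $z$ yields
\begin{equation*}
w(x,t)=-\int_{t_0}^t\int_{\mathbb{R}} H_z(x,t;z,s;f^a)\,\big(f^a-f^b\big)(z,s)\,H_z(z,s;y,t_0;f^b)\,\mathrm{d}z\,\mathrm{d}s .
\end{equation*}
All claimed bounds then come from differentiating this formula in $y$ and/or $t$, or integrating it in $s$ or $\tau$. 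In each case I will insert the pointwise bounds of Lemmas~\ref{lemma: Liu} and \ref{lemma: 2 derivative} for the individual kernels. The Gaussian factors combine through the usual semigroup-type convolution estimate, $\int_{\mathbb{R}} e^{-\frac{(x-z)^2}{C(t-s)}}e^{-\frac{(z-y)^2}{C(s-t_0)}}\,\mathrm{d}z\lesssim \sqrt{\tfrac{(t-s)(s-t_0)}{t-t_0}}\,e^{-\frac{(x-y)^2}{C'(t-t_0)}}$.

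For the pointwise bounds on $H_{xy}$ and $H_t$, I split the $s$-integral at $s=(t+t_0)/2$. On the half near $t$, the singular kernel is the one from $f^a$. There I use $\partial_t$ or $\partial_x$ on the first factor and let $H_z(\cdot;f^b)$ carry the $y$-derivative; the worst factor is $H_{xz}(x,t;z,s)\sim (t-s)^{-3/2}$. To avoid the non-integrable singularity at $s=t$, I write
\begin{equation*}
(f^a-f^b)(z,s)=(f^a-f^b)(x,t)+\big[(f^a-f^b)(z,s)-(f^a-f^b)(x,t)\big].
\end{equation*}
The constant part is handled exactly by the identities of Lemma~\ref{int H} and the $\delta$-type identities in Lemma~\ref{lemma: 2 derivative}. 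The increment is split into a spatial part and a temporal part. The spatial part $|(f^a-f^b)(z,s)-(f^a-f^b)(x,s)|$ is controlled by the total variation between $x$ and $z$, which gives $\||f^a-f^b|\|_{BV}$. The temporal part is controlled through $\sqrt{\tau}\,|\partial_\tau(f^a-f^b)|/|\log\tau|$, and integrating $(t-s)^{-1/2}$-type weights against it produces the $\sqrt{t-t_0}\,|\log t|$ term. The half near $t_0$ is symmetric: the singular kernel is now $H_{zy}(z,s;y,t_0;f^b)\sim (s-t_0)^{-3/2}$, and I freeze $f^a-f^b$ at $(y,t_0)$ instead. Whenever the $L^\infty$ norm of $f^a-f^b$ multiplies a $1/(s-t_0)$-type integrand that is only borderline integrable, the result is a factor $|\log(t-t_0)|$. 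The $L^1$ norm enters when the $z$-integral is bounded with one kernel in $L^\infty$ rather than through the Gaussian convolution.

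The three bounds on time-integrated differences, namely $\int[H_y(x,t;y,s)]\,\mathrm{d}s$, $\int H_x\,\mathrm{d}s$ and $\int H_y\,\mathrm{d}\tau$, I obtain by integrating the Duhamel formula in the outer time variable and exchanging the order of integration. The resulting inner integrals are exactly those bounded by $C_*e^{-(x-y)^2/C_*(t-t_0)}$ in the third line of Lemma~\ref{lemma: Liu}, so only a single Gaussian convolution remains. For the two identities involving $\int H_{xy}$, I apply the corresponding expansions of Lemma~\ref{lemma: 2 derivative} separately to $f^a$ and to $f^b$ and subtract. The $\delta(x-y)$ terms give the stated $\big[\tfrac1{f^a}-\tfrac1{f^b}\big]\delta$ term, and the two explicit $\int\frac{f(\cdot,\tau)-f(\cdot,t_0)}{f}H_{xy}$ terms are kept as they stand. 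The $O(e^{-\cdots}/\sqrt{t-t_0})$ remainders must also be compared. For this I go back to how those remainders were produced in Lemma~\ref{lemma: 2 derivative} and substitute the already established difference bounds, namely the pointwise $H_{xy}$ bound just proved and Lemma~\ref{lemma: comparison}.

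I expect the main obstacle to be the pointwise $H_{xy}$ and $H_t$ difference. There the two endpoint singularities are each of order $(\cdot)^{-3/2}$, and without cancellation they would give a non-integrable $(t-t_0)^{-2}$ behaviour in place of the claimed $(t-t_0)^{-3/2}$. The cancellation via freezing $f^a-f^b$ at an endpoint must be done carefully at the discontinuity set $\mathcal{D}$. Since $\mathcal{D}$ is time-invariant by \eqref{f}, the jump part of $f^a-f^b$ does not move in time; its contribution can therefore be absorbed into the BV term, while the time increments involve only the continuous part.
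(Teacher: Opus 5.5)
The paper gives no proof of this lemma; it is quoted from \cite{WangHT2022}. Your sketch therefore has to stand on its own, and as written it does not close. The Duhamel formula for $w$ is a sound starting point. The failure is in how you handle the endpoint singularity.

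\textbf{The pointwise bounds.} On $s\in[\frac{t+t_0}{2},t]$ you bound $|(f^a-f^b)(z,s)-(f^a-f^b)(x,s)|$ by the total variation between $x$ and $z$, and integrate it against $|H_{xz}(x,t;z,s;f^a)|\lesssim (t-s)^{-3/2}e^{-(x-z)^2/C(t-s)}$. Suppose $f^a-f^b$ has a jump $J$ at $\omega\in\mathcal{D}$, with $d=|x-\omega|\ll\sqrt{t-t_0}$ and $|x-y|\lesssim\sqrt{t-t_0}$. Then the $z$-integral of your majorant is $\gtrsim J(t-s)^{-1}$ whenever $t-s\ge d^2$. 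The Gaussian bound for $H_{zy}(z,s;y,t_0;f^b)$ is of size $(t-t_0)^{-3/2}$ there, so the method only gives
\[
J\,(t-t_0)^{-3/2}\log\frac{t-t_0}{d^2},
\]
which blows up as $x\to\omega$. A steep continuous ramp of width $\epsilon$ gives $\log((t-t_0)/\epsilon^2)$ in the same way. So absolute values plus total variation cannot yield $\||f^a-f^b|\|_{BV}(t-t_0)^{-3/2}$ uniformly in $x$.

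The frozen ``constant part'' is not handled exactly either. Lemma~\ref{int H} only lets you subtract $H_{zy}(x,s;y,t_0;f^b)$. Because $f^bH_z$ is continuous, $H_{zy}(\cdot,s;y,t_0;f^b)$ jumps across $\mathcal{D}$, and the same logarithm reappears, now weighted by the jumps of $f^b$.

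What is missing is a \emph{signed} treatment of the singular time integral. Freeze the relevant amplitude at $s=t$: a jump of $f^a-f^b$, a jump of $H_{zy}$, or the whole product $(f^a-f^b)H_{zy}(\cdot;f^b)$. Then use $|\int H_x(x,t;\omega,s)\,\mathrm{d}s|\le C_*$, or the fourth identity of Lemma~\ref{lemma: 2 derivative} for $\int H_{xz}(x,t;z,s;f^a)\,\mathrm{d}s$, instead of $\int|H_{xz}|\,\mathrm{d}s$. You must then also control the time increments of whatever you froze, which your sketch does not address.

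Similarly, differentiating your formula in $t$ produces the boundary term $\partial_x\big((f^a-f^b)(x,t)H_x(x,t;y,t_0;f^b)\big)$. It contains the measure $\partial_x(f^a-f^b)$ and has no pointwise bound by itself. You would have to show that it cancels against the integral term.

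\textbf{The time-integrated bounds.} These are not a consequence of the third line of Lemma~\ref{lemma: Liu}, because after exchanging integrals an endpoint singularity still remains.
\begin{itemize}
\item For $\int_{t_0}^t[H_x(x,t;y,s;f^a)-H_x(x,t;y,s;f^b)]\,\mathrm{d}s$, the inner integral $\int_{t_0}^rH_z(z,r;y,s;f^b)\,\mathrm{d}s$ is indeed bounded. But the outer kernel is still $H_{xz}(x,t;z,r;f^a)$, and the majorant $\int|H_{xz}|\,\mathrm{d}z\lesssim(t-r)^{-1}$ is not integrable at $r=t$.
\item For $\int_{t_0}^t[H_y(x,\tau;y,t_0;f^a)-H_y(x,\tau;y,t_0;f^b)]\,\mathrm{d}\tau$, the leftover kernel $H_{zy}(z,s;y,t_0;f^b)$ gives $(s-t_0)^{-1}$ at $s=t_0$.
\item For $\int_{t_0}^t[H_y(x,t;y,s;f^a)-H_y(x,t;y,s;f^b)]\,\mathrm{d}s$, the inner integral is $\int_{t_0}^rH_{zy}(z,r;y,s;f^b)\,\mathrm{d}s$. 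By Lemma~\ref{lemma: 2 derivative} this equals $-\delta(z-y)/f^b(y,r)$ plus lower-order terms, so it is not $O\big(e^{-(z-y)^2/C_*(r-t_0)}\big)$ at all.
\end{itemize}
All three bounds therefore need the same signed endpoint-cancellation argument as the pointwise ones.

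Your plan for the two $\int H_{xy}$ identities relies on the unspecified internals of the proof of Lemma~\ref{lemma: 2 derivative}, so it cannot be checked as written.
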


	\begin{remark}
		\label{re: conti}
		The following useful observations are stated in \cite{WangHT2022, LiuTP2022}.
		\begin{enumerate}
			\item For the backward heat equation
			\begin{equation}
				\label{eq: H backward}
				\left\{\begin{array}{l}
					\left(\partial_{\tau}+\partial_y f(y,\tau)\partial_y\right) H(x,t;y,\tau;f)=0, \quad \tau<t, \\
					H(x,t;y,t;f)=\delta(x-y),
				\end{array}\right.
			\end{equation}
			the product $f(y,\tau)H_y(x,t;y,\tau;f)$ is continuous with respect to $y$.
			\item The weak solution to the heat equation~\eqref{eq: H} can be defined similarly as per  Definition~\ref{def funda}. Consider the equation \eqref{eq: H} with a source term
			conservative form,
			\begin{equation*}
				u_t(x,t)=(f(t,x)u_x(x,t)+g(x,t))_x,
			\end{equation*}
			where $g(x,t)\in BV$ with respect to $x$. The mild solution constructed by heat kernel and Duhamel  principle is also a weak solution to the above equation in the distributional sense. Furthermore, the flux term $(f(t,x)u_x(x,t)+g(x,t))$ is continuous with respect to $x$ if one of the following two conditions holds:
			\begin{itemize}
				\item $g(x,t)$ is Lipschitz continuous in $x$, i.e.,
				\begin{equation*}
					\|g_x(x,t)\|_{L_x^{\infty}}<+\infty;
				\end{equation*}
				\item $g(x,t)$ is H\"{o}lder continuous in $t$, i.e., there exist $C>0$ and $0<\alpha<1$ such that
				\begin{equation*}
					|g(x,t)-g(x,s)|\leq C\frac{(t-s)^{\alpha}}{s^{\alpha}},\quad 0<s<t.
				\end{equation*}
			\end{itemize}
		\end{enumerate}
	\end{remark}

\subsection{A lemma from Fourier anlaysis}
The following lemma will be used in Section~\ref{sec: Green func}.
	\begin{lemma}[Proposition~2.2 in \cite{LiuTP2022}]\label{lemma f}
		Let $\hat{f}(\eta)$ be the Fourier transform of $f(x)$. Suppose that $\hat{f}(\eta)$ is analytic in the region $|\operatorname{Im}(\eta)|< \sigma_0$ and $\hat{f}(\eta)$ has the  asymptotic property:
		\begin{equation*}
			\hat{f}(\eta)=O(1)\cdot\frac{\eta}{(1+|\eta|)^{2m+2}}+O(1)\cdot\frac{1}{(1+|\eta|)^{2m+2}},\quad m\geq 0,\quad \eta\rightarrow\infty.
		\end{equation*}
		Then $f(x)\in H^{2m}(\mathbb{R})$ and
		\begin{equation*}
			\sum\limits_{j=0}^{2m}\left|\frac{\mathrm{d}^j}{\mathrm{d}x^j}f(x)\right|=O(1)e^{-\sigma_0|x|},\quad x\in\mathbb{R}.
		\end{equation*} 
	\end{lemma}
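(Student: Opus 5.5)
The plan is to get the Sobolev statement from Plancherel on the real line, and the pointwise bound by shifting the Fourier inversion contour to the edges of the analyticity strip. The top derivative is the only delicate case, and I handle it by first subtracting an explicit model function. I read the hypothesis in the form the statement gives it: the asymptotics hold uniformly on the closed strip $|\operatorname{Im}\eta|\le\sigma_0$, with $\hat f$ continuous up to its boundary, and the first term is $c\,\eta(1+|\eta|)^{-2m-2}$ for a fixed constant $c$ plus a remainder that can be absorbed into the second term. This reading of the first term is an assumption on my part, forced by the top-derivative obstruction described below.

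\emph{Sobolev regularity.} On the real axis, $|\eta|^{2j}|\hat f(\eta)|^2\lesssim (1+|\eta|)^{2j-4m-2}$. For $j\le 2m$ this is integrable, so Plancherel gives $f\in H^{2m}(\mathbb{R})$ at once.

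\emph{Derivatives $j\le 2m-1$.} For these orders $|\eta^j\hat f(\eta)|\lesssim(1+|\eta|)^{j-2m-1}$, which is in $L^1$ uniformly on every horizontal line of the closed strip. So $f^{(j)}(x)=\frac1{2\pi}\int_{\mathbb{R}}e^{ix\xi}(i\xi)^j\hat f(\xi)\,\mathrm d\xi$ is a genuine absolutely convergent integral. For $x>0$ I would apply Cauchy's theorem on the rectangle with horizontal sides $\operatorname{Im}\eta=0$ and $\operatorname{Im}\eta=\sigma_0$; for $x<0$ I would use $\operatorname{Im}\eta=-\sigma_0$ instead. The vertical sides vanish as $|\operatorname{Re}\eta|\to\infty$ by the uniform decay. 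This gives
\[
f^{(j)}(x)=\frac{e^{-\sigma_0|x|}}{2\pi}\int_{\mathbb{R}}e^{ix\xi}\big(i(\xi\pm i\sigma_0)\big)^j\hat f(\xi\pm i\sigma_0)\,\mathrm d\xi ,
\]
and the remaining integral is bounded by an $L^1$ norm independent of $x$. Hence $|f^{(j)}(x)|=O(1)e^{-\sigma_0|x|}$.

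\emph{Top derivative $j=2m$.} This is the main obstacle. Here $\eta^{2m}\hat f$ behaves like $\eta^{-1}$, which is not integrable, and an arbitrary bounded multiple of $1/\eta$ need not have a bounded inverse transform. So the splitting in the hypothesis has to be used. I would fix $\kappa>\sigma_0$ and put
\[
\hat g(\eta)=c\,\eta\,(\kappa^2+\eta^2)^{-m-1}.
\]
This function is analytic in $|\operatorname{Im}\eta|<\kappa$. Its inverse transform is explicit: it is $x$ times a polynomial in $|x|$ times $e^{-\kappa|x|}$, coming from derivatives of $e^{-\kappa|x|}$. Consequently $g^{(2m)}$ is bounded and piecewise smooth with decay $e^{-\kappa|x|}\le e^{-\sigma_0|x|}$; this is checked directly from the formula rather than through the integral. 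The remainder $\hat f-\hat g$ is analytic in $|\operatorname{Im}\eta|<\sigma_0$. On the closed strip it is $O((1+|\eta|)^{-2m-2})$, because $\eta(1+|\eta|)^{-2m-2}-\eta(\kappa^2+\eta^2)^{-m-1}=O(|\eta|^{-2m-2})$ for large $|\eta|$ and the remaining part of $\hat f$ is already of that order by hypothesis. The contour-shift argument above then applies to $\hat f-\hat g$ for every $j\le 2m$, since now $|\eta^{2m}(\hat f-\hat g)|\lesssim(1+|\eta|)^{-2}$ is integrable.

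Adding the two pieces gives $\sum_{j\le 2m}|f^{(j)}(x)|=O(1)e^{-\sigma_0|x|}$, understood a.e. for $j=2m$ in accordance with $f\in H^{2m}$. The constants depend only on the $O(1)$ bounds in the hypothesis, on $m$, and on the fixed choice of $\kappa$.
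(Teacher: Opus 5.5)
Your proof is correct. Note that the paper gives no proof of this lemma; it is quoted from Liu--Yu. So the natural comparison is with the standard argument: Plancherel, then shifting the inversion contour to $\operatorname{Im}\eta=\pm\sigma_0$. That is exactly your first two steps, and it yields $f\in H^{2m}$ and $|f^{(j)}(x)|=O(1)e^{-\sigma_0|x|}$ for $j\le 2m-1$.

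What you add is the treatment of $j=2m$, and this addition is necessary. Read literally, with the $O(1)$'s bounded functions, the hypothesis only says $\hat f=O\big((1+|\eta|)^{-2m-1}\big)$, and then the top-order pointwise bound fails. Take $m=0$ and $\hat f(\eta)=(1+\eta^2)^{-1/2}$. It is analytic in $|\operatorname{Im}\eta|<1$ and bounded by $C(1+|\eta|)^{-1}$ on $|\operatorname{Im}\eta|\le\frac12$. Yet $f(x)=\frac{1}{2\pi}\int_{\mathbb{R}}e^{ix\xi}(1+\xi^2)^{-1/2}\,\mathrm{d}\xi$ has a logarithmic singularity at $x=0$.

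The obstruction is an even $1/|\eta|$-type tail. A constant multiple of the odd tail $\eta(1+|\eta|)^{-2m-2}$ only produces a jump in $f^{(2m)}$, and subtracting $\hat g=c\,\eta(\kappa^2+\eta^2)^{-m-1}$ isolates exactly that jump. So your constant-coefficient reading is the right repair of the statement. It also covers the one place the paper invokes the lemma directly: for $t\ge1$, $e^{\lambda_4^*t}$ decays faster than any power of $\eta$ uniformly on the strip, so $c=0$ and $m$ can be taken arbitrary.

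Two small points.
\begin{itemize}
\item For $m=0$ your model $g$ is a multiple of $\operatorname{sgn}(x)e^{-\kappa|x|}$, not $x$ times a polynomial in $|x|$. That description is correct only for $m\ge1$, where $g\in C^{2m-1}$. Your conclusion that $g^{(2m)}$ is bounded, piecewise smooth and $O(e^{-\kappa|x|})$ is unaffected.
\item Continuity of $\hat f$ up to $|\operatorname{Im}\eta|=\sigma_0$ is more than you need. Bounds uniform on the open strip let you shift to $\operatorname{Im}\eta=\pm s$ with constants independent of $s<\sigma_0$, and then let $s\uparrow\sigma_0$. Some uniformity in the strip is indispensable, though: bounds on the real axis plus analyticity alone would not give the rate $\sigma_0$.
\end{itemize}
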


	\section{Local solution}\label{sec: local sol}
	This section is devoted to the construction of local-in-time weak solutions to Eq.~\eqref{PDE,2}. For convenience of the reader, we reproduce Eq.~\eqref{PDE,2} as follows.\begin{equation}
		\label{Cauchy pde}
		\left\{\begin{array}{l}
			v_t-u_x=0, \\
			u_t+p_x=\left(\frac{\mu u_x}{v}\right)_x, \\
			\theta_t+\frac{p}{c_v} u_x-\frac{\mu}{c_v v}\left(u_x\right)^2=\left(\frac{\nu}{c_v v} \theta_x\right)_x+\frac{q}{c_v}K\phi(\theta)z, \\
			z_t+K\phi(\theta)z=\left(\frac{D}{v^2}z_x\right)_x,\\
			\left.(v, u, \theta,z)\right|_{t=0}=\left(v_0, u_0, \theta_0,z_0\right),
		\end{array}\right.
	\end{equation}
	where the initial datum is a perturbation around the constant state $[1,0,1,0]^\top$. We set
	\begin{equation*}
		v_0=1+v_0^*,\quad u_0=u_0^*,\quad \theta_0=1+\theta_0^*,\quad z_0=z_0^*,
	\end{equation*}
and requires as in \eqref{ini} that \begin{equation}\label{smallness condition}
		\|v_0^*\|_{BV\cap L_x^1}+\|u_0^*\|_{BV\cap L_x^1}+\|\theta_0^*\|_{BV\cap L_x^1}+\|z_0^*\|_{BV\cap L_x^1}\leq \delta\ll 1.
	\end{equation}

The existence of local solutions shall be established in a standard way. One first constructs a sequence of approximate solutions via an iterative scheme, then derive uniform \emph{a priori} estimates for this sequence using Lemmas~\ref{lemma: Liu}--\ref{lemma: comparison2}, and finally prove the convergence of the approximate sequence in suitable topologies via compactness arguments. 
	
	\subsection{Iteration scheme}\label{subsec: iteration}
Following Itaya \cite{Itaya}, we construct a sequence of approximate solutions $(V^n,U^n,\Theta^n,Z^n)$ via an iteration scheme. At each step $n+1$, we solve the system
	\begin{equation}\label{eq: n+1}
		\left\{\begin{array}{l}
			V_t^{n+1}-U_x^{n+1}=0, \\
			U_t^{n+1}-\left(\frac{\mu U_x^{n+1}}{1+V^n}\right)_x=-p\left(1+V^n, 1+\Theta^n\right)_x, \\
			\Theta_t^{n+1}-\left(\frac{\nu \Theta_x^{n+1}}{c_v\left(1+V^n\right)}\right)_x=-\frac{p\left(1+V^n, 1+\Theta^n\right)}{c_v} U_x^n+\frac{\mu}{c_v\left(1+V^n\right)}\left(U_x^n\right)^2+\frac{q}{c_v}K\phi(1+\Theta^n)Z^n, \\
			Z^{n+1}-\left(\frac{DZ_x^{n+1}}{(1+V^n)^2}\right)_x=-K\phi(1+\Theta^{n})Z^n,\\
			\left.\left(V^{n+1}, U^{n+1}, \Theta^{n+1},Z^{n+1}\right)\right|_{t=0}=\left(v_0^*, u_0^*, \theta_0^*,z_0^*\right), \\
			\left(V^0, U^0, \Theta^0, Z^0\right)=(0,0,0,0),
		\end{array}\right.
	\end{equation}
The base step ($n=0$) for this iteration is the following linear problem:
	\begin{equation}\label{eq: n=0}
		\left\{\begin{array}{l}
			V_t^{1}-U_x^{1}=0, \\
			U_t^{1}-\left(\mu U_x^{1}\right)_x=0, \\
			\Theta_t^{1}-\left(\frac{\nu \Theta_x^{1}}{b}\right)_x=0, \\
			Z^{1}-\left(D Z_x^{1}\right)_x=0,\\
			\left.\left(V^{1}, U^{1}, \Theta^{1},Z^{1}\right)\right|_{t=0}=\left(v_0^*, u_0^*, \theta_0^*,z_0^*\right), \\
		\end{array}\right.
	\end{equation}
	By using Lemma \ref{o kernel}, we have the following estimates for $\left(V^{1}, U^{1}, \Theta^{1},Z^{1}\right)$.
	\begin{lemma}\label{lemma: n=0}
		Suppose that the initial datum $\left(v_0^*, u_0^*, \theta_0^*,z_0^*\right)$ satisfies the smallness condition~\eqref{smallness condition}. There exists a positive constant $C_{\sharp}$ such that for any $0<t<t_{\sharp}$, the solution to Eq.~\eqref{eq: n=0} satisfies
		\begin{equation*}
			\left\{\begin{array}{ll}
				\max \left\{\left\|U^1(\cdot, t)\right\|_{L_x^1},\left\|U^1(\cdot, t)\right\|_{L_x^{\infty}},\left\|U_x^1(\cdot, t)\right\|_{L_x^1},\right. & \\
				\left.\sqrt{t}\left\|U_x^1(\cdot, t)\right\|_{L_x^{\infty}}, t\left\|U_t^1(\cdot, t)\right\|_{L_x^{\infty}}\right\} \leq C_{\sharp} \delta, & 0<t<t_{\sharp}, \\
				\max \left\{\left\|\Theta^1(\cdot, t)\right\|_{L_x^1},\left\|\Theta^1(\cdot, t)\right\|_{L_x^{\infty}},\left\|\Theta_x^1(\cdot, t)\right\|_{L_x^1},\right. &  \\
				\left.\sqrt{t}\left\|\Theta_x^1(\cdot, t)\right\|_{L_x^{\infty}}, t\left\|\Theta_t^1(\cdot, t)\right\|_{L_x^{\infty}}\right\} \leq C_{\sharp} \delta, & 0<t<t_{\sharp}, \\
				\max \left\{\sqrt{t}\left\|V_t^1(\cdot, t)\right\|_{L_x^{\infty}},\left\|V^1(\cdot, t)\right\|_{B V},\right. & \\
				\left.\left\|V^1(x, t)\right\|_{L_x^1},\left\|V^1(x, t)\right\|_{L_x^{\infty}}\right\} \leq C_{\sharp} \delta, &  0<t<t_{\sharp},\\
				\max \left\{\left\|Z^1(\cdot, t)\right\|_{L_x^1},\left\|Z^1(\cdot, t)\right\|_{L_x^{\infty}},\left\|Z_x^1(\cdot, t)\right\|_{L_x^1},\right. &  \\
				\left.\sqrt{t}\left\|Z_x^1(\cdot, t)\right\|_{L_x^{\infty}}, t\left\|Z_t^1(\cdot, t)\right\|_{L_x^{\infty}}\right\} \leq C_{\sharp} \delta, &   0<t<t_{\sharp}, \\
				\|V^1(\cdot,t)-V^1(\cdot,s)\|_{BV}\leq C_{\sharp}\frac{t-s}{\sqrt{t}}\delta, & 0\leq s\leq t<t_{\sharp},\\
				\left|\left.V^1(\cdot,t)\right|_{x=z^-}^{x=z^+}\right|=\left|\left.v_0^*(\cdot)\right|_{x=z^-}^{x=z^+}\right|, & z\in \mathcal{D}, 0<t<t_{\sharp},
			\end{array}\right.
		\end{equation*}
		where $t_{\sharp}>0$ is sufficiently small constructed in Lemma \ref{lemma: Liu}, and $\mathcal{D}$ is the
		discontinuity set of $v_0^*$.
	\end{lemma}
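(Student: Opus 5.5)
The system \eqref{eq: n=0} is linear and, for $U^1,\Theta^1,Z^1$, decoupled with constant coefficients, so I will write each component as a convolution with a rescaled heat kernel and read off every bound from Lemma~\ref{o kernel}. (The $Z^1$-equation is evidently meant as $Z^1_t-(DZ^1_x)_x=0$, and $b$ is a positive constant; I treat them so.) Thus
\[
U^1(\cdot,t)=\mathbf{K}(\cdot,\mu t)*u_0^*,\qquad \Theta^1(\cdot,t)=\mathbf{K}(\cdot,\tfrac{\nu}{b}t)*\theta_0^*,\qquad Z^1(\cdot,t)=\mathbf{K}(\cdot,Dt)*z_0^*,
\]
and $V^1(x,t)=v_0^*(x)+\int_0^t U^1_x(x,s)\,\mathrm{d}s$. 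The three parabolic components are handled identically, so I describe $U^1$ only.

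Since $\|\mathbf{K}(\cdot,\mu t)\|_{L^1}=1$, Young's inequality gives $\|U^1\|_{L^1}\le\|u_0^*\|_{L^1}$ and $\|U^1\|_{L^\infty}\le\|u_0^*\|_{L^\infty}$. For the derivative I will let the derivative fall on the data: $U^1_x=\mathbf{K}*Du_0^*$, with $Du_0^*$ a finite measure. This gives $\|U^1_x\|_{L^1}\le {\rm Tot.\,Var.}(u_0^*)$. For the weighted sup-norm I instead put the derivative on the kernel and use the $L^1$ bound of Lemma~\ref{o kernel}:
\[
\sqrt t\,\|U^1_x\|_{L^\infty}\le\sqrt t\,\|\partial_x\mathbf{K}\|_{L^1}\|u_0^*\|_{L^\infty}\le O(1)\|u_0^*\|_{L^\infty}.
\]
Next, $U^1_t=\mu\,\partial_x\mathbf{K}*Du_0^*$, so the same lemma gives $t\|U^1_t\|_{L^\infty}\le O(1)\sqrt t\,{\rm Tot.\,Var.}(u_0^*)\le O(1)\delta$ for $t<t_\sharp\le1$. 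All of these are bounded by a constant multiple of $\delta$ by \eqref{smallness condition}.

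For $V^1$, I first note $\int_0^t\|U^1_x(s)\|_{L^\infty}\,\mathrm{d}s\le O(1)\delta\sqrt t$. Hence $\int_0^tU^1_x\,\mathrm{d}s$ is continuous in $x$, and the jumps of $V^1$ coincide with those of $v_0^*$, which is the last claim. For the $L^\infty$ bound, $\|V^1\|_{L^\infty}\le\|v_0^*\|_{L^\infty}+O(1)\delta$. Using $V^1_t=U^1_x$, the bound $\sqrt t\|V^1_t\|_{L^\infty}\le C\delta$ follows from the $U^1_x$ estimate. For the BV and $L^1$ bounds I use $\partial_x\int_0^tU^1_x\,\mathrm{d}s=\int_0^tU^1_{xx}\,\mathrm{d}s=\mu^{-1}(U^1(t)-u_0^*)$ in the sense of measures, so
\[
\|V^1(t)\|_{BV}\le\|v_0^*\|_{BV}+2\mu^{-1}\|u_0^*\|_{BV},\qquad \|V^1(t)\|_{L^1}\le\|v_0^*\|_{L^1}+t\,{\rm Tot.\,Var.}(u_0^*).
\]

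The step I expect to be the main obstacle is the time-difference BV bound $\|V^1(t)-V^1(s)\|_{BV}\le C\delta(t-s)/\sqrt t$. The $L^\infty$ part of this norm is immediate, since $\int_s^t\|U^1_x\|_{L^\infty}\le C\delta(t-s)/\sqrt s$. That is acceptable when $s\ge t/2$, and when $s<t/2$ one has $(t-s)/\sqrt t\gtrsim\sqrt t$, so the crude bound $C\delta\sqrt t$ suffices. The total-variation part equals $\mu^{-1}{\rm Tot.\,Var.}(U^1(t)-U^1(s))=\mu^{-1}\|\int_s^tU^1_{xt}\|_{L^1}$. Here I write $U^1_{xt}=\mu\,\partial_x^2\mathbf{K}*Du_0^*$, whose $L^1$ norm is $O(\delta)/\tau$. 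Integrating over $\tau\in(s,t)$ gives $\delta\log(t/s)$, which is $\lesssim\delta(t-s)/s$. This matches the claim when $s\ge t/2$, and when $s<t/2$ the uniform bound $2\|u_0^*\|_{BV}$ is already $\lesssim(t-s)/\sqrt t$ only if $\sqrt t\gtrsim1$. So if the regime $s\ll t\ll1$ fails, I would settle for the weaker form $(t-s)|\log(t-s)|/\sqrt t$, consistent with \eqref{minus v}, or interpret the BV bound through the continuous part only, as the paper does.
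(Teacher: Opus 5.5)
Your route, explicit convolution with the constant-coefficient heat kernel plus the bounds of Lemma~\ref{o kernel}, is the one the paper has in mind (its justification is the single sentence citing Lemma~\ref{o kernel}), and most items are fine. The genuine gap is the time-difference estimate $\|V^1(\cdot,t)-V^1(\cdot,s)\|_{BV}\le C_\sharp\delta(t-s)/\sqrt t$. You leave it open and propose to weaken it, but the obstruction you found comes from an extra derivative.

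Since $V^1(t)-V^1(s)=\int_s^tU^1_x\,\mathrm d\tau$, its total variation is the $L^1$ norm of $\int_s^tU^1_{xx}\,\mathrm d\tau=\mu^{-1}(U^1(t)-U^1(s))$. That is, it equals $\mu^{-1}\|U^1(t)-U^1(s)\|_{L^1}$, not $\mu^{-1}{\rm Tot.\,Var.}(U^1(t)-U^1(s))$; you used the correct identity yourself for $\|V^1(t)\|_{BV}$. With the right count, $U^1_{xx}=\partial_x\mathbf K(\cdot,\mu\tau)*Du_0^*$ has $L^1$ norm $O(1)\delta\,\tau^{-1/2}$ by Lemma~\ref{o kernel} with $m=1$, $p=1$. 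This is integrable at $\tau=0$, so for all $0\le s\le t$,
\[
{\rm Tot.\,Var.}\big(V^1(t)-V^1(s)\big)\le\int_s^t\|U^1_{xx}(\cdot,\tau)\|_{L^1}\,\mathrm d\tau\le O(1)\delta\,(\sqrt t-\sqrt s)=O(1)\delta\,\frac{t-s}{\sqrt t+\sqrt s}\le O(1)\delta\,\frac{t-s}{\sqrt t}.
\]
The $L^\infty$ part is the same computation with $\|U^1_x(\cdot,\tau)\|_{L^\infty}\le O(1)\delta\,\tau^{-1/2}$. No case split and no logarithm are needed. Your $1/\tau$ integrand, and hence the $\log(t/s)$ that blows up as $s\to0$, came from working with $U^1_{xt}$ instead of $U^1_{xx}$.

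There are two smaller slips that do not affect the conclusions.
\begin{enumerate}
\item $\|\partial_x\mathbf K(\cdot,t)\|_{L^\infty}$ is of order $t^{-1}$, so the correct bound is $t\|U^1_t\|_{L^\infty}\le O(1)\,{\rm Tot.\,Var.}(u_0^*)$, not $O(1)\sqrt t\,{\rm Tot.\,Var.}(u_0^*)$. The latter already fails for a step function.
\item The total variation of $\int_0^tU^1_x\,\mathrm ds$ is $\mu^{-1}\|U^1(t)-u_0^*\|_{L^1}\le2\mu^{-1}\|u_0^*\|_{L^1}$, so it is controlled by $\|u_0^*\|_{L^1}$, not $\|u_0^*\|_{BV}$.
\end{enumerate}
Both quantities are still $O(\delta)$ by \eqref{smallness condition}.
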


With Lemma~\ref{lemma: n=0} at hand, we now proceed with  induction. Fix $k \in \mathbb{N}$. Assume for each $n\leq k$ that  Eq.~\eqref{eq: n+1} has a solution $\left(V^n, U^n, \Theta^n, Z^n\right)$ that satisfies the following:
\begin{equation}\label{estimates n}
		\left\{\begin{array}{l}
			0<\delta, t_{\sharp} \ll 1, \quad 1 \leq n \leq k, \quad 0<t<t_{\sharp}, \\
			\max \left\{\left\|U^n(\cdot, t)\right\|_{L_x^1},\left\|U^n(\cdot, t)\right\|_{L_x^{\infty}},\left\|U_x^n(\cdot, t)\right\|_{L_x^1}, \sqrt{t}\left\|U_x^n(\cdot, t)\right\|_{L_x^{\infty}}\right\} \leq 2 C_{\sharp} \delta, \\
			\max \left\{\left\|\Theta^n(\cdot, t)\right\|_{L_x^1},\left\|\Theta^n(\cdot, t)\right\|_{L_x^{\infty}},\left\|\Theta_x^n(\cdot, t)\right\|_{L_x^1}, \sqrt{t}\left\|\Theta_x^n(\cdot, t)\right\|_{L_x^{\infty}}\right\} \leq 2 C_{\sharp} \delta, \\
			\max \left\{\left\|Z^{n}(\cdot,t)\right\|_{L_x^1},\left\|Z^n(\cdot, t)\right\|_{L_x^{\infty}},\left\|Z_x^n(\cdot, t)\right\|_{L_x^1}, \sqrt{t}\left\|Z_x^n(\cdot, t)\right\|_{L_x^{\infty}}\right\} \leq 2 C_{\sharp} \delta, \\
			\max \left\{\left\|V^n(\cdot, t)\right\|_{B V},\left\|V^n(\cdot, t)\right\|_{L_x^1},\left\|V^n(\cdot, t)\right\|_{L_x^{\infty}}, \sqrt{t}\left\|V_t^n(\cdot, t)\right\|_{L_x^{\infty}}\right\} \leq 2 C_{\sharp} \delta, \\
			\left\|V^n(\cdot, t)-V^n(\cdot, s)\right\|_{B V} \leq 2 C_{\sharp} \delta \frac{(t-s)|\log (t-s)|}{\sqrt{t}}, \quad 0 \leq s<t, \\
			\left|\left.V^n(\cdot, t)\right|_{x=\omega^{-}}^{x=\omega^{+}}\right|\leq 2\left|\left. v_0^*(\cdot)\right|_{x=\omega^{-}} ^{x=\omega^{+}} \right|, \quad \omega \in \mathcal{D}. 
		\end{array}\right.
	\end{equation}
The case $n=1$ follows from Lemma~\ref{lemma: n=0}. We will show \eqref{estimates n} for $\left(V^{k+1},U^{k+1}, \Theta^{k+1}, Z^{k+1}\right)$.

To this end, let us first introduce several notations:
	\begin{align}
		\label{N1 N2}
			\mu^k & \equiv \frac{\mu}{1+V^k}, \quad \mathcal{N}_1^k(x, t) \equiv-\partial_x p\left(1+V^k, 1+\Theta^k\right), \nonumber\\
			\nu^k & \equiv \frac{\nu}{c_v\left(1+V^k\right)}, \quad D^k  \equiv \frac{D}{(1+V^k)^2},\\
			\mathcal{N}_2^k(x, t) & \equiv-\frac{p\left(1+V^k, 1+\Theta^k\right)}{c_v} U_x^k+\frac{\mu}{c_v\left(1+V^k\right)}\left(U_x^k\right)^2+\frac{q}{c_v}K\phi(1+\Theta^k)Z^k.\nonumber
	\end{align}
Then, in view of the Duhamel principle, we have the expressions:
	\begin{align}
		\label{eq: V k+1}
		V^{k+1}(t,x)=&v_0^*(x)+\int_{0}^t U_x^{k+1}(x,s)\,\mathrm{d}s,\\
		\label{eq: U k+1}
		U^{k+1}(x,t)=&\int_{\mathbb{R}} H(x,t;y,0;\mu^k) u_0^*(y)\,\mathrm{d}y\nonumber \\
		& +\int_0^t \int_{\mathbb{R} \setminus \mathcal{D}} H_y\left(x,t;y,s;\mu^k\right) p\left(1+V^k, 1+\Theta^k\right)\,\mathrm{d}y \,\mathrm{d}s\\
		=&:\,\mathcal{I}_1^{u}+\mathcal{I}_2^{u},\nonumber\\
		\label{eq: Theta k+1}
		\Theta^{k+1}(x,t)=& \int_{\mathbb{R}} H(x,t;y,0; \nu^k) \theta_0^*(y) \,\mathrm{d}y \nonumber\\
		& +\int_0^t \int_{\mathbb{R}} H(x,t;y,s; \nu^k) \mathcal{N}_2^k(y,s)\,\mathrm{d}y \,\mathrm{d}s\\
		=&:\,\mathcal{I}_1^{\theta}+\mathcal{I}_2^{\theta},\nonumber\\
		\label{eq: Z k+1}
		Z^{k+1}(x,t)= & \int_{\mathbb{R}} H(x,t;y,0; D^k) z_0^*(y) \,\mathrm{d}y \nonumber\\
		& -\int_0^t \int_{\mathbb{R}} H(x,t;y,s; D^k) K\phi(1+\Theta^k)Z^k \,\mathrm{d}y \,\mathrm{d}s\\
		=&:\,\mathcal{I}_1^{z}+\mathcal{I}_2^{z}.\nonumber
	\end{align}

By adapting the arguments in Wang--Yu--Zhang \cite{WangHT2022}, we readily obtain the following:

	\begin{lemma}\label{lemma: V U Theta Z k+1}
		For  $\delta, t_{\sharp}>0$ sufficiently small, Eq.~\eqref{estimates n} holds for $(V^{k+1}, U^{k+1}, \Theta^{k+1}, Z^{k+1})$ whenever $0<t<t_{\sharp}$.
	\end{lemma}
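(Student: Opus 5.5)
The argument is an induction step. Assume \eqref{estimates n} for all $n\le k$, and use the Duhamel representations \eqref{eq: V k+1}--\eqref{eq: Z k+1}. First check that the conductivity coefficients $\mu^k$, $\nu^k$, $D^k$ satisfy the hypotheses \eqref{f} with $\delta_*=O(1)C_\sharp\delta$. This follows from the bounds on $V^k$ in \eqref{estimates n}:
\begin{itemize}
\item smallness of $V^k$ in $BV\cap L^1_x$ gives the $L^1$ and BV parts of \eqref{f};
\item $\sqrt t\|V^k_t\|_{L^\infty_x}\le 2C_\sharp\delta$ gives the bound on $f_t$;
\item the jump bound $|V^k|_{\omega^-}^{\omega^+}|\le 2|v_0^*|_{\omega^-}^{\omega^+}|$, together with $V^k=v_0^*+\int_0^t U^k_x\,\mathrm ds$ and the continuity of $U^k_x$ away from $\mathcal D$, shows that the discontinuity set stays $\mathcal D$.
\end{itemize}
Hence Lemmas~\ref{lemma: Liu} and \ref{lemma: 2 derivative} apply to $H(\cdot;\mu^k)$, $H(\cdot;\nu^k)$ and $H(\cdot;D^k)$ on $(0,t_\sharp)$.

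The estimates for $U^{k+1}$, $\Theta^{k+1}$ and $V^{k+1}$ are taken over essentially verbatim from Wang--Yu--Zhang \cite{WangHT2022}. The only new ingredient is the extra source $\frac{q}{c_v}K\phi(1+\Theta^k)Z^k$ in $\mathcal N_2^k$. Since $\phi$ is Lipschitz and $\|\Theta^k\|_\infty$ is small, this term is bounded pointwise by $O(1)|Z^k|$. It therefore lies in $L^1\cap L^\infty$ with norm $O(\delta)$, uniformly in $t$, which is less singular than the $(U^k_x)^2$ term already handled in \cite{WangHT2022}. For this term we would use:
\begin{itemize}
\item Gaussian bounds on $H$ and $H_x$, giving a contribution of $O(t)\delta$ in $L^1$ and $L^\infty$, and $O(\sqrt t)\delta$ for $\Theta_x$ in $L^1$ and for $\sqrt t\,\Theta_x$ in $L^\infty$;
\item \eqref{es: H_t infty} for $t\,\Theta_t$, after splitting the time integral at $t/2$ and moving the derivative onto the source on $[t/2,t]$ via $\sqrt s\,Z^k_x$ bounds.
\end{itemize}
The $V^{k+1}$ bounds, including the BV modulus in time and the jump bound, come from the representation of $U^{k+1}_x$ obtained by differentiating \eqref{eq: U k+1}. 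The principal term $\int_0^tH_{xy}\,\mathrm ds$ is handled by Lemma~\ref{lemma: 2 derivative}: the $\delta(x-y)/\mu^k$ part produces the controlled jump of $V^{k+1}$ (jump of $p$ divided by $\mu$), and the remainder is small. All new contributions carry factors $t_\sharp$ or $\delta^2$, so the constant $2C_\sharp$ closes once $\delta,t_\sharp\ll1$, with the linear parts bounded by $C_\sharp\delta$ as in Lemma~\ref{lemma: n=0}.

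The $Z^{k+1}$ equation is the genuinely new part, and it is simpler than the $U$ equation because its source has no derivative. For $\mathcal I_1^z$, Lemma~\ref{lemma: Liu} and Lemma~\ref{int H} give the following, exactly as in the heat-kernel analysis for $\Theta$:
\begin{itemize}
\item $\|\mathcal I_1^z\|_{L^1}\le\|z_0^*\|_{L^1}$ and $\|\mathcal I_1^z\|_{L^\infty}\le C\|z_0^*\|_{L^\infty}$;
\item the $L^1$ bound on $\partial_x\mathcal I_1^z$, obtained by writing $\int H_x z_0^*\,\mathrm dy$, using $\int H_y\,\mathrm dy=0$ and $H_x+H_y$ cancellations against the BV-norm of $z_0^*$ (as for $\theta_0^*$);
\item $\sqrt t\,\|\partial_x\mathcal I_1^z\|_{L^\infty}$.
\end{itemize}
For $\mathcal I_2^z$, the source $K\phi(1+\Theta^k)Z^k$ is bounded in $L^1\cap L^\infty$ by $O(C_\sharp\delta)$, and its $x$-derivative is bounded by $O(|\Theta^k_x||Z^k|+|Z^k_x|)$. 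Hence $\|\mathcal I_2^z\|_{L^1\cap L^\infty}\le O(t)\delta$, and $\|\partial_x\mathcal I_2^z\|_{L^1}\le O(\sqrt t)\delta$ via $\int_0^t(t-s)^{-1/2}\,\mathrm ds$. The time derivative $tZ^{k+1}_t$ is handled using \eqref{es: H_t infty} and \eqref{es: int H_t} as for $\Theta$.

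I expect the main obstacle to be the BV-in-time estimate $\|V^{k+1}(t)-V^{k+1}(s)\|_{BV}\le 2C_\sharp\delta(t-s)|\log(t-s)|/\sqrt t$. This requires controlling $\int_s^t U^{k+1}_{xx}$ in $L^1$ through the Hölder-in-time structure of the flux $\mu^kU^{k+1}_x-p$. I would follow \cite{WangHT2022} exactly, noting that the $Z$-coupling enters $p$ only through $\Theta^k$, which is already controlled. Consequently the reaction terms affect only lower-order constants.
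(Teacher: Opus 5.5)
Your proposal is correct and follows the paper's plan. You verify \eqref{f} for the frozen coefficients and take $U^{k+1},\Theta^{k+1},V^{k+1}$ from Wang--Yu--Zhang; the paper does this in one sentence, and your remark that the new source $\frac{q}{c_v}K\phi(1+\Theta^k)Z^k$ in $\mathcal N_2^k$ is $O(|Z^k|)$ and closes with a factor of $t_\sharp$ is a correct filling-in. You then estimate $Z^{k+1}$ from \eqref{eq: Z k+1} with the Gaussian bounds of Lemma~\ref{lemma: Liu}, exactly as in the paper's steps (2)--(4). Your treatment of the jump of $V^{k+1}$ through the $\delta(x-y)/\mu^k$ term of Lemma~\ref{lemma: 2 derivative} is equivalent to the paper's flux-continuity argument in Lemma~\ref{lemma: lip V}.

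The one step done genuinely differently is $\|Z^{k+1}\|_{L^1_x}$. The paper integrates the $Z^{k+1}$-equation over $\R\times[0,t]$ and uses $\phi\ge0$. This gives $\|Z^{k+1}\|_{L^1_x}\le\|z_0^*\|_{L^1_x}$ and, as a by-product, the reaction bound \eqref{eq: phi z L1}. You instead bound $\mathcal I_1^z$ and $\mathcal I_2^z$ separately and get $C\|z_0^*\|_{L^1_x}+O(t)\delta$. The paper's mass balance gives the sharp constant and the extra space--time bound, but it tacitly uses two signs:
\begin{itemize}
\item $Z^{k+1}\ge0$, to identify $\int Z^{k+1}\,\mathrm{d}x$ with the $L^1$ norm;
\item $Z^k\ge0$, to drop the reaction integral.
\end{itemize}
Neither is evidently preserved by the scheme, whose source $-K\phi(1+\Theta^k)Z^k$ is built from the previous iterate and diffuses with a different coefficient. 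Your kernel estimate needs no sign information and is all that \eqref{estimates n} requires.

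One step needs rephrasing. For $\|\partial_x\mathcal I_1^z\|_{L^1_x}$, the identity you need is $\int_{\R}H_x(x,t;y,0;D^k)\,\mathrm{d}y=0$ from Lemma~\ref{int H}, not $\int H_y\,\mathrm{d}y=0$.
\begin{itemize}
\item This identity makes the antiderivative $W$ in \eqref{W anti_deri} vanish as $y\to\pm\infty$.
\item The Gaussian bound on $H_x$ then gives $|W|\le O(1)e^{-(x-y)^2/(C_*t)}/\sqrt t$.
\item A Stieltjes integration by parts moves the derivative onto $z_0^*$ and yields $O(1)\|z_0^*\|_{BV}$, uniformly in $t$.
\end{itemize}
A cancellation in $H_x+H_y$ is not available for a BV coefficient from Lemmas~\ref{lemma: Liu}--\ref{lemma: comparison2}. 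Comparing with a constant-coefficient kernel via Lemma~\ref{lemma: comparison} controls $|H_x+H_y|$ only by a small factor times $(1+|\log t|)\,e^{-(x-y)^2/(C_*t)}/t$. Its contribution in $L^1_x$ is then of order $|\log t|\,t^{-1/2}\|z_0^*\|_{L^1_x}$, which does not stay bounded as $t\to0$.
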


	\begin{proof}[Sektch of proof for Lemma~\ref{lemma: V U Theta Z k+1}]

Our arguments are an adaptation of  \cite{WangHT2022}, so only the differences are emphasised. Key ingredients are the fundamental solution estimates in Lemmas \ref{lemma: Liu}--\ref{lemma: comparison2}. Throughout the proof, constants $O(1)$ and $C_{\sharp}$ are independent of $\delta, k$, and  $t$ whenever $t < t_\sharp$.

To begin with, in view of Eq.~\eqref{estimates n}, we find that $D^k$ satisfies Eq.~\eqref{f}. Thus, by applying Lemma~\ref{lemma: Liu} one obtains the following:
		\begin{enumerate}
			\item \textbf{Estimate of $\|Z^{k+1}\|_{L_x^1}$}: 
			Integrating the fourth equation in \eqref{eq: n+1} over $[0,t] \times \mathbb{R}$, we obtain
			\begin{equation}
				\int_{\mathbb{R}}Z^{k+1}(x,t) \,\mathrm{d}x+\int_0^t\int_{\mathbb{R}}K \phi(1+\Theta^k) Z^k (y,\tau)\,\mathrm{d}y\,\mathrm{d}\tau\leq \int_{\mathbb{R}}z_0^{*}(x) \,\mathrm{d}x.
			\end{equation}
			Since $\phi$ is Lipschitz continuous and satisfies $\phi(\theta) \geq 0$, it follows from the smallness condition on the initial data \eqref{smallness condition} that
			\begin{equation}
				\label{eq: phi z L1}
				\int_0^t\int_{\mathbb{R}}K \phi(1+\Theta^k) Z^k (y,\tau)\,\mathrm{d}y\,\mathrm{d}\tau\leq \delta,
			\end{equation}
			and
			\begin{equation}
				\label{eq: Zk L1}
					\left\| Z^{k+1}\right\|_{L_x^1} \leq \|z_0^*\|_{L_x^1} \leq C_{\sharp} \delta.
			\end{equation}

			\item \textbf{Estimate of $\|Z^{k+1}\|_{L_x^{\infty}}$}: 
			From \eqref{eq: Z k+1} and the estimate $|H(x,t;y,s;D^k)| \leq O(1) \frac{e^{-\frac{(x-y)^2}{C_*(t-s)}}}{\sqrt{t-s}}$ in Lemma \ref{lemma: Liu}, for sufficiently small $\delta$
			\begin{align}
				\label{eq: Zk L infty}
				\|Z^{k+1}\|_{L_x^{\infty}} 
				&\leq \int_{\mathbb{R}} \left| H(x,t;y,0;D^k) \right| |z_0^*(y)| \, \mathrm{d}y \nonumber\\
				&\quad + \int_0^t \int_{\mathbb{R}} \left| H(x,t;y,s;D^k) \right| K \phi(1+\Theta^k) Z^k(y,s)\, \mathrm{d}y \, \mathrm{d}s \nonumber\\
				&\leq O(1) \int_{\mathbb{R}} \frac{e^{-\frac{(x-y)^2}{C_* t}}}{\sqrt{t}} \, \mathrm{d}y \cdot \|z_0^*\|_{L_x^{\infty}} 
				+ O(1) \int_0^t \int_{\mathbb{R}} \frac{e^{-\frac{(x-y)^2}{C_*(t-s)}}}{\sqrt{t-s}} \left\|Z^k\right\|_{L_x^{\infty}} \, \mathrm{d}y \, \mathrm{d}s \nonumber\\
				&\leq O(1) \delta + O(1) t\delta \leq2 C_{\sharp} \delta.
			\end{align}
			
			\item \textbf{Estimate of $\|Z_x^{k+1}\|_{L_x^1}$}: 
			Differentiating \eqref{eq: Z k+1} with respect to $x$ gives
			\begin{align}
				\label{eq: Z_x k+1}
				Z_x^{k+1}(x,t) 
				&= \int_{\mathbb{R}} H_x(x,t;y,0;D^k) z_0^*(y) \, \mathrm{d}y \nonumber\\
				&\quad - \int_0^t \int_{\mathbb{R}} H_x(x,t;y,s;D^k) K \phi(1+\Theta^k) Z^k \, \mathrm{d}y \, \mathrm{d}s.
			\end{align}
			Since $\int_{\mathbb{R}} H_x(x,t;\omega,0;D^k) \, \mathrm{d}\omega = 0$, we define the anti-derivative
			\begin{equation}
				\label{W anti_deri}
				W(x,t;y,0;D^k) = 
				\begin{cases}
					\int_{-\infty}^y H_x(x,t;w,0;D^k) \, \mathrm{d}w, & \text{for } y < x, \\
					-\int_y^{\infty} H_x(x,t;w,0;D^k) \, \mathrm{d}w, & \text{for } y \geq x.
				\end{cases}
			\end{equation}
			Since $z_0^*$ is a BV function, integration by parts for the Stieltjes integral yields
			\begin{align*}
				\int_{\mathbb{R}} \left| \int_{\mathbb{R}} \mathrm{d}W(x,t;y,0;D^k) z_0^*(y) \right| \mathrm{d}x 
				&= \int_{\mathbb{R}} \left| \int_{\mathbb{R}} W(x,t;y,0;D^k) \, \mathrm{d}z_0^*(y) \right| \mathrm{d}x \\
				&\leq \int_{\mathbb{R}} \int_{\mathbb{R}} \left| W(x,t;y,0;D^k) \right| |\mathrm{d}z_0^*(y)| \, \mathrm{d}x \\
				&\leq O(1) \int_{\mathbb{R}} \frac{e^{-\frac{(x-y)^2}{C_* t}}}{\sqrt{t}} \, \mathrm{d}x \cdot \|z_0^*\|_{BV} \leq C_{\sharp} \delta.
			\end{align*}
			Using the estimate $|H_x(x,t;y,s;D^k)| \leq O(1) \frac{e^{-\frac{(x-y)^2}{C_*(t-s)}}}{t-s}$ from Lemma \ref{lemma: Liu} and the ansatz \eqref{estimates n}, we obtain for sufficiently small $\delta$
			\begin{align}
				\label{eq: Zx k L1}
				\|Z_x^{k+1}\|_{L_x^1} 
				&\leq \int_{\mathbb{R}} \int_{\mathbb{R}} \left| H_x(x,t;y,0;D^k) \right| |z_0^*(y)| \, \mathrm{d}y \, \mathrm{d}x \nonumber\\
				&\quad + \int_{\mathbb{R}} \int_0^t \int_{\mathbb{R}} \left| H_x(x,t;y,s;D^k) \right| K \phi(1+\Theta^k) Z^k(y,s)\, \mathrm{d}y \, \mathrm{d}s \, \mathrm{d}x \nonumber\\
				&\leq O(1) \int_{\mathbb{R}} \frac{e^{-\frac{(x-y)^2}{C_* t}}}{\sqrt{t}} \, \mathrm{d}x \cdot \|z_0^*\|_{BV} \nonumber\\
				&\quad+ O(1) \int_{\mathbb{R}} \int_0^t \int_{\mathbb{R}} \frac{e^{-\frac{(x-y)^2}{C_*(t-s)}}}{t-s} K \phi(1+\Theta^k) Z^k(y,s)\, \mathrm{d}y \,\mathrm{d}s \, \mathrm{d}x \nonumber\\
				&\leq O(1) \delta + O(1) \int_0^t \left\|\frac{e^{-\frac{x^2}{C_* (t-s)}}}{t-s}\right\|_{L_x^1}\left\|Z^k\right\|_{L_x^1} \, \mathrm{d}s\nonumber\\
				&\leq O(1) \delta + O(1)\int_0^t \frac{1}{\sqrt{t-s}}\delta\,\mathrm{d}s \nonumber\\
				&\leq O(1)(\delta+\sqrt{t}\delta)\leq 2C_{\sharp}\delta.
			\end{align}
			
			\item \textbf{Estimate of $\|Z_x^{k+1}\|_{L_x^{\infty}}$}: 
			Similarly, for sufficiently small $\delta$
			\begin{align}
				\label{eq: Zx k L infty}
				\|Z_x^{k+1}\|_{L_x^{\infty}} 
				&\leq \int_{\mathbb{R}} \left| H_x(x,t;y,0;D^k) \right| |z_0^*(y)| \, \mathrm{d}y \nonumber\\
				&\quad + \int_0^t \int_{\mathbb{R}} \left| H_x(x,t;y,s;D^k) \right| K \phi(1+\Theta^k) Z^k\, \mathrm{d}y \, \mathrm{d}s \nonumber\\
				&\leq O(1) \int_{\mathbb{R}} \frac{e^{-\frac{(x-y)^2}{C_* t}}}{t} \, \mathrm{d}y \|z_0^*\|_{L_x^{\infty}} + O(1) \int_0^t \int_{\mathbb{R}} \frac{e^{-\frac{(x-y)^2}{C_*(t-s)}}}{t-s} \left\|Z^k\right\|_{L_x^{\infty}}\, \mathrm{d}y \, \mathrm{d}s \nonumber\\
				&\leq O(1) \frac{\delta}{\sqrt{t}} + O(1) \delta\int_0^t \frac{1}{\sqrt{t-s}} \, \mathrm{d}s\nonumber \\
				&\leq O(1) \left( \frac{\delta}{\sqrt{t}} + \delta \sqrt{t} \right) \leq \frac{2 C_{\sharp} \delta}{\sqrt{t}}.
			\end{align}
		\end{enumerate}
		
		Combining the estimates in steps (1)--(4), we conclude that $Z^{k+1}$ satisfies the bounds in \eqref{estimates n} for sufficiently small $\delta$ and $t_{\sharp}$. Similar estimates hold for $V^{k+1}$, $U^{k+1}$, and $\Theta^{k+1}$ analogously. This completes the induction.   \end{proof}

We next show the Lipschitz continuity of $V^{k+1}$ in time across the jump set.
	\begin{lemma}
		\label{lemma: lip V}
		For sufficiently small $\delta>0$ and $t_{\sharp}>0$, $V^{k+1}$ satisfies the following Lipschitz continuity estimate for $0\leq s<t<t_{\sharp}$:
		\begin{align*}
			&\sum_{\omega\in\mathcal{D}}\left|\left.V_x^{k+1}(\cdot,t)\right|_{\omega^-}^{\omega^+}-\left.V_x^{k+1}(\cdot,s)\right|_{\omega^-}^{\omega^+}\right|\\
			&\qquad\leq O(1)\int_s^t \left(1+\frac{1}{\sqrt{\tau}}\right)\,\mathrm{d}\tau\sum_{\omega\in\mathcal{D}}\left|\left.v_0^*(\cdot)\right|_{\omega^-}^{\omega^+}\right|\leq O(1)\frac{t-s}{\sqrt{t}}.
		\end{align*}
	\end{lemma}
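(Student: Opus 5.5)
The plan is to reduce the statement to a pointwise bound on the jump of $U_x^{k+1}$ across each $\omega\in\mathcal{D}$. By \eqref{eq: V k+1},
$$\left.V^{k+1}(\cdot,t)\right|_{\omega^-}^{\omega^+}-\left.V^{k+1}(\cdot,s)\right|_{\omega^-}^{\omega^+}=\int_s^t \left.U_x^{k+1}(\cdot,\tau)\right|_{\omega^-}^{\omega^+}\,\mathrm{d}\tau .$$
I read the quantity in the statement as the jump of $V^{k+1}$ itself, since by the $n=1$ case the jumps of $V$ are those of $v_0^*$. It therefore suffices to prove
$$\Big|\left.U_x^{k+1}(\cdot,\tau)\right|_{\omega^-}^{\omega^+}\Big|\le O(1)\Big(1+\frac{1}{\sqrt{\tau}}\Big)\Big|\left.v_0^*\right|_{\omega^-}^{\omega^+}\Big|\qquad\text{for } 0<\tau<t_\sharp,\ \omega\in\mathcal{D}.$$
Once this holds, I sum over $\omega$, use $\sum_\omega|[v_0^*]_\omega|\le\|v_0^*\|_{BV}\le\delta$, and integrate in $\tau$. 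The elementary bound
$$\int_s^t\Big(1+\tau^{-1/2}\Big)\,\mathrm{d}\tau=(t-s)+\frac{2(t-s)}{\sqrt t+\sqrt s}\le O(1)\frac{t-s}{\sqrt t}\qquad (t<t_\sharp\ll1)$$
then gives the final inequality.

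\textbf{Step 1: continuity of the flux.} I will show that the flux $F^{k+1}:=\mu^kU_x^{k+1}-p(1+V^k,1+\Theta^k)$ is continuous in $x$ for each $\tau\in(0,t_\sharp)$. This should follow from Remark~\ref{re: conti}(2), applied to the $U^{k+1}$-equation in \eqref{eq: n+1} written in conservative form with $f=\mu^k$ and $g=-p(1+V^k,1+\Theta^k)$. The function $g$ is BV in $x$, but it is not Lipschitz in $x$ because of the jumps of $V^k$. So I must verify the second alternative, namely H\"older continuity of $g$ in time. This comes from the induction hypothesis \eqref{estimates n}:
\begin{itemize}
\item the bound $\|V^k(\cdot,t)-V^k(\cdot,s)\|_{BV}\le 2C_\sharp\delta(t-s)|\log(t-s)|/\sqrt t$ controls the $V^k$-dependence of $g$;
\item the bound $\sqrt{t}\|\Theta_x^k\|_{L^\infty_x}$ together with the heat equation for $\Theta^k$ (or the $t\|\Theta^k_t\|_{L^\infty_x}$ bound available from the previous step) controls the $\Theta^k$-dependence, giving an estimate of the form $|g(x,t)-g(x,s)|\le C\big((t-s)/s\big)^\alpha$ for some $\alpha<1$.
\end{itemize}
Continuity of $F^{k+1}$ across $\omega$ gives $[\mu^kU_x^{k+1}]_\omega=[p]_\omega$.

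\textbf{Step 2: the jump of $U_x^{k+1}$.} I write $U_x^{k+1}=\frac{1+V^k}{\mu}\big(F^{k+1}+p\big)$, with $F^{k+1}$ continuous. Then
$$\big[U_x^{k+1}\big]_\omega=\frac{[V^k]_\omega}{\mu}\big(F^{k+1}+p\big)(\omega^+)+\frac{1+V^k(\omega^-)}{\mu}\,[p]_\omega .$$
Since $\Theta^k$ is continuous (it lies in $W^{1,1}$) and $p=a\theta/v$, we get $[p]_\omega=a(1+\Theta^k(\omega))\big[\tfrac{1}{1+V^k}\big]_\omega$, so $|[p]_\omega|\le O(1)|[V^k]_\omega|$. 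By \eqref{estimates n},
$$|F^{k+1}+p|\le O(1)\big(|U_x^{k+1}|+1\big)\le O(1)\Big(1+\frac{\delta}{\sqrt\tau}\Big),$$
where the bound on $\sqrt\tau\|U_x^{k+1}\|_{L^\infty_x}$ is supplied by Lemma~\ref{lemma: V U Theta Z k+1}. Combining these with the induction hypothesis $|[V^k]_\omega|\le 2|[v_0^*]_\omega|$ yields the pointwise jump bound displayed in the first paragraph.

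\textbf{Main obstacle.} I expect the hardest part to be Step 1: rigorously justifying that $U_x^{k+1}$ has one-sided limits at each $\omega$ and that the flux is continuous there. This requires the time-H\"older property of $p(1+V^k,1+\Theta^k)$ with an exponent $\alpha<1$, which I would extract from the $|\log|$-type BV time-continuity of $V^k$ as indicated above. If that route proves too delicate, the fallback is to work directly with the Duhamel formula \eqref{eq: U k+1}. In that formula $\mathcal{I}_1^u$ is continuous across $\omega$. For $\mathcal{I}_2^u$, I would apply the representation of $\int H_{xy}\,\mathrm{d}s$ from Lemma~\ref{lemma: 2 derivative}: its $\delta(x-y)/f(y,t)$ term produces exactly the jump $[p]_\omega/\mu^k$, while the remaining terms are bounded by Gaussian kernels and are therefore continuous.
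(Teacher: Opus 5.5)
Your proposal is correct and follows essentially the paper's argument. The paper likewise gets continuity of the flux from Remark~\ref{re: conti}, writes $U_x^{k+1}=\frac{1+V^k}{\mu}\big(F^{k+1}+p\big)$ so that $[U_x^{k+1}]_\omega$ is controlled by $[V^k]_\omega$, bounds it by $O(1)(1+\tau^{-1/2})|[v_0^*]_\omega|$ using Lemma~\ref{lemma: V U Theta Z k+1}, and integrates in time. The only difference is that you integrate over $[s,t]$, which yields the displayed Lipschitz estimate directly, whereas the paper integrates over $[0,t]$ to obtain the jump bound $|[V^{k+1}]_\omega|\le 2|[v_0^*]_\omega|$.

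One small correction: the bound $t\|\Theta^k_t\|_{L^\infty_x}$ is not part of the induction hypothesis \eqref{estimates n}. The time-H\"older continuity of $\Theta^k$ needed for Remark~\ref{re: conti} should instead come from Lemma~\ref{lemma: diff Theta k+1} applied at the previous step, as the paper does in the proof of Theorem~\ref{thm: local exi}.
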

	\begin{proof}
		Our arguments are directly adapted from \cite[Lemma 3.4]{WangHT2022}.  
		Since $$\frac{\mu U_x^k}{1+V^{k-1}}-p(1+V^{k-1},1+\Theta^{k-1})$$ is continuous in $x$ by Remark \ref{re: conti}, we have that
		\begin{align*}
			\left.V_t^{k+1}(\cdot, t)\right|_{\omega^-}^{\omega^+}=&\left.U_x^{k+1}(\cdot, t)\right|_{\omega^-}^{\omega^+}\\
			=&\left.\frac{V^k}{\mu}(\cdot, t)\right|_{\omega^-}^{\omega^+}\left(\frac{\mu U_x^k}{1+V^{k-1}}-p(1+V^{k-1},1+\Theta^{k-1})\right)+\left.\frac{a(1+\Theta^k)}{\mu}(\cdot, t)\right|_{\omega^-}^{\omega^+}.
		\end{align*}
		Thus, integrating this identity over $[0,t]$ and applying Lemma~\ref{lemma: V U Theta Z k+1}, we conclude that
		\begin{align}
			\label{Vk jump}
			&\left|\left.V^{k+1}(\cdot,t)\right|_{\omega^-}^{\omega^+}\right|\nonumber\\
			\leq &\left|\left.v_0^*(\cdot)\right|_{\omega^-}^{\omega^+}\right|+\frac{1}{\mu}\int_0^t\left(\frac{\mu\left|U_x^{k+1}\right|(\cdot,s)}{1-\left\|V^{k}\right\|_{L_x^{\infty}}}+\frac{2a\left(1+\left\|\Theta^{k}\right\|_{L_x^{\infty}}\right)}{1-\left\|V^{k}\right\|_{L_x^{\infty}}}\right)\,\mathrm{d}s\times \sup\limits_{0\leq s\leq t}\left|\left.V^{k}(\cdot,s)\right|_{\omega^-}^{\omega^+}\right|\nonumber\\
			\leq &O(1)\left(1+\sqrt{t}+t\right)\left|\left.v_0^*(\cdot)\right|_{\omega^-}^{\omega^+}\right|\nonumber\\
			\leq & 2\left|\left.v_0^*(\cdot)\right|_{\omega^-}^{\omega^+}\right|.
		\end{align}
	\end{proof}

The following result is a variant of \cite[Lemma~3.2]{WangHT2022}, but the presence of $Z^k$ in the $\mathcal{N}_2$ term  (see Eq.~\eqref{N1 N2}) brings about essential differences. We thus provide a detailed proof below.
    
	\begin{lemma}\label{lemma: diff Theta k+1}
		For sufficiently small $\delta$ and $t_{\sharp}$, the following time
		difference estimates hold for $\Theta^{k+1}$, whenever $0<s\leq t<t_{\sharp}$:
		\begin{align}
			& \label{eq: diff theta k+1}\left\|\Theta^{k+1}(\cdot, t)-\Theta^{k+1}(\cdot, s)\right\|_{L_x^{\infty}} \nonumber\\
			& \quad \leq O(1)\left((\delta+\delta^2) \frac{t-s}{\sqrt{s} \sqrt{t}}+\delta\frac{t-s}{\sqrt{t}}+\delta\sqrt{t-s}+\delta^2 \frac{\sqrt{t-s}}{\sqrt{s}}\right),\\
			& \left\|\Theta^{k+1}(\cdot, t)-\Theta^{k+1}(\cdot, s)\right\|_{L_x^1} \nonumber\\
			& \quad \leq O(1)\left(\delta \frac{t-s}{\sqrt{s} \sqrt{t}}+\delta(t-s)+\delta^2 \frac{t-s}{\sqrt{t}}+ \delta\sqrt{t-s} \sqrt{s}+\delta^2\sqrt{t-s} \right)\nonumber \\
			& \left\|\Theta_x^{k+1}(\cdot, t)-\Theta_x^{k+1}(\cdot, s)\right\|_{L_x^1} \nonumber\\
			& \quad \leq O(1)\left(\frac{\sqrt{t-s}|\log (t-s)|}{\sqrt{t} \sqrt{s}} \delta+\sqrt{t-s} \delta+\frac{\sqrt{t-s}}{\sqrt{t}} \delta^2+\sqrt{t-s}|\log (t-s)| \delta^2\right).\nonumber
		\end{align}
	\end{lemma}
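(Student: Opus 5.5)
We would start from the Duhamel representation \eqref{eq: Theta k+1}, $\Theta^{k+1}=\mathcal{I}_1^\theta+\mathcal{I}_2^\theta$, and estimate the time differences of the two pieces separately. For the initial-data term we write
\[
\mathcal{I}_1^\theta(x,t)-\mathcal{I}_1^\theta(x,s)=\int_s^t\int_{\mathbb{R}} H_\tau(x,\tau;y,0;\nu^k)\,\theta_0^*(y)\,\mathrm{d}y\,\mathrm{d}\tau .
\]
We then use $|H_t|\le C_*e^{-(x-y)^2/(C_*\tau)}\tau^{-3/2}$ from \eqref{es: H_t infty}. In $L^\infty$ this gives $\int_s^t \tau^{-1}\,\mathrm{d}\tau\,\|\theta_0^*\|_{L^\infty}$, which is bounded by $(t-s)/s$. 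To reach the stated rate $\delta(t-s)/(\sqrt s\sqrt t)$, we would instead integrate by parts in $y$, moving one derivative onto $\theta_0^*$ (a BV function). This uses the anti-derivative $W$ of \eqref{W anti_deri} and the backward equation in Remark~\ref{re: conti}, writing $H_\tau=\partial_x(\nu^kH_x)$ in the time-dependent coefficient setting. The result is a kernel of size $\tau^{-1}$ against $|\mathrm{d}\theta_0^*|$, i.e.\ $O(1)\delta\int_s^t \tau^{-1}$ in $L^1$, plus an $L^\infty$ bound interpolating to $\delta(t-s)/\sqrt{st}$. The $L^1$ and $\Theta_x$ differences are handled the same way, with the extra $|\log(t-s)|$ in the $\Theta_x$ bound coming from the $H_{xy}$ estimate in Lemma~\ref{lemma: 2 derivative}, as in \cite[Lemma 3.2]{WangHT2022}.

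For the source term we split, as usual,
\begin{align*}
\mathcal{I}_2^\theta(x,t)-\mathcal{I}_2^\theta(x,s)={}&\int_s^t\int_{\mathbb{R}}H(x,t;y,\tau;\nu^k)\mathcal{N}_2^k(y,\tau)\,\mathrm{d}y\,\mathrm{d}\tau\\
&+\int_0^s\int_{\mathbb{R}}\big[H(x,t;y,\tau;\nu^k)-H(x,s;y,\tau;\nu^k)\big]\mathcal{N}_2^k(y,\tau)\,\mathrm{d}y\,\mathrm{d}\tau .
\end{align*}
We further decompose $\mathcal{N}_2^k$ into three parts: the pressure-work part $-\frac{p}{c_v}U^k_x$, the viscous heating part $\frac{\mu}{c_v(1+V^k)}(U^k_x)^2$, and the new reaction part $\frac{q}{c_v}K\phi(1+\Theta^k)Z^k$. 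By \eqref{estimates n} these satisfy, respectively, $|U^k_x|\le 2C_\sharp\delta/\sqrt\tau$ with $L^1$ norm $\le 2C_\sharp\delta$; $(U^k_x)^2\le \delta^2/\tau$ in $L^\infty$ and $\delta^2/\sqrt\tau$ in $L^1$; and the reaction part is bounded by $\delta$ in both $L^1$ and $L^\infty$, with no singularity at $\tau=0$, since $\phi$ is Lipschitz and $\Theta^k,Z^k$ are bounded.

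\begin{itemize}
\item The first integral, over $[s,t]$, is handled directly by $|H|\le C_*e^{-(x-y)^2/(C_*(t-\tau))}/\sqrt{t-\tau}$. The reaction part gives $\delta(t-s)$ in $L^\infty$ and $L^1$. The $\delta^2$-part gives $\delta^2\int_s^t\tau^{-1}\mathrm{d}\tau\lesssim\delta^2\sqrt{t-s}/\sqrt s$, using $(t-s)/s\le \sqrt{t-s}/\sqrt s$ when $t-s\le s$, and a direct computation otherwise.
\item For the pressure-work term $pU_x^k$ we follow \cite{WangHT2022}. Its $\delta$-linear part $-\frac{a}{c_v}U^k_x$ is written as an $x$-derivative and integrated by parts onto $H_y$. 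Using the time-integrated bounds for $\int H_y\,\mathrm{d}s$ in Lemma~\ref{lemma: Liu}, this yields the $\delta(t-s)/\sqrt t$ and $\delta\sqrt{t-s}$ terms.
\item The second integral, over $[0,s]$, uses $H(x,t;y,\tau)-H(x,s;y,\tau)=\int_s^tH_{t'}\,\mathrm{d}t'$ with the $(t'-\tau)^{-3/2}$ bound when $\tau<s-(t-s)$. Near $\tau\approx s$ we bound the two kernels separately. This produces the $\delta\sqrt{t-s}\sqrt s$ and $\delta^2\sqrt{t-s}$ terms in $L^1$.
\end{itemize}

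The main obstacle is the source term of the $\Theta_x$ difference in $L^1$. There $H_x(x,t;\cdot)-H_x(x,s;\cdot)$ is only controlled by $H_{xt}$-type bounds with a $(t'-\tau)^{-2}$ singularity, which is not integrable near $\tau=s$. We would split at $\tau=s-(t-s)$. On the near-diagonal piece we use $|H_x|\le(t-\tau)^{-1}e^{-(x-y)^2/(C_*(t-\tau))}$ separately for each kernel, which costs only $\sqrt{t-s}$ in $L^1$. On the far piece we use the $H_{ty}$ bound of Lemma~\ref{lemma: 2 derivative}, which gives $\int_0^{s-(t-s)}(t-s)(s-\tau)^{-3/2}\,\mathrm{d}\tau\lesssim\sqrt{t-s}$. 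Combined with the $1/\sqrt\tau$ weight of $U_x^k$, this produces the $|\log(t-s)|$ factors. The reaction contribution is harmless here: it gives only $\delta\sqrt{t-s}$, because $\|Z^k\|_{L^1}\le\delta$ without time singularity. This is precisely the new $\delta\sqrt{t-s}$ term in the statement.
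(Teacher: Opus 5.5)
Your decomposition is exactly the paper's: the initial-data difference written as $\int_s^t H_\sigma$, the source on $[s,t]$ estimated against $|H|$, and the source on $[0,s]$ estimated against $\int_s^t H_\sigma\,\mathrm{d}\sigma$. Where you depart from the paper, you make the argument harder than it needs to be, and one step is wrong as written.

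\textbf{Initial-data term.} You conclude that the direct $H_t$ bound only gives $(t-s)/s$, and you turn to an integration by parts onto $\theta_0^*$ followed by an ``interpolation''. That detour gains nothing in $L^\infty$. The antiderivative kernel $W$ is still of pointwise size $\sigma^{-1}$, so $\sup_x\int|W|\,|\mathrm{d}\theta_0^*|\lesssim\delta\sigma^{-1}$, and you land on the same $\delta\int_s^t\sigma^{-1}\,\mathrm{d}\sigma$. What you missed is that this integral already suffices:
\[
\int_s^t\sigma^{-1}\,\mathrm{d}\sigma\le\int_s^t(\sqrt{s}\sqrt{\sigma})^{-1}\,\mathrm{d}\sigma=\frac{2(\sqrt{t}-\sqrt{s})}{\sqrt{s}}\le\frac{2(t-s)}{\sqrt{st}}.
\]
This one line is the paper's argument. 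The same inequality also handles your $\delta^2\int_s^t\tau^{-1}\,\mathrm{d}\tau$ term without any case distinction.

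\textbf{Pressure-work term.} No integration by parts is needed here either. The bound $|U_x^k|\le 2C_\sharp\delta\tau^{-1/2}$ gives $2\delta(\sqrt{t}-\sqrt{s})\le 2\delta(t-s)/\sqrt{t}$ in $L^\infty$, and $\|U_x^k\|_{L^1}\le 2C_\sharp\delta$ gives $\delta(t-s)$ in $L^1$. Your integrated-by-parts version produces $\delta\sqrt{t-s}$. That is admissible in $L^\infty$, but it is \emph{not} among the terms of the stated $L^1$ bound, which contains only $\delta\sqrt{t-s}\sqrt{s}$. Moreover $\delta\sqrt{t-s}$ is not dominated by that bound uniformly in $s$, so in $L^1$ you must use the direct estimate. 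Note also that Lemma~\ref{lemma: Liu} gives time-integrated bounds for $H_x$, not $H_y$.

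\textbf{Source on $[0,s]$.} Your near-diagonal cut at $\tau=s-(t-s)$ is harmless but unnecessary. Since $\int_s^t(\sigma-\tau)^{-3/2}\,\mathrm{d}\sigma=2\big((s-\tau)^{-1/2}-(t-\tau)^{-1/2}\big)$ is integrable up to $\tau=s$, no cut is needed. The paper splits only at $s/2$, to absorb the $\delta^2/\tau$ weight coming from $(U_x^k)^2$.

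\textbf{$\Theta_x$ bound.} The paper carries out only the $L^\infty$ estimate explicitly and gives no argument for the $\Theta_x$ part. Your sketch of it relies on a kernel estimate that is not available. You need
\[
\partial_x\big[H(x,t;y,\tau)-H(x,s;y,\tau)\big]=\int_s^t\partial_x\partial_\sigma H(x,\sigma;y,\tau)\,\mathrm{d}\sigma,
\]
which is a mixed derivative in the \emph{forward} variable $x$. Lemma~\ref{lemma: 2 derivative} instead supplies $H_{ty}$, a derivative in the backward variable. Because $\nu^k$ depends on $x$, $H$ is not a function of $x-y$, so you cannot trade $\partial_x$ for $-\partial_y$. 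You would have to establish a bound $|H_{xt}(x,t;y,t_0)|\le C(t-t_0)^{-2}e^{-(x-y)^2/(C(t-t_0))}$ separately. The same issue affects the initial-data part of the $\Theta_x$ difference.

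Separately, your account of the $|\log(t-s)|$ factors is inaccurate. In $L^1$ the $\delta$-linear source $U_x^k$ carries no $\tau^{-1/2}$ weight. Within your own scheme, the $(U_x^k)^2$ source yields $\delta^2\sqrt{(t-s)/s}$ with no logarithm. That is harmless, since it is dominated by the leading term $\delta\sqrt{t-s}\,|\log(t-s)|/\sqrt{st}$, but the logarithm does not come from where you say.
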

	\begin{proof}
In view of Eq.~\eqref{estimates n},  $\nu^k$ satisfies Eq.~\eqref{f}, so we can apply Lemmas~\ref{lemma: Liu} and \ref{lemma: 2 derivative} to derive the required estimates. Indeed, by the representation formula for $\Theta^{k+1}$ in Eq.~\eqref{eq: Theta k+1}, we have
		\begin{align*}
			&\Theta^{k+1}(y,t)-\Theta^{k+1}(y,s) \\
			= & \int_{\mathbb{R}}\left(H(y,t;\omega,0;\nu^k)-H(y,s;\omega,0;\nu^k)\right) \theta_0^*(\omega)\,\mathrm{d}\omega \\
			& +\int_s^t \int_{\mathbb{R}} H(y,t;\omega,\tau;\nu^k) \mathcal{N}_2^k(\omega,\tau)\,\mathrm{d}\omega\,\mathrm{d} \tau\\&+\int_0^s \int_{\mathbb{R}}\left(H(y,t;\omega,\tau;\nu^k)-H(y,s;\omega,\tau;\nu^k)\right) \mathcal{N}_2^k(\omega, \tau)\,\mathrm{d}\omega \,\mathrm{d} \tau \\
			=&\mathcal{I}_1+\mathcal{I}_2+\mathcal{I}_3.
		\end{align*}
		\begin{enumerate}
			\item For $\mathcal{I}_1$, we use the estimate of $H_t$ from \eqref{es: H_t infty} in Lemma \ref{lemma: 2 derivative} to obtain
			\begin{align*}
				\left|\mathcal{I}_1\right| & =\left|\int_{\mathbb{R}} \int_s^t H_\sigma(y,\sigma;\omega,0;\nu^k) \theta_0^*(\omega)\,\mathrm{d} \sigma\,\mathrm{d}\omega\right| \\
				&\leq\int_{\mathbb{R}} \int_s^t \left|H_\sigma(y,\sigma;\omega,0;\nu^k)\right|\,\mathrm{d} \sigma\,\mathrm{d}\omega \|\theta_0^*\|_{L_x^{\infty}} \\
				&\leq O(1) \delta\int_{\mathbb{R}} \int_s^t \frac{e^{\frac{-(y-\omega)^2}{C_* \sigma}}}{\sigma^{\frac{3}{2}}} \,\mathrm{d} \sigma\,\mathrm{d}\omega\\
				& \leq O(1)\delta\int_s^t \frac{1}{\sigma}\,\mathrm{d} \sigma \leq O(1)\delta \int_s^t \frac{1}{\sqrt{s}\sqrt{\sigma}}\,\mathrm{d} \sigma\\
				&\leq O(1)\frac{\delta(t-s)}{\sqrt{s} \sqrt{t}}.
			\end{align*}
			
			\item For $\mathcal{I}_2$, using the expression for $\mathcal{N}_2^k$ in \eqref{N1 N2}, the estimate for $H$ from Lemma \ref{lemma: Liu}, and the ansatz \eqref{estimates n}, we have:
			\begin{align*}
				\left|\mathcal{I}_2\right| & 
				\leq \int_s^t \int_{\mathbb{R}} \left|H(y,t;\omega,\tau;\nu^k)\right|\left(\left|\frac{a(1+\Theta^k)}{c_v(1+V^k)}U_y^k(\omega,\tau)\right| +\left|\frac{\mu\left(U_y^k(\omega,\tau)^2\right)}{c_v(1+V^k)}\right|\right.\\
				&\qquad +\left.\frac{q}{c_v}K\phi(1+\Theta^k)Z^k(\omega,\tau)\right)\,\mathrm{d}\omega\,\mathrm{d}\tau\\
				&\leq O(1)\int_s^t \int_{\mathbb{R}} \frac{e^{\frac{-(y-\omega)^2}{C_* (t-\tau))}}}{\sqrt{t-\tau}}\left(\frac{\delta}{\sqrt{\tau}}+\frac{\delta^2}{\tau}\right)\,\mathrm{d}\omega\,\mathrm{d}\tau+O(1)\int_s^t \frac{1}{\sqrt{t-\tau}}\int_{\mathbb{R}} Z^k(\omega,\tau)\,\mathrm{d}\omega\,\mathrm{d}\tau\\
				&\leq O(1) \int_s^t \left(\frac{\delta}{\sqrt{\tau}}+\frac{\delta^2}{\tau}+\frac{\delta}{\sqrt{t-\tau}}\right)\,\mathrm{d}\tau\\
				& \leq O(1)\left(\delta\frac{t-s}{\sqrt{t}}+\delta^2\frac{t-s}{\sqrt{t}\sqrt{s}}+\delta\sqrt{t-s}\right).
			\end{align*}
			
			\item Similarly, for $\mathcal{I}_3$, combining the expression of $\mathcal{N}_2^k$ in \eqref{N1 N2}, the estimate for $H_t$ from Lemma \ref{lemma: 2 derivative}, and the ansatz \eqref{estimates n}, we obtain
			\begin{align*}
				\left|\mathcal{I}_3\right| & =\left|\int_0^s\int_{\mathbb{R}} \int_s^t H_{\sigma}(y,\sigma;\omega,\tau;\nu^k) \,\mathrm{d}\sigma \mathcal{N}_2^k(\omega,\tau)\,\mathrm{d} \omega\,\mathrm{d}\tau\right| \\
				&\leq O(1)\int_0^s \int_{\mathbb{R}} \int_s^t \frac{e^{\frac{-(y-\omega)^2}{C_* (\sigma-\tau))}}}{(\sigma-\tau)^{3/2}}\,\mathrm{d}\sigma \left[\left|U_\omega^k\right| \left(\delta+\frac{\delta}{\sqrt{\tau}}\right)+Z^k(\omega,\tau)\right]\,\mathrm{d}\omega\,\mathrm{d}\tau\\
				&\leq O(1) \int_0^s\int_{\mathbb{R}}\left(\frac{1}{\sqrt{s-\tau}}-\frac{1}{\sqrt{t-\tau}}\right)\left[\left|U_{\omega}^k\right|\delta+Z^k(\omega,\tau)\right]\,\mathrm{d}\omega\,\mathrm{d}\tau\\
				&\qquad +O(1)\int_0^{\frac{s}{2}}\int_{\mathbb{R}}\left(\frac{1}{\sqrt{s-\tau}}-\frac{1}{\sqrt{t-\tau}}\right)\left|U_{\omega}^k\right|\frac{\delta}{\sqrt{\tau}}\,\mathrm{d}\omega\,\mathrm{d}\tau+\int_{\frac{s}{2}}^{s}\int_s^t \frac{1}{\sigma-\tau}\frac{\delta^2}{\tau}\,\mathrm{d}\sigma\,\mathrm{d}\tau\\
				&\leq O(1)(\delta+\delta^2)\int_0^s\left(\frac{1}{\sqrt{s-\tau}}-\frac{1}{\sqrt{t-\tau}}\right)\,\mathrm{d}\tau+O(1)\delta^2\int_0^{\frac{s}{2}}\left(\frac{1}{\sqrt{s-\tau}}-\frac{1}{\sqrt{t-\tau}}\right)\frac{1}{\sqrt{\tau}}\,\mathrm{d}\tau\\
				&\qquad +O(1)\delta^2\int_{\frac{s}{2}}^{s}\frac{1}{\sqrt{s-\tau}}\int_s^t\frac{1}{\sqrt{\sigma-\tau}}\frac{1}{\tau}\,\mathrm{d}\sigma\,\mathrm{d}\tau\\
				&\leq O(1)(\delta+\delta^2)(\frac{t-s}{\sqrt{t}}+\sqrt{t-s})+O(1)\frac{\delta^2\sqrt{t-s}}{\sqrt{s}}+O(1)\frac{\delta^2}{\sqrt{s}}\int_{\frac{s}{2}}^{s}\frac{1}{\sqrt{s-\tau}}\frac{t-s}{\sqrt{t-\tau}}\frac{1}{\sqrt{\tau}}\,\mathrm{d}\tau\\
				&\leq O(1)(\delta+\delta^2)(\frac{t-s}{\sqrt{t}}+\sqrt{t-s})+O(1)\frac{\delta^2\sqrt{t-s}}{\sqrt{s}}.
			\end{align*}
		\end{enumerate}
		
	We combining the estimates for $\mathcal{I}_1$, $\mathcal{I}_2$, and $\mathcal{I}_3$ to conclude Eq.~\eqref{eq: diff theta k+1}. 
	\end{proof}

	\subsection{Convergence of the iteration scheme}
	Our goal is to establish the  convergence of the iteration scheme in Eq.~\eqref{eq: n+1} for $t \in (0,t_{\sharp})$. To this end, we need to estimate the differences between adjacent terms in the sequence of approximate solutions $\left(V^{n}, U^{n}, \Theta^{n}, Z^{n}\right)$, with respect to the norms $\left(\left\|\left|\cdot\right|\right\|_{\infty},\left\|\left|\cdot\right|\right\|_1,\left\|\left|\cdot\right|\right\|_{BV}\right)$ defined in Eq.~\eqref{norm sup t}.

    Observe that the PDE for  $(V^{n+1}-V^n, U^{n+1}-U^n, \Theta^{n+1}-\Theta^n, Z^{n+1}-Z^n)$ is as follows:
	\begin{equation}
		\label{diff equs}
		\left\{\begin{array}{l}
			\partial_t(V^{n+1}-V^n)-\partial_x(U^{n+1}-U^n)=0, \\
			\partial_t(U^{n+1}-U^n)-\partial_x\left(\frac{\mu(U^{n+1}-U^n)_x}{1+V^n}\right)=-\partial_x\left(\frac{\mu U_x^n(V^n-V^{n-1})}{(1+V^n)(1+V^{n-1})}\right)+\mathcal{N}_1^n-\mathcal{N}_1^{n-1}, \\
			\partial_t(\Theta^{n+1}-\Theta^n)-\partial_x\left(\frac{\nu(\Theta^{n+1}-\Theta^n)_x}{c_v(1+V^n)}\right)=-\partial_x\left(\frac{\nu\Theta_x^n(V^n-V^{n-1})}{c_v(1+V^n)(1+V^{n-1})}\right)+\mathcal{N}_2^n-\mathcal{N}_2^{n-1}, \\
			\partial_t\left(Z^{n+1}-Z^n\right)-\partial_x\left(\frac{D(Z^{n+1}-Z^n)_x}{(1+V^n)^2}\right)=-\partial_x\left(\frac{DZ_x^n(V^n-V^{n-1})(V^n+V^{n-1}+2)}{(1+V^n)^2(1+V^{n-1})^2}\right)+\mathcal{N}_3^n-\mathcal{N}_3^{n-1},\\
			V^{n+1}(x, 0)-V^n(x, 0)=U^{n+1}(x, 0)-U^n(x, 0)=\Theta^{n+1}(x, 0)-\Theta^n(x, 0)=0,\\
			Z^{n+1}(x, 0)-Z^n(x, 0)=0.
		\end{array}\right.
	\end{equation}
	Here,
	\begin{align}
		\label{N1n n-1}\mathcal{N}_1^n-\mathcal{N}_1^{n-1}= & -\partial_x\left(\frac{a(\Theta^n-\Theta^{n-1})}{1+V^n}-\frac{a(V^{n-1}-V^n)(1+\Theta^{n-1})}{(1+V^{n-1})(1+V^n)}\right), \\
		\label{N2n n-1}\mathcal{N}_2^n-\mathcal{N}_2^{n-1}= & -\frac{U_x^n}{c_v}\left(p^n-\frac{\mu U_x^n}{1+V^n}\right)+\frac{U_x^{n-1}}{c_v}\left(p^{n-1}-\frac{\mu U_x^{n-1}}{1+V^{n-1}}\right)\nonumber\\
		& +\frac{q}{c_v}K\phi(1+\Theta^n)Z^n-\frac{q}{c_v}K\phi(1+\Theta^{n-1})Z^{n-1}\nonumber\\
		= & O(1)\left[\left(\left|V^n-V^{n-1}\right|+\left|\Theta^n-\Theta^{n-1}\right|\right)\left|U_x^n\right|+\left|V^n-V^{n-1}\right|\left(U_x^n\right)^2\right.\nonumber \\
		& \left.+\left(1+\left|U_x^n+U_x^{n-1}\right|\right)\left|U_x^n-U_x^{n-1}\right|+\left|Z^n-Z^{n-1}\right|\right.\nonumber\\
		&+\left.\left|\Theta^n-\Theta^{n-1}\right|Z^{n-1}\right],\\
		\label{N3n n-1}\mathcal{N}_3^n-\mathcal{N}_3^{n-1}=& -K\phi(1+\Theta^n)Z^n+K\phi(1+\Theta^{n-1})Z^{n-1}\nonumber\\
		=&O(1)\left[\left|Z^n-Z^{n-1}\right|+ \left|\Theta^n-\Theta^{n-1}\right|Z^{n-1}\right].
	\end{align}
    Note that we have used the Lipschitz continuity of $\phi$, which accounts for the term $\left|\Theta^n-\Theta^{n-1}\right|$ on the right-hand side.  

    An application of the Duhamel principle to Eq.~\eqref{diff equs} yields that
	\begin{align}
		\label{eq: V n+1-n}
		\left(V^{n+1}-V^n\right)(x,t)=&\int_0^t \partial_x(U^{n+1}-U^n) \mathrm{d}\tau,\\
		\label{eq: U n+1-n}
		\left(U^{n+1}-U^n\right)(x,t)=&\int_0^t \int_{\mathbb{R}} H_y(x,t;y,\tau;\mu^n)\frac{\mu U_y^n(V^n-V^{n-1})}{(1+V^n)(1+V^{n-1})}(y,\tau)\,\mathrm{d}y\,\mathrm{d}\tau\nonumber\\
		&+\int_0^t\int_{\mathbb{R}}H(x,t;y,\tau;\mu^n)\left(\mathcal{N}_1^n-\mathcal{N}_1^{n-1}\right)(y,\tau)\,\mathrm{d}y\,\mathrm{d}\tau,\\
		\label{eq: Theta n+1-n}
		\left(\Theta^{n+1}-\Theta^n\right)(x,t)=&\int_0^t \int_{\mathbb{R}} H_y(x,t;y,\tau;\nu^n)\frac{\nu \Theta_y^n(V^n-V^{n-1})}{c_v(1+V^n)(1+V^{n-1})}(y,\tau)\,\mathrm{d}y\,\mathrm{d}\tau\nonumber\\
		&+\int_0^t\int_{\mathbb{R}}H(x,t;y,\tau;\nu^n)\left(\mathcal{N}_2^n-\mathcal{N}_2^{n-1}\right)(y,\tau)\,\mathrm{d}y\,\mathrm{d}\tau,\\
		\label{eq: Z n+1-n}
		\left(Z^{n+1}-Z^n\right)(x,t)=&\int_0^t \int_{\mathbb{R}} H_y(x,t;y,\tau;D^n)\frac{D Z_y^n(V^n-V^{n-1})(V^n+V^{n-1}+2)}{(1+V^n)^2(1+V^{n-1})^2}(y,\tau)\,\mathrm{d}y\,\mathrm{d}\tau \nonumber\\
		&+\int_0^t\int_{\mathbb{R}}H(x,t;y,\tau;D^n)\left(\mathcal{N}_3^n-\mathcal{N}_3^{n-1}\right)(y,\tau)\,\mathrm{d}y\,\mathrm{d}\tau,
	\end{align}
	where $\mu^n$, $\nu^n$ and $D^n$ are defined as in \eqref{N1 N2}.

We shall establish the following log-Lipschitz in-time bounds for $\Theta^{n+1}-\Theta^n$ and $\Theta^{n+1}_x-\Theta^n_x$.

	\begin{lemma}
		\label{lemma: Theta n diff}
		For sufficiently small $\delta$ and $t_{\sharp}$, there exists a positive constant $C_2$ such that for any $0<t<t_{\sharp}$, we have the estimates:
		\begin{align*}
			& \left\|\frac{\Theta^{n+1}(\cdot, t)-\Theta^n(\cdot, t)}{\left|\log t\right|}\right\|_{L_x^{\infty}} \\
			& \quad \leq C_2\left(\sqrt{t_{\sharp}}+\delta\right)\left(\left\|\left|V^n-V^{n-1}\right|\right\|_{\infty}+\left\|\left|\frac{\Theta^n-\Theta^{n-1}}{\left|\log \tau\right|}\right|\right\|_{\infty}+\left\|\left|\frac{U_x^n-U_x^{n-1}}{\left|\log \tau\right|}\right|\right\|_1\right.\\
			&\qquad \left.+\frac{1}{\left|\log t\right|}\left\|\left|Z^n-Z^{n-1}\right|\right\|_{\infty}\right), \\
			& \left\|\Theta^{n+1}(\cdot, t)-\Theta^n(\cdot, t)\right\|_{L_x^{1}} \\
			& \quad \leq C_2\left(\sqrt{t_{\sharp}}+\delta\right)\left(\left\|\left|V^n-V^{n-1}\right|\right\|_1+\left\|\left|V^n-V^{n-1}\right|\right\|_{\infty}+\left\|\left|\Theta^n-\Theta^{n-1}\right|\right\|_1\right.\\
			&\qquad \left.+\sqrt{t}\left|\log t\right|\left\|\left|\frac{U_x^n-U_x^{n-1}}{\left|\log \tau\right|}\right|\right\|_1+\sqrt{t}\left\|\left|Z^n-Z^{n-1}\right|\right\|_{1}\right), \\
			& \frac{\sqrt{t}}{\left|\log t\right|}\left\|\Theta_x^{n+1}(\cdot, t)-\Theta_x^n(\cdot, t)\right\|_{L_x^{\infty}} \\
			& \quad \leq C_2\left(\sqrt{t_{\sharp}}+\delta\right)\left(\left\|\left|V^n-V^{n-1}\right|\right\|_{\infty}+\frac{1}{\left|\log t\right|}\left\|\left| V^n-V^{n-1}\right|\right\|_{B V}+\frac{1}{\left|\log t\right|}\left\|\left|V^n-V^{n-1}\right|\right\|_1\right. \\
			& \left.\qquad+\left\|\left|\frac{\sqrt{\tau}}{\left|\log \tau\right|}\left(U_x^n-U_x^{n-1}\right)\right|\right\|_{\infty}+\left\|\left|\frac{U_x^n-U_x^{n-1}}{\left|\log \tau\right|}\right|\right\|_1+\left\|\left|\frac{\Theta^n-\Theta^{n-1}}{\left|\log \tau\right|}\right|\right\|_{\infty}+\frac{1}{\left|\log t\right|}\left\|\left|Z^n-Z^{n-1}\right|\right\|_{\infty}\right), \\
			& \frac{\left\|\Theta_x^{n+1}(\cdot, t)-\Theta_x^n(\cdot, t)\right\|_{L_x^{1}}}{\left|\log t\right|} \\
			& \quad \leq C_2\left(\sqrt{t_{\sharp}}+\delta\right)\left(\left\|\left| V^n-V^{n-1}\right|\right\|_{\infty}+\frac{1}{\left|\log t\right|}\left\|\left|V^n-V^{n-1}\right|\right\|_{B V}+\frac{1}{\left|\log t\right|}\left\|\left| V^n-V^{n-1}\right|\right\|_1\right. \\
			& \left.\qquad+\left\|\left|\frac{\sqrt{\tau}}{\left|\log \tau\right|}\left(U_x^n-U_x^{n-1}\right)\right|\right\|_{\infty}+\left\|\left|\frac{U_x^n-U_x^{n-1}}{\left|\log \tau\right|}\right|\right\|_1+\frac{1}{\left|\log t\right|}\left\|\left|\Theta^n-\Theta^{n-1}\right|\right\|_1+\frac{1}{\left|\log t\right|}\left\|\left|Z^n-Z^{n-1}\right|\right\|_1\right).
		\end{align*}
	\end{lemma}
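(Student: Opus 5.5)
We start from the Duhamel representation \eqref{eq: Theta n+1-n}, which splits $\Theta^{n+1}-\Theta^n$ into a \emph{flux-difference} term
$$\mathcal{J}_1:=\int_0^t\int_{\mathbb{R}}H_y(x,t;y,\tau;\nu^n)\,\frac{\nu\Theta^n_y(V^n-V^{n-1})}{c_v(1+V^n)(1+V^{n-1})}\,\mathrm{d}y\,\mathrm{d}\tau$$
and a \emph{source-difference} term $\mathcal{J}_2:=\int_0^t\int_{\mathbb{R}}H(x,t;y,\tau;\nu^n)(\mathcal{N}_2^n-\mathcal{N}_2^{n-1})\,\mathrm{d}y\,\mathrm{d}\tau$. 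We treat the two pieces separately, following the template of the corresponding lemma in \cite{WangHT2022} for the Navier--Stokes--Fourier system. The only genuinely new contributions are the two $Z$-terms $|Z^n-Z^{n-1}|$ and $|\Theta^n-\Theta^{n-1}|Z^{n-1}$ in \eqref{N2n n-1}. Since $\nu^n$ satisfies \eqref{f} by the ansatz \eqref{estimates n}, Lemmas~\ref{lemma: Liu} and \ref{lemma: 2 derivative} apply.

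\textbf{The $L^\infty$ and $L^1$ bounds.} For $\mathcal{J}_1$ we use $|H_y|\le C_*e^{-(x-y)^2/C_*(t-\tau)}/(t-\tau)$ together with $\|\Theta^n_y\|_{L^\infty}\le 2C_\sharp\delta/\sqrt\tau$. This gives
$$|\mathcal{J}_1|\le O(1)\,\delta\,\|\!|V^n-V^{n-1}|\!\|_\infty\int_0^t\frac{\mathrm{d}\tau}{\sqrt{t-\tau}\sqrt\tau}=O(\delta)\,\|\!|V^n-V^{n-1}|\!\|_\infty,$$
and the $L^1$ version follows by using $\|\Theta^n_y\|_{L^1}\le 2C_\sharp\delta$ instead. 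For $\mathcal{J}_2$ we insert the pointwise bound \eqref{N2n n-1}, with the following pairings:
\begin{itemize}
\item[] the terms containing $U_x^n$ are paired with \eqref{estimates n} exactly as in \cite{WangHT2022};
\item[] the term $(1+|U_x^n+U_x^{n-1}|)|U_x^n-U_x^{n-1}|$ is placed in $L^1_y$ and multiplied by $|\log\tau|$, producing the $\|\!|(U^n_x-U^{n-1}_x)/|\log\tau|\,|\!\|_1$ term;
\item[] the new term $\int_0^t\int H\,|Z^n-Z^{n-1}|$ is bounded by $t\,\|\!|Z^n-Z^{n-1}|\!\|_\infty$ in $L^\infty$ (resp.\ $t\,\|\!|\cdot|\!\|_1$ in $L^1$), which is $\le\sqrt{t_\sharp}\cdot\sqrt t\,\|\!|\cdot|\!\|$;
\item[] the term $|\Theta^n-\Theta^{n-1}|Z^{n-1}$ gives $O(\delta)\,t\,|\log t|\,\|\!|(\Theta^n-\Theta^{n-1})/|\log\tau|\,|\!\|_\infty$, using $\|Z^{n-1}\|_{L^\infty}\le 2C_\sharp\delta$, and its $L^1$ analogue.
\end{itemize}
Dividing by $|\log t|$ where required, and using that $\int_0^t|\log\tau|\,(t-\tau)^{-1/2}\,\mathrm{d}\tau\lesssim\sqrt t\,|\log t|$ for $t<t_\sharp\ll1$, yields the first two inequalities with prefactor $C_2(\sqrt{t_\sharp}+\delta)$.

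\textbf{The derivative bounds.} We differentiate in $x$. The $\mathcal{J}_2$ part is handled with $|H_x|\lesssim e^{-(x-y)^2/C_*(t-\tau)}/(t-\tau)$. Each source term then gains a factor of $(t-\tau)^{-1/2}$ relative to the previous step, which after the $\sqrt t/|\log t|$ weighting still leaves the $(\sqrt{t_\sharp}+\delta)$ smallness; the $Z$-terms again contribute $\sqrt t\,\|\!|Z^n-Z^{n-1}|\!\|$. The hard piece is $\partial_x\mathcal{J}_1$, because $H_{xy}$ is not integrable in time near $\tau=t$. There we split the time integral as $\int_0^{t/2}+\int_{t/2}^t$.
\begin{itemize}
\item[] On $[0,t/2]$ we use $|H_{xy}|\lesssim (t-\tau)^{-3/2}e^{-(\cdot)}$ directly.
\item[] On $[t/2,t]$ we write $g(y,\tau)=g(y,t)+(g(y,\tau)-g(y,t))$, where $g$ is the flux integrand of $\mathcal{J}_1$. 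We apply the time-integrated identity for $\int H_{xy}(x,t;y,s)\,\mathrm{d}s$ from Lemma~\ref{lemma: 2 derivative}: the $\delta(x-y)/f$ term reproduces $g(x,t)$ pointwise, and the remainder is $O(1)e^{-(\cdot)}/\sqrt{t-t_0}$ plus a commutator term controlled by the time regularity of $\nu^n$.
\item[] The difference $g(y,\tau)-g(y,t)$ is controlled via the time-difference estimates for $\Theta^n$ from Lemma~\ref{lemma: diff Theta k+1} and the Lipschitz bound for $V^n$ in \eqref{estimates n}.
\end{itemize}
This step is where the $\|\!|V^n-V^{n-1}|\!\|_{BV}$ norm and the factor $|\log t|$ enter: the BV norm arises from integrating by parts in $y$ against $(V^n-V^{n-1})$, with jumps along $\mathcal{D}$ treated as in Lemma~\ref{lemma: lip V}.

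We expect this singular $H_{xy}$ near-diagonal analysis of $\partial_x\mathcal{J}_1$ to be the main obstacle. It is, however, identical to \cite[Lemma 3.5]{WangHT2022}, since $Z$ does not enter $\mathcal{J}_1$, so we would cite that argument and only verify that the extra $Z$-dependence of $\mathcal{N}_2$ (through Lemma~\ref{lemma: diff Theta k+1}) does not spoil the time-Hölder input. The $L^1$ derivative bound then follows by the same decomposition, with $L^1_x$ Gaussian integrals replacing sup bounds.
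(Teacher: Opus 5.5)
Your $L^\infty_x$ and $L^1_x$ bounds for $\Theta^{n+1}-\Theta^n$ are fine and follow the paper. Both use the representation \eqref{eq: Theta n+1-n} with the same pairings; the paper bounds $|\Theta^n-\Theta^{n-1}|Z^{n-1}$ with $\|Z^{n-1}\|_{L^1_x}$ rather than $\|Z^{n-1}\|_{L^\infty_x}$, which is immaterial.

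\textbf{The gap.} The problem is in the two derivative bounds, at the step where you control
$\int_{t/2}^t\int|H_{xy}(x,t;y,\tau;\nu^n)|\,|g(y,\tau)-g(y,t)|\,\mathrm{d}y\,\mathrm{d}\tau$.
Since $\int|H_{xy}|\,\mathrm{d}y\sim(t-\tau)^{-1}$, you need a genuine H\"older modulus in time for $g$, and $g$ contains $\Theta^n_y$, not $\Theta^n$.
\begin{itemize}
\item Lemma \ref{lemma: diff Theta k+1} gives time differences of $\Theta^{k+1}_x$ only in $L^1_x$, with modulus $\delta\sqrt{t-s}\,|\log(t-s)|/(\sqrt t\sqrt s)$.
\item For the sup bound you must pair this with $\|H_{xy}(x,t;\cdot,\tau)\|_{L^\infty_y}\lesssim(t-\tau)^{-3/2}$. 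That gives $\int_{t/2}^t(t-\tau)^{-1}|\log(t-\tau)|\,\mathrm{d}\tau=\infty$. The trivial bound $|g(\tau)-g(t)|\le|g(\tau)|+|g(t)|$ also diverges, like $\int_{t/2}^t(t-\tau)^{-1}\,\mathrm{d}\tau$.
\item For the $L^1_x$ bound the same modulus gives $\delta\,|\log t|\,t^{-1/2}\,\||V^n-V^{n-1}|\|_\infty$. After dividing by $|\log t|$ this is $\delta t^{-1/2}\||V^n-V^{n-1}|\|_\infty$, which is not $O(\sqrt{t_\sharp}+\delta)$.
\item If you instead integrate by parts in $y$ to put the time difference on $\Theta^n$, Lemma \ref{lemma: diff Theta k+1} does control that difference in $L^\infty$. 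But then you need pointwise bounds on $\partial_y(\nu^n H_{xy})$, a third-order kernel estimate that is not among Lemmas \ref{lemma: Liu}--\ref{lemma: comparison2}.
\end{itemize}
Either way, your route needs new time-regularity estimates for $\Theta^n_x$ added to the induction hypotheses \eqref{estimates n}, together with new kernel bounds. The proposal supplies neither, and this step cannot be obtained by citation from the available lemmas.

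\textbf{How the paper avoids it.} The paper does not differentiate \eqref{eq: Theta n+1-n}. It differentiates \eqref{eq: Theta k+1} for $\Theta^{n+1}$ and $\Theta^n$ separately and writes $\Theta^{n+1}_x-\Theta^n_x=\mathcal{I}_1+\mathcal{I}_2+\mathcal{I}_3$, where
\begin{itemize}
\item $\mathcal{I}_1=\int(H_x(\cdot;\nu^n)-H_x(\cdot;\nu^{n-1}))\theta_0^*$,
\item $\mathcal{I}_2=\int_0^t\int H_x(\cdot;\nu^{n-1})(\mathcal{N}_2^n-\mathcal{N}_2^{n-1})$,
\item $\mathcal{I}_3=\int_0^t\int(H_x(\cdot;\nu^n)-H_x(\cdot;\nu^{n-1}))\mathcal{N}_2^n$.
\end{itemize}
The comparison estimate for $H_x$ in Lemma \ref{lemma: comparison} is applied with $f^a-f^b=\nu^n-\nu^{n-1}$, whose time derivative involves $U^n_x-U^{n-1}_x$. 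It directly produces the contributions $|\log t|\,\||V^n-V^{n-1}|\|_\infty$, $\||V^n-V^{n-1}|\|_{BV}$, $\sqrt t\,\||V^n-V^{n-1}|\|_1$ and $\sqrt t\,|\log t|\,\||\tfrac{\sqrt\tau}{|\log\tau|}(U^n_x-U^{n-1}_x)|\|_\infty$. This is where the BV norm and the $|\log t|$ factor actually come from. Everything else needs only $|H_x|$, the bounds \eqref{estimates n}, and a split of the time integral at $t/2$. No time regularity of $\Theta^n_x$ is used, and your treatment of $\mathcal{J}_2$ carries over to $\mathcal{I}_2$ essentially verbatim.
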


Recall the norms $\left\|\left|\cdot\right|\right\|_{\infty}$, $\left\|\left|\cdot\right|\right\|_{1}$ and $\left\|\left|\cdot\right|\right\|_{BV}$ from Eq.~\eqref{norm sup t}.
    
	\begin{proof}
	Assume the estimates in Eq.~\eqref{estimates n}. Then  both $\nu^n$ and $\nu^{n-1}$ satisfy Eq.~\eqref{f}. We may thus apply Lemmas~\ref{lemma: Liu}--\ref{lemma: comparison2} to corresponding fundamental solution $H(x,t;y,0;\cdot)$.

    First, thanks to the representation formula~\eqref{eq: Theta n+1-n}, we have that
		\begin{align*}
			|(\Theta^{n+1}&-\Theta^n)(x,t)|\leq\int_0^t\int_{\mathbb{R}}\left|H_y(x,t;y,\tau;\nu^n)\right|\frac{\nu\left|\Theta_y^n\right|\left|V^n-V^{n-1}\right|}{c_v\left|(1+V^n)(1+V^{n-1})\right|}(y,\tau)\,\mathrm{d}y\,\mathrm{d}\tau \\
			&\quad +\int_0^t\int_{\mathbb{R}} \left|H(x,t;y,\tau;\nu^n)\right|\left|\mathcal{N}_2^n-\mathcal{N}_2^{n-1}\right|(y,\tau)\,\mathrm{d}y\,\mathrm{d}\tau.
		\end{align*}
		 For the first term on the right-hand side, we apply the estimate for $H_y$ in Lemma~\ref{lemma: Liu} and that for $|\Theta_y^n|$ in Eq.~\eqref{estimates n}. For the second term, we take the expression for $\mathcal{N}_2^n - \mathcal{N}_2^{n-1}$ from \eqref{N2n n-1}, and apply the bound on $H$ in Lemma~\ref{lemma: Liu} and Eq.~\eqref{estimates n}. In this way, one obtains that 
		\begin{align*}
			|(\Theta^{n+1}&-\Theta^n)(x,t)|\\
			\leq& O(1)\int_0^t\int_{\mathbb{R}}\frac{\frac{e^{-(x-y)^2}}{C_*(t-\tau)}}{t-\tau}\left|\Theta_y^n\right|(y,\tau)\,\mathrm{d}y\,\mathrm{d}\tau \left\|\left|V^n-V^{n-1}\right|\right\|_{\infty}\\
			&+O(1)\int_0^t\int_{\mathbb{R}}\frac{\frac{e^{-(x-y)^2}}{C_*(t-\tau)}}{\sqrt{t-\tau}}\left[\left(\left\|\left|V^n-V^{n-1}\right|\right\|_{\infty}+\left|\log\tau\right|\left\|\left|\frac{\Theta^n-\Theta^{n-1}}{\left|\log\tau\right|}\right|\right\|_{\infty}\right)\left|U_x^n\right|\right.\\
			&\qquad +\left.\left\|\left|V^n-V^{n-1}\right|\right\|_{\infty}\left|U_x^n\right|^2\right]\,\mathrm{d}y\,\mathrm{d}\tau\\
			&+O(1)\int_0^t \frac{1}{\sqrt{t-\tau}}\left|\log\tau\right|\left(1+\frac{\delta}{\sqrt{\tau}}\right)\,\mathrm{d}\tau\left\|\left|\frac{U_x^n-U_x^{n-1}}{\left|\log\tau\right|}\right|\right\|_{1}\\
			&+O(1)\int_0^t\int_{\mathbb{R}}\frac{\frac{e^{-(x-y)^2}}{C_*(t-\tau)}}{\sqrt{t-\tau}}\,\mathrm{d}y\,\mathrm{d}\tau \left\|\left|Z^n-Z^{n-1}\right|\right\|_{\infty}\\
			&+O(1)\int_0^t \frac{1}{\sqrt{t-\tau}}\|Z^{n-1}\|_{L_x^1}\left|\log\tau\right|\,\mathrm{d}\tau \left\|\left|\frac{\Theta^n-\Theta^{n-1}}{\left|\log\tau\right|}\right|\right\|_{\infty}\\
			\leq & O(1)\int_0^t\frac{1}{\sqrt{t-\tau}}\frac{\delta}{\sqrt{\tau}}\,\mathrm{d}\tau\left\|\left|V^n-V^{n-1}\right|\right\|_{\infty}+O(1)\int_0^t\left(\left\|\left|V^n-V^{n-1}\right|\right\|_{\infty}\right.\\
			&+\left.\left|\log\tau\right|\left\|\left|\frac{\Theta^n-\Theta^{n-1}}{\left|\log\tau\right|}\right|\right\|_{\infty}\right)\frac{\delta}{\sqrt{\tau}}\,\mathrm{d}\tau+O(1)\int_0^t\frac{1}{\sqrt{t-\tau}}\frac{\delta^2}{\sqrt{\tau}}\,\mathrm{d}\tau\left\|\left|V^n-V^{n-1}\right|\right\|_{\infty}\\
			&+O(1)\int_0^t \frac{1}{\sqrt{t-\tau}}\left|\log\tau\right|\left(1+\frac{\delta}{\sqrt{\tau}}\right)\,\mathrm{d}\tau\left\|\left|\frac{U_x^n-U_x^{n-1}}{\left|\log\tau\right|}\right|\right\|_{1}+O(1)t\left\|\left|Z^n-Z^{n-1}\right|\right\|_{\infty}\\
			&+O(1)\int_0^t \frac{1}{\sqrt{t-\tau}}\delta \left|\log\tau\right|\,\mathrm{d}\tau \left\|\left|\frac{\Theta^n-\Theta^{n-1}}{\left|\log\tau\right|}\right|\right\|_{\infty}\\
			\leq& O(1)(\sqrt{t}+\delta)\left(\left\|\left|V^n-V^{n-1}\right|\right\|_{\infty}+\left|
			\log t\right|\left\|\left|\frac{\Theta^n-\Theta^{n-1}}{\left|\log\tau\right|}\right|\right\|_{\infty}+\left|\log t\right|\left\|\left|\frac{U_x^n-U_x^{n-1}}{\left|\log\tau\right|}\right|\right\|_{1}\right.\\
			&\left.+\left\|\left|Z^n-Z^{n-1}\right|\right\|_{\infty}\right).
		\end{align*}

Next, we derive the $L^1$-estimate for $\left(\Theta^{n+1}-\Theta^n\right)$.
		\begin{align*}
			\int_{\mathbb{R}}\left|(\Theta^{n+1}-\Theta^n)(x,t)\right|\,\mathrm{d}x\leq&\int_{\mathbb{R}}\int_0^t\int_{\mathbb{R}}\left|H_y(x,t;y,\tau;\nu^n)\right|\frac{\nu\left|\Theta_y^n\right|\left|V^n-V^{n-1}\right|}{c_v\left|(1+V^n)(1+V^{n-1})\right|}(y,\tau)\,\mathrm{d}y\,\mathrm{d}\tau\,\mathrm{d}x\\
			&\quad +\int_{\mathbb{R}}\int_0^t\int_{\mathbb{R}} \left|H(x,t;y,\tau;\nu^n)\right|\left|\mathcal{N}_2^n-\mathcal{N}_2^{n-1}\right|(y,\tau)\,\mathrm{d}y\,\mathrm{d}\tau\,\mathrm{d}x\\
			\leq &O(1)\int_0^t\frac{1}{\sqrt{t-\tau}}\frac{\delta}{\sqrt{\tau}}\,\mathrm{d}\tau\left\|\left|V^n-V^{n-1}\right|\right\|_{1}\\
			&+O(1)\int_0^t \frac{\delta}{\sqrt{\tau}}\,\mathrm{d}\tau\left(\left\|\left|V^n-V^{n-1}\right|\right\|_{1}+\left\|\left|\Theta^n-\Theta^{n-1}\right|\right\|_{1}\right)\nonumber\\
			&+O(1)\int_0^t\frac{\delta^2}{\sqrt{\tau}}\,\mathrm{d}\tau\left\|\left|V^n-V^{n-1}\right|\right\|_{\infty}\\
			&+O(1)\int_0^t \left|\log\tau\right|\left(1+\frac{\delta}{\sqrt{\tau}}\right)\,\mathrm{d}\tau\left\|\left|\frac{U_x^n-U_x^{n-1}}{\left|\log\tau\right|}\right|\right\|_{1}\\
			&+O(1)t\left\|\left|Z^n-Z^{n-1}\right|\right\|_{1}+O(1)\int_0^t \left\|Z^{n-1}\right\|_{L_x^1}\,\mathrm{d}\tau \left\|\left|\Theta^n-\Theta^{n-1}\right|\right\|_{1}\\
			\leq&  O(1)(\sqrt{t}+\delta)\left(\left\|\left|V^n-V^{n-1}\right|\right\|_{1}+\left\|\left|V^n-V^{n-1}\right|\right\|_{\infty}+\left\|\left|\Theta^n-\Theta^{n-1}\right|\right\|_{1}\right.\\
			&+\left.\sqrt{t}\left|\log t\right|\left\|\left|\frac{U_x^n-U_x^{n-1}}{\left|\log\tau\right|}\right|\right\|_{1}+\sqrt{t}\left\|\left|Z^n-Z^{n-1}\right|\right\|_{1}\right).
		\end{align*}

To control $\Theta_x^{n+1}-\Theta_x^n$, we differentiate Eq.~\eqref{eq: Theta k+1} with respect to $x$ to deduce
		\begin{align*}
			(\Theta_x^{n+1}-\Theta_x^n)(x,t)=&\int_{\mathbb{R}}\left(H_x(x,t;y,0;\nu^n)-H_x(x,t;y,0;\nu^{n-1})\right)\theta_0^*(y)\,\mathrm{d}y\\
			&+\int_0^t\int_{\mathbb{R}}H_x(x,t;y,\tau;\nu^{n-1})\left(\mathcal{N}_2^n(y,\tau)-\mathcal{N}_2^{n-1}(y,\tau)\right)\,\mathrm{d}y\,\mathrm{d}\tau\\
			&+\int_0^t\int_{\mathbb{R}}\left(H_x(x,t;y,\tau;\nu^n)-H_x(x,t;y,\tau;\nu^{n-1})\right)\mathcal{N}_2^{n}(y,\tau)\,\mathrm{d}y\,\mathrm{d}\tau\\
			=&:\mathcal{I}_1+\mathcal{I}_2+\mathcal{I}_3.
		\end{align*}

        We estimate $\mathcal{I}_1$ by using the $L^\infty$-bound for $\theta_0^*$ and the comparison estimates for $H_x$ in Lemma~\ref{lemma: comparison}:
		\begin{align*}
			\left|\mathcal{I}_1\right|\leq& \int_{\mathbb{R}}\left|\left(H_x(x,t;y,0;\nu^n)-H_x(x,t;y,0;\nu^{n-1})\right)\right|\,\mathrm{d}y \left\|\theta_0^*\right\|_{L_x^{\infty}}\\
			\leq &O(1)\delta\int_{\mathbb{R}}\frac{e^{\frac{-(x-y)^2}{C_*t}}}{t}\,\mathrm{d}y\left[\left|\log t\right|\left\|\left|V^n-V^{n-1}\right|\right\|_{\infty}+\left\|\left|V^n-V^{n-1}\right|\right\|_{BV}+\sqrt{t}\left\|\left|V^n-V^{n-1}\right|\right\|_{1}\right.\\
			&\quad +\left.\sqrt{t}\left|\log t\right|\left\|\left|\frac{\sqrt{\tau}}{\left|\log \tau\right|}\left(U_x^n-U_x^{n-1}\right)\right|\right\|_{\infty}\right]\\
			\leq &O(1)\frac{\delta}{\sqrt{t}}\left[\left|\log t\right|\left\|\left|V^n-V^{n-1}\right|\right\|_{\infty}+\left\|\left|V^n-V^{n-1}\right|\right\|_{BV}+\sqrt{t}\left\|\left|V^n-V^{n-1}\right|\right\|_{1}\right.\\
			&\quad +\left.\sqrt{t}\left|\log t\right|\left\|\left|\frac{\sqrt{\tau}}{\left|\log \tau\right|}\left(U_x^n-U_x^{n-1}\right)\right|\right\|_{\infty}\right].
		\end{align*}
        
        For $\mathcal{I}_2$, we  apply the estimate of $H_x$ in Lemma~\ref{lemma: Liu} to deduce that
		\begin{align*}
			\left|\mathcal{I}_2\right|\leq& \int_0^t\int_{\mathbb{R}}\left|H_x(x,t;y,\tau;\nu^{n-1})\right|\left|\mathcal{N}_2^n(y,\tau)-\mathcal{N}_2^{n-1}(y,\tau)\right|\,\mathrm{d}y\,\mathrm{d}\tau\\
			\leq &O(1)\int_0^t\int_{\mathbb{R}}\frac{e^{\frac{-(x-y)^2}{C_*(t-\tau)}}}{t-\tau}   
			\left[\left(\left|V^n-V^{n-1}\right|+\left|\Theta^n-\Theta^{n-1}\right|\right)\left|U_x^n\right|+\left|V^n-V^{n-1}\right|\left(U_x^n\right)^2\right.\\
			& \left.+\left(1+\left|U_x^n+U_x^{n-1}\right|\right)\left|U_x^n-U_x^{n-1}\right|+\left|Z^n-Z^{n-1}\right|+ \left|\Theta^n-\Theta^{n-1}\right|Z^{n-1}\right]
			\,\mathrm{d}y\,\mathrm{d}\tau\\
			\leq& O(1)\int_0^t\frac{1}{\sqrt{t-\tau}}\frac{\delta}{\sqrt{\tau}}\,\mathrm{d}\tau \left\|\left|V^n-V^{n-1}\right|\right\|_{\infty}+O(1)\int_0^t\frac{1}{\sqrt{t-\tau}}\frac{\delta}{\sqrt{\tau}}\left|\log\tau\right|\,\mathrm{d}\tau \left\|\left|\frac{\Theta^n-\Theta^{n-1}}{\left|\log\tau\right|}\right|\right\|_{\infty}\\
			&+O(1)\left(\int_0^{\frac{t}{2}}\frac{1}{t-\tau}\frac{\delta^2}{\sqrt{\tau}}\,\mathrm{d}\tau +\int_{\frac{t}{2}}^t\frac{1}{\sqrt{t-\tau}}\frac{\delta^2}{\tau}\,\mathrm{d}\tau\right)\left\|\left|V^n-V^{n-1}\right|\right\|_{\infty}\nonumber\\
			&+O(1)\int_0^{\frac{t}{2}}\left(1+\frac{\delta}{\sqrt{\tau}}\right)\frac{1}{t-\tau}\left|\log\tau\right|\,\mathrm{d}\tau \left\|\left|\frac{U_x^n-U_x^{n-1}}{\left|\log\tau\right|}\right|\right\|_{1}\\
			&+O(1)\int_{\frac{t}{2}}^t\left(1+\frac{\delta}{\sqrt{\tau}}\right)\frac{1}{\sqrt{t-\tau}}\frac{\left|\log\tau\right|}{\sqrt{\tau}}\,\mathrm{d}\tau \left\|\left|\frac{\sqrt{\tau}\left(U_x^n-U_x^{n-1}\right)}{\left|\log\tau\right|}\right|\right\|_{\infty}\\
			&+O(1)\int_0^t\frac{1}{\sqrt{t-\tau}}\,\mathrm{d}\tau \left\|\left|Z^n-Z^{n-1}\right|\right\|_{\infty}+O(1)\int_0^t\frac{1}{\sqrt{t-\tau}}\left|\log\tau\right|\,\mathrm{d}\tau \left\|\left|\frac{\Theta^n-\Theta^{n-1}}{\left|\log\tau\right|}\right|\right\|_{\infty}\left\|Z^{n-1}\right\|_{L_x^{\infty}}\\
			\leq &O(1)\delta\left(\left\|\left|V^n-V^{n-1}\right|\right\|_{\infty}+\left|\log t\right|\left\|\left|\frac{\Theta^n-\Theta^{n-1}}{\left|\log\tau\right|}\right|\right\|_{\infty}+\frac{\delta}{\sqrt{t}}\left\|\left|V^n-V^{n-1}\right|\right\|_{\infty}\right)\\
			&+O(1)(\delta+\sqrt{t})\frac{\left|\log t\right|}{\sqrt{t}}\left(\left\|\left|\frac{U_x^n-U_x^{n-1}}{\left|\log\tau\right|}\right|\right\|_{1}+\left\|\left|\frac{\sqrt{\tau}\left(U_x^n-U_x^{n-1}\right)}{\left|\log\tau\right|}\right|\right\|_{\infty}\right)\\
			&+O(1)\sqrt{t}\left\|\left|Z^n-Z^{n-1}\right|\right\|_{\infty}+O(1)\delta \sqrt{t}\left|\log t\right|\left\|\left|\frac{\Theta^n-\Theta^{n-1}}{\left|\log\tau\right|}\right|\right\|_{\infty},
		\end{align*}
		where we split the time integral at $\frac{t}{2}$ to handle the singularity at $\tau=t$.
		
		Finally, we estimate $\mathcal{I}_3$ by combining the comparison estimates for $H_x$ in Lemma~\ref{lemma: comparison} with bounds for $\mathcal{N}_2^n$. We again split the integral at $\frac{t}{2}$. 
		\begin{align*}
			\left|\mathcal{I}_3\right| \leq& O(1)\int_0^t\int_{\mathbb{R}}\left|\left(H_x(x,t;y,\tau;\nu^n)-H_x(x,t;y,\tau;\nu^{n-1})\right)\right|\left[\left|U_y^n\right|\left(1+\frac{\delta}{\sqrt{\tau}}\right)+Z^n\right](y,\tau)\,\mathrm{d}y\,\mathrm{d}\tau\\
			\leq &O(1)\int_0^t\int_{\mathbb{R}}\frac{e^{\frac{-(x-y)^2}{C_*(t-\tau)}}}{t-\tau}\left[\left|\log (t-\tau)\right|\left\|\left|V^n-V^{n-1}\right|\right\|_{\infty}+\left\|\left|V^n-V^{n-1}\right|\right\|_{BV}+\sqrt{t-\tau}\left\|\left|V^n-V^{n-1}\right|\right\|_{1}\right.\\
			&\quad +\left.\sqrt{t-\tau}\left|\log t\right|\left\|\left|\frac{\sqrt{\tau}}{\left|\log \tau\right|}\left(U_x^n-U_x^{n-1}\right)\right|\right\|_{\infty}\right]\left[\left|U_y^n\right|\left(1+\frac{\delta}{\sqrt{\tau}}\right)+Z^n\right]\,\mathrm{d}y\,\mathrm{d}\tau\\
			\leq& O(1)\left[\int_0^t\frac{1}{\sqrt{t-\tau}}\left|\log(t-\tau)\right|\frac{\delta}{\sqrt{\tau}}\,\mathrm{d}\tau+\int_0^{\frac{t}{2}}\frac{1}{t-\tau}\left|\log(t-\tau)\right|\frac{\delta^2}{\sqrt{\tau}}\,\mathrm{d}\tau+\int_{\frac{t}{2}}^t\frac{1}{\sqrt{t-\tau}}\left|\log(t-\tau)\right|\frac{\delta^2}{\tau}\,\mathrm{d}\tau \right]\\
			&\times \left\|\left|V^n-V^{n-1}\right|\right\|_{\infty}+O(1)\left[\int_0^t\frac{1}{\sqrt{t-\tau}}\frac{\delta}{\sqrt{\tau}}\,\mathrm{d}\tau+\int_0^{\frac{t}{2}}\frac{1}{t-\tau}\frac{\delta^2}{\sqrt{\tau}}\,\mathrm{d}\tau+\int_{\frac{t}{2}}^t\frac{1}{\sqrt{t-\tau}}\frac{\delta^2}{\tau}\,\mathrm{d}\tau \right]\\ &\times\left\|\left|V^n-V^{n-1}\right|\right\|_{BV}+O(1)\left[\int_0^t\frac{\delta}{\sqrt{\tau}}\,\mathrm{d}\tau+\int_0^{\frac{t}{2}}\frac{1}{\sqrt{t-\tau}}\frac{\delta^2}{\sqrt{\tau}}\,\mathrm{d}\tau+\int_{\frac{t}{2}}^t\frac{\delta^2}{\tau}\,\mathrm{d}\tau \right]\left\|\left|V^n-V^{n-1}\right|\right\|_{1}\\
			&+O(1)\left[\int_0^t\frac{\delta}{\sqrt{\tau}}\,\mathrm{d}\tau+\int_0^{\frac{t}{2}}\frac{1}{\sqrt{t-\tau}}\frac{\delta^2}{\sqrt{\tau}}\,\mathrm{d}\tau+\int_{\frac{t}{2}}^t\frac{\delta^2}{\tau}\,\mathrm{d}\tau \right]\left|\log t\right|\left\|\left|\frac{\sqrt{\tau}}{\left|\log \tau\right|}\left(U_x^n-U_x^{n-1}\right)\right|\right\|_{\infty}\\
			&+O(1)\left\|Z^n\right\|_{L_x^{\infty}}\int_0^t\frac{1}{\sqrt{t-\tau}}\left[\left|\log (t-\tau)\right|\left\|\left|V^n-V^{n-1}\right|\right\|_{\infty}+\left\|\left|V^n-V^{n-1}\right|\right\|_{BV}\right.\\
			&\quad +\left.\sqrt{t-\tau}\left\|\left|V^n-V^{n-1}\right|\right\|_{1}+\sqrt{t-\tau}\left|\log t\right|\left\|\left|\frac{\sqrt{\tau}}{\left|\log \tau\right|}\left(U_x^n-U_x^{n-1}\right)\right|\right\|_{\infty}\right]\,\mathrm{d}\tau\\
			\leq& O(1)\frac{(\sqrt{t}+\delta)\delta}{\sqrt{t}}\left[\left|\log t\right|\left\|\left|V^n-V^{n-1}\right|\right\|_{\infty}+\left\|\left|V^n-V^{n-1}\right|\right\|_{BV}+\sqrt{t}\left\|\left|V^n-V^{n-1}\right|\right\|_{1}\right.\\
			&\qquad +\left.\sqrt{t}\left|\log t\right|\left\|\left|\frac{\sqrt{\tau}}{\left|\log \tau\right|}\left(U_x^n-U_x^{n-1}\right)\right|\right\|_{\infty}\right].
		\end{align*}

		Combining the estimates for $\mathcal{I}_1$, $\mathcal{I}_2$ and $\mathcal{I}_3$ and dividing both sides by $\frac{|\log t|}{\sqrt{t}}$, we obtain that 
		\begin{align}
			\frac{\sqrt{t}}{\left|\log t\right|}\left|\left(\Theta_x^{n+1}-\Theta_x^n\right)\right|\leq&C_2(\sqrt{t}+\delta)\left[ \left\|\left|V^n-V^{n-1}\right|\right\|_{\infty}+\left\|\left|V^n-V^{n-1}\right|\right\|_{BV}+ \left\|\left|V^n-V^{n-1}\right|\right\|_{1}\right.\nonumber\\
			&+\left.\left\|\left|\frac{\sqrt{\tau}}{\left|\log \tau\right|}\left(U_x^n-U_x^{n-1}\right)\right|\right\|_{\infty}+\left\|\left|\frac{U_x^n-U_x^{n-1}}{\left|\log\tau\right|}\right|\right\|_{1}+\left\|\left|\frac{\Theta^n-\Theta^{n-1}}{\left|\log\tau\right|}\right|\right\|_{\infty}\right.\nonumber\\
			&+\left.\left\|\left|Z^n-Z^{n-1}\right|\right\|_{\infty}\right].
		\end{align}

        Finally, via a similar argument, we integrate $\mathcal{I}_1$, $\mathcal{I}_2$, and $\mathcal{I}_3$ over $x \in \mathbb{R}$ to obtain that
		\begin{align}
			\frac{\left\|\Theta_x^{n+1}-\Theta_x^n\right\|_{L_x^1}}{\left|\log t\right|}\leq& C_2(\sqrt{t}+\delta)\left[ \left\|\left|V^n-V^{n-1}\right|\right\|_{\infty}+\left\|\left|V^n-V^{n-1}\right|\right\|_{BV}+ \left\|\left|V^n-V^{n-1}\right|\right\|_{1}\right.\nonumber\\
			&+\left.\left\|\left|\frac{\sqrt{\tau}}{\left|\log \tau\right|}\left(U_x^n-U_x^{n-1}\right)\right|\right\|_{\infty}+\left\|\left|\frac{U_x^n-U_x^{n-1}}{\left|\log\tau\right|}\right|\right\|_{1}+\left\|\left|\Theta^n-\Theta^{n-1}\right|\right\|_{1}\right.\nonumber\\
			&+\left.\left\|\left|Z^n-Z^{n-1}\right|\right\|_{1}\right].
		\end{align}
        This completes the proof.  
	\end{proof}

	\begin{lemma}
		\label{lemma: Z n diff}
		For sufficiently small $\delta$ and $t_{\sharp}$ and for $0<t<t_{\sharp}$, there exists a positive constant $C_2$ such that the differences $Z^{n+1}-Z^n$
		and their derivatives satisfy the following:
		\begin{align}
			& \left\|Z^{n+1}(\cdot, t)-Z^n(\cdot, t)\right\|_{L_x^{\infty}} \nonumber\\
			& \quad \leq C_2\left(\sqrt{t_{\sharp}}\left|\log t\right|+\delta\right)\left(\left\|\left|V^n-V^{n-1}\right|\right\|_{\infty}+\left\|\left|\frac{\Theta^n-\Theta^{n-1}}{\left|\log \tau\right|}\right|\right\|_{\infty}+\left\|\left|Z^n-Z^{n-1}\right|\right\|_{\infty}\right), \nonumber\\
			& \left\|Z^{n+1}(\cdot, t)-Z^n(\cdot, t)\right\|_{L_x^{1}} \nonumber\\
			& \quad \leq C_2\left(\sqrt{t_{\sharp}}+\delta\right)\left(\left\|\left|V^n-V^{n-1}\right|\right\|_1+\left\|\left|\Theta^n-\Theta^{n-1}\right|\right\|_1+\left\|\left|Z^n-Z^{n-1}\right|\right\|_{1}\right), \nonumber\\
			& \frac{\sqrt{t}}{\left|\log t\right|}\left\|Z_x^{n+1}(\cdot, t)-Z_x^n(\cdot, t)\right\|_{L_x^{\infty}}\nonumber \\
			& \quad \leq C_2\left(\sqrt{t_{\sharp}}+\delta\right)\left(\left\|\left|V^n-V^{n-1}\right|\right\|_{\infty}+\frac{1}{\left|\log t\right|}\left\|\left| V^n-V^{n-1}\right|\right\|_{B V}+\frac{1}{\left|\log t\right|}\left\|\left|V^n-V^{n-1}\right|\right\|_1\right.\nonumber \\
			& \left.\quad+\left\|\left|\frac{\sqrt{\tau}}{\left|\log \tau\right|}\left(U_x^n-U_x^{n-1}\right)\right|\right\|_{\infty}+\left\|\left|\frac{\Theta^n-\Theta^{n-1}}{\left|\log \tau\right|}\right|\right\|_{\infty}+\frac{1}{\left|\log t\right|}\left\|\left|Z^n-Z^{n-1}\right|\right\|_{\infty}\right), \nonumber\\
			& \frac{\left\|Z_x^{n+1}(\cdot, t)-Z_x^n(\cdot, t)\right\|_{L_x^{1}}}{\left|\log t\right|} \nonumber\\
			& \leq C_2\left(\sqrt{t_{\sharp}}+\delta\right)\left(\left\|\left| V^n-V^{n-1}\right|\right\|_{\infty}+\frac{1}{\left|\log t\right|}\left\|\left|V^n-V^{n-1}\right|\right\|_{B V}+\frac{1}{\left|\log t\right|}\left\|\left| V^n-V^{n-1}\right|\right\|_1\right. \nonumber\\
			& \left.\quad+\left\|\left|\frac{\sqrt{\tau}}{\left|\log \tau\right|}\left(U_x^n-U_x^{n-1}\right)\right|\right\|_{\infty}+\frac{1}{\left|\log t\right|}\left\|\left|\Theta^n-\Theta^{n-1}\right|\right\|_1+\frac{1}{\left|\log t\right|}\left\|\left|Z^n-Z^{n-1}\right|\right\|_1\right).
		\end{align}
	\end{lemma}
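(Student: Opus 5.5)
The proof of Lemma~\ref{lemma: Z n diff} will follow the proof of Lemma~\ref{lemma: Theta n diff} closely. We use the Duhamel representation \eqref{eq: Z n+1-n} for the $L^\infty$ and $L^1$ bounds of $Z^{n+1}-Z^n$. Under the ansatz \eqref{estimates n}, both $D^n$ and $D^{n-1}$ satisfy \eqref{f}, so Lemmas~\ref{lemma: Liu}--\ref{lemma: comparison2} apply.

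\textbf{The first term of \eqref{eq: Z n+1-n}.} This is the flux term with $H_y(x,t;y,\tau;D^n)$ applied to $Z^n_y(V^n-V^{n-1})$. We bound $|H_y|$ by $e^{-(x-y)^2/C_*(t-\tau)}/(t-\tau)$ and use $|Z^n_y|\le 2C_\sharp\delta/\sqrt{\tau}$. This gives $O(1)\delta\int_0^t(t-\tau)^{-1/2}\tau^{-1/2}\,d\tau\,\||V^n-V^{n-1}|\|_\infty=O(\delta)\||V^n-V^{n-1}|\|_\infty$. For the $L^1$ bound we use instead $\|Z^n_y\|_{L^1}\le 2C_\sharp\delta$ and $\|H_y\|_{L^1_x}\lesssim (t-\tau)^{-1/2}$, which yields $O(\delta)\sqrt t\,\||V^n-V^{n-1}|\|_\infty$. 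To get the stated $\||V^n-V^{n-1}|\|_1$, we pair the $L^\infty_y$ bound on $Z^n_y$ with the $L^1$ norm of $V^n-V^{n-1}$; this gives $\delta\||V^n-V^{n-1}|\|_1$.

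\textbf{The reaction term.} This is the term with $\mathcal N_3^n-\mathcal N_3^{n-1}=O(1)(|Z^n-Z^{n-1}|+|\Theta^n-\Theta^{n-1}|Z^{n-1})$ from \eqref{N3n n-1}. Since $\int|H|\,dy\le O(1)$, the $|Z^n-Z^{n-1}|$ part gives $O(t)\||Z^n-Z^{n-1}|\|_\infty$ in sup norm and $O(t)\||Z^n-Z^{n-1}|\|_1$ in $L^1$. For the other part we use $\|Z^{n-1}\|_{L^\infty}\le 2C_\sharp\delta$ and write $|\Theta^n-\Theta^{n-1}|\le|\log\tau|\,\||(\Theta^n-\Theta^{n-1})/|\log\tau||\|_\infty$. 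Since $\int_0^t|\log\tau|\,d\tau\lesssim t|\log t|\le\sqrt{t_\sharp}\,|\log t|$ for small $t$, this produces the prefactor $\sqrt{t_\sharp}|\log t|+\delta$ in the first inequality. In $L^1$ the same argument gives $\delta t\,\||\Theta^n-\Theta^{n-1}|\|_1$.

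\textbf{The derivative estimates.} We differentiate \eqref{eq: Z k+1} for both indices and split into $\mathcal I_1+\mathcal I_2+\mathcal I_3$ exactly as for $\Theta$:
\begin{itemize}
\item $\mathcal I_1$ is the initial-data term with $H_x(\cdot;D^n)-H_x(\cdot;D^{n-1})$ acting on $z_0^*$.
\item $\mathcal I_2$ is $H_x(\cdot;D^{n-1})$ acting on $\mathcal N_3^n-\mathcal N_3^{n-1}$.
\item $\mathcal I_3$ is the kernel difference acting on $K\phi(1+\Theta^n)Z^n$.
\end{itemize}
Note that $D^n-D^{n-1}=O(1)(V^n-V^{n-1})$ in the $\infty$, BV and $L^1$ norms. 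Also $\partial_\tau(D^n-D^{n-1})=O(1)(U^n_x-U^{n-1}_x)+O(1)|V^n-V^{n-1}|(|U^n_x|+|U^{n-1}_x|)$. Lemma~\ref{lemma: comparison} then controls $\mathcal I_1$ by $\frac{\delta}{\sqrt t}[|\log t|\,\||V^n-V^{n-1}|\|_\infty+\||V^n-V^{n-1}|\|_{BV}+\sqrt t\,\||V^n-V^{n-1}|\|_1+\sqrt t|\log t|\,\||\sqrt\tau(U^n_x-U^{n-1}_x)/|\log\tau||\|_\infty]$.

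For $\mathcal I_2$ the source is bounded, not singular in $\tau$, so $\int_0^t(t-\tau)^{-1/2}\,d\tau=O(\sqrt t)$ suffices and no splitting at $t/2$ is needed. This gives $\sqrt t\,\||Z^n-Z^{n-1}|\|_\infty$ and $\delta\sqrt t|\log t|\,\||(\Theta^n-\Theta^{n-1})/|\log\tau||\|_\infty$.

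For $\mathcal I_3$, the factor $\|Z^n\|_{L^\infty}\le 2C_\sharp\delta$ multiplies the comparison bound, and the integral $\int_0^t(t-\tau)^{-1/2}|\log(t-\tau)|\,d\tau$ is finite.

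Multiplying by $\sqrt t/|\log t|$ gives the third inequality. Integrating in $x$ and using the $L^1$ norms of the sources gives the fourth.

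\textbf{Main obstacle.} I expect the delicate point to be the bookkeeping of the logarithmic weights. The terms must close in the same weighted norms that appear in Lemma~\ref{lemma: Theta n diff}, so that the combined contraction argument works. In particular, the $|\Theta^n-\Theta^{n-1}|Z^{n-1}$ term in the sup-norm estimate carries the factor $t|\log t|$ rather than $\sqrt t$, which is why the prefactor $\sqrt{t_\sharp}|\log t|$ appears there. The $\tau$-derivative of $D^n-D^{n-1}$ must be checked to fit the hypothesis $\||\sqrt\tau\,\partial_\tau(\cdot)/|\log\tau||\|_\infty$ of Lemma~\ref{lemma: comparison}. Its quadratic part $|V^n-V^{n-1}|\,|U_x|\lesssim\delta\tau^{-1/2}\||V^n-V^{n-1}|\|_\infty$ does, and is absorbed into the $\||V^n-V^{n-1}|\|_\infty$ term.
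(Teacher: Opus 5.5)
The $L^\infty$ and $L^1$ bounds for $Z^{n+1}-Z^n$, and the pointwise bound for $Z^{n+1}_x-Z^n_x$, follow the paper's argument: the same Duhamel formula \eqref{eq: Z n+1-n}, the same split $\mathcal I_1+\mathcal I_2+\mathcal I_3$, and the same use of Lemma~\ref{lemma: comparison}. You put $H_x(\cdot;D^{n-1})$ in $\mathcal I_2$ and $K\phi(1+\Theta^n)Z^n$ in $\mathcal I_3$, where the paper uses $H_x(\cdot;D^n)$ and $K\phi(1+\Theta^{n-1})Z^{n-1}$; this is immaterial. Your check that the quadratic part of $\partial_\tau(D^n-D^{n-1})$ is absorbed by $\||V^n-V^{n-1}|\|_\infty$ fills in a step the paper leaves implicit.

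\textbf{The gap: the $L^1$ bound for the initial-data term.} It occurs in the fourth inequality, at
$\mathcal I_1=\int_{\mathbb R}[H_x(x,t;y,0;D^n)-H_x(x,t;y,0;D^{n-1})]\,z_0^*(y)\,\mathrm dy$.
Here, ``using the $L^1$ norms of the sources'' means pairing $\|z_0^*\|_{L^1}\le\delta$ with $\sup_y\|H_x(\cdot,t;y,0;D^n)-H_x(\cdot,t;y,0;D^{n-1})\|_{L^1_x}\lesssim t^{-1/2}[\cdots]$. That gives $\|\mathcal I_1\|_{L^1_x}/|\log t|\lesssim\delta\,t^{-1/2}\big(\||V^n-V^{n-1}|\|_\infty+\cdots\big)$.

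The extra factor $t^{-1/2}$ is not in the claimed bound. It also cannot be tolerated: $\||(Z^n_x-Z^{n-1}_x)/|\log\tau||\|_1$ enters the functional $\mathcal F$ of \eqref{map F} as a supremum over $\tau\in(0,t_\sharp)$.

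\textbf{The fix: use the BV norm of $z_0^*$.} Since $\int_{\mathbb R}H_x(x,t;w,0;f)\,\mathrm dw=0$ for $f=D^n$ and $f=D^{n-1}$ (Lemma~\ref{int H}), the $y$-antiderivative $W^n-W^{n-1}$ of the kernel difference, built as in \eqref{W anti_deri}, is well defined. The pointwise bound of Lemma~\ref{lemma: comparison} gives $|W^n-W^{n-1}|\lesssim t^{-1/2}e^{-(x-y)^2/(Ct)}[\cdots]$. A Stieltjes integration by parts then yields
\[
\|\mathcal I_1\|_{L^1_x}\le\int_{\mathbb R}\int_{\mathbb R}|W^n-W^{n-1}|\,\mathrm dx\,|\mathrm dz_0^*|(y)\lesssim\delta\Big(|\log t|\,\||V^n-V^{n-1}|\|_\infty+\||V^n-V^{n-1}|\|_{BV}+\sqrt t\,\||V^n-V^{n-1}|\|_1+\sqrt t\,|\log t|\,\Big\|\Big|\tfrac{\sqrt\tau}{|\log\tau|}(U^n_x-U^{n-1}_x)\Big|\Big\|_\infty\Big),
\]
with no singular factor. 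Dividing by $|\log t|$ gives exactly the $\mathcal I_1$ contribution to the fourth inequality.

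This is how the paper handles $\mathcal I_1$ here. It is the same device it uses for $\|Z^{k+1}_x\|_{L^1_x}$ in Lemma~\ref{lemma: V U Theta Z k+1}. For $\mathcal I_2$ and $\mathcal I_3$ your $L^1$-of-the-source argument is fine, because there the kernel's $(t-\tau)^{-1/2}$ is integrated in $\tau$.
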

	\begin{proof}
		The proof follows a similar structure to that of Lemma \ref{lemma: Theta n diff}.

       Assume the bounds in Eq.~\eqref{estimates n}. Then, both $D^n$ and $D^{n-1}$ satisfy \eqref{f}, which enables us to apply Lemmas~\ref{lemma: Liu}--\ref{lemma: comparison2} to the fundamental solutions $H(x,t;y,\tau;D^n)$ and $H(x,t;y,\tau;D^{n-1})$. Indeed, we have the pointwise bound
       \begin{align*}
			|(Z^{n+1}&-Z^n)(x,t)|\leq\int_0^t\int_{\mathbb{R}}\left|H_y(x,t;y,\tau;D^n)\right|\frac{\left|Z_y^n\right|\left|(V^n-V^{n-1})(V^n+V^{n-1}+2)\right|}{(1+V^n)^2(1+V^{n-1})^2}(y,\tau)\,\mathrm{d}y\,\mathrm{d}\tau\nonumber\\
			&\quad +O(1)\int_0^t\int_{\mathbb{R}} \left|H(x,t;y,\tau;D^n)\right|\left[\left|Z^n-Z^{n-1}\right|+ \left|\Theta^n-\Theta^{n-1}\right|Z^{n-1}\right]\,\mathrm{d}y\,\mathrm{d}\tau,
		\end{align*}
       thanks to the representation formula~\eqref{eq: Z n+1-n} and the bound for $\mathcal{N}_3^n-\mathcal{N}_3^{n-1}$ in Eq.~\eqref{N3n n-1}. From here, we apply the estimates for $H_y(x,t;y,\tau;D^n)$ and $H(x,t;y,\tau;D^n)$ in Lemma~\ref{lemma: Liu}, together with the $L^{\infty}$-estimates for $Z_y^n$ in Eq.~\eqref{estimates n}, to obtain that
		\begin{align*}
			|(Z^{n+1}-Z^n)(x,t)|
			\leq & O(1)\left\{\int_0^t\int_{\mathbb{R}}\frac{e^{\frac{-(x-y)^2}{C_*(t-\tau)}}}{t-\tau}\left|Z_y^n\right|\,\mathrm{d}y\,\mathrm{d}\tau \right\}\left\|\left| V^n-V^{n-1}\right|\right\|_{\infty}\\
			&+O(1)\left\{\int_0^t\int_{\mathbb{R}}\frac{e^{\frac{-(x-y)^2}{C_*(t-\tau)}}}{\sqrt{t-\tau}}\,\mathrm{d}y\,\mathrm{d}\tau \right\}\left\|\left| Z^n-Z^{n-1}\right|\right\|_{\infty}\\
			&+O(1)\left\{\int_0^t\int_{\mathbb{R}}\frac{e^{\frac{-(x-y)^2}{C_*(t-\tau)}}}{\sqrt{t-\tau}}\left|\log \tau\right|\,\mathrm{d}y\,\mathrm{d}\tau \right\}\left\|\left|\frac{\Theta^n-\Theta^{n-1}}{\left|\log \tau\right|}\right|\right\|_{\infty}\left\|Z^{n-1}\right\|_{L_x^{\infty}}\\
			\leq & O(1)\left\{\int_0^t\frac{1}{\sqrt{t-\tau}}\frac{\delta}{\sqrt{\tau}}\,\mathrm{d}\tau \right\}\left\|\left| V^n-V^{n-1}\right|\right\|_{\infty}+O(1)t\left\|\left| Z^n-Z^{n-1}\right|\right\|_{\infty}\nonumber\\
			&+O(1)\delta \left\{\int_0^t \left|\log \tau\right|\,\mathrm{d}\tau\right\} \left\|\left|\frac{\Theta^n-\Theta^{n-1}}{\left|\log \tau\right|}\right|\right\|_{\infty}.
		\end{align*}
We thus arrive at 
\begin{align*}
    &|(Z^{n+1}-Z^n)(x,t)|\\
    &\quad \leq O(1) \left(\delta\left\|\left|V^n-V^{n-1}\right|\right\|_{\infty}+\delta t\left|\log t\right|\left\|\left|\frac{\Theta^n-\Theta^{n-1}}{\left|\log \tau\right|}\right|\right\|_{\infty}+t\left\|\left|Z^n-Z^{n-1}\right|\right\|_{\infty}\right).
\end{align*}

The $L^1$-estimate for $Z^{n+1}-Z^{n}$
		follows easily from the pointwise bound above:
		\begin{align*}
			\int_{\mathbb{R}}|(Z^{n+1}&-Z^n)|(x,t)\,\mathrm{d}x \nonumber\\ 
			\leq&  O(1)\left\{\int_0^t\frac{1}{\sqrt{t-\tau}}\frac{\delta}{\sqrt{\tau}}\,\mathrm{d}\tau\right\} \left\|\left| V^n-V^{n-1}\right|\right\|_{1}+O(1)t\left\|\left| Z^n-Z^{n-1}\right|\right\|_{1}\\
			&+O(1)\left\{\int_0^t \frac{1}{\sqrt{t-\tau}}\delta \,\mathrm{d}\tau\right\}\left\|\left|\Theta^n-\Theta^{n-1}\right|\right\|_{1}\\
			\leq &O(1) \left(\delta\left\|\left|V^n-V^{n-1}\right|\right\|_{1}+\sqrt{t}\delta\left\|\left|\Theta^n-\Theta^{n-1}\right|\right\|_{1}+t\left\|\left|Z^n-Z^{n-1}\right|\right\|_{1}\right).
		\end{align*}

		To estimate $Z_x^{n+1}-Z_x^n$, we differentiate the representation formula~\eqref{eq: Z k+1} of $Z^{n+1}$  with respect to $x$, and then decompose  into three terms:
		\begin{align*}
			\left(Z_x^{n+1}-Z_x^n\right)(x,t)= &\int_{\mathbb{R}} \left[H_x(x,t;y,0;D^n)-H_x(x,t;y,0;D^{n-1})\right] z_0^*(y) \,\mathrm{d}y \\
			& +\int_0^t \int_{\mathbb{R}} H_x(x,t;y,\tau; D^n) \left[\mathcal{N}_3^n-\mathcal{N}_3^{n-1}\right](y,\tau)\,\mathrm{d}y \,\mathrm{d}\tau\\
			&-\int_0^t \int_{\mathbb{R}}\left[H_x(x,t;y,\tau; D^n)-H_x(x,t;y,\tau; D^{n-1})\right]\\
			&\qquad \times K\phi(1+\Theta^{n-1})Z^{n-1}\,\mathrm{d}y \,\mathrm{d}\tau\\
			=&: \mathcal{I}_1+\mathcal{I}_2+\mathcal{I}_3.
		\end{align*}
		We shall bound each term $\mathcal{I}_1$, $\mathcal{I}_2$, and $\mathcal{I}_3$ separately. 
\begin{itemize}
    \item
 For $\mathcal{I}_1$, by the $L^{\infty}$-bound for $z_0^*$ and the comparison estimate for $H_x(x,t;y,\tau; D^n)$ in Lemma~\ref{lemma: comparison}, one arrives at
\begin{align*}
			\left|\mathcal{I}_1\right|
			\leq &O(1)\delta\left\{\int_{\mathbb{R}}\frac{e^{\frac{-(x-y)^2}{C_*t}}}{t}\,\mathrm{d}y\right\}\left[\left|\log t\right|\left\|\left|V^n-V^{n-1}\right|\right\|_{\infty}+\left\|\left|V^n-V^{n-1}\right|\right\|_{BV}+\sqrt{t}\left\|\left|V^n-V^{n-1}\right|\right\|_{1}\right.\\
			&\quad +\left.\sqrt{t}\left|\log t\right|\left\|\left|\frac{\sqrt{\tau}}{\left|\log \tau\right|}\left(U_x^n-U_x^{n-1}\right)\right|\right\|_{\infty}\right]\left\|z_0^*\right\|_{L_x^{\infty}}\\
			\leq &O(1)\frac{\delta}{\sqrt{t}}\left[\left|\log t\right|\left\|\left|V^n-V^{n-1}\right|\right\|_{\infty}+\left\|\left|V^n-V^{n-1}\right|\right\|_{BV}+\sqrt{t}\left\|\left|V^n-V^{n-1}\right|\right\|_{1}\right.\\
			&\quad +\left.\sqrt{t}\left|\log t\right|\left\|\left|\frac{\sqrt{\tau}}{\left|\log \tau\right|}\left(U_x^n-U_x^{n-1}\right)\right|\right\|_{\infty}\right].
		\end{align*}

        \item 
        For $\mathcal{I}_2$, we directly estimate that 	\begin{align*}
			\left|\mathcal{I}_2\right|
			\leq &O(1)\int_0^t\int_{\mathbb{R}}\frac{e^{\frac{-(x-y)^2}{C_*(t-\tau)}}}{t-\tau}   
			\left[\left|Z^n-Z^{n-1}\right|+ \left|\Theta^n-\Theta^{n-1}\right|Z^{n-1}\right]
			\,\mathrm{d}y\,\mathrm{d}\tau \\
			\leq& O(1)\left\{\int_0^t\frac{1}{\sqrt{t-\tau}}\,\mathrm{d}\tau \right\}\left\|\left|Z^n-Z^{n-1}\right|\right\|_{\infty}+O(1)\left\{\int_0^t\frac{1}{\sqrt{t-\tau}}\left|\log\tau\right|\,\mathrm{d}\tau \right\}\left\|\left|\frac{\Theta^n-\Theta^{n-1}}{\left|\log\tau\right|}\right|\right\|_{\infty}\left\|Z^{n-1}\right\|_{L_x^{\infty}}\\
			\leq &O(1)\sqrt{t}\left\|\left|Z^n-Z^{n-1}\right|\right\|_{\infty}+O(1)\delta \sqrt{t}\left|\log t\right|\left\|\left|\frac{\Theta^n-\Theta^{n-1}}{\left|\log\tau\right|}\right|\right\|_{\infty}.
		\end{align*}
 \item 
	For $\mathcal{I}_3$, we use the Lipschitz continuity of $\phi$, combined with the estimate for $H_x(x,t;y,\tau;D^n)$ and the bound for $\|Z^{n-1}\|_{L_x^{\infty}}$ in Eq.~\eqref{estimates n}, to obtain that
 \begin{align*}
			\left|\mathcal{I}_3\right| \leq& O(1)\int_0^t\int_{\mathbb{R}}\frac{e^{\frac{-(x-y)^2}{C_*(t-\tau)}}}{t-\tau}\left[\left|\log (t-\tau)\right|\left\|\left|V^n-V^{n-1}\right|\right\|_{\infty}+\left\|\left|V^n-V^{n-1}\right|\right\|_{BV}+\sqrt{t-\tau}\left\|\left|V^n-V^{n-1}\right|\right\|_{1}\right.\\
			&\quad +\left.\sqrt{t-\tau}\left|\log t\right|\left\|\left|\frac{\sqrt{\tau}}{\left|\log \tau\right|}\left(U_x^n-U_x^{n-1}\right)\right|\right\|_{\infty}\right]\left\|Z^{n-1}\right\|_{L_x^{\infty}}\,\mathrm{d}y\,\mathrm{d}\tau\\
			\leq& O(1)\delta\left[\int_0^t\frac{1}{\sqrt{t-\tau}}\left|\log(t-\tau)\right|\,\mathrm{d}\tau\left\|\left|V^n-V^{n-1}\right|\right\|_{\infty}+\int_0^{t}\frac{1}{\sqrt{t-\tau}}\,\mathrm{d}\tau\left\|\left|V^n-V^{n-1}\right|\right\|_{BV}\right.\\
			&\left.+t\left\|\left|V^n-V^{n-1}\right|\right\|_{1}+t\left|\log t\right|\left\|\left|\frac{\sqrt{\tau}}{\left|\log \tau\right|}\left(U_x^n-U_x^{n-1}\right)\right|\right\|_{\infty}\right]\\
			\leq& O(1)\delta\left[\frac{1}{\sqrt{t}}\left|\log t\right|\left\|\left|V^n-V^{n-1}\right|\right\|_{\infty}+\sqrt{t}\left\|\left|V^n-V^{n-1}\right|\right\|_{BV}+t\left\|\left|V^n-V^{n-1}\right|\right\|_{1}\right.\\
			&\qquad +\left.t\left|\log t\right|\left\|\left|\frac{\sqrt{\tau}}{\left|\log \tau\right|}\left(U_x^n-U_x^{n-1}\right)\right|\right\|_{\infty}\right].
		\end{align*}
\end{itemize}

Summarising that above pointwise bounds for $\mathcal{I}_1$, $ \mathcal{I}_2$, and $\mathcal{I}_3$, we arrive at 
		\begin{align*}
			&\frac{\sqrt{t}}{\left|\log t\right|}\left|Z_x^{n+1}-Z_x^n\right|\\
			&\qquad \leq C_2(\sqrt{t}+\delta)\bigg\{\left\|\left|V^n-V^{n-1}\right|\right\|_{\infty}+\frac{1}{\left|\log t\right|}\left\|\left|V^n-V^{n-1}\right|\right\|_{BV}+ \frac{1}{\left|\log t\right|}\left\|\left|V^n-V^{n-1}\right|\right\|_{1}\\
			&\qquad +\left\|\left|\frac{\sqrt{\tau}}{\left|\log \tau\right|}\left(U_x^n-U_x^{n-1}\right)\right|\right\|_{\infty}+\left\|\left|\frac{\Theta^n-\Theta^{n-1}}{\left|\log\tau\right|}\right|\right\|_{\infty}+\frac{1}{\left|\log t\right|}\left\|\left|Z^n-Z^{n-1}\right|\right\|_{\infty}\bigg\}.
		\end{align*}

        Next, let us derive the $L^1$-estimate for $Z_x^{n+1}-Z_x^n= \mathcal{I}_1+\mathcal{I}_2+\mathcal{I}_3$. 
       \begin{itemize}
           \item 
           For $\mathcal{I}_1$, combining the definition of $W$ in \eqref{W anti_deri}, the estimates in Lemma~\ref{lemma: comparison}, and the BV-norm bound for $z_0^*$, we obtain that
		\begin{align*}
			\int_{\mathbb{R}}\left|\mathcal{I}_1\right|\,\mathrm{d}x
			&\leq O(1)\left\{\int_{\mathbb{R}}\frac{e^{\frac{-(x-y)^2}{C_*t}}}{\sqrt{t}}\,\mathrm{d}y\right\}\left\|z_0^*\right\|_{BV} \bigg[\left|\log t\right|\left\|\left|V^n-V^{n-1}\right|\right\|_{\infty}+\left\|\left|V^n-V^{n-1}\right|\right\|_{BV}\\
			& +\sqrt{t}\left\|\left|V^n-V^{n-1}\right|\right\|_{1}+\sqrt{t}\left|\log t\right|\left\|\left|\frac{\sqrt{\tau}}{\left|\log \tau\right|}\left(U_x^n-U_x^{n-1}\right)\right|\right\|_{\infty}\bigg]\\
			&\quad\leq O(1)\delta\bigg[\left|\log t\right|\left\|\left|V^n-V^{n-1}\right|\right\|_{\infty}+\left\|\left|V^n-V^{n-1}\right|\right\|_{BV}+\sqrt{t}\left\|\left|V^n-V^{n-1}\right|\right\|_{1}\\
			&\quad +\sqrt{t}\left|\log t\right|\left\|\left|\frac{\sqrt{\tau}}{\left|\log \tau\right|}\left(U_x^n-U_x^{n-1}\right)\right|\right\|_{\infty}\bigg].
		\end{align*}
           
           \item 
For $\mathcal{I}_2$, a direct computation leads to 
		\begin{align*}
			\int_{\mathbb{R}}\left|\mathcal{I}_2\right|\,\mathrm{d}x
			&\leq O(1)\left\{\int_0^t\frac{1}{\sqrt{t-\tau}}\,\mathrm{d}\tau \right\}\left\|\left|Z^n-Z^{n-1}\right|\right\|_{1}\\
            &\qquad + O(1)\left\{\int_0^t\frac{1}{\sqrt{t-\tau}}\,\mathrm{d}\tau \left\|\left|\Theta^n-\Theta^{n-1}\right|\right\|_{1}\left\|Z^{n-1}\right\|_{L_x^{\infty}}\right\}\\
&\leq  O(1)\sqrt{t}\left\|\left|Z^n-Z^{n-1}\right|\right\|_{1}+O(1)\delta \sqrt{t}\left\|\left|\Theta^n-\Theta^{n-1}\right|\right\|_{1}.
		\end{align*}
           
		For $\mathcal{I}_3$, a similar argument as that for $\mathcal{I}_1$ leads to
\begin{align}
\int_{\mathbb{R}}\left|\mathcal{I}_3\right|\,\mathrm{d}x &\leq O(1)\left\{\int_0^t\frac{1}{\sqrt{t-\tau}}\left|\log(t-\tau)\right|\delta \,\mathrm{d}\tau\right\}\left\|\left|V^n-V^{n-1}\right|\right\|_{\infty}\nonumber\\
&\qquad
 +O(1)\left\{\int_0^{t}\frac{1}{\sqrt{t-\tau}}\delta \,\mathrm{d}\tau\right\}\left\|\left|V^n-V^{n-1}\right|\right\|_{BV}\nonumber\\
&\qquad
 +O(1)\left\{\int_0^t \delta \,\mathrm{d}\tau\right\}\left\|\left|V^n-V^{n-1}\right|\right\|_{1}\nonumber\\
&\qquad
 +O(1)\left\{\int_0^t \left|\log t\right|\delta\,\mathrm{d}\tau \right\} \left\|\left|\frac{\sqrt{\tau}}{\left|\log \tau\right|}\left(U_x^n-U_x^{n-1}\right)\right|\right\|_{\infty}\nonumber\\
		& \leq  O(1)\delta\Bigg[\sqrt{t}\left|\log t\right|\left\|\left|V^n-V^{n-1}\right|\right\|_{\infty}+\sqrt{t}\left\|\left|V^n-V^{n-1}\right|\right\|_{BV}\nonumber\\
			&\qquad+t\left\|\left|V^n-V^{n-1}\right|\right\|_{1} + t\left|\log t\right|\left\|\left|\frac{\sqrt{\tau}}{\left|\log \tau\right|}\left(U_x^n-U_x^{n-1}\right)\right|\right\|_{\infty}\Bigg].
		\end{align}
       \end{itemize}

		Summarising the above estimates, we infer that for sufficiently small $\delta$ and $t_{\sharp}$,
		\begin{align*}
			&\frac{\left\|Z_x^{n+1}-Z_x^n\right\|_{L_x^{1}}}{\left|\log t\right|}\\
			&\quad \leq C_2(\sqrt{t}+\delta)\Bigg[\left\|\left|V^n-V^{n-1}\right|\right\|_{\infty}+\frac{1}{\left|\log t\right|}\left\|\left|V^n-V^{n-1}\right|\right\|_{BV}+ \frac{1}{\left|\log t\right|}\left\|\left|V^n-V^{n-1}\right|\right\|_{1}\\
&\qquad +\left\|\left|\frac{\sqrt{\tau}}{\left|\log \tau\right|}\left(U_x^n-U_x^{n-1}\right)\right|\right\|_{\infty}+\frac{1}{\left|\log t\right|}\left\|\left|\Theta^n-\Theta^{n-1}\right|\right\|_{1}+\frac{1}{\left|\log t\right|}\left\|\left|Z^n-Z^{n-1}\right|\right\|_{1}\Bigg].
		\end{align*}

        The proof of Lemma~\ref{lemma: Z n diff} is now complete.  	\end{proof}

	The following two lemmas can be found in \cite{WangHT2022}. We safely omit the proof here.
	\begin{lemma}
		\label{lemma: U n diff}
		For sufficiently small $\delta$ and $t_{\sharp}$ and for $0<t<t_{\sharp}$, there exists a positive constant $C_2$ such that the differences $U^{n+1}-U^n$
		and their derivatives satisfy the following estimates:
		\begin{align}
			& \left\|U^{n+1}(\cdot,t)-U^n(\cdot, t)\right\|_{L_x^{\infty}} \nonumber\\
			& \quad \leq C_2\left(\sqrt{t_{\sharp}}\left|\log t\right|+\delta\right)\left(\left\|\left|V^n-V^{n-1}\right|\right\|_{\infty}+\left\|\left|\frac{\Theta^n-\Theta^{n-1}}{\left|\log \tau\right|}\right|\right\|_{\infty}\right), \nonumber\\
			& \left\|U^{n+1}(\cdot,t)-U^n(\cdot, t)\right\|_{L_x^{1}} \nonumber\\
			& \quad \leq C_2\left(\sqrt{t_{\sharp}}+\delta\right)\left(\left\|\left|V^n-V^{n-1}\right|\right\|_1+\left\|\left|\Theta^n-\Theta^{n-1}\right|\right\|_1\right), \nonumber\\
			& \frac{\sqrt{t}}{\left|\log t\right|}\left\|U_x^{n+1}(\cdot,t)-U_x^n(\cdot, t)\right\|_{L_x^{\infty}}\nonumber \\
			& \quad \leq C_2\left(\sqrt{t_{\sharp}}+\delta\right)\left(\left\|\left|V^n-V^{n-1}\right|\right\|_{\infty}+\frac{1}{\left|\log t\right|}\left\|\left| V^n-V^{n-1}\right|\right\|_{B V}+\frac{1}{\left|\log t\right|}\left\|\left|V^n-V^{n-1}\right|\right\|_1\right.\nonumber \\
			& \left.\quad+\left\|\left|\frac{\sqrt{\tau}}{\left|\log \tau\right|}\left(U_x^n-U_x^{n-1}\right)\right|\right\|_{\infty}+\left\|\left|\frac{\Theta^n-\Theta^{n-1}}{\left|\log \tau\right|}\right|\right\|_{\infty}\right), \nonumber\\
			& \frac{\left\|U_x^{n+1}(\cdot,t)-U_x^n(\cdot, t)\right\|_{L_x^{1}}}{\left|\log t\right|} \nonumber\\
			& \leq C_2\left(\sqrt{t_{\sharp}}+\delta\right)\left(\left\|\left| V^n-V^{n-1}\right|\right\|_{\infty}+\frac{1}{\left|\log t\right|}\left\|\left|V^n-V^{n-1}\right|\right\|_{B V}+\frac{1}{\left|\log t\right|}\left\|\left| V^n-V^{n-1}\right|\right\|_1\right. \nonumber\\
			& \left.\quad+\left\|\left|\frac{\sqrt{\tau}}{\left|\log \tau\right|}\left(U_x^n-U_x^{n-1}\right)\right|\right\|_{\infty}+\left\|\left|\frac{U_x^n-U_x^{n-1}}{\left|\log\tau\right|}\right|\right\|_{1}+\left\|\left|\frac{\Theta^n-\Theta^{n-1}}{\left|\log\tau\right|}\right|\right\|_{\infty}\right.\nonumber\\
			&\left.\quad +\frac{1}{\left|\log t\right|}\left\|\left|\Theta^n-\Theta^{n-1}\right|\right\|_1\right).
		\end{align}
	\end{lemma}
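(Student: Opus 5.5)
The plan is to follow the template of Lemmas~\ref{lemma: Theta n diff} and \ref{lemma: Z n diff}, starting from the Duhamel representation \eqref{eq: U n+1-n}. Under the ansatz \eqref{estimates n}, both $\mu^n$ and $\mu^{n-1}$ satisfy \eqref{f}, so Lemmas~\ref{lemma: Liu}--\ref{lemma: comparison2} apply to $H(\cdot;\mu^n)$ and $H(\cdot;\mu^{n-1})$. The source in \eqref{eq: U n+1-n} splits into two parts. The first is the viscous commutator $\partial_x\big(\mu U^n_x (V^n-V^{n-1})/((1+V^n)(1+V^{n-1}))\big)$, already moved onto $H_y$. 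The second is $\mathcal{N}_1^n-\mathcal{N}_1^{n-1}$, which by \eqref{N1n n-1} is an exact $x$-derivative. We integrate it by parts once as well, giving a term $\int_0^t\int H_y(x,t;y,\tau;\mu^n)\,[a(\Theta^n-\Theta^{n-1})/(1+V^n) + O(1)(V^n-V^{n-1})]\,\mathrm{d}y\,\mathrm{d}\tau$. No $Z$ terms enter, which is why the right-hand sides contain no $Z^n-Z^{n-1}$.

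\emph{$L^\infty$ and $L^1$ bounds.} For the $L^\infty$ bound we use $|H_y|\le C_*e^{-(x-y)^2/C_*(t-\tau)}/(t-\tau)$. In the commutator term we combine this with $\|U^n_x\|_{L^\infty_x}\le 2C_\sharp\delta/\sqrt\tau$ and $\int_0^t (t-\tau)^{-1/2}\tau^{-1/2}\,\mathrm{d}\tau=O(1)$, which yields $O(\delta)\||V^n-V^{n-1}|\|_\infty$. The pressure difference term cannot simply be bounded by $\int_0^t (t-\tau)^{-1/2}\,\mathrm{d}\tau$ times the sup, since that kernel is not integrable pointwise. We therefore use the third estimate of Lemma~\ref{lemma: Liu}, $|\int H_y\,\mathrm{d}s|\le C_*e^{-(x-y)^2/C_*(t-\tau)}$, after freezing the time-independent part. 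In practice, it is enough to write $\Theta^n-\Theta^{n-1}=|\log\tau|\cdot\frac{\Theta^n-\Theta^{n-1}}{|\log\tau|}$ and to note that $\int_0^t (t-\tau)^{-1/2}|\log\tau|\,\mathrm{d}\tau\le O(1)\sqrt{t}|\log t|$. This produces the factor $\sqrt{t_\sharp}|\log t|$. The $L^1$ bound is the same computation with $\int_{\mathbb R}|H_y|\,\mathrm{d}x\le O(1)(t-\tau)^{-1/2}$, which gives $O(\sqrt{t}+\delta)$ times $\||V^n-V^{n-1}|\|_1+\||\Theta^n-\Theta^{n-1}|\|_1$.

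\emph{Derivative bounds.} For $U_x^{n+1}-U_x^n$ we do not differentiate the difference equation. Instead we subtract the two representations \eqref{eq: U k+1}, in analogy with the decomposition $\mathcal{I}_1+\mathcal{I}_2+\mathcal{I}_3$ in Lemma~\ref{lemma: Theta n diff}. The first piece is the initial data term with $H_x(\cdot;\mu^n)-H_x(\cdot;\mu^{n-1})$, controlled by Lemma~\ref{lemma: comparison}; this brings in $|\log t|\,\||V^n-V^{n-1}|\|_\infty$, $\||V^n-V^{n-1}|\|_{BV}$, $\sqrt t\,\||V^n-V^{n-1}|\|_1$, and the $\frac{\sqrt\tau}{|\log\tau|}\partial_\tau(\mu^n-\mu^{n-1})$ term. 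The last of these is expressed through $U_x^n-U_x^{n-1}$ via $V_t=U_x$. The second piece carries $H_{xy}(\cdot;\mu^{n-1})$ against the pressure difference. The third carries the comparison $H_{xy}(\cdot;\mu^n)-H_{xy}(\cdot;\mu^{n-1})$ against $p(1+V^n,1+\Theta^n)$.

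\emph{Main obstacle.} The hardest step is the $H_{xy}$ terms, where the kernel $(t-\tau)^{-3/2}$ is not time-integrable. For these we will use the time-integrated identities of Lemmas~\ref{lemma: 2 derivative} and \ref{lemma: comparison2}, namely $\int H_{xy}\,\mathrm{d}s=-\delta(x-y)/f+\dots$, which extract a $\delta$-mass plus a remainder $O(e^{-(x-y)^2/C_*(t-\tau)}/\sqrt{t-\tau})$. Applying them requires time regularity of the pressure difference. This comes from the time-difference estimates of Lemma~\ref{lemma: diff Theta k+1} and from \eqref{minus v}-type control on $V$, and is where the $\||\frac{\sqrt\tau}{|\log\tau|}(U^n_x-U^{n-1}_x)|\|_\infty$ and $\||\frac{U_x^n-U_x^{n-1}}{|\log\tau|}|\|_1$ norms enter. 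As in Lemma~\ref{lemma: Theta n diff}, we split $\int_0^t$ at $t/2$ to absorb the singularities at $\tau=0$ and $\tau=t$. Finally we multiply by $\sqrt t/|\log t|$ (respectively divide by $|\log t|$ after $x$-integration, using the anti-derivative $W$ of \eqref{W anti_deri} for the BV initial term) to arrive at the stated bounds.
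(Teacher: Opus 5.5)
Your bounds for $\|U^{n+1}-U^n\|_{L^\infty_x\cap L^1_x}$ are sound. So are the initial-data piece and the kernel-comparison piece of $U^{n+1}_x-U^n_x$. Your aside that the pressure term ``cannot simply be bounded'' is mistaken, since $\int_{\mathbb{R}}|H_y|\,\mathrm{d}y\le O(1)(t-\tau)^{-1/2}$ is integrable in $\tau$. That aside is harmless, because the computation you actually carry out is exactly that direct bound.

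The gap is in the temperature part of $p^n-p^{n-1}$ against $H_{xy}(\cdot;\mu^{n-1})$:
\[
\int_0^t\int_{\mathbb{R}}H_{xy}(x,t;y,\tau;\mu^{n-1})\,\frac{a(\Theta^n-\Theta^{n-1})}{1+V^n}(y,\tau)\,\mathrm{d}y\,\mathrm{d}\tau .
\]
Because $\int_{\mathbb{R}}|H_{xy}|\,\mathrm{d}y$ is of order $(t-\tau)^{-1}$, a sup-norm bound on $\Theta^n-\Theta^{n-1}$ alone gives a divergent integral. After extracting the $\delta$-mass with the time-integrated identity of Lemma~\ref{lemma: 2 derivative}, you must bound $(\Theta^n-\Theta^{n-1})(y,\tau)-(\Theta^n-\Theta^{n-1})(y,t)$ by something \emph{linear in the difference norms}.

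Lemma~\ref{lemma: diff Theta k+1} cannot supply this. It bounds $\Theta^{k+1}(\cdot,t)-\Theta^{k+1}(\cdot,s)$ for a single iterate by absolute quantities of size $O(\delta)$. Applying it to $\Theta^n$ and $\Theta^{n-1}$ separately gives only $O(\delta)$ times a modulus of $t-\tau$, with no factor of $\Theta^n-\Theta^{n-1}$. The resulting inhomogeneous term is not of the form $C_2(\sqrt{t_\sharp}+\delta)$ times difference norms, so \eqref{contraction} does not follow. The same problem appears in the $L^1_x$ version.

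Lemma~\ref{lemma: diff Theta k+1} and \eqref{minus v} are the right tools only where absolute time regularity is what is needed. One such place is your third piece, where $H_{xy}(\cdot;\mu^n)-H_{xy}(\cdot;\mu^{n-1})$ acts on $p^n$ itself. The other is the $O(1)$ factors multiplying $V^n-V^{n-1}$ in the $V$-part of $p^n-p^{n-1}$. There the needed linear modulus of $V^n-V^{n-1}$ comes from $\partial_t(V^n-V^{n-1})=U^n_x-U^{n-1}_x$, which you do indicate.

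What is missing is a time-modulus estimate for the difference $\Theta^n-\Theta^{n-1}$ itself. One route is to redo the proof of Lemma~\ref{lemma: diff Theta k+1} on the difference representation \eqref{eq: Theta n+1-n} at index $n-1$. That brings in the previous-step differences $V^{n-1}-V^{n-2}$, $U^{n-1}_x-U^{n-2}_x$, $\Theta^{n-1}-\Theta^{n-2}$ and $Z^{n-1}-Z^{n-2}$.

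Trading time regularity for spatial regularity does not by itself close near $\mathcal{D}$. That trade would use the cancellation $\int_{\mathbb{R}}H_{xy}\,\mathrm{d}y=0$ together with the $\Theta_x$-difference norms in $\mathcal{F}$. But the jumps of $1/(1+V^n)$ force you back to the time-integrated identity, and hence back to needing time regularity of $\Theta^n-\Theta^{n-1}$.

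Either way, the right-hand side you obtain contains terms not displayed in the statement. You need to say how the contraction argument of Theorem~\ref{thm: local exi} absorbs them. For example, an inequality of the form $a_{n+1}\le\epsilon(a_n+a_{n-1})$ with $\epsilon<\tfrac12$ still gives geometric decay.
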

	
\begin{proof}
   See \cite{WangHT2022}.  
\end{proof}

	\begin{lemma}
		\label{lemma: V n diff}
		For sufficiently small $\delta$ and $t_{\sharp}$ and for $0<t<t_{\sharp}$, there exists a positive constant $C_2$ such that the differences $V^{n+1}-V^n$
		and their derivatives satisfy the following estimates:
		\begin{align}
			& \left\|V^{n+1}(\cdot,t)-V^n(\cdot, t)\right\|_{L_x^{\infty}} \nonumber\\
			& \quad \leq C_2\left(\sqrt{t_{\sharp}}+\delta\right)\left(\left\|\left|V^n-V^{n-1}\right|\right\|_{\infty}+\left\|\left|\frac{\Theta^n-\Theta^{n-1}}{\left|\log \tau\right|}\right|\right\|_{\infty}\right), \nonumber\\
			& \left\|V^{n+1}(\cdot,t)-V^n(\cdot, t)\right\|_{L_x^{1}} \nonumber\\
			& \quad \leq C_2\left(\sqrt{t_{\sharp}}+\delta\right)\left(\left\|\left|V^n-V^{n-1}\right|\right\|_1+\left\|\left|\Theta^n-\Theta^{n-1}\right|\right\|_1\right), \nonumber\\
			& \label{Vn cauchy}\left|\left.\left(V^{n+1}(\cdot,t)-V^n(\cdot, t)\right)\right|_{\omega^-}^{\omega^+}\right| \\
			& \quad \leq C_2\left(\sqrt{t_{\sharp}}+\delta\right)\sqrt{t} \sup_{0<\tau<t} \left|\left.V^n(\cdot,t)-V^{n-1}(\cdot, t)\right|_{\omega^-}^{\omega^+}\right|+C_2\delta\left(\left\|\left|V^n-V^{n-1}\right|\right\|_{\infty}\right.\nonumber\\
			&\qquad +\left.\left\|\left|\frac{\sqrt{\tau}}{\left|\log \tau\right|}\left(U_x^n-U_x^{n-1}\right)\right|\right\|_{\infty}+\left\|\left|\frac{\Theta^n-\Theta^{n-1}}{\left|\log \tau\right|}\right|\right\|_{\infty}\right), \nonumber\\
			& \left\|V^{n+1}(\cdot,t)-V^n(\cdot, t)\right\|_{BV}\nonumber\\
			& \quad \leq C_2\left(\sqrt{t_{\sharp}}+\delta\right)\left(\left\|\left| V^n-V^{n-1}\right|\right\|_{\infty}+\left\|\left|V^n-V^{n-1}\right|\right\|_{B V}+\left\|\left| V^n-V^{n-1}\right|\right\|_1\right. \nonumber\\
			& \left.\qquad+\left\|\left|\frac{\Theta^n-\Theta^{n-1}}{\left|\log\tau\right|}\right|\right\|_{\infty}+\left\|\left|\Theta^n-\Theta^{n-1}\right|\right\|_1+\left\|\left|\frac{\Theta_x^n-\Theta_x^{n-1}}{\left|\log\tau\right|}\right|\right\|_{\infty}\right).\nonumber
		\end{align}
	\end{lemma}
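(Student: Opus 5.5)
The plan is to read everything off the representation \eqref{eq: V n+1-n},
\[
(V^{n+1}-V^n)(x,t)=\int_0^t \partial_x\big(U^{n+1}-U^n\big)(x,\tau)\,\mathrm{d}\tau ,
\]
together with the bounds for $U^{n+1}-U^n$ in Lemma~\ref{lemma: U n diff} and the flux structure of the $U$-equation.

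\textbf{$L^\infty$ and $L^1$ bounds.} I would bound $|U_x^{n+1}-U_x^n|(\cdot,\tau)$ by $\frac{|\log\tau|}{\sqrt{\tau}}$ times the right-hand side of the third estimate in Lemma~\ref{lemma: U n diff}. Since $\int_0^t |\log\tau|\,\tau^{-1/2}\,\mathrm{d}\tau\le O(1)\sqrt{t}\,|\log t|\le O(1)$ for $t<t_\sharp$, this gives the $L^\infty$ estimate. Here I would have to check that the $\mathrm{BV}$-, $L^1$- and $U_x$-difference terms on that right-hand side can be absorbed into the displayed ones, or else restrict to the leading-order terms. I expect the cleaner route is to go back to \eqref{eq: U n+1-n}, integrate in time first, and use the time-integrated kernel bounds of Lemma~\ref{lemma: Liu}, namely $\big|\int H_x\,\mathrm{d}s\big|\le C_*e^{-(x-y)^2/(C_*(t-s))}$, together with $\int H_{xy}\,\mathrm{d}s$ from Lemma~\ref{lemma: 2 derivative}. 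This avoids losing the factor $\tau^{-1/2}$ and yields exactly the terms $\||V^n-V^{n-1}|\|_\infty$ and $\||(\Theta^n-\Theta^{n-1})/|\log\tau||\|_\infty$. The $L^1$ bound is identical, with $L^1_x$ norms in place of $L^\infty_x$ norms, using $\int_{\mathbb R}e^{-(x-y)^2/C_*t}\,\mathrm{d}x\le O(1)\sqrt t$.

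\textbf{Jumps at $\omega\in\mathcal D$.} I would argue as in Lemma~\ref{lemma: lip V}. By Remark~\ref{re: conti} the flux $F^{k}:=\frac{\mu U_x^{k+1}}{1+V^k}-p(1+V^k,1+\Theta^k)$ is continuous in $x$. Hence
\[
U^{k+1}_x\big|_{\omega^-}^{\omega^+}=\frac{1}{\mu}\,V^k\big|_{\omega^-}^{\omega^+}F^k+\frac{a}{\mu}\,\Theta^k\big|_{\omega^-}^{\omega^+}.
\]
The second term vanishes because $\Theta^k$ is continuous. Subtracting the identities for $k=n$ and $k=n-1$ gives
\[
\big(V^n-V^{n-1}\big)\big|_{\omega^-}^{\omega^+}F^n+V^{n-1}\big|_{\omega^-}^{\omega^+}\big(F^n-F^{n-1}\big).
\]
Since $|F^n|\le O(1)(1+\delta\tau^{-1/2})$, integration in $\tau$ produces the term $\sqrt t\,\sup_\tau\big|(V^n-V^{n-1})|_{\omega^\mp}\big|$. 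The second piece involves $|F^n-F^{n-1}|\le O(1)\big(|U_x^n-U_x^{n-1}|+|V^n-V^{n-1}|\,|U_x^n|+|V^n-V^{n-1}|+|\Theta^n-\Theta^{n-1}|\big)$ with the jump size $O(\delta)$, and it gives the $C_2\delta(\cdots)$ part after using the weights $\sqrt\tau/|\log\tau|$ and $|\log\tau|$.

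\textbf{BV bound (main obstacle).} The jump part is handled by summing the previous step over $\mathcal D$. For the absolutely continuous part (the singular continuous part is analogous by \cite[Lemma 3.4]{WangHT2022}) I would use the equation to write
\[
U_x^{k+1}=\frac{1+V^k}{\mu}\Big(p(1+V^k,1+\Theta^k)+\int_{-\infty}^x U^{k+1}_t\,\mathrm{d}y\Big).
\]
Differentiating in $x$ gives $\partial_x U_x^{k+1}=\frac{V^k_x}{\mu}(\cdots)+\frac{1+V^k}{\mu}\big(p_x+U_t^{k+1}\big)$. Taking the difference $k=n$ minus $k=n-1$, integrating in time, and taking $L^1_x$ norms produces terms of four kinds. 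The first is $(V^n-V^{n-1})_x$ times a bounded flux, contributing $O(\sqrt t)\,\||V^n-V^{n-1}|\|_{BV}$. The second is $V^{n-1}_x$ times flux differences, which is $O(\delta)$ times the $L^\infty$ difference norms. The third is $p_x$ differences, which bring in $\Theta^n_x-\Theta^{n-1}_x$; this is the source of the $\||(\Theta_x^n-\Theta_x^{n-1})/|\log\tau||\|_\infty$ term. The fourth is $\int_0^t\|U_t^{n+1}-U_t^n\|_{L^1}\,\mathrm{d}\tau$.

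The last term is the hard step. I would represent $U_t^{n+1}-U_t^n$ through $H_t$ and $\int H_t\,\mathrm{d}s$, using \eqref{es: int H_t} and the $H_t$-comparison in Lemma~\ref{lemma: comparison2}. The aim is an integrable time singularity of order $|\log\tau|/\sqrt\tau$ multiplying the difference norms $\||V^n-V^{n-1}|\|_{\infty,1,BV}$ and $\||\Theta^n-\Theta^{n-1}|\|_{\infty,1}$. Care is needed so that no $U_x$-difference norm survives in the final inequality. Failing that, I would keep such a term and absorb it later in the combined contraction argument with Lemma~\ref{lemma: U n diff}.
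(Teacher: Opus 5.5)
Your $L^\infty$ and $L^1$ bounds work in the ``cleaner'' form you describe. Integrate $\mathcal N_1^n-\mathcal N_1^{n-1}$ by parts to get $V^{n+1}-V^n=\int_0^t\int_{\mathbb R}\big[\int_s^tH_{xy}(x,\tau;y,s;\mu^n)\,\mathrm{d}\tau\big]G(y,s)\,\mathrm{d}y\,\mathrm{d}s$, where $G$ is $\frac{\mu U^n_x(V^n-V^{n-1})}{(1+V^n)(1+V^{n-1})}$ plus the pressure difference. The forward-in-$\tau$ formula of Lemma~\ref{lemma: 2 derivative} then gives the displayed terms: a local part $\delta(x-y)/f(x,s)$, a commutator controlled by $|V^n(x,\tau)-V^n(x,s)|\le\int_s^\tau|U^n_x|$, and a Gaussian remainder.

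The jump step is the flux-continuity argument of Lemma~\ref{lemma: lip V}, but there is an index slip. With your (correct) definition $F^k=\frac{\mu U^{k+1}_x}{1+V^k}-p(1+V^k,1+\Theta^k)$, the difference $F^n-F^{n-1}$ contains $U^{n+1}_x-U^n_x$, not $U^n_x-U^{n-1}_x$. You therefore obtain $\delta\left\|\left|\frac{\sqrt\tau}{|\log\tau|}(U^{n+1}_x-U^n_x)\right|\right\|_\infty$. This is harmless for the contraction: it carries a factor $\delta$ and can be absorbed, or converted via Lemma~\ref{lemma: U n diff} at the price of BV and $L^1$ norms of $V^n-V^{n-1}$. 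It is not, however, literally the stated right-hand side.

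The genuine gap is the BV estimate, which is the core of the lemma. Your identity $\big(\frac{\mu U^{k+1}_x}{1+V^k}\big)_x=U^{k+1}_t+p(1+V^k,1+\Theta^k)_x$ is exact, so it only relabels the problem. You would need $\int_0^t\|U^{n+1}_t-U^n_t\|_{L^1}\,\mathrm{d}\tau$, i.e. norms taken at each fixed $\tau$ and integrated afterwards, and this throws away the cancellation in time. Writing $U^{n+1}_t-U^n_t$ by Duhamel with source $\partial_yG$, the term $\int_0^\tau\int_{\mathbb R}H_t(x,\tau;y,s)\,\partial_yG(y,s)\,\mathrm{d}y\,\mathrm{d}s$ is, with the available bound $|H_t|\le C_*e^{-\frac{(x-y)^2}{C_*(\tau-s)}}(\tau-s)^{-3/2}$, controlled in $L^1_x$ only by $\int_0^\tau(\tau-s)^{-1}\|G(\cdot,s)\|_{BV}\,\mathrm{d}s$. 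That integral diverges logarithmically. The only way around it is to freeze the source at time $\tau$, as \eqref{es: int H_t} requires, and then control $G(\cdot,s)-G(\cdot,\tau)$ in BV. That means time regularity of $V^n-V^{n-1}$ and $\Theta^n-\Theta^{n-1}$, which no component of $\mathcal F$ provides. Your fallback of absorbing a $U_t$-difference term later also fails, since neither $\mathcal F$ nor Lemma~\ref{lemma: U n diff} contains such a quantity.

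The argument the paper defers to (\cite{WangHT2022}, following \cite{LiuTP2022}) instead differentiates the time-integrated representation above in $x$. It then uses the formula for $\int_s^tH_{xxy}(x,\tau;y,s;f)\,\mathrm{d}\tau$, $f=\mu^n$, in Lemma~\ref{lemma: 2 derivative}. This works for three reasons:
\begin{itemize}
\item The singular parts $\delta'(x-y)/f(x,s)$ and $-\partial_xf(x,s)\,\delta(x-y)/f^2(x,s)$ combine into the local term $\int_0^t\partial_x(G/f)(x,s)\,\mathrm{d}s$, which is controlled by norms of the differences at the same time $s$.
\item The remainders $O(1)\big(|\partial_xf|\,e^{-\frac{(x-y)^2}{C_*(t-s)}}(t-s)^{-1/2}+e^{-\frac{(x-y)^2}{C_*(t-s)}}(t-s)^{-1}\big)$ are integrable in $x$ and $s$.
\item The commutator $\partial_x[(f(x,\tau)-f(x,s))H_{xy}]$ needs time regularity only of the coefficient. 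That is exactly the hypothesis $\|V^n(\cdot,\tau)-V^n(\cdot,s)\|_{BV}\le 2C_\sharp\delta\,(\tau-s)|\log(\tau-s)|/\sqrt\tau$ carried in \eqref{estimates n}.
\end{itemize}
The missing idea is to integrate the kernel in time first, so that no time regularity of the differences is ever needed.
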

\begin{proof}
   See \cite{WangHT2022}.  
\end{proof}

Equipped with the estimates for the time differences of $(V^n, U^n, \Theta^n, Z^n)$ in Lemmas~\ref{lemma: Theta n diff}--\ref{lemma: V n diff}, we are now ready to establish the local existence of weak solutions to Eq.~\eqref{Cauchy pde}, provided that the smallness condition~\eqref{smallness condition} for the initial datum is verified.
	\begin{theorem}\label{thm: local exi}
There is a universal constant $\delta>0$ such that the following holds. Suppose that the initial datum $\left(v_0^*,u_0^*,\theta_0^*,z_0^*\right)$ satisfies the smallness condition~\eqref{smallness condition} with $\delta$. Then there exists a positive constant $t_{\sharp}$ such that system \eqref{Cauchy pde} admits a weak solution
		$$(v,u,\theta,z)=(v^*+1, u^*, \theta^*+1, z^*),\quad t<t_{\sharp}\ll 1,$$
		with the estimates
		\begin{equation}
			\label{es local sol}
			\left\{\begin{array}{l}
				\max \left\{\left\|u(\cdot, t)\right\|_{L_x^1},\left\|u(\cdot, t)\right\|_{L_x^{\infty}},\left\|u_x(\cdot, t)\right\|_{L_x^1}, \sqrt{t}\left\|u_x(\cdot, t)\right\|_{L_x^{\infty}}\right\} \leq 2 C_{\sharp} \delta, \\
				\max \left\{\left\|\theta(\cdot, t)-1\right\|_{L_x^1},\left\|\theta(\cdot, t)-1\right\|_{L_x^{\infty}},\left\|\theta_x(\cdot, t)\right\|_{L_x^1}, \sqrt{t}\left\|\theta_x(\cdot, t)\right\|_{L_x^{\infty}}\right\} \leq 2 C_{\sharp} \delta, \\
				\max \left\{\left\|z(\cdot, t)\right\|_{L_x^1},\left\|z(\cdot, t)\right\|_{L_x^{\infty}},\left\|z_x(\cdot, t)\right\|_{L_x^1},\sqrt{t}\left\|z_x(\cdot, t)\right\|_{L_x^{\infty}}\right\} \leq 2  C_{\sharp} \delta, \\
				\max \left\{\left\|v(\cdot, t)\right\|_{B V},\left\|v(\cdot, t)-1\right\|_{L_x^1},\left\|v-1(\cdot, t)\right\|_{L_x^{\infty}}, \sqrt{t}\left\|v_t(\cdot, t)\right\|_{L_x^{\infty}}\right\} \leq 2 C_{\sharp} \delta, \\
				v^*=v_{\tilde{a}}^*+v_j^*,\quad v_j^*(x,t)=\sum\limits_{\omega<x,\omega\in \mathcal{D}} \left.v^*\right|_{\omega^-}^{\omega^{+}} h(x-\omega), \quad v_{\tilde{a}}^* \text{ is continuous},\\
				\left|\left.v(\cdot, t)\right|_{x=\omega^{-}}^{x=\omega^{+}}\right|\leq 2\left|\left. v_0^*(\cdot)\right|_{x=\omega^{-}} ^{x=\omega^{+}} \right|, \quad \omega \in \mathcal{D}.
			\end{array}\right.
		\end{equation}
		Here, $C_{\sharp}$ is a universal constant and $h(x)$ is the Heaviside step function. 
        
        The fluxes of $u$, $\theta$ and $z$, namely that
        \begin{align*}
            \bar{u}:= \frac{\mu u_x}{v}-p,\qquad \bar{\theta}:=\frac{\nu}{c_v v}\theta_x-\int_{-\infty}^{x}(\frac{p}{c_v}u_y-\frac{\mu}{c_v v}u_y^2-\frac{q}{c_v}K\phi(\theta)z)\,\mathrm{d}y,\qquad \bar{z}:=\frac{D}{v^2}z_x,
        \end{align*} 
		are all continuous with respect to $x$.
	\end{theorem}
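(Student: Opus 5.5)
We combine the uniform bounds of Lemma~\ref{lemma: V U Theta Z k+1} with the difference estimates of Lemmas~\ref{lemma: Theta n diff}--\ref{lemma: V n diff}. The argument has three steps: show that the iterates converge, pass to the limit in \eqref{eq: n+1}, and verify the regularity properties listed in \eqref{es local sol}.

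\textbf{Contraction.} Introduce the weighted difference functional
\begin{align*}
\mathcal{E}^n:=&\left\|\left|V^n-V^{n-1}\right|\right\|_{\infty}+\left\|\left|V^n-V^{n-1}\right|\right\|_{1}+\left\|\left|V^n-V^{n-1}\right|\right\|_{BV}+\left\|\left|\tfrac{\Theta^n-\Theta^{n-1}}{|\log\tau|}\right|\right\|_{\infty}+\left\|\left|\Theta^n-\Theta^{n-1}\right|\right\|_{1}\\
&+\left\|\left|\tfrac{\Theta_x^n-\Theta_x^{n-1}}{|\log\tau|}\right|\right\|_{\infty}+\left\|\left|\tfrac{U_x^n-U_x^{n-1}}{|\log\tau|}\right|\right\|_{1}+\left\|\left|\tfrac{\sqrt{\tau}(U_x^n-U_x^{n-1})}{|\log\tau|}\right|\right\|_{\infty}+\left\|\left|Z^n-Z^{n-1}\right|\right\|_{\infty}+\left\|\left|Z^n-Z^{n-1}\right|\right\|_{1}.
\end{align*}
The $\Theta_x$ sup-norm term should carry the same $\sqrt{\tau}$ weight as in Lemma~\ref{lemma: Theta n diff}. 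Take suprema over $t\in(0,t_\sharp)$ in Lemmas~\ref{lemma: Theta n diff}--\ref{lemma: V n diff}, and absorb the factors $1/|\log t|\le 1$ and $\sqrt{t}|\log t|\le 1$, valid for small $t$.

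The jump estimate \eqref{Vn cauchy} is the one place where a difference at step $n+1$ is bounded by a difference at step $n$ with a coefficient that is merely $C_2\delta$ rather than something involving $\sqrt{t_\sharp}$. Since $\delta$ is itself small, this is harmless.

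The one genuinely new point is the $Z$-estimate of Lemma~\ref{lemma: Z n diff} in $L^\infty$. Its coefficient $\sqrt{t_\sharp}|\log t|$ must be handled using the sharper pointwise bound obtained in its proof, namely
\[
O(1)\big(\delta\,\cdot+\,\delta t|\log t|\,\cdot+\,t\,\cdot\big),
\]
which is small uniformly in $t$. With this in hand, all the lemmas give
\[
\mathcal{E}^{n+1}\le C(\sqrt{t_\sharp}+\delta)\,\mathcal{E}^n.
\]
Choosing $\delta$ and $t_\sharp$ small makes the factor at most $1/2$. Hence $(V^n,U^n,\Theta^n,Z^n)$ is Cauchy in these norms, and we write $(v^*,u^*,\theta^*,z^*)$ for the limit.

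\textbf{Passing to the limit.} The bounds \eqref{estimates n} are uniform in $n$. By weak-$*$ lower semicontinuity of the $L^\infty$, $L^1$ and BV norms, they pass to the limit and give \eqref{es local sol}. The jump bound survives because the jumps at each $\omega\in\mathcal{D}$ converge by \eqref{Vn cauchy}.

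Next, write the weak formulations \eqref{def distri} for the iterates. The coefficients $\mu^n,\nu^n,D^n$ converge uniformly, and $U_x^n,\Theta_x^n,Z_x^n$ converge in $L^1_{loc}$ with weight $|\log\tau|$, which is integrable. The quadratic term $(U^n_x)^2/(1+V^n)$ converges in $L^1_{loc}$ because $\sqrt{\tau}U_x^n$ is uniformly bounded. The reaction term $\phi(1+\Theta^n)Z^n$ converges because $\phi$ is Lipschitz. Letting $n\to\infty$ then shows that $(v,u,\theta,z)$ satisfies \eqref{def distri}. The remaining time-derivative bounds in \eqref{weak solu} are obtained from the Duhamel representations \eqref{eq: U k+1}--\eqref{eq: Z k+1} together with \eqref{es: H_t infty}, exactly as in Lemma~\ref{lemma: diff Theta k+1}.

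\textbf{Continuity and structure.} The decomposition $v^*=v^*_{\tilde a}+v^*_j$ follows because the discontinuity set $\mathcal{D}$ is invariant along the iteration (Lemma~\ref{lemma: lip V}) and the limit preserves it.

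Continuity of the fluxes follows from Remark~\ref{re: conti}. For $\bar u$ and $\bar z$, the source appears in conservative form. The pressure $p$ is BV in $x$ and H\"older in $t$ by Lemma~\ref{lemma: diff Theta k+1} and \eqref{minus v}, so the second alternative of Remark~\ref{re: conti} applies. For $\bar\theta$, we rewrite the non-conservative source $\mathcal{N}_2$ as the $x$-derivative of its antiderivative $\int_{-\infty}^x\mathcal{N}_2\,\mathrm{d}y$, which is Lipschitz in $x$, so the first alternative applies.

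I expect the main obstacle to be closing the contraction. The logarithmic weights and the $1/|\log t|$ factors must be matched consistently across all four lemmas, and the $Z$-coupling enters $\Theta$ through $\|Z^n-Z^{n-1}\|$ without any $\delta$-smallness. The remedy is to ensure that every such term carries a factor $\sqrt{t}$ or $t$ (as the proofs of Lemmas~\ref{lemma: Theta n diff} and~\ref{lemma: Z n diff} show it does), so that shrinking $t_\sharp$ restores the contraction.
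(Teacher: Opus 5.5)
Your route is the paper's. It runs through a weighted difference functional, the contraction principle, convergence of the jumps via \eqref{Vn cauchy}, Helly-type lower semicontinuity, and flux continuity via Remark~\ref{re: conti}. Your functional trims the paper's $\mathcal{F}$ in \eqref{map F}. This is legitimate: the terms you drop never appear on the right-hand sides of Lemmas~\ref{lemma: Theta n diff}--\ref{lemma: V n diff}, and they converge as by-products.

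You also observe that the coefficient $\sqrt{t_\sharp}|\log t|$ in the $L^\infty$ bound of Lemma~\ref{lemma: Z n diff} is unbounded as $t\to0^+$. It must be replaced by the $O(1)(\delta+t)$-type bound actually obtained in that lemma's proof. This is correct, and the paper passes over it when asserting \eqref{contraction}. The analogous coefficient in Lemma~\ref{lemma: U n diff} does not affect you, since your functional omits $\|U^n-U^{n-1}\|_\infty$.

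Three points need repair.

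(i) The $z$-equation has no conservative source. The term $-K\phi(\theta)z$ is a zeroth-order reaction term, so Remark~\ref{re: conti} does not apply to $\bar z$ as you state it, and the pressure alternative is irrelevant there. Use the device you already use for $\bar\theta$: write $-K\phi(\theta)z=\partial_x g$ with $g=-\int_{-\infty}^x K\phi(\theta)z\,\mathrm{d}y$. This $g$ is BV with $\|g_x\|_{L^\infty_x}\le O(1)\delta$, so the first alternative gives continuity of $\frac{D}{v^2}z_x+g$, hence of $\bar z$.

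(ii) Remark~\ref{re: conti} concerns the mild (Duhamel) solution of the linear problem with the \emph{limit} coefficients, so you must identify your limit with that solution. The paper does this by uniqueness of weak solutions of the linear equations \eqref{eq: bar theta}, \eqref{eq: bar u}, \eqref{eq: bar z}. Alternatively, pass to the limit in \eqref{eq: U k+1}--\eqref{eq: Z k+1} using Lemmas~\ref{lemma: comparison}--\ref{lemma: comparison2}.

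(iii) The bounds on $u_t,\theta_t,z_t$ in \eqref{weak solu} do not follow from \eqref{es: H_t infty} in the manner of Lemma~\ref{lemma: diff Theta k+1}. Differentiating the Duhamel term produces $\int_0^t(t-\tau)^{-1}\|\cdot\|_{L^\infty_x}\,\mathrm{d}\tau$, which diverges logarithmically. Moreover, the bounds of Lemma~\ref{lemma: diff Theta k+1} are only H\"older-$\frac12$ in time. The paper defers this to Lemma~\ref{lemma: t deri}. There the integral over $(t/2,t)$ is split into a part with the source frozen at time $t$, handled by \eqref{es: int H_t}, and a remainder handled by the time-H\"older bounds of Lemmas~\ref{lemma: z holder}--\ref{lemma: theta holder}.

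Finally, do not cite \eqref{minus v}, which is proved only later. The $L^\infty$ time-continuity of $v$ needed for the H\"older condition on $p$ follows directly from $v_t=u_x$ and $\sqrt{t}\,\|u_x\|_{L^\infty_x}\le 2C_\sharp\delta$.
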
 
    
	\begin{proof}
    We essentially follows the framework presented in \cite{WangHT2022}. Consider the functional:
		\begin{align}
			\label{map F}
			\mathcal{F}&\left[V^{n+1}-V^n, U^{n+1}-U^n, \Theta^{n+1}-\Theta^n, Z^{n+1}-Z^n \right]\nonumber\\
			&:= \left\|\left|V^n-V^{n-1}\right|\right\|_{\infty}+ \left\|\left|V^n-V^{n-1}\right|\right\|_{1}+\left\|\left|V^n-V^{n-1}\right|\right\|_{BV}\nonumber\\
			&\qquad+\left\|\left|U^n-U^{n-1}\right|\right\|_{\infty}+ \left\|\left|U^n-U^{n-1}\right|\right\|_{1}+\left\|\left|\frac{\sqrt{\tau}}{\left|\log \tau\right|}\left(U_x^n-U_x^{n-1}\right)\right|\right\|_{\infty}\nonumber\\
			&\qquad +\left\|\left|\frac{U_x^n-U_x^{n-1}}{\left|\log \tau\right|}\right|\right\|_{1} +\left\|\left|\frac{\Theta^n-\Theta^{n-1}}{\left|\log \tau\right|}\right|\right\|_{\infty}+\left\|\left|\Theta^n-\Theta^{n-1}\right|\right\|_{1}\nonumber\\
			&\qquad +\left\|\left|\frac{\sqrt{\tau}}{\left|\log \tau\right|}\left(\Theta_x^n-\Theta_x^{n-1}\right)\right|\right\|_{\infty}+\left\|\left|\frac{\Theta_x^n-\Theta_x^{n-1}}{\left|\log \tau\right|}\right|\right\|_{1} +\left\|\left|Z^n-Z^{n-1}\right|\right\|_{\infty}\nonumber\\
			&\qquad +\left\|\left|Z^n-Z^{n-1}\right|\right\|_{1}+\left\|\left|\frac{\sqrt{\tau}}{\left|\log \tau\right|}\left(Z_x^n-Z_x^{n-1}\right)\right|\right\|_{\infty}+\left\|\left|\frac{Z_x^n-Z_x^{n-1}}{\left|\log \tau\right|}\right|\right\|_{1}.
		\end{align}
		From Lemmas~\ref{lemma: Theta n diff}, \ref{lemma: Z n diff}, \ref{lemma: U n diff}, and \ref{lemma: V n diff}, we deduce that
		\begin{equation}
			\label{contraction}
			\begin{aligned}
				\mathcal{F}&\left[V^{n+1}-V^n, U^{n+1}-U^n, \Theta^{n+1}-\Theta^n, Z^{n+1}-Z^n \right]\\&\leq C_2\left(\sqrt{t_{\sharp}}\left|\log t_{\sharp}\right|+\delta\right)\mathcal{F}\left[V^n-V^{n-1}, U^n-U^{n-1}, \Theta^n-\Theta^{n-1}, Z^n-Z^{n-1} \right]
			\end{aligned}
		\end{equation}
		where $C_2\simeq O(1)$ is a uniform constant independent of $n$, $\delta$, and $t_{\sharp}$. As $\sqrt{t}|\log t|\rightarrow 0$, as $t\rightarrow 0^+$, one may choose positive constants $\delta, t_{\sharp}\ll 1$  such that
		\begin{equation*}
			C_2\left(\sqrt{t_{\sharp}}\left|\log t_{\sharp}\right|+\delta\right)< 1.
		\end{equation*} 
	We may thus apply the contraction mapping theorem to deduce a strong limit $(v^*, u^*, \theta^*, z^*)$ for the sequence $\left\{\left(V^n, U^n, \Theta^n, Z^n\right)\right\}$ that satisfies
		\begin{equation}
			\label{es: w-sol}
			\begin{cases}v^*(x, t) \in L^{\infty}\left(0, t_{\sharp} ; L^1(\mathbb{R})\right), & \\ u^*(x, t) \in L^{\infty}\left(0, t_{\sharp} ; L^1(\mathbb{R}) \cap L^{\infty}(\mathbb{R})\right), & \sqrt{t} u_x^*(x, t) \in L^{\infty}\left(0, t_{\sharp} ; L^{\infty}(\mathbb{R})\right), \\ \theta^*(x, t) \in L^{\infty}\left(0, t_{\sharp} ; L^1(\mathbb{R}) \cap L^{\infty}(\mathbb{R})\right), & \sqrt{t} \theta_x^*(x, t) \in L^{\infty}\left(0, t_{\sharp} ; L^{\infty}(\mathbb{R})\right),\\
				z^*(x, t) \in L^{\infty}\left(0, t_{\sharp} ; L^1(\mathbb{R}) \cap L^{\infty}(\mathbb{R})\right), & \sqrt{t} z_x^*(x, t) \in L^{\infty}\left(0, t_{\sharp} ; L^{\infty}(\mathbb{R})\right).
			\end{cases}
		\end{equation}
		Moreover, $(v^*, u^*, \theta^*, z^*)$ is a weak solution to Eq.~\eqref{Cauchy pde}, and clearly $\left(v, u, \theta, z\right)=\left(v^*+1, u^*, \theta^*+1, z^*\right)$ is a weak solution to Eq.~\eqref{Cauchy pde}.

        Meanwhile, as $(V^n, U^n, \Theta^n, Z^n)$ converges in the topology defined by $\mathcal{F}$, which is stronger than $[L^1(\mathbb{R})]^4$, we conclude that 
		\begin{equation}
			\label{eq: phiz}
				\left\|z^*(\cdot, t)\right\|_{L_x^1}\leq C_{\sharp}\delta, \quad \int_0^t\int_{\mathbb{R}}K \phi(\theta^*) z^* (y,\tau)\,\mathrm{d}y\,\mathrm{d}\tau\leq \delta.
		\end{equation}
		By Lemma~\ref{lemma: V U Theta Z k+1}, the norms $\|V^n\|_{BV}$, $\|U_x^n\|_{L_x^1}$, $\|\Theta_x^n\|_{L_x^1}$, and $\|Z_x^n\|_{L_x^1}$ are uniformly bounded. Thus, in view of Helly's selection theorem for BV functions, we may pass to a subsequence of $\left\{\left(V^n, U^n, \Theta^n, Z^n\right)\right\}$ to deduce that 
		\begin{equation*}
			\|v^*\|_{BV}\leq 2C_{\sharp}\delta,\quad \|u_x^*\|_{L_x^1}\leq 2C_{\sharp}\delta,\quad \|\theta_x^*\|_{L_x^1}\leq 2C_{\sharp}\delta,\quad \|z_x^*\|_{L_x^1}\leq 2C_{\sharp}\delta.
		\end{equation*}

        On the other hand, $V^n\in {\rm BV}(\R)$ by construction for each $n\in\mathbb{N}$, and
		\begin{equation*}
			V^n = V_{\tilde{a}}^n+ V_j^n
		\end{equation*}
        where $V_{\tilde{a}}^n$ is continuous and
\begin{align*}
    V_j^n(x,t) = \sum_{\omega<x,\omega \in \mathcal{D}} j^n(\omega, t) h(x-\omega)
\end{align*}
with \begin{align*}
    j^n(\omega, t) = \left.V^n(\cdot, t) \right|_{\omega^-}^{\omega^+}.
\end{align*}
From the estimates~\eqref{Vk jump} in Lemma~\ref{lemma: lip V} and \eqref{Vn cauchy} in Lemma~\ref{lemma: V n diff}, the sequence of jumps $\left\{ j^n(\omega, t)\right\}$ is Cauchy. Hence, it converges strongly to a limit $j(\omega, t)$ for each $\omega \in \mathcal{D}$. Define the step function
		\begin{equation*}
			v_j^*(x,t)=\sum_{\omega<x,\omega\in\mathcal{D}}j(\omega,t)h(x-\omega) \quad \text{with} \quad |j(\omega, t)|\leq 2 \left|\left.v_0^*(\cdot)\right|_{\omega^-}^{\omega^+}\right|.
		\end{equation*}
		Since $\left\{V_j^n\right\}_n$ converges pointwise to $v_j^*$ and $\left\{V_{\tilde{a}}^n\right\}_n$ converges uniformly to a  continuous function $v_{\tilde{a}}^*$, we conclude that
		\begin{equation*}
			v^*=\lim_{n\to\infty} V^n = v_{\tilde{a}}^*+ v_j^*.
		\end{equation*}
		Therefore, $v^*$ is a BV function with the same  set  $\mathcal{D}$ of discontinuities as the initial datum. Clearly, $v = 1 + v^*$ is also BV.

		Now we turn to the continuity of the fluxes for $\theta$, $u$, and $z$.
		First, by substituting $v$, $u$, and $z$ obtained just now into the equation
		for $\theta$, we arrive at an inhomogeneous linear heat equation: 
		\begin{align}
			\label{eq: bar theta}
			\bar{\theta}_t=&\left(\frac{\nu}{c_v v}\bar{\theta}_x-\int_{-\infty}^{x}\left(\frac{p}{c_v} u_y-\frac{\mu}{c_v v} u_y^2\right)\,\mathrm{d}y+\int_{-\infty}^{x}\frac{q}{c_v}K\phi(\theta)z \,\mathrm{d}y\right)_x\nonumber\\
			=&:\left(\frac{\nu}{c_v v}\bar{\theta}_x+ g(t,x)\right)_x.
		\end{align}
        This is of the same form as in Remark~\ref{re: conti}. By Eq.~\eqref{es: w-sol} we know that
		\begin{equation*}
			\int_{\mathbb{R}}\left|\frac{p}{c_v} u_y\right|+\left|\frac{\mu}{c_v v} u_y^2\right|\,\mathrm{d}y+\int_{\mathbb{R}}\frac{q}{c_v}K\phi(\theta)z \,\mathrm{d}y<+\infty,
		\end{equation*}
		which implies that $g(t,x)$ is a BV function in $x \in \R$. Remark~\ref{re: conti} ensures the existence of the solution $\bar{\theta}$, and from \eqref{es: w-sol} we infer the Lipschitz continuity of $g$ in $x$:
		\begin{align*}
			\|g_x(t,x)\|_{L_x^{\infty}}&\leq O(1)\|\theta\|_{L_x^{\infty}} \|u_x\|_{L_x^{\infty}}+O(1)\|u_x\|_{L_x^{\infty}}^2+O(1)\|z\|_{L_x^{\infty}}\\
            &\qquad< \frac{O(1)}{t}<+\infty\qquad \text{ for any $t\in (0,t_{\sharp})$}.
		\end{align*}
        By Remark~\ref{re: conti} once again, the flux of $\bar{\theta}$ is continuous in $x$. But Eq.~\eqref{eq: bar theta} is linear in $\bar{\theta}$, so the weak solution is unique. Therefore, the solution $\theta$ constructed from the aforementioned iteration scheme coincides with $\bar{\theta}$, and hence the flux of $\theta$ is continuous. 
		
		Next, we substitute $v$ and $\theta$ into the equation of $u$ to obtain 
		\begin{equation}
			\label{eq: bar u}
			\bar{u}_t=\left(\frac{\mu \bar{u}_x}{v}-p(v,\theta)\right)_x,
		\end{equation}
		which is of the same form as in Remark \ref{re: conti}. We have already shown that $\theta$ is  the weak solution to Eq.~\eqref{eq: bar theta}, and obtained regularity  estimates for $v$ and $u$. Thus, following the proof for Eq.~\eqref{eq: diff theta k+1} in Lemma~\ref{lemma: diff Theta k+1}, we easily deduce that
		\begin{equation*}
			|\theta(x,t)-\theta(x,s)|\leq O(1)\frac{\sqrt{t-s}}{\sqrt{s}},
		\end{equation*}
        which shows that $\theta(x,t)$ is H\"{o}lder continuous in $t$. As $v_t=u_x$ is uniformly bounded, $v$ is also Lipschitz in $t$. Thus, in light of Eq.~\eqref{eq: bar u}, $p(v,\theta)$ is H\"{o}lder in $t$. Therefore, repeating the same argument as for $\theta$ above, we obtain the uniqueness of weak solution $\bar{u}$ and the continuity of the flux of $\bar{u}$. But $u$ is also a weak solution, so $u=\bar{u}$ and hence  the flux of $u$ is continuous.
		
		Finally, we consider $\bar{z}=\frac{D}{v^2}z_x$,  the flux of $z$. It satisfies
		\begin{equation}
			\label{eq: bar z}
			\bar{z}_t+K\phi(\theta)\bar{z}=\left(\frac{D \bar{z}_x}{v^2}\right)_x.
		\end{equation}
		Since the homogeneous part of \eqref{eq: bar z} is of the same form as in Remark \ref{re: conti}, it admits a unique weak solution. By Duhamel's principle, the inhomogeneous equation \eqref{eq: bar z} also admits a unique weak solution. Since $z$ is also a weak solution to \eqref{eq: bar z}, uniqueness implies $\bar{z}=z$. Moreover, the continuity of the flux of $z$ follows directly from Remark \ref{re: conti} with $g\equiv 0$.

The proof of Theorem~\ref{thm: local exi} is now complete.  \end{proof}
	
	\subsection{Improved regularity in time}

In this subsection, we show that the weak solutions admit improved regularity properties with respect to time. In particular, we shall  establish H\"{o}lder continuity for $\theta$ and $z$. (See Theorem~\ref{thm: local regularity} below for the precise formulation.)

Similar arguments as in \cite[Lemma 4.1]{WangHT2022} shall be used, but we need to overcome the additional difficulties originated from the $z$ terms in the equations.

	\begin{lemma}
		\label{lemma: z holder}
		For sufficiently small $t_{\sharp}\ll 1$, the reactant ratio function $z$ satisfies the following H\"{o}lder continuity estimates with respect to $t$, provided that $0<s,t<t_{\sharp}$:
		\begin{equation}
			\left\{\begin{array}{l}
				|z(x, t)-z(x, s)|=\left|z^*(x, t)-z^*(x, s)\right| \leq O(1) \delta \frac{(t-s)|\log (t-s)|}{s}, \\
				|z(x, t)-z(x, s)|=\left|z^*(x, t)-z^*(x, s)\right| \leq O(1) \delta\left(\sqrt{s}\sqrt{t-s}+\frac{(t-s)}{s}\right),\\
				\int_{\mathbb{R}}|z(x, t)-z(x, s)| \,\mathrm{d} x=\int_{\mathbb{R}}\left|z^*(x, t)-z^*(x, s)\right| \,\mathrm{d} x \leq O(1) \delta \frac{(t-s)|\log (t-s)|}{\sqrt{s}},\\
				\int_{\mathbb{R}}|z(x, t)-z(x, s)| d x=\int_{\mathbb{R}}\left|z^*(x, t)-z^*(x, s)\right| \,\mathrm{d} x \leq O(1) \delta\left(\sqrt{t-s}+\frac{(t-s)}{\sqrt{s}}\right).
			\end{array}\right.
		\end{equation}
	\end{lemma}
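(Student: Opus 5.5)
We follow the template of Lemma~\ref{lemma: diff Theta k+1} (itself modelled on \cite[Lemma 4.1]{WangHT2022}), now applied to the limit solution $z$ rather than an iterate. We first note that by Theorem~\ref{thm: local exi} the coefficient $D/v^2$ satisfies \eqref{f}, so $z$ admits the Duhamel representation
\begin{equation*}
z(x,t)=\int_{\mathbb{R}}H(x,t;y,0;D/v^2)z_0^*(y)\,\mathrm{d}y-\int_0^t\int_{\mathbb{R}}H(x,t;y,\tau;D/v^2)K\phi(\theta)z(y,\tau)\,\mathrm{d}y\,\mathrm{d}\tau .
\end{equation*}
Accordingly, we split $z(x,t)-z(x,s)=\mathcal{I}_1+\mathcal{I}_2+\mathcal{I}_3$. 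Here $\mathcal{I}_1=\int_{\mathbb{R}}\int_s^t H_\sigma(x,\sigma;y,0)\,\mathrm{d}\sigma\, z_0^*(y)\,\mathrm{d}y$. The term $\mathcal{I}_2$ is the source integral over $[s,t]$ against $H(x,t;y,\tau)$. The term $\mathcal{I}_3$ is the source integral over $[0,s]$ against $\int_s^t H_\sigma(x,\sigma;y,\tau)\,\mathrm{d}\sigma$. Throughout, the source $K\phi(\theta)z$ is controlled by $\|z\|_{L^\infty_x}+\|z\|_{L^1_x}\le O(1)\delta$, using Lipschitz continuity of $\phi$ and the bound $|\theta-1|\le 2C_\sharp\delta$.

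For the pointwise bounds we use \eqref{es: H_t infty}, namely $|H_\sigma|\le C_*e^{-(x-y)^2/C_*(\sigma-\tau)}(\sigma-\tau)^{-3/2}$. For $\mathcal{I}_1$ we take $\|z_0^*\|_{L^\infty}$ out and obtain $\delta\int_s^t\sigma^{-1}\,\mathrm{d}\sigma\le\delta(t-s)/s$. For $\mathcal{I}_2$ we have $\int_s^t\|H(\cdot,t;\cdot,\tau)\|_{L^1_y}\delta\,\mathrm{d}\tau\le\delta(t-s)$, which is dominated by both right-hand sides.

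The term $\mathcal{I}_3$ is handled in two ways, giving the two pointwise bounds. In the first, we use the source in $L^\infty$ and obtain $\delta\int_0^s\int_s^t(\sigma-\tau)^{-1}\,\mathrm{d}\sigma\,\mathrm{d}\tau$. Splitting $\tau\in[0,s-(t-s)]$ and $\tau\in[s-(t-s),s]$, the first part gives $(t-s)|\log(t-s)|$ and the second part is of the same order. Since $s<1$, this is $\le(t-s)|\log(t-s)|/s$. In the second, we use the source in $L^1_y$ against $\sup_y|H_\sigma|\le(\sigma-\tau)^{-3/2}$, which produces $\delta\int_0^s\big((s-\tau)^{-1/2}-(t-\tau)^{-1/2}\big)\,\mathrm{d}\tau\le\delta\sqrt{t-s}$. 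An extra factor $\sqrt{s}$ can be extracted by using $L^\infty$ on the tail, giving the $\sqrt{s}\sqrt{t-s}$ form; if this fails, we interpolate between the two estimates. The $L^1_x$ bounds are obtained the same way after integrating in $x$ first. Then $\int_{\mathbb{R}}|H_\sigma|\,\mathrm{d}x\le(\sigma-\tau)^{-1}$, so $\mathcal{I}_1$ contributes $\|z_0^*\|_{L^1}\log(t/s)$. This is too crude, so for $\mathcal{I}_1$ we instead write $H_\sigma=\partial_x(fH_x)$ and integrate by parts onto $z_0^*\in{\rm BV}$ using the antiderivative $W$ from \eqref{W anti_deri}. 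This yields $\delta\int_s^t\sigma^{-1/2}\,\mathrm{d}\sigma\le\delta(t-s)/\sqrt{s}$, while $\mathcal{I}_2$ and $\mathcal{I}_3$ give $\delta(t-s)|\log(t-s)|$ and $\delta\sqrt{t-s}$ respectively.

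The main obstacle is the $L^1$ and $\log$-Lipschitz estimate of $\mathcal{I}_3$, because $\int_0^s\int_s^t(\sigma-\tau)^{-1}$ is only log-integrable and the source $z$ has no spatial derivative gain. If the crude bound is insufficient, we would instead use the BV bound on $z(\cdot,\tau)$ (namely $\|z_x\|_{L^1}\le 2C_\sharp\delta$) and move one $x$-derivative from $H_\sigma$ onto $z$, as was done for $\mathcal{I}_1$. This gains a factor $(\sigma-\tau)^{1/2}$ and makes the $\tau$-integral harmless.
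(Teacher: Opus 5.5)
Your splitting $z(x,t)-z(x,s)=\mathcal{I}_1+\mathcal{I}_2+\mathcal{I}_3$, the kernel bounds, and the antiderivative $W$ for the $L^1_x$ bound of $\mathcal{I}_1$ are the paper's own. Your logarithmic pointwise bound, which puts the source in $L^\infty$ on all of $[0,s]$ instead of using the paper's split at $\tau=s/2$, is fine. What is not actually carried out is the second pointwise bound. With the source in $L^1_y$ on all of $[0,s]$ you only get $\delta\sqrt{t-s}$, and this does not imply the claim: for $t-s=s^4$ one has $\sqrt{t-s}=s^2$, whereas $\sqrt{s}\sqrt{t-s}+\frac{t-s}{s}=s^{5/2}+s^3$. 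The factor $\sqrt{s}$ has to come from the $L^\infty$ bound for $\tau$ near $s$. You name this step but do not do it, and the fallback of ``interpolating'' is left unspecified. The missing ingredient is the elementary inequality $\frac{1}{\sigma-\tau}\le\frac{1}{\sqrt{s-\tau}\,\sqrt{\sigma-\tau}}$ for $\sigma\ge s$, which gives
\[
\int_s^t\frac{\mathrm{d}\sigma}{\sigma-\tau}\le\frac{2\left(\sqrt{t-\tau}-\sqrt{s-\tau}\right)}{\sqrt{s-\tau}}\le\frac{2\sqrt{t-s}}{\sqrt{s-\tau}},
\qquad\text{hence}\qquad
\int_{s/2}^{s}\int_s^t\frac{\mathrm{d}\sigma\,\mathrm{d}\tau}{\sigma-\tau}\le O(1)\sqrt{s}\sqrt{t-s}.
\]
The paper uses this on $[s/2,s]$ together with $\|z\|_{L^\infty_x}$. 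On $[0,s/2]$ it uses $\|z\|_{L^1_x}$ and $(s-\tau)^{-1/2}-(t-\tau)^{-1/2}\le\frac{t-s}{\sqrt{t-\tau}\,(s-\tau)}$, which gives $O(1)\delta\frac{t-s}{\sqrt{s}}\le O(1)\delta\frac{t-s}{s}$. In fact the same inequality works on all of $[0,s]$, so the $L^\infty$ estimate you already wrote down gives $|\mathcal{I}_3|\le O(1)\delta\sqrt{s}\sqrt{t-s}$ directly.

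Conversely, the ``main obstacle'' in your last paragraph is not one, and moving a derivative onto $z$ is unnecessary. Since $\int_{\mathbb{R}}|H_\sigma(x,\sigma;y,\tau;\frac{D}{v^2})|\,\mathrm{d}x\le O(1)(\sigma-\tau)^{-1}$ and $\|K\phi(\theta)z(\cdot,\tau)\|_{L^1_x}\le O(1)\delta$, you get $\int_{\mathbb{R}}|\mathcal{I}_3|\,\mathrm{d}x\le O(1)\delta\int_0^s\int_s^t\frac{\mathrm{d}\sigma\,\mathrm{d}\tau}{\sigma-\tau}$. You have already computed this to be $O(1)\delta(t-s)(1+|\log(t-s)|)$, and by the same inequality it is also $O(1)\delta\sqrt{s}\sqrt{t-s}$. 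The logarithm is exactly the loss the statement permits, and this crude bound (after a split at $s/2$) is all the paper uses.

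Two smaller points. First, $\mathcal{I}_2$ contributes only $O(1)\delta(t-s)$ in $L^1_x$. Second, for $\mathcal{I}_1$ the integration by parts onto $z_0^*$ does not come from the $x$-divergence form $H_\sigma=\partial_x(fH_x)$. It requires the $y$-antiderivative $W(x,\sigma;y,0)=\int_{-\infty}^{y}H_\sigma(x,\sigma;\omega,0)\,\mathrm{d}\omega$, which is well defined because $\int_{\mathbb{R}}H_t\,\mathrm{d}y=0$ by Lemma~\ref{int H}, together with the bound $|W|\le O(1)\sigma^{-1}e^{-(x-y)^2/(C_*\sigma)}$.
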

	\begin{proof}
		By Eq.~\eqref{Cauchy pde} and Duhamel's principle, we have that
		\begin{align*}
			z(x,t)-z(x,s)=&\int_{\mathbb{R}} \left(H(x,t;y,0;\frac{D}{v^2})-H(x,s;y,0;\frac{D}{v^2})\right) z_0(y)\,\mathrm{d}y \\
			&-\int_s^t\int_{\mathbb{R}} H(x,t;y,\tau;\frac{D}{v^2})K\phi(\theta) z(y,\tau)\,\mathrm{d}y\,\mathrm{d}\tau\\
			&-\int_0^s\int_{\mathbb{R}}\left[H(x,t;y,\tau;\frac{D}{v^2})-H(x,s;y,\tau;\frac{D}{v^2})\right]K\phi(\theta)z(y,\tau)\,\mathrm{d}y\,\mathrm{d}\tau \\
			=&\int_{\mathbb{R}} \int_s^t H_{\sigma}(x,\sigma;y,0;\frac{D}{v^2}) z_0(y)\,\mathrm{d}y-\int_s^t\int_{\mathbb{R}} H(x,t;y,\tau;\frac{D}{v^2})K\phi(\theta) z(y,\tau)\,\mathrm{d}y\,\mathrm{d}\tau\\
			&-\int_0^s\int_{\mathbb{R}}\int_s^t H_{\sigma}(x,\sigma;y,\tau;\frac{D}{v^2}) K\phi(\theta)z(y,\tau)\,\mathrm{d}y\,\mathrm{d}\tau \\
			=&:\mathcal{I}_1+\mathcal{I}_2+\mathcal{I}_3.
		\end{align*}

        In view of Eq.~\eqref{es local sol} in Theorem~\ref{thm: local exi}, the term $\frac{D}{v^2}$ satisfies Eq.~\eqref{f}, so Lemmas~\ref{lemma: Liu} and \ref{lemma: 2 derivative} are applicable to the function $H\left(x,t;y,\tau;\frac{D}{v^2}\right)$. Thus, in view of the estimates for $H_{\sigma}$ in Lemma~\ref{lemma: 2 derivative}, the smallness condition~\eqref{smallness condition} for the initial datum, as well as the estimates for $z$ in Theorem~\ref{thm: local exi}, we deduce that
			\begin{align*}
			&\left|z(x,t)- z(x,s)\right|\nonumber\\
			\leq& \int_{\mathbb{R}}\int_s^t \left|H_{\sigma}(x,\sigma;y,0;\frac{D}{v^2})\right|\,\mathrm{d}\sigma z_0(y)\,\mathrm{d}y +O(1)\int_s^t\int_{\mathbb{R}}\left|H(x,t;y,\tau;\frac{D}{v^2})\right| z(y,\tau)\,\mathrm{d}y\,\mathrm{d}\tau\\
			&+O(1)\int_0^s\int_{\mathbb{R}}\int_s^t \left|H_{\sigma}(x,\sigma;y,\tau;\frac{D}{v^2})\right|\,\mathrm{d}\sigma z(y,\tau)\,\mathrm{d}y\,\mathrm{d}\tau\\
			\leq &O(1) \int_{\mathbb{R}}\int_s^t \frac{e^{\frac{-(x-y)^2}{C_*\sigma}}}{\sigma^{\frac{3}{2}}}\,\mathrm{d}\sigma \,\mathrm{d}y \left\|z_0\right\|_{L_x^{\infty}}+O(1) \int_s^t\int_{\mathbb{R}}\frac{e^{\frac{-(x-y)^2}{C_*(t-\tau)}}}{\sqrt{t-\tau}}\,\mathrm{d}y\,\mathrm{d}\tau \left\|z\right\|_{L_x^{\infty}}\\
			&+O(1)\int_0^s\int_{\mathbb{R}}\int_s^t\frac{e^{\frac{-(x-y)^2}{C_*(t-\tau)}}}{(t-\tau)^{\frac{3}{2}}} \,\mathrm{d}\sigma z(y,\tau)\,\mathrm{d}y\,\mathrm{d}\tau\\
			\leq &O(1)\delta \int_s^t \frac{1}{\sigma}\,\mathrm{d}\sigma+O(1)\int_s^t \delta \,\mathrm{d}\tau+O(1) \int_0^{\frac{s}{2}} \left(\frac{1}{\sqrt{s-\tau}}-\frac{1}{\sqrt{t-\tau}}\right) \left\|z\right\|_{L_x^1}\,\mathrm{d}\tau\\
			&+O(1)\int_{\frac{s}{2}}^s \int_s^t \frac{1}{\sigma-\tau}\,\mathrm{d}\sigma \,\mathrm{d}\tau \left\|z\right\|_{L_x^{\infty}}\\
			\leq &O(1) \delta\frac{t-s}{s} +O(1)\delta (t-s)+O(1)\delta\int_{0}^{\frac{s}{2}}\frac{t-s}{\sqrt{t-\tau}(s-\tau)}\,\mathrm{d}\tau+O(1)\delta(t-s)\left|\log(t-s)\right|\\
			\leq &O(1)\delta\frac{(t-s)\left|\log(t-s)\right|}{s},
		\end{align*}
		where we use the fact that $\frac{1}{\sqrt{s-\tau}}-\frac{1}{\sqrt{t-\tau}}\leq O(1)\frac{t-s}{\sqrt{t-\tau}(s-\tau)}$. On the other hand, since
		\begin{align*}
			\int_{\frac{s}{2}}^s \int_s^t \frac{1}{\sigma-\tau}\,\mathrm{d}\sigma \,\mathrm{d}\tau&\leq \int_{\frac{s}{2}}^s\frac{1}{\sqrt{s-\tau}} \int_s^t \frac{1}{\sqrt{\sigma-\tau}}\,\mathrm{d}\sigma \,\mathrm{d}\tau
			\\
            &\leq \int_{\frac{s}{2}}^s\frac{1}{\sqrt{s-\tau}} \frac{t-s}{\sqrt{t-\tau}}\,\mathrm{d}\tau\\
            &\leq O(1)\sqrt{s}\sqrt{t-s},
		\end{align*}
		we obtain another bound without the logarithmic term, namely that
		\begin{align*}
			\left|z(x,t)- z(x,s)\right|\leq O(1)\delta\left(\frac{t-s}{s}+\sqrt{s}\sqrt{t-s}\right).
		\end{align*}

		Next, we estimate the $L^1$-norm of $z(x,t)-z(x,s)$. Define the anti-derivative 
		\begin{equation*}
	W\left(x,\sigma;y,0;\frac{D}{v^2}\right)= \begin{cases}\int_{-\infty}^y H_\sigma(x,\sigma;\omega,0;\frac{D}{v^2}) \mathrm{d} \sigma & \text {for } y<x, \\ -\int_y^{\infty} H_\sigma(x,\sigma;\omega,0;\frac{D}{v^2}) \mathrm{d}\sigma & \text {for } y \geq x.\end{cases}
		\end{equation*}
		We then have
		\begin{align*}
			\int_{\mathbb{R}}\left|\mathcal{I}_1\right|\,\mathrm{d}x &\leq  \int_s^t\int_{\mathbb{R}}\left|\int_{\mathbb{R}} \partial_y W(x,\sigma;y,0;\frac{D}{v^2})z_0(y) \,\mathrm{d}y\right|\,\mathrm{d}x\,\mathrm{d}\sigma\\
			&\leq  \int_s^t\int_{\mathbb{R}}\left|W(x,\sigma;y,0;\frac{D}{v^2})\right|\int_{\mathbb{R}}\left|z_0'(y)\right|\,\mathrm{d}x\,\mathrm{d}\sigma\\
			&\leq O(1) \left\{\int_s^t \int_{\mathbb{R}}\frac{e^{\frac{-(x-y)^2}{C_*\sigma}}}{\sigma}\,\mathrm{d}x\,\mathrm{d}\sigma \right\}\left\|z_0\right\|_{BV}\\
            &\leq O(1)\delta \int_s^t\frac{1}{\sqrt{\sigma}}\,\mathrm{d}\sigma\\
            &\leq O(1)\delta \frac{t-s}{\sqrt{t}},
		\end{align*}
		For $\mathcal{I}_2$, using the estimate for $H$ in Lemma~\ref{lemma: Liu} and for $\|z\|_{L^1_x}$ in Theorem~\ref{thm: local exi}, we deduce that
			\begin{align*}
			\int_{\mathbb{R}}\left|\mathcal{I}_2\right|\,\mathrm{d}x\leq & \int_s^t\int_{\mathbb{R}}\int_{\mathbb{R}} \frac{e^{\frac{-(x-y)^2}{C_*(t-\tau)}}}{\sqrt{t-\tau}}z(y,\tau) \,\mathrm{d}y\,\mathrm{d}x\,\mathrm{d}\tau\nonumber\\
			\leq & O(1)\int_s^t \delta\,\mathrm{d}\tau
			\leq O(1) \delta (t-s),
		\end{align*}
		Similarly, for $\mathcal{I}_3$, we bound its $L^1$-norm by
		\begin{align*}
			\int_{\mathbb{R}}\left|\mathcal{I}_3\right|\,\mathrm{d}x\leq & \int_0^s\int_s^t\int_{\mathbb{R}}\int_{\mathbb{R}} \frac{e^{\frac{-(x-y)^2}{C_*(t-\tau)}}}{(t-\tau)^{\frac{3}{2}}}z(y,\tau) \,\mathrm{d}y\,\mathrm{d}x\,\mathrm{d}\sigma\,\mathrm{d}\tau\\
			\leq & O(1)\int_0^s\int_s^t \frac{1}{\sigma-\tau}\delta \,\mathrm{d}\sigma\,\mathrm{d}\tau \\
			\leq &O(1) \delta \int_0^{\frac{s}{2}}\int_s^t\frac{1}{\sigma-\frac{s}{2}}\,\mathrm{d}\sigma\,\mathrm{d}\tau \\
			&+O(1)\delta\int_{\frac{s}{2}}^s \left(\log(t-\tau)-\log(s-\tau)\right)\,\mathrm{d}\tau\\
			\leq &O(1)\delta(t-s)+O(1)\delta(t-s)\left|\log(t-s)\right|.
		\end{align*}
		We also note that
		\begin{align*}
			\int_{\mathbb{R}}\left|\mathcal{I}_3\right|\,\mathrm{d}x\leq O(1)\delta(t-s)+O(1)\delta\sqrt{s}\sqrt{t-s}.
		\end{align*}
		Putting all these estimates together, we complete the proof.  \end{proof}
        \begin{lemma}
		\label{lemma: theta holder}
		When $t_{\sharp}\ll 1$ is sufficiently small, $\theta$ satisfies the following H\"{o}lder continuity estimates with respect to $t$:
		\begin{equation}
			\left\{\begin{array}{l}
				|\theta(x, t)-\theta(x, s)|=\left|\theta^*(x, t)-\theta^*(x, s)\right| \leq O(1) \delta \frac{(t-s)|\log (t-s)|}{s} \\
				|\theta(x, t)-\theta(x, s)|=\left|\theta^*(x, t)-\theta^*(x, s)\right| \leq O(1) \delta\left(\frac{\sqrt{t-s}}{\sqrt{s}}+\frac{(t-s)}{s}\right), \\
				\int_{\mathbb{R}}|\theta(x, t)-\theta(x, s)| d x=\int_{\mathbb{R}}\left|\theta^*(x, t)-\theta^*(x, s)\right| \,{\rm d} x \leq O(1) \delta \frac{(t-s)|\log (t-s)|}{\sqrt{s}}, \\
				\int_{\mathbb{R}}|\theta(x, t)-\theta(x, s)|\,{\rm d} x=\int_{\mathbb{R}}\left|\theta^*(x, t)-\theta^*(x, s)\right| \,{\rm d} x \leq O(1) \delta\left(\sqrt{t-s}+\frac{(t-s)}{\sqrt{s}}\right)
			\end{array}\right.
		\end{equation}
     for arbitrary  $0<s,t<t_{\sharp}$.
	\end{lemma}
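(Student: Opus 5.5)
We follow the proof of Lemma~\ref{lemma: z holder} and of Lemma~\ref{lemma: diff Theta k+1}, now for the limit solution of Theorem~\ref{thm: local exi}. Since $\frac{\nu}{c_v v}$ satisfies \eqref{f} by \eqref{es local sol}, Lemmas~\ref{lemma: Liu} and \ref{lemma: 2 derivative} apply to $H(x,t;y,\tau;\frac{\nu}{c_v v})$. By Duhamel's principle we write
\begin{align*}
\theta(x,t)-\theta(x,s)=&\int_{\mathbb{R}}\int_s^t H_\sigma(x,\sigma;y,0;\tfrac{\nu}{c_v v})\,\mathrm{d}\sigma\,\theta_0^*(y)\,\mathrm{d}y+\int_s^t\int_{\mathbb{R}}H(x,t;y,\tau;\tfrac{\nu}{c_v v})\mathcal{N}_2(y,\tau)\,\mathrm{d}y\,\mathrm{d}\tau\\
&+\int_0^s\int_{\mathbb{R}}\int_s^t H_\sigma(x,\sigma;y,\tau;\tfrac{\nu}{c_v v})\,\mathrm{d}\sigma\,\mathcal{N}_2(y,\tau)\,\mathrm{d}y\,\mathrm{d}\tau=:\mathcal{I}_1+\mathcal{I}_2+\mathcal{I}_3,
\end{align*}
where $\mathcal{N}_2=-\frac{p}{c_v}u_x+\frac{\mu}{c_v v}u_x^2+\frac{q}{c_v}K\phi(\theta)z$. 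From \eqref{es local sol} we have the pointwise bound $|\mathcal{N}_2(\cdot,\tau)|\le O(1)(\delta\tau^{-1/2}+\delta^2\tau^{-1}+\delta)$ and the $L^1$ bound $\|\mathcal{N}_2(\cdot,\tau)\|_{L^1_x}\le O(1)(\delta+\delta^2\tau^{-1/2})$; the $z$-part is bounded and integrable, like the source in Lemma~\ref{lemma: z holder}.

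For the pointwise bounds, $\mathcal{I}_1$ is handled as in Lemma~\ref{lemma: diff Theta k+1}. There, \eqref{es: H_t infty} gives $O(1)\delta\int_s^t\sigma^{-1}\,\mathrm{d}\sigma\le O(1)\delta\frac{t-s}{s}$. For $\mathcal{I}_2$ we integrate the Gaussian in $y$ and obtain $O(1)\int_s^t(\delta\tau^{-1/2}+\delta^2\tau^{-1}+\delta)\,\mathrm{d}\tau\le O(1)\delta\frac{t-s}{s}$. For $\mathcal{I}_3$ we split at $\tau=s/2$:
\begin{itemize}
\item On $(0,s/2)$ we use the $L^1$ bound for $\mathcal{N}_2$ together with $\frac{1}{\sqrt{s-\tau}}-\frac{1}{\sqrt{t-\tau}}\le O(1)\frac{t-s}{\sqrt{t-\tau}(s-\tau)}$, which gives $O(1)\delta\frac{t-s}{s}$.
\item On $(s/2,s)$ we use $|\mathcal{N}_2|\le O(1)\delta/s$ (with $\delta$ small) and $\int_{s/2}^s\int_s^t(\sigma-\tau)^{-1}\,\mathrm{d}\sigma\,\mathrm{d}\tau$. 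This double integral is bounded either by $O(1)(t-s)|\log(t-s)|$, which yields the first estimate, or by $O(1)\sqrt{s}\sqrt{t-s}$, which yields the $\delta\sqrt{t-s}/\sqrt{s}$ term of the second estimate.
\end{itemize}

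For the $L^1$ bounds, $\mathcal{I}_1$ is treated with the antiderivative $W$ and the BV norm of $\theta_0^*$, exactly as in Lemma~\ref{lemma: z holder}, giving $O(1)\delta\frac{t-s}{\sqrt t}$. The term $\mathcal{I}_2$ gives $O(1)\int_s^t(\delta+\delta^2\tau^{-1/2})\,\mathrm{d}\tau$. The term $\mathcal{I}_3$ gives $O(1)\int_0^s\int_s^t(\sigma-\tau)^{-1}\|\mathcal{N}_2(\tau)\|_{L^1}\,\mathrm{d}\sigma\,\mathrm{d}\tau$, split again at $s/2$. Near $\tau=s$ the factor $\|\mathcal{N}_2\|_{L^1}\lesssim\delta s^{-1/2}$, which produces $\delta\frac{(t-s)|\log(t-s)|}{\sqrt s}$ or $\delta\frac{\sqrt{s}\sqrt{t-s}}{\sqrt s}=\delta\sqrt{t-s}$.

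The main obstacle is the quadratic term $u_x^2/v$, whose pointwise size $\delta^2/\tau$ is not integrable at $\tau=0$. The split at $s/2$ is therefore essential, and on $(0,s/2)$ we must use $\|u_x^2\|_{L^1}\le\|u_x\|_{L^\infty}\|u_x\|_{L^1}\lesssim\delta^2\tau^{-1/2}$ instead of a pointwise bound. We must also make sure the resulting powers of $s$ never exceed $s^{-1}$ in the pointwise estimates, or $s^{-1/2}$ in the $L^1$ estimates.
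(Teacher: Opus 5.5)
Your proposal is correct and follows essentially the same route as the paper. You use the same Duhamel splitting into the initial term, the source on $[s,t]$, and the $H_\sigma$-difference on $[0,s]$. The paper cites \cite[Lemma 4.1]{WangHT2022} for the $u_x$, $u_x^2$ parts and reuses the argument of Lemma~\ref{lemma: z holder} for the $z$-source (its $\mathcal{I}_4,\mathcal{I}_5$). You instead absorb $\frac{q}{c_v}K\phi(\theta)z$ into $\mathcal{N}_2$ and carry out the $s/2$ splitting explicitly, using $\|u_x^2\|_{L^1_x}\lesssim\delta^2\tau^{-1/2}$ on $(0,s/2)$ and the pointwise $\delta/s$ bound on $(s/2,s)$; these steps check out.
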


\begin{proof}
    Since $\theta$ is the weak solution of system \eqref{Cauchy pde}, we obtain by Duhamel's principle that   
		\begin{align*}
			&\theta(x,t)-\theta(x,s)\\=&\int_{\mathbb{R}} \left(H(x,t;y,0;\frac{\nu}{c_v v})-H(x,s;y,0;\frac{\nu}{c_v v})\right) \theta_0(y)\,\mathrm{d}y \\
			&-\int_s^t\int_{\mathbb{R}} H(x,t;y,\tau;\frac{\nu}{c_v v})\left[-\frac{p}{c_v} u_x+\frac{\mu}{c_v v}\left(u_x\right)^2+\frac{q}{c_v}K\phi(\theta)z\right](y,\tau) \,\mathrm{d}y\,\mathrm{d}\tau\\
			&-\int_0^s\int_{\mathbb{R}}\left[H(x,t;y,\tau;\frac{\nu}{c_v v})-H(x,s;y,\tau;\frac{\nu}{c_v v})\right]\left[-\frac{p}{c_v} u_x+\frac{\mu}{c_v v}\left(u_x\right)^2+\frac{q}{c_v}K\phi(\theta)z\right](y,\tau)\,\mathrm{d}y\,\mathrm{d}\tau \\
			=&\int_{\mathbb{R}} \int_s^t H_{\sigma}(x,\sigma;y,0;\frac{\nu}{c_v v}) \theta_0(y)\,\mathrm{d}y-\int_s^t\int_{\mathbb{R}} H(x,t;y,\tau;\frac{\nu}{c_v v})\left[-\frac{p}{c_v} u_x+\frac{\mu}{c_v v}\left(u_x\right)^2\right](y,\tau)\,\mathrm{d}y\,\mathrm{d}\tau\\
			&-\int_0^s\int_{\mathbb{R}}\int_s^t H_{\sigma}(x,\sigma;y,\tau;\frac{\nu}{c_v v})\left[-\frac{p}{c_v} u_x+\frac{\mu}{c_v v}\left(u_x\right)^2\right](y,\tau)\,\mathrm{d}y\,\mathrm{d}\tau \\
			&-\int_s^t\int_{\mathbb{R}} H(x,t;y,\tau;\frac{\nu}{c_v v})\frac{q}{c_v}K\phi(\theta)z\,\mathrm{d}y\,\mathrm{d}\tau-\int_0^s\int_{\mathbb{R}}\int_s^t H_{\sigma}(x,\sigma;y,\tau;\frac{\nu}{c_v v})\frac{q}{c_v}K\phi(\theta)z(y,\tau)\,\mathrm{d}y\,\mathrm{d}\tau\\
			=&:\mathcal{I}_1+\mathcal{I}_2+\mathcal{I}_3+\mathcal{I}_4+\mathcal{I}_5.
		\end{align*}

        The bounds for $\mathcal{I}_1$, $\mathcal{I}_2$, and $\mathcal{I}_3$ in $L^\infty \cap L^1$ follow from \cite[Lemma 4.1]{WangHT2022}, and the estimates for $\mathcal{I}_4$ and $\mathcal{I}_5$ follow from Lemma~\ref{lemma: z holder}.
\end{proof}

	\begin{lemma}
		\label{lemma: t deri}
The time-derivatives 		$u_t(x,t)$, $\theta_t(x,t)$, and $z_t(x,t)$ are well-defined for a.e. $x\in\mathbb{R}$ whenever $0<t<t_{\sharp}$. Moreover, we have the estimates
		\begin{align*}
			&\left\|u_t(\cdot, t)\right\|_{L_x^{\infty}} \leq O(1) \frac{\delta}{t}, \quad\left\|u_t(\cdot, t)\right\|_{L_x^1} \leq O(1) \frac{\delta}{\sqrt{t}},\\
			&\left\|\theta_t(\cdot, t)\right\|_{L_x^{\infty}} \leq O(1) \frac{\delta}{t}, \quad\left\|\theta_t(\cdot, t)\right\|_{L_x^1} \leq O(1) \frac{\delta}{\sqrt{t}},\\
			&\left\|z_t(\cdot, t)\right\|_{L_x^{\infty}} \leq O(1) \frac{\delta}{t}, \quad\left\|z_t(\cdot, t)\right\|_{L_x^1} \leq O(1) \frac{\delta}{\sqrt{t}}.
		\end{align*}
	\end{lemma}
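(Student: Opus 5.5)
We differentiate the Duhamel representations of $u$, $\theta$ and $z$ (with coefficients $\mu/v$, $\nu/(c_v v)$ and $D/v^2$ respectively) in $t$. This is legitimate because, by Theorem~\ref{thm: local exi}, these coefficients satisfy \eqref{f}, so Lemmas~\ref{lemma: Liu} and \ref{lemma: 2 derivative} apply. We treat $z$ in detail, since it is the new component; $u$ and $\theta$ follow \cite[Lemma 4.2]{WangHT2022}, with the extra source $\frac{q}{c_v}K\phi(\theta)z$ in the $\theta$-equation handled exactly like the $z$-source below. Write
\begin{equation*}
z(x,t)=\int_{\mathbb{R}}H(x,t;y,0;\tfrac{D}{v^2})z_0(y)\,\mathrm{d}y-\int_0^t\int_{\mathbb{R}}H(x,t;y,\tau;\tfrac{D}{v^2})K\phi(\theta)z(y,\tau)\,\mathrm{d}y\,\mathrm{d}\tau .
\end{equation*}
The difficulty is that $\partial_t$ of the Duhamel term produces a nonintegrable singularity $(t-\tau)^{-1}$ near $\tau=t$. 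To avoid it, we split the time integral at $t/2$.

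For the initial-data term, \eqref{es: H_t infty} gives $|H_t|\le C_*(t)^{-3/2}e^{-(x-y)^2/(C_*t)}$. Hence its $L^\infty$-norm is bounded by $O(1)\|z_0\|_{L^\infty}/t$. For the $L^1$-norm we integrate by parts in $y$ as in Lemma~\ref{lemma: z holder}, using the antiderivative $W$ built from $H_t$, whose spatial size is $O(1)t^{-1}e^{-(x-y)^2/(C_*t)}$ since it is comparable to $\partial_t\int H_x$. This yields $O(1)\|z_0\|_{BV}/\sqrt t\le O(1)\delta/\sqrt t$.

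On $[0,t/2]$ we differentiate under the integral. $|H_t|\lesssim (t-\tau)^{-3/2}\sim t^{-3/2}$, combined with $\|\phi(\theta)z\|_{L^\infty}\le O(\delta)$ and $\|\phi(\theta)z\|_{L^1}\le O(\delta)$, gives $O(\delta)\int_0^{t/2}(t-\tau)^{-1}\,\mathrm{d}\tau=O(\delta)$ in $L^\infty$ and $O(\delta)\sqrt t$ in $L^1$. Both are better than required.

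The main obstacle is the near-diagonal piece on $[t/2,t]$. We rewrite the source as $f(y,\tau)=[f(y,\tau)-f(y,t)]+f(y,t)$ with $f=K\phi(\theta)z$. The term containing $f(y,t)$ is handled by \eqref{es: int H_t}: $\int_{t/2}^tH_t(x,t;y,\tau)\,\mathrm{d}\tau$ equals a heat kernel minus $\delta(x-y)$ plus $O(\delta_*)$ Gaussian. Its contribution, together with the boundary term $H(x,t;y,t)f(y,t)=f(x,t)$ from Leibniz's rule, is $O(1)\|f(\cdot,t)\|_{L^\infty}$ pointwise and $O(1)\|f(\cdot,t)\|_{L^1}$ in $L^1$. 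For the difference term we use the Hölder bound of Lemma~\ref{lemma: z holder}, $|z(\cdot,\tau)-z(\cdot,t)|\le O(\delta)(t-\tau)|\log(t-\tau)|/\tau$, and for $\theta$ Lemma~\ref{lemma: theta holder} together with the Lipschitz continuity of $\phi$. Against $|H_t|\lesssim(t-\tau)^{-3/2}$ this gives
\begin{equation*}
\int_{t/2}^t \frac{(t-\tau)|\log(t-\tau)|}{(t-\tau)\,\tau}\,\mathrm{d}\tau\le O(1),
\end{equation*}
hence $O(\delta)$ in $L^\infty$. In $L^1$, the $L^1$-Hölder bounds in those lemmas give $O(\delta)\sqrt t|\log t|$, which is $\le O(\delta)/\sqrt t$.

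Collecting terms gives $\|z_t\|_{L^\infty}\le O(\delta)/t$ and $\|z_t\|_{L^1}\le O(\delta)/\sqrt t$. The existence of $z_t$ a.e. follows because these integrals converge absolutely, so the difference quotients converge. For $\theta$ we apply the same scheme to the sources $-\frac{p}{c_v}u_x+\frac{\mu}{c_v v}u_x^2$, using $\sqrt t\|u_x\|_{L^\infty}\le O(\delta)$ and the Hölder continuity of $u_x$ from \cite{WangHT2022}. For $u$ we use the $H_{ty}$ bound in Lemma~\ref{lemma: 2 derivative} together with the continuity in time of $p$.
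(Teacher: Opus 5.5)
Your proposal follows the paper's proof essentially step for step. It uses the same split at $t/2$ and freezes $K\phi(\theta)z$ at time $t$, treating the frozen part with \eqref{es: int H_t} and the remainder with Lemmas~\ref{lemma: z holder}--\ref{lemma: theta holder} plus the Lipschitz continuity of $\phi$; it bounds the initial-data term in $L^1$ via the antiderivative $W$ and defers to \cite{WangHT2022} for $u_t$ and $\theta_t$. There are two harmless slips, and one place where your version is better than the paper's: $\int_{t/2}^t|\log(t-\tau)|\,\tau^{-1}\,\mathrm{d}\tau$ is of order $1+|\log t|$, not $O(1)$ (still $\le O(1)/t$, or use the log-free bounds $\frac{t-s}{s}+\sqrt{s}\sqrt{t-s}$ and $\frac{t-s}{s}+\frac{\sqrt{t-s}}{\sqrt{s}}$ as the paper does); the $[0,t/2]$ piece in $L^1$ is $O(\delta)$ rather than $O(\delta)\sqrt{t}$; and bounding that piece in $L^\infty$ through $\|K\phi(\theta)z\|_{L^\infty_x}\le O(\delta)$ avoids the paper's step $\delta t^{-3/2}\le O(1)\delta/t$, which fails for small $t$.
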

	\begin{proof}
The bound for $u_t$ follows from \cite[Lemma 4.2]{WangHT2022}, while the bound for $\theta_t$ follows from \cite[Lemma 4.3]{WangHT2022} together with the estimates for $z_t$. It thus remains to estimate $z_t$. 
        
For this purpose, when $t_{\sharp}\ll 1$ is sufficiently small, we have that
			\begin{align*}
			z_t(x,t)=&\int_{\mathbb{R}}H_t\left(x,t;y,0;\frac{D}{v^2}\right)z_0(y)\,\mathrm{d}y\\
			& -\int_0^{\frac{t}{2}}\int_{\mathbb{R}}H_t\left(x,t;y,\tau;\frac{D}{v^2}\right)K\phi(\theta(y,\tau))z(y,\tau)\,\mathrm{d}y\,\mathrm{d}\tau \\
			&-\int_{\frac{t}{2}}^t\int_{\mathbb{R}} H_t\left(x,t;y,\tau;\frac{D}{v^2}\right)K\phi(\theta(y,t))z(y,t)\,\mathrm{d}y\,\mathrm{d}\tau\\
			&-\int_{\frac{t}{2}}^t\int_{\mathbb{R}} H_t\left(x,t;y,\tau;\frac{D}{v^2}\right)\left[K\phi(\theta(y,\tau))z(y,\tau)-K\phi(\theta(y,t))z(y,t)\right]\,\mathrm{d}y\,\mathrm{d}\tau\\
			=&:\mathcal{I}_1+\mathcal{I}_2+\mathcal{I}_3+\mathcal{I}_4.
		\end{align*}

\smallskip
\noindent
\textbf{$L^{\infty}$-estimate for $z_t$.} For $\mathcal{I}_1, \mathcal{I}_2$, and $\mathcal{I}_3$, we apply the estimate for $H_t$ in Eqs.~\eqref{es: H_t infty} and \eqref{es: int H_t} in Lemma~\ref{lemma: 2 derivative}, as well as the estimates in Theorem~\ref{thm: local exi}, Eq.~\eqref{es local sol}, to deduce that
			\begin{align*}
			\left|\mathcal{I}_1\right|\leq &O(1)\int_{\mathbb{R}} \frac{e^{\frac{-(x-y)^2}{C_*t}}}{t^{\frac{3}{2}}}\,\mathrm{d}y\left\|z_0\right\|_{L_x^{\infty}}\leq O(1)\frac{\delta}{t},\\
			\left|\mathcal{I}_2\right|\leq &O(1)\int_0^{\frac{t}{2}}\frac{1}{(t-\tau)^{\frac{3}{2}}}\int_{\mathbb{R}} K\phi(\theta(y,\tau))z(y,\tau)\,\mathrm{d}y\,\mathrm{d}\tau\\
			\leq& O(1)\frac{1}{t^{\frac{3}{2}}}\int_0^{\frac{t}{2}}\int_{\mathbb{R}} K\phi(\theta(y,\tau))z(y,\tau)\,\mathrm{d}y\,\mathrm{d}\tau\\
			\leq &O(1)\frac{\delta}{t^{\frac{3}{2}}}\leq O(1)\frac{\delta}{t},\\
			\left|\mathcal{I}_3\right|\leq
			&\int_{\mathbb{R}}\left|\int_{\frac{t}{2}}^t H_t(x,t;y,\tau;\frac{D}{v^2})\,\mathrm{d}\tau+\delta(x-y)\right| K\phi(\theta(y,t))z(y,t)\,\mathrm{d}y\\
			&\qquad + K\phi(\theta(x,t))z(x,t)\\
			\leq& O(1)\int_{\mathbb{R}}\left(\frac{e^{\frac{-(x-y)^2}{C_*(t-\tau)}}}{\sqrt{t-\tau}}+\delta_* e^{\frac{-(x-y)^2}{C_*(t-\tau)}}\right) z(y,t)\,\mathrm{d}y+O(1)\delta\\
			\leq &O(1) \left\|z\right\|_{L_x^{\infty}}+O(1)\delta
			\leq O(1)\delta.
		\end{align*}
For $\mathcal{I}_4$, we utilise additionally the Lipschitz continuity of $\phi=\phi(\theta)$ and the H\"{o}lder continuity for $z$, $\theta$ in Lemmas~\ref{lemma: z holder} and \ref{lemma: theta holder} to infer that
		\begin{align}
			\label{z_t L infty}
			\left|\mathcal{I}_4\right|\leq &O(1)\int_{\frac{t}{2}}^t\int_{\mathbb{R}} \left|H_t(x,t;y,\tau;\frac{D}{v^2})\right|\Big[\left|z(y,\tau)-z(y,t)\right|+z(y,\tau)\left|\theta(y,\tau)-\theta(y,t)\right|\Big]\,\mathrm{d}y\,\mathrm{d}\tau\nonumber\\
			\leq &O(1)\delta\int_{\frac{t}{2}}^t\int_{\mathbb{R}}\frac{e^{\frac{-(x-y)^2}{C_*(t-\tau)}}}{(t-\tau)^\frac{3}{2}}\left(\frac{t-\tau}{\tau}+\sqrt{\tau}\sqrt{t-\tau}+\frac{\sqrt{t-\tau}}{\sqrt{\tau}}\right)\,\mathrm{d}y\,\mathrm{d}\tau\nonumber\\
			\leq &O(1)\delta \int_{\frac{t}{2}}^t\frac{1}{t-\tau}\left(\frac{t-\tau}{\tau}+\sqrt{\tau}\sqrt{t-\tau}+\frac{\sqrt{t-\tau}}{\sqrt{\tau}}\right)\,\mathrm{d}\tau\nonumber\\
			\leq &O(1)\delta(1+t).
		\end{align}
In summary, we have established that
		\begin{equation*}
			\left\|z_t(\cdot,t)\right\|_{L_x^{\infty}}\leq O(1)\frac{\delta}{t}\qquad \text{for any } 0<t<t_{\sharp}.
		\end{equation*}
        
        \smallskip
		\noindent\textbf{$L^{1}$-estimate.}
		Following a similar argument to that in Lemma~\ref{lemma: V U Theta Z k+1}, define the anti-derivative
		\begin{equation*}
			W(x,t;y,0;\frac{D}{v^2})= \begin{cases}\int_{-\infty}^y H_t(x,t;w,0; \frac{D}{v^2}) \mathrm{d}\omega & \text {for}\quad y<x, \\ -\int_y^{\infty} H_t(x,t;w,0;\frac{D}{v^2}) \mathrm{d}\omega, & \text {for} \quad y \geq x.\end{cases}
		\end{equation*}
		It then follows from the smallness assumption~\eqref{smallness condition} for the initial datum that
		\begin{align*}
\int_{\mathbb{R}}\left|\mathcal{I}_1\right|\,\mathrm{d}x &=\int_{\mathbb{R}} \left|\int_{\mathbb{R}}\partial_y W\left(x,t;y,0;\frac{D}{v^2}\right)z_0(y)\,\mathrm{d}y\right|\,\mathrm{d}x \nonumber\\
			&\quad\leq  \int_{\mathbb{R}}\left|W\left(x,t;y,0;\frac{D}{v^2}\right)\right| \int_{\mathbb{R}}\left|\mathrm{d}z_0'(y)\right|\,\mathrm{d}x\nonumber\\
			&\quad\leq O(1)\left\{\int_{\mathbb{R}}\frac{e^{\frac{-(x-y)^2}{C_*t}}}{t}\,\mathrm{d}x\right\}\left\|z_0\right\|_{BV} \\
            &\quad\leq O(1)\frac{\delta}{\sqrt{t}}.
		\end{align*}
For $\mathcal{I}_2$, $\mathcal{I}_3$, we infer from Eqs.~\eqref{es: H_t infty} and \eqref{es: int H_t} in Lemma~\ref{lemma: 2 derivative} and \eqref{es local sol} in Theorem~\ref{thm: local exi} that 
		\begin{align*}
			\int_{\mathbb{R}}\left|\mathcal{I}_2\right|\,\mathrm{d}x\leq &O(1)\int_0^{\frac{t}{2}}\int_{\mathbb{R}}\int_{\mathbb{R}} \frac{e^{\frac{-(x-y)^2}{C_*(t-\tau)}}}{(t-\tau)^{\frac{3}{2}}}z(y,\tau)\,\mathrm{d}y\,\mathrm{d}x\,\mathrm{d}\tau\\
			\leq &O(1)\int_0^{\frac{t}{2}}\frac{1}{t-\tau}\left\|z\right\|_{L_x^{1}}\,\mathrm{d}\tau\\
			\leq &O(1)\int_0^{\frac{t}{2}}\frac{1}{t-\tau}\delta \,\mathrm{d}\tau\leq O(1)\delta,
		\end{align*}
		as well as
			\begin{align*}
			\int_{\mathbb{R}}\left|\mathcal{I}_3\right|\,\mathrm{d}x\leq
			&\int_{\mathbb{R}}\int_{\mathbb{R}}\left|\int_{\frac{t}{2}}^t H_t(x,t;y,\tau;\frac{D}{v^2})\,\mathrm{d}\tau+\delta(x-y)\right| K\phi(\theta)z(y,t)\,\mathrm{d}y\,\mathrm{d}x\\
			&\qquad + \int_{\mathbb{R}}K\phi(\theta)z(x,t)\,\mathrm{d}x\\
			\leq& O(1)\int_{\mathbb{R}}\int_{\mathbb{R}}\left(\frac{e^{\frac{-(x-y)^2}{C_*(t-\tau)}}}{\sqrt{t-\tau}}+\delta_* e^{\frac{-(x-y)^2}{C_*(t-\tau)}}\right) z(y,t)\,\mathrm{d}y\,\mathrm{d}x+O(1)\left\|z\right\|_{L_x^1}\\
			\leq &O(1)\left\|z\right\|_{L_x^1}\leq O(1)\delta.
		\end{align*}
Moreover, an adaptation of the arguments in Eq.~\eqref{z_t L infty} yields that
		\begin{align*}
			\int_{\mathbb{R}}\left|\mathcal{I}_4\right|\,\mathrm{d}x\leq& O(1)\int_{\frac{t}{2}}^t\int_{\mathbb{R}}\int_{\mathbb{R}} \left|H_t(x,t;y,\tau;\frac{D}{v^2})\right|\left[\left|z(y,\tau)-z(y,t)\right|\right.\\
			&\left.\qquad+z(y,\tau)\left|\theta(y,\tau)-\theta(y,t)\right|\right]\,\mathrm{d}y\,\mathrm{d}x\,\mathrm{d}\tau\\
			\leq&O(1) \delta \int_{\frac{t}{2}}^t\frac{1}{t-\tau}(\sqrt{t-\tau}+\frac{\sqrt{t-\tau}}{\sqrt{\tau}})\,\mathrm{d}\tau\\
			\leq &O(1) \delta(1+\sqrt{t}).
		\end{align*} 
		Putting the above estimates together, we conclude that $$\left\|z_t(\cdot,t)\right\|_{L_x^{1}}\leq O(1)\frac{\delta}{\sqrt{t}}$$ for any $0<t<t_{\sharp}$.  This completes the proof.  	\end{proof}

    Now we are ready to state and prove the main result of this subsection.
	
	\begin{theorem}
		\label{thm: local regularity}
        There exists a universal constant $\delta>0$ such that the following holds. Suppose that the initial datum $\left(v_0^*, u_0^*, \theta_0^*, z_0^*\right)$ satisfies the smallness condition in Eq.~\eqref{smallness condition} with this $\delta$. Let $\left(v, u, \theta, z\right)$ be the corresponding local weak solution constructed in Theorem~\ref{thm: local exi}. Then
		\begin{enumerate}
			\item There exists a positive constant $C_{\sharp}$ such that 
			\begin{equation}
				\label{es partial t}
				\begin{aligned}
					&\max \left\{\sqrt{t}\left\|u_t(\cdot, t)\right\|_{L_x^1}, t\left\|u_t(\cdot, t)\right\|_{L_x^{\infty}}, \sqrt{t}\left\|\theta_t(\cdot, t)\right\|_{L_x^1}, t\left\|\theta_t(\cdot, t)\right\|_{L_x^{\infty}},\right.\\
					& \left.\qquad \sqrt{t}\left\|z_t(\cdot, t)\right\|_{L_x^1}, t\left\|z_t(\cdot, t)\right\|_{L_x^{\infty}} \right\} 
					\leq 2 C_{\sharp} \delta,
				\end{aligned}
			\end{equation}
            in addition to the estimates in Eq.~\eqref{es local sol}. 
			\item The fluxes of $u$, $\theta$, and $z$ (defined in Theorem~\ref{thm: local exi}) are globally Lipschitz continuous with respect to $x$, provided that $0<t<t_{\sharp}$.
			\item The specific volume $v(x,t)$ satisfies 
			\begin{equation}
				\label{v Holder}
				\left\{\begin{array}{l}
					\|v(\cdot, t)-v(\cdot, s)\|_{B V} \leq O(1) \delta \frac{(t-s)|\log (t-s)|}{\sqrt{t}}, \\
					\|v(\cdot, t)-v(\cdot, s)\|_{L_x^{\infty}} \leq O(1) \delta \frac{t-s}{\sqrt{t}}, \\
					\|v(\cdot, t)-v(\cdot, s)\|_{L_x^1} \leq O(1) \delta(t-s)
				\end{array}\right.
			\end{equation}
            for any $0\leq s<t<t_{\sharp}$.
		\end{enumerate}
	\end{theorem}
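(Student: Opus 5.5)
The plan is to assemble the three items from the lemmas of this subsection together with Theorem~\ref{thm: local exi}, treating them in order.

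\emph{Item (1).} This is a direct consequence of Lemma~\ref{lemma: t deri}. Multiplying the bounds there by $t$ and $\sqrt{t}$ respectively gives \eqref{es partial t}, once $C_\sharp$ is enlarged if necessary to absorb the $O(1)$ constants. That enlargement is harmless because $C_\sharp$ is only required to be universal (independent of $\delta$ and $t$). The estimates in \eqref{es local sol} are inherited unchanged from Theorem~\ref{thm: local exi}.

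\emph{Item (2).} The three fluxes are handled separately, using the equations in \eqref{Cauchy pde}. For $u$, the distributional identity $\bigl(\frac{\mu u_x}{v}-p\bigr)_x=u_t$ holds. Since $u_t(\cdot,t)\in L^\infty_x$ with norm $O(1)\delta/t$, and the flux is already known to be continuous in $x$ (Theorem~\ref{thm: local exi}), the flux is Lipschitz in $x$ with constant $O(1)\delta/t$. For $z$, the identity is $\bigl(\frac{D}{v^2}z_x\bigr)_x=z_t+K\phi(\theta)z$, and both terms on the right are in $L^\infty_x$ for $t>0$. For $\theta$, the $x$-derivative of $\bar\theta$ equals $c_v\theta_t\cdot$(constant) after the integral term is cancelled; more precisely $\partial_x\bar\theta=\theta_t\cdot c_v/c_v$ up to the normalisation used. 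In every case the derivative is bounded by $O(1)\delta/t$, by Lemma~\ref{lemma: t deri} and the $L^\infty$ bounds of $z,u_x$. Continuity of the flux then upgrades the distributional bound to a genuine global Lipschitz bound.

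\emph{Item (3).} Write $v(\cdot,t)-v(\cdot,s)=\int_s^t u_x(\cdot,\tau)\,\mathrm{d}\tau$. The $L^\infty$ estimate follows from $\|u_x\|_{L^\infty}\le 2C_\sharp\delta/\sqrt{\tau}$, giving $\int_s^t\tau^{-1/2}\,\mathrm{d}\tau\le 2(t-s)/\sqrt{t}$. Strictly speaking one obtains $(t-s)/(\sqrt{t}+\sqrt{s})$, which is comparable. The $L^1$ estimate follows from $\|u_x\|_{L^1}\le 2C_\sharp\delta$.

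The BV estimate is the main obstacle. I would split $v$ into its continuous and jump parts. For the jump part, I would use the jump relation at $\omega\in\mathcal D$ from the proof of Lemma~\ref{lemma: lip V}, passed to the limit. Since the flux $\frac{\mu u_x}{v}-p$ is continuous, the relation $[v_t]_\omega=[u_x]_\omega$ is expressed through $[v]_\omega$ and $[\theta]_\omega$, each multiplied by bounded factors of size $O(1)(1+\delta/\sqrt{\tau})$. Integrating in $\tau$ and summing over $\omega$ then gives $O(1)\delta\,(t-s)/\sqrt{t}$.

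For the continuous part, I need $\int_{\mathbb R}|u_{xx}^{\tilde a}|$-type control of $\partial_x\int_s^t u_x\,\mathrm{d}\tau$. I would express $u_x$ through the Lipschitz flux, $u_x=\frac{v}{\mu}(\text{flux}+p)$, so that $\partial_x u_x$ is controlled away from $\mathcal D$ by $|\partial_x\text{flux}|+|v_x|(|\text{flux}|+|p|)+|\theta_x|$. Integrating gives a bound of $O(1)\delta(1/\sqrt{\tau}+\|u_t\|_{L^1})$-type. This naive route yields only $O(1)\delta\,(t-s)/\sqrt{s}$-type or $\log$-divergent bounds near $\tau=0$. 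To recover the stated $(t-s)|\log(t-s)|/\sqrt t$, I would instead follow \cite[Lemma 3.4]{WangHT2022}. That argument represents $u_x$ via $H_{xy}$ and the identity for $\int_s^t H_{xy}\,\mathrm{d}\tau$ in Lemma~\ref{lemma: 2 derivative}, which isolates a $\delta$-function term (the jump part) and leaves a kernel remainder whose time integral over $[s,t]$ produces the $|\log(t-s)|$ factor. The new $z$-contribution enters only through $\theta$ via the source $\frac{q}{c_v}K\phi(\theta)z$, which is bounded in $L^1\cap L^\infty$ by $O(1)\delta$ and so contributes only lower-order terms.
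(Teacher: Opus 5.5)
Your treatment of (1) and (2) is essentially the paper's. The $x$-derivatives of the three fluxes are $u_t$, $\theta_t$ (exactly, once the integral term is differentiated) and $z_t+K\phi(\theta)z$, all bounded in $L^\infty_x$ by Lemma~\ref{lemma: t deri}.

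For (3) the routes differ. The paper never works on the limit solution. It reads the $L^1$, $L^\infty$ and BV time-difference bounds off the uniform bounds for the iterates $V^{k+1}$: the BV bound is built into the induction hypothesis \eqref{estimates n}, and the jumps come from Lemma~\ref{lemma: lip V}. It then passes to the limit, which is legitimate because convergence in the $\mathcal{F}$-topology includes $\|\cdot\|_{BV}$. Your $L^1$, $L^\infty$ and jump estimates carried out directly on the limit solution are fine.

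For the BV part, however, you discard the direct estimate for the wrong reason. It closes, and with a better bound. Write $F=\frac{\mu u_x}{v}-p$ and use $vp=a\theta$, so that $u_x=\frac{1}{\mu}(vF+a\theta)$. By (2), $F(\cdot,\tau)$ is Lipschitz with $F_x=u_t$ and $\|F(\cdot,\tau)\|_{L^\infty_x}\le O(1)(1+\delta\tau^{-1/2})$. The product rule for a BV function times a Lipschitz one gives $D_x(vF)=F\,D_xv+v\,u_t\,\mathrm{d}x$, hence
\[
|D_xu_x(\cdot,\tau)|(\mathbb{R})\le \frac{1}{\mu}\Big(\|F\|_{L^\infty_x}|D_xv|(\mathbb{R})+\|v\|_{L^\infty_x}\|u_t\|_{L^1_x}+a\|\theta_x\|_{L^1_x}\Big)\le O(1)\delta\big(1+\tau^{-1/2}\big),
\]
using (1) and \eqref{es local sol}. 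This singularity is integrable at $\tau=0$.

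Since $v(\cdot,t)-v(\cdot,s)=\int_s^tu_x\,\mathrm{d}\tau$, the total variation of the difference is at most
\[
O(1)\delta\int_s^t\big(1+\tau^{-1/2}\big)\,\mathrm{d}\tau\le O(1)\delta\,\frac{t-s}{\sqrt{t}}.
\]
This is the same computation you already did for the $L^\infty$ bound. Combined with that bound, it gives $\|v(\cdot,t)-v(\cdot,s)\|_{BV}\le O(1)\delta\,(t-s)/\sqrt{t}$. This implies the stated estimate, since $|\log(t-s)|\ge 1$ for $t<t_\sharp\ll 1$, and it handles the jump, Cantor and absolutely continuous parts at once.

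So neither the jump/continuous split nor a re-run of the heat-kernel analysis of \cite{WangHT2022} is needed. As written, that last step of your proposal is only a pointer to a reference, not an argument.

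Compared with the paper, the corrected direct route is self-contained given (1)--(2) and removes the logarithm. The paper's route is shorter, but it relies on the BV time-difference bound having been propagated through the iteration, which the paper itself only asserts by analogy with \cite{WangHT2022}.
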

	\begin{proof}
		(1) follows directly from the Lemma~\ref{lemma: t deri}. Meanwhile, the $L_x^{\infty}$-norm of $u_t(x,t)$, $\theta_t(x,t)$, and $z_t(x,t)$ are finite whenever $0<t<t_{\sharp}$, thanks to Lemma~\ref{lemma: t deri}. This proves (2).

        For (3), from the representation formula~\eqref{eq: V k+1} for $V^{k+1}$ and the bound  $\left\|U_x^{k+1}\right\|_{L_x^1}\leq 2C_{\sharp}\delta$, we deduce that
		\begin{align*}
			\left\|V^{k+1}(x,t)-V^{k+1}(x,s)\right\|_{L_x^1}=&\left\|\int_0^t U_x^{k+1}(x,\tau)\,\mathrm{d}\tau-\int_0^s U_x^{k+1}(x,\tau)\,\mathrm{d}\tau\right\|_{L_x^1}\\
			\leq &\int_s^t \left\|U_x^{k+1}(x,\tau)\right\|_{L_x^1}\,\mathrm{d}\tau\leq O(1)\delta (t-s).
		\end{align*}
		Furthermore, from Eq.~\eqref{estimates n} and Lemma~\ref{lemma: lip V}, we obtain 
		the first two properties in Eq.~\eqref{v Holder}  for $V^{k+1}$. We thus conclude  by applying the strong convergence result in Theorem~\ref{thm: local exi}. 	\end{proof}

Recall the total specific energy $E=c_v\theta+\frac{u^2}{2}+q z$. We may deduce from the above developments the existence of local weak solutions to Eq.~\eqref{PDE,1}.
        
	\begin{corollary}
		Suppose that the initial datum $\left(v_0^*, u_0^*, \theta_0^*, z_0^*\right)$ satisfy the smallness condition~\eqref{smallness condition} for a sufficiently small $\delta$. Let $\left(v, u, \theta, z\right)$ be the corresponding weak solution to Eq.~\eqref{PDE,2} on $\R \times [0,t_\sharp)$ constructed in
		Theorems~\ref{thm: local exi} and \ref{thm: local regularity}, where $t_{\sharp}\ll 1$ is sufficiently small. Then $\left(v, u, E, z\right)$ is a weak solution to the original system~\eqref{PDE,1} with initial datum
		$$\left(v_0, u_0, E_0, z_0\right)=\left(1+v_0^*, u_0^*, c_v(1+\theta_0^*)+\frac{(u_0^*)^2}{2}+q z_0^*, z_0^*\right).$$
	\end{corollary}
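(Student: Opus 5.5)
The plan is to test the weak formulation \eqref{def distri} of \eqref{PDE,2}, which we already have from Theorem~\ref{thm: local exi}, against the equation for $E=c_v\theta+\frac{u^2}{2}+qz$. The $v$-, $u$- and $z$-equations of \eqref{PDE,1} and \eqref{PDE,2} coincide, so only the energy equation needs to be checked. That is, for every $\varphi\in C_0^\infty([0,t_\sharp)\times\R)$ we must show
\begin{equation*}
\int_0^{t_\sharp}\!\!\int_{\R}\Big[\varphi_x\Big(\frac{\mu u u_x}{v}+\frac{\nu\theta_x}{v}+\frac{qDz_x}{v^2}-pu\Big)-\varphi_t E\Big]\,\mathrm{d}x\,\mathrm{d}t=\int_{\R}\varphi(x,0)E_0(x)\,\mathrm{d}x.
\end{equation*}

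First, $c_v$ times the $\theta$-identity plus $q$ times the $z$-identity gives the contribution of $c_v\theta+qz$. The reaction terms $\pm qK\phi(\theta)z\varphi$ cancel exactly. What remains is the source $\varphi\,(p u_x-\frac{\mu}{v}u_x^2)$ together with the flux terms $\varphi_x(\frac{\nu\theta_x}{v}+\frac{qDz_x}{v^2})$.

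Second, we need the kinetic part $\frac{u^2}{2}$. The idea is to use $u\varphi$ as a test function in the momentum identity, but $u\varphi$ is not smooth. We would justify this using the regularity from Theorems~\ref{thm: local exi} and \ref{thm: local regularity}. On each interval $[\epsilon,t_\sharp)$ we have $u_t\in L^\infty_tL^1_x\cap L^\infty_tL^\infty_x$, and by part (2) of Theorem~\ref{thm: local regularity} the flux $\sigma:=\frac{\mu u_x}{v}-p$ is globally Lipschitz in $x$. Hence $u_t=\sigma_x$ holds a.e., with both sides bounded on $[\epsilon,t_\sharp)$. Multiplying by $u\varphi$ and integrating by parts in $x$, which is legitimate because $u\in W^{1,1}\cap L^\infty$ and $\sigma$ is Lipschitz and bounded, we get
\begin{equation*}
\int_\epsilon^{t_\sharp}\!\!\int_{\R}\Big[\varphi_x\sigma u+\varphi\,\sigma u_x-\varphi_t\frac{u^2}{2}\Big]\,\mathrm{d}x\,\mathrm{d}t=\int_{\R}\varphi(x,\epsilon)\frac{u^2(x,\epsilon)}{2}\,\mathrm{d}x.
\end{equation*}
Here $\varphi\sigma u_x=\varphi(\frac{\mu}{v}u_x^2-pu_x)$. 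This term exactly cancels the source from the first step. Likewise, $\varphi_x\sigma u$ supplies the fluxes $\frac{\mu uu_x}{v}-pu$.

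Third, we pass $\epsilon\to0$. Doing the first step on $[\epsilon,t_\sharp)$ as well and adding the two identities yields the energy identity with the initial term replaced by $\int_{\R}\varphi(\cdot,\epsilon)E(\cdot,\epsilon)$. This limit is the main obstacle, since $u_x$ is only $O(t^{-1/2})$ in $L^\infty$. Still, $\sqrt{t}\,u_x\in L^\infty$ and $u_x\in L^\infty_tL^1_x$ give $u_x^2\in L^1_x$ with norm $O(\delta^2/\sqrt{t})$, which is integrable in time. The same holds for the flux terms $\theta_x, z_x\in L^\infty_tL^1_x$. So the space-time integrals over $(0,\epsilon)$ vanish by dominated convergence.

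For the boundary term we need $E(\cdot,\epsilon)\to E_0$ in $L^1_{loc}$. For $u$ this follows from $\|u(\cdot,t)-u(\cdot,s)\|_{L^1}\le\int_s^t\|u_t\|_{L^1}\lesssim\sqrt t-\sqrt s$, using $\sqrt{t}\|u_t\|_{L^1}\le 2C_\sharp\delta$, together with $u$ being uniformly bounded. Since $u$ is $L^1$-Cauchy as $\epsilon\to0$ and its limit must be $u_0$ by the weak formulation, $u^2\to u_0^2$ in $L^1_{loc}$. The same argument applies to $\theta$ and $z$, or one can invoke Lemmas~\ref{lemma: z holder} and \ref{lemma: theta holder}. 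Combining these gives the claim. The regularity requirements in Definition~\ref{new def, weak sol, Nov25}, with $E$ in place of $\theta$, follow from those already established for $\theta$, $u$ and $z$.
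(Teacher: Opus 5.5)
Your proposal is correct and follows the paper's own argument. You test the momentum equation with $u\varphi$, which is legitimate because $u_t$ exists in the strong sense, and add $c_v$ times the $\theta$-identity and $q$ times the $z$-identity. The reaction terms and the $pu_x-\frac{\mu}{v}u_x^2$ sources then cancel, and your cut-off at $t=\epsilon$ with the limit $\epsilon\to0$ supplies details the paper leaves implicit.

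The only slip is the parenthetical appeal to Lemmas~\ref{lemma: z holder} and \ref{lemma: theta holder}. Their $L^1$ bounds contain $\frac{t-s}{\sqrt{s}}$ and degenerate as $s\to0$, so they do not give convergence to the initial data. Instead, obtain $\theta(\cdot,\epsilon)\to\theta_0$ and $z(\cdot,\epsilon)\to z_0$ exactly as you did for $u$, from the bounds $\sqrt{t}\|\theta_t\|_{L_x^1},\sqrt{t}\|z_t\|_{L_x^1}\le 2C_\sharp\delta$ of Theorem~\ref{thm: local regularity}.
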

	\begin{proof}
		As $(v, u, \theta, z)$
		is the weak solution constructed in Theorems~\ref{thm: local exi} and \ref{thm: local regularity}, it also satisfies Eq.~\eqref{PDE,2} in the distributional sense. Since $u_t$ is defined in the strong sense, we can take $\varphi u$ as the test function in the second identity of Eq.~\eqref{def distri}. Additionally, we may take $c_v\varphi$ as the test function in the third identity of Eq.~\eqref{def distri} and $q\varphi$ as the test function in the fourth equation of Eq.~\eqref{Cauchy pde}, respectively. Thus $E$ satisfies Eq.~\eqref{PDE,1} in the distributional sense.
	\end{proof}
	\begin{remark}
		\label{re: v regularity}
		The regularity result in Eq.~\eqref{v Holder} for $v$ (obtained from Theorem \ref{thm: local regularity}) is the same as in that in Eq.~\eqref{weak solu}. Thus, for each $0\leq t<t_{\sharp}$, the mapping  $x\mapsto v(t,x)$ is continuous on $L_x^1(\mathbb{R})\cap L_x^{\infty}(\mathbb{R})\cap BV$. 
	\end{remark}
	\subsection{Stability and Uniqueness}
	This subsection focuses on the stability of weak solution constructed in Theorems~\ref{thm: local exi} and \ref{thm: local regularity}. From this one may conclude the continuous dependence of weak solution on the initial data, as well as the uniqueness of weak solution.
	\begin{lemma}\label{lemma: smallness} 
    There exists a universal constant $0 < \delta_* \ll 1$ such that the following holds.  Suppose that the initial data satisfy
		\begin{align}
			\label{small initial}
&	\left\|v_0-1\right\|_{L_x^1}+\left\|v_0\right\|_{B V}+\left\|u_0\right\|_{L_x^1}+\left\|u_0\right\|_{B V}\nonumber\\
&\qquad + \left\|\theta_0-1\right\|_{L_x^1}+\left\|\theta_0\right\|_{B V} +\left\|z_0\right\|_{L_x^1}+\left\|z_0\right\|_{B V}<\delta_* \ll 1.
		\end{align}
		Let $(v, u, \theta, z)$ be any weak solution to Eq.~\eqref{PDE,2} as in Definition~\ref{new def, weak sol, Nov25} with the above initial data. Let $C_{\sharp}$ and $\delta$ be the parameters given in Theorems~\ref{thm: local exi} and \ref{thm: local regularity}. Then there exists a small positive constant $t_{*}$ such that for any $t \in (0,t_*)$, one has that
		\begin{equation}
		\label{smallness for any}
			\begin{aligned}
				& \max \left\{\|u(\cdot, t)\|_{L_x^1},\|u(\cdot, t)\|_{L_x^{\infty}},\left\|u_x(\cdot, t)\right\|_{L_x^1}, \sqrt{t}\left\|u_x(\cdot, t)\right\|_{L_x^{\infty}}\right\} \leq 2 C_{\sharp} \delta, \\
				& \max \left\{\|\theta(\cdot, t)\|_{L_x^1},\|\theta(\cdot, t)\|_{L_x^{\infty}},\left\|\theta_x(\cdot, t)\right\|_{L_x^1}, \sqrt{t}\left\|\theta_x(\cdot, t)\right\|_{L_x^{\infty}}\right\}\leq 2 C_{\sharp} \delta,\\
				& \max \left\{\|z(\cdot, t)\|_{L_x^1}, \|z(\cdot, t)\|_{L_x^{\infty}},\left\|z_x(\cdot, t)\right\|_{L_x^1},\sqrt{t}\left\|z_x(\cdot, t)\right\|_{L_x^{\infty}}\right\}\leq 2 C_{\sharp} \delta.
			\end{aligned}
		\end{equation}
	\end{lemma}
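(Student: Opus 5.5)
The idea is to show that an arbitrary weak solution in the class of Definition~\ref{new def, weak sol, Nov25} obeys the same Duhamel representations as the iterates. Then we close a continuity (bootstrap) argument on a short time interval whose length depends on the solution; the bounds themselves are universal.

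\textbf{Step 1: a priori control of the coefficients.} By Definition~\ref{new def, weak sol, Nov25}, $v-1\in C([0,t_\sharp);L^1\cap{\rm BV})$. Together with \eqref{small initial}, continuity at $t=0$ gives a $t_1>0$ with $\|v(\cdot,t)-1\|_{L^1_x}+\|v(\cdot,t)\|_{BV}\le 2\delta_*$ on $[0,t_1)$. Since $v_t=u_x$ and $\sqrt t u_x\in L^\infty_tL^\infty_x$, we also get $\sqrt t\|v_t\|_{L^\infty_x}\le M$ for some finite $M$, which depends on the solution. The coefficients $\mu/v$, $\nu/(c_vv)$, $D/v^2$ therefore satisfy \eqref{f}, with two caveats. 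First, the bound on $f_t$ holds only with $M$ in place of $\delta_*$; I expect the kernel bounds of Lemmas~\ref{lemma: Liu} and \ref{lemma: 2 derivative} to survive on a possibly shorter interval, at the cost of shrinking $t_\sharp$ depending on $M$. Second, the invariance of the discontinuity set $\mathcal D$ must be checked. It follows from $v(\cdot,t)-v_0=\int_0^tu_x\,\mathrm ds$ with $u_x(\cdot,s)$ bounded for $s>0$, so the jumps of $v$ sit exactly at $\mathcal D$.

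\textbf{Step 2: Duhamel representation.} With the heat kernels $H(\cdot;\mu/v)$, $H(\cdot;\nu/(c_vv))$, $H(\cdot;D/v^2)$ now available, I use the linear uniqueness in Remark~\ref{re: conti}, as in the proof of Theorem~\ref{thm: local exi}. This shows that $u,\theta,z$ are given by formulas of the type \eqref{eq: U k+1}, \eqref{eq: Theta k+1}, \eqref{eq: Z k+1}, with $k$ replaced by the solution itself (a fixed-coefficient linear problem with given sources). The regularity listed in \eqref{weak solu} makes the sources $p$, $\mathcal N_2$, $K\phi(\theta)z$ admissible.

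\textbf{Step 3: bootstrap.} Define
\[
A(t):=\sup_{0<s<t}\max\bigl\{\|u\|_{L^1_x},\|u\|_{L^\infty_x},\|u_x\|_{L^1_x},\sqrt s\|u_x\|_{L^\infty_x},\ \text{same for }\theta-1,\ z\bigr\}(s).
\]
Repeating the computations of Lemma~\ref{lemma: V U Theta Z k+1} (steps (1)--(4) for $z$, and their analogues from \cite{WangHT2022} for $u,\theta$) gives
\[
A(t)\le C_\sharp\delta_*+O(1)\bigl(\sqrt t+A(t)\bigr)A(t).
\]
For $z$, the $L^1$ bound follows directly from the dissipative sign of $K\phi z$, which needs $z\ge0$ (or simply bounding $|K\phi(\theta)z|$ via $\|z\|_{L^1_x}$ and Gronwall). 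The function $A$ is finite by the definition of weak solution. The main obstacle is that $A$ need not be continuous at $t=0$, because $\sqrt t\|u_x\|_{L^\infty_x}$ is only known to be bounded, not small. I handle this by estimating the Duhamel pieces directly. The initial-data terms are bounded by $C_\sharp\delta_*$ uniformly in $t$, via Lemma~\ref{lemma: Liu} and the antiderivative trick \eqref{W anti_deri}. The nonlinear source terms are bounded by $O(1)\sqrt t\,A(t)+O(1)A(t)^2$, where $A(t)\le A(t_1)<\infty$. One must also check that the quadratic contributions carry a factor $\sqrt t$ or $\delta_*$; for instance, the pressure term in $u_x$ is $\int H_{xy}\,p$ and uses Lemma~\ref{lemma: 2 derivative} together with the BV smallness of $v,\theta$. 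This gives smallness of $A$ near $0$, after which a standard continuity argument in $t$ yields $A(t)\le 2C_\sharp\delta$ on $(0,t_*)$, which is \eqref{smallness for any}.
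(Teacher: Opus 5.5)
Your Steps 1 and 2 follow the paper's route. Smallness of $v-1$ comes from $v-1\in C([0,t_\sharp);L^1\cap{\rm BV})$, the kernels for $\mu/v$, $\nu/(c_vv)$, $D/v^2$ are built from it, and $u,\theta,z$ get Duhamel formulae. You are also more candid than the paper that \eqref{f} only holds with a solution-dependent constant in the $f_t$ bound.

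The gap is in Step 3. The norms in $A(t)$ are only known to be finite. An inequality $A\le C_\sharp\delta_*+O(1)(\sqrt t+A)A$ therefore only says that $A(t)$ is either $O(\delta_*)$ or bounded below by a fixed constant. You have no small value of $A$ near $t=0$ from which to run continuity.

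Your remedy is that every quadratic contribution carries a factor $\sqrt t$ or $\delta_*$. This fails for the viscous heating term $\frac{\mu}{c_vv}u_x^2$ in the $\theta$-equation. With only the information in Definition~\ref{new def, weak sol, Nov25}, the best available bound is
\[
\int_0^t\!\!\int_{\mathbb R}\Bigl|H\Bigl(x,t;y,s;\frac{\nu}{c_vv}\Bigr)\Bigr|\,u_x^2(y,s)\,\mathrm dy\,\mathrm ds\le O(1)\sup_{0<s<t}\|u_x(\cdot,s)\|_{L^1_x}\,\sup_{0<s<t}\sqrt s\,\|u_x(\cdot,s)\|_{L^\infty_x}\int_0^t\frac{\mathrm ds}{\sqrt{t-s}\,\sqrt s}.
\]
The last integral equals $\pi$ for every $t>0$. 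The term is scale-invariant, so no power of $t$ can be gained, and no $\delta_*$ appears unless $u$ is already known to be small. The same $O(1)A^2$ appears in $\sqrt t\|\theta_x\|_{L^\infty_x}$. Likewise, the BV smallness of $\theta$ you invoke for the pressure term is not available a priori; only that of $v$ is.

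What is missing is the triangular structure of the system, which replaces the joint bootstrap by a sequential argument.

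First $z$, which the paper handles with purely linear estimates. It gets $\|z\|_{L^1_x}\le\|z_0\|_{L^1_x}$ from the sign of $K\phi(\theta)z$. The $L^\infty_x$ and $z_x$ bounds then follow from Duhamel using only $\|z\|_{L^1_x}$ and the boundedness of $\phi(\theta)$, with no smallness of the solution assumed. For $u,\theta$ the paper refers to \cite[Lemma~5.1]{WangHT2022}.

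To make your argument work, treat $u$ alone next. Its equation is linear in $u$ with source $p(v,\theta)-a$. Every contribution of that source to the four $u$-norms carries a factor $\sqrt t$, up to $|\log t|$, times quantities that are merely finite by the Definition: $\|\theta-1\|_{L^1\cap L^\infty}$, $\|\theta_x\|_{L^1_x}$, $t\|\theta_t\|_{L^\infty_x}$, $\|v\|_{BV}$. The tools are:
\begin{itemize}
\item $\int H_y\,\mathrm dy=0$ for the $L^1_x$ and $L^\infty_x$ norms of $u$;
\item Stieltjes integration by parts in $y$ for $\|u_x\|_{L^1_x}$;
\item the identity for $\int H_{xy}\,\mathrm ds$ in Lemma~\ref{lemma: 2 derivative}, plus a time difference of $p$, for $\sqrt t\|u_x\|_{L^\infty_x}$.
\end{itemize}
So the $u$-norms are at most $C_\sharp\delta_*+C\sqrt t|\log t|$ with $C$ depending on the solution, which is small for $t<t_*$. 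Only after this are all sources of the $\theta$-equation small: $pu_x$ and $u_x^2$ by the $u$-bounds, $K\phi(\theta)z$ by the $z$-bounds. The $\theta$ estimates then follow with no bootstrap.

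Separately, your reason for the invariance of $\mathcal D$ is not valid. A bounded $u_x$ can jump, and it does jump on $\mathcal D$, since the jumps of $v$ vary in time. Use instead the continuity in $x$ of $\sigma=\mu u_x/v-p$, which holds because $\sigma_x=u_t(\cdot,t)\in L^1_x$. Then $v_t=\frac{\sigma}{\mu}v+\frac{a\theta}{\mu}$ holds pointwise in $x$, so $v(\cdot,t)=\alpha(\cdot,t)v_0+\beta(\cdot,t)$ with $\alpha>0$ and $\alpha,\beta$ continuous in $x$.
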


	\begin{proof}
	By Theorems~\ref{thm: local exi} and \ref{thm: local regularity}, under the assumption~\eqref{small initial} for the initial data,    
    Eq.~\eqref{PDE,2} has at least one weak solution. Eq.~\eqref{smallness for any} follows directly from Eqs.~\eqref{es local sol} and \eqref{es partial t}.
		
		Next, by Remark \ref{re: v regularity}, there exists a small $t_{*}$ such that
		\begin{equation}
			\begin{aligned}
				& \|v(\cdot, t)-1\|_{L_x^1} \leq C_{\sharp} \delta_*, \quad\|v(\cdot, t)-1\|_{L_x^{\infty}} \leq C_{\sharp} \delta_*, \\
				& \|v(\cdot, t)-1\|_{B V} \leq C_{\sharp} \delta_*
			\end{aligned}
		\end{equation}
whenever $t \in (0,t_*)$. 	For $\delta_{*}$ and $t_{*}$ sufficiently small, the terms $\frac{\mu}{v}$, $\frac{\nu}{c_v v}$, and $\frac{D}{v^2}$ satisfy the condition in Eq.~\eqref{f}. This allows us to construct the corresponding fundamental solution $H(x,t;y,t_0;\cdot)$. Applying Duhamel's principle and integration by parts, we arrive at the representation formulae:
		\begin{align}
			u(x,t)=&\int_{\mathbb{R}} H(x,t;y,0;\frac{\mu}{v}) u(y,0)\,\mathrm{d}y+\int_0^t\int_{\mathbb{R}} H_y(x,t;y,\tau;\frac{\mu}{v})p(y, \tau)\,\mathrm{d}y\,\mathrm{d}\tau,\\
			\theta(x,t)=&\int_{\mathbb{R}} H(x,t;y,0;\frac{\nu}{c_vv}) \theta(y,0)\,\mathrm{d}y\nonumber\\
			&+\int_0^t\int_{\mathbb{R}} H(x,t;y,\tau;\frac{\nu}{c_vv})\left(-\frac{p u_y}{b}+\frac{\mu}{c_vv}(u_y)^2+\frac{q}{c_v}K\phi(\theta)z\right)(y,\tau)\,\mathrm{d}y\,\mathrm{d}\tau,\\
			\label{z}
			z(x,t)=&\int_{\mathbb{R}} H(x,t;y,0;\frac{D}{v^2}) z(y,0)\,\mathrm{d}y+\int_0^t\int_{\mathbb{R}} H(x,t;y,\tau;\frac{D}{v^2})K\phi(\theta)z(y,\tau)\,\mathrm{d}y\,\mathrm{d}\tau.
		\end{align}
		Note that the smallness of $u$ and $\theta$ without the $z$-term can be established as in \cite[proof of Lemma~5.1]{WangHT2022}, so here we focus on proving the smallness of $z(x,t)$.
		
		Using the smallness condition on the initial data as in \eqref{small initial}, we integrate both sides of the fourth equation in \eqref{PDE,2} over $[0,t] \times \mathbb{R}$ to obtain
		\begin{align*}
			\int_{\mathbb{R}} z(x,t) \, \mathrm{d}x 
			+\int_0^t\int_{\mathbb{R}} K \phi(\theta) z(y,\tau) \, \mathrm{d}y\, \mathrm{d}\tau \leq \int_{\mathbb{R}} z_0^*(x) \, \mathrm{d}x.
		\end{align*}
	  Since $K \phi(\theta) \geq 0$, the second integral on the left-hand side is non-negative, which implies
		\begin{equation*}
			\left\| z\right\|_{L_x^1} \leq \|z_0^*\|_{L_x^1} \leq O(1)\delta_*.
		\end{equation*}
		Then, using the estimate for $|H(x,t;y,\tau;D^k)|$ in Lemma \ref{lemma: Liu} and the above $L^1$-norm of $z$, for sufficiently small $\delta$
		\begin{align}
			\|z\|_{L_x^{\infty}} 
			&\leq \int_{\mathbb{R}} \left| H(x,t;y,0;D^k) \right| |z_0^*(y)| \, \mathrm{d}y \nonumber\\
			&\quad + \int_0^t \int_{\mathbb{R}} \left| H(x,t;y,\tau;D^k) \right| K \phi(\theta) z(y,\tau) \, \mathrm{d}y \, \mathrm{d}s \nonumber\\
			&\leq O(1) \int_{\mathbb{R}} \frac{e^{-\frac{(x-y)^2}{C_* t}}}{\sqrt{t}} \, \mathrm{d}y \cdot \|z_0^*\|_{L_x^{\infty}} 
			+ O(1) \int_0^t \int_{\mathbb{R}} \frac{e^{-\frac{(x-y)^2}{C_*(t-\tau)}}}{\sqrt{t-\tau}}z(y,\tau)\, \mathrm{d}y \, \mathrm{d}\tau \nonumber\\
			&\leq O(1) \delta_* + O(1) \int_0^t \frac{1}{\sqrt{t-\tau}} \left\|z\right\|_{L_x^1}\, \mathrm{d}\tau \nonumber\\
			&\leq O(1)\delta_*+O(1)\delta_*\sqrt{t} \leq O(1)\delta_*.
		\end{align}
		Next, we differentiate \eqref{z} with respect to $x$ to get the representation of $z_x$ and then apply the smallness condition for initial data \eqref{small initial}, the estimate for $H_x(x,t;y,\tau;\frac{D}{v^2})$ in Lemma \ref{lemma: Liu}, the Lipschitz continuity of $\phi$, and the regularity in \eqref{weak solu} to derive the $L^{\infty}$-estimate for $z_x(x,t)$
		\begin{align*}
			\left|z_x(x,t)\right|\leq&\int_{\mathbb{R}}\left|H_x(x,t;y,0;\frac{D}{v^2})\right|\left|z(y,0)\right|\,\mathrm{d}y\\
			&\qquad +\int_0^t\int_{\mathbb{R}} \left|H_{x}(x,t;y,\tau;\frac{D}{v^2})\right|\left|K\phi(\theta)z(y,\tau)\right|\,\mathrm{d}y\,\mathrm{d}\tau\\
			\leq &O(1)\left\{\int_{\mathbb{R}}\frac{e^{\frac{-(x-y)^2}{C_*t}}}{t}\,\mathrm{d}y\right\}\left\|z_0\right\|_{L_x^{\infty}}+O(1)\left\{\int_0^t\int_{\mathbb{R}}\frac{e^{\frac{-(x-y)^2}{C_*(t-\tau}}}{t-\tau}\,\mathrm{d}y\,\mathrm{d}\tau\right\}\|z\|_{L_x^{\infty}}\\
			\leq&O(1)\frac{\delta_{*}}{\sqrt{t}}+O(1)\delta_*\int_0^t\frac{1}{\sqrt{t-\tau}}\,\mathrm{d}\tau \leq O(1)\delta_{*}(\frac{1}{\sqrt{t}}+\sqrt{t}).
		\end{align*}
		For the $L^1$-norm of $z_x$, similarly to Eq.~\eqref{W anti_deri}, we define anti-derivative of $H_x(x,t;y,0;\frac{D}{v^2})$:
		\begin{equation*}
			W\left(x,t;y,0;\frac{D}{v^2}\right)= \begin{cases}\int_{-\infty}^y H_x\left(x,t;w,0; \frac{D}{v^2}\right) \mathrm{d} w & \text {for} \quad y<x, \\ -\int_y^{\infty} H_x\left(x,t;w,0;\frac{D}{v^2}\right) \mathrm{d} w & \text {for}\quad y \geq x .\end{cases}
		\end{equation*}
		Using a similar argument as for $\left\|Z_x^{k+1}\right\|_{L_x^1}$ in Lemma~\ref{lemma: V U Theta Z k+1}, we find that
		\begin{align*}
			\int_{\mathbb{R}}\left|z_x(x,t)\right|\,\mathrm{d}x\leq & \int_{\mathbb{R}}\int_{\mathbb{R}}\left|H_x(x,t;y,0;\frac{D}{v^2})\right|\left|z(y,0)\right|\,\mathrm{d}y\,\mathrm{d}x\\
			&+\int_{\mathbb{R}}\int_0^t\int_{\mathbb{R}} \left|H_x(x,t;y,\tau;\frac{D}{v^2})\right|K\phi(\theta)z(y,\tau)\,\mathrm{d}y\,\mathrm{d}\tau\,\mathrm{d}x\\
			\leq & \int_{\mathbb{R}} \int_{\mathbb{R}}\left|W\left(x,t;y,0;\frac{D}{v^2}\right)\right|\left|\mathrm{d}z_0(y)\right|\,\mathrm{d}x\\
			&+O(1)\int_0^t\int_{\mathbb{R}}\int_{\mathbb{R}}\frac{e^{\frac{-(x-y)^2}{C_*(t-\tau)
			}}}{t-\tau}z(y,\tau)\,\mathrm{d}y\,\mathrm{d}x\,\mathrm{d}\tau\\
			\leq&O(1) \int_{\mathbb{R}}\frac{e^{\frac{-(x-y)^2}{C_* t
			}}}{\sqrt{t}}\,\mathrm{d}y\left\|z_0\right\|_{BV}+O(1)\int_0^t\frac{1}{\sqrt{t-\tau}}\,\mathrm{d}\tau\|z\|_{L_x^1}\\
			\leq &O(1)\delta_{*}(1+\sqrt{t}).
		\end{align*}

        Therefore, for sufficiently small $\delta_{*}$ and $t_{*}$ such that $$\delta_{*}(1+\sqrt{t_{*}})\leq \delta,$$ we conclude that
	\begin{align*}
			\max \left\{\|z(\cdot, t)\|_{L_x^1}, \|z(\cdot, t)\|_{L_x^{\infty}},\left\|z_x(\cdot, t)\right\|_{L_x^1}, \sqrt{t}\left\|z_x(\cdot, t)\right\|_{L_x^{\infty}}\right\}\leq 2 C_{\sharp}\delta
		\end{align*}
whenever $0<t<t_*$. This completes the proof.   
    \end{proof}

	\begin{lemma}
		\label{lemma: sta}
        There is a univeral positive constant $\delta_0$ such that the following holds. 	Suppose that the initial data $(v_0^{\epsilon},u_0^{\epsilon},\theta_0^{\epsilon},z_0^{\epsilon})$ and $(v_0^{\iota},u_0^{\iota},\theta_0^{\iota},z_0^{\iota})$ both satisfy the smallness condition:
		\begin{equation*}
			\left\|v_0-1\right\|_{L_x^1}+\left\|v_0\right\|_{B V}+\left\|u_0\right\|_{L_x^1}+\left\|u_0\right\|_{B V}+\left\|\theta_0-1\right\|_{L_x^1}+\left\|\theta_0\right\|_{B V} +\left\|z_0\right\|_{L_x^1}+\left\|z_0\right\|_{B V}\leq\delta_*.
		\end{equation*}
		Let $(v^{\epsilon},u^{\epsilon},\theta^{\epsilon},z^{\epsilon})$ and $(v^{\iota},u^{\iota},\theta^{\iota},z^{\iota})$ be the corresponding weak solutions to the system \eqref{Cauchy pde} as constructed in Theorem \ref{thm: local exi}. Then  there
		exists a positive constant $C_2$ such that
		\begin{equation*}
			\begin{aligned}
				&\mathcal{F} \left[v^{\epsilon}-v^{\iota}, u^{\epsilon}-u^{\iota}, \theta^{\epsilon}-\theta^{\iota}, z^{\epsilon}-z^{\iota}\right] \\
				&\qquad \leq
				C_2\left(\left\|v_0^{\epsilon}-v_0^{\iota}\right\|_{L_x^1}+\left\|v_0^{\epsilon}-v_0^{\iota}\right\|_{L_x^{\infty}}+\left\|v_0^{\epsilon}-v_0^{\iota}\right\|_{B V}+\left\|u_0^{\epsilon}-u_0^{\iota}\right\|_{L_x^{\infty}}\right.\\
				&\left.\qquad+\left\|u_0^{\epsilon}-u_0^{\iota}\right\|_{L_x^1}+\left\|\theta_0^{\epsilon}-\theta_0^{\iota}\right\|_{L_x^{\infty}}+\left\|\theta_0^{\epsilon}-\theta_0^{\iota}\right\|_{L_x^1}+\left\|z_0^{\epsilon}-z_0^{\iota}\right\|_{L_x^{\infty}}+\left\|z_0^{\epsilon}-z_0^{\iota}\right\|_{L_x^1}\right),
			\end{aligned}
		\end{equation*}
		where $\mathcal{F}$ is a functional defined in \eqref{map F}.

        In particular, we have that
		\begin{equation*}
			\begin{aligned}
				&\left\|v^{\epsilon}-v^{\iota}\right\|_{L_x^1}+\left\| u^{\epsilon}-u^{\iota}\right\|_{L_x^1}+\left\| \theta^{\epsilon}-\theta^{\iota} \right\|_{L_x^1}+\left\| z^{\epsilon}-z^{\iota}\right\|_{L_x^1}\\
				&\qquad \leq C_2\left(\left\|v_0^{\epsilon}-v_0^{\iota}\right\|_{L_x^1}+\left\|v_0^{\epsilon}-v_0^{\iota}\right\|_{L_x^{\infty}}+\left\|v_0^{\epsilon}-v_0^{\iota}\right\|_{B V}+\left\|u_0^{\epsilon}-u_0^{\iota}\right\|_{L_x^{\infty}}\right.\\
				&\left.\qquad+\left\|u_0^{\epsilon}-u_0^{\iota}\right\|_{L_x^1}+\left\|\theta_0^{\epsilon}-\theta_0^{\iota}\right\|_{L_x^{\infty}}+\left\|\theta_0^{\epsilon}-\theta_0^{\iota}\right\|_{L_x^1}+\left\|z_0^{\epsilon}-z_0^{\iota}\right\|_{L_x^{\infty}}+\left\|z_0^{\epsilon}-z_0^{\iota}\right\|_{L_x^1}\right) .
			\end{aligned}
		\end{equation*}
	\end{lemma}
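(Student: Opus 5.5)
The plan is to run the contraction argument of Theorem~\ref{thm: local exi} once more, now comparing the two limit solutions rather than two consecutive iterates. Set $\Delta v=v^{\epsilon}-v^{\iota}$, and define $\Delta u,\Delta\theta,\Delta z$ in the same way. Subtracting the two copies of \eqref{Cauchy pde} gives a system of exactly the form \eqref{diff equs}, with $(V^{n+1}-V^n,\dots)$ on the left replaced by $(\Delta v,\dots)$. On the right-hand side $(V^n-V^{n-1},\dots)$ is replaced by the same differences, since the nonlinearities now carry the actual solutions. The one new feature is that the initial data are no longer zero: $\Delta v(\cdot,0)=v_0^\epsilon-v_0^\iota$, and similarly for the other components.

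First I would fix one coefficient, say $\mu/v^\iota$, $\nu/(c_v v^\iota)$ and $D/(v^\iota)^2$. By Theorem~\ref{thm: local exi} and Lemma~\ref{lemma: smallness}, these satisfy \eqref{f}. The Duhamel representations analogous to \eqref{eq: U n+1-n}--\eqref{eq: Z n+1-n} then contain the extra data terms $\int H(x,t;y,0;\cdot)(u_0^\epsilon-u_0^\iota)\,dy$, the analogues for $\theta$ and $z$, and $\Delta v(x,0)$ in the formula for $v$. When we instead use the representation of each solution with its own kernel, as in the proof of Lemma~\ref{lemma: Theta n diff}, we also pick up the kernel-comparison terms $H(\cdot;f^\epsilon)-H(\cdot;f^\iota)$ applied to the data. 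Lemmas~\ref{lemma: comparison} and \ref{lemma: comparison2} control these by the data size $\delta_*$ times the norms of $\Delta v$ and $\Delta u_x$, so they are absorbed like the terms $\mathcal I_1,\mathcal I_3$ of Lemma~\ref{lemma: Z n diff}. The pure data terms are bounded directly by Lemma~\ref{lemma: Liu}. They give $\|\Delta u_0\|_{L^\infty\cap L^1}$ for $\Delta u$, and $\|\Delta u_0\|_{L^\infty}/\sqrt t$ for $\Delta u_x$ in $L^\infty$. For $\Delta u_x$ in $L^1$ they give $\|\Delta u_0\|_{L^1}/\sqrt t$. The latter, divided by $|\log t|$ and integrated in time, is harmless for the $v$ component. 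The same holds for $\theta$ and $z$.

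Repeating Lemmas~\ref{lemma: Theta n diff}, \ref{lemma: Z n diff}, \ref{lemma: U n diff} and \ref{lemma: V n diff} verbatim for the inhomogeneous terms, I expect
\[
\mathcal F[\Delta v,\Delta u,\Delta\theta,\Delta z]\le C_2\,\mathcal E_0+C_2\big(\sqrt{t_*}|\log t_*|+\delta\big)\,\mathcal F[\Delta v,\Delta u,\Delta\theta,\Delta z],
\]
where $\mathcal E_0$ is the sum of initial-difference norms on the right of the lemma. For the jump part of $v$, the analogue of \eqref{Vn cauchy} contributes $\|\Delta v_0\|_{BV}$ through the initial jumps. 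Choosing $\delta_*,t_*$ so that $C_2(\sqrt{t_*}|\log t_*|+\delta)\le\frac12$ and absorbing the last term yields the claim. This requires $\mathcal F$ to be finite a priori, which Lemma~\ref{lemma: smallness} and Theorem~\ref{thm: local regularity} guarantee. The $L^1$ bound then follows because $\mathcal F$ dominates the $|\!|\!|\cdot|\!|\!|_1$ norms.

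The main obstacle is the $L^1$ and weighted $L^\infty$ estimate of $\Delta u_x$, $\Delta\theta_x$ and $\Delta z_x$ near $t=0$. The weights $\sqrt\tau/|\log\tau|$ in $\mathcal F$ must tame the data terms, which only have $L^\infty\cap L^1$ control and no BV control for $u,\theta,z$. I would check that $\sqrt t\cdot t^{-1/2}\|\Delta u_0\|_{L^\infty}/|\log t|$ stays bounded. For $\|\Delta u_x\|_{L^1}/|\log t|$, the bound $O(1)\|\Delta u_0\|_{L^1}t^{-1/2}$ is not uniformly bounded, so I would first try the weaker route of feeding $\Delta u_x$ into the $v$ equation only in integrated form $\int_0^t$. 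If that fails, I would modify that entry of $\mathcal F$ to $\sqrt\tau\|\Delta u_x\|_{L^1}$ and recheck the dependencies in Lemmas~\ref{lemma: U n diff}--\ref{lemma: V n diff}.
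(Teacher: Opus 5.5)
Your scheme matches the paper's. You subtract the two systems and split each Duhamel formula into a pure data term, a kernel-comparison term controlled by Lemmas~\ref{lemma: comparison}--\ref{lemma: comparison2}, and a nonlinear difference term. You then rerun Lemmas~\ref{lemma: Theta n diff}--\ref{lemma: V n diff} and absorb. The paper also works on the union $\mathcal D_\epsilon\cup\mathcal D_\iota$ of the jump sets, which you need implicitly for the jumps of $v^\epsilon-v^\iota$.

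\textbf{The gap.} The genuine gap is the step you flag yourself, and neither of your fallbacks closes it. The entries $\sup_\tau\|(u^\epsilon-u^\iota)_x(\cdot,\tau)\|_{L^1_x}/|\log\tau|$, and their analogues for $\theta_x$ and $z_x$, sit on the left-hand side of the claimed inequality. Using $\Delta u_x$ only in time-integrated form inside the $v$-equation therefore does not bound them, and modifying $\mathcal F$ changes the statement.

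\textbf{The inequality fails as written.} The pure data term $\int_{\mathbb R}H_x(x,t;y,0;\cdot)(u_0^\epsilon-u_0^\iota)(y)\,\mathrm dy$ can have $L^1_x$-norm of order $\min\{\mathrm{TV}(u_0^\epsilon-u_0^\iota),\,t^{-1/2}\|u_0^\epsilon-u_0^\iota\|_{L^1_x}\}$, and nothing on the right-hand side controls $\mathrm{TV}(u_0^\epsilon-u_0^\iota)$. Here is a concrete example.
\begin{itemize}
\item Take the $\iota$-data $(1,0,1,0)$, which is stationary.
\item Take the $\epsilon$-data $(1,f,1,0)$, with $f$ oscillating on $[0,1]$ with amplitude $\eta$ and wavelength $\lambda\sim\eta/\delta_*$, so that $\mathrm{TV}(f)=\delta_*/2$.
\item At $t=c\lambda^2$ with $c$ small, the left-hand side is $\gtrsim\delta_*/|\log\lambda|$. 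All other contributions to $u^\epsilon_x(\cdot,t)$ are negligible, and the right-hand side is $O(\eta)$.
\item Letting $\eta\to0$ gives a contradiction.
\end{itemize}
The paper's proof passes over the same point: its final list of data terms contains no contribution from the $L^1_x$-norms of the differentiated data terms.

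\textbf{Repair.} Within your framework, the clean fix is to add $\|u_0^\epsilon-u_0^\iota\|_{BV}+\|\theta_0^\epsilon-\theta_0^\iota\|_{BV}+\|z_0^\epsilon-z_0^\iota\|_{BV}$ to the right-hand side. The $L^1_x$-norm of each differentiated data term is then $O(1)$ times that BV norm, by the antiderivative $W$ argument used for $\|Z_x^{k+1}\|_{L^1_x}$ in Lemma~\ref{lemma: V U Theta Z k+1}. The rest of your argument then goes through. Uniqueness in Theorem~\ref{thm: sta} is unaffected, since all data terms vanish for equal data.

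\textbf{If you weaken $\mathcal F$ instead.} Keeping the stated right-hand side by weakening those entries to $\sqrt\tau\,\|\cdot\|_{L^1_x}$ requires a substantive recheck. In Lemma~\ref{lemma: Theta n diff}, the source $(1+|U_x^n+U_x^{n-1}|)|U_x^n-U_x^{n-1}|$ is estimated through exactly this entry. A bound of order $\tau^{-1/2}$ then yields $\int_0^t(t-\tau)^{-1/2}\tau^{-1/2}\,\mathrm d\tau=\pi$, which is not small, and $\int_0^t(t-\tau)^{-1/2}\tau^{-1}\,\mathrm d\tau=\infty$. The linear part would have to be moved onto the kernel via $\frac{p}{c_v}\Delta u_x=\partial_x\bigl(\frac{p}{c_v}\Delta u\bigr)-\frac{p_x}{c_v}\Delta u$. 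The quadratic part would have to be bounded by $\|U_x\|_{L^1_x}\|\Delta U_x\|_{L^\infty_x}$.
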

	\begin{proof}
The existence and continuity of the fluxes of the weak solutions $(v^{\epsilon},u^{\epsilon},\theta^{\epsilon},z^{\epsilon})$ and $(v^{\iota},u^{\iota},\theta^{\iota},z^{\iota})$ follows from  Theorem~\ref{thm: local exi}. In addition, the integral representation formulae for both solutions can also be derived via  Duhamel's principle.

        
Following the proof of convergence for approximate solutions (Lemmas~\ref{lemma: Theta n diff}--\ref{lemma: V n diff}), we have
		\begin{align*}
\left(z_x^{\epsilon}-z_x^{\iota}\right)(x,t) &= \int_{\mathbb{R}} \left[H_x(x,t;y,0;D^{\epsilon})-H_x(x,t;y,0;D^{\iota})\right] z_0^{\epsilon}(y) \,\mathrm{d}y \\
			&\qquad+\int_{\mathbb{R}} H_x(x,t;y,0;D^{\iota})\left(z_0^{\epsilon}-z_0^{\iota}\right)(y) \,\mathrm{d}y\\
			&\qquad-\int_0^t \int_{\mathbb{R}}\left[H_x(x,t;y,\tau; D^{\epsilon})-H_x(x,t;y,\tau; D^{\iota})\right]K\phi(\theta^{\epsilon})z^{\epsilon}(y,\tau)\,\mathrm{d}y \,\mathrm{d}\tau\\
			&\qquad-\int_0^t \int_{\mathbb{R}} H_x(x,t;y,\tau; D^{\iota}) \left[K\phi(\theta^{\epsilon})z^{\epsilon}(y,\tau)-K\phi(\theta^{\iota})z^{\iota}(y,\tau)\right]\,\mathrm{d}y \,\mathrm{d}\tau
		\end{align*}
Note, however, that the set of discontinuities $\mathcal{D}_{\epsilon}$ and $\mathcal{D}_{\iota}$ may not coincide; compare with Lemma~\ref{lemma: Z n diff}. We define instead $\mathcal{D} = \mathcal{D}_{\epsilon} \cup \mathcal{D}_{\iota}$, and note that all the previous estimates remain valid on $\mathcal{D}$. 

Using the $L^{\infty}$-bound for $H_x$ from Lemma~\ref{lemma: Liu}, we estimate that
		\begin{align*}
			\left|\int_{\mathbb{R}} H_x(x,t;y,0;D^{\iota})\left(z_0^{\epsilon}-z_0^{\iota}\right)(y) \,\mathrm{d}y\right|\leq& O(1)\int_{\mathbb{R}} \left|H_x(x,t;y,0;D^{\iota})\right| \,\mathrm{d}y \left\|z_0^{\epsilon}-z_0^{\iota}\right\|_{L_x^{\infty}}\\
			\leq &O(1)\frac{1}{\sqrt{t}}\left\|z_0^{\epsilon}-z_0^{\iota}\right\|_{L_x^{\infty}}.
		\end{align*}
This together with Lemma~\ref{lemma: Z n diff} implies that, for sufficiently small $\delta$ and $t_{\sharp}$, 
		\begin{align*}
			&\frac{\sqrt{t}}{\left|\log t\right|}\left\|z_x^{\epsilon}(\cdot, t)-z_x^{\iota}(\cdot, t)\right\|_{L_x^{\infty}}\\
			& \quad \leq \frac{O(1)}{|\log t|}\left\|z_0^{\epsilon}-z_0^{\iota}\right\|_{L_x^{\infty}}+ C_2\left(\sqrt{t_{\sharp}}+\delta\right)\left(\left\|\left|v^{\epsilon}-v^{\iota}\right|\right\|_{\infty}+\frac{1}{\left|\log t\right|}\left\|\left| v^{\epsilon}-v^{\iota}\right|\right\|_{BV}\right.\\
			&\qquad\left.+\frac{1}{\left|\log t\right|}\left\|\left|v^{\epsilon}-v^{\iota}\right|\right\|_1+\left\|\left|\frac{\sqrt{\tau}}{\left|\log \tau\right|}\left(u_x^{\epsilon}-u_x^{\iota}\right)\right|\right\|_{\infty}+\left\|\left|\frac{\theta^{\epsilon}-\theta^{\iota}}{\left|\log \tau\right|}\right|\right\|_{\infty}+\frac{1}{\left|\log t\right|}\left\|\left|z^{\epsilon}-z^{\iota}\right|\right\|_{\infty}\right).
		\end{align*}
		Following the similar arguments as in Lemmas~\ref{lemma: Theta n diff}, \ref{lemma: Z n diff}, \ref{lemma: U n diff}, and \ref{lemma: V n diff}, we find that
		\begin{align*}
			&\mathcal{F}\left[v^{\epsilon}-v^{\iota}, u^{\epsilon}-u^{\iota}, \theta^{\epsilon}-\theta^{\iota}, z^{\epsilon}-z^{\iota}\right] \\
			\leq & 15 C_2 \left(\delta+\sqrt{t_{\sharp}}\left|\log t_{\sharp}\right|\right) \mathcal{F}\left[v^{\epsilon}-v^{\iota}, u^{\epsilon}-u^{\iota}, \theta^{\epsilon}-\theta^{\iota}, z^{\epsilon}-z^{\iota}\right] \\
			& +\frac{O(1)}{|\log (t)|}\left\|\theta_0^{\epsilon}-\theta_0^{\iota}\right\|_{L_x^{\infty}}+O(1)\left\|\theta_0^{\epsilon}-\theta_0^{\iota}\right\|_{L_x^1} +\frac{O(1) \sqrt{t}}{|\log (t)|}\left\|\theta_0^{\epsilon}-\theta_0^{\iota}\right\|_{L_x^{\infty}}\\
			&+\frac{O(1)}{|\log (t)|}\left\|u_0^{\epsilon}-u_0^{\iota}\right\|_{L_x^{\infty}} +O(1)\left\|u_0^{\epsilon}-u_0^{\iota}\right\|_{L_x^{\infty}}+O(1)\left\|u_0^{\epsilon}-u_0^{\iota}\right\|_{L_x^1} \\
			& +\left\|v_0^{\epsilon}-v_0^{\iota}\right\|_{L_x^1}+\left\|v_0^{\epsilon}-v_0^{\iota}\right\|_{L_x^{\infty}}+\left\|v_0^{\epsilon}-v_0^{\iota}\right\|_{BV}\\
			&+\frac{O(1)}{|\log (t)|}\left\|z_0^{\epsilon}-z_0^{\iota}\right\|_{L_x^{\infty}}+ +O(1)\left\|z_0^{\epsilon}-z_0^{\iota}\right\|_{L_x^{\infty}}+O(1)\left\|z_0^{\epsilon}-z_0^{\iota}\right\|_{L_x^1}.
		\end{align*}
		By choosing $\delta$, $t_{\sharp}$ sufficiently small such that $$15 C_2 \left(\delta+\sqrt{t_{\sharp}}\left|\log t_{\sharp}\right|\right)<1,$$  we obtain the desired results. This completes the proof.  	\end{proof}

Now, by collecting the previous results in this section, we are at the stage of concluding the main theorem on stability and uniqueness of weak solutions.
        
	\begin{theorem}
		\label{thm: sta}
	There exists a univeral positive constant $\delta_0$ such that the following holds. 		Suppose that the initial data $(v_0^{\epsilon},u_0^{\epsilon},\theta_0^{\epsilon},z_0^{\epsilon})$ and $(v_0^{\iota},u_0^{\iota},\theta_0^{\iota},z_0^{\iota})$ both satisfy the smallness conditions:
		\begin{equation*}
			\left\|v_0-1\right\|_{L_x^1}+\left\|v_0\right\|_{B V}+\left\|u_0\right\|_{L_x^1}+\left\|u_0\right\|_{B V}+\left\|\theta_0-1\right\|_{L_x^1}+\left\|\theta_0\right\|_{B V} +\left\|z_0\right\|_{L_x^1}+\left\|z_0\right\|_{B V}\leq\delta_*.
		\end{equation*}
		Let $(v^{\epsilon},u^{\epsilon},\theta^{\epsilon},z^{\epsilon})$ and $(v^{\iota},u^{\iota},\theta^{\iota},z^{\iota})$ be the corresponding weak solutions in the sense of Definition~\ref{new def, weak sol, Nov25}. Then there
		exists a positive constant $C_2$ such that 
		\begin{equation*}
			\begin{aligned}
				&\mathcal{F} \left[v^{\epsilon}-v^{\iota}, u^{\epsilon}-u^{\iota}, \theta^{\epsilon}-\theta^{\iota}, z^{\epsilon}-z^{\iota}\right] \\
				&\qquad \leq
				C_2\left(\left\|v_0^{\epsilon}-v_0^{\iota}\right\|_{L_x^1}+\left\|v_0^{\epsilon}-v_0^{\iota}\right\|_{L_x^{\infty}}+\left\|v_0^{\epsilon}-v_0^{\iota}\right\|_{B V}+\left\|u_0^{\epsilon}-u_0^{\iota}\right\|_{L_x^{\infty}}\right.\\
				&\left.\qquad+\left\|u_0^{\epsilon}-u_0^{\iota}\right\|_{L_x^1}+\left\|\theta_0^{\epsilon}-\theta_0^{\iota}\right\|_{L_x^{\infty}}+\left\|\theta_0^{\epsilon}-\theta_0^{\iota}\right\|_{L_x^1}+\left\|z_0^{\epsilon}-z_0^{\iota}\right\|_{L_x^{\infty}}+\left\|z_0^{\epsilon}-z_0^{\iota}\right\|_{L_x^1}\right),
			\end{aligned}
		\end{equation*}
		where $\mathcal{F}$ is the functional defined in Eq.~\eqref{map F}. 
        
        In particular, for $\delta_*$ sufficiently small, there exists  $t_{*}>0$ such that Eq.~\eqref{PDE,2} admits a unique weak solution in the sense of Definition~\ref{new def, weak sol, Nov25} for $t\in[0,t_{*})$.
	\end{theorem}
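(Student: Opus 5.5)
The plan has three steps: reduce to a priori smallness, run the difference estimates for two arbitrary weak solutions, and absorb the self-referential term. The one genuinely new ingredient is that the statement concerns \emph{arbitrary} weak solutions in the sense of Definition~\ref{new def, weak sol, Nov25}, not only those built in Theorem~\ref{thm: local exi}.

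\textbf{Step 1: a priori smallness.} By Lemma~\ref{lemma: smallness}, any such weak solution with data satisfying \eqref{small initial} obeys the smallness bounds \eqref{smallness for any} on some interval $(0,t_*)$. Remark~\ref{re: v regularity} makes $\|v(\cdot,t)-1\|_{L^1\cap L^\infty\cap BV}$ small for $t<t_*$. Combined with $\sqrt{t}\,v_t=\sqrt{t}\,u_x\in L^\infty$, this shows that $\mu/v$, $\nu/(c_v v)$ and $D/v^2$ satisfy \eqref{f}. One must also check that the discontinuity set of $v$ is invariant in time. For this I would use the jump relation from the continuity of the fluxes (Remark~\ref{re: conti}), as in Lemma~\ref{lemma: lip V}: the jumps of $v$ evolve by an ODE along each fixed $\omega$, so no new discontinuities are created. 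The heat kernels of Lemmas~\ref{lemma: Liu}--\ref{lemma: comparison2} are then available. Testing the weak formulation \eqref{def distri} against the backward kernel of \eqref{eq: H backward} gives the Duhamel representations used in the proof of Lemma~\ref{lemma: smallness} for $u,\theta,z$, together with $v=v_0+\int_0^t u_x$.

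\textbf{Step 2: difference estimates.} For two solutions $(v^\epsilon,\dots)$ and $(v^\iota,\dots)$, I would write the equations for the differences exactly as \eqref{diff equs}, replacing the index pair $(n+1,n)/(n,n-1)$ by $(\epsilon,\iota)$. I use the union $\mathcal{D}=\mathcal{D}_\epsilon\cup\mathcal{D}_\iota$ of the jump sets, as in Lemma~\ref{lemma: sta}. I then redo the estimates of Lemmas~\ref{lemma: Theta n diff}, \ref{lemma: Z n diff}, \ref{lemma: U n diff} and \ref{lemma: V n diff}, with two differences: the unknown now appears on both sides, and extra initial-data terms appear. These come from $\int H(\cdot;f^\iota)(w_0^\epsilon-w_0^\iota)$ and its $x$-derivative, bounded by $\|w_0^\epsilon-w_0^\iota\|_{L^1\cap L^\infty}$ and by $t^{-1/2}\|w_0^\epsilon-w_0^\iota\|_{L^\infty}$ respectively. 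The $v$-part of the initial difference enters directly through $v^\epsilon-v^\iota=v_0^\epsilon-v_0^\iota+\int_0^t(u^\epsilon-u^\iota)_x$, including its BV norm and the jumps at points of $\mathcal{D}$.

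\textbf{Step 3: absorption.} Summing the estimates yields
\[
\mathcal{F}\le 15C_2\bigl(\delta+\sqrt{t_*}\,|\log t_*|\bigr)\mathcal{F}+C\cdot(\text{initial differences}).
\]
Shrinking $\delta_*$ and $t_*$ so the coefficient is $<1$ gives the claimed estimate, provided $\mathcal{F}$ is a priori finite. Finiteness follows from the regularity class \eqref{weak solu} together with Step~1, since every weighted norm in \eqref{map F} is bounded by the corresponding individual bounds. Uniqueness then follows by taking equal data, which gives $\mathcal{F}=0$. Existence comes from Theorem~\ref{thm: local exi}.

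\textbf{Main obstacle.} The step I expect to be hardest is Step~1 for a general weak solution. The difficulty is justifying the Duhamel representation and the structural hypotheses \eqref{f}, namely a fixed jump set and the bound on $f_t$, without knowing that the solution came from the iteration. I would first try to derive the Duhamel formula from \eqref{def distri} by mollifying the backward kernel in time. I would then read off flux continuity and jump invariance from the weak form combined with the BV regularity of $v$.
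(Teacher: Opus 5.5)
Your plan is the paper's proof in all essentials. It runs through four steps. Lemma~\ref{lemma: smallness} gives the a priori bounds for an arbitrary weak solution. The difference estimates of Lemma~\ref{lemma: sta} come from rerunning Lemmas~\ref{lemma: Theta n diff}--\ref{lemma: V n diff} with $\mathcal{D}=\mathcal{D}_\epsilon\cup\mathcal{D}_\iota$. The $15C_2(\delta+\sqrt{t_*}\,|\log t_*|)\,\mathcal{F}$ term is absorbed, and uniqueness follows from $\mathcal{F}=0$. The paper adds one step: it upgrades a.e.\ equality to pointwise equality using the one-sided limits of $v$ and the continuity of $u,\theta,z$. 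Your checks of jump-set invariance and of $\mathcal{F}<\infty$ make explicit what the paper leaves implicit.

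One caveat applies equally to the paper. Consider the terms $\sup_\tau|\log\tau|^{-1}\|(u^\epsilon-u^\iota)_x(\cdot,\tau)\|_{L^1_x}$ in $\mathcal{F}$, and their $\theta,z$ analogues. The data contribution $\int_{\mathbb{R}}H_x(x,\tau;y,0;\mu/v^\iota)(u_0^\epsilon-u_0^\iota)(y)\,\mathrm{d}y$ has $L^1_x$-norm bounded either by $\tau^{-1/2}\|u_0^\epsilon-u_0^\iota\|_{L^1}$, which is not uniform as $\tau\to0$, or, via the antiderivative~\eqref{W anti_deri}, by $\|u_0^\epsilon-u_0^\iota\|_{BV}$. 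For highly oscillatory differences it genuinely is comparable to that BV norm at small $\tau$. So the BV norms of the $u,\theta,z$ data differences must be added to the right-hand side; uniqueness is unaffected.
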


	\begin{proof}
The existence of weak solutions has been established in Theorems~\ref{thm: local exi} and \ref{thm: local regularity}, whenever $\delta_*<\delta$ and $t<t_{\sharp}$ with $\delta$ and $t_{\sharp}$ in Theorem~\ref{thm: local exi}. Then, by Lemma~\ref{lemma: smallness}, the weak solution remains small over $t \in (0,t_*)$. We thus deduce from Lemma~\ref{lemma: sta} the  estimate:
		\begin{equation*}
			\begin{aligned}
				&\mathcal{F} \left[v^{\epsilon}-v^{\iota}, u^{\epsilon}-u^{\iota}, \theta^{\epsilon}-\theta^{\iota}, z^{\epsilon}-z^{\iota}\right] \\
				&\qquad \leq
				C_2\left(\left\|v_0^{\epsilon}-v_0^{\iota}\right\|_{L_x^1}+\left\|v_0^{\epsilon}-v_0^{\iota}\right\|_{L_x^{\infty}}+\left\|v_0^{\epsilon}-v_0^{\iota}\right\|_{B V}+\left\|u_0^{\epsilon}-u_0^{\iota}\right\|_{L_x^{\infty}}\right.\\
				&\left.\qquad+\left\|u_0^{\epsilon}-u_0^{\iota}\right\|_{L_x^1}+\left\|\theta_0^{\epsilon}-\theta_0^{\iota}\right\|_{L_x^{\infty}}+\left\|\theta_0^{\epsilon}-\theta_0^{\iota}\right\|_{L_x^1}+\left\|z_0^{\epsilon}-z_0^{\iota}\right\|_{L_x^{\infty}}+\left\|z_0^{\epsilon}-z_0^{\iota}\right\|_{L_x^1}\right), \quad 0<t<t_*.
			\end{aligned}
		\end{equation*}
		In particular, if the two solutions have the same initial data, then
		\begin{equation}
			\mathcal{F} \left[v^{\epsilon}-v^{\iota}, u^{\epsilon}-u^{\iota}, \theta^{\epsilon}-\theta^{\iota}, z^{\epsilon}-z^{\iota}\right]=0,
		\end{equation}
		which implies the two solutions coincide almost everywhere. Moreover, since both solutions belong to the class \eqref{weak solu}, they satisfy the following continuity properties:
		\begin{itemize}
			\item $v(x,t)$ has both left and right limits at $x\in\mathbb{R}$ for $0<t<t_*$,
			\item $u(x,t)$, $\theta(x,t)$ and $z(x,t)$ are continuous on $\mathbb{R}$ for $0<t<t_*$.
		\end{itemize}
		Using these continuity properties, we conclude that
		\begin{equation*}
			(v^{\epsilon},u^{\epsilon},\theta^{\epsilon},z^{\epsilon})=(v^{\iota},u^{\iota},\theta^{\iota},z^{\iota}),\quad x\in\mathbb{R},\quad 0<t<t_*.
		\end{equation*}
        This completes the proof.  	\end{proof}

	\section{Green's Function}\label{sec: Green func}

This section is devoted to deriving the pointwise estimates for the Green's function $\mathbb{G}$ associated to the PDE of the form Eq.~\eqref{new, balance law, Dec25}, which is a  balance law of divergence form with the drift coefficient in BV. In this low regularity regime, $\mathbb{G}$ has both singular and regular parts, whose detailed estimates are crucial for the analysis of the large-time behaviour of weak solutions to Eq.~\eqref{PDE,1} in \S\ref{sec: global wp, final} below. 

The notations in the subsequent parts of the paper follows Wang--Yu--Zhang~\cite{WangHT2021}. In particular, we denote by $\mathbb{G}$, $\lambda_j$, and $\lambda_j^*$ the Green's function, the eigenvalues, and the corresponding approximated eigenvalues. The symbol $\hat{\mathbb{M}}_j$ is reserved for the modes of the Green's function at frequency $\lambda_j$; see Eq.~\eqref{G} for the precise definition. 
	
	We first introduce the variables
	\begin{align*}
		&E=e+\frac{u^2}{2}+qz,\qquad U=(v,u,E,z),\\
    &p(v,e(E,u,z))=\frac{a}{c_v v}(E-\frac{u^2}{2}-qz)\simeq \frac{E-\frac{u^2}{2}-qz}{v}.
	\end{align*}
	Note that $e_{u}=-u$, $e_{E}=1$, and $e_z=-q$. Eq.~\eqref{PDE,1} can be expressed as	\begin{equation}\label{PDE, g}
		\left\{\begin{aligned}
			&v_t-u_x=0, \\
			&u_t+p_v v_x+p_e e_u u_x+p_e e_E E_x+p_e e_z z_x=\left(\frac{\mu u_x}{v}\right)_x, \\
			&E_t+u p_v v_x+\left(p+u p_e e_u\right) u_x+u p_e e_E E_x+u p_e e_z z_x \\&\qquad\qquad \qquad =\left(\left(\frac{\mu u}{v}-\frac{\nu u}{c_vv}\right) u_x+\frac{\nu}{c_vv}E_x\right)_x +\left(\left(-\frac{q\nu}{c_vv}+\frac{qD}{v^2}\right)z_x\right)_x,\\
			&z_t+K\phi(\theta)z=\left(\frac{D}{v^2}z_x\right)_x.
		\end{aligned}\right.
	\end{equation}
	This system can also be expressed in vector form as
	\begin{equation}\label{new, balance law, Dec25}
	U_t+F(U)_x=\left(B(U) U_x\right)_x +A \, \Longleftrightarrow \, U_t+F'(U)U_x=\left(B(U) U_x\right)_x +A,
 	\end{equation}
	where
	\begin{equation}
		\begin{aligned}
			U & =\left(\begin{array}{l}
				v \\
				u \\
				E \\
				z
			\end{array}\right), \quad F(U)=\left(\begin{array}{c}
				-u \\
				p \\
				p u\\
				0
			\end{array}\right), \quad F^{\prime}(U)=\left(\begin{array}{cccc}
				0 & -1 & 0 & 0 \\
				p_v & -p_e u & p_e  &-q p_e\\
				p_v u & p-p_e u^2 & p_e u & -q p_e u\\
				0 & 0 & 0 & 0
			\end{array}\right), \\
			B(U) & =\left(\begin{array}{cccc}
				0 & 0 & 0 & 0 \\
				0 & \frac{\mu}{v} & 0 & 0 \\
				0 & \left(\frac{\mu}{v}-\frac{\nu}{c_vv}\right) u & \frac{\nu}{c_vv} & -\frac{q\nu}{c_vv}+\frac{qD}{v^2}\\
				0 & 0 & 0 & \frac{D}{v^2}
			\end{array}\right), \quad A=\left(\begin{array}{c}
				0 \\
				0 \\
				0 \\
				-K\phi(\theta)z
			\end{array}\right).
		\end{aligned}
	\end{equation}

	Consider $U=\bar{U}+V$, where $\bar{U}$ is a constant state. Then, the linearization of Eq.~\eqref{PDE, g} around $\bar{U}$ reads:
    \begin{equation}\label{eq: V g}
		V_t+F^{\prime}(\bar{U})V_x-B(\bar{U})V_{xx}=\left[N_1(V;\bar{U})+N_2(V;\bar{U})\right]_x +A,
	\end{equation}
	where $N_1$ and $N_2$ are the nonlinear terms originating from the hyperbolic and parabolic parts, respectively:
	\begin{equation}
		N_1(V;\bar{U})=-\left[F(\bar{U}+V)-F(\bar{U})-F^{\prime}(\bar{U}) V\right], \quad N_2(V;\bar{U})=\left(B(U)-B(\bar{U})\right)V_x.
	\end{equation}

\begin{definition}\label{def, G, Dec25}
We write $\mathbb{G}(x,t;\bar{U})$ for the Green's function of the linearised equation \eqref{eq: V g}:
\begin{equation}\label{1}
		\left\{\begin{array}{l}
			\partial_t \mathbb{G}(x,t;\bar{U})=\left(-F^{\prime}(\bar{U}) \partial_x+B(\bar{U})\partial_{xx}\right) \mathbb{G}(x,t;\bar{U}), \\
			\mathbb{G}(x,0;\bar{U})=\delta(x)I,
		\end{array}\right.
	\end{equation}
	where $I$ is the identity matrix, $\delta(x)$ is the Dirac delta function, and $F^{\prime}(\overline{U})$ and $ B(\overline{U})$ are the coefficient matrices after linearization around the constant state $\bar{U}$.
\end{definition}

Over the frequency domain, the Fourier-transformed Green's function $\hat{\mathbb{G}}(\eta, t;\bar{U})$ satisfies
\begin{equation}\label{green fourier}
		\left\{\begin{array}{l}
			\partial_t \hat{\mathbb{G}}(x,t;\bar{U})=\left(-i\eta F^{\prime}(\bar{U}) -\eta^2 B(\bar{U})\right) \hat{\mathbb{G}}(x,t;\bar{U}), \\
			\hat{\mathbb{G}}(\eta,0;\bar{U})=I.
		\end{array}\right.
	\end{equation}

	The eigenvalues $\lambda_j$ are obtained by solving the characteristic equation:
	\begin{align}\label{det}
		0 &= \operatorname{det}(\lambda I+i\eta F^{\prime}(\bar{U})+\eta^2 B(\bar{U}))\nonumber\\
        &=\left(\lambda +\eta^2 \frac{D}{v^2}\right)\left[\lambda\eta^2 pp_e+\left(\lambda\left(\lambda+\eta^2\frac{\mu}{v}\right)-\eta^2p_v\right)\left(\lambda+\eta^2\frac{\nu}{c_vv}\right)\right].
	\end{align}
	\begin{lemma}
		There exists a sufficiently small positive constant $\sigma_0$ such that the matrix $$-i\eta F^{\prime}(\bar{U})-\eta^2 B(\bar{U})$$ has 4 distinct eigenvalues $\lambda_1$ --  $\lambda_4$ whenever $\left|\eta\right|>0$ and $\left|\operatorname{Im} (\eta)\right|<\sigma_0$.
	\end{lemma}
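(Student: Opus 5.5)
The plan is to factor the characteristic polynomial \eqref{det}, treat the $z$-mode and the three ``fluid'' modes separately, and then rule out collisions between them. Throughout we linearise at $\bar U=(1,0,E,0)$, so that $v=1$, $u=0$, and $p_v<0$, $p_e>0$, $p>0$ are constants. From \eqref{det}, one eigenvalue is explicit,
\begin{equation*}
\lambda_4=-D\eta^2,
\end{equation*}
and the other three, $\lambda_1,\lambda_2,\lambda_3$, are the roots of the cubic
\begin{equation*}
P(\lambda,\eta):=\lambda(\lambda+\mu\eta^2)(\lambda+\kappa\eta^2)-p_v\eta^2(\lambda+\kappa\eta^2)+pp_e\eta^2\lambda,\qquad \kappa:=\frac{\nu}{c_v}.
\end{equation*}
This cubic is exactly the dispersion relation of the linearised Navier--Stokes--Fourier system \eqref{full NS}. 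Its roots are pairwise distinct for $|\eta|>0$ in a strip, which is the eigenvalue lemma of Wang--Yu--Zhang \cite{WangHT2021,WangHT2022}; I would quote it, after re-checking it in the three regimes below. The proof then reduces to two tasks: showing $P(-D\eta^2,\eta)\neq 0$, and making the ``distinct'' statement uniform on $\{|\operatorname{Im}\eta|<\sigma_0,\ \eta\neq0\}$.

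For uniformity I would split $\eta$ into three regimes.
\begin{enumerate}
\item \emph{Small $|\eta|$.} Substitute $\lambda=\eta\Lambda$ and let $\eta\to0$. The leading equation is $\Lambda(\Lambda^2-p_v+pp_e)=0$, whose roots $0$ and $\pm i\sqrt{-p_v+pp_e}$ are simple. By the implicit function theorem, $\lambda_{1,2}=\pm i c\eta+O(\eta^2)$ and $\lambda_3=O(\eta^2)$, with $c=\sqrt{-p_v+pp_e}$, analytic in complex $\eta$ near $0$. The root $\lambda_3$ has an explicit negative $\eta^2$-coefficient. Comparing this coefficient with $-D$ separates $\lambda_3$ from $\lambda_4$ to second order; the first-order terms $\pm ic\eta$ already separate $\lambda_{1,2}$ from $\lambda_3$ and $\lambda_4$.
\item \emph{Large $|\eta|$.} Substitute $\lambda=\eta^2\Lambda$ and $\varepsilon=\eta^{-1}$. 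The limiting roots are $\Lambda\in\{0,-\mu,-\kappa\}$ for the cubic and $\Lambda=-D$ for $\lambda_4$. These are simple provided $\mu,\kappa,D$ are distinct, which I would state as an assumption or verify from the physics. The roots then perturb analytically in $\varepsilon^2$ and stay separated.
\item \emph{Intermediate $|\eta|$.} On the remaining compact range, distinctness for real $\eta$ gives a positive minimum gap between the roots. Continuity of the roots in $\eta$ then yields $\sigma_0>0$ such that the roots stay distinct for $|\operatorname{Im}\eta|<\sigma_0$.
\end{enumerate}

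The main obstacle is excluding a coincidence $\lambda_4=\lambda_j$ at some finite real $\eta$. Substituting $\lambda=-D\eta^2$ gives
\begin{equation*}
P(-D\eta^2,\eta)=\eta^4\Big[-D(\mu-D)(\kappa-D)\eta^2-p_v(\kappa-D)-Dpp_e\Big].
\end{equation*}
This vanishes for some $\eta^2>0$ unless the sign structure forbids it, so the bracket is where the argument has to be made. First I would try to show that, with $p_v<0$, the bracket has no positive root under the paper's physical parameters. If that fails, I would add a mild non-resonance condition on $(\mu,\kappa,D)$ so that the bracket has no root with $\eta^2>0$. A finite exceptional set of $\eta$ would also be tolerable for the later Green's function analysis if handled by a contour shift, but the lemma as stated needs no exceptional set. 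Once no real coincidence occurs, the compactness step above extends distinctness to the strip $|\operatorname{Im}\eta|<\sigma_0$.
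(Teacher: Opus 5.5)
Your route is the same as the paper's. You factor the determinant, cite Wang--Yu--Zhang for the roots of the cubic, and substitute $\lambda_4=-D\eta^2$ into the cubic; your bracket is correct and matches \eqref{lambda neq} at $v=\theta=1$ up to an irrelevant overall sign. The gap is that you stop at the only step with content and never decide whether the bracket vanishes. It can be decided. At the equilibrium $p_v=-a$ and $pp_e=a^2/c_v$, and with $c_p=c_v+a$ your bracket equals
\[
\frac{1}{c_v}\Bigl[D(D-\mu)(\nu-c_vD)\,\eta^2+a(\nu-c_pD)\Bigr].
\]
This is affine in $\eta^2$, so for all complex $\eta$ at once, $\lambda_4$ meets a root of the cubic only at $\eta=0$ and at the single value $\eta^2=\eta_*^2$. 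Hence no small, large or intermediate splitting is needed for $\lambda_4$. It stays distinct on the punctured strip for some $\sigma_0$ if and only if $\eta_*^2\le 0$; when $\eta_*^2<0$, any $\sigma_0\le\sqrt{-\eta_*^2}$ works. Under the paper's standing inequalities $\mu<D$ and $\nu>c_vD$ the $\eta^2$-coefficient is positive, so the condition is exactly $\nu\ge c_pD$, i.e. $\operatorname{Le}\ge 1$. You need it strictly if you want the lemma for all $\bar U$ near the equilibrium, since in general the constant term is proportional to $\nu\bar v-c_pD$. For $1/\gamma<\operatorname{Le}<1$ (with $\operatorname{Pr}<1$) there is a real $\eta_*\neq 0$ at which $\lambda_4$ equals one of the other three eigenvalues, so the statement is false there. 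For example, $a=c_v=D=1$, $\mu=\tfrac12$, $\nu=\tfrac32$ gives a double eigenvalue $-2$ at $\eta^2=2$. This happens on an open set of parameters, so a ``mild non-resonance condition'' cannot repair it, and no argument from $\operatorname{Pr}<1$, $\operatorname{Le}\approx 1$ can. The missing ingredient is the explicit hypothesis $\nu\ge c_pD$.

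The paper's proof has the same hole. It asserts that a coincidence requires the bracket to vanish identically, and it dismisses case [b] because it ``depends on a specific $\eta$''. That is precisely the situation the lemma must exclude, so you were right to flag it, and the sign condition above is what both arguments need. Two smaller points follow.
\begin{itemize}
\item In your small-$\eta$ regime the diffusive root is $-\frac{\nu}{c_p}\eta^2+O(\eta^4)$. ``Separating to second order'' is therefore again the condition $\nu\neq c_pD$, and it fails in the borderline case $\operatorname{Le}=1$. There only the exact identity $P(-D\eta^2,\eta)=-D(\mu-D)(\kappa-D)\eta^6\neq 0$ separates the two roots.
\item The fallback of an exceptional set of $\eta$ is incompatible with the lemma as stated.
\end{itemize}
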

	\begin{proof}
		The second factor on the right-most term of Eq.~\eqref{det} coincides with the characteristic equation in \cite[Lemma 3.1]{WangHT2021}, wherein it is shown that it has 3 distinct roots $\lambda_1, \lambda_2, \lambda_3$ provided that $\left|\eta\right|>0$ and $\left|\operatorname{Im}(\eta)\right|<\sigma_0$ for some $\sigma_0>0$ sufficiently small.  Clearly, $\lambda_4=-\frac{D}{v^2}\eta^2$ is another eigenvalue. It remains to verify that $\lambda_4$ is distinct from $\lambda_j (j=1,2,3)$.

        Substituting $\lambda_4 = -\frac{D}{v^2}\eta^2$ into the second factor of \eqref{det}, we obtain that
		\begin{equation}
			\label{lambda neq}
			\begin{aligned}
				&\lambda_4\eta^2 pp_e+\left(\lambda_4\left(\lambda_4+\eta^2\frac{\mu}{v}\right)-\eta^2p_v\right)\left(\lambda_4+\eta^2\frac{\nu}{c_vv}\right)\\
				=&-\frac{\eta^4}{c_v v^4}\left[\eta^2\frac{D(D-\mu v)(\nu v-c_vD)}{v^2}+a\theta(v\nu-c_vD-aD)\right].
			\end{aligned}
		\end{equation}
		For $\lambda_4$ to coincide with any of $\lambda_j(j=1,2,3)$, the right-most term in \eqref{lambda neq} must constantly vanish. In this case, one of the following two conditions must hold:
		\begin{itemize}
			\item $[a]$ $D=\mu v$ and $\nu v=(a+c_v)D=\gamma c_v D$;
			\item $[b]$ There exists a  special $\eta$ such that $\eta^2=-\frac{a\theta(v\nu-c_vD-aD) v^2}{D(D-\mu v)(\nu v-c_vD)}.$
		\end{itemize}
		Condition [b] depends on a specific $\eta$, but we are concerned with all $\eta$ with $|\eta|>0$ and $\left|\operatorname{Im} (\eta)\right|<\sigma_0$. Thus $[b]$ is impossible. On the other hand, it is known that the Prandtl number $$\operatorname{Pr}=\frac{\mu c_p}{\nu}$$ for ideal gas is smaller than one. Hence,
		\begin{equation}
			\label{Pr}
			\frac{\mu c_v}{\nu}=\operatorname{Pr}\frac{c_v}{c_p}=\operatorname{Pr}\frac{1}{\gamma}<\frac{1}{\gamma},
		\end{equation}
		which implies that $\nu>\mu c_v\gamma$. This rules out $[a]$. 
	\end{proof}
	\begin{lemma}\label{spectral gap}
		For any $0<r<R$, there exists a positive constant $\tilde{b}>0$ such that $$\operatorname{Re}(\lambda_j(\eta))\leq-\tilde{b}$$ for all real $\eta$ with $r<|\eta|<R$, $j=1,2,3,4$.
	\end{lemma}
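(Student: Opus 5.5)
The characteristic polynomial \eqref{det} factors. So I would handle $\lambda_4$ separately from $\lambda_1,\lambda_2,\lambda_3$, and then take the minimum of the two resulting gaps. For $\lambda_4=-\frac{D}{v^2}\eta^2$ there is nothing to prove. Since $D>0$ and $\bar v>0$, for real $\eta$ with $r<|\eta|<R$ we get $\operatorname{Re}\lambda_4=-\frac{D}{\bar v^2}\eta^2\le -\frac{D}{\bar v^2}r^2<0$.

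For $\lambda_1,\lambda_2,\lambda_3$, the second factor of \eqref{det} is exactly the characteristic polynomial of the linearised Navier--Stokes--Fourier system treated in \cite{WangHT2021}. The plan is to prove a pointwise strict negativity and then upgrade it by compactness. Fix real $\eta\neq0$. Suppose some root satisfied $\operatorname{Re}\lambda\ge 0$. Since the cubic has real coefficients in $\lambda$ (they depend only on $\eta^2$), I would rule out such a root by an energy argument on the $3\times3$ block. The $z$-row and $z$-column decouple from the $(v,u,E)$-block: $F'$ and $B$ are block upper triangular with respect to the $z$-component, and this is exactly why \eqref{det} factorises. Take an eigenvector $\hat V$ of the $(v,u,\theta)$ linearisation in symmetrised variables, i.e. weighted by the entropy Hessian, with $p_v<0$ and $p_e>0$. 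The hyperbolic part $i\eta F'$ becomes skew-Hermitian, so it contributes nothing to the real part. The dissipative part then gives
\[
\operatorname{Re}\lambda\,|\hat V|^2_{W}=-\eta^2\Big(\tfrac{\mu}{\bar v}|\hat u|^2+c\,\tfrac{\nu}{\bar v}|\hat\theta|^2\Big)\le 0 .
\]
Equality forces $\hat u=\hat\theta=0$. The first equation then gives $\lambda\hat v=0$ and the second gives $i\eta p_v\hat v=0$, so $\hat v=0$, a contradiction. Hence $\operatorname{Re}\lambda_j(\eta)<0$ for $j=1,2,3$ and every real $\eta\ne0$. This is the Kawashima--Shizuta genuine coupling condition for this block.

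Next, the roots of a polynomial depend continuously on its coefficients, and those coefficients are polynomial in $\eta$. Therefore $\eta\mapsto\max_j\operatorname{Re}\lambda_j(\eta)$ is continuous on the compact set $\{r\le|\eta|\le R\}$. It is strictly negative there, so it attains a negative maximum $-\tilde b_1$. Setting $\tilde b=\min\{\tilde b_1, \tfrac{D}{\bar v^2}r^2\}$ finishes the proof. Continuity of the max holds even where roots collide, so the distinct-eigenvalue lemma is not needed here.

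The main obstacle is writing down the symmetriser correctly in the $(v,u,E)$ variables. The term $\left(\frac{\mu}{v}-\frac{\nu}{c_v v}\right)u$ in $B$ vanishes at $\bar u=0$, but the change of variables from $E$ to $\theta$ must still be tracked. My first attempt would be to work directly with $(v,u,\theta)$ at $\bar U=(1,0,1,0)$. There the linearised system is $v_t=u_x$, $u_t+p_v v_x+p_\theta\theta_x=\mu u_{xx}$, $c_v\theta_t+\bar p u_x=\nu\theta_{xx}$, and I would use the weights $(-p_v,1,c_v/\bar\theta)$. If that fails, I would instead apply the Routh--Hurwitz criterion to the cubic in $\lambda$ with coefficients in $\eta^2$. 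Alternatively, I would cite the corresponding spectral-gap statement in \cite{WangHT2021} for the $3\times3$ factor.
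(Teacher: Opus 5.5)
Your proof is correct, and its skeleton is the paper's: handle $\lambda_4=-\frac{D}{v^2}\eta^2$ by hand, treat the cubic factor of \eqref{det} separately, and take $\tilde b=\min\{\tilde b_1,\frac{D}{v^2}r^2\}$. The only difference is the $j=1,2,3$ step. The paper settles it in one line by citing \cite[Lemma 3.2]{WangHT2021}; you prove it instead.

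Your proof of that step checks out on each point:
\begin{itemize}
\item In $(v,u,\theta)$ variables, the characteristic polynomial of the $3\times3$ symbol is exactly the cubic factor in \eqref{det}. This uses $p_e=p_\theta/c_v$, and the fact that $p_v$ at fixed $e$ equals $p_v$ at fixed $\theta$ when $e=c_v\theta$. So the two blocks have the same roots.
\item The weights $(-p_v,1,c_v/\bar\theta)$ make the first-order part skew-Hermitian. The $(v,u)$ cross terms cancel automatically. The $(u,\theta)$ cross terms cancel because $\bar\theta p_\theta=\bar p$ for $p=a\theta/v$.
\item In the degenerate case $\hat u=\hat\theta=0$, the $u$-equation forces $\hat v=0$, since $\eta p_v\neq0$.
\item The compactness step is valid: the cubic is monic in $\lambda$ with coefficients polynomial in $\eta$, so $\max_j\operatorname{Re}\lambda_j$ is continuous even where roots coincide.
\end{itemize}

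Your route buys self-containedness. It exposes the mechanism (an entropy-type symmetrizer plus the Kawashima--Shizuta coupling). It works at any state with $\bar v,\bar\theta>0$, and it relies neither on the external reference nor on the distinct-eigenvalue lemma. The paper's citation is shorter but opaque.

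One small imprecision: the $z$-column does not decouple. $F'$ has entries $-qp_e$ and $-qp_eu$ in that column, and $B_{34}\neq0$. Only the $z$-row decouples. The resulting block upper-triangular structure is what gives the factorisation \eqref{det}, and that is all you actually use.
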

	\begin{proof}
		For $j=1,2,3$, it follows from \cite[Lemma 3.2]{WangHT2021} that there exists $b>0$ such that $\operatorname{Re}(\lambda_j(\eta))\leq-b$ for all real $\eta$ with $r<|\eta|<R$. For $j=4$, since $D>0$ and $v>0$, we have $\operatorname{Re}(\lambda_4)= -\frac{D}{v^2}|\eta|^2\leq -\frac{D}{v^2}r^2$. Take $\tilde{b}=\min(b,\frac{D}{v^2}r^2)$ to conclude.
	\end{proof}

To proceed, we represent the Green's function in the frequency domain as follows:
	\begin{equation}\label{G}
		\hat{\mathbb{G}}(\eta,t;\bar{U})=\sum_{j=1}^4 e^{\lambda_j t}\hat{\mathbb{M}}_j,\quad \hat{\mathbb{M}}_j=\frac{\left.\operatorname{adj}\left(s+i\eta F^{\prime}(\bar{U})+\eta^2 B(\bar{U})\right)\right|_{s=\lambda_j}}{\prod_{k\neq j}(\lambda_j-\lambda_k)},
	\end{equation}
	where $\operatorname{adj}(A)$ is the adjugate matrix of $A$. The standard Green's function $\mathbb{G}(x,t;\bar{U})$ can be derived via the inverse Fourier transform: $$\mathbb{G}(x,t;\bar{U})=\mathcal{F}^{-1}(\hat{\mathbb{G}}(\eta,t;\bar{U})).$$

	It has been shown in \cite{LiuTP2022, WangHT2021} that $\mathbb{G}(x,t;\bar{U})$ is decomposed into singular and regular parts:
    \begin{equation}\label{decomposition into reg and sing of G, Dec25}
        \mathbb{G}(x,t;\bar{U}) = \underbrace{\mathbb{G}^{*}(x,t;\bar{U})}_{\text{regular part}} + \underbrace{\mathbb{G}^{\dagger}(x,t;\bar{U})}_{\text{singular part}}.  
    \end{equation}
We shall estimate these two parts in detail below.
	
	We first expand the eigenvalues $\lambda_j$ as Laurent polynomials in terms of $\eta^{-1}$. For sufficiently large $\eta$, we have the following asymptotic expansions (see \cite{WangHT2021}):
	\begin{align}
		\label{asy infty}
		\lambda_1= & \frac{v p_v}{\mu}-\frac{v^3\left(\nu \theta_e p_v^2+\mu p p_e p_v\right)}{\nu \mu^3 \theta_e} \eta^{-2}+\frac{v^5 p_v\left(\mu^2 p^2 p_e^2+2 \nu^2 \theta_e^2 p_v^2+\mu p p_e p_v\left(3 \nu \theta_e+\mu\right)\right)}{\nu^2 \mu^5 \theta_e^2} \eta^{-4} \nonumber\\
		& -\frac{v^7 p_v\left(\mu^3 p^3 p_e^3+3 \mu^2 p^2 p_e^2 p_v\left(2 \nu \theta_e+\mu\right)+5 \nu^3 \theta_e^3 p_v^3+\mu p p_e p_v^2\left(10 \nu^2 \theta_e^2+4 \nu \mu \theta_e+\mu^2\right)\right)}{\nu^3 \mu^7 \theta_e^3} \eta^{-6}\nonumber\\
		&+O(1) \eta^{-8},\nonumber \\
		\lambda_2= & -\frac{\eta^2 \mu}{v}+\frac{v\left(\mu p p_e+\nu \theta_e p_v-\mu p_v\right)}{\mu\left(\mu-\nu \theta_e\right)}+\eta^{-2}\left(\frac{v^3\left(\mu^2 p p_e-p_v\left(\mu-\nu \theta_e\right)^2\right)\left(\mu p p_e+p_v\left(\nu \theta_e-\mu\right)\right)}{\mu^3\left(\mu-\nu \theta_e\right)^3}\right)\nonumber \\
		& +\eta^{-4}\left(\frac{v^5\left(2 \mu^5 p^3 p_e^3+\mu^2 p^2 p_e^2 p_v\left(\nu^3 \theta_e^2-5 \nu^2 \mu \theta_e^2+10 \nu \mu^2 \theta_e-6 \mu^3\right)\right)}{\mu^5\left(\mu-\nu \theta_e\right)^5}\right) \nonumber\\
		& \left.+\frac{v^5\left(\mu p p_e p_v^2\left(\mu-\nu \theta_e\right)^2\left(3 \nu^2 \theta_e^2-8 \nu \mu \theta_e+6 \mu^2\right)-2 p_v^3\left(\mu-\nu \theta_e\right)^5\right)}{\mu^5\left(\mu-\nu \theta_e\right)^5}\right) \nonumber\\
		& +\eta^{-6}\left(\frac{v^7\left(5 \mu^7 p^4 p_e^3-\mu^3 p^3 p_e^3 p_v\left(\nu^4 \theta_e^3-7 \nu^3 \mu \theta_e^3+21 \nu^2 \mu^2 \theta_e^2-35 \nu \mu^3 \theta_e+20 \mu^4\right)\right)}{\mu^7\left(\mu-\nu \theta_e\right)^7}\right)\nonumber\\
		& +\frac{v^7\left(3 \mu^2 p^2 p_e^2 p_v^2\left(\mu-\nu \theta_e\right)^2\left(-2 \nu^3 \theta_e^3+9 \nu^2 \mu \theta_e^2-15 \nu \mu^2 \theta_e+10 \mu^3\right)\right)}{\mu^7\left(\mu-\nu \theta_e\right)^7} \nonumber\\
		& \left.-\frac{v^7\left(\mu p p_e p_v^3\left(\mu-\nu \theta_e\right)^3\left(-10 \nu^3 \theta_e^3+36 \nu^2 \mu \theta_e^2-45 \nu \mu^2 \theta_e+20 \mu^3\right)+5 p_v^4\left(\mu-\nu \theta_e\right)^7\right)}{\mu^7\left(\mu-\nu \theta_e\right)^7}\right)+O(1)\eta^{-8},\nonumber \\
		\lambda_3= & -\frac{\eta^2 \nu \theta_e}{v}+\frac{p v p_e}{\nu \theta_e-\mu}+\eta^{-2}\left(\frac{p v^3 p_e\left(\nu \theta_e\left(p p_e+p_v\right)-\mu p_v\right)}{\nu \theta_e\left(\nu \theta_e-\mu\right)^3}\right) \nonumber\\
		& +\eta^{-4}\left(\frac{p v^5 p_e\left(2 \nu^2 p^2 \theta_e^2 p_e^2+p p_e p_v\left(4 \nu^2 \theta_e^2-5 \nu \mu \theta_e+\mu^2\right)+p_v^2\left(\mu-\nu \theta_e\right)^2\right)}{\nu^2 \theta_e^2\left(\nu \theta_e-\mu\right)^5}\right) \nonumber\\
		& +\eta^{-6}\left(\frac{p v^7 p_e\left(5 \nu^3 p^3 \theta_e^3 p_e^2-p^2 p_e^2 p_v\left(-15 \nu^3 \theta_e^3+21 \nu^2 \mu \theta_e^2-7 \nu \mu^2 \theta_e+\mu^3\right)\right)}{\nu^3 \theta_e^3\left(\nu \theta_e-\mu\right)^7}\right.\nonumber \\
		& \left.-\frac{3 p p_e p_v^2\left(\mu-3 \nu \theta_e\right)\left(\mu-\nu \theta_e\right)^2}{\nu^3 \theta_e^3\left(\nu \theta_e-\mu\right)^7}-\frac{p v^7 p_e\left(p_v^3\left(\mu-\nu \theta_e\right)^3\right)}{\eta^6 \nu^3 \theta_e^3\left(\nu \theta_e-\mu\right)^7}\right)+O(1) \eta^{-8},\nonumber\\
		\lambda_4=&-\eta^2\frac{D}{v^2}.
	\end{align}

	From \cite{WangHT2021}, the real parts of the above high-frequency eigenvalues satisfy  that
	\begin{equation*}
		0>\operatorname{Re}(\lambda_1)>\operatorname{Re}(\lambda_2)>\operatorname{Re}(\lambda_3).
	\end{equation*}
    On the other hand, the Lewis number $\operatorname{Le}$ is approximately 1 for ideal gases, while the   Prandtl number $\operatorname{Pr} <1$. Here, $$\operatorname{Le}=\frac{\nu v}{c_p D},$$ and recall that $\gamma = \frac{c_p}{c_v}>1$. We thus have
	\begin{equation*}
		\frac{\nu \theta_e v}{D}=\frac{\nu v}{c_vD} =\frac{\nu v}{c_p D} \frac{c_p}{c_v}=\operatorname{Le} \gamma >1
	\end{equation*}
    as well as
	\begin{equation*}
		\frac{\mu v}{D}=\operatorname{Pr}\cdot \operatorname{Le} <1.
	\end{equation*}
	From this we conclude that 
	\begin{equation}
		\label{Re lambda dis}
		0>\operatorname{Re}(\lambda_1)>\operatorname{Re}(\lambda_2)>\operatorname{Re}(\lambda_4)>\operatorname{Re}(\lambda_3).
	\end{equation}

 We also have the following low-frequency asymptotic expansions ($|\eta|\to 0$, see \cite{WangHT2021}):
	\begin{align}
		\label{lambda zero}
		\lambda_1= & \frac{\eta^2 \nu \theta_e p_v}{v\left(p p_e-p_v\right)}-\frac{\eta^4 \nu^2 p \theta_e^2 p_e p_v\left(\mu p p_e+p_v\left(\nu \theta_e-\mu\right)\right)}{v^3\left(p_v-p p_e\right)^4}\nonumber \\
		& -\frac{\eta^6 \nu^3 p \theta_e^3 p_e p_v\left(\mu^2\left(-p^3\right) p_e^3+\mu p^2 p_e^2 p_v\left(\mu-3 \nu \theta_e\right)+p p_e p_v^2\left(-2 \nu^2 \theta_e^2+\nu \mu \theta_e+\mu^2\right)-p_v^3\left(\mu-\nu \theta_e\right)^2\right)}{v^5\left(p p_e-p_v\right)^7}\nonumber\\
		&+O(1) \eta^6,\nonumber\\
		\lambda_2= & -i \eta \sqrt{p p_e-p_v}-\frac{\eta^2\left(\nu p \theta_e p_e+\mu p p_e-\mu p_v\right)}{2 v\left(p p_e-p_v\right)}\nonumber\\
		&-\frac{i \eta^3\left(-p^2 p_e^2\left(\mu-\nu \theta_e\right)^2+2 p p_e p_v\left(2 \nu^2 \theta_e^2-\nu \mu \theta_e+\mu^2\right)-\mu^2 p_v^2\right)}{8 v^2\left(p p_e-p_v\right)^{5 / 2}}\nonumber \\
		& +\frac{\eta^4 \nu^2 p \theta_e^2 p_e p_v\left(\mu p p_e+p_v\left(\nu \theta_e-\mu\right)\right)}{2 v^3\left(p_v-p p_e\right)^4}+O(1) \eta^4,\nonumber\\
		\lambda_3= & i \eta \sqrt{p p_e-p_v}-\frac{\eta^2\left(\nu p \theta_e p_e+\mu p p_e-\mu p_v\right)}{2 v\left(p p_e-p_v\right)}\nonumber\\
		&-\frac{i \eta^3\left(p^2 p_e^2\left(\mu-\nu \theta_e\right)^2-2 p p_e p_v\left(2 \nu^2 \theta_e^2-\nu \mu \theta_e+\mu^2\right)+\mu^2 p_v^2\right)}{8 v^2\left(p p_e-p_v\right)^{5/2}},\nonumber \\
		& +\frac{\eta^4 \nu^2 p \theta_e^2 p_e p_v\left(\mu p p_e+p_v\left(\nu \theta_e-\mu\right)\right)}{2 v^3\left(p_v-p p_e\right)^4}+O(1) \eta^4,\nonumber\\
		\lambda_4=&-\eta^2\frac{D}{v^2}.
	\end{align}
	\subsection{High-Frequency Analysis ($\eta\rightarrow \infty$), Singular Part}\label{sec: high freq}
	In this part, we focus on analyzing the singular part of Green's function, which captures high-frequency behaviours and singularities of the Green's function.
	
	Due to the inverse power of $\eta$ in the asymptotic expansion ~\eqref{asy infty} as $|\eta|\to\infty$, $\lambda_j$ is not analytic at $\eta=0$. To resolve this,
	we construct analytic approximations $\lambda_j^*$ in the neighbourhood of the real axis:
	\begin{align}
		\label{appro lambda}
			& \lambda_1^*=\beta_1^*+\sum_{k=1}^3 \frac{A_{1, k}}{\left(1+\eta^2\right)^k}-\frac{K_1}{\left(\eta^2+1\right)^4}, \nonumber\\
			& \lambda_2^*=-\alpha_2^* \eta^2+\beta_2^*+\sum_{k=1}^3 \frac{A_{2, k}}{\left(1+\eta^2\right)^k}-\frac{K_2}{\left(\eta^2+1\right)^4}, \nonumber\\
			& \lambda_3^*=-\alpha_3^* \eta^2+\beta_3^*+\sum_{k=1}^3 \frac{A_{3, k}}{\left(1+\eta^2\right)^k}-\frac{K_3}{\left(\eta^2+1\right)^4},\nonumber\\
			& \lambda_4^*=-\alpha_4^* \eta^2=-\frac{D}{v^2}\eta^2,
	\end{align}
	where the coefficients $\alpha_j^*$, $\beta_j^*$, and $A_{j,k}$ are listed in Appendix~\ref{appendix, new}. Moreover,  $K_1$, $K_2$, and $K_3$ are sufficiently large positive constants ensuring the following Lemma.
	\begin{lemma}\label{lemma: appro eigen}
		Fix $\bar{U}$. We can find positive constants $K_1, K_2, K_3,\sigma_0, \sigma_0^*$ and $\sigma_1^*$ such that
		\begin{enumerate}
			\item  The approximate eigenvalues $\lambda_j^*$;  $j=1,2,3,4$, are analytic  in \begin{equation*}
				\left\{\eta:|\operatorname{Im}(\eta)|<\sigma_0\right\}.
			\end{equation*}
			\item $\lambda_j^*$ (j=1,2,3) is an approximation of $\lambda_j$  accurate up to the power $(\eta^2+1)^{-3}$:\begin{equation*}
				\lambda_j^*=\lambda_j+O(1)\left|\eta\right|^{-8}.
			\end{equation*}
			\item In the region $\left\{\eta:|\operatorname{Im}(\eta)|<\sigma_0\right\}$, all the approximated eigenvalues $\lambda_j^*$ (j=1,2,3,4) have distinct negative real part. Indeed,
			\begin{align*}
				&\sup _{|\operatorname{Im}(\eta)|<\sigma_0} \operatorname{Re}\left(\lambda_1^*\right)<-\sigma_0^*, \\
                &\sup _{|\operatorname{Im}(\eta)|<\sigma_0} \operatorname{Re}\left(\lambda_2^*+\frac{\mu}{2 v}\left(\eta^2+1\right)\right)<-\sigma_0^*, \\
				&\sup _{|\operatorname{Im}(\eta)|<\sigma_0} \operatorname{Re}\left(\lambda_3^*+\frac{\nu \theta_e}{2 v}\left(\eta^2+1\right)\right)<-\sigma_0^*,\\
                &\sup _{|\operatorname{Im}(\eta)|<\sigma_0} \operatorname{Re}\left(\lambda_4^*+\frac{D}{ 2v^2}\left(\eta^2+1\right)\right)<-\sigma_0^*,\\
				&\min_{j,k} \inf _{|\operatorname{Im}(\eta)|<\sigma_0}\left|\operatorname{Re}\left(\lambda_k^*-\lambda_j^*\right)\right|=\sigma_1^*.
			\end{align*}
		\end{enumerate}
	\end{lemma}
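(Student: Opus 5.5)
The lemma has three parts, and for $j=1,2,3$ each part is a statement about the same $\lambda_j^*$ used in \cite{WangHT2021}, where the analogous lemma is proved for the three-mode system. My plan is to import that result for $j=1,2,3$, with the same $K_1,K_2,K_3$ and preliminary $\sigma_0,\sigma_0^*$. The remaining work is to add the fourth mode $\lambda_4^*=-\frac{D}{v^2}\eta^2$ and to check its separation from the other three. Where I cannot simply cite, I will argue directly as follows.

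\textbf{Analyticity and accuracy.} Every $\lambda_j^*$ is a polynomial in $\eta^2$ plus a finite combination of powers $(1+\eta^2)^{-k}$, $1\le k\le4$. Their only singularities are at $\eta=\pm i$, so they are analytic in $|\operatorname{Im}\eta|<\sigma_0$ for any $\sigma_0<1$.

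For part (2), expand $(1+\eta^2)^{-k}=\eta^{-2k}(1+\eta^{-2})^{-k}$ in powers of $\eta^{-2}$. The coefficients $A_{j,k}$ listed in Appendix~\ref{appendix, new} are chosen exactly so that the expansion of $\lambda_j^*$ matches \eqref{asy infty} through order $\eta^{-6}$. The added term $K_j(1+\eta^2)^{-4}$ is $O(|\eta|^{-8})$. Hence $\lambda_j^*-\lambda_j=O(|\eta|^{-8})$ as $|\eta|\to\infty$, and $\lambda_4^*=\lambda_4$ exactly.

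\textbf{Real-part bounds.} Write $\eta=\xi+i\varsigma$ with $|\varsigma|<\sigma_0$. Then
\[
\operatorname{Re}(\eta^2)=\xi^2-\varsigma^2\ge\xi^2-\sigma_0^2 .
\]
For $\lambda_4^*$ this gives
\[
\operatorname{Re}\Bigl(\lambda_4^*+\frac{D}{2v^2}(\eta^2+1)\Bigr)=-\frac{D}{2v^2}\operatorname{Re}(\eta^2)+\frac{D}{2v^2}\le -\frac{D}{2v^2}(\xi^2-\sigma_0^2-1).
\]
This is not uniformly negative near $\xi=0$, so the bound cannot be obtained from the formula alone. The same issue arises for $\lambda_2^*,\lambda_3^*$, and \cite{WangHT2021} handles it by taking $K_j$ large. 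The $-K_j(1+\eta^2)^{-4}$ term has real part close to $-K_j$ near $\xi=0$ and uniformly small negative contributions elsewhere.

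For $\lambda_4$ there is no $K_4$ term. My approach is therefore to read the inequality as the convention adopted in \cite{WangHT2021}: the constant ``$+1$'' in $(\eta^2+1)$ is absorbed by $\beta_j^*$ and the $K_j$ terms. For $\lambda_4^*$ I would, if necessary, replace the weight $\frac{D}{2v^2}(\eta^2+1)$ by $\frac{D}{2v^2}\eta^2$, or equivalently shrink the constant. The bound needed downstream is only a Gaussian-type decay $\operatorname{Re}\lambda_4^*\le -c\,\operatorname{Re}\eta^2$ with room to spare, which holds trivially since $\operatorname{Re}\lambda_4^*=-\frac{D}{v^2}(\xi^2-\varsigma^2)$. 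If the statement must be taken literally, I would add a harmless correction $-K_4(1+\eta^2)^{-4}$ to $\lambda_4^*$ as well, at the cost of an $O(|\eta|^{-8})$ error, consistent with part (2).

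\textbf{Separation $\sigma_1^*$ (main obstacle).} For large $|\xi|$, the ordering \eqref{Re lambda dis} rests on $\nu\theta_e> D/v>\mu/v$, that is on $\operatorname{Le}\gamma>1>\operatorname{Pr}\cdot\operatorname{Le}$. The leading coefficients $\alpha_j^*$ then strictly separate the real parts of the $\lambda_j^*$: the gaps grow like $\xi^2$, and $\lambda_1^*$ stays bounded.

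For bounded $\xi$, separation must come from the constants. The $K_j$ are chosen first, and I would additionally choose them pairwise distinct with large gaps, say $K_3\gg K_4$-effective $\gg K_2\gg K_1$ arranged compatibly with the large-$\xi$ ordering. The real parts are then separated on compacts, and the two regimes are glued by continuity.

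Finally, shrinking $\sigma_0$ perturbs all real parts by $O(\sigma_0)$ uniformly, because the $\lambda_j^*$ are analytic with bounded derivatives after subtracting the $\eta^2$ terms. This preserves the strict inequalities and gives $\sigma_0^*,\sigma_1^*>0$. The delicate point is making the choice of the $K_j$ consistent simultaneously with the ordering at $\xi=0$ and as $|\xi|\to\infty$, since $\lambda_4^*$ has no free constant. I would first try to verify this numerically on the explicit formulas in the appendix before committing to the constants.
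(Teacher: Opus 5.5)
Your treatment of parts (1)--(2), and of $j=1,2,3$ throughout, agrees with the paper. The paper simply cites \cite[Lemma 3.3]{WangHT2021} for $j=1,2,3$ and observes that $\lambda_4^*$ is a polynomial.

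\textbf{The gap.} The gap is part (3) for $j=4$, and no choice of constants closes it. With $\lambda_4^*=-\frac{D}{v^2}\eta^2$ as fixed in \eqref{appro lambda}, the claims are false.
\begin{itemize}
\item At $\eta=0$ one has $\operatorname{Re}\bigl(\lambda_4^*+\frac{D}{2v^2}(\eta^2+1)\bigr)=\frac{D}{2v^2}>0$.
\item Your weakened weight does not rescue this. At $\eta=i\varsigma$ you get $\operatorname{Re}\bigl(\lambda_4^*+\frac{D}{2v^2}\eta^2\bigr)=\frac{D}{2v^2}\varsigma^2\ge 0$.
\item The separation step is impossible. On the real axis every $\lambda_j^*$ is real-valued and continuous. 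We have $\lambda_4^*(0)=0>\lambda_1^*(0)$, since the first inequality of (3) forces $\lambda_1^*(0)<0$. Yet $\lambda_4^*(\xi)\to-\infty$ while $\lambda_1^*(\xi)\to\beta_1^*$ as $|\xi|\to\infty$. By the intermediate value theorem, $\lambda_4^*(\xi_0)=\lambda_1^*(\xi_0)$ for some real $\xi_0$.
\item The same argument gives a crossing with $\lambda_2^*$. Here use $\lambda_2^*(0)<-\frac{\mu}{2v}<0$ (forced by the second inequality of (3)) and $\frac{D}{v^2}>\frac{\mu}{v}$ at infinity.
\end{itemize}
Hence $\inf|\operatorname{Re}(\lambda_4^*-\lambda_j^*)|=0$ for every choice of $K_1,K_2,K_3$. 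No numerical search will find admissible constants, and the ``$K_4$-effective'' in your ordering does not exist for this $\lambda_4^*$.

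Nor is the $\lambda_4^*$ bound a mere convention. The paper uses it to assert $\mathbb{G}^{*,4}=O(1)e^{-\sigma_0^*t-\sigma_0|x|}$ for $t\ge 1$. That fails for the pure heat kernel $e^{-\frac{D}{v^2}\eta^2 t}$, whose maximum decays only like $t^{-1/2}$. So your remark that Gaussian-type decay is all that is needed downstream does not match how the paper actually uses the lemma.

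\textbf{What can be proved.} Your last fallback is the only provable version. Add $-K_4(1+\eta^2)^{-4}$ to $\lambda_4^*$, and take $K_2-K_1$, $K_4-K_2$, $K_3-K_4$ and $K_4$ itself large. Since $\min_{\xi}\bigl[c\,\xi^2+K(1+\xi^2)^{-4}\bigr]\to\infty$ as $K\to\infty$, each adjacent gap then dominates the bounded terms $\beta_j^*$ and $A_{j,k}$. Your $O(\sigma_0)$ perturbation argument then carries the bounds into the strip.

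However, this proves a lemma about a different function, with $\lambda_4^*\neq\lambda_4$, and that is not harmless. The paper uses $\lambda_4^*=\lambda_4$ in three places:
\begin{itemize}
\item to drop the fourth mode from $\hat{\mathbb{G}}^{\dagger}$ in \eqref{G dagger};
\item to set $\hat{\mathbb{M}}_4^*=\hat{\mathbb{M}}_4$;
\item to write $\mathbb{G}^{*,4}$ as an exact heat kernel for $0<t<1$.
\end{itemize}
With the modification, you would have to carry $(e^{\lambda_4 t}-e^{\lambda_4^* t})\hat{\mathbb{M}}_4$ into the regular part and estimate it.

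\textbf{The paper's proof has the same defect.} For $|\eta|\le C_1$ it imposes $K_2<\beta_2^*+\frac{D}{v^2}|\eta|^2$. This condition depends on $\eta$ and is incompatible with $\beta_2^*-K_2<0$ at $\eta=0$. Its $\sigma_0^*$ also contains the term $\frac12\frac{D}{v^2}\eta^2$, which vanishes at $\eta=0$.

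So the statement cannot be proved as written and must be corrected. One option is to remove or restrict the $\lambda_4^*$ entries in (3). The other is to modify $\lambda_4^*$ and adjust \S\ref{sec: Green func} accordingly.
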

	\begin{proof}
		
		\begin{itemize}
    \item 
    For $\lambda_1^*,\lambda_2^*,\lambda_3^*$, it is shown in \cite[Lemma 3.3]{WangHT2021} that they satisfy (1) and (2).
			\item Since $\lambda_4^*=-\frac{D}{v^2}\eta^2$ is a polynomial in $\eta$, it is analytic on $\mathbb{C}$. 
			\item For (3), applying \eqref{Re lambda dis}, we may find sufficiently large $C_1>0$ and define 
			\begin{equation*}
				\sigma_0^* = \min\left\{\frac{1}{2}|\beta_1^*|,\, \frac{1}{2}\alpha_2^*C_1^2,\,\frac{1}{2}\frac{D}{v^2}\eta^2,\, \frac{1}{2}\alpha_3^*C_1^2\right\}
			\end{equation*}
			so that $\operatorname{Re}(\lambda_j^*)<-\sigma_0^*$.
			
			Next, for $|\eta|\leq C_1$, we know that $\operatorname{Re}(\lambda_j^*) \approx \beta_j^* - K_j$. Then, we can choose $K_1, K_2, K_3$ sufficiently large with $K_1<K_2<K_3$ to ensure $0>\beta_1^*-K_1>\beta_2^*-K_2>\beta_3^*-K_3$. For $\lambda_4$, since $\operatorname{Re}(\lambda_4) \approx -\frac{D}{v^2}|\eta|^2$, we make  additional assumptions that $K_2<\beta_2^*+\frac{D}{v^2}|\eta|^2$ and $K_3>\beta_3^*+\frac{D}{v^2}|\eta|^2$, which ensure $\beta_2^*-K_2>-\frac{D}{v^2}|\eta|^2>\beta_3^*-K_3$. Therefore, we obtain
			\begin{equation*}
				0>\operatorname{Re}(\lambda_1^*)>\operatorname{Re}(\lambda_2^*)> \operatorname{Re}(\lambda_4^*) > \operatorname{Re}(\lambda_3^*),\quad |\eta|\leq C_1.
			\end{equation*}
			
			On the other hand, for $|\eta|>C_1$, we know that $\operatorname{Re}(\lambda_2^*)\approx-\alpha_2^*|\eta|^2$, $\operatorname{Re}(\lambda_4^*)=-\frac{D}{v^2}|\eta|^2$ and $\operatorname{Re}(\lambda_3^*)\approx-\alpha_3^*|\eta|^2$ are negative. We then apply \eqref{Re lambda dis} to obtain that \begin{equation*}
				0>\operatorname{Re}(\lambda_1^*)>\operatorname{Re}(\lambda_2^*)>\operatorname{Re}(\lambda_4^*)>\operatorname{Re}(\lambda_3^*),\quad |\eta|> C_1.
			\end{equation*}
			Thus, there exist positive gaps between any two $\operatorname{Re}(\lambda_j^*)$. Set $\sigma_1^*$ to be the minimum gap
			\begin{equation*}
				\sigma_1^*= \min_{j, k} \inf_{|\operatorname{Im}(\eta)|<\sigma_0}\left|\operatorname{Re}\left(\lambda_k^*-\lambda_j^*\right)\right|
			\end{equation*}
            to conclude the proof.
		\end{itemize}
	\end{proof}
	Now, we are ready to construct the singular part of Green's function using $\lambda_j$ and $\hat{\mathbb{M}}_j$. A direct computation yields the expression of $\hat{\mathbb{M}}_j:=\hat{\mathbb{M}}_j(\eta;\lambda)$ when $\eta$ tends to infinity:
	\begin{equation}\label{adj M}
		\hat{\mathbb{M}}_j=\frac{\left(\begin{array}{cccc}
				A_{11}(\lambda_j) & A_{21}(\lambda_j) & A_{31}(\lambda_j) & A_{41}(\lambda_j) \\
				A_{12}(\lambda_j) & A_{22}(\lambda_j) & A_{32}(\lambda_j) & A_{42}(\lambda_j) \\
				A_{13}(\lambda_j) & A_{23}(\lambda_j) & A_{33}(\lambda_j) & A_{43}(\lambda_j) \\
				A_{14}(\lambda_j) & A_{24}(\lambda_j) & A_{34}(\lambda_j) & A_{44}(\lambda_j)
			\end{array}\right)}{\prod_{k \neq j}\left(\lambda_j-\lambda_k\right)} ,
	\end{equation}
	where 
	\begin{align*}
		&A_{11}(\lambda_j)=\left(\lambda_j+\eta^2\frac{D}{v^2}\right)\left[\left(\lambda_j+\eta^2\frac{\mu}{v}\right)\left(\lambda_j+\eta^2\frac{\nu}{c_vv}\right)+\eta^2 p p_e\right],\\
		&A_{21}(\lambda_j)=i\eta \left(\lambda_j+\eta^2\frac{\nu}{c_v v}\right)\left(\lambda_j+\eta^2\frac{D}{v^2}\right),\\
		&A_{31}(\lambda_j)=\eta^2 p_e\left(\lambda_j+\eta^2\frac{D}{v^2}\right),\\
		&A_{41}(\lambda_j)=-q\eta^2 p_e\left(\lambda_j+\eta^2\frac{D}{v^2}\right),\\
		&A_{12}(\lambda_j)=-i\eta p_v\left(\lambda_j+\eta^2\frac{D}{v^2}\right)\left(\lambda_j+\eta^2\frac{\nu}{c_vv}\right),\\
		&A_{22}(\lambda_j)=\lambda_j\left(\lambda_j+\eta^2\frac{D}{v^2}\right)\left(\lambda_j+i\eta p_e+\eta^2\frac{\nu}{c_v v}\right),\\
		&A_{32}(\lambda_j)=-\lambda_j\left(i\eta p_e\right)\left(\lambda_j+\eta^2\frac{D}{v^2}\right),\\
		&A_{42}(\lambda_j)=q\lambda_j\left(i\eta p_e\right)\left(\lambda_j+\eta^2\frac{D}{v^2}\right),\\
		&A_{13}(\lambda_j)=-\left(\lambda_j+\eta^2\frac{D}{v^2}\right)\eta p_v\left[\eta p+i u\left(\lambda_j+\eta^2\frac{\nu}{c_vv}\right)\right],\\
		&A_{23}(\lambda_j)=-\left(\lambda_j+\eta^2\frac{D}{v^2}\right)\left[\lambda_j\left(i\eta p+\eta^2\frac{\mu u}{v}-\eta^2\frac{\nu u}{c_vv}\right)-i\eta \lambda_j p_e u^2-\eta^2 p_v u\right],\\
		&A_{33}(\lambda_j)=\left(\lambda_j+\eta^2\frac{D}{v^2}\right)\left[\lambda_j\left(\lambda_j+\eta^2\frac{\mu}{v}\right)-i\eta \lambda_j p_e u-\eta^2 p_v \right],\\
		&A_{43}(\lambda_j)=q\left[\left(\lambda_j+i\eta p_e+\eta^2\frac{\nu}{c_vv}\right)\left(\lambda_j\left(\lambda_j+\eta^2\frac{\mu}{v}\right)-\eta^2 p_v\right)+\lambda_j\eta^2 p p_e\right]-qA_{33}(\lambda_j),\\
		&A_{14}(\lambda_j)=A_{24}(\lambda_j)=A_{34}(\lambda_j)=0,\\
		&A_{44}(\lambda_j)=\lambda\eta^2 pp_e+\left(\lambda\left(\lambda+\eta^2\frac{\mu}{v}\right)-\eta^2p_v\right)\left(\lambda+\eta^2\frac{\nu}{c_vv}\right).
	\end{align*}

	As $\hat{\mathbb{M}}_j(\eta;\lambda_j)$ may be non-analytic, we consider approximate eigenvalues $\lambda_j^*$ \eqref{appro lambda}  to obtain analytic matrix $\hat{\mathbb{M}}_j(\eta;\lambda_j^*)$, which satisfies:
	\begin{lemma}\label{lemma: M star}
		The matrix $\hat{\mathbb{M}}_j(\eta;\lambda_j^*)$ is analytic in the region $\left\{\eta:|\operatorname{Im}(\eta)|<\sigma_0\right\}$ around the real axis, and it has the following expansion at infinity:
		\begin{equation}
			\label{M star ex}
			\hat{\mathbb{M}}_j^*=M_j^{*, 0}+i \eta^{-1} M_j^{*, 1}+\eta^{-2} M_j^{*, 2}+i \eta^{-3} M_j^{*, 3}+\eta^{-4} M_j^{*, 4}+O(1) \eta^{-5}, \quad j=1,2,3,4 \quad|\eta| \rightarrow \infty,
		\end{equation}
		where $M_j^{*,k}$ are listed in Appendix A.
	\end{lemma}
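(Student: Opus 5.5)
The plan is to prove analyticity and the expansion separately, starting from the explicit form \eqref{adj M}. Write $\hat{\mathbb{M}}_j^*=\hat{\mathbb{M}}_j(\eta;\lambda_j^*)$. Its numerator entries $A_{kl}(\lambda_j^*)$ are polynomials in $\eta$ and $\lambda_j^*$. Its denominator is $\prod_{k\neq j}(\lambda_j^*-\lambda_k^*)$, where all eigenvalues are replaced by their approximations.

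\textbf{Analyticity.} By Lemma~\ref{lemma: appro eigen}(1), each $\lambda_k^*$ is analytic in $\{|\operatorname{Im}\eta|<\sigma_0\}$. Indeed the only possible singularities of the rational terms $(1+\eta^2)^{-k}$ sit at $\eta=\pm i$, which lie outside the strip once $\sigma_0<1$. Hence every $A_{kl}(\lambda_j^*)$ is analytic there. For the denominator, Lemma~\ref{lemma: appro eigen}(3) gives $|\lambda_j^*-\lambda_k^*|\geq|\operatorname{Re}(\lambda_j^*-\lambda_k^*)|\geq\sigma_1^*>0$ throughout the strip. So the denominator never vanishes, and $\hat{\mathbb{M}}_j^*$ is a quotient of analytic functions with a nonvanishing denominator, hence analytic. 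For $j=1,2,3$ this reproduces \cite[Lemma~3.4]{WangHT2021}, now with the extra factor $(\lambda_j^*+\eta^2 D/v^2)$ and the extra $4$th row and column. The new case $j=4$ is elementary, since $\lambda_4^*=-\frac{D}{v^2}\eta^2$ is entire.

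\textbf{Expansion at infinity.} I will insert the Laurent-type expansions \eqref{appro lambda} into \eqref{adj M}; for $|\eta|$ large these agree with \eqref{asy infty} up to $O(\eta^{-8})$. Expanding $(1+\eta^2)^{-k}=\eta^{-2k}(1-\eta^{-2}+\dots)$ turns each $\lambda_k^*$ into a series $-\alpha_k^*\eta^2+\beta_k^*+c_{k,1}\eta^{-2}+\dots$. For each $j$, I factor out the leading power $\eta^{2m_j}$ of the denominator $\prod_{k\neq j}(\lambda_j^*-\lambda_k^*)$:
\begin{itemize}
\item for $j=1$ it is $\eta^6\,\alpha_2^*\alpha_3^*\alpha_4^*$;
\item for $j=2,3,4$ it is $\eta^4$ times a product of differences $\alpha_k^*-\alpha_j^*$ and of $\alpha_k^*$.
\end{itemize}
These leading coefficients are nonzero exactly because of the strict ordering \eqref{Re lambda dis}, which uses $\operatorname{Pr}<1$ and $\operatorname{Le}\gamma>1$. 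The inverse of the denominator is then a convergent geometric series in $\eta^{-2}$. Each numerator entry is a polynomial in $\eta$ whose degree is at most $2m_j$, with odd powers of $\eta$ carrying a factor $i$ (from the $i\eta F'$ terms). Dividing therefore gives a series $\sum_{k\geq0}i^{k}\eta^{-k}M_j^{*,k}$, and I will truncate after $k=4$ with remainder $O(\eta^{-5})$. The coefficients $M_j^{*,k}$ are then read off and recorded in Appendix~A.

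\textbf{Main obstacle.} The hard step is bookkeeping: checking that no entry grows, i.e.\ that every $A_{kl}(\lambda_j^*)$ has degree at most the denominator's. The critical case is $j=4$. There the numerator factor $\lambda_4^*+\eta^2D/v^2$ vanishes identically, so all entries containing it drop out, and $A_{44}(\lambda_4^*)$ and $A_{43}(\lambda_4^*)$ must be compared against the denominator $(\lambda_4^*-\lambda_1^*)(\lambda_4^*-\lambda_2^*)(\lambda_4^*-\lambda_3^*)\sim\eta^6$. Now $A_{44}(\lambda_4^*)$ equals the cubic factor evaluated at $\lambda_4^*$, which by \eqref{lambda neq} is of order $\eta^6$, and the quotient is $1+O(\eta^{-2})$. $A_{43}(\lambda_4^*)$ is of the same order, so that entry is also $O(1)$ with a term-by-term expansion. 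For $j=1$, where $\lambda_1^*=O(1)$, the factor $\lambda_1^*+\eta^2D/v^2\sim\eta^2$ raises the numerator degree. I must check that it matches the extra factor $\lambda_1^*-\lambda_4^*\sim\eta^2$ in the denominator. It does, and the same cancellation handles $j=2,3$. Finally, the $O(\eta^{-8})$ error in Lemma~\ref{lemma: appro eigen}(2) only affects terms beyond $\eta^{-4}$, so it is absorbed into the $O(1)\eta^{-5}$ remainder.
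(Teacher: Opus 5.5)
Your route is the paper's own: analyticity from the analyticity of the $\lambda_k^*$ plus a uniformly nonvanishing denominator, and the expansion from the large-$|\eta|$ series. The expansion part is essentially fine, but the analyticity step fails, and the paper's proof relies on the same claim.

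You invoke Lemma~\ref{lemma: appro eigen}(3) for $|\lambda_j^*-\lambda_4^*|\ge\sigma_1^*$ on the strip, but no such gap exists. On the real axis $\lambda_4^*=-\frac{D}{v^2}\eta^2$ is real, vanishes at $\eta=0$ and tends to $-\infty$. Meanwhile $\lambda_1^*$ is real, strictly negative at $\eta=0$ (that is what the large $K_1$ is for) and tends to $\beta_1^*$. By the intermediate value theorem, $\lambda_1^*(\eta_0)=\lambda_4^*(\eta_0)$ at some real $\eta_0\neq0$. The same happens for $\lambda_2^*$, since $\alpha_2^*<\alpha_4^*$. Indeed the claimed bound $\operatorname{Re}(\lambda_4^*+\frac{D}{2v^2}(\eta^2+1))<-\sigma_0^*$ already fails at $\eta=0$.

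Nor is the zero cancelled by the numerator. Write $s+i\eta F'(\bar U)+\eta^2B(\bar U)=\begin{pmatrix}P(s)&c\\0&d(s)\end{pmatrix}$ with $d(s)=s+\eta^2D/v^2$, and let $e_3$ be the third unit vector. One checks $c=-qP(s)e_3+q\,d(s)e_3$, hence
\[
\operatorname{adj}\begin{pmatrix}P&c\\0&d\end{pmatrix}=\begin{pmatrix} d\,\operatorname{adj}P & q\det P\,e_3-q\,d\,\operatorname{adj}P\,e_3\\ 0 & \det P\end{pmatrix}.
\]
Consequently $\hat{\mathbb M}_4(\eta;\lambda_4^*)=\frac{\det P(\lambda_4)}{\prod_{k\le3}(\lambda_4-\lambda_k^*)}(qE_{34}+E_{44})$, where $E_{kl}$ are matrix units. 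Here $\det P(\lambda_4)=\prod_{k\le3}(\lambda_4-\lambda_k)$ is the quantity in \eqref{lambda neq}, which is nonzero for real $\eta\neq0$ because the exact eigenvalues are distinct. So this matrix has a genuine pole at $\eta_0$. Similarly, the $(3,4)$ and $(4,4)$ entries of $\hat{\mathbb M}_j(\eta;\lambda_j^*)$, $j=1,2$, contain $\det P(\lambda_j^*)/\prod_{k\neq j}(\lambda_j^*-\lambda_k^*)$ and blow up where $\lambda_j^*=\lambda_4^*$.

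The missing idea is to use this block structure instead of any gap involving $\lambda_4^*$. For $j\le3$ the factor $d(\lambda_j^*)=\lambda_j^*-\lambda_4^*$ cancels exactly in the top-left block; this is the cancellation you noticed. What remains there is $\operatorname{adj}P(\lambda_j^*)/\prod_{k\le3,\,k\neq j}(\lambda_j^*-\lambda_k^*)$, the $3\times3$ matrix of \cite{WangHT2021}. It is analytic using only the gaps among $\lambda_1^*,\lambda_2^*,\lambda_3^*$.

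The leftover term proportional to $\det P(\lambda_j^*)$ vanishes for the exact eigenvalues and is $O(\eta^{-10})$ at infinity, so it must be dropped. That is, $\hat{\mathbb M}_j^*$ has to be \emph{defined} with fourth column equal to $-q$ times the third and fourth row zero. Likewise set $\hat{\mathbb M}_4^*:=qE_{34}+E_{44}$: the scalar factor above is identically $1$ if the exact $\lambda_k$ are used, since $\det P$ is monic in $s$. This is precisely \eqref{M 34}. With that definition analyticity is immediate, and the expansion is inherited from \cite{WangHT2021}, with $M_4^{*,0}=qE_{34}+E_{44}$ and $M_4^{*,k}=0$ for $k\ge1$.

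Two smaller points. Your estimate $1+O(\eta^{-2})$ for the $(4,4)$ entry is too crude to give $M_4^{*,k}=0$ for $k\ge1$; the correct error is $O(\eta^{-10})$. Also, each $\prod_{k\neq j}(\lambda_j^*-\lambda_k^*)$ is of order $\eta^6$ (three factors of order $\eta^2$), not $\eta^4$ for $j=2,3,4$; you use $\eta^6$ yourself later for $j=4$.
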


	\begin{proof}
		For $j=1,2,3$, it follows from \cite[Lemma 3.4]{WangHT2021}
		that
		\begin{align*}
			\hat{\mathbb{M}}_j^*= \frac{\operatorname{adj}(s+i\eta F'+\eta^2 B)|_{s=\lambda_j^*}}{\prod_{k\neq j}(\lambda_j^*-\lambda_k^*)}
		\end{align*}
		is analytic in $|\operatorname{Im}(\eta)|<\sigma_0$ and admits the asymptotic expansion in Eq.~\eqref{M star ex}. This relies on the analyticity of the approximate eigenvalues $\lambda_j^*$ and the presence of the uniform spectral gap shown in Lemma~\ref{lemma: appro eigen}.
		
		For $j=4$, the analyticity of $\hat{\mathbb{M}}_4^*$ follows from that of $\lambda_4^*$, as well as the spectral gap $|\operatorname{Re}(\lambda_4^*-\lambda_j^*)|\geq \sigma_1^*$ established in Lemma~\ref{lemma: appro eigen}. \end{proof}

Observe that the top-left $3\times 3$-minor of the matrix $\hat{\mathbb{M}}_j^*$ has already been computed in  \cite{WangHT2021}. The other entries of $\hat{\mathbb{M}}_j^*$ ($j=1,2,3,4$) are derived via the observation \eqref{M 34} below.

	By using Lemma \ref{lemma: appro eigen}, we derive the following expansions for $j=1,2,3$:
	\begin{align*}
		&(\lambda_j^*-\lambda_1)(\lambda_j^*-\lambda_2)(\lambda_j^*-\lambda_3)\\
		&\quad=\left[\lambda_j^*-\lambda_1^*-O(1)|\eta|^{-8}\right]\left[\lambda_j^*-\lambda_2^*-O(1)|\eta|^{-8}\right]\left[\lambda_j^*-\lambda_3^*-O(1)|\eta|^{-8}\right]\\
		&\quad =(\lambda_j^*-\lambda_1^*)(\lambda_j^*-\lambda_2^*)(\lambda_j^*-\lambda_3^*)+O(1)|\eta|^{-8}(\lambda_j^*-\lambda_1^*)(\lambda_j^*-\lambda_2^*)\\
		&\qquad +O(1)|\eta|^{-8}(\lambda_j^*-\lambda_1^*)(\lambda_j^*-\lambda_3^*)+O(1)|\eta|^{-16}(\lambda_j^*-\lambda_1^*)\\
		&\qquad +O(1)|\eta|^{-8}(\lambda_j^*-\lambda_2^*)(\lambda_j^*-\lambda_3^*)+O(1)|\eta|^{-16}(\lambda_j^*-\lambda_2^*)\\
		&\qquad +O(1)|\eta|^{-16}(\lambda_j^*-\lambda_3^*)+O(1)|\eta|^{-24}.		
	\end{align*}
	By Lemma \ref{lemma: appro eigen}, $\lambda_j^*=\lambda_j+O(1)\left|\eta\right|^{-8}$ and $\min\limits_{j, k} \inf \limits_{|\operatorname{Im}(\eta)|<\sigma_0}\left|\operatorname{Re}\left(\lambda_k^*-\lambda_j^*\right)\right|=\sigma_1^*$ for a positive number $\sigma_1^*$. These imply that the terms of order $|\eta|^{-16}$ or higher decay much faster than those of order $|\eta|^{-8}$. Thus, 
	\begin{equation*}
		(\lambda_j^*-\lambda_1)(\lambda_j^*-\lambda_2)(\lambda_j^*-\lambda_3)\simeq (\lambda_j^*-\lambda_1^*)(\lambda_j^*-\lambda_2^*)(\lambda_j^*-\lambda_3^*)+O(1)|\eta|^{-8}.
	\end{equation*}
	Moreover, direct computation yields that, for $j=1,2,3$,
	\begin{equation*}
		\begin{aligned}
			\frac{A_{43}(\lambda_j^*)}{\prod\limits_{k\neq j}\left(\lambda_j^*-\lambda_k^*\right)}&=q\frac{\operatorname{det}(\lambda_j^* I+i\eta F^{\prime}(\bar{U})+\eta^2 B(\bar{U}))}{\left(\lambda_j^*-\lambda_4\right)\prod\limits_{k\neq j}\left(\lambda_j^*-\lambda_k^*\right)}-q\frac{A_{33}(\lambda_j^*)}{\prod\limits_{k\neq j}\left(\lambda_j^*-\lambda_k^*\right)}\\
			&=O(1)|\eta|^{-8}-q\frac{A_{33}(\lambda_j^*)}{\prod\limits_{k\neq j}\left(\lambda_j^*-\lambda_k^*\right)}
		\end{aligned}
	\end{equation*}
	and 
	\begin{equation*}
		\frac{A_{44}(\lambda_j^*)}{\prod\limits_{k\neq j}\left(\lambda_j^*-\lambda_k^*\right)}=\frac{\operatorname{det}(\lambda_j^* I+i\eta F^{\prime}(\bar{U})+\eta^2 B(\bar{U}))}{\left(\lambda_j^*-\lambda_4\right)\prod\limits_{k\neq j}\left(\lambda_j^*-\lambda_k\right)}=O(1)|\eta|^{-8}.
	\end{equation*}
    This term vanishes as  $\eta \rightarrow\infty$,  which result in two approximations:
	\begin{equation*}
		\frac{A_{43}(\lambda_j^*)}{\prod\limits_{k\neq j}(\lambda_j^*-\lambda_k^*)}\simeq -q\frac{A_{33}(\lambda_j^*)}{\prod\limits_{k\neq j}\left(\lambda_j^*-\lambda_k^*\right)},\qquad \frac{A_{44}(\lambda_j^*)}{\prod\limits_{k\neq j}\left(\lambda_j^*-\lambda_k^*\right)}\simeq 0\qquad (j=1,2,3).
	\end{equation*}
	As a consequence,
	\begin{equation}
		\label{M 34}
		\left(\hat{\mathbb{M}}_j^*\right)_{k4}=-q\left(\hat{\mathbb{M}}_j^*\right)_{k3}, \qquad 	\left(\hat{\mathbb{M}}_j^*\right)_{44}=0 \qquad (j,k \in \{1,2,3\}).
	\end{equation}
	Similar arguments also lead to
	\begin{equation}
		\hat{\mathbb{M}}_4^*=\hat{\mathbb{M}}_4=\left(\begin{array}{cccc}
			0 & 0 & 0 & 0 \\
			0 & 0 & 0 & 0 \\ 
			0 & 0 & 0 & q \\ 
			0 & 0 & 0 & 1
		\end{array}\right).
	\end{equation}
	For simplicity, we take $M_4^{*,k}=0$ for $k=1,2,3,4$.

    Now, substituting the previous formulae for $\lambda_j^*$ and $\hat{\mathbb{M}}_j^*$ into the Green's function \eqref{G}, we obtain the expression for the singular part of Green's function as follows: 	\begin{equation}\label{G star}
		\hat{\mathbb{G}}^*(\eta,t;\bar{U})=\sum_{j=1}^4 \hat{\mathbb{G}}^{*,j}(\eta,t;\bar{U}),\quad  \hat{\mathbb{G}}^{*,j}= e^{\lambda_j^* t}\hat{\mathbb{M}}_j^*,\quad j=1,2,3,4.
	\end{equation}
	Now, we give the point-wise estimates for $\hat{\mathbb{G}}^{*,4}$. According to Lemma \ref{lemma: appro eigen}, we know that the real part of $\lambda_4^*$ has a negative upper bound. For $t\geq 1$, we have
	\begin{align*}
		e^{\lambda_4^* t+\sigma_0^* t}&=e^{\left(-\eta^2\frac{D}{v^2}+\frac{D}{ 2v^2}(\eta^2+1)\right)t+\sigma_0^* t}\, e^{-\frac{D}{ 2v^2}(\eta^2+1)t}\\
		&=O(1)e^{-\frac{D}{ 2v^2}(\eta^2+1)t}=O(1)\frac{e^{-t/C}}{(1+\eta^2)^m}, \quad t\geq 1, \quad m\in \mathbb{Z}^{+},
	\end{align*}
	which is analytic in the region $\left\{\eta:|\operatorname{Im}(\eta)|<\sigma_0\right\}$. Therefore, we apply the Lemma \ref{lemma f} to obtain that
	\begin{equation*}
		e^{\lambda_4^* t}=e^{-\sigma_0^* t-\sigma_0|x|}, \quad t\geq 1.
	\end{equation*}
	On the other hand, for $0<t<1$, it is clear that $e^{\lambda_4^* t}=e^{-\eta^2\frac{D}{v^2} t}$. Similar to the argument in \cite[Lemma 3.7]{WangHT2021}, we introduce the symbol $\Lambda_4^*(x,t)=\mathcal{F}^{-1}\left[e^{\lambda_4^* t}\right]$. Then 
	\begin{align*}
		\Lambda_4^*(x,t)=\begin{cases}
			O(1)e^{-\sigma_0^* t-\sigma_0|x|}, \quad & t\geq 1,\\
			\frac{1}{\sqrt{4\pi \alpha_4^* t}}e^{-\frac{x^2}{4\alpha_4^* t}},\quad & 0<t<1.
		\end{cases}
	\end{align*}
	Hence, we have 
	\begin{align*}
		\mathbb{G}^{*,4}(x,t)=&\Lambda_4^*(x,t) \star_x \mathbb{M}_4^*(x)=\Lambda_4^*(x,t) \star_x \left(\delta(x) \mathbb{M}_4^{*,0}\right)\\
		=&\begin{cases}
			O(1)e^{-\sigma_0^* t-\sigma_0|x|}, \quad & t\geq 1,\\
			\left[\frac{1}{\sqrt{4\pi \alpha_4^* t}}e^{-\frac{x^2}{4\alpha_4^* t}}\right]M_4^{*,0},\quad & 0<t<1.
		\end{cases}
	\end{align*}
The symbol $\star_x$ denotes the convolution in the $x$-variable.

	Finally, we combine the bounds for $\mathbb{G}^{*,j}(x,t) (j=1,2,3)$ given in \cite[Theorem 3.1]{WangHT2021} and the above estimates for $\mathbb{G}^{*,4}(x,t)$ to arrive at the following estimates of $\mathbb{G}^*(x,t;\bar{U})$.
	\begin{theorem}
		\label{thm: es for singular G}
		The singular part of the Green's function defined in Eq.~\eqref{G star} satisfies the following bounds.
        
        For $\mathbb{G}^{*,1}$, whenever $t>0$, 
		\begin{equation*}
			\left\{\begin{array}{l}
				\mathbb{G}^{*, 1}(x, t)=e^{\frac{\nu p_\nu}{\mu} t} \delta(x) M_1^{*, 0}+O(1) e^{-\sigma_0^* t-\sigma_0|x|} \\
				\partial_x \mathbb{G}^{*, 1}(x, t)=e^{\frac{\nu p_\nu}{\mu} t}\left(\frac{d}{d x} \delta(x) M_1^{*, 0}-\delta(x) M_1^{*, 1}\right)+O(1) e^{-\sigma_0^* t-\sigma_0|x|} \\
				\partial_x^2 \mathbb{G}^{*, 1}(x, t)=e^{\frac{\nu p_\nu}{\mu} t}\left(\frac{d^2}{d x^2} \delta(x) M_1^{*, 0}-\frac{d}{d x} \delta(x) M_1^{*, 1}+\delta(x)\left(M_1^{*, 2}-A_{1,1} t M_1^{*, 0}\right)\right)+O(1) e^{-\sigma_0^* t-\sigma_0|x|}.
			\end{array}\right.
		\end{equation*}

		For $\mathbb{G}^{*,2}(x,t)$, $\mathbb{G}^{*,3}(x,t)$, and $\mathbb{G}^{*,4}(x,t)$, if $t\geq 1$ then
		\begin{equation*}
			\partial_x^k \mathbb{G}^{*, j}(x, t)=O(1) e^{-\sigma_0^* t-\sigma_0|x|}, \quad k=0,1,2,3; \quad j=2,3,4.
		\end{equation*}
		If $0<t<1$, then 
		\begin{equation*}
			\left\{\begin{aligned}
				\mathbb{G}^{*, j}(x, t)= & O(1) e^{-\sigma_0^* t-\sigma_0|x|}+\frac{e^{\beta_j^* t}}{\sqrt{4 \pi \alpha_j^* t}} e^{-\frac{x^2}{4 \alpha_j^* t}} M_j^{*, 0}, \\
				\partial_x \mathbb{G}^{*, j}(x, t)= & O(1) e^{-\sigma_0^* t-\sigma_0|x|}+\partial_x\left[\frac{e^{\beta_j^* t}}{\sqrt{4 \pi \alpha_j^* t}} e^{-\frac{x^2}{4 \alpha_j^* t}}\right] M_j^{*, 0}-\frac{e^{\beta_j^* t}}{\sqrt{4 \pi \alpha_j^* t}} e^{-\frac{x^2}{4 \alpha_j^* t}} M_j^{*, 1}, \\
				\partial_x^2 \mathbb{G}^{*, j}(x, t)= & O(1) e^{-\sigma_0^* t-\sigma_0|x|}+\partial_x^2\left[\frac{e^{\beta_j^* t}}{\sqrt{4 \pi \alpha_j^* t}} e^{-\frac{x^2}{4 \alpha_j^* t}}\right] M_j^{*, 0}-\partial_x\left[\frac{e^{\beta_j^* t}}{\sqrt{4 \pi \alpha_j^* t}} e^{-\frac{x^2}{4 \alpha_j^* t}}\right] M_j^{*, 1}, \\
				& +\frac{e^{\beta_j^* t}}{\sqrt{4 \pi \alpha_j^* t}} e^{-\frac{x^2}{4 \alpha_j^* t}} M_j^{*, 2}, \\
				\partial_x^3 \mathbb{G}^{*, j}(x, t)= & O(1) e^{-\sigma_0^* t-\sigma_0|x|}+\partial_x^3\left[\frac{e^{\beta_j^* t}}{\sqrt{4 \pi \alpha_j^* t}} e^{-\frac{x^2}{4 \alpha_j^* t}}\right] M_j^{*,0}-\partial_x^2\left[\frac{e^{\beta_j^* t}}{\sqrt{4 \pi \alpha_j^* t}} e^{-\frac{x^2}{4 \alpha_j^* t}}\right] M_j^{*, 1} \\
				& +\partial_x\left[\frac{e^{\beta_j^* t}}{\sqrt{4 \pi \alpha_j^* t}} e^{-\frac{x^2}{4 \alpha_j^* t}}\right] M_j^{*, 2}+\frac{e^{\beta_j^* t}}{\sqrt{4 \pi \alpha_j^* t}} e^{-\frac{x^2}{4 \alpha_j^* t}} M_j^{*, 3},\\
				\partial_x^k \mathbb{G}^{*,4}(x,t)=& \partial_x^k\left[\frac{1}{\sqrt{4\pi\alpha_4^* t}} e^{-\frac{x^2}{4 \alpha_4^* t}}\right] M_4^{*,0},\quad k=0,1,2,3.
			\end{aligned}\right.
		\end{equation*}
	\end{theorem}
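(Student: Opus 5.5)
The plan is to treat the modes $j=1,2,3$ and $j=4$ separately. For $j=1,2,3$ I would reduce to the corresponding result of Wang--Yu--Zhang \cite[Theorem 3.1]{WangHT2021}. The top-left $3\times 3$ block of $\hat{\mathbb{M}}_j^*$ coincides with the matrix treated there. The fourth column is obtained from the third through \eqref{M 34}, namely $(\hat{\mathbb{M}}_j^*)_{k4}=-q(\hat{\mathbb{M}}_j^*)_{k3}$ together with $(\hat{\mathbb{M}}_j^*)_{44}=0$, and the fourth row vanishes. Hence every entry of $\hat{\mathbb{G}}^{*,j}=e^{\lambda_j^*t}\hat{\mathbb{M}}_j^*$ is a fixed linear combination of entries already estimated in \cite{WangHT2021}. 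The stated formulae for $\mathbb{G}^{*,j}$, $j=1,2,3$, and their $x$-derivatives then transfer verbatim, with the coefficient matrices $M_j^{*,k}$ now read as $4\times4$ matrices from Lemma~\ref{lemma: M star}.

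For orientation, the mechanism in \cite{WangHT2021} is as follows. For $j=1$, one writes $e^{\lambda_1^*t}=e^{\beta_1^*t}\bigl(1+A_{1,1}t(1+\eta^2)^{-1}+\cdots\bigr)$ and multiplies by the expansion \eqref{M star ex} of $\hat{\mathbb{M}}_1^*$. The terms of order $\eta^{0},\eta^{-1},\eta^{-2}$ produce the $\delta$, $\delta'$ and $\delta''$ parts. The remainder is $O(|\eta|^{-4})$ uniformly in the strip $|\operatorname{Im}\eta|<\sigma_0$ by Lemma~\ref{lemma: appro eigen}, and it also carries the factor $e^{-\sigma_0^*t}$. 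Lemma~\ref{lemma f} then converts this remainder into the bound $O(1)e^{-\sigma_0^*t-\sigma_0|x|}$. Each $x$-derivative multiplies the symbol by $i\eta$, and this costs one order in the expansion, which is why terms like $-\delta M_1^{*,1}$ and $A_{1,1}tM_1^{*,0}$ appear.

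For $j=2,3$ with $t\ge1$, I would write
\[
e^{\lambda_j^*t}=e^{(\lambda_j^*+\alpha_j^*(\eta^2+1)/2)t}\,e^{-\alpha_j^*(\eta^2+1)t/2}.
\]
The second factor beats any power $(1+\eta^2)^{-m}$ with an $e^{-t/C}$ gain, so Lemma~\ref{lemma f} gives exponential decay in both $x$ and $t$. For $0<t<1$, I would factor out the Gaussian $e^{-\alpha_j^*\eta^2t+\beta_j^*t}$. The leftover symbol is $1+O(t)(1+\eta^2)^{-1}$ times the expansion of $\hat{\mathbb{M}}_j^*$. Inverting term by term yields the heat-kernel pieces with $M_j^{*,0},\dots,M_j^{*,3}$, and the tail is again handled by Lemma~\ref{lemma f}.

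For $j=4$, the argument is explicit. Here $\hat{\mathbb{M}}_4^*$ is the constant matrix with entries $q$ in position $(3,4)$ and $1$ in position $(4,4)$, so $\mathbb{G}^{*,4}=\Lambda_4^*\,M_4^{*,0}$, where $\Lambda_4^*=\mathcal{F}^{-1}[e^{-\alpha_4^*\eta^2t}]$. For $0<t<1$, this is exactly the Gaussian, and differentiating $k$ times in $x$ gives the stated identity; no remainder arises because $M_4^{*,k}=0$ for $k\ge1$. For $t\ge1$, I would use the splitting already shown before the theorem, $e^{\lambda_4^*t+\sigma_0^*t}=O(1)e^{-t/C}(1+\eta^2)^{-m}$, which holds for every $m$ by Lemma~\ref{lemma: appro eigen}(3). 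Multiplying by $(i\eta)^k$ with $k\le3$ and choosing $m\ge 3$ keeps the hypothesis of Lemma~\ref{lemma f} satisfied. This gives $\partial_x^k\mathbb{G}^{*,4}=O(1)e^{-\sigma_0^*t-\sigma_0|x|}$.

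The main point needing care is the consistency of the fourth column for $j=1,2,3$. I must check that the $O(|\eta|^{-8})$ errors discarded in deriving \eqref{M 34} fit within the $O(\eta^{-5})$ remainder of \eqref{M star ex}, and that they stay analytic in the strip. Analyticity follows from the uniform gap $\sigma_1^*$, which keeps $\lambda_j^*-\lambda_4^*$ away from zero, so the division in the computation of $A_{43},A_{44}$ is harmless. Once this is confirmed, the rest is bookkeeping inherited from \cite{WangHT2021}.
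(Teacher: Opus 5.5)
\textbf{Main gap: the case $j=4$, $t\ge1$.} For $j=1,2,3$ you follow the paper's route: import \cite[Theorem 3.1]{WangHT2021} for the $3\times3$ block and border it via \eqref{M 34}. The $0<t<1$ identity for $j=4$ is also correct. The step that fails is $j=4$, $t\ge1$, and it cannot be repaired, because that part of the statement is false. Since $\lambda_4^*=-\alpha_4^*\eta^2$ vanishes at $\eta=0$, you have $|e^{(\lambda_4^*+\sigma_0^*)t}|=e^{\sigma_0^*t}$ there. So the splitting $e^{\lambda_4^*t+\sigma_0^*t}=O(1)e^{-t/C}(1+\eta^2)^{-m}$, which you take from the text preceding the theorem, cannot hold. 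Likewise Lemma~\ref{lemma: appro eigen}(3) is false for $j=4$: $\operatorname{Re}\bigl(\lambda_4^*+\frac{D}{2v^2}(\eta^2+1)\bigr)=\frac{D}{2v^2}\bigl(1-(\operatorname{Re}\eta)^2+(\operatorname{Im}\eta)^2\bigr)>0$ near $\eta=0$. Its proof hides this by letting $\sigma_0^*$ depend on $\eta$ through $\frac12\frac{D}{v^2}\eta^2$. In fact $\hat{\mathbb{G}}^{*,4}=e^{-\alpha_4^*\eta^2t}\hat{\mathbb{M}}_4^*$ with $\hat{\mathbb{M}}_4^*$ constant, so for \emph{every} $t>0$
\[
\mathbb{G}^{*,4}(x,t)=\frac{1}{\sqrt{4\pi\alpha_4^*t}}\,e^{-\frac{x^2}{4\alpha_4^*t}}\,M_4^{*,0}.
\]
Its $(4,4)$ entry has spatial integral $1$ and equals $(4\pi\alpha_4^*t)^{-1/2}$ at $x=0$, which rules out any bound $O(1)e^{-\sigma_0^*t-\sigma_0|x|}$ for large $t$. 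What you can prove is this Gaussian identity for all $t>0$. It gives only diffusive bounds $|\partial_x^k\mathbb{G}^{*,4}|\le O(1)t^{-(k+1)/2}e^{-x^2/(Ct)}$, which is also what the later Lemma~\ref{lemma: G es} needs, since its large-time sum runs over $j=1,\dots,4$. The alternative is to damp $\lambda_4^*$ at low frequency with a $-K_4(1+\eta^2)^{-4}$ term, as for $\lambda_2^*$ and $\lambda_3^*$, and move the $z$-diffusion wave into $\mathbb{G}^\dagger$.

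\textbf{Secondary gap: your check of the fourth column.} This check assumes a uniform gap between $\lambda_4^*$ and the other approximate eigenvalues, and that gap does not exist for $\lambda_1^*,\lambda_2^*$. On the real axis all of these are real. $\lambda_1^*(0)$ and $\lambda_2^*(0)$ are very negative (because $K_1,K_2$ are large), while $\lambda_4^*(0)=0$. Both $\lambda_1^*-\lambda_4^*$ and $\lambda_2^*-\lambda_4^*$ tend to $+\infty$ as $|\eta|\to\infty$ (the latter because $\mu v<D$), so each vanishes at some real $\eta_0$.

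Let $\mathcal{L}(s)=s+i\eta F'(\bar U)+\eta^2B(\bar U)$; its fourth column equals $-q$ times the third plus $(s+\eta^2\frac{D}{v^2})(0,0,q,1)^\top$. The identity $\operatorname{adj}(\mathcal{L})\mathcal{L}=\det(\mathcal{L})I$ then gives exactly $A_{41}=-qA_{31}$, $A_{42}=-qA_{32}$ and $A_{43}=qA_{44}-qA_{33}$. The cofactors $A_{ki}$ with $k,i\le3$ contain the factor $s+\eta^2\frac{D}{v^2}$, which cancels $\lambda_j^*-\lambda_4^*$. But $A_{44}(s)=\prod_{k\le3}(s-\lambda_k)$ does not contain it, and $A_{44}(\lambda_4(\eta_0))\neq0$ because the eigenvalues are distinct. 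Hence the terms discarded in \eqref{M 34}, namely $qA_{44}(\lambda_j^*)$ and $A_{44}(\lambda_j^*)$ divided by $\prod_{k\ne j}(\lambda_j^*-\lambda_k^*)$, have genuine poles on the real axis. They cannot be absorbed via Lemma~\ref{lemma f}.

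The reduction still works if you \emph{define} $\hat{\mathbb{M}}_j^*$, $j\le3$, as the matrix of \cite{WangHT2021} bordered by a zero fourth row and a fourth column equal to $-q$ times the third. This is a legitimate choice of singular part: the exact projections $\hat{\mathbb{M}}_j$ satisfy these relations identically, because $(0,0,q,1)^\top$ and $e_4^\top$ (with $e_4$ the fourth unit vector) are the right and left eigenvectors for $\lambda_4$.
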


The same approach can be directly adapted to bound the time derivatives.
	Note that
	\begin{align*}
		&\mathcal{F}\left[\partial_t \mathbb{G}^{*,4}(x,t)\right]=\lambda_4^* e^{\lambda_4^* t} \hat{\mathbb{M}}_4^*=-\eta^2\frac{D}{v^2} e^{\lambda_4^* t} \hat{\mathbb{M}}_4^*,\\
		&\mathcal{F}\left[\partial_{xt} \mathbb{G}^{*,4}(x,t)\right]=i\eta \lambda_4^* e^{\lambda_4^* t} \hat{\mathbb{M}}_4^*= -i\eta^3\frac{D}{v^2} e^{\lambda_4^* t} \hat{\mathbb{M}}_4^*.
	\end{align*}
	Taking the inverse Fourier transform, we deduce that 
	\begin{align*}
		\partial_t \mathbb{G}^{*,4}(x,t)=& \frac{D}{v^2} \partial_x^2 \left[\frac{1}{\sqrt{4\pi \alpha_4^*t}}e^{-\frac{x^2}{4\alpha_4^* t}}\right] \star_x \left(\delta(x)\mathbb{M}_4^{*,0}\right)\\
		=& \alpha_4^*\partial_x^2 \left[\frac{1}{\sqrt{4\pi \alpha_4^*t}}e^{-\frac{x^2}{4\alpha_4^* t}}\right]\mathbb{M}_4^{*,0},\\
		\partial_{xt} \mathbb{G}^{*,4}(x,t)=& \frac{D}{v^2} \partial_x^3 \left[\frac{1}{\sqrt{4\pi \alpha_4^*t}}e^{-\frac{x^2}{4\alpha_4^* t}}\right] \star_x \left(\delta(x)\mathbb{M}_4^{*,0}\right)\\
		=& \alpha_4^*\partial_x^3 \left[\frac{1}{\sqrt{4\pi \alpha_4^*t}}e^{-\frac{x^2}{4\alpha_4^* t}}\right]\mathbb{M}_4^{*,0}
	\end{align*}
    for $0<t<1$. Together with \cite[Theorem 3.2]{WangHT2021}, it yields the following.
	\begin{theorem}
	The time-derivatives of the singular part of  Green's function satisfy that
		\begin{align*}
				&\partial_t \mathbb{G}^{*, 1}(x, t)=\frac{v p_v}{\mu} e^{\frac{v p_v}{\mu} t} \delta(x) M_1^{*, 0}+O(1) e^{-\sigma_0^* t-\sigma_0|x|}, \\
				&\partial_{x t} \mathbb{G}^{*, 1}(x, t)=\frac{v p_v}{\mu} e^{\frac{v p_v}{\mu} t} \frac{d}{d x} \delta(x) M_1^{*, 0}-\frac{v p_v}{\mu} e^{\frac{v p_w}{\mu} t} \delta(x) M_1^{*, 1}+O(1) e^{-\sigma_0^* t-\sigma_0|x|}, \\
				&\partial_t \mathbb{G}^{*, j}(x, t)=O(1) e^{-\sigma_0^* t-\sigma_0|x|}+\begin{cases}
					\alpha_j^* \partial_x^2\left[\frac{e^{\beta_j^* t}}{\sqrt{4 \pi \alpha_j^* t}} e^{-\frac{x^2}{4 \alpha_j^* t}}\right] M_j^{*, 0}-\alpha_j^* \partial_x\left[\frac{e^{\beta_j^* t}}{\sqrt{4 \pi \alpha_j^* t}} e^{-\frac{x^2}{4 \alpha_j^* t}}\right] M_j^{*, 1} \\
					+\frac{e^{\beta_j^* t}}{\sqrt{4 \pi \alpha_j^* t}} e^{-\frac{x^2}{4 \alpha_j^* t}}\left(\alpha_j^* M_j^{*, 2}+\beta_j^* M_j^{*, 0}\right), \quad 0<t<1,\\
					0, \quad t\geq 1.
				\end{cases}\\
				&\partial_{x t} \mathbb{G}^{*, j}(x, t)=O(1) e^{-\sigma_0^* t-\sigma_0|x|}+ \begin{cases}\alpha_j^* \partial_x^3\left[\frac{e^{\beta_j^* t}}{\sqrt{4 \pi \alpha_j^* t}} e^{-\frac{x^2}{4 \alpha_j^* t}}\right] M_j^{*, 0}-\alpha_j^* \partial_x^2\left[\frac{e^{\beta_j^* t}}{\sqrt{4 \pi \alpha_j^* t}} e^{-\frac{x^2}{4 \alpha_j^* t}}\right] M_j^{*, 1} \\
					+\partial_x\left[\frac{e^{\beta_j^* t}}{\sqrt{4 \pi \alpha_j^* t}} e^{-\frac{x^2}{4 \alpha_j^* t}}\right]\left(\alpha_j^* M_j^{*, 2}+\beta_j^* M_j^{*, 0}\right) \\
					+\frac{e^{\beta_j^* t}}{\sqrt{4 \pi \alpha_j^* t}} e^{-\frac{x^2}{4 \alpha_j^* t}}\left(\alpha_j^* M_j^{*, 3}-\beta_j^* M_j^{*, 1}\right), \quad 0<t<1,\\
					0,\quad t\geq 1.\end{cases}
				\\
				&\partial_t \mathbb{G}^{*,4}(x,t)=\begin{cases}
					\alpha_4^* \partial_x^2\left[\frac{1}{\sqrt{4 \pi \alpha_4^* t}} e^{-\frac{x^2}{4 \alpha_4^* t}}\right] M_4^{*, 0}, \quad 0<t<1,\\
					0, \quad t\geq 1.
				\end{cases}\\
				&\partial_{x t} \mathbb{G}^{*,4}(x, t)=\begin{cases}\alpha_4^* \partial_x^3\left[\frac{1}{\sqrt{4 \pi \alpha_4^* t}} e^{-\frac{x^2}{4 \alpha_4^* t}}\right] M_4^{*, 0}, \quad 0<t<1,\\
					0,\quad t\geq 1.\end{cases}
		\end{align*}
        In the above, $j \in \{2,3\}.$
	\end{theorem}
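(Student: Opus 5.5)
The statement is the time-derivative analogue of Theorem~\ref{thm: es for singular G}, so I will obtain it on the Fourier side by multiplying each mode of Eq.~\eqref{G star} by its approximate eigenvalue and reusing the spatial expansions. Since $\hat{\mathbb{G}}^{*,j}=e^{\lambda_j^*t}\hat{\mathbb{M}}_j^*$ with $\hat{\mathbb{M}}_j^*$ independent of $t$, we have
\[
\partial_t\hat{\mathbb{G}}^{*,j}=\lambda_j^*\,e^{\lambda_j^*t}\hat{\mathbb{M}}_j^*,\qquad \widehat{\partial_{xt}\mathbb{G}^{*,j}}=i\eta\,\lambda_j^*\,e^{\lambda_j^*t}\hat{\mathbb{M}}_j^*.
\]
For $j=1,2,3$ these are exactly the expressions treated in \cite[Theorem~3.2]{WangHT2021}. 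The only change here is that the fourth row and column of $\hat{\mathbb{M}}_j^*$ are now present. By \eqref{M 34}, those entries are $-q$ times the third-column entries or vanish, so they inherit the same expansion \eqref{M star ex} and the same bounds. Hence the formulas for $\partial_t\mathbb{G}^{*,j}$ and $\partial_{xt}\mathbb{G}^{*,j}$ with $j\in\{1,2,3\}$ follow verbatim from that reference, applied entrywise to the $4\times4$ matrices.

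\textbf{Modes $j=1,2,3$.} I will only sketch the mechanism for completeness. For $j=1$, write $\lambda_1^*=\beta_1^*+O((1+\eta^2)^{-1})$ with $\beta_1^*=vp_v/\mu$. Then $\lambda_1^*e^{\lambda_1^*t}\hat{\mathbb{M}}_1^*$ splits into $\beta_1^*e^{\beta_1^*t}M_1^{*,0}$, whose inverse transform is the $\delta$ term, plus a remainder. The remainder is analytic in $|\operatorname{Im}\eta|<\sigma_0$ and decays like $(1+|\eta|)^{-2}$. Combined with the factor $e^{-\sigma_0^*t}$ from Lemma~\ref{lemma: appro eigen}(3), Lemma~\ref{lemma f} turns it into $O(1)e^{-\sigma_0^*t-\sigma_0|x|}$. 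For $\partial_{xt}$, the extra $i\eta$ brings in the $\frac{d}{dx}\delta\,M_1^{*,0}$ and $-\delta\,M_1^{*,1}$ terms through the $i\eta^{-1}M_1^{*,1}$ term of \eqref{M star ex}.

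For $j=2,3$ and $0<t<1$, write $\lambda_j^*=-\alpha_j^*\eta^2+\beta_j^*+O((1+\eta^2)^{-1})$ and expand $\lambda_j^*\hat{\mathbb{M}}_j^*$ in powers of $\eta^{-1}$. The terms $-\alpha_j^*\eta^2M_j^{*,0}$, $-\alpha_j^*\eta\, iM_j^{*,1}$ and $(\alpha_j^*M_j^{*,2}+\beta_j^*M_j^{*,0})$ become, after multiplication by $e^{(-\alpha_j^*\eta^2+\beta_j^*)t}$ and inversion, the listed derivatives of the Gaussian. The correction $e^{\lambda_j^*t}-e^{(-\alpha_j^*\eta^2+\beta_j^*)t}$, together with the lower-order remainder, is handled by Lemma~\ref{lemma f} as in Theorem~\ref{thm: es for singular G}. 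For $t\ge1$, the bound $\operatorname{Re}(\lambda_j^*+\tfrac{\alpha_j^*}{2}(\eta^2+1))<-\sigma_0^*$ from Lemma~\ref{lemma: appro eigen} lets us absorb any polynomial factor $\lambda_j^*$ or $i\eta\lambda_j^*$, so everything is $O(1)e^{-\sigma_0^*t-\sigma_0|x|}$. This is why the explicit part is recorded as $0$ for $t\ge1$.

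\textbf{Mode $j=4$.} Here $\lambda_4^*=-\alpha_4^*\eta^2$ exactly and $\hat{\mathbb{M}}_4^*=M_4^{*,0}$ is constant. Therefore $\partial_t\hat{\mathbb{G}}^{*,4}=-\alpha_4^*\eta^2e^{-\alpha_4^*\eta^2t}M_4^{*,0}$, which is precisely the Fourier transform of $\alpha_4^*\partial_x^2$ applied to the heat kernel $(4\pi\alpha_4^*t)^{-1/2}e^{-x^2/(4\alpha_4^*t)}$. This gives the formula for $0<t<1$, and the extra $i\eta$ gives the $\partial_x^3$ formula for $\partial_{xt}\mathbb{G}^{*,4}$. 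For $t\ge1$ I will argue exactly as for $\mathbb{G}^{*,4}$ before Theorem~\ref{thm: es for singular G}. Split off $e^{-\frac{D}{2v^2}(\eta^2+1)t}$, which is $O(1)(1+\eta^2)^{-m}e^{-t/C}$ for every $m$ and so absorbs the factors $\eta^2$ and $\eta^3$. Lemma~\ref{lemma f} then shows both time derivatives are $O(1)e^{-\sigma_0^*t-\sigma_0|x|}$, consistent with the explicit part being $0$ for $t\ge1$.

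\textbf{Main obstacle.} The substantive point is checking that the added fourth row and column do not spoil the expansion of $\lambda_j^*\hat{\mathbb{M}}_j^*$ for $j\le3$, that is, that the $O(|\eta|^{-8})$ errors in \eqref{M 34} remain harmless after multiplying by $\lambda_j^*=O(\eta^2)$ or $i\eta\lambda_j^*=O(\eta^3)$. After this multiplication the error is $O(|\eta|^{-5})$. That still satisfies the decay hypothesis of Lemma~\ref{lemma f} with $m\ge1$, so it contributes only to the $O(1)e^{-\sigma_0^*t-\sigma_0|x|}$ remainder. I expect this to go through without difficulty.
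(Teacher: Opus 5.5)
For $j\in\{1,2,3\}$, and for $j=4$ with $0<t<1$, your route is the paper's: multiply each mode of \eqref{G star} by $\lambda_j^*$ (resp.\ $i\eta\lambda_j^*$), quote \cite[Theorem~3.2]{WangHT2021} for $j\le3$, and invert $-\alpha_4^*\eta^2e^{-\alpha_4^*\eta^2t}M_4^{*,0}$ directly.

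\textbf{The $j=4$, $t\ge1$ step fails, and the claim is false.} After you split off $e^{-\frac{D}{2v^2}(\eta^2+1)t}$, the leftover factor is $e^{(\lambda_4^*+\frac{D}{2v^2}(\eta^2+1))t}=e^{\frac{D}{2v^2}(1-\eta^2)t}$. Near $\eta=0$ this grows like $e^{\frac{D}{2v^2}t}$. The bound $\operatorname{Re}(\lambda_4^*+\frac{D}{2v^2}(\eta^2+1))<-\sigma_0^*$ asserted in Lemma~\ref{lemma: appro eigen}(3) is false: at $\eta=0$ the left side equals $\frac{D}{2v^2}>0$. The reason is that $\lambda_4^*=-\alpha_4^*\eta^2$ vanishes at $\eta=0$ and, unlike $\lambda_2^*,\lambda_3^*$, has no constant term or $-K/(1+\eta^2)^4$ correction pushing it into the left half-plane. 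No repair is possible. Your identity $\partial_t\hat{\mathbb G}^{*,4}=-\alpha_4^*\eta^2e^{-\alpha_4^*\eta^2t}M_4^{*,0}$ is exact for every $t>0$, so $\partial_t\mathbb G^{*,4}=\alpha_4^*\partial_x^2\big[(4\pi\alpha_4^*t)^{-1/2}e^{-x^2/(4\alpha_4^*t)}\big]M_4^{*,0}$ for all $t>0$. Its $(4,4)$ entry at $x=0$ is $-\frac{1}{2t\sqrt{4\pi\alpha_4^*t}}$, which is neither $0$ nor $O(1)e^{-\sigma_0^*t}$; the same holds for $\partial_{xt}$. The paper's proof only derives the $0<t<1$ formula. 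It inherits the $t\ge1$ line from the same faulty claim $\Lambda_4^*=O(1)e^{-\sigma_0^*t-\sigma_0|x|}$ made before Theorem~\ref{thm: es for singular G}. What is true, and what you should state, is the heat-kernel formula for all $t>0$, with only polynomial decay ($t^{-3/2}$ and $t^{-2}$ in $L^\infty$).

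\textbf{Your \emph{main obstacle} is misdiagnosed.} The defect of \eqref{M 34} for $\hat{\mathbb M}_j^*$ is not only a high-frequency $O(|\eta|^{-8})$ effect. The first line of Lemma~\ref{lemma: appro eigen}(3) forces $\lambda_1^*(0)<0=\lambda_4^*(0)$, while $\lambda_1^*-\lambda_4^*\to+\infty$ as $|\eta|\to\infty$. Both functions are real on the real axis, so they coincide at some real $\eta_0$. Consequently the gap $\sigma_1^*$ in Lemma~\ref{lemma: appro eigen}(3) fails for the pair $(1,4)$. The $4\times4$ adjugate formula for $\hat{\mathbb M}_1^*$ has denominator containing $\lambda_1^*-\lambda_4^*$, so it generically has a pole at $\eta_0$, located exactly in its fourth row and in column four plus $q$ times column three.

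\textbf{A clean way around both problems.} Note that $w=(0,0,q,1)^\top$ and $e_4^\top$ are exact right and left eigenvectors of $-i\eta F'(\bar U)-\eta^2B(\bar U)$ for $\lambda_4$. Hence $\hat{\mathbb G}$ has:
\begin{itemize}
\item fourth row $(0,0,0,e^{\lambda_4t})$;
\item fourth column $-q\,\hat{\mathbb G}e_3+e^{\lambda_4t}w$;
\item upper-left $3\times3$ block equal to the Navier--Stokes--Fourier Green's function.
\end{itemize}
If you define $\hat{\mathbb M}_j^*$ ($j\le3$) from that block and take $\hat{\mathbb M}_4^*$ to be the exact projector $we_4^\top$, then \eqref{M 34} holds exactly. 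The $j\le3$ formulas become literally those of \cite[Theorem~3.2]{WangHT2021}, and the $j=4$ part is the heat kernel for every $t>0$.
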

	\subsection{Low-Frequency Analysis ($\eta\rightarrow 0$), Regular Part}
	This subsection is devoted to the pointwise estimates for the regular part of Green's function, $\mathbb{G}^{\dagger}=\mathbb{G}-\mathbb{G}^*$.

	In view of Eqs.~\eqref{G}, \eqref{G star} and the fact that $\lambda_4=\lambda_4^*$, the Fourier transform of the regular part $\mathbb{G}^{\dagger}$ can be expressed as
	\begin{equation}\label{G dagger}
		\hat{\mathbb{G}}^{\dagger}(\eta,t;\bar{U})=\sum_{j=1}^4 e^{\lambda_j t}\hat{\mathbb{M}}_j-\sum_{j=1}^4 e^{\lambda_j^* t}\hat{\mathbb{M}}_j^*=\sum_{j=1}^3 e^{\lambda_j t}\hat{\mathbb{M}}_j-\sum_{j=1}^3 e^{\lambda_j^* t}\hat{\mathbb{M}}_j^*.
	\end{equation}
	As $\lambda_4$ does not contribute to the regular part, Eq.~\eqref{G dagger} is consistent with Eq.~(3.35) in \cite{WangHT2021}. We may thus directly adapt the arguments therein.

    The estimates we need for $\hat{\mathbb{G}}^{\dagger}$ in the low frequency regime are summarised below. We refer the reader to \cite{WangHT2021} for the proofs.

	\begin{enumerate}
		\item \textbf{Weighted Energy Estimates:} Weighted energy estimates are used to characterise $\mathbb{G}^{\dagger}$ in two key regimes: the initial layer $0<t<1$ and the region outside the wave cone $t\geq 1$ and $|x|>2Mt$, where $M$ is a constant related to the Mach number.

		\begin{itemize}
			\item \textbf{Estimates for $0<t<1$:} For the initial layer,  multiplying the equation of $\mathbb{G}^{\dagger}$ by a positive definite matrix and applying integration by parts, one obtains (see \cite[Lemma 3.9]{WangHT2021}) that
			\begin{lemma}
				Under the assumption 
				\begin{equation}
					\label{state}
					\bar{U}=(\bar{v}, \bar{u}, \bar{E}, \bar{z}), \quad\|\bar{U}-(1,0, c_v, 0)\|<\varepsilon, \quad 0<\varepsilon \ll 1,
				\end{equation}
				there exists a positive constant $\sigma$ such that  for $0<t<1$, one has that	\begin{equation*}
					\left\|e^{\sigma|x|} \mathbb{G}^{\dagger}(x, t)\right\|_{H^4(\mathbb{R})} \leq O(1) t, \quad \sum_{k=0}^3\left|\partial_x^k \mathbb{G}^{\dagger}(x, t)\right| \leq O(1) t e^{-\sigma|x|}.
				\end{equation*}
			\end{lemma}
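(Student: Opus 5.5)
The plan is a weighted energy estimate for the regular part, applied directly to the equation it satisfies. First I would derive that equation. Since $\mathbb{G}$ solves \eqref{1} and $\mathbb{G}^*$ is given explicitly by \eqref{G star}, the difference $\mathbb{G}^{\dagger}=\mathbb{G}-\mathbb{G}^*$ solves
\begin{equation*}
\partial_t\mathbb{G}^{\dagger}+F'(\bar U)\partial_x\mathbb{G}^{\dagger}-B(\bar U)\partial_{xx}\mathbb{G}^{\dagger}=\mathcal{R}(x,t),\qquad \mathbb{G}^{\dagger}(x,0)=0 .
\end{equation*}
The initial datum vanishes because $\hat{\mathbb{G}}(\eta,0)=I=\sum_j\hat{\mathbb{M}}_j^*$. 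The source has Fourier transform
\begin{equation*}
\hat{\mathcal{R}}=\sum_{j=1}^3 e^{\lambda_j^*t}\bigl(\lambda_j I+i\eta F'+\eta^2B\bigr)\hat{\mathbb{M}}_j^*-\sum_{j=1}^3 e^{\lambda_j^* t}(\lambda_j-\lambda_j^*)\hat{\mathbb{M}}_j^* ,
\end{equation*}
which I would rearrange using $\lambda_j^*=\lambda_j+O(|\eta|^{-8})$ from Lemma~\ref{lemma: appro eigen}. The $j=4$ mode contributes nothing, since $\lambda_4^*=\lambda_4$ and $\hat{\mathbb{M}}_4^*=\hat{\mathbb{M}}_4$, consistent with \eqref{G dagger}.

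The key claim is that $\hat{\mathcal{R}}$ is analytic in $|\operatorname{Im}\eta|<\sigma_0$ and decays like $O(|\eta|^{-6})$ uniformly for $0<t<1$. This comes from $(\lambda_j^*I+i\eta F'+\eta^2 B)\operatorname{adj}(\cdot)|_{\lambda_j^*}=\det(\lambda_j^*I+\dots)I$, whose ratio to $\prod_{k\ne j}(\lambda_j^*-\lambda_k^*)$ is $O(|\eta|^{-8})$ times the polynomial growth of the other factors. Lemma~\ref{lemma f} then gives $\sum_{k\le 4}|\partial_x^k\mathcal{R}(x,t)|\le O(1)e^{-\sigma_0|x|}$. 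Because the equation and its Fourier symbol coincide with \cite{WangHT2021} in the upper-left $3\times3$ block, and the new fourth row and column enter only through the entries fixed by \eqref{M 34}, this step is a bookkeeping extension of the computation there.

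Next comes the energy step. The system is hyperbolic–parabolic: the $v$ component has no diffusion, while $u$, $E$, $z$ do (with $B$ upper triangular and positive diagonal entries $\mu/\bar v,\ \nu/(c_v\bar v),\ D/\bar v^2$). I would pick a symmetrizer $A_0$ (positive definite, constant) making $A_0F'$ symmetric and $A_0B$ nonnegative on the parabolic block. The coupling entry $-q\nu/(c_v v)+qD/v^2$ is handled by weighting the $z$-component with a large constant, possible since $z$ has its own diffusion and zero in $F'$'s last column apart from $p_e$ terms.

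I would then multiply the $\partial_x^k$-differentiated equation ($k\le4$) by $e^{2\sigma\langle x\rangle}A_0\partial_x^k\mathbb{G}^\dagger$ and integrate. The weight derivatives produce terms $O(\sigma)\|e^{\sigma\langle x\rangle}\partial_x^k\mathbb{G}^\dagger\|^2$ plus cross terms absorbed by dissipation for small $\sigma<\sigma_0$. Gronwall over $t\in(0,1)$ with zero initial data and source bounded in the weighted $H^4$ norm gives $\|e^{\sigma|x|}\mathbb{G}^\dagger\|_{H^4}\le O(1)\int_0^t\|e^{\sigma|x|}\mathcal{R}\|_{H^4}\,\mathrm{d}s\le O(1)t$. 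Sobolev embedding on $e^{\sigma|x|}\partial_x^k\mathbb{G}^\dagger$ ($k\le3$) then yields the pointwise bound $O(1)te^{-\sigma|x|}$. Closeness of $\bar U$ to $(1,0,c_v,0)$ in \eqref{state} ensures all constants are uniform.

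The main obstacle is controlling the non-dissipative $v$-component at the highest derivative level. The hyperbolic mode cannot be damped by $B$, so for the $H^4$ estimate of $v$ I would use its transport structure together with a Kawashima-type compensating term $\partial_x^{k-1}v\,\partial_x^k u$. Failing that, I would rely on the $\mathcal{R}$ bound directly for the $v$-row, since the $v$-equation is just $\partial_t\mathbb{G}^\dagger_{1\cdot}=\partial_x\mathbb{G}^\dagger_{2\cdot}+\mathcal{R}_{1\cdot}$ and can be integrated in time.
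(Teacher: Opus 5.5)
Your proposal is correct and follows essentially the same route as the paper, which imports this estimate from Lemma 3.9 of Wang--Yu--Zhang. There, $\mathbb{G}^{\dagger}$ solves the linearized system with zero initial data and source $-(\partial_t+F'\partial_x-B\partial_x^2)\mathbb{G}^*$; this source is analytic in the strip and hence exponentially localized, so a weighted energy estimate with a positive definite symmetrizer, integrated in time, gives the $O(1)t$ bound, and Sobolev embedding then gives the pointwise bound.

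The difficulty you flag for the $v$-row is not actually there. On the bounded interval $0<t<1$ the symmetric hyperbolic part only produces $O(\sigma)$ weight-derivative terms, so Gronwall controls $\partial_x^4$ of the nondissipative component without any Kawashima-type compensating term. Moreover, in the perturbation variables $(v,u,e,z)$ with $e=E-\bar u\,u-qz$, the linearized system splits into the Navier--Stokes--Fourier block plus a decoupled heat equation for $z$. This makes a symmetrizer with $A_0B$ symmetric nonnegative explicit, so no large weight on $z$ is needed.
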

			\item \textbf{Estimates for $t\geq 1$ and $|x|>2Mt$:} For large $t$ outside the wave cone, by choosing $M$ sufficiently larger than the sound speed and $\sigma>0$ sufficiently small, the following decay estimates are established as Lemma 3.10 in \cite{WangHT2021}:
			\begin{lemma}
				Under the assumption \eqref{state}, there exists a positive constant  $\sigma$ and $M$ such that the following estimates hold when $t\geq 1$ and $|x|>2Mt$:
				\begin{align*}
&\left\|e^{ \pm \sigma(x-M t)} \mathbb{G}^{\dagger}(x, t)\right\|_{H^4(\mathbb{R})} \leq O(1) e^{-\sigma_0^* t}, \\
&\sum_{k=0}^3\left|\partial_x^k \mathbb{G}^{\dagger}(x, t)\right| \leq O(1) e^{-\frac{\sigma}{2}|x|-\sigma_0^* t}.\end{align*}
			\end{lemma}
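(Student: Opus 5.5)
The statement to prove is the outside-the-wave-cone lemma: for $t\ge 1$ and $|x|>2Mt$, the regular part $\mathbb{G}^{\dagger}$ satisfies the weighted $H^4$ bound and the pointwise exponential decay. I would run a weighted energy argument on the equation satisfied by $\mathbb{G}^{\dagger}$, as in \cite[Lemma 3.10]{WangHT2021}, and check that the extra $z$-row and column leave the argument intact.

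\textbf{Step 1: equation for $\mathbb{G}^{\dagger}$.} Since $\mathbb{G}$ solves \eqref{1} exactly, $\mathbb{G}^{\dagger}=\mathbb{G}-\mathbb{G}^*$ satisfies
\[
\partial_t\mathbb{G}^{\dagger}+F'(\bar U)\partial_x\mathbb{G}^{\dagger}-B(\bar U)\partial_{xx}\mathbb{G}^{\dagger}=-\mathcal{R}^*,\qquad \mathcal{R}^*:=\big(\partial_t+F'(\bar U)\partial_x-B(\bar U)\partial_{xx}\big)\mathbb{G}^*.
\]
\begin{itemize}
\item Because $\lambda_4^*=\lambda_4$ and $\hat{\mathbb{M}}_4^*=\hat{\mathbb{M}}_4$, the fourth mode contributes nothing to $\mathcal{R}^*$.
\item For $j=1,2,3$, $\hat{\mathcal{R}}^*$ is a sum of terms of the form $e^{\lambda_j^*t}\cdot O(\lambda_j^*-\lambda_j)\cdot(\text{analytic})$, and these are $O(|\eta|^{-8})$ by Lemma~\ref{lemma: appro eigen}.
\item Lemma~\ref{lemma f}, combined with the gap $\operatorname{Re}\lambda_j^*<-\sigma_0^*$, then gives $\sum_{k\le 4}|\partial_x^k\mathcal{R}^*|\le O(1)e^{-\sigma_0^*t-\sigma_0|x|}$.
\end{itemize}
The initial data are controlled through the previous lemma: $\|e^{\sigma|x|}\mathbb{G}^{\dagger}(\cdot,1)\|_{H^4}\le O(1)$.

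\textbf{Step 2: symmetrizer.} I would construct a positive definite matrix $A_0$ making the linearized system symmetric hyperbolic–parabolic in the Kawashima sense.
\begin{itemize}
\item On the $(v,u,E)$ block I would take the symmetrizer from \cite{WangHT2021}.
\item The $z$ entries of $F'(\bar U)$ are multiples of $-qp_e$ times the $E$-column, and $B$ is upper block-triangular with $B_{44}=D/\bar v^2>0$. So I would change variables to $(v,u,E-qz,z)$, i.e. use the internal-energy-type variable. In these variables the $z$-equation decouples at the linear level into a pure heat equation, and the remaining block is exactly the Navier–Stokes–Fourier one.
\item I would then take $A_0=\operatorname{diag}(A_0^{NSF},\kappa)$, with $\kappa>0$ small enough to absorb the off-diagonal viscous coupling $-q\nu/(c_v\bar v)+qD/\bar v^2$ by Cauchy–Schwarz against the $z_x$ dissipation.
\end{itemize}

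\textbf{Step 3: weighted estimate.} I would multiply the equation by $e^{\pm2\sigma(x-Mt)}(\partial_x^k\mathbb{G}^{\dagger})^\top A_0$ for $k\le 4$ and integrate in $x$.
\begin{itemize}
\item The weight's time derivative produces $\mp2\sigma M\,|\cdot|^2$. The convective term produces at most $2\sigma|A_0F'|\,|\cdot|^2$, and the viscous term produces $O(\sigma^2)|\cdot|^2$. Choosing $M$ larger than the maximal characteristic speed and $\sigma$ small, these combine into a net damping term of size $c\sigma M\|e^{\pm\sigma(x-Mt)}\partial_x^k\mathbb{G}^{\dagger}\|^2$.
\item Together with the source bound from Step~1, Gronwall gives $\|e^{\pm\sigma(x-Mt)}\mathbb{G}^{\dagger}\|_{H^4}\le O(1)e^{-\sigma_0^*t}$, after possibly shrinking $\sigma_0^*$ to $\min(\sigma_0^*,c\sigma M)$.
\end{itemize}

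\textbf{Step 4: pointwise bound.} Sobolev embedding gives $\sum_{k\le3}|\partial_x^k\mathbb{G}^{\dagger}|\le O(1)e^{\mp\sigma(x-Mt)-\sigma_0^*t}$. If $|x|>2Mt$, then $|x|-Mt\ge|x|/2$, so choosing the sign according to the sign of $x$ yields $e^{-\frac{\sigma}{2}|x|-\sigma_0^*t}$.

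\textbf{Main obstacle.} I expect the hard part to be Step~2, constructing a uniform symmetrizer once the $z$-coupling is present. The $z$-column of $F'$ is not symmetric against the others, and only a triangular change of variables removes it. I would try the $E\mapsto E-qz$ substitution first. Failing that, I would fall back on estimating the decoupled scalar $z$-heat equation separately and treating its contribution as a source in the $(v,u,E)$ block.
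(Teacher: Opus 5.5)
Your method is the right one. It is in substance the weighted energy argument behind Lemma 3.10 of Wang--Yu--Zhang, and the paper itself gives nothing beyond a citation of that lemma, justified by the remark that $\lambda_4^*=\lambda_4$ and $\hat{\mathbb{M}}_4^*=\hat{\mathbb{M}}_4$, so that $\hat{\mathbb{G}}^{\dagger}$ contains only the modes $j=1,2,3$.

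Your change of variables does more than you use it for. Let $P$ be the constant matrix sending $(v,u,E,z)$ to $(v,u,E-qz,z)$. Then $PF'(\bar U)P^{-1}$ and $PB(\bar U)P^{-1}$ are block diagonal for every $\bar U$:
\begin{itemize}
\item the fourth column of $F'$ is $-q$ times the third;
\item the $(3,4)$ viscous entry becomes $\frac{q\nu}{c_v\bar v}-\frac{q\nu}{c_v\bar v}+\frac{qD}{\bar v^2}-\frac{qD}{\bar v^2}=0$.
\end{itemize}
So no off-diagonal viscous coupling remains for $\kappa$ to absorb, and any $\kappa>0$ works.

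More is true. Take the paper's normalisation of $\hat{\mathbb{M}}_j^*$ (fourth row zero, fourth column equal to $-q$ times the third). The factor $\lambda_j^*-\lambda_4$ in the adjugate then cancels against $\lambda_j^*-\lambda_4^*$ in the denominator, and one gets $P\mathbb{G}^{\dagger}P^{-1}=\mathrm{diag}(\mathbb{G}^{\dagger}_{\mathrm{NSF}},0)$. Here $\mathbb{G}^{\dagger}_{\mathrm{NSF}}$ is the three-variable regular part, up to the choice of the constants $K_j$. The lemma is therefore the three-variable lemma conjugated by a constant invertible matrix. This is the precise content behind the paper's ``consistent with (3.35)'' remark, and it turns your Steps 1--3 into a self-contained re-derivation of the cited result rather than a new estimate that has to be checked.

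The one step that fails as written is the sign bookkeeping in Step 3. For the weight $e^{-2\sigma(x-Mt)}$, the time derivative of the weight is $+2\sigma M$ times the weight, so the lower sign produces growth, not damping. This cannot be repaired, because with that weight the $H^4(\mathbb{R})$ bound is false. Inside the cone $\mathbb{G}^{\dagger}$ carries diffusion waves of size about $t^{-1/2}$ on $|x|\le\sqrt{t}$ (for instance the $\beta_1=0$ wave), while at $x=0$ the weight equals $e^{\sigma Mt}$.

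The intended weights are $e^{\sigma(\pm x-Mt)}$, and with them Step 3 goes through. Write $\Phi$ for $\partial_x^k$ of the transformed $\mathbb{G}^{\dagger}$.
\begin{itemize}
\item The time derivative of the weight contributes $-2\sigma M$ for both signs.
\item The convective term contributes $\pm 2\sigma\,\Phi^{\top}A_0F'\Phi$. Since $A_0F'$ is symmetric, this is at most $2\sigma$ times the largest characteristic speed in absolute value times $\Phi^{\top}A_0\Phi$.
\item You also need $\sigma<\sigma_0$, so that the weighted source from Step 1 is square integrable.
\end{itemize}
This is also the weight your Step 4 actually relies on when you invoke $|x|-Mt\ge|x|/2$.
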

		\end{itemize}
		\item \textbf{Long-Wave and Short-Wave Decomposition:} For large time $(t\geq 1)$ within the wave cone $(|x|<2Mt)$, the regular part is decomposed into long wave part $\mathbb{G}_L^{\dagger}$ $(|\eta|<\delta)$ and short wave part $\mathbb{G}_S^{\dagger}$ $(|\eta|\geq \delta)$.

		\begin{itemize}
			\item \textbf{Long wave estimates:} For low frequencies ($|\eta|<\delta$),  eigenvalues $\lambda_j(j=1,2,3)$ have the following asymptotic expansions around $\eta=0$:
			\begin{align*}
				&\lambda_1=i\eta \beta_1-\alpha_1 \eta^2+O(1)\eta^3,\quad \lambda_2=i\eta \beta_2-\alpha_2\eta^2+O(1)\eta^3,\\
				&\lambda_3=i\eta \beta_3 -\alpha_3\eta^2+O(1)\eta^3,
			\end{align*}
			where $\beta_j$ are wave speeds and $\alpha_j>0$ are diffusion coefficients, defined as
			\begin{align*}
				&\alpha_1=\frac{-\nu\theta_e p_v}{v(p p_e-p_v)},\quad \alpha_2=\alpha_3=\frac{\nu p \theta_e p_e+\mu pp_e-\mu p_v}{2v(pp_e-p_v)},\\
				&\beta_1=0,\quad \beta_2=-\sqrt{pp_e-p_v},\quad \beta_3=\sqrt{pp_e-p_v}.
			\end{align*}
			Using these expansions and inverse Fourier transforms, the key estimate for $\mathbb{G}_L^\dagger$ is given as Lemma 3.13 in \cite{WangHT2021}:
			\begin{lemma}
				\label{lemma: regular M}
				Suppose $t\geq 1$ and $|x|\leq 2Mt$. There exists a  sufficiently small positive constant $\sigma_0^*$ and a  large constant $C$ such that, the long wave of the regular part $\mathbb{G}_L^{\dagger}$ has the following estimates
				\begin{align*}
					&\left|\partial_x^k\mathbb{G}_L^{\dagger}(x, t)-\sum_{j=1}^{3}\partial_x^k\left(\frac{e^{-\frac{\left(x+\beta_j t\right)^2}{4 \alpha_j t}}}{2 \sqrt{\pi \alpha_j t}}\right) M_j^0-\sum_{j=1}^{3}\partial_x^{k+1}\left(\frac{e^{-\frac{\left(x+\beta_j t\right)^2}{4 \alpha_j t}}}{2 \sqrt{\pi \alpha_j t}}\right) M_j^1\right|\\
					&\leq \frac{O(1) \sum_{j=1}^{3}e^{-\frac{\left(x+\beta_j t\right)^2}{4 C t}}}{t^{\frac{k+2}{2}}} M_j^0+\sum_{j=1}^{3}\frac{O(1) e^{-\frac{\left(x+\beta_j t\right)^2}{4 C t}}}{t^{\frac{k+3}{2}}}+O(1) e^{-\sigma_0^*\left(\frac{t}{2}+\frac{|x|}{4 M}\right)},
				\end{align*}
				where $M_j^0$ and $M_j^1$ are listed in Appendix A.
			\end{lemma}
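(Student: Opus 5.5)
This estimate is obtained by diagonalising the low-frequency symbol and inverting the Fourier transform. The only point that differs from \cite[Lemma 3.13]{WangHT2021} is the $z$-mode, which drops out because $\lambda_4=\lambda_4^*$.

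First, restrict to $|\eta|<\delta$ and use the decomposition \eqref{G dagger}. The regular part contains no $\lambda_4$-mode. In the long-wave range the approximate modes $e^{\lambda_j^* t}\hat{\mathbb M}_j^*$ are exponentially small for $t\geq1$: by Lemma~\ref{lemma: appro eigen}, $\operatorname{Re}\lambda_j^*<-\sigma_0^*$. They therefore contribute only to the remainder $O(1)e^{-\sigma_0^*(t/2+|x|/(4M))}$, after shifting the contour to $\operatorname{Im}\eta=\pm\sigma_0$ and applying Lemma~\ref{lemma f}. It remains to treat $\sum_{j=1}^3 e^{\lambda_j t}\hat{\mathbb M}_j$ with $\hat{\mathbb M}_j$ taken from \eqref{adj M}.

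Second, Taylor-expand around $\eta=0$. Using the low-frequency expansions of $\lambda_j$ ($j=1,2,3$), which are distinct for small $\eta\neq0$, write $\hat{\mathbb M}_j=M_j^0+i\eta M_j^1+O(\eta^2)$. Also write
\[
e^{\lambda_j t}=e^{i\eta\beta_j t-\alpha_j\eta^2 t}\bigl(1+O(1)\eta^3 t\bigr),
\]
valid for $|\eta|^3t\lesssim1$; otherwise use $|e^{O(\eta^3)t}-1|\leq O(\eta^3 t)e^{\alpha_j\eta^2t/2}$ for $|\eta|<\delta$ small. For the $4\times4$ structure, note that the fourth row and column enter only through $(\hat{\mathbb M}_j)_{k4}=-q(\hat{\mathbb M}_j)_{k3}$ and $(\hat{\mathbb M}_j)_{44}=O(\eta^2)$ for $j\le3$. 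This follows by the same identity $A_{44}(\lambda_j)=\det(\cdot)/(\lambda_j-\lambda_4)$ and $A_{43}=q\det/(\lambda_j-\lambda_4)-qA_{33}$, with the spectral gap $\lambda_j-\lambda_4\neq0$ near $\eta=0$. Strictly, $\lambda_1-\lambda_4=O(\eta^2)$ near $0$, so this needs $\alpha_1\neq D$, which I will assume from the non-degeneracy already shown. Hence $M_j^0$ and $M_j^1$ are the $3\times3$ blocks of \cite{WangHT2021}, completed by these relations.

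Third, invert the Fourier transform. The leading terms $e^{i\eta\beta_jt-\alpha_j\eta^2t}(M_j^0+i\eta M_j^1)(i\eta)^k$ give exactly the heat kernels $\partial_x^k$ and $\partial_x^{k+1}$ of $(4\pi\alpha_jt)^{-1/2}e^{-(x+\beta_jt)^2/(4\alpha_jt)}$ times $M_j^0,M_j^1$. The truncation $|\eta|<\delta$ costs only the exponential remainder. The $O(\eta^{k+2})$ and $O(\eta^{k+3}t)$ errors are estimated pointwise by moving the contour to $\eta+i\kappa$ with $\kappa=(x+\beta_jt)/(2Ct)$. This is admissible since $|x|\le2Mt$ keeps $|\kappa|\lesssim M$; take $\delta$ accordingly, or split off $|x+\beta_jt|\gtrsim t$ into the exponential term. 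It yields the Gaussian factors $e^{-(x+\beta_jt)^2/(4Ct)}t^{-(k+2)/2}$ and $t^{-(k+3)/2}$.

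The main obstacle is this contour-shift step for the error terms. One must check analyticity of each $\lambda_j$ and $\hat{\mathbb M}_j$ in a strip near $\eta=0$, which holds because the three eigenvalues are distinct there. One must also check that the $\lambda_4$-related denominators do not degenerate. I would handle the latter by verifying $\alpha_1\ne D/v^2$ via the Prandtl/Lewis inequalities used in \eqref{Re lambda dis}, then follow \cite[Lemma 3.13]{WangHT2021} verbatim.
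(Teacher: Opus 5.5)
The gap is in the step you flag as the main obstacle, and it is exactly the ingredient the paper's proof needs. The paper notes that $\lambda_4=\lambda_4^*$, so the $z$-mode cancels in \eqref{G dagger}, and then cites \cite[Lemma 3.13]{WangHT2021}, which your second and third steps sketch. You make the reduction of the $4\times4$ matrices $\hat{\mathbb M}_j$ conditional on $\alpha_1\neq D/v^2$, to be derived from the Prandtl/Lewis inequalities. That derivation cannot succeed. At $\bar U$ one has $p=a$, $p_v=-a$, $p_e=a/c_v$, $\theta_e=1/c_v$, so $\alpha_1=\nu/(a+c_v)=\nu/c_p$. Hence $\alpha_1=D$ exactly when $\operatorname{Le}=1$. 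The inequalities behind \eqref{Re lambda dis}, $\operatorname{Le}\cdot\gamma>1$ and $\operatorname{Pr}\cdot\operatorname{Le}<1$, do not exclude this, and the paper itself notes $\operatorname{Le}\approx1$ for ideal gases. The distinctness lemma says nothing about the $\eta^2$-coefficients either. The usual analyticity argument, $\lambda_j=\eta\mu_j(\eta)$ with $\mu_j$ simple eigenvalues of $-iF'-\eta B$ at $\eta=0$, does not apply to the $4\times4$ symbol, because $0$ is a double eigenvalue of $F'$. As written, your argument covers only $\operatorname{Le}\neq1$.

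What is missing is that the $z$-mode decouples exactly, so no separation of $\lambda_1$ from $\lambda_4$ is needed. Let $P$ be the identity with an extra entry $q$ in position $(3,4)$, i.e.\ pass to the variables $(v,u,E-qz,z)$. The fourth columns of $F'$ and $B$ are $-q$ times their third columns, except for the entries $qD/v^2$ and $D/v^2$ in rows $3,4$ of $B$. Their fourth rows are $0$ and $(0,0,0,D/v^2)$ respectively. Hence $P^{-1}(-i\eta F'-\eta^2B)P=\mathrm{diag}(L^{\mathrm{NSF}}(\eta),-\eta^2D/v^2)$, where $L^{\mathrm{NSF}}$ is the upper-left $3\times3$ block. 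Using $\operatorname{adj}\,\mathrm{diag}(A,s)=\mathrm{diag}(s\operatorname{adj}A,\det A)$, one gets for $j\le3$
\[
\operatorname{adj}\bigl(\lambda_j+i\eta F'+\eta^2B\bigr)=(\lambda_j-\lambda_4)\,P\,\mathrm{diag}\bigl(\operatorname{adj}(\lambda_jI_3-L^{\mathrm{NSF}}),0\bigr)P^{-1}.
\]
Here $\det(\lambda_jI_3-L^{\mathrm{NSF}})$ is the cubic factor of \eqref{det}, which vanishes at $\lambda_j$. The factor $\lambda_j-\lambda_4$ cancels identically against the same factor in $\prod_{k\ne j}(\lambda_j-\lambda_k)$.

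Consequently $\hat{\mathbb M}_j=P\,\mathrm{diag}(\hat{\mathbb M}_j^{\mathrm{NSF}},0)P^{-1}$ for all $\eta$. The fourth row is zero, so $(\hat{\mathbb M}_j)_{44}=0$ exactly, and the fourth column is $-q$ times the third. Likewise $\sum_{j\le3}e^{\lambda_jt}\hat{\mathbb M}_j=P\,\mathrm{diag}(\hat{\mathbb G}^{\mathrm{NSF}},0)P^{-1}$. Analyticity at $\eta=0$, the expansion $M_j^0+i\eta M_j^1+O(\eta^2)$, and the whole long-wave analysis are then inherited verbatim from the $3\times3$ case, for every value of $\operatorname{Le}$. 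This is also what actually justifies the paper's citation.

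Two further remarks. First, this confirms your completion of $M_j^0,M_j^1$, with fourth column $-q$ times the third; the zero fourth columns printed in Appendix A are inconsistent with it. Second, in your first step no contour shift is needed, and Lemma~\ref{lemma f} concerns full-line transforms, not truncated integrals. On $|\eta|<\delta$ the starred modes are $O(e^{-\sigma_0^*t})$ outright, and $|x|\le2Mt$ turns this into the stated remainder.
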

			\item For high frequencies ($|\eta|\geq\delta$), applying Duhamel's principle for the inhomogeneous equation about $\mathbb{G}^{\dagger}$, the following estimates are proven as Lemma 3.14 in \cite{WangHT2021}:
			\begin{lemma}
				There exists a sufficiently small positive constant $\sigma_0^*$ such that, when $t\geq 1$ and $|x|\leq 2Mt$, the short wave parts $\mathbb{G}_S^{\dagger}$ has the following estimates
				\begin{align*}
					&\left|\hat{\mathbb{G}}_S^{\dagger}(\eta,t)\right|=O(1)\frac{e^{-\sigma_0^* t}}{(1+|\eta|)^6},\quad \left\|\mathbb{G}_S^{\dagger}(x,t)\right\|_{H^4(\mathbb{R})}\leq O(1)e^{-\sigma_0^* t},\\
					&\sum\limits_{k=0}^{3}\left|\partial_x^k \mathbb{G}_S^{\dagger}(x,t)\right|\leq O(1)e^{-\sigma_0^*t}\leq O(1)e^{-\sigma_0^*(\frac{t}{2}+\frac{|x|}{4M})}.
				\end{align*}
			\end{lemma}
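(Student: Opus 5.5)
The plan is a Duhamel argument for the short-wave part, run over the high-frequency window $|\eta|\ge\delta$ with the spectral gap of Lemma~\ref{spectral gap} as the source of decay. The first observation is that $\lambda_4=\lambda_4^*$, so by Eq.~\eqref{G dagger} the short-wave piece $\hat{\mathbb{G}}_S^{\dagger}=\big(\sum_{j=1}^3(e^{\lambda_jt}\hat{\mathbb{M}}_j-e^{\lambda_j^*t}\hat{\mathbb{M}}_j^*)\big)\mathbf 1_{|\eta|\ge\delta}$ involves only the first three modes. It is therefore essentially the object treated in \cite[Lemma 3.14]{WangHT2021}, except that the $4\times4$ adjugate entries enter, notably the fourth column, which is related to the third one by \eqref{M 34}.

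\textbf{Step 1: Fourier bound.} I would write $\hat{\mathbb{G}}^\dagger=\hat{\mathbb{G}}-\hat{\mathbb{G}}^*$ and note that $\hat{\mathbb{G}}^*$ satisfies the Fourier equation \eqref{green fourier} up to a remainder. Since $\lambda_j^*-\lambda_j=O(|\eta|^{-8})$ by Lemma~\ref{lemma: appro eigen}(2), and $\hat{\mathbb{M}}_j^*-\hat{\mathbb{M}}_j$ is of the same order, we get $\partial_t\hat{\mathbb{G}}^*=(-i\eta F'-\eta^2B)\hat{\mathbb{G}}^*+\hat R(\eta,t)$ with $|\hat R|\le O(1)(1+|\eta|)^{-6}e^{-\sigma_0^*t}$. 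Here I use Lemma~\ref{lemma: appro eigen}(3) for $\mathrm{Re}\,\lambda_j^*$, and the entries of $\hat{\mathbb{M}}_j^*$ grow at most polynomially, with the growth absorbed by the $\eta^{-8}$ accuracy. Then $\hat{\mathbb{G}}^\dagger$ solves the linear ODE with source $-\hat R$ and initial data $I-\sum_j\hat{\mathbb{M}}_j^*=O(|\eta|^{-8})$, and Duhamel gives
\begin{equation*}
\hat{\mathbb{G}}^\dagger(\eta,t)=\hat{\mathbb{G}}(\eta,t)\big(I-\textstyle\sum_j\hat{\mathbb{M}}_j^*\big)-\int_0^t\hat{\mathbb{G}}(\eta,t-s)\hat R(\eta,s)\,\mathrm ds.
\end{equation*}

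\textbf{Step 2: uniform decay of the full propagator.} For $|\eta|\ge\delta$ I need $|\hat{\mathbb{G}}(\eta,t)|\le O(1)e^{-\tilde b t}$ uniformly. On compact sets $\delta\le|\eta|\le R$, Lemma~\ref{spectral gap} together with distinctness of the eigenvalues gives this, the distinctness ensuring the $\hat{\mathbb{M}}_j$ are bounded there. For $|\eta|>R$ the expansions \eqref{asy infty} show $\mathrm{Re}\,\lambda_j\le-\sigma_0^*$ and that the $\hat{\mathbb{M}}_j$ are bounded. Inserting this into the Duhamel formula and choosing $\sigma_0^*<\tilde b$ yields $|\hat{\mathbb{G}}_S^\dagger|\le O(1)(1+|\eta|)^{-6}e^{-\sigma_0^*t}$; the factor $te^{-\min t}$ from the convolution is absorbed by slightly decreasing $\sigma_0^*$.

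\textbf{Step 3: physical-space bounds.} Since $(1+|\eta|)^{-6}$ times $|\eta|^k$ is in $L^2\cap L^1$ for $k\le4$ and $k\le3$ respectively, Plancherel gives $\|\mathbb{G}_S^\dagger\|_{H^4}\le O(1)e^{-\sigma_0^*t}$, and the inverse transform gives $\sum_{k\le3}|\partial_x^k\mathbb{G}_S^\dagger|\le O(1)e^{-\sigma_0^*t}$. Finally, on $|x|\le2Mt$ we have $\tfrac{t}{2}+\tfrac{|x|}{4M}\le t$, so $e^{-\sigma_0^*t}\le e^{-\sigma_0^*(t/2+|x|/4M)}$.

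\textbf{Main obstacle.} I expect the hardest part to be the polynomial growth of the entries of the $4\times4$ matrices $\hat{\mathbb{M}}_j^*$ at large $\eta$, coming from the $z$-column $A_{43}$. The remainder $\hat R$ still has to decay like $(1+|\eta|)^{-6}$ despite this growth. I would first try to control it using \eqref{M 34}, which shows that the fourth-column entries are $-q$ times the third-column ones up to $O(|\eta|^{-8})$, so that no new growth appears beyond what \cite{WangHT2021} already handles.
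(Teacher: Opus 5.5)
Your proposal is correct and is essentially the paper's argument: the paper takes this lemma from \cite[Lemma 3.14]{WangHT2021}, which applies Duhamel's principle to the inhomogeneous equation satisfied by $\mathbb{G}^{\dagger}$ on $|\eta|\ge\delta$ and uses the spectral gap there, just as in your Steps 1--3. The obstacle you anticipate does not actually arise, since $(\lambda I+i\eta F'+\eta^2B)\operatorname{adj}(\lambda I+i\eta F'+\eta^2B)=\prod_{k=1}^4(\lambda-\lambda_k)\,I$ gives $\hat R=\sum_{j=1}^3 e^{\lambda_j^*t}(\lambda_j^*-\lambda_j)\prod_{k\neq j}\frac{\lambda_j^*-\lambda_k}{\lambda_j^*-\lambda_k^*}\,I=O(1)(1+|\eta|)^{-8}e^{-\sigma_0^*t}$ on $|\eta|\ge\delta$ (the $j=4$ term vanishes because $\lambda_4^*=\lambda_4$), so you need no column-by-column bookkeeping and no appeal to \eqref{M 34}.
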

		\end{itemize}
	\end{enumerate}

We deduce the following result from the four lemmas above.
	\begin{theorem}
		\label{thm: es for regular G}
		Assume the condition~\eqref{state}. There exists a sufficiently large constant C and sufficiently small constants $\sigma_0$, $\sigma_0^*$ such that
\begin{equation*}
    \sum\limits_{k=0}^3\left|\partial_x^k \mathbb{G}^{\dagger}(x, t)\right| \leq O(1) t e^{-\sigma_0|x|} \qquad\text{for } 0<t<1,
\end{equation*}
as well as for $k \in \{0,1,2,3\}$ that
\begin{align*}
&\left|\partial_x^k \mathbb{G}^{\dagger}(x, t)-\sum\limits_{j=1}^3 \partial_x^k\left(\frac{e^{-\frac{\left(x+\beta_j t\right)^2}{4 \alpha_j t}}}{2 \sqrt{\pi \alpha_j t}}\right) M_j^0-\sum\limits_{j=1}^3 \partial_x^{k+1}\left(\frac{e^{-\frac{\left(x+\beta_j t\right)^2}{4 \alpha_j t}}}{2 \sqrt{\pi \alpha_j t}}\right) M_j^1\right|  \\
				&\quad\leq\sum\limits_{j=1}^3\frac{O(1) e^{-\frac{\left(x+\beta_j t\right)^2}{4Ct}}}{t^{\frac{k+2}{2}}} M_j^0+\sum\limits_{j=1}^3 \frac{O(1) e^{-\frac{\left(x+\beta_jt\right)^2}{4Ct}}}{t^{\frac{k+3}{2}}}+O(1) e^{-\sigma_0^* t-\sigma_0|x|}, \qquad \text{for } t \geq 1.    
\end{align*} 
The explicit expressions for $M_j^0$ and $M_j^1$ are presented in Appendix~\ref{appendix, new}.
	\end{theorem}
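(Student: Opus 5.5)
The estimate will be assembled regime by regime from the four lemmas just stated. The starting point is the decomposition \eqref{G dagger}. Since $\lambda_4=\lambda_4^*$ and $\hat{\mathbb{M}}_4=\hat{\mathbb{M}}_4^*$, the fourth mode cancels identically. So $\hat{\mathbb{G}}^{\dagger}$ involves only the three modes $\lambda_1,\lambda_2,\lambda_3$, which are exactly the roots of the second factor in \eqref{det}, and their projectors. This is the point I would check first. By \eqref{M 34}, the fourth row and column of $\hat{\mathbb{M}}_j$ and $\hat{\mathbb{M}}_j^*$ ($j=1,2,3$) are determined by the top-left $3\times 3$ block: column four equals $-q$ times column three, and the $(4,4)$ entry vanishes. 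Hence every pointwise bound proved for the $3\times 3$ regular part in \cite{WangHT2021} transfers entrywise to the $4\times 4$ matrix, at the cost of a factor $(1+|q|)$. The matrices $M_j^0$ and $M_j^1$ in the statement are then the $4\times4$ extensions given in Appendix~\ref{appendix, new}.

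With this reduction, I would split $\{t>0\}\times\R$ into three regions.
\begin{itemize}
\item[(i)] For $0<t<1$, the initial-layer weighted energy lemma gives $\sum_{k\le 3}|\partial_x^k\mathbb{G}^\dagger|\le O(1)te^{-\sigma|x|}$. Setting $\sigma_0\le\sigma$ yields the first claim.
\item[(ii)] For $t\ge1$ and $|x|>2Mt$, the outside-cone lemma gives $O(1)e^{-\frac{\sigma}{2}|x|-\sigma_0^*t}$. This is absorbed into the last error term once $\sigma_0\le\sigma/2$. In this region the Gaussian main terms are themselves $O(e^{-c t-c|x|})$, because $|x+\beta_jt|\ge |x|/2$ there, so subtracting them changes nothing.
\item[(iii)] For $t\ge1$ and $|x|\le 2Mt$, write $\mathbb{G}^\dagger=\mathbb{G}^\dagger_L+\mathbb{G}^\dagger_S$. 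Lemma~\ref{lemma: regular M} gives the stated expansion with the error terms $t^{-(k+2)/2}$ and $t^{-(k+3)/2}$ plus $O(1)e^{-\sigma_0^*(t/2+|x|/(4M))}$. The short-wave lemma gives $O(1)e^{-\sigma_0^*(t/2+|x|/(4M))}$. After shrinking the constants, both exponential remainders are of the form $O(1)e^{-\sigma_0^* t-\sigma_0|x|}$: rename $\sigma_0^*/2$ as $\sigma_0^*$ and take $\sigma_0\le \sigma_0^*/(4M)$.
\end{itemize}

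The main obstacle is making the reduction in the first step fully rigorous. That requires three things.
\begin{itemize}
\item[(a)] Check that $\lambda_4$ stays separated from $\lambda_{1,2,3}$ uniformly near $\eta=0$, where $\lambda_1$ and $\lambda_4$ are both $O(\eta^2)$. They differ in the coefficient of $\eta^2$: $\alpha_1\ne D/v^2$, which follows from the Prandtl and Lewis number relations used to prove \eqref{Re lambda dis}. This ensures that the denominators $\prod_{k\ne j}(\lambda_j-\lambda_k)$ in \eqref{G} are smooth at low frequency, with an extra nonvanishing factor $(\lambda_j-\lambda_4)$ that cancels against the factor $(\lambda+\eta^2D/v^2)$ appearing in the adjugate entries.
\item[(b)] Verify that the long-wave Taylor expansions of $\hat{\mathbb{M}}_j$ keep the same structure in the fourth column. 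This gives $M_j^0$ and $M_j^1$.
\item[(c)] Confirm that the weighted energy method still applies to the full $4\times4$ system, since the symmetrizer must now include the $z$-component. For this, the triangular structure of $B$ and the decoupled $z$-equation allow one to estimate $z$ first by a scalar heat estimate and then feed it into the $E$-equation.
\end{itemize}

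With these in hand, the constants are fixed by taking minima over the three regions.
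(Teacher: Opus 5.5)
Your main line is the paper's own argument. By \eqref{G dagger} the $\lambda_4$-mode drops out, \eqref{M 34} reduces $\mathbb{G}^\dagger$ to the $3\times3$ regular part of \cite{WangHT2021}, and the four lemmas are patched over $0<t<1$, $\{t\ge1,\ |x|>2Mt\}$ and $\{t\ge1,\ |x|\le2Mt\}$ exactly as you do. Your remark that the Gaussian main terms are themselves $O(e^{-ct-c|x|})$ outside the cone supplies a step the paper leaves implicit.

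Your obstacle (a), however, is false as stated, and it is not needed.
First, $\lambda_1-\lambda_4=(D/v^2-\alpha_1)\eta^2+O(\eta^3)$ vanishes at $\eta=0$. So this factor is neither nonvanishing nor uniformly separated there.
Second, at $\bar U=(1,0,c_v,0)$ one has $p=a$, $p_v=-a$, $p_e=a/c_v$ and $\theta_e=1/c_v$. Hence $\alpha_1=\nu/(a+c_v)=\nu/c_p$, and $\alpha_1\neq D$ is exactly $\operatorname{Le}\neq1$. The inequalities $\operatorname{Le}\,\gamma>1$ and $\operatorname{Pr}\operatorname{Le}<1$ behind \eqref{Re lambda dis} do not exclude $\operatorname{Le}=1$.

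What actually makes your reduction rigorous is algebraic. We have $F'_{\cdot4}=-qF'_{\cdot3}$ and $B_{\cdot4}=-qB_{\cdot3}+(0,0,qD/v^2,D/v^2)^\top$, and the fourth rows of $F'$ and $B$ are $0$ and $(0,0,0,D/v^2)$. Differentiating in $t$ then shows that for every $\eta$
\[
e^{-t(i\eta F'+\eta^2B)}=\begin{pmatrix} e^{tA_3(\eta)} & -q\,e^{tA_3(\eta)}e_3+q\,e^{\lambda_4 t}e_3\\ 0 & e^{\lambda_4 t}\end{pmatrix}.
\]
Here $A_3(\eta)$ is the top-left $3\times3$ block of $-i\eta F'-\eta^2B$, $e_3=(0,0,1)^\top$, and $\lambda_4=-\eta^2D/v^2$. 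Only the distinctness of $\lambda_1,\lambda_2,\lambda_3$ is used, which is the $3\times3$ input from \cite{WangHT2021}.

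With $\hat{\mathbb{M}}_j^*$ normalized by \eqref{M 34}, the same block form holds for $\hat{\mathbb{G}}^*$. Hence $\mathbb{G}^\dagger$ has the $3\times3$ regular part in its top-left block, $-q$ times that block's third column as its fourth column, and zeros in its fourth row. This also makes your (c) superfluous: no symmetrizer involving $z$ is needed, because the energy lemmas of \cite{WangHT2021} apply verbatim to the top-left block.

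One consequence concerns your (b). The fourth columns of $M_j^0$ and $M_j^1$ must be $-q$ times their third columns; for example, $(M_1^0)_{14}=-q\,p_e/(pp_e-p_v)$. The Appendix prints zeros there. With those zeros, the $(1,4)$ entry of the $t\ge1$ estimate would fail at order $t^{-1/2}$ when $q\neq0$. So the statement holds with the extension your reduction produces, not with the Appendix matrices as printed.
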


See \cite[Theorem 3.3]{WangHT2021} for a proof.

	\section{Global Well-posedness}\label{sec: global wp, final}
	In this final section, we show that the local-in-time weak solution constructed in Theorem~\ref{thm: local exi} and \ref{thm: local regularity} can be
	extended to arbitrary time, provided that  the initial data $(v_0, u_0, \theta_0, z_0)$ are a small perturbation of the constant state $(1,0,1,0)$. (Equivalently,  $(v_0, u_0, E_0, z_0)$ is a small perturbation around $\bar{U}=(1,0,c_v,0)$, where $E$ is the total energy.) 
    
    Our proof relies on a delicate analysis of the properties of Green's function $\mathbb{G}$ constructed in \S\ref{sec: Green func}. As in Eq.~\eqref{decomposition into reg and sing of G, Dec25}, we consider \begin{align*}
          \mathbb{G}(x,t;\bar{U}) = \underbrace{\mathbb{G}^{*}(x,t;\bar{U})}_{\text{regular part}} + \underbrace{\mathbb{G}^{\dagger}(x,t;\bar{U})}_{\text{singular part}}, 
    \end{align*}
    where
	\begin{equation*}
		\mathbb{G}^*=\left(\begin{array}{cccc}
			\mathbb{G}_{11}^* & \mathbb{G}_{12}^* & \mathbb{G}_{13}^* &\mathbb{G}_{14}^* \\
			\mathbb{G}_{21}^* & \mathbb{G}_{22}^* & \mathbb{G}_{23}^* &\mathbb{G}_{24}^*\\
			\mathbb{G}_{31}^* & \mathbb{G}_{32}^* & \mathbb{G}_{33}^* &\mathbb{G}_{34}^*\\
			\mathbb{G}_{41}^* & \mathbb{G}_{42}^* & \mathbb{G}_{43}^* &\mathbb{G}_{44}^*
		\end{array}\right), \quad \mathbb{G}^{\dagger}=\left(\begin{array}{cccc}
			\mathbb{G}_{11}^{\dagger} & \mathbb{G}_{12}^{\dagger} & \mathbb{G}_{13}^{\dagger} &\mathbb{G}_{14}^{\dagger} \\
			\mathbb{G}_{21}^{\dagger} & \mathbb{G}_{22}^{\dagger} & \mathbb{G}_{23}^{\dagger} &\mathbb{G}_{24}^{\dagger}\\
			\mathbb{G}_{31}^{\dagger} & \mathbb{G}_{32}^{\dagger} & \mathbb{G}_{33}^{\dagger} &\mathbb{G}_{34}^{\dagger}\\
			\mathbb{G}_{41}^{\dagger} & \mathbb{G}_{42}^{\dagger} & \mathbb{G}_{43}^{\dagger} &\mathbb{G}_{44}^{\dagger}
		\end{array}\right).
	\end{equation*}
It satisfies the backward heat equation  
	\begin{equation}
		\partial_\tau \mathbb{G}(x-y, t-\tau ; \bar{U})+\partial_y \mathbb{G}(x-y, t-\tau ; \bar{U}) F^{\prime}(\bar{U})+\partial_y^2 \mathbb{G}(x-y, t-\tau ; \bar{U}) B(\bar{U})=0
	\end{equation}
with the coefficient matrices 
\begin{equation}
		F^{\prime}(\bar{U})=\left(\begin{array}{cccc}
			0 & -1 & 0 & 0\\
			-a & 0 & \frac{a}{c_v} &-\frac{qa}{c_v}\\
			0 & a & 0 & 0\\
			0 & 0 & 0 & 0
		\end{array}\right), \quad B(\bar{U})=\left(\begin{array}{cccc}
			0 & 0 & 0 & 0\\
			0 & \mu & 0 & 0 \\
			0 & 0 & \frac{\nu}{c_v} & -\frac{q\nu}{c_v}+qD\\
			0 & 0 & 0 & D
		\end{array}\right).
	\end{equation}
In components, the PDEs for $\mathbb{G}_{jk}$ are as follows:

	\begin{align}
		\label{array G}
		&\left(\begin{array}{ccc}
			\partial_\tau \mathbb{G}_{11}-a \partial_y \mathbb{G}_{12} & \partial_\tau \mathbb{G}_{12}-\partial_y \mathbb{G}_{11}+a \partial_y \mathbb{G}_{13}+\mu \partial_y^2 \mathbb{G}_{12} & \partial_\tau \mathbb{G}_{13}+\frac{a}{c_v} \partial_y \mathbb{G}_{12}+\frac{\nu}{c_v} \partial_y^2 \mathbb{G}_{13} \\
			\partial_\tau \mathbb{G}_{21}-a \partial_y \mathbb{G}_{22} & \partial_\tau \mathbb{G}_{22}-\partial_y \mathbb{G}_{21}+a \partial_y \mathbb{G}_{23}+\mu \partial_y^2 \mathbb{G}_{22} & \partial_\tau \mathbb{G}_{23}+\frac{a}{c_v} \partial_y \mathbb{G}_{22}+\frac{\nu}{c_v} \partial_y^2 \mathbb{G}_{23} \\
			\partial_\tau \mathbb{G}_{31}-a \partial_y \mathbb{G}_{32} & \partial_\tau \mathbb{G}_{32}-\partial_y \mathbb{G}_{31}+a \partial_y \mathbb{G}_{33}+\mu \partial_y^2 \mathbb{G}_{32} & \partial_\tau \mathbb{G}_{33}+\frac{a}{c_v} \partial_y \mathbb{G}_{32}+\frac{\nu}{c_v} \partial_y^2 \mathbb{G}_{33} \\
			\partial_\tau \mathbb{G}_{41}-a \partial_y \mathbb{G}_{42} & \partial_\tau \mathbb{G}_{42}-\partial_y \mathbb{G}_{41}+a \partial_y \mathbb{G}_{43}+\mu \partial_y^2 \mathbb{G}_{42} & \partial_\tau \mathbb{G}_{43}+\frac{a}{c_v} \partial_y \mathbb{G}_{42}+\frac{\nu}{c_v} \partial_y^2 \mathbb{G}_{43} 
		\end{array}\right.\nonumber\\
		&\qquad \left.\begin{array}{c}
			\partial_\tau \mathbb{G}_{14}-\frac{qa}{c_v} \partial_y \mathbb{G}_{12}+\left(-\frac{q\nu}{c_v}+qD\right)\partial_y^2 \mathbb{G}_{13}+D\partial_y^2 \mathbb{G}_{14}\\
			\partial_\tau \mathbb{G}_{24}-\frac{qa}{c_v} \partial_y \mathbb{G}_{22}+\left(-\frac{q\nu}{c_v}+qD\right)\partial_y^2 \mathbb{G}_{23}+D\partial_y^2 \mathbb{G}_{24} \\
			\partial_\tau \mathbb{G}_{34}-\frac{qa}{c_v} \partial_y \mathbb{G}_{32}+\left(-\frac{q\nu}{c_v}+qD\right)\partial_y^2 \mathbb{G}_{33}+D\partial_y^2 \mathbb{G}_{34}\\
			\partial_\tau \mathbb{G}_{44}-\frac{qa}{c_v} \partial_y \mathbb{G}_{42}+\left(-\frac{q\nu}{c_v}+qD\right)\partial_y^2 \mathbb{G}_{43}+D\partial_y^2 \mathbb{G}_{44}  	  	
		\end{array}\right)=\mathbb{O}.
	\end{align}

In the sequel, we shall systematically drop the background state $\bar{U}$ when there is no danger of confusion. For instance, we shall write $\mathbb{G}(x-y,t-\tau)\equiv \mathbb{G}(x-y,t-\tau;\bar{U})$.

	\subsection{Representation by Green's function} 
	The heat kernels introduced in \S\ref{sec: pre} is convenient for constructing local solutions but less suitable for analysing large-time behaviour. To address this issue, as in \cite{WangHT2021, LiuTP2022} we introduce an \emph{effective Green's function} $G$ that combines the local heat kernel $H$ and the global Green's function $\mathbb{G}$.

    First, fix a smooth, non-increasing cutoff function $\mathcal{X}$, such that
	\begin{equation}
		\mathcal{X}(t) \in C^{\infty}\left(\mathbb{R}_{+}\right), \quad \mathcal{X}^{\prime}(t) \leq 0, \quad\left\|\mathcal{X}^{\prime}\right\|_{L^{\infty}\left(\mathbb{R}_{+}\right)} \leq 2, \quad \mathcal{X}(t)= \begin{cases}1, & \text { for } t \in(0,1], \\ 0, & \text { for } t>2.\end{cases}
	\end{equation}
	Next, let $\nu_0$ be a sufficiently small positive constant such that the heat kernel  $H(x,t;y,\tau;\frac{1}{v})$ and the local solution $(v(x,\tau),u(x,\tau),E(x,\tau),z(x,\tau))$ exist for $\tau\in (t-2\nu_0,t)$. Then define
	\begin{equation}
		\label{eff G}
		\left\{\begin{array}{l}
			G_{22}(x,t;y,\tau)=\mathcal{X}\left(\frac{t-\tau}{\nu_0}\right) H\left(x,t;y,\tau ; \frac{\mu}{v}\right)+\left(1-\mathcal{X}\left(\frac{t-\tau}{\nu_0}\right)\right) \mathbb{G}_{22}(x-y;t-\tau), \\
			G_{33}(x,t;y,\tau)=\mathcal{X}\left(\frac{t-\tau}{\nu_0}\right) H\left(x,t;y,\tau ;\frac{\nu}{c_v v}\right)+\left(1-\mathcal{X}\left(\frac{t-\tau}{\nu_0}\right)\right) \mathbb{G}_{33}(x-y;t-\tau),\\
			G_{44}(x,t;y,\tau)=\mathcal{X}\left(\frac{t-\tau}{\nu_0}\right) H\left(x,t;y,\tau;\frac{D}{v^2}\right)+\left(1-\mathcal{X}\left(\frac{t-\tau}{\nu_0}\right)\right) \mathbb{G}_{44}(x-y;t-\tau).
		\end{array}\right.
	\end{equation}
	\begin{lemma}\label{lemma: z rep}
		Suppose that the weak solution $(v(x,\tau),u(x,\tau),E(x,\tau),z(x,\tau))$ to Eq.~\eqref{PDE,1} exists for $\tau\in [0,t]$. Let $\nu_0$ be a sufficiently small positive constant with $2\nu_0<t$ such that the heat kernel $H(x,t;y,\tau; \frac{D}{v^2})$ exists for $\tau\in (t-2\nu_0,t)$. Then 
		\begin{equation}
			\begin{aligned}\label{z rep}
				z(x,t)
				=&\int_{\mathbb{R}} \mathbb{G}_{41}(x-y,t)(v(y,0)-1)\mathrm{d}y+\int_{\mathbb{R}} \mathbb{G}_{42}(x-y,t) u(y, 0)\,\mathrm{d}y\\
				&+\int_{\mathbb{R}} \mathbb{G}_{43}(x-y,t)(E(y,0)-c_v)\,\mathrm{d}y+\int_{\mathbb{R}} G_{44}(x,t;y,0)z(y,0)\,\mathrm{d}y\\
				&-\int_0^t\int_{\mathbb{R}} G_{44}(x,t;y,\tau)K\phi(\theta)z(y,\tau)\,\mathrm{d}y\,\mathrm{d}\tau+\sum_{i=1}^3 \mathcal{R}_i^{z},
			\end{aligned}
		\end{equation}
		where the inhomogeneous remainders $\mathcal{R}_i^z$, $i = 1,2,3$, are given by
		\begin{align*}
			\mathcal{R}_1^{z}= & \int_0^{t-2\nu_0}\int_{\mathbb{R}} \partial_y \mathbb{G}_{42}(x-y, t-\tau)\left[\frac{a(v-1)^2}{v}+\frac{a(\theta-1)(1-v)}{v}-\frac{a u^2}{2 c_v}+\frac{\mu u_y(v-1)}{v}\right] \,\mathrm{d}y \,\mathrm{d}\tau \\
			& +\int_0^{t-2\nu_0}\int_{\mathbb{R}}\partial_y \mathbb{G}_{43}(x-y, t-\tau)\left[\left(\frac{a(\theta-1)+a(1-v)}{v}\right) u+\frac{\nu \theta_y(v-1)}{v}+\left(\frac{\nu}{c_v}-\frac{\mu}{v}\right)u u_y\right.\\
			&+\left.\frac{qD(v^2-1)}{v^2}z_y\right]\,\mathrm{d}y\,\mathrm{d}\tau +\int_0^{t-2\nu_0} \int_{\mathbb{R}} \partial_y \mathbb{G}_{44}(x-y,t-\tau)\frac{D(v^2-1)}{v^2}z_y \,\mathrm{d}y \,\mathrm{d}\tau,\\
			\mathcal{R}_2^{z}= &\int_{t-2\nu_0}^{t-\nu_0}\int_{\mathbb{R}} \partial_y \mathbb{G}_{42}(x-y, t-\tau)\left[\frac{a(v-1)^2}{v}+\frac{a(\theta-1)(1-v)}{v}-\frac{au^2}{2c_v}+\frac{\mu u_y(v-1)}{v}\right.\\
			&\left.-\mathcal{X}(\frac{t-\tau}{\nu_0})\frac{qa}{c_v}z\right](y,\tau)\,\mathrm{d}y\,\mathrm{d}\tau+\int_{t-2\nu_0}^{t-\nu_0} \int_{\mathbb{R}} \partial_y\mathbb{G}_{43}(x-y;t-\tau) \left[\frac{a(\theta-1)+a(1-v)}{v} u\right.\\
			&\left.+\frac{\nu(v-1)}{v}\theta_y+\left(\frac{\nu}{c_v v}-\frac{\mu}{v}\right)uu_y+\frac{qD(v^2-1)}{v^2}z_y\right](y,\tau)\,\mathrm{d}y\,\mathrm{d}\tau\\
			&+\int_{t-2\nu_0}^{t-\nu_0} \int_{\mathbb{R}}\left(1-\mathcal{X}(\frac{t-\tau}{\nu_0})\right) \partial_y\mathbb{G}_{44}(x-y;t-\tau) \frac{D(v^2-1)}{v^2}z_y(y,\tau)\,\mathrm{d}y\,\mathrm{d}\tau\\
			& +\int_{t-2\nu_0}^{t-\nu_0} \int_{\mathbb{R}} \frac{1}{\nu_0} \mathcal{X}^{\prime}\left(\frac{t-\tau}{\nu_0}\right)\left[\mathbb{G}_{44}(x-y;t-\tau)-H\left(x,t;y,\tau;\frac{D}{v^2}\right)\right]z(y, \tau)\,\mathrm{d}y\,\mathrm{d}\tau \\
			&+\int_{t-2\nu_0}^{t-\nu_0} \int_{\mathbb{R}}\mathcal{X}(\frac{t-\tau}{\nu_0}) \partial_y\mathbb{G}_{43}(x-y, t-\tau) \left(\frac{q\nu}{c_v}-qD\right)z_y(y,\tau)\,\mathrm{d}y\,\mathrm{d}\tau,\\
			\mathcal{R}_3^{z}= &\int_{t-\nu_0}^t \int_{\mathbb{R}} \partial_y \mathbb{G}_{42}(x-y,t-\tau)\left[\frac{a(v-1)^2}{v}+\frac{a(\theta-1)(1-v)}{v}-\frac{au^2}{2c_v}+\frac{\mu u_y(v-1)}{v}-\frac{qa}{c_v}z\right]\,\mathrm{d}y \,\mathrm{d}\tau\\
			&+\int_{t-2\nu_0}^{t-\nu_0} \int_{\mathbb{R}} \partial_y \mathbb{G}_{43}(x-y;t-\tau)\left[\frac{a(\theta-1)+a(1-v)}{v} u+\frac{\nu(v-1)}{v}\theta_y+\left(\frac{\nu}{c_v v}-\frac{\mu}{v}\right)u u_y\right.\\
			& \left.-\left(\frac{q\nu}{c_v}-qD\right)z_y\right](y,\tau)\,\mathrm{d}y\,\mathrm{d}\tau.
		\end{align*}
	\end{lemma}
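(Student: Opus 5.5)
We will follow the standard Duhamel-type argument of \cite{LiuTP2022, WangHT2021}: pair the solution with the fourth row of the effective Green's function and integrate by parts in space-time. Fix $(x,t)$ with $2\nu_0<t$ and write the perturbation $V=(v-1,u,E-c_v,z)$, which satisfies Eq.~\eqref{eq: V g} with $\bar U=(1,0,c_v,0)$. We test this equation, over $\tau\in[0,t]$, against the row vector
\[
\big(\mathbb{G}_{41},\ \mathbb{G}_{42},\ \mathbb{G}_{43},\ G_{44}\big)(x,t;y,\tau),
\]
in which the first three entries are the constant-coefficient Green's functions $\mathbb{G}_{4k}(x-y,t-\tau)$ and the last entry is the effective kernel \eqref{eff G}. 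Each row of $\mathbb{G}(x-y,t-\tau)$ satisfies the backward equation \eqref{array G} in $(y,\tau)$. Integrating $\partial_\tau$ over $[0,t]$ then produces the boundary terms: the value $z(x,t)$ at $\tau=t$, since $\mathbb{G}(\cdot,0)=\delta I$ and $H(x,t;y,t)=\delta(x-y)$, and the four initial-data integrals at $\tau=0$.

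The terms are then handled by time window. On $[0,t-2\nu_0]$ we have $\mathcal X=0$, so $G_{44}=\mathbb{G}_{44}$. The linear terms $F'(\bar U)V_y-B(\bar U)V_{yy}$ cancel exactly against the backward equation \eqref{array G}. What survives is the flux nonlinearities $N_1+N_2$, moved onto $\partial_y\mathbb{G}_{4k}$ by one integration by parts, together with the reaction term $A$ paired with $G_{44}$. Writing $N_1,N_2$ explicitly in terms of $(v,u,\theta,z)$ gives the brackets in $\mathcal R_1^z$: the $p$-nonlinearity in the second component, the $pu$, viscous and heat-flux nonlinearities in the third, and $(D/v^2-D)z_y$ in the third and fourth. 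The $-\frac{qa}{c_v}z$ part of the pressure is linear, so it is already absorbed by the linear operator. The reaction term becomes $-\int_0^t\!\int G_{44}K\phi(\theta)z$.

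On $[t-2\nu_0,t]$, $G_{44}$ contains $\mathcal X H(\cdot;\frac{D}{v^2})$. For this part we use instead the $z$-equation in its variable-coefficient form $z_\tau-(\frac{D}{v^2}z_y)_y=-K\phi z$ and the backward equation \eqref{eq: H backward} for $H$. This leaves three kinds of contribution. First, the derivative of the cutoff gives $\frac{1}{\nu_0}\mathcal X'(\mathbb{G}_{44}-H)z$. Second, the linear cross-coupling terms become sources: the $q$-terms $-\mathcal X\frac{qa}{c_v}z$ and $\mathcal X(\frac{q\nu}{c_v}-qD)z_y$, which appear because the $z$-column of $\mathbb{G}$ is no longer cancelled by the matching $H$-row. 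Third, the remaining parts of $G_{44}$ and the $\mathbb{G}_{42},\mathbb{G}_{43}$ pairings contribute flux nonlinearities as before. Sorting these into $\tau\in[t-2\nu_0,t-\nu_0]$, where $0<\mathcal X<1$, and $\tau\in[t-\nu_0,t]$, where $\mathcal X=1$ and the cutoff derivative vanishes, produces $\mathcal R_2^z$ and $\mathcal R_3^z$.

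The main obstacle is justifying these manipulations at BV regularity, including the $E$ versus $\theta$ bookkeeping. The pairing against $H$ must be interpreted in the weak sense of Definition~\ref{def funda}. We will use Remark~\ref{re: conti}, namely continuity of $\frac{D}{v^2}H_y$ and of the flux $\frac{D}{v^2}z_x$ from Theorem~\ref{thm: local exi}, so that the spatial integration by parts across the jump set $\mathcal D$ produces no boundary terms. The time integration near $\tau=t$ is justified by the estimates $\sqrt{t}z_t$ and $t z_t$ from Theorem~\ref{thm: local regularity} and the kernel bounds in Lemma~\ref{lemma: Liu}. Near $\tau=t$, $\mathbb{G}_{42},\mathbb{G}_{43}$ carry the singular $\delta$-parts of Theorem~\ref{thm: es for singular G}, so we will approximate the test functions by mollification and pass to the limit.
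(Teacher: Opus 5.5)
Your proposal is correct and is essentially the paper's argument. The paper also pairs the equations with $(\mathbb{G}_{41},\mathbb{G}_{42},\mathbb{G}_{43},G_{44})$, integrates by parts in $(y,\tau)$, eliminates the $\partial_\tau\mathbb{G}_{4k}$ terms via \eqref{array G}, and on $[t-2\nu_0,t]$ uses the backward equation \eqref{eq: H backward} for $H$; this leaves exactly the $\mathcal{X}'$-term and the $\mathcal{X}\frac{qa}{c_v}z$ and $\mathcal{X}(\frac{q\nu}{c_v}-qD)z_y$ sources you identify. The only difference is that the paper tests the conservative system \eqref{PDE,1} directly rather than the perturbation form \eqref{eq: V g}, which amounts to the same computation.

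One small correction to your justification paragraph: the fourth row of $\mathbb{G}$ carries no $\delta$-type singular part. Indeed, the linearised $z$-equation $z_t=Dz_{xx}$ decouples, so $\mathbb{G}_{4k}\equiv 0$ for $k\le 3$ and $\mathbb{G}_{44}$ is the heat kernel with diffusivity $D$. Hence no mollification of these kernels near $\tau=t$ is needed.
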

	\begin{proof}
		Multiplying the vector $\left[\mathbb{G}_{41}(x-y,t-\tau),\mathbb{G}_{42}(x,t;y,\tau),\mathbb{G}_{43}(x-y,t-\tau),G_{44}(x,t;y,\tau)\right]^\top$ to the system~\eqref{PDE,1} and integrating over $\tau\in[0,t]$ and $y \in \mathbb{R}$ by parts, we arrive at
		\begin{align*}
			0= & \int_0^t \int_{\mathbb{R}} \mathbb{G}_{41}(x-y, t-\tau)\left(v_\tau-u_y\right)(y,\tau)\,\mathrm{d}y\,\mathrm{d}\tau \\
			& +\int_0^t \int_{\mathbb{R}} \mathbb{G}_{42}(x-y,t-\tau)\left(u_\tau+p_y-\left(\frac{\mu u_y}{v}\right)_y\right)(y,\tau)\,\mathrm{d}y\,\mathrm{d}\tau \\
			& +\int_0^t \int_{\mathbb{R}} \mathbb{G}_{43}(x-y, t-\tau)\left(E_{\tau}+(p u)_y-\left(\frac{\nu}{v} \theta_y+\frac{\mu}{v} u u_y+\frac{qDz_y}{v^2}\right)_y\right)(y,\tau)\,\mathrm{d}y\,\mathrm{d}\tau \\
			&+\int_0^t \int_{\mathbb{R}} G_{44}(x,t;y,\tau)\left(z_{\tau}+K\phi(\theta)z-\left(\frac{Dz_y}{v^2}\right)_y\right)(y,\tau)\,\mathrm{d}y\,\mathrm{d}\tau \\
			= & \int_0^t \int_{\mathbb{R}} \mathbb{G}_{41}(x-y, t-\tau)\left((v-1)_\tau-u_y\right)(y,\tau)\,\mathrm{d}y\,\mathrm{d}\tau \\
			& +\int_0^t \int_{\mathbb{R}} \mathbb{G}_{42}(x-y,t-\tau)\left(u_\tau+(p(v, \theta)-p(1,1))_y-\left(\frac{\mu u_y}{v}\right)_y\right)(y,\tau)\,\mathrm{d}y\,\mathrm{d}\tau \\
			& +\int_0^t \int_{\mathbb{R}} \mathbb{G}_{43}(x-y, t-\tau)\left(\left(E-c_v\right)_\tau+(p u)_y-\left(\frac{\nu}{v} \theta_y+\frac{\mu}{v} u u_y+\frac{qDz_y}{v^2}\right)_y\right)(y,\tau)\,\mathrm{d}y\,\mathrm{d}\tau \\
			&+\int_0^t \int_{\mathbb{R}} G_{44}(x,t;y,\tau)\left(z_{\tau}+K\phi(\theta)z-\left(\frac{Dz_y}{v^2}\right)_y\right)(y,\tau)\,\mathrm{d}y\,\mathrm{d}\tau \\
			= & -\int_{\mathbb{R}} \mathbb{G}_{41}(x-y,t)\left(v(y,0)-1\right) d y-\int_0^t \int_{\mathbb{R}} \partial_\tau \mathbb{G}_{41}(x-y, t-\tau)(v(y,\tau)-1)\,\mathrm{d}y\,\mathrm{d}\tau \\
			& +\int_0^t \int_{\mathbb{R}} \partial_y\mathbb{G}_{41}(x-y, t-\tau) u(y,\tau)\,\mathrm{d}y\,\mathrm{d}\tau-\int_{\mathbb{R}}\mathbb{G}_{42}(x-y,t)u(y,0)\,\mathrm{d}y\\
			&-\int_0^t \int_{\mathbb{R}}\partial_\tau \mathbb{G}_{42}(x-y,t-\tau)u(y,\tau)\,\mathrm{d}y\,\mathrm{d}\tau -\int_0^t \int_{\mathbb{R}}\partial_y \mathbb{G}_{42}(x-y,t- \tau)\left((p-p(1,1))-\frac{\mu u_y}{v}\right)\,\mathrm{d}y \,\mathrm{d}\tau \\
			& -\int_{\mathbb{R}} \mathbb{G}_{43}(x-y,t)\left(E(y, 0)-c_v\right)\,\mathrm{d} y-\int_0^t \int_{\mathbb{R}} \partial_\tau \mathbb{G}_{43}(x-y, t-\tau)\left(E(y,\tau)-c_v\right)\,\mathrm{d}y\,\mathrm{d}\tau \\
			& -\int_0^t \int_{\mathbb{R}} \partial_y \mathbb{G}_{43}(x-y, t-\tau)\left((pu)-\left(\frac{\nu}{v} \theta_y+\frac{\mu}{v} u u_y+\frac{qD}{v^2}z_y\right)\right)(y,\tau)\,\mathrm{d}y\,\mathrm{d}\tau\\
			&+z(x,t)-\int_{\mathbb{R}} G_{44}(x,t;y,0)z(y, 0)\,\mathrm{d} y-\int_0^t \int_{\mathbb{R}} \partial_\tau G_{44}(x,t;y,\tau)z(y,\tau)\,\mathrm{d}y \,\mathrm{d}\tau \\
			& +\int_0^t G_{44}(x,t;y,\tau)K\phi(\theta)z(y,\tau)\,\mathrm{d}y \,\mathrm{d}\tau+\int_0^t \int_{\mathbb{R}} \partial_y G_{44}(x,t;y,\tau)\frac{D}{v^2}z_y (y,\tau)\,\mathrm{d}y \,\mathrm{d}\tau.
		\end{align*}
In the above, we make use of  $\mathbb{G}(x-y;0)=\delta(x-y)$ and that
		\begin{equation*}
			\int_{\mathbb{R}}G_{44}(x,t;y,t)z(y,t)\,\mathrm{d}y=\int_{\mathbb{R}}\mathbb{G}_{44}(x-y;0)z(y,t)\,\mathrm{d}y=z(x,t).
		\end{equation*}
Rearrangement gives us
		\begin{align}
			\label{z Ii}
			z(x, t)
			&= \int_{\mathbb{R}}\mathbb{G}_{41}(x-y,t)(v(y,0)-1)\,\mathrm{d} y+\int_{\mathbb{R}} \mathbb{G}_{42}(x-y,t) u(y,0)\,\mathrm{d}y\nonumber\\
			&\quad+\int_{\mathbb{R}}\mathbb{G}_{43}(x-y, t)\left(E(y, 0)-c_v\right)\,\mathrm{d}y+\int_{\mathbb{R}} G_{44}(x,t;y,0) z(y,0)\,\mathrm{d}y\nonumber\\
			&\quad+\int_0^t\int_{\mathbb{R}} \partial_\tau \mathbb{G}_{41}(x-y,t-\tau)(v(y,\tau)-1)\,\mathrm{d}y\,\mathrm{d}\tau-\int_0^t \int_{\mathbb{R}} \partial_y \mathbb{G}_{41}(x-y,t-\tau) u(y,\tau)\,\mathrm{d}y\,\mathrm{d}\tau \nonumber\\
			&\quad+\int_0^t \int_{\mathbb{R}} \partial_\tau \mathbb{G}_{42}(x-y,t-\tau) u(y,\tau)\,\mathrm{d}y\,\mathrm{d} \tau\nonumber\\
			&\quad+\int_0^t \int_{\mathbb{R}} \partial_y \mathbb{G}_{42}(x-y,t-\tau)\left(p-p(1,1)-\frac{\mu u_y}{v}\right)(y,\tau)\,\mathrm{d}y\,\mathrm{d}\tau \nonumber\\
			&\quad+\int_0^t \int_{\mathbb{R}}\partial_\tau \mathbb{G}_{43}(x-y,t-\tau)\left(E(y, \tau)-c_v\right)\,\mathrm{d}y\,\mathrm{d}\tau\nonumber\\
			&\quad+\int_0^t \int_{\mathbb{R}} \partial_y \mathbb{G}_{43}(x-y, t-\tau)\left((p u)-\left(\frac{\nu}{v} \theta_y+\frac{\mu}{v} u u_y+\frac{qD}{v^2}z_y\right)\right)(y,\tau)\,\mathrm{d}y\,\mathrm{d}\tau \nonumber\\
			&\quad+\int_0^t \int_{\mathbb{R}}\partial_\tau G_{44}(x,t;y,\tau)z(y, \tau)\,\mathrm{d}y\,\mathrm{d}\tau-\int_0^t \int_{\mathbb{R}}G_{44}(x,t;y,\tau)K\phi(\theta)z(y,\tau)\,\mathrm{d}y \,\mathrm{d}\tau\nonumber\\
			&\quad-\int_0^t \int_{\mathbb{R}} \partial_y G_{44}(x,t;y,\tau)\frac{D}{v^2}z_y(y,\tau)\,\mathrm{d}y\,\mathrm{d}\tau \nonumber\\
			&=: \sum_{i=1}^{13} \mathcal{I}_i.
		\end{align}

        To proceed, we use Eq.~\eqref{array G} to replace the term $\mathbb{G}_{\tau}$ by $\mathbb{G}_y$. Hence,
		\begin{align*}
			\mathcal{I}_5=&\int_0^t\int_{\mathbb{R}}\partial_\tau \mathbb{G}_{41}(x-y,t-\tau)(v(y,\tau)-1)\,\mathrm{d}y\,\mathrm{d}\tau\\
			=&\int_0^t\int_{\mathbb{R}}a\partial_y \mathbb{G}_{42}(x-y,t-\tau)(v(y,\tau)-1)\,\mathrm{d}y\,\mathrm{d}\tau,\\
			\mathcal{I}_7=&\int_0^t\int_{\mathbb{R}}\partial_\tau \mathbb{G}_{42}(x-y,t-\tau)u(y,\tau)\,\mathrm{d}y\,\mathrm{d}\tau\\
			=&\int_0^t\int_{\mathbb{R}}\left(\partial_y\mathbb{G}_{41}-a\partial_y\mathbb{G}_{43}-\mu\partial_y^2\mathbb{G}_{42}\right)(x-y,t-\tau)u(y,\tau)\,\mathrm{d}y\,\mathrm{d}\tau,\\
			\mathcal{I}_9=&\int_0^t\int_{\mathbb{R}}\partial_\tau \mathbb{G}_{43}(x-y,t-\tau)(E(y,\tau)-c_v)\,\mathrm{d}y\,\mathrm{d}\tau\\
			=&\int_0^t\int_{\mathbb{R}}\left(-\frac{a}{c_v}\partial_y\mathbb{G}_{42}-\frac{\nu}{c_v}\partial_y^2\mathbb{G}_{43}\right)(x-y,t-\tau)(E(y,\tau)-c_v)\,\mathrm{d}y\,\mathrm{d}\tau.
		\end{align*}
		For $\mathcal{I}_{11}$, using the definition of $G_{44}(x,t;y,\tau)$ in \eqref{eff G}, we split the integral into three time intervals
		\begin{align*}
			\mathcal{I}_{11}=&\int_0^t\int_{\mathbb{R}}\partial_\tau G_{44}(x,t;y,\tau)z(y,\tau)\,\mathrm{d}y\,\mathrm{d}\tau\\
			=&\int_0^t\int_{\mathbb{R}}\left[-\frac{\mathcal{X}^{\prime}\left(\frac{t-\tau}{\nu_0}\right)}{\nu_0}H(x,t;y,\tau;\frac{D}{v^2})+\mathcal{X}\left(\frac{t-\tau}{\nu_0}\right)H_{\tau}(x,t;y,\tau;\frac{D}{v^2})\right.\\
			& \left.+\frac{\mathcal{X}^{\prime}\left(\frac{t-\tau}{\nu_0}\right)}{\nu_0}\mathbb{G}_{44}(x-y,t-\tau)+\left(1-\mathcal{X}\left(\frac{t-\tau}{\nu_0}\right)\right)\partial_\tau \mathbb{G}_{44}(x-y,t-\tau)\right]z(y,\tau)\,\mathrm{d}y\,\mathrm{d}\tau\\
			=&\int_{0}^{t-2\nu_0}\int_{\mathbb{R}}\partial_{\tau}\mathbb{G}_{44}(x-y,t-\tau)z(y,\tau)\,\mathrm{d}y\,\mathrm{d}\tau\\
			& +\int_{t-2\nu_0}^{t-\nu_0}\int_{\mathbb{R}}\frac{-\mathcal{X}^{\prime}\left(\frac{t-\tau}{\nu_0}\right)}{\nu_0}H(x,t;y,\tau;\frac{D}{v^2}) z(y,\tau)\,\mathrm{d}y\,\mathrm{d}\tau\\
			&\quad+\int_{t-2\nu_0}^{t-\nu_0}\int_{\mathbb{R}}\mathcal{X}\left(\frac{t-\tau}{\nu_0}\right)H_{\tau}(x,t;y,\tau;\frac{D}{v^2}) z(y,\tau)\,\mathrm{d}y\,\mathrm{d}\tau\\
			& +\int_{t-2\nu_0}^{t-\nu_0}\int_{\mathbb{R}}\frac{\mathcal{X}^{\prime}\left(\frac{t-\tau}{\nu_0}\right)}{\nu_0}\mathbb{G}_{44}(x-y,t-\tau)z(y,\tau)\,\mathrm{d}y\,\mathrm{d}\tau\\
			& +\int_{t-2\nu_0}^{t-\nu_0}\int_{\mathbb{R}}\left(1-\mathcal{X}\left(\frac{t-\tau}{\nu_0}\right)\right)\partial_{\tau}\mathbb{G}_{44}(x-y,t-\tau)z(y,\tau)\,\mathrm{d}y\,\mathrm{d}\tau\\
			& +\int_{t-\nu_0}^{t}\int_{\mathbb{R}}H_{\tau}(x,t;y,\tau;\frac{D}{v^2}) z(y,\tau)\,\mathrm{d}y\,\mathrm{d}\tau\\
			=&\int_{0}^{t-2\nu_0}\int_{\mathbb{R}}\left(\frac{qa}{c_v}\partial_y\mathbb{G}_{42}-\left(\frac{-q\nu}{c_v}+qD\right)\partial_y^2\mathbb{G}_{43}-D\partial_y^2\mathbb{G}_{44}\right)(x-y,t-\tau)z(y,\tau)\,\mathrm{d}y\,\mathrm{d}\tau\\
			& +\int_{t-2\nu_0}^{t-\nu_0}\int_{\mathbb{R}}\frac{\mathcal{X}^{\prime}\left(\frac{t-\tau}{\nu_0}\right)}{\nu_0}\left[\mathbb{G}_{44}(x-y,t-\tau)-H(x,t;y,\tau;\frac{D}{v^2})\right] z(y,\tau)\,\mathrm{d}y\,\mathrm{d}\tau\\
			& -\int_{t-2\nu_0}^{t-\nu_0}\int_{\mathbb{R}}\mathcal{X}\left(\frac{t-\tau}{\nu_0}\right)\left(\frac{D}{v^2}H_y(x,t;y,\tau;\frac{D}{v^2})\right)_y z(y,\tau)\,\mathrm{d}y \,\mathrm{d}\tau\\
			& +\int_{t-2\nu_0}^{t-\nu_0}\int_{\mathbb{R}}\left(1-\mathcal{X}\left(\frac{t-\tau}{\nu_0}\right)\right)\left(\frac{qa}{c_v}\partial_y\mathbb{G}_{42}-\left(\frac{-q\nu}{c_v}+qD\right)\partial_y^2\mathbb{G}_{43}-D\partial_y^2\mathbb{G}_{44}\right)\\
			&\quad (x-y,t-\tau)z(y,\tau)\,\mathrm{d}y\,\mathrm{d}\tau -\int_{t-\nu_0}^{t}\int_{\mathbb{R}}\left(\frac{D}{v^2}H_y(x,t;y,\tau;\frac{D}{v^2})\right)_y z(y,\tau)\,\mathrm{d}y\,\mathrm{d}\tau,
		\end{align*}
		in which we use the backward Eq.~\eqref{eq: H backward} for $H$ and Eq.~\eqref{array G} for $\mathbb{G}$.
		
		Finally, for $\mathcal{I}_{13}$, we use the definition of $G_{44}(x,t;y,\tau)$ to obtain
		\begin{align*}
			\mathcal{I}_{13}=&-\int_0^t\int_{\mathbb{R}}\partial_y G_{44}(x,t;y,\tau)\frac{D}{v^2} z_y(y,\tau)\,\mathrm{d}y\,\mathrm{d}\tau\\
			=&-\int_{0}^{t-2\nu_0}\int_{\mathbb{R}}\partial_y\mathbb{G}_{44}(x-y,t-\tau)\frac{D}{v^2}z_y(y,\tau)\,\mathrm{d}y\,\mathrm{d}\tau\\
			& -\int_{t-2\nu_0}^{t-\nu_0}\int_{\mathbb{R}}\mathcal{X}\left(\frac{t-\tau}{\nu_0}\right)H_y(x,t;y,\tau;\frac{D}{v^2}) \frac{D}{v^2}z_y(y,\tau)\,\mathrm{d}y\,\mathrm{d}\tau\\
			&-\int_{t-2\nu_0}^{t-\nu_0}\int_{\mathbb{R}}\left(1-\mathcal{X}\left(\frac{t-\tau}{\nu_0}\right)\right)\mathbb{G}_{44}(x-y,t-\tau)\frac{D}{v^2}z_y(y,\tau)\,\mathrm{d}y\,\mathrm{d}\tau\\
			&-\int_{t-\nu_0}^{t}\int_{\mathbb{R}}H_y(x,t;y,\tau;\frac{D}{v^2})\frac{D}{v^2}z_y(y,\tau)\,\mathrm{d}y\,\mathrm{d}\tau.
		\end{align*}
		Substituting the expressions for $\mathcal{I}_5,\mathcal{I}_7,\mathcal{I}_9,\mathcal{I}_{11}$ and $\mathcal{I}_{13}$ into \eqref{z Ii}, we obtain the desired representation formula for $z(x,t)$.  \end{proof}

	\begin{lemma}
		\label{lemma: formula uvt}
		Suppose the weak solution $(v(x,\tau),u(x,\tau),E(x,\tau),z(x,\tau))$ to Eq.~\eqref{PDE,1} exists for $\tau\in [0,t]$, and let $\nu_0 $ be a sufficiently small positive constant with $2\nu_0<t$ such that the heat kernels $H(x,t;y,\tau; \frac{\mu}{v})$ and $H(x,t;y,\tau; \frac{\nu}{c_v v})$ exist for $\tau\in (t-2\nu_0,t)$. Then 
		\begin{align}
			u(x, t)
			=&\int_{\mathbb{R}} \mathbb{G}_{21}(x-y,t)(v(y,0)-1)\mathrm{d}y+\int_{\mathbb{R}} G_{22}(x,t;y,0) u(y, 0)\,\mathrm{d}y\nonumber\\
			&+\int_{\mathbb{R}} \mathbb{G}_{23}(x-y,t)\left(E(y,0)-c_v\right)\,\mathrm{d}y
			+\sum_{i=1}^3 \mathcal{R}_i^u,\nonumber\\
			v(x,t)-1
			=&\int_{\mathbb{R}} \mathbb{G}_{11}(x-y,t)(v(y,0)-1)\mathrm{d}y+\int_{\mathbb{R}} \mathbb{G}_{12}(x-y,t) u(y,0)\,\mathrm{d}y\nonumber\\
			&+\int_{\mathbb{R}} \mathbb{G}_{13}(x-y,t)\left(E(y,0)-c_v\right)\,\mathrm{d}y\nonumber\\
			&+\int_0^{t}\int_{\mathbb{R}} \partial_y \mathbb{G}_{12}(x-y, t-\tau)\left[\frac{a(v-1)^2}{v}+\frac{a(\theta-1)(1-v)}{v}-\frac{a u^2}{2 c_v}+\frac{\mu u_y(v-1)}{v}\right.\nonumber\\
			&\left.-\frac{qa}{c_v}z\right](y,\tau)\,\mathrm{d}y \,\mathrm{d}\tau +\int_0^{t}\int_{\mathbb{R}}\partial_y \mathbb{G}_{13}(x-y, t-\tau)\left[\left(\frac{a(\theta-1)+a(1-v)}{v}\right) u\right.\nonumber\\
			&\left.+\frac{\nu \theta_y(v-1)}{v}+\left(\frac{\nu}{c_v}-\frac{\mu}{v}\right)u u_y+\frac{q\nu}{c_v}z_y-\frac{qD}{v^2}z_y\right](y,\tau)\,\mathrm{d}y\,\mathrm{d}\tau,\nonumber\\
			\label{E rep}
			E(x,t)-c_v
			=&\int_{\mathbb{R}} \mathbb{G}_{31}(x-y,t)(v(y,0)-1)\mathrm{d}y+\int_{\mathbb{R}} \mathbb{G}_{32}(x-y,t) u(y, 0)\,\mathrm{d}y\nonumber\\
			&+\int_{\mathbb{R}} G_{33}(x,t;y,0)(E(y,0)-c_v)\,\mathrm{d}y+\sum_{i=1}^3 \mathcal{R}_i^{\theta}.
		\end{align}
The inhomogeneous remainder terms $\mathcal{R}_i^{u}$ and $\mathcal{R}_i^{\theta}$ are given by
		\begin{align*}
			\mathcal{R}_1^u= & \int_0^{t-2 \nu_0} \int_{\mathbb{R}} \partial_y \mathbb{G}_{22}(x-y, t-\tau)\left[\frac{a(v-1)^2}{v}+\frac{a(\theta-1)(1-v)}{v}-\frac{a u^2}{2 c_v}+\frac{\mu u_y(v-1)}{v}\right.\\
			&\left.-\frac{qa}{c_v}z\right](y,\tau) \,\mathrm{d}y \,\mathrm{d}\tau+\int_0^{t-2\nu_0}\int_{\mathbb{R}}\partial_y \mathbb{G}_{23}(x-y, t-\tau)\left[\frac{a(\theta-1)+a(1-v)}{v} u\right.\\
			&\left.+\frac{\nu \theta_y(v-1)}{v}+\left(\frac{\nu}{c_v}-\frac{\mu}{v}\right)u u_y+\frac{q\nu}{c_v}z_y-\frac{qD}{v_2} z_y\right](y,\tau)\,\mathrm{d}y\,\mathrm{d}\tau,\\
			\mathcal{R}_2^u= & \int_{t-2\nu_0}^{t-\nu_0}\int_{\mathbb{R}} \partial_y \mathbb{G}_{22}(x-y, t-\tau)\left[\frac{a(v-1)^2}{v}+\frac{a(\theta-1)(1-v)}{v}+\frac{\mu u_y(v-1)}{v}(1-\mathcal{X})\right.\\
			&\left. -\frac{au^2}{2c_v}\right](y,\tau)\,\mathrm{d}y\,\mathrm{d}\tau +\int_{t-2\nu_0}^{t-\nu_0} \int_{\mathbb{R}} \partial_y \mathbb{G}_{23}(x-y;t-\tau)\left[\left(\frac{a(\theta-1)+a(1-v)}{v}\right) u\right.\\
			&\left.+\frac{\nu \theta_y(v-1)}{v}+\left(\frac{\nu}{c_v}-\frac{\mu}{v}\right)u u_y+\frac{q\nu}{c_v}z_y-\frac{qD}{v^2}z_y\right](y,\tau)\,\mathrm{d}y\,\mathrm{d}\tau \\
			& +\int_{t-2\nu_0}^{t-\nu_0} \int_{\mathbb{R}} \frac{1}{\nu_0} \mathcal{X}^{\prime}\left(\frac{t-\tau}{\nu_0}\right)\left[\mathbb{G}_{22}(x-y;t-\tau)-H\left(x,t;y,\tau;\frac{\mu}{v}\right)\right]u(y, \tau)\,\mathrm{d}y\,\mathrm{d}\tau \\
			& +\int_{t-2\nu_0}^{t-\nu_0} \int_{\mathbb{R}} \mathcal{X}\left(\frac{t-\tau}{\nu_0}\right)\left[H_y\left(x,t;y, \tau;\frac{\mu}{v}\right)-\partial_y\mathbb{G}_{22}(x-y ; t-\tau)\right] \frac{a(\theta-v)}{v}\,\mathrm{d}y\,\mathrm{d}\tau \\
			& +\int_{t-2\nu_0}^{t-\nu_0} \int_{\mathbb{R}} \mathcal{X}\left(\frac{t-\tau}{\nu_0}\right)\left[a\partial_y \mathbb{G}_{23}(x-y, t-\tau)-\partial_y \mathbb{G}_{21}(x-y, t-\tau)\right]u(y,\tau)\,\mathrm{d}y \,\mathrm{d}\tau, \\
			\mathcal{R}_3^u= & -\int_{t-\nu_0}^t \int_{\mathbb{R}} \partial_y \mathbb{G}_{21}(x-y,t-\tau) u(y,\tau)\,\mathrm{d}y \,\mathrm{d}\tau\\
			&+\int_{t-\nu_0}^t \int_{\mathbb{R}} \partial_y \mathbb{G}_{22}(x-y,t-\tau)\left[a(v-\theta)-\frac{au^2}{2c_v}-\frac{qa}{c_v}z\right](y,\tau)\,\mathrm{d}y \,\mathrm{d}\tau\\
			&+\int_{t-\nu_0}^t \int_{\mathbb{R}} \partial_y \mathbb{G}_{23}(x-y,t-\tau)\left[pu+\frac{\nu(v-1)}{v}\theta_y+\left(\frac{\nu}{c_v}-\frac{\mu}{v}\right)uu_y+\frac{q\nu}{c_v}z_y-\frac{qD}{v^2}z_y\right]\,\mathrm{d}y \,\mathrm{d}\tau\\
			&+\int_{t-\nu_0}^t \int_{\mathbb{R}} H_y(x,t;y,\tau;\frac{\mu}{v})\frac{a(\theta-v)}{v}(y,\tau)\,\mathrm{d}y \,\mathrm{d}\tau,
		\end{align*}
		\begin{align*}
			\mathcal{R}_1^{\theta}= & \int_0^{t-2\nu_0}\int_{\mathbb{R}} \partial_y \mathbb{G}_{32}(x-y, t-\tau)\left[\frac{a(v-1)^2}{v}+\frac{a(\theta-1)(1-v)}{v}-\frac{a u^2}{2 c_v}+\frac{\mu u_y(v-1)}{v}-\frac{qa}{c_v}z\right] \,\mathrm{d}y \,\mathrm{d}\tau \\
			& +\int_0^{t-2\nu_0}\int_{\mathbb{R}}\partial_y \mathbb{G}_{33}(x-y, t-\tau)\left[\left(\frac{a(\theta-1)+a(1-v)}{v}\right) u+\frac{\nu \theta_y(v-1)}{v}+\left(\frac{\nu}{c_v}-\frac{\mu}{v}\right)u u_y\right.\\
			&+\left.\frac{q\nu}{c_v}z_y-\frac{qD}{v^2}z_y\right](y,\tau)\,\mathrm{d}y\,\mathrm{d}\tau,\\
			\mathcal{R}_2^{\theta}= &\int_{t-2\nu_0}^{t-\nu_0}\int_{\mathbb{R}} \partial_y \mathbb{G}_{32}(x-y, t-\tau)\left[\frac{a(\theta-1)}{v}+\frac{\mu u_y(v-1)}{v}+\frac{a(v-1)^2}{v}\right](y,\tau)\,\mathrm{d}y\,\mathrm{d}\tau \\
			&-\int_{t-2\nu_0}^{t-\nu_0} \int_{\mathbb{R}}\left(1-\mathcal{X}(\frac{t-\tau}{\nu_0})\right)\frac{a}{c_v}\partial_y\mathbb{G}_{32}(x-y;t-\tau)(E(y, \tau)-c_v)\,\mathrm{d}y\,\mathrm{d}\tau\\
			& +\int_{t-2\nu_0}^{t-\nu_0} \int_{\mathbb{R}} \frac{1}{\nu_0} \mathcal{X}^{\prime}\left(\frac{t-\tau}{\nu_0}\right)\left[\mathbb{G}_{33}(x-y;t-\tau)-H\left(x,t;y,\tau;\frac{\nu}{c_v v}\right)\right](E(y, \tau)-c_v)\,\mathrm{d}y\,\mathrm{d}\tau \\
			& +\int_{t-2\nu_0}^{t-\nu_0} \int_{\mathbb{R}} \mathcal{X}(\frac{t-\tau}{\nu_0})H_y(x,t;y,\tau;\frac{\nu}{c_v v})\left[pu+\left(\frac{\nu}{c_v v}-\frac{\mu}{v}\right)u u_y+\left(\frac{q\nu}{c_v v}-\frac{qD}{v^2}z_y\right)\right]\,\mathrm{d}y\,\mathrm{d}\tau \\
			& +\int_{t-2\nu_0}^{t-\nu_0} \int_{\mathbb{R}}\left(1-\mathcal{X}(\frac{t-\tau}{\nu_0})\right) \partial_y\mathbb{G}_{33}(x-y;t-\tau) \left[pu+\frac{\nu(v-1)}{v}\theta_y+\left(\frac{\nu}{c_v v}-\frac{\mu}{v}\right)uu_y\right.\\
			&\left.\qquad+\left(\frac{q\nu}{c_v v}-\frac{qD}{v^2}\right)z_y\right](y,\tau)\,\mathrm{d}y\,\mathrm{d}\tau-\int_{t-2\nu_0}^{t-\nu_0} \int_{\mathbb{R}} \partial_y \mathbb{G}_{33}(x-y;t-\tau)a u(y,\tau)\,\mathrm{d}y\,\mathrm{d}\tau,\\
			\mathcal{R}_3^{\theta}= &\int_{t-\nu_0}^t \int_{\mathbb{R}} \partial_y \mathbb{G}_{32}(x-y,t-\tau)\left[a(v-1)+\frac{a(\theta-1)+a(1-v)}{v}+\frac{\mu u_y(v-1)}{v}\right](y,\tau)\,\mathrm{d}y \,\mathrm{d}\tau\\
			&-\int_{t-2\nu_0}^{t-\nu_0} \int_{\mathbb{R}} \partial_y \mathbb{G}_{33}(x-y;t-\tau)a u(y,\tau)\,\mathrm{d}y\,\mathrm{d}\tau\\
			&+\int_{t-\nu_0}^t \int_{\mathbb{R}} H_y(x,t;y,\tau;\frac{\nu}{c_v v})\left[pu+\left(\frac{\nu}{c_v v}-\frac{\mu}{v}\right)u u_y+\left(\frac{q\nu}{c_v v}-\frac{qD}{v^2}\right)z_y\right](y,\tau)\,\mathrm{d}y \,\mathrm{d}\tau.
		\end{align*}
	\end{lemma}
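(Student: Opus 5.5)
We prove Lemma~\ref{lemma: formula uvt} exactly as Lemma~\ref{lemma: z rep}. For the $u$-formula we test the system~\eqref{PDE,1} against the row vector
\[
\bigl[\mathbb{G}_{21}(x-y,t-\tau),\ G_{22}(x,t;y,\tau),\ \mathbb{G}_{23}(x-y,t-\tau),\ \mathbb{G}_{24}(x-y,t-\tau)\bigr].
\]
For the $v$-formula we use the first row of $\mathbb{G}$ throughout, with no effective kernel. For the $E$-formula we use the third row with $G_{33}$ in the diagonal slot. In each case we integrate over $(y,\tau)\in\mathbb{R}\times[0,t]$ and integrate by parts in $\tau$ and $y$. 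The conservative form of the equations is used after subtracting the background state: $v-1$, $p(v,\theta)-p(1,1)$, $E-c_v$. The boundary term at $\tau=t$ produces the left-hand side, via $\mathbb{G}(\cdot,0)=\delta I$ and $G_{jj}(x,t;y,t)=\delta(x-y)$. The term at $\tau=0$ produces the initial-data integrals.

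The $z$-column contributions, namely the $\mathbb{G}_{j4}$ terms and the $qDz_y/v^2$ and $qz$ parts of $p$ and $E$, are kept inside the source brackets. This is why the terms $-\frac{qa}{c_v}z$ and $\bigl(\frac{q\nu}{c_v}-\frac{qD}{v^2}\bigr)z_y$ appear in the remainders.

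Next, every $\partial_\tau\mathbb{G}_{jk}$ is replaced by spatial derivatives using the component identities~\eqref{array G}. Every $\partial_\tau H$ in the diagonal slot is replaced using the backward equation~\eqref{eq: H backward}, namely $H_\tau=-(fH_y)_y$ with $f=\mu/v$ (respectively $\nu/(c_vv)$).

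We then write the nonlinear fluxes as linear part plus remainder. We use
\[
p-1=\frac{a(\theta-1)}{v}+\frac{a(1-v)}{v},\qquad \theta=\frac{E-u^2/2-qz}{c_v},
\]
so that $\frac{a(v-1)^2}{v}$, $\frac{a(\theta-1)(1-v)}{v}$ and $-\frac{au^2}{2c_v}$ appear. The terms that are linear in $(v-1,u,E-c_v,z)$ cancel exactly against the $\mathbb{G}$-identities. For example, $\int\partial_\tau\mathbb{G}_{21}(v-1)=a\int\partial_y\mathbb{G}_{22}(v-1)$ cancels against the linear part of $\int\partial_y\mathbb{G}_{22}(p-1)$. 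What survives is quadratic, plus the viscous corrections $\frac{\mu u_y(v-1)}{v}$ and $\frac{\nu\theta_y(v-1)}{v}$, which come from writing $\frac{\mu}{v}=\mu-\frac{\mu(v-1)}{v}$.

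Finally, the $\tau$-integral is split into $[0,t-2\nu_0]$, $[t-2\nu_0,t-\nu_0]$ and $[t-\nu_0,t]$. On the first interval $G_{jj}=\mathbb{G}_{jj}$, and all linear terms cancel, giving $\mathcal{R}_1$. On the last interval $G_{jj}=H$. Here the diffusion term $\bigl(\frac{\mu}{v}u_y\bigr)_y$ cancels exactly against $-\bigl(\frac{\mu}{v}H_y\bigr)_y u$ after integrating by parts. The pressure term is handled with $H_y$ acting on $\frac{a(\theta-v)}{v}$, and the off-diagonal $\mathbb{G}$ terms stay explicit, giving $\mathcal{R}_3$. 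The middle interval gives $\mathcal{R}_2$. There $\partial_\tau G_{jj}$ contains $\nu_0^{-1}\mathcal{X}'\,[\mathbb{G}_{jj}-H]$ together with the convex-combination weights $\mathcal{X}$ and $1-\mathcal{X}$ on the $H$- and $\mathbb{G}$-parts. The mismatch between the exact cancellation available for $H$ and that available for $\mathbb{G}$ leaves the terms $\mathcal{X}\,[H_y-\partial_y\mathbb{G}_{22}]\frac{a(\theta-v)}{v}$ and $\mathcal{X}\,[a\partial_y\mathbb{G}_{23}-\partial_y\mathbb{G}_{21}]u$.

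For $v$ there is no diffusion in the first equation, so no cutoff is needed. The whole interval $[0,t]$ is treated as in $\mathcal{R}_1$, which gives the single remainder displayed in the statement.

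Justification of the integrations by parts uses the regularity of Definition~\ref{new def, weak sol, Nov25}: $v\in BV$ and $u,\theta,z\in W^{1,1}$. It also uses the continuity of the fluxes from Theorem~\ref{thm: local exi}, so that no boundary terms arise at the jump set $\mathcal{D}$. Since $\mathbb{G}^*$ contains $\delta$-type singular parts (Theorem~\ref{thm: es for singular G}), pairing these with the BV function $v-1$ is interpreted as in \cite{WangHT2021}, and it is legitimate because $\tau\mapsto v(\cdot,\tau)$ is continuous in $L^1\cap BV$.

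The main obstacle is bookkeeping on the middle window $[t-2\nu_0,t-\nu_0]$. One must check that every linear term generated by $\partial_\tau G_{jj}$ is either cancelled or recorded in $\mathcal{R}_2$, including the $\mathcal{X}$-weighted terms in the $z$-coupling. I would first verify each cancellation separately for the pure $\mathbb{G}$ case ($\mathcal{X}\equiv 0$) and the pure $H$ case ($\mathcal{X}\equiv 1$), and then interpolate linearly in $\mathcal{X}$.
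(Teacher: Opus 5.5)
Your overall scheme is the paper's: pair the equations with a row of the Green's matrix that carries the effective kernel on the diagonal, integrate by parts, trade $\partial_\tau\mathbb{G}$ for $y$-derivatives via \eqref{array G} and $\partial_\tau H$ via \eqref{eq: H backward}, and split at $t-2\nu_0$ and $t-\nu_0$. Your test vector, however, is not the paper's, and with it the stated identity does not come out.

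The paper multiplies only the first three equations of \eqref{PDE,1}, by $[\mathbb{G}_{11},\mathbb{G}_{12},\mathbb{G}_{13}]$, $[\mathbb{G}_{21},G_{22},\mathbb{G}_{23}]$ and $[\mathbb{G}_{31},\mathbb{G}_{32},G_{33}]$. This closes because, for $k\le 3$, the identities \eqref{array G} contain no $\mathbb{G}_{j4}$: the fourth rows of $F'(\bar U)$ and $B(\bar U)$ vanish apart from $B_{44}=D$. Because the $z$-equation is never used, the $z$-dependence of $p$ and of the energy flux is never cancelled. That is exactly why $-\frac{qa}{c_v}z$ and $\bigl(\frac{q\nu}{c_v}-\frac{qD}{v^2}\bigr)z_y$ appear linearly in the brackets.

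If you append $\mathbb{G}_{24}$ and pair it with the $z$-equation, you generate $\int_{\mathbb{R}}\mathbb{G}_{24}(x-y,t)z(y,0)\,\mathrm{d}y$ and $-\int_0^t\int_{\mathbb{R}}\mathbb{G}_{24}(x-y,t-\tau)K\phi(\theta)z\,\mathrm{d}y\,\mathrm{d}\tau$. Neither term is in the statement, and neither vanishes, since $\mathbb{G}_{24}=-q\mathbb{G}_{23}\not\equiv 0$ (cf.\ \eqref{M 34}). You also say you replace every $\partial_\tau\mathbb{G}_{jk}$ using \eqref{array G}. Doing so for $\partial_\tau\mathbb{G}_{24}$ produces $\frac{qa}{c_v}\partial_y\mathbb{G}_{22}\,z$ and $\bigl(qD-\frac{q\nu}{c_v}\bigr)\partial_y\mathbb{G}_{23}\,z_y$. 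These cancel exactly the linear $z$-terms, leaving $\frac{qD(v^2-1)}{v^2}z_y$ plus a new $\partial_y\mathbb{G}_{24}\frac{D(v^2-1)}{v^2}z_y$ term. The result is a valid identity of the type in Lemma~\ref{lemma: z rep}, but it is not the one you are asked to prove.

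Two of your explanatory sentences are therefore inconsistent with each other and with the statement. No bracket in the statement contains a $\mathbb{G}_{j4}$, so the $\mathbb{G}_{j4}$ terms cannot be ``kept inside the source brackets''. And if all terms linear in $(v-1,u,E-c_v,z)$ cancelled, $-\frac{qa}{c_v}z$ could not appear in $\mathcal{R}^u_1$.

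The fix is to drop the fourth component in all three formulas. Equivalently, note that for $j\le 3$ the total $\mathbb{G}_{j4}$-contribution is just the integrated-by-parts form of $\int_0^t\int_{\mathbb{R}}\mathbb{G}_{j4}\bigl(z_\tau+K\phi(\theta)z-(\tfrac{D}{v^2}z_y)_y\bigr)\,\mathrm{d}y\,\mathrm{d}\tau=0$. It sums to zero, with no boundary term at $\tau=t$ since $\mathbb{G}_{j4}(\cdot,0)=0$, but only if you leave it untransformed. Make this change and restrict your cancellation claim to terms linear in $(v-1,u,E-c_v)$. The rest of your plan is then the argument the paper sketches, including the $\mathcal{X}$-interpolation on $[t-2\nu_0,t-\nu_0]$ and the absence of a cutoff for $v$.
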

	\begin{proof}
		The proof follows  similarly to that of Lemma~\ref{lemma: z rep}. There is, however, a key difference: only the first three equations of Eq.~\eqref{PDE,1} are used here. This is because the fourth column of the Green's function is $q$ times the third column.

		We begin by multiplying the first three equations of Eq.~\eqref{PDE,1} by the vector
		\begin{equation*}
			\left[\mathbb{G}_{11}(x-y,t-\tau),\mathbb{G}_{12}(x-y,t-\tau),\mathbb{G}_{13}(x-y,t-\tau)\right]^\top,
		\end{equation*}
		and then integrate with respect to $\tau$ and $y$. Via integration by parts, we obtain the representation formula for $v(x,t)$. 
        
        Similarly, for $u(x,t)$ and $E(x,t)$ we multiply the first three equation of Eq.~\eqref{PDE,1} by the vectors
		\begin{equation*}
			\left[\mathbb{G}_{21}(x-y,t-\tau),G_{22}(x,t;y,\tau),\mathbb{G}_{23}(x-y,t-\tau)\right]^\top
		\end{equation*}
		and
		\begin{equation*}
			\left[\mathbb{G}_{31}(x-y,t-\tau),\mathbb{G}_{32}(x-y,t-\tau),G_{33}(x,t;y,\tau)\right]^\top,
		\end{equation*}
		respectively, and then  follow the similar procedure.
	\end{proof}
	\begin{corollary}
		The first order derivatives of $\mathcal{R}_3^{u}$ and $\mathcal{R}_3^{\theta}$ are expressed below:
		\begin{align*}
			\partial_x \mathcal{R}_3^u&=  \int_{-\infty}^x \int_{-\infty}^y \frac{1}{\mu}\left(\partial_x \mathbb{G}_{22}\left(x-\omega, \nu_0\right)-H_x\left(x,t;\omega,t-\nu_0;\frac{\mu}{v}\right)\right) \,\mathrm{d}\omega\left(a v-a\theta\right)(y,t-\nu_0) \,\mathrm{d}y \\
			& -\int_{t-\nu_0}^t \int_{-\infty}^x \int_{-\infty}^y-\frac{1}{\mu}\left(\partial_x \mathbb{G}_{22}(x-\omega,t-\tau)-H_x\left(x,t;\omega,\tau;\frac{\mu}{v}\right)\right)\,\mathrm{d}\omega\left(a v_\tau-a\theta_\tau\right)(y,\tau)\,\mathrm{d}y\,\mathrm{d}\tau \\
			& -\int_x^{+\infty} \int_y^{+\infty} \frac{1}{\mu}\left(\partial_x \mathbb{G}_{22}\left(x-\omega,\nu_0\right)-H_x\left(x,t;\omega,t-\nu_0;\frac{\mu}{v}\right)\right)\,\mathrm{d}\omega\left(a v-a \theta\right)(y, t-\nu_0)\,\mathrm{d} y \\
			& +\int_{t-\nu_0}^t \int_x^{+\infty} \int_y^{+\infty}-\frac{1}{\mu}\left(\partial_x \mathbb{G}_{22}(x-\omega,t-\tau)-H_x\left(x,t;\omega,\tau;\frac{\mu}{v}\right)\right)\,\mathrm{d}\omega\left(a v_\tau-a \theta_\tau\right)(y,\tau)\,\mathrm{d}y\,\mathrm{d} \tau \\
			& +\int_{t-\nu_0}^t \int_{\mathbb{R}}\frac{1}{\mu}\left(\partial_x \mathbb{G}_{21}(x-y,t-\tau)-a\partial_x \mathbb{G}_{23}(x-y, t-\tau)\right)(a v-a \theta)(y,\tau)\,\mathrm{d}y\,\mathrm{d}\tau \\
			& +\int_{t-\nu_0}^t \int_{\mathbb{R}} \partial_x \mathbb{G}_{21}(x-y, t-\tau)u_y(y,\tau)\,\mathrm{d}y\,\mathrm{d}\tau+\int_{t-\nu_0}^t \int_{\mathbb{R}} \partial_x \mathbb{G}_{22}(x-y, t-\tau)\frac{au u_y}{c_v}(y,\tau)\,\mathrm{d}y\,\mathrm{d}\tau\\
			&+\int_{t-\nu_0}^t \int_{\mathbb{R}} \partial_x \mathbb{G}_{22}(x-y, t-\tau)\frac{qa}{c_v}z_y(y,\tau)\,\mathrm{d}y\,\mathrm{d}\tau\\
			&+\int_{t-\nu_0}^t \int_{\mathbb{R}} \partial_{xy} \mathbb{G}_{23}(x-y,t-\tau)\left(p u+\frac{\nu(v-1)}{v}\theta_y+\left(\frac{\nu}{c_v}-\frac{\mu}{v}\right)u u_y+\frac{q\nu}{c_v}z_y-\frac{qD}{v^2}z_y\right)\,\mathrm{d}y\,\mathrm{d}\tau,
		\end{align*}
		and
		\begin{align*}
			\partial_x \mathcal{R}_3^\theta= & \int_{t-\nu_0}^t \int_{\mathbb{R}} \partial_{x y} \mathbb{G}_{32}(x-y,t-\tau)\left[a(v-1)+\mu u_y\frac{v-1}{v}+\frac{a \theta-a v}{v}\right](y,\tau)\,\mathrm{d}y\,\mathrm{d}\tau \\
			& +\int_{t-\nu_0}^t \int_{\mathbb{R}}\left(\partial_x \mathbb{G}_{33}(x-y,t-\tau)-H_x\left(x,t;y,\tau;\frac{\nu}{c_v v}\right)\right) au_y(y, \tau) d y d \tau \\
			& +\int_{t-\nu_0}^t \int_{\mathbb{R}}H_x\left(x,t;y,\tau;\frac{\nu}{c_v v}\right)au_y(y, \tau) d y d \tau \\
			& +\int_{t-\nu_0}^t \int_{\mathbb{R}}H_x\left(x,t;y,\tau;\frac{\nu}{c_v v}\right)\left(1-\frac{\nu}{c_v\mu}\right)\left(uu_{\tau}-pu_y+\frac{\mu u_y}{v}u_y\right)(y,\tau)d y d \tau \\
			& +\int_{-\infty}^x \int_{-\infty}^y H_x\left(x,t;\omega,t-\nu_0 ; \frac{\nu}{c_v v}\right)\,\mathrm{d}\omega \,\frac{a}{\mu} \theta(y,t-\nu_0)u(y, t-\nu_0) d y \\
			& +\int_{t-\nu_0}^t \int_{-\infty}^x \int_{-\infty}^y H_x\left(x, t ; \omega,\tau;\frac{\nu}{c_v v}\right) d\omega \,\frac{a }{\mu}\left(\theta_\tau u+\theta u_\tau\right)(y,\tau)d y d \tau \\
			& -\int_x^{+\infty} \int_y^{+\infty} H_x\left(x,t;\omega,t-\nu_0 ;\frac{\nu}{c_v v}\right) d\omega\,\frac{a}{\mu} \theta(y,t-\nu_0)u(y, t-\nu_0) d y \\
			& -\int_{t-\nu_0}^t \int_x^{+\infty} \int_y^{+\infty} H_x\left(x,t;\omega,\tau;\frac{\nu}{c_v v}\right) d \omega \,\frac{a }{\mu}\left(\theta_\tau u+\theta u_\tau\right)(y,\tau)d y d \tau\\
			&+qz_x(x,t)-q\int_{\mathbb{R}} H_x\left(x,t;y,t-\nu_0;\frac{\nu}{c_v v}\right) z(y,t-\nu_0)\,\mathrm{d}y\\
			&+q\int_{t-\nu_0}^{t}\int_{\mathbb{R}}H_x\left(x,t;y,t-\nu_0;\frac{\nu}{c_v v}\right)K\phi(\theta) z\,\mathrm{d}y\,\mathrm{d}\tau.
		\end{align*}
	\end{corollary}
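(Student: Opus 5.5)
The plan is to differentiate the representation formulae for $\mathcal{R}_3^u$ and $\mathcal{R}_3^\theta$ from Lemma~\ref{lemma: formula uvt} in $x$. The singular kernels $\partial_{xy}$ must never act directly on merely BV data such as $v$ and $\theta$ near $\tau=t$. To avoid this, we trade each $\partial_y\mathbb{G}_{22}$ and $H_y$ for an $x$-derivative of a ``double antiderivative'' of the kernel, integrate by parts in $\tau$, and move the time derivative onto $v_\tau=u_y$ and $\theta_\tau$, whose bounds are available from Theorem~\ref{thm: local regularity}. The corollary is an identity, so the proof is purely algebraic manipulation plus justification of the boundary terms.

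\textbf{The terms of $\partial_x\mathcal{R}_3^u$.} First handle the term $\int_{t-\nu_0}^t\int H_y(x,t;y,\tau;\frac{\mu}{v})\frac{a(\theta-v)}{v}$ together with the $\partial_y\mathbb{G}_{22}\,a(v-\theta)$ term. Write $\frac{a(\theta-v)}{v}=\frac{1}{\mu}\cdot\frac{\mu}{v}\,a(\theta-v)$. Use the backward equation \eqref{eq: H backward}, $\partial_\tau H=-\partial_y(\frac{\mu}{v}H_y)$, and its analogue for $\mathbb{G}_{22}$ from \eqref{array G}, $\partial_\tau\mathbb{G}_{22}=\partial_y\mathbb{G}_{21}-a\partial_y\mathbb{G}_{23}-\mu\partial_y^2\mathbb{G}_{22}$. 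These let us express $\frac{\mu}{v}H_y$ (respectively $\mu\partial_y\mathbb{G}_{22}$) as $-\int_{-\infty}^y\partial_\tau(\cdot)\,d\omega$ for $y<x$, or as $+\int_y^\infty\partial_\tau(\cdot)\,d\omega$ for $y>x$. The split at $y=x$ follows the anti-derivative convention \eqref{W anti_deri}; it is needed because the total $\omega$-integrals vanish only after the $x$-derivative, by Lemma~\ref{int H}. The $\mathbb{G}_{21}$ and $\mathbb{G}_{23}$ pieces produced in this way are exactly the fifth line of the displayed formula.

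\textbf{Integration by parts in $\tau$ and the remaining terms.} Next we differentiate in $x$ and integrate by parts in $\tau$ over $(t-\nu_0,t)$. The lower endpoint $\tau=t-\nu_0$ gives the two boundary terms with data $(av-a\theta)(y,t-\nu_0)$. At the upper endpoint, $\mathbb{G}_{22}(\cdot,0)=H(\cdot,t;\cdot,t)=\delta$, so the deltas cancel in the difference $\partial_x\mathbb{G}_{22}-H_x$. The interior term carries $a v_\tau-a\theta_\tau$. The remaining pieces, with kernels $\mathbb{G}_{21}u$, $\mathbb{G}_{22}\frac{au^2}{2c_v}$, $\mathbb{G}_{22}\frac{qa}{c_v}z$ and $\mathbb{G}_{23}$, are differentiated directly, and we move $\partial_y$ onto the data, giving $u_y$, $uu_y$ and $z_y$; this is legitimate since $u,z\in W^{1,1}$. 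For $\partial_x\mathcal{R}_3^\theta$ the approach is the same, using $H(\cdot;\frac{\nu}{c_vv})$. The only new ingredient is the $z$-coupling. Since $E=c_v\theta+u^2/2+qz$, the contribution $\frac{q\nu}{c_v v}z_y$ is rewritten through the $z$-equation, $(\frac{D}{v^2}z_y)_y=z_\tau+K\phi(\theta)z$, and the Duhamel identity for $z$ on $[t-\nu_0,t]$. This produces $qz_x(x,t)$, the boundary term at $t-\nu_0$, and the reaction integral. Similarly, $u u_\tau$ enters through $u_\tau=(\frac{\mu u_y}{v}-p)_y$.

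\textbf{Main obstacle.} The hard part is justifying the $\tau$-integration by parts near $\tau=t$. There the kernel $\partial_x\mathbb{G}_{22}-H_x$ has only the integrated bounds of Lemmas~\ref{lemma: Liu} and \ref{lemma: 2 derivative}, and $v_\tau,\theta_\tau$ are $O(\tau^{-1/2})$ and $O(\tau^{-1})$ in $L^\infty$. I would first establish the identity on $(t-\nu_0,t-\epsilon)$, then let $\epsilon\to0$. This uses the delta cancellation and the continuity of the fluxes from Theorem~\ref{thm: local regularity}(2), since $t-\nu_0>0$ is bounded away from the initial layer.
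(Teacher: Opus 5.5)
Your treatment of $\partial_x\mathcal{R}_3^u$ is correct and matches the derivation the paper takes from Wang--Yu--Zhang. Writing $\frac{a(\theta-v)}{v}=\frac{1}{\mu}\cdot\frac{\mu}{v}\,a(\theta-v)$, the backward equations for $H(\cdot;\frac{\mu}{v})$ and $\mathbb{G}_{22}$ turn the two $a(v-\theta)$ terms into a split double antiderivative of $\frac{1}{\mu}\partial_\tau(H-\mathbb{G}_{22})$, plus the $\mathbb{G}_{21},\mathbb{G}_{23}$ line. Since $\int_{\mathbb{R}}(\mathbb{G}_{22}-H)\,d\omega=0$, nothing survives at $\tau=t$ or from the $x$-dependent limits. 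So the split at $y=x$ is not what makes the $u$-identity work, contrary to your stated reason; it matters only where $\int_{\mathbb{R}}H\,d\omega=1$.

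\textbf{Missing idea for $\mathcal{R}_3^\theta$.} The gap is the sentence that for $\partial_x\mathcal{R}_3^\theta$ ``the approach is the same''. In the target formula the double antiderivatives contain $H_x(\cdot;\frac{\nu}{c_v v})$ alone, with data $\frac{a}{\mu}\theta u$. The kernel $\mathbb{G}_{33}$ enters only through $\partial_x\mathbb{G}_{33}\,au_y$, obtained from $-\partial_y\mathbb{G}_{33}\,au$ by moving $\partial_y$ onto $u$. What you are missing is the recombination that produces this data:
\begin{itemize}
\item write $H_y(\frac{\nu}{c_v v}-\frac{\mu}{v})uu_y=-(1-\frac{\nu}{c_v\mu})H_y\frac{\mu u_y}{v}u$;
\item split $\frac{\mu u_y}{v}=\sigma+p$ with $\sigma=\frac{\mu u_y}{v}-p$, so that $\sigma_y=u_\tau$;
\item integrate $H_y\sigma u$ by parts in $y$.
\end{itemize}
This gives the line $(1-\frac{\nu}{c_v\mu})H_x(uu_\tau-pu_y+\frac{\mu}{v}u_y^2)$ after differentiation. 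It also gives a term $-(1-\frac{\nu}{c_v\mu})H_y\,pu$. This combines with the $H_y\,pu$ already present in $\mathcal{R}_3^\theta$ to give $\frac{\nu}{c_v\mu}H_y\,pu=\frac{\nu}{c_v v}H_y\cdot\frac{a}{\mu}\theta u$. Only in this form does $\partial_y(\frac{\nu}{c_v v}H_y)=-H_\tau$ produce the stated terms. Applying your $u$-trick directly to $H_y\,pu$ instead gives data $\frac{c_v a}{\nu}\theta u$. It also leaves $H_{xy}\,uu_y$, and $\int_{\mathbb{R}}|H_{xy}|\,dy$ is of order $(t-\tau)^{-1}$.

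Likewise, the $z$-terms come from testing the $z$-equation against $H(\cdot;\frac{\nu}{c_v v})$, not from the Duhamel formula with $H(\cdot;\frac{D}{v^2})$. The backward equation turns $\frac{q\nu}{c_v v}H_y z_y$ into $q\int H_\tau z$. The diffusion term of the $z$-equation then cancels the $-\frac{qD}{v^2}H_y z_y$ piece.

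\textbf{The $\tau=t$ endpoint.} Your endpoint argument does not transfer either, because an $H$-only kernel has no delta cancellation. Suppose you differentiate first and integrate by parts in $\tau$ on $(t-\nu_0,t-\epsilon)$, as you propose. The endpoint quantity $\int_{-\infty}^x\int_{-\infty}^y H_x\,d\omega\,g\,dy-\int_x^{+\infty}\int_y^{+\infty}H_x\,d\omega\,g\,dy$, with $g=\frac{a}{\mu}\theta u$, tends to $-g(x,t)$ as $\epsilon\to0$. So you pick up an extra local term $+\frac{a}{\mu}(\theta u)(x,t)$.

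The paper works in the other order: it integrates by parts in $\tau$ first, where the $\tau=t$ term does vanish because of the split, and then differentiates. In that order the same term comes from differentiating the limits $\int_{-\infty}^x$ and $\int_x^{+\infty}$, since $\int_{\mathbb{R}}H\,d\omega=1$. The paper's proof does not record this term, and the displayed formula omits it.

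You can check this with constant diffusivity $f$, its heat kernel $K$, and time-independent $g$. Then $\partial_x\int_{t-\nu_0}^t\int_{\mathbb{R}}f\,\partial_y K(x-y,t-\tau)\,g(y)\,dy\,d\tau=g(x)-\int_{\mathbb{R}}K(x-y,\nu_0)g(y)\,dy$, whereas the displayed double-antiderivative terms give only the second summand. The missing term is $O(1)|u|$, so it is harmless in Lemma~\ref{lemma: es z_x theta_x large t}. But the identity you set out to prove needs it.
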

	\begin{proof}
		The derivation for $\partial_x \mathcal{R}_3^u$ can be found in \cite[Corollary 4.1]{WangHT2021}. Here we focus on  $\partial_x \mathcal{R}_3^{\theta}$.

		Recall the evolution equation for $u$, namely,
		\begin{equation*}
			uu_{\tau}=u\left(\frac{\mu u_y}{v}-p\right)_y.
		\end{equation*}
		Multiplying  both sides by $H(x,t;y,\tau;\frac{\nu}{c_v v})$ and integrating over $\mathbb{R}\times [t-\nu_0,t]$, we obtain that
		\begin{equation*}
			\int_{t-\nu_0}^t\int_{\mathbb{R}}H(x,t;y,\tau;\frac{\nu}{c_v v})\left(uu_{\tau}-u\left(\frac{\mu u_y}{v}-p\right)_y\right)\,\mathrm{d}y\,\mathrm{d}\tau=0.
		\end{equation*}
		Applying integration by parts, we derive that
		\begin{align*}
			&\int_{t-\nu_0}^t\int_{\mathbb{R}}H_y(x,t;y,\tau;\frac{\nu}{c_v v})\left(\frac{\nu}{c_v v}-\frac{\mu}{v}\right)u u_y(y,\tau)\,\mathrm{d}y\,\mathrm{d}\tau\\
			=&-\int_{t-\nu_0}^t\int_{\mathbb{R}}H_y(x,t;y,\tau;\frac{\nu}{c_v v})\left(1-\frac{\nu}{c_v v}\right)\frac{\mu}{v}u u_y(y,\tau)\,\mathrm{d}y\,\mathrm{d}\tau\\
			=&\int_{t-\nu_0}^t\int_{\mathbb{R}}H(x,t;y,\tau;\frac{\nu}{c_v v})\left(1-\frac{\nu}{c_v v}\right)\left[uu_{\tau}-pu_y+\frac{\mu u_y}{v}u_y\right](y,\tau)\,\mathrm{d}y\,\mathrm{d}\tau\\
			&\quad -\int_{t-\nu_0}^t\int_{\mathbb{R}}H_y(x,t;y,\tau;\frac{\nu}{c_v v})\left(1-\frac{\nu}{c_v v}\right)p u(y,\tau)\,\mathrm{d}y\,\mathrm{d}\tau.
		\end{align*}
		Using $p=\frac{a\theta}{v}$ and the backward equation for $H$:
		\begin{equation}
			\label{eq: H back nu}
			\left\{\begin{array}{l}
				\partial_{\omega}\left(\frac{\nu}{c_v v}(\omega,\tau) H_{\omega}(x,t;\omega,\tau;\frac{\nu}{c_v v})\right)=-H_{\tau}(x,t;\omega,\tau;\frac{\nu}{c_v v}),\\
				H(x,t;\omega,t;\frac{\nu}{c_v v})=\delta(x-\omega),
			\end{array}\right.
		\end{equation}
		we further obtain via integration by parts that
		\begin{align*}
			&\int_{t-\nu_0}^t\int_{\mathbb{R}}H_y(x,t;y,\tau;\frac{\nu}{c_v v})\left(1-\frac{\nu}{c_v v}\right)\frac{\nu}{c_v \mu}\frac{a\theta}{v}u(y,\tau)\,\mathrm{d}y\,\mathrm{d}\tau\\
			=&\int_{t-\nu_0}^t\int_{-\infty}^{x}\int_{-\infty}^{y}\partial_{\omega}\left(\frac{\nu}{c_v v} H_{\omega}(x,t;\omega,\tau;\frac{\nu}{c_v v})\right)\,\mathrm{d}\omega \frac{a}{\mu}\theta u(y,\tau)\,\mathrm{d}y\,\mathrm{d}\tau\\
			&\quad -\int_{t-\nu_0}^t\int_{x}^{+\infty}\int_{y}^{+\infty}\partial_{\omega}\left(\frac{\nu}{c_v v} H_{\omega}(x,t;\omega,\tau;\frac{\nu}{c_v v})\right)\,\mathrm{d}\omega \frac{a}{\mu}\theta u(y,\tau)\,\mathrm{d}y\,\mathrm{d}\tau\\
			=&-\int_{t-\nu_0}^t\int_{-\infty}^{x}\int_{-\infty}^{y}H_{\tau}(x,t;\omega,\tau;\frac{\nu}{c_v v})\,\mathrm{d}\omega \frac{a}{\mu}\theta u(y,\tau)\,\mathrm{d}y\,\mathrm{d}\tau\\
			&\quad +\int_{t-\nu_0}^t\int_{x}^{+\infty}\int_{y}^{+\infty}H_{\tau}(x,t;\omega,\tau;\frac{\nu}{c_v v})\,\mathrm{d}\omega \frac{a}{\mu}\theta u(y,\tau)\,\mathrm{d}y\,\mathrm{d}\tau\\
			=&\int_{-\infty}^{x}\int_{-\infty}^y H(x,t;\omega,t-\nu_0;\frac{\nu}{c_v v})\,\mathrm{d}\omega \frac{a}{\mu}\theta(y,t-\nu_0) u(y,t-\nu_0)\,\mathrm{d}y\\
			&\quad -\int_{x}^{+\infty}\int_{y}^{+\infty} H(x,t;\omega,t-\nu_0;\frac{\nu}{c_v v})\,\mathrm{d}\omega \frac{a}{\mu}\theta(y,t-\nu_0) u(y,t-\nu_0)\,\mathrm{d}y\\
			&\quad +\int_{t-\nu_0}^t\int_{-\infty}^{x}\int_{-\infty}^{y}H(x,t;\omega,\tau;\frac{\nu}{c_v v})\,\mathrm{d}\omega \frac{a}{\mu}\left(\theta_{\tau}u+\theta u_{\tau}\right)(y,\tau)\,\mathrm{d}y\,\mathrm{d}\tau\\
			&\quad -\int_{t-\nu_0}^t\int_{x}^{+\infty}\int_{y}^{+\infty}H(x,t;\omega,\tau;\frac{\nu}{c_v v})\,\mathrm{d}\omega \frac{a}{\mu}\left(\theta_{\tau}u+\theta u_{\tau}\right)(y,\tau)\,\mathrm{d}y\,\mathrm{d}\tau.
		\end{align*}

		In an analogous manner, multiplying 
		\begin{equation*}
			z_{\tau}+K \phi(\theta)z=\left(\frac{D}{v^2} z_y\right)_y
		\end{equation*}
		by the kernel $H(x,t;y,\tau;\frac{\nu}{c_v v})$ and integrating over $\mathbb{R}\times [t-\nu_0,t]$, we deduce that
		\begin{align*}
			\int_{t-\nu_0}^t \int_{\mathbb{R}}H(x,t;y,\tau;\frac{\nu}{c_v v}) \left(z_{\tau}+K \phi(\theta) z-\left(\frac{D}{v^2} z_y\right)_y\right)\,\mathrm{d}y\,\mathrm{d}\tau=0.
		\end{align*}
		Note that
		\begin{align*}
			\int_{t-\nu_0}^t \int_{\mathbb{R}}H(x,t;y,\tau;\frac{\nu}{c_v v}) \left(z_{\tau}+K \phi(\theta) z\right)\,\mathrm{d}y\,\mathrm{d}\tau=-\int_{t-\nu_0}^t \int_{\mathbb{R}}H_y(x,t;y,\tau;\frac{\nu}{c_v v}) \frac{D}{v^2} z_y \,\mathrm{d}y \,\mathrm{d}\tau.
		\end{align*}
        Hence, using the backward Eq.~\eqref{eq: H back nu} for $H$  and integrating by parts in $\tau$, we obtain that
		\begin{align*}
			&\int_{t-\nu_0}^t \int_{\mathbb{R}}H_y(x,t;y,\tau;\frac{\nu}{c_v v}) \frac{\nu}{c_v v} z_y(y,\tau) \,\mathrm{d}y \,\mathrm{d}\tau\\
			=&-\int_{t-\nu_0}^t \int_{\mathbb{R}}\partial_y\left(\frac{\nu}{c_v v}H_y(x,t;y,\tau;\frac{\nu}{c_v v}) \right) z(y,\tau) \,\mathrm{d}y \,\mathrm{d}\tau\\
			=&\int_{t-\nu_0}^t \int_{\mathbb{R}}H_{\tau}(x,t;y,\tau;\frac{\nu}{c_v v}) z(y,\tau) \,\mathrm{d}y \,\mathrm{d}\tau\\
			=&z(x,t)-\int_{\mathbb{R}}H(x,t;y,t-\nu_0;\frac{\nu}{c_v v})z(y,t-\nu_0)\,\mathrm{d}y-\int_{t-\nu_0}^t \int_{\mathbb{R}}H(x,t;y,\tau;\frac{\nu}{c_v v}) z_{\tau}(y,\tau) \,\mathrm{d}y \,\mathrm{d}\tau.
		\end{align*}

		Finally, substituting the above results into the representation of $\mathcal{R}_3^{\theta}$ given in Lemma \ref{lemma: formula uvt} and differentiating with respect to $x$, we complete the proof. 	    	
	\end{proof}
	\subsection{Global existence, uniqueness and large time behaviour}
  As in \cite{LiuTP2022}, we introduce 
	\begin{align}
		\label{stop time}
			T & =\sup _{t \geq 0}\{t :\,\mathcal{G}(\tau)<\delta \text { for all } 0<\tau<t\}, 
	\end{align}
viewed as the \emph{stopping time}, where
    \begin{align*}
\mathcal{G}  (\tau) &:=\|\sqrt{\tau+1}(v(\cdot, \tau)-1)\|_{L_x^{\infty}}+\|\sqrt{\tau+1} u(\cdot, \tau)\|_{L_x^{\infty}}+\|\sqrt{\tau+1}(\theta(\cdot, \tau)-1)\|_{L_x^{\infty}}\nonumber \\
			&+\|\sqrt{\tau+1} z(\cdot, \tau)\|_{L_x^{\infty}}+\|v(\cdot, \tau)-1\|_{L_x^1}+\|u(\cdot, \tau)\|_{L_x^1}+\|\theta(\cdot, \tau)-1\|_{L_x^1}\nonumber \\
			&+\|z(\cdot, \tau)\|_{L_x^1}+\|v(\cdot, \tau)-1\|_{BV}+\|u(\cdot, \tau)\|_{BV}+\|\theta(\cdot,\tau)-1\|_{BV}+\|z(\cdot,\tau)\|_{BV}\nonumber\\
			& +\left\|\sqrt{\tau}u_x(\cdot, \tau)\right\|_{L_x^{\infty}}+\left\|\sqrt{\tau}\theta_x(\cdot, \tau)\right\|_{L_x^{\infty}}+\left\|\sqrt{\tau}z_x(\cdot, \tau)\right\|_{L_x^{\infty}}.
    \end{align*}
	\begin{lemma}
		\label{lemma: T+t}
		Let $(v, u, E, z)$, $C_{\sharp}$, $t_{\sharp}$ and $\delta$ be the local solution
		and corresponding parameters constructed in Theorem \ref{thm: local exi}. We further suppose that 
		\begin{equation}
			\label{300C}
			\left\|v_0-1\right\|_{L_x^1}+\left\|v_0\right\|_{B V}+\left\|u_0\right\|_{L_x^1}+\left\|u_0\right\|_{B V}+\left\|\theta_0-1\right\|_{L_x^1}+\left\|\theta_0\right\|_{B V}+\left\|z_0\right\|_{L_x^1}+\left\|z_0\right\|_{B V} \leq \delta^*<\frac{\delta}{300C_{\sharp}}.
		\end{equation}
		Then the stopping time defined in \eqref{stop time} satisfies $$T>t_{\sharp}.$$  Moreover, for $\delta>0$ and $t<T+t_{\sharp}$, the solution satisfies
		\begin{equation}
			\label{es for T+t}
			\left\{\begin{array}{l}
				\max \left\{\begin{array}{l}
					\|u(\cdot, t)\|_{L_x^1},\|u(\cdot, t)\|_{L_x^{\infty}},\left\|u_x(\cdot, t)\right\|_{L_x^1}, \\
					\sqrt{\min \left(t, t_{\sharp}\right)}\left\|u_x(\cdot, t)\right\|_{L_x^{\infty}}, \sqrt{\min \left(t, t_{\sharp}\right)}\left\|u_t(\cdot, t)\right\|_{L_x^1}, \min \left(t, t_{\sharp}\right)\left\|u_t(\cdot, t)\right\|_{L_x^{\infty}}
				\end{array}\right\} \leq 2 C_{\sharp} \delta, \\
				\max \left\{\begin{array}{l}
					\|\theta(\cdot, t)-1\|_{L_x^1},\|\theta(\cdot, t)-1\|_{L_x^{\infty}},\left\|\theta_x(\cdot, t)\right\|_{L_x^1}, \\
					\sqrt{\min \left(t, t_{\sharp}\right)}\left\|\theta_x(\cdot, t)\right\|_{L_x^{\infty}}, \sqrt{\min \left(t, t_{\sharp}\right)}\left\|\theta_t(\cdot, t)\right\|_{L_x^1}, \min \left(t, t_{\sharp}\right)\left\|\theta_t(\cdot, t)\right\|_{L_x^{\infty}}
				\end{array}\right\} \leq 2 C_{\sharp} \delta, \\
				\max \left\{\int_{\mathbb{R} \backslash \mathcal{D}}\left|v_x(x, t)\right|\,\mathrm{d}x,\|v(\cdot, t)-1\|_{L_x^1},\|v(\cdot, t)-1\|_{L_x^{\infty}}, \sqrt{\min \left(t, t_{\sharp}\right)}\left\|v_t(\cdot, t)\right\|_{L_x^{\infty}}\right\} \leq 2 C_{\sharp} \delta, \\
				\max \left\{\begin{array}{l}
					\|z(\cdot, \tau)\|_{L_x^1},\|z(\cdot, t)\|_{L_x^{\infty}},\left\|z_x(\cdot, t)\right\|_{L_x^1}, \\
					\sqrt{\min \left(t, t_{\sharp}\right)}\left\|z_x(\cdot, t)\right\|_{L_x^{\infty}}, \sqrt{\min \left(t, t_{\sharp}\right)}\left\|z_t(\cdot, t)\right\|_{L_x^1}, \min \left(t, t_{\sharp}\right)\left\|z_t(\cdot, t)\right\|_{L_x^{\infty}}
				\end{array}\right\} \leq 2 C_{\sharp} \delta, \\
				v^*=v_{\tilde{a}}^*+v_j^*, \quad v_j^*(x,t)=\sum\limits_{\omega<x, \omega\in \mathcal{D}} \left.v^*\right|_{\omega^{-}} ^{\omega^{+}} h(x-\omega), \quad v_{\tilde{a}}^* \text { is continuous} , \\
				\left|\left.v(\cdot, t)\right|_{x=\omega^{-}}^{x=\omega^{+}}\right|\leq 2\left|\left. v_0^*(\cdot)\right|_{x=\omega^{-}}^{x=\omega^{+}}\right|, \quad \omega \in \mathcal{D},
			\end{array}\right.
		\end{equation}
		where $v^*=v-1$, $\mathcal{D}$ is the discontinuity set of $v$ and $h(\cdot)$ is the Heaviside function as before.
	\end{lemma}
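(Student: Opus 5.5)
The plan is to prove both claims of Lemma~\ref{lemma: T+t} with one argument: restart the local theory at later times. The two facts needed are the smallness (rather than decay) part of the stopping-time bound and the local existence and regularity results, Theorems~\ref{thm: local exi} and \ref{thm: local regularity}.

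\textbf{Step 1: $T>t_\sharp$.} Since $\delta^*<\delta/(300C_\sharp)$, the initial datum satisfies \eqref{smallness condition} with $\delta^*$ in place of $\delta$. Theorem~\ref{thm: local exi} then produces the solution on $[0,t_\sharp)$, and it gives every $L^1$, $L^\infty$, BV and weighted-derivative norm entering $\mathcal{G}(\tau)$ a bound of the form $2C_\sharp\delta^*$. There are at most sixteen such terms, each carrying a factor at most $\sqrt{1+t_\sharp}\le 2$. The BV norms of $u,\theta,z$ are controlled by their $L^\infty$ and $W^{1,1}$ bounds. Hence
\[
\mathcal{G}(\tau)\le 64\,C_\sharp\delta^*<\tfrac{64}{300}\,\delta<\delta \qquad\text{for } 0<\tau<t_\sharp.
\]
Next, the continuity in time from \eqref{v Holder} and Lemmas~\ref{lemma: z holder}--\ref{lemma: t deri} shows that $\mathcal{G}$ is continuous up to $t_\sharp$. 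By Theorem~\ref{thm: sta} the solution extends a little beyond $t_\sharp$ with the same strict inequality, and the definition \eqref{stop time} gives $T>t_\sharp$.

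\textbf{Step 2: the bounds \eqref{es for T+t} for $t<T+t_\sharp$.} I split into two cases.
\begin{itemize}
\item For $t<t_\sharp$ the bounds are exactly those of Theorems~\ref{thm: local exi} and \ref{thm: local regularity}, since $\min(t,t_\sharp)=t$.
\item For $t_\sharp\le t<T+t_\sharp$, write $t=t_0+s$ with $t_0=t-t_\sharp/2\in(0,T)$ and $s=t_\sharp/2$. By the definition of $T$ we have $\mathcal{G}(t_0)<\delta$, so $(v,u,\theta,z)(\cdot,t_0)$ is an $L^1\cap{\rm BV}$ perturbation of $(1,0,1,0)$ of size below $\delta$. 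This is precisely the hypothesis \eqref{smallness condition}, read with $\delta$ itself as the smallness parameter; note that $\delta$ was chosen so that Theorems~\ref{thm: local exi}--\ref{thm: sta} hold with this value.
\end{itemize}
In the second case I restart the local theory at time $t_0$. This gives a local solution on $[t_0,t_0+t_\sharp)$ satisfying \eqref{es local sol} and \eqref{es partial t} with the time variable $t-t_0$. By Theorem~\ref{thm: sta} this solution coincides with the global one. Since $t-t_0=t_\sharp/2\ge\tfrac12\min(t,t_\sharp)$, the weights $\sqrt{t-t_0}$ and $t-t_0$ may be replaced by $\sqrt{\min(t,t_\sharp)}$ and $\min(t,t_\sharp)$, at the cost of an absolute factor absorbed into the constant. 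A fixed time shift commutes with the Duhamel formulas, so the bound on $\|v_t\|_{L_x^\infty}$ and the estimates for $z$ (which use $\int\!\!\int K\phi(\theta)z\le\|z(\cdot,t_0)\|_{L^1}$) transfer verbatim.

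\textbf{Step 3: the jump structure.} The decomposition $v^*=v_{\tilde a}^*+v_j^*$ and the invariance of $\mathcal{D}$ come from the construction in Theorem~\ref{thm: local exi} applied at each restart. The jump bound $\bigl|v|_{\omega^-}^{\omega^+}\bigr|\le2\bigl|v_0^*|_{\omega^-}^{\omega^+}\bigr|$ is the delicate point. Restarting naively would compare each jump with the jump at time $t_0$, and iterating this would lose a factor $2$ per restart. Instead I would integrate the jump ODE from Lemma~\ref{lemma: lip V} from time $0$. Since the flux $\frac{\mu u_x}{v}-p$ is continuous, this ODE gives $\frac{d}{dt}[v]=[v]\cdot(\text{coefficient})$, where the coefficient is (flux$\,+\,a\theta$)$/\mu$ divided by $v$ on each side. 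Its linear part is a damping term $-a/\mu$; the remaining part is $O(\delta)$ and integrable in time.

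This is where I expect the main obstacle. Controlling that $O(\delta)$ part uniformly on $[0,T+t_\sharp)$ needs the $\sqrt{\tau+1}$ decay encoded in $\mathcal{G}$, and it also needs the damping sign. I would try Gr\"onwall's inequality with the damping term kept, so that $|[v](t)|\le e^{O(\delta)}|[v](0)|\le2|[v](0)|$. A matching care is needed for the constant $2C_\sharp\delta$, so that it is not doubled at each restart; this is guaranteed because each restart uses only $\mathcal{G}(t_0)<\delta$, never the previous bound.
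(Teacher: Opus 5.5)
Your strategy is the paper's: restart Theorem~\ref{thm: local exi} at a time $t_0<T$, using $\mathcal{G}(t_0)<\delta$ as the smallness hypothesis with parameter $\delta$, so that constants never compound. The step that fails is the choice $t_0=t-t_\sharp/2$. You assert $t_0\in(0,T)$ for all $t_\sharp\le t<T+t_\sharp$, but $t_0<T$ holds only for $t<T+t_\sharp/2$. For $t\in[T+t_\sharp/2,\,T+t_\sharp)$ one has $t_0\ge T$, and \eqref{stop time} gives no control of $\mathcal{G}(t_0)$ there. So there is no smallness to restart from, and your argument proves \eqref{es for T+t} only on $[0,T+t_\sharp/2)$. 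The paper restarts at $\tau_0=\max(0,t-t_\sharp)<T$ instead.

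A clean repair is the following. For $t\ge t_\sharp$, take any $t_0\in(t-t_\sharp,\min(T,t))$; this interval is nonempty because $t<T+t_\sharp$. Then let $t_0\downarrow t-t_\sharp$. Now $t\in(t_0,t_0+t_\sharp)$ and $t-t_0\uparrow t_\sharp=\min(t,t_\sharp)$, so the weighted bounds hold with exactly $2C_\sharp\delta$. This also avoids placing $t$ at the open right endpoint $\tau_0+t_\sharp$ of the restarted interval, which the paper's choice does. With your fixed gap $t-t_0=t_\sharp/2$, the bounds instead come with $2\sqrt2\,C_\sharp\delta$ and $4C_\sharp\delta$, which is harmless only after enlarging $C_\sharp$. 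Separately, in Step~1 the continuation past $t_\sharp$ comes from restarting Theorem~\ref{thm: local exi}; Theorem~\ref{thm: sta} only gives uniqueness.

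Your Step~3 improves on the paper. Writing $[v](s):=v(\omega^+,s)-v(\omega^-,s)$, the paper's restart only yields $\bigl|[v](t)\bigr|\le2\bigl|[v](\tau_0)\bigr|$ and never explains why the comparison with $v_0^*$ survives. Your justification needs one correction. The perturbative part of the coefficient is of size $\delta s^{-1/2}$ near $s=0$ and at best $\delta(1+s)^{-1/2}$ afterwards. It is therefore \emph{not} integrable on $[0,\infty)$, and Gr\"onwall without the damping gives nothing uniform in $t$.

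Use the exact formula instead. The quantity $\sigma:=\frac{\mu u_x}{v}-\frac{a\theta}{v}$ is continuous at $\omega$, and $\theta(\cdot,s)\in W^{1,1}$ has no jump for $s>0$. Hence the computation of Lemma~\ref{lemma: lip V} gives $\frac{\mathrm{d}}{\mathrm{d}s}[v]=\frac{\sigma(\omega,s)}{\mu}[v]$, and therefore
\[
[v](t)=[v](0)\exp\Bigl(-\frac{a}{\mu}t+\frac{1}{\mu}\int_0^t(\sigma+a)(\omega,s)\,\mathrm{d}s\Bigr),\qquad |\sigma+a|\le O(1)\bigl(|u_x|+|v-1|+|\theta-1|\bigr).
\]
The restarts give $|u_x|\le2C_\sharp\delta/\sqrt{\min(s,t_\sharp)}$ and $|v-1|,|\theta-1|\le2C_\sharp\delta$; these do not use the jump bound, so there is no circularity. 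With them the integral is $O(\delta)\bigl(\sqrt{t_\sharp}+t/\sqrt{t_\sharp}\bigr)$. Once $\delta\ll\sqrt{t_\sharp}$, its linear part is absorbed by $\frac{a}{\mu}t$, leaving $\bigl|[v](t)\bigr|\le e^{O(\delta\sqrt{t_\sharp})}\bigl|[v](0)\bigr|\le2\bigl|[v](0)\bigr|$. Only smallness and the sign of the damping are used, not the decay encoded in $\mathcal{G}$.
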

	\begin{proof}
		By Theorem~\ref{thm: local exi}, the local solution can be constructed with the same constants $C_{\sharp}$ and $t_{\sharp}$ therein, when the initial data satisfies the condition~\eqref{300C}. Specifically, for all $\tau < t_{\sharp}$, the solution satisfies
		\begin{equation*}
			\left\{\begin{array}{l}
				\max \left\{\begin{array}{l}
					\|v(\cdot, \tau)-1\|_{BV},\|u(\cdot, \tau)\|_{BV},\|\theta(\cdot, \tau)-1\|_{BV},\|z(\cdot, \tau)\|_{BV},\\
					\|v(\cdot, \tau)-1\|_{L_x^1},\|u(\cdot, \tau)\|_{L_x^1},\|\theta(\cdot, \tau)-1\|_{L_x^1},\|z(\cdot, \tau)\|_{L_x^1}
				\end{array}\right\} \leq 2C_{\sharp}\delta^*, \\
				\max\left\{\begin{array}{l}
					\|\sqrt{1+\tau} u(\cdot, \tau)\|_{L_x^{\infty}},\|\sqrt{1+\tau}(\theta(\cdot, \tau)-1)\|_{L_x^{\infty}},\\
					\|\sqrt{1+\tau}(v(\cdot, \tau)-1)\|_{L_x^{\infty}},
					\|\sqrt{1+\tau} z(\cdot, \tau)\|_{L_x^{\infty}}
				\end{array}\right\}
				\leq 2 C_{\sharp} \sqrt{1+t_{\sharp}} \delta^*, \\
				\max \left\{\left\|\sqrt{\tau} u_x(\cdot, \tau)\right\|_{L_x^{\infty}},\left\|\sqrt{\tau} \theta_x(\cdot, \tau)\right\|_{L_x^{\infty}},\left\|\sqrt{\tau} z_x(\cdot, \tau)\right\|_{L_x^{\infty}}\right\} \leq 2 C_{\sharp} \delta^* .
			\end{array}\right.
		\end{equation*}
		Combining the above estimates, the definition of $\delta^*$ in \eqref{300C}, and the expression of $\mathcal{G}(\tau)$ in \eqref{stop time}, we can directly deduce that
		\begin{equation}
			\label{T geq t}
			\mathcal{G}(\tau)\leq 30C_{\sharp}\sqrt{1+t_{\sharp}} \delta^* <\frac{\delta}{5},\quad \text{for} \quad\tau<t_{\sharp}.
		\end{equation}
		Since $\mathcal{G}(\tau)<\delta$ for all $\tau<t_{\sharp}$, and the solution $(v, u, E, z)$ is continuous in $t$, we immediately conclude that $T>t_{\sharp}$.

        Next, we prove the estimate~\eqref{es for T+t} for $t<T+t_{\sharp}$. By definition of the stopping time $T$ in \eqref{stop time}, for any $t<T+t_{\sharp}$ we set $\tau_0=\max(0,t-t_{\sharp})$. Then clearly $\tau_0 < T$, which implies that $\mathcal{G}(\tau_0)<\delta$. Thus, if we take $(v(\cdot,\tau_0), u(\cdot,\tau_0), \theta(\cdot,\tau_0), z(\cdot,\tau_0))$ as the new initial data and apply the local existence and estimate results in Theorem \ref{thm: local exi} again, we may continue the solution up to time $T+t^\sharp$ with the estimates in Eq.~\eqref{es for T+t}. This completes the proof.  
	\end{proof}
	
    Equipped with the key property above for the stopping time $T$, now we turn to the large-time behaviour of the solution. It relies on refined estimates for Green's function around the constant equilibrium
	state $(v,u, \theta, z)=(1,0,1,0)$.

    First, obverse that by using Theorems~\ref{thm: es for singular G} and \ref{thm: es for regular G}, we arrive at simplified expressions for Green's function.
	\begin{lemma}\label{lemma: G es}
		Let $\mathbb{G}(x,t)$ be the Green's function of the linearized equation of Eq.~\eqref{PDE,1} around the constant equilibrium state $(v, u, \theta, z)=(1, 0, 1, 0)$. For $t\leq 1$, we have
		\begin{align}
			& \left\lvert\, \mathbb{G}(x-y, t)-e^{-\frac{K}{\mu} t} \delta(x-y)\left(\begin{array}{cccc}
				1 & 0 & 0 & 0 \\
				0 & 0 & 0 & 0\\
				0 & 0 & 0 & 0\\
				0 & 0 & 0 & 0
			\end{array}\right)-\frac{e^{\beta_2^* t}}{\sqrt{4 \pi \mu t}} e^{-\frac{(x-y)^2}{4 \mu t}}\left(\begin{array}{cccc}
				0 & 0 & 0 & 0\\
				0 & 1 & 0 & 0\\
				0 & 0 & 0 & 0\\
				0 & 0 & 0 & 0
			\end{array}\right)\right. \nonumber\\
			& \left.-\frac{e^{\beta_3^* t}}{\sqrt{4 \pi \frac{\nu}{c_v} t}} e^{-\frac{(x-y)^2}{4 \frac{\nu}{c_v} t}}\left(\begin{array}{cccc}
				0 & 0 & 0 & 0 \\
				0 & 0 & 0 & 0\\
				0 & 0 & 1 & q\\
				0 & 0 & 0 & 0
			\end{array}\right)-\frac{1}{\sqrt{4 \pi Dt}} e^{-\frac{(x-y)^2}{4 D t}}\left(\begin{array}{cccc}
				0 & 0 & 0 & 0 \\
				0 & 0 & 0 & 0\\
				0 & 0 & 0 & q\\
				0 & 0 & 0 & 1
			\end{array}\right)\right\rvert \nonumber\\
			&\qquad \qquad \leq O(1) e^{-\sigma_0^* t-\sigma_0|x-y|}+O(1) t e^{-\sigma_0|x-y|}, \quad t \leq 1, \nonumber\\
			& \left\lvert\, \mathbb{G}_x(x-y, t)-e^{-\frac{K}{\mu} t} \delta^{\prime}(x-y)\left(\begin{array}{cccc}
				1 & 0 & 0 & 0\\
				0 & 0 & 0 & 0\\
				0 & 0 & 0 & 0\\
				0 & 0 & 0 & 0
			\end{array}\right)+e^{-\frac{K}{\mu} t} \delta(x-y)\left(\begin{array}{cccc}
				0 & \frac{1}{\mu} & 0 & 0\\
				\frac{a}{\mu} & 0 & 0 & 0\\
				0 & 0 & 0 & 0\\
				0 & 0 & 0 & 0
			\end{array}\right)\right. \nonumber\\
			& \left.-\partial_x\left(\frac{e^{\beta_2^* t}}{\sqrt{4 \pi \mu t}} e^{-\frac{(x-y)^2}{4 \mu t}}\right)\left(\begin{array}{cccc}
				0 & 0 & 0 & 0\\
				0 & 1 & 0 & 0\\
				0 & 0 & 0 & 0\\
				0 & 0 & 0 & 0
			\end{array}\right)+\frac{e^{\beta_2^* t}}{\sqrt{4 \pi \mu t}} e^{-\frac{(x-y)^2}{4 \mu t}}\left(\begin{array}{cccc}
				0 & -\frac{1}{\mu} & 0 & 0\\
				-\frac{a}{\mu} & 0 & \frac{a}{c_v \mu-\nu} & \frac{-qa}{c_v \mu-\nu}\\
				0 & \frac{c_v a}{c_v \mu-\nu} & 0 & 0\\
				0 & 0 & 0 & 0
			\end{array}\right) \right.\nonumber\\
			& \left.-\partial_x\left(\frac{e^{\beta_3^* t}}{\sqrt{4 \pi \frac{\nu}{c_v} t}} e^{-\frac{(x-y)^2}{4 \frac{\nu}{c_v} t}}\right)\left(\begin{array}{cccc}
				0 & 0 & 0 & 0\\
				0 & 0 & 0 & 0\\
				0 & 0 & 1 & q\\
				0 & 0 & 0 & 0
			\end{array}\right)+\frac{e^{\beta_3^* t}}{\sqrt{4 \pi \frac{\nu}{c_v} t}} e^{-\frac{(x-y)^2}{4 \frac{\nu}{c_v} t}}\left(\begin{array}{cccc}
				0 & 0 & 0 & 0\\
				0 & 0 & -\frac{a}{c_v \mu-\nu} & \frac{qa}{c_v \mu-\nu}\\
				0 & -\frac{c_v a}{c_v \mu-\nu} & 0 & 0\\
				0 & 0 & 0 & 0
			\end{array}\right) \right.\nonumber\\
			&\left.-\partial_x\left(\frac{1}{\sqrt{4\pi Dt}}e^{\frac{-(x-y)^2}{4 Dt}}\right)\left(\begin{array}{cccc}
				0 & 0 & 0 & 0\\
				0 & 0 & 0 & 0\\
				0 & 0 & 0 & q\\
				0 & 0 & 0 & 1
			\end{array}\right)\right\rvert\, \nonumber\\
			& \leq O(1) e^{-\sigma_0^* t-\sigma_0|x-y|}+O(1) t e^{-\sigma_0|x-y|}, \quad t \leq 1, \nonumber\\
			& \left\lvert\, \mathbb{G}_{xx}(x-y, t)-e^{-\frac{a}{\mu} t} \delta^{\prime \prime}(x-y)\left(\begin{array}{cccc}
				1 & 0 & 0 & 0\\
				0 & 0 & 0 & 0\\
				0 & 0 & 0 & 0\\
				0 & 0 & 0 & 0
			\end{array}\right)+e^{-\frac{K}{\mu} t} \delta^{\prime}(x-y)\left(\begin{array}{cccc}
				0 & \frac{1}{\mu} & 0 & 0\\
				\frac{a}{\mu} & 0 & 0 & 0\\
				0 & 0 & 0 & 0\\
				0 & 0 & 0 & 0
			\end{array}\right)\right. \nonumber\\
			& \left.-e^{-\frac{K}{\mu} t} \delta(x-y)\left(\left(\begin{array}{cccc}
				\frac{a}{\mu^2} & 0 & \frac{a}{\nu \mu} & -\frac{qa}{\nu \mu} \\
				0 & -\frac{a}{\mu^2} & 0 & 0 \\
				\frac{c_\nu a^2}{\nu \mu} & 0 & 0 & 0\\
				0 & 0 & 0 & 0
			\end{array}\right)-t\left(\begin{array}{cccc}
				-\frac{\left(\nu a^2-\mu a^3\right)}{\nu \mu^3} & 0 & 0 & 0\\
				0 & 0 & 0 & 0\\
				0 & 0 & 0 & 0\\
				0 & 0 & 0 & 0
			\end{array}\right)\right)\right. \nonumber\\
			&\left. -\partial_{xx}\left(\frac{e^{\beta_2^* t}}{\sqrt{4 \pi \mu t}} e^{-\frac{(x-y)^2}{4 \mu t}}\right)\left(\begin{array}{cccc}
				0 & 0 & 0 & 0\\
				0 & 1 & 0 & 0\\
				0 & 0 & 0 & 0\\
				0 & 0 & 0 & 0
			\end{array}\right)\right.\nonumber\\
			&\left.+\partial_x\left(\frac{e^{\beta_2^* t}}{\sqrt{4 \pi \mu t}} e^{-\frac{(x-y)^2}{4 \mu t}}\right)\left(\begin{array}{cccc}
				0 & -\frac{1}{\mu} & 0 & 0\\
				-\frac{a}{\mu} & 0 & \frac{a}{c_v \mu-\nu} & -\frac{qa}{c_v \mu-\nu}\\
				0 & \frac{c_v a}{c_v \mu-\nu} & 0 & 0\\
				0 & 0 & 0 & 0
			\end{array}\right) \right.\nonumber\\
			& \left.-\frac{e^{\beta_2^* t}}{\sqrt{4 \pi \mu t}} e^{-\frac{(x-y)^2}{4 \mu t}}\left(\begin{array}{cccc}
				-\frac{a}{\mu^2} & 0 & \frac{a}{c_v \mu^2-\nu \mu} & -\frac{qa}{c_v \mu^2-\nu \mu}\\
				0 & \frac{c_v a^2}{\left(c_v \mu-\nu\right)^2}+\frac{a}{\mu^2} & 0  & 0\\
				\frac{c_v a^2}{c_v \mu^2-\nu \mu} & 0 & -\frac{c_v a^2}{\left(c_v \mu-\nu\right)^2} & \frac{q c_v a^2}{\left(c_v \mu-\nu\right)^2}\\
				0 & 0 & 0 & 0
			\end{array}\right) \right.\nonumber\\
			& \left.-\partial_{xx}\left(\frac{e^{\beta_3^* t}}{\sqrt{4 \pi \frac{\nu}{c_v} t}} e^{-\frac{(x-y)^2}{4 \frac{\nu}{c_v} t}}\right)\left(\begin{array}{cccc}
				0 & 0 & 0 & 0\\
				0 & 0 & 0 & 0\\
				0 & 0 & 1 & q\\
				0 & 0 & 0 & 0
			\end{array}\right)\right.\nonumber\\
			&\left.+\partial_x\left(\frac{e^{\beta_3^* t}}{\sqrt{4 \pi \frac{\nu}{c_v} t}} e^{-\frac{(x-y)^2}{4 \frac{\nu}{c_v} t}}\right)\left(\begin{array}{cccc}
				0 & 0 & 0 & 0\\
				0 & 0 & -\frac{a}{c_v \mu-\nu} & \frac{qa}{c_v \mu-\nu}\\
				0 & -\frac{c_v a}{c_v \mu-\nu} & 0 & 0\\
				0 & 0 & 0 & 0
			\end{array}\right)\right. \nonumber\\
			& \left.+\frac{e^{\beta_3^* t}}{\sqrt{4 \pi \frac{\nu}{c_v} t}} e^{-\frac{(x-y)^2}{4 \frac{\nu}{c_v} t}}\left(\begin{array}{cccc}
				0 & 0 & \frac{c_v a}{\nu\left(c_v \mu-\nu\right)} & -\frac{qc_v a}{\nu\left(c_v \mu-\nu\right)}\\
				0 & \frac{c_v a^2}{\left(c_v \mu-\nu\right)^2} & 0 & 0\\
				\frac{c_v^2 a^2}{\nu\left(c_v \mu-\nu\right)} & 0 & -\frac{c_v a^2}{\left(c_v \mu-\nu\right)^2} & \frac{q c_v a^2}{\left(c_v \mu-\nu\right)^2}\\
				0 & 0 & 0 & 0
			\end{array}\right) \right.\nonumber\\
			&\left.-\partial_{xx}\left(\frac{1}{\sqrt{4\pi Dt}}e^{\frac{-(x-y)^2}{4 Dt}}\right)\left(\begin{array}{cccc}
				0 & 0 & 0 & 0\\
				0 & 0 & 0 & 0\\
				0 & 0 & 0 & q\\
				0 & 0 & 0 & 1
			\end{array}\right)\right\rvert \nonumber\\
			&\leq O(1) e^{-\sigma_0^* t-\sigma_0|x-y|}+O(1) t e^{-\sigma_0|x-y|}, \quad t \leq 1 .
		\end{align}

		On the other hand, for large time $t\geq 1$ it holds that
		\begin{align}
			& \left\lvert\, \mathbb{G}(x-y, t)-e^{-\frac{a}{\mu} t} \delta(x-y)\left(\begin{array}{cccc}
				1 & 0 & 0 & 0\\
				0 & 0 & 0 & 0\\
				0 & 0 & 0 & 0\\
				0 & 0 & 0 & 0
			\end{array}\right)\right. \nonumber\\
			& \left.-\sum_{j=1}^4 \frac{e^{-\frac{\left(x-y+\beta_j t\right)^2}{4 \alpha_j t}}}{2 \sqrt{\pi \alpha_j t}} M_j^0-\sum_{j=1}^3 \partial_x\left(\frac{e^{-\frac{\left(x-y+\beta_j t\right)^2}{4 \alpha_j t}}}{2 \sqrt{\pi \alpha_j t}}\right) M_j^1 \right\rvert\, \nonumber\\
			& \leq \sum_{j=1}^3 \frac{O(1) e^{-\frac{\left(x-y+\beta_j t\right)^2}{4 C t}}}{t} M_j^0+\sum_{j=1}^3 \frac{O(1) e^{-\frac{\left(x-y+\beta_j t\right)^2}{4 C t}}}{t^{\frac{3}{2}}}+O(1) e^{-\sigma_0^* t-\sigma_0|x-y|}, \quad 1 \leq t, \nonumber\\
			& \left\lvert\, \mathbb{G}_x(x-y, t)-e^{-\frac{a}{\mu} t} \delta^{\prime}(x-y)\left(\begin{array}{cccc}
				1 & 0 & 0 & 0\\
				0 & 0 & 0 & 0\\
				0 & 0 & 0 & 0\\
				0 & 0 & 0 & 0
			\end{array}\right)+e^{-\frac{a}{\mu} t} \delta(x-y)\left(\begin{array}{cccc}
				0 & \frac{1}{\mu} & 0 & 0\\
				\frac{K}{\mu} & 0 & 0 & 0\\
				0 & 0 & 0 & 0\\
				0 & 0 & 0 & 0
			\end{array}\right)\right. \nonumber\\
			& \left.-\sum_{j=1}^4 \partial_x\left(\frac{e^{-\frac{\left(x-y+\beta_j t\right)^2}{4 \alpha_j t}}}{2 \sqrt{\pi \alpha_j t}}\right) M_j^0-\sum_{j=1}^3 \partial_x^2\left(\frac{e^{-\frac{\left(x-y+\beta_j t\right)^2}{4 \alpha_j t}}}{2 \sqrt{\pi \alpha_j t}}\right) M_j^1 \right\rvert\, \nonumber\\
			& \leq \sum_{j=1}^3 \frac{O(1) e^{-\frac{\left(x-y+\beta_j t\right)^2}{4 C t}}}{t^{\frac{3}{2}}} M_j^0+\sum_{j=1}^3 \frac{O(1) e^{-\frac{\left(x-y+\beta_j t\right)^2}{4 C t}}}{t^2}+O(1) e^{-\sigma_0^* t-\sigma_0|x-y|}, \quad 1 \leq t, \nonumber\\
			& \left\lvert\, \mathbb{G}_{x x}(x-y, t)-e^{-\frac{a}{\mu} t} \delta^{\prime \prime}(x-y)\left(\begin{array}{cccc}
				1 & 0 & 0 & 0\\
				0 & 0 & 0 & 0\\
				0 & 0 & 0 & 0\\
				0 & 0 & 0 & 0
			\end{array}\right)+e^{-\frac{a}{\mu} t} \delta^{\prime}(x-y)\left(\begin{array}{cccc}
				0 & \frac{1}{\mu} & 0 & 0\\
				\frac{K}{\mu} & 0 & 0 & 0\\
				0 & 0 & 0 & 0\\
				0 & 0 & 0 & 0
			\end{array}\right)\right. \nonumber\\
			& -e^{-\frac{a}{\mu} t} \delta(x-y)\left(\left(\begin{array}{cccc}
				\frac{a}{\mu^2} & 0 & \frac{a}{\nu \mu} & 0\\
				0 & -\frac{a}{\mu^2} & 0 & 0\\
				\frac{c_v a^2}{\nu \mu} & 0 & 0 & 0\\
				0 & 0 & 0 & 0
			\end{array}\right)-t\left(\begin{array}{cccc}
				-\frac{\left(\nu a^2-\mu a^3\right)}{\nu \mu^3} & 0 & 0 & 0\\
				0 & 0 & 0 & 0\\
				0 & 0 & 0 & 0\\
				0 & 0 & 0 & 0
			\end{array}\right)\right)\nonumber \\
			& \left.-\sum_{j=1}^4 \partial_x^2\left(\frac{e^{-\frac{\left(x-y+\beta_j t\right)^2}{4 \alpha_j t}}}{2 \sqrt{\pi \alpha_j t}}\right) M_j^0-\sum_{j=1}^3 \partial_x^3\left(\frac{e^{-\frac{\left(x-y+\beta_j t\right)^2}{4 \alpha_j t}}}{2 \sqrt{\pi \alpha_j t}}\right) M_j^1 \right\rvert\,\nonumber \\
			& \leq \sum_{j=1}^3 \frac{O(1) e^{-\frac{\left(x-y+\beta_j t\right)^2}{4 C t}}}{t^2} M_j^0+\sum_{j=1}^3 \frac{O(1) e^{-\frac{\left(x-y+\beta_j t\right)^2}{4 C t}}}{t^{\frac{5}{2}}}+O(1) e^{-\sigma_0^* t-\sigma_0|x-y|}, \quad 1 \leq t.
		\end{align}
	\end{lemma}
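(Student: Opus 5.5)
This lemma is a specialisation of Theorems~\ref{thm: es for singular G} and \ref{thm: es for regular G} to the equilibrium state $\bar U=(1,0,c_v,0)$. The plan is to write $\mathbb{G}=\mathbb{G}^*+\mathbb{G}^\dagger$ as in Eq.~\eqref{decomposition into reg and sing of G, Dec25} and to evaluate every coefficient matrix appearing in those theorems at this state. At $\bar U$ we have $p=a$, $p_v=-a$, $p_e=a/c_v$, $u=0$. Hence $\frac{vp_v}{\mu}=-\frac a\mu$, which is the exponent $e^{-\frac a\mu t}$ multiplying the $\delta$-part coming from $\mathbb{G}^{*,1}$. Also $\alpha_2^*=\mu$, $\alpha_3^*=\nu/c_v$, and $\alpha_4^*=D$.

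\textbf{Step 1: the singular part.} I read off the matrices $M_j^{*,k}$ ($j=1,2,3$, $k=0,1,2$) from Appendix~\ref{appendix, new}. By the last part of Theorem~\ref{thm: es for singular G}, the $\lambda_4$-mode contributes exactly $\partial_x^k\big[(4\pi Dt)^{-1/2}e^{-x^2/(4Dt)}\big]M_4^{*,0}$, with $M_4^{*,0}=\hat{\mathbb{M}}_4$. The fourth columns of $M_j^{*,k}$ for $j=1,2,3$ are fixed by the relation \eqref{M 34}: $(M_j^{*,k})_{\cdot4}=-q(M_j^{*,k})_{\cdot3}$ in the first three rows, and $(M_j^{*,k})_{44}=0$. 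This explains why every displayed matrix carries $\pm q$ times its third column in the fourth column. The upper-left $3\times3$ blocks coincide with those of \cite{WangHT2021} evaluated at $(1,0,1)$. Substituting these gives the $\delta,\delta',\delta''$ terms and the heat-kernel terms in the $t\le1$ expansions. The remainders are of order $O(1)e^{-\sigma_0^*t-\sigma_0|x-y|}$, which are exactly the error terms in Theorem~\ref{thm: es for singular G}.

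\textbf{Step 2: the regular part.} For $t\le1$, Theorem~\ref{thm: es for regular G} gives $\sum_{k\le3}|\partial_x^k\mathbb{G}^\dagger|\le O(1)te^{-\sigma_0|x|}$, and this produces the second error term. For $t\ge1$, I use the long-wave expansion in Theorem~\ref{thm: es for regular G}, with the $\lambda_4$ diffusion wave included through $M_4^0=\hat{\mathbb{M}}_4$ (with $\beta_4=0$, $\alpha_4=D$). The singular parts $\mathbb{G}^{*,j}$ for $j=2,3,4$ are $O(1)e^{-\sigma_0^*t-\sigma_0|x|}$ when $t\ge1$, so they are absorbed into the error. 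Only $\mathbb{G}^{*,1}$ survives, through its $\delta$-terms. For $\mathbb{G}_x$ and $\mathbb{G}_{xx}$ I repeat the argument with $k=1,2$.

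\textbf{Main obstacle.} The hard part is the bookkeeping of the explicit coefficient matrices, above all the $\delta$-coefficient in $\partial_x^2\mathbb{G}^{*,1}$, namely $M_1^{*,2}-A_{1,1}tM_1^{*,0}$, and the $M_j^{*,2}$ terms with their fourth columns. I would handle this by verifying the first three columns against \cite{WangHT2021} and then obtaining the fourth column from \eqref{M 34}. The other point to check is that the $\lambda_4$-mode, which is exactly the heat kernel with no long-wave corrections, is placed correctly in the sums. In the $t\ge1$ estimates it appears in the $M_j^0$ sum (index $1$ to $4$) but not in the $M_j^1$ sum, because $M_4^{*,k}=0$ for $k\ge1$.
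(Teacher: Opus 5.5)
Your route is the paper's own: split $\mathbb{G}=\mathbb{G}^*+\mathbb{G}^\dagger$ and specialise Theorems~\ref{thm: es for singular G} and~\ref{thm: es for regular G} at $\bar U=(1,0,c_v,0)$, which the paper does simply by citing Remark~4.2 of \cite{WangHT2021}. However, Step~2 mishandles the $\lambda_4$-mode for $t\ge1$.

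\textbf{The $\lambda_4$-mode for $t\ge1$.} You discard $\mathbb{G}^{*,4}$ as $O(1)e^{-\sigma_0^*t-\sigma_0|x|}$ and then recover the $j=4$ diffusion wave from the long-wave part of $\mathbb{G}^\dagger$. Both steps are wrong.
\begin{itemize}
\item Since $\lambda_4^*=\lambda_4=-D\eta^2$ and $\hat{\mathbb{M}}_4^*=\hat{\mathbb{M}}_4$ is independent of $\eta$, one has exactly $\mathbb{G}^{*,4}(x,t)=\mathcal{H}_D(x,t)\,\hat{\mathbb{M}}_4$ for every $t>0$, where $\mathcal{H}_D(x,t)=(4\pi Dt)^{-1/2}e^{-x^2/(4Dt)}$. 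Its $(4,4)$ entry has unit mass for all $t$, so it cannot be exponentially small.
\item There is no spectral gap at $\eta=0$: there $\operatorname{Re}\lambda_4^*=0$ and $\lambda_4^*+\frac{D}{2}(\eta^2+1)=\frac{D}{2}>0$. So the $j=4$, $t\ge1$ case of Theorem~\ref{thm: es for singular G} that you invoke is not valid.
\item Conversely, $\hat{\mathbb{G}}^\dagger=\sum_{j\le3}(e^{\lambda_jt}\hat{\mathbb{M}}_j-e^{\lambda_j^*t}\hat{\mathbb{M}}_j^*)$ has no $\lambda_4$-component, and Theorem~\ref{thm: es for regular G} only sums over $j\le3$. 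There is nothing to ``include'' there.
\end{itemize}
Your final formula comes out right only because the two errors cancel. Your last paragraph, which calls the $\lambda_4$-mode exactly a heat kernel, also contradicts Step~2. The fix is to keep $\mathbb{G}^{*,4}$ exactly for all $t$. It is precisely the $j=4$ summand, with $\beta_4=0$, $\alpha_4=D$, $M_4^0=\hat{\mathbb{M}}_4$, while $\mathbb{G}^\dagger$ supplies only $j=1,2,3$.

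\textbf{The coefficients.} Carrying out your bookkeeping faithfully does not reproduce the display as written, and your ``$\pm q$'' remark hides this.
\begin{itemize}
\item Relation~\eqref{M 34} together with $\sum_{j=1}^4M_j^{*,0}=I$ gives $M_3^{*,0}=e_{33}-q\,e_{34}$ at $u=0$, where $e_{kl}$ are the matrix units. So the $\beta_3^*$ heat-kernel matrices must carry $-q$ in the $(3,4)$ slot. With the displayed $+q$, the residual contains $2q\,e^{\beta_3^*t}\mathcal{H}_{\nu/c_v}$, which is unbounded as $t\to0$.
\item Every $K$ in the display should be $a$. For $t\ge1$, the $(1,4)$ entry of the $\delta$-coefficient of $\mathbb{G}_{xx}$ should be $-\frac{qa}{\nu\mu}$, as it is for $t\le1$.
\item For $t\ge1$ you never say where the fourth columns of $M_j^0,M_j^1$ come from. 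Theorem~\ref{thm: es for regular G} is inherited from the $3\times3$ system, and the appendix's zero fourth columns would make the bounds on the fourth column of $\mathbb{G}$ false.
\end{itemize}

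\textbf{A structural fix.} All of this is settled at once by one observation. In the variables $(v,u,E-qz,z)$ the linearised operator is block diagonal, so $\mathbb{G}=T\,\mathrm{diag}(\mathbb{G}^{\mathrm{NS}},\mathcal{H}_D)\,T^{-1}$ with $T=I+q\,e_{34}$. Here $\mathbb{G}^{\mathrm{NS}}$ is the $3\times3$ Green's function of \cite{WangHT2021}. Consequently
\begin{itemize}
\item $\mathbb{G}_{4k}=0$ for $k\le3$, and $\mathbb{G}_{44}=\mathcal{H}_D$;
\item $\mathbb{G}_{k4}=-q\,\mathbb{G}_{k3}$ for $k=1,2$;
\item $\mathbb{G}_{34}=-q\,\mathbb{G}_{33}+q\,\mathcal{H}_D$.
\end{itemize}
The corrected lemma then follows directly from the $3\times3$ Navier--Stokes--Fourier estimates of \cite{WangHT2021} together with the explicit heat kernel, with no new spectral analysis needed.
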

	\begin{remark}
		\label{re: nu scale}
		The above result follows from Remark 4.2 in \cite{WangHT2021}. By rescaling time  $t\mapsto \frac{t}{\nu_0}$ for $\nu_0>0$, we may obtain analogous estimates for  $t\geq \nu_0$. The only modification is that all the $O(1)$ terms now depend on $\nu_0$ in the rescaling.
	\end{remark}
	With the estimates for the Green's function, we proceed to derive some \emph{a priori} estimates for the solutions constructed in Theorem \ref{thm: local exi} and Theorem \ref{thm: local regularity}.
	Based on the results in \cite{WangHT2021}, we already have the following estimates
	\begin{lemma}
		\label{lemma: es u large t}
		Let $(v, u, \theta, z)$ be the local solution constructed in Theorem~\ref{thm: local exi}. Further assume that
		\begin{equation}
			\label{tau condition}
			\left\{\begin{array}{l}
				\left\|v_0-1\right\|_{B V}+\left\|u_0\right\|_{B V}+\left\|\theta_0-1\right\|_{B V}+\left\|v_0-1\right\|_{L_x^1}+\left\|u_0\right\|_{L_x^1}+\left\|\theta_0-1\right\|_{L_x^1}+\left\|z_0\right\|_{L_x^1}<\delta^*, \\
				\mathcal{G}(\tau)<\delta, \quad  \forall \tau<t, \\
				t_{\sharp} \geq 4 \nu_0,
			\end{array}\right.
		\end{equation}
		Then, for any $t>t_{\sharp}$ we have 
		\begin{equation*}
			\left\{\begin{array}{l}
				\|u(\cdot, t)\|_{L_x^1} \leq C(\nu_0) \delta^*+O(1)\left(\sqrt{\nu_0}\delta+\delta^2\right), \\
				\|\sqrt{1+t}\,u(\cdot, t)\|_{L_x^{\infty}} \leq C\left(\nu_0\right) \delta^*+O(1)\left(\sqrt{\nu_0}\delta+\delta^2\right),\\
				\|u_x(\cdot, t)\|_{L_x^1} \leq C(\nu_0) \delta^*+O(1)\left(\frac{|\log(\nu_0)|\delta^2}{\sqrt{\nu_0}}+\sqrt{\nu_0}\delta\right), \\
				\|\sqrt{t}\,u_x(\cdot, t)\|_{L_x^{\infty}} \leq C\left(\nu_0\right) \delta^*+O(1)\left(\frac{|\log(\nu_0)|\delta^2}{\sqrt{\nu_0}}+\sqrt{\nu_0}\delta\right).
			\end{array}\right.
		\end{equation*}
	\end{lemma}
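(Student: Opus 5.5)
I would start from the representation formula for $u$ in Lemma~\ref{lemma: formula uvt}, written with the effective kernel $G_{22}$ and the Green's-function entries $\mathbb{G}_{21},\mathbb{G}_{23}$. I would then estimate separately the initial-data part and the three inhomogeneous remainders $\mathcal{R}_1^u,\mathcal{R}_2^u,\mathcal{R}_3^u$. The estimates will be in $L^1_x$, in $L^\infty_x$ weighted by $\sqrt{1+t}$, and, after differentiating in $x$, for $u_x$. The structure matches \cite{WangHT2021, LiuTP2022}. The only new ingredients relative to the Navier--Stokes--Fourier case are the $z$-terms
\[
-\frac{qa}{c_v}z \ \text{(paired with } \partial_y\mathbb{G}_{22}\text{)}, \qquad \Big(\frac{q\nu}{c_v}-\frac{qD}{v^2}\Big)z_y \ \text{(paired with } \partial_y\mathbb{G}_{23}\text{)}.
\]
The plan is therefore to reproduce the argument of \cite{WangHT2021} and check that each $z$-contribution obeys the same bounds, using the stopping-time hypothesis $\mathcal{G}(\tau)<\delta$ in \eqref{tau condition}.

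\textbf{Initial-data terms.} For $t\ge 2\nu_0$ the effective kernel $G_{22}(x,t;y,0)$ equals $\mathbb{G}_{22}(x-y,t)$. Lemma~\ref{lemma: G es} (rescaled as in Remark~\ref{re: nu scale}) decomposes each $\mathbb{G}_{2j}$ into three pieces:
\begin{itemize}
\item $\delta$-pieces, which do not appear in the $(2,\cdot)$ row for the undifferentiated kernel;
\item diffusion waves of the form $t^{-1/2}e^{-(x-y+\beta_jt)^2/(4\alpha_j t)}$ and their $x$-derivatives;
\item exponentially decaying remainders.
\end{itemize}
Convolving against $L^1\cap BV$ data of size $\delta^*$ then gives $C(\nu_0)\delta^*$ for the $L^1$ norm and for $\sqrt{1+t}\,\|\cdot\|_{L^\infty}$. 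For $u_x$ I would move the derivative onto the data where needed, using the BV norm, and use the $t^{-1}$ decay of $\partial_x$ of the diffusion waves. This yields the $C(\nu_0)\delta^*$ term with the $\sqrt t$ weight.

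\textbf{Remainder terms.} The three remainders require different treatment.
\begin{itemize}
\item $\mathcal{R}_1^u$ (time interval $[0,t-2\nu_0]$). Every integrand is at least quadratic in the perturbation, apart from the linear terms $z$ and $z_y$. For the quadratic terms, the bounds in $\mathcal{G}$ give pointwise size $\delta^2(1+\tau)^{-1}$ together with $L^1$ size $\delta^2(1+\tau)^{-1/2}$. The standard diffusion-wave convolution estimates from \cite{LiuTP2022} then give $O(1)\delta^2$ contributions with the right decay.
\item $\mathcal{R}_2^u$ (time window $[t-2\nu_0,t-\nu_0]$). The kernels there are smooth and bounded by constants depending on $\nu_0$, and the window has length $\nu_0$. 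Together with Lemma~\ref{lemma: T+t}, this gives $O(1)(\sqrt{\nu_0}\,\delta+\delta^2)$. For the $x$-derivative, the comparison $H-\mathbb{G}_{22}$ costs a factor $|\log\nu_0|/\sqrt{\nu_0}$.
\item $\mathcal{R}_3^u$ (the last window of length $\nu_0$). Here I would use the expression for $\partial_x\mathcal{R}_3^u$ from the Corollary after Lemma~\ref{lemma: formula uvt}. Combining Lemma~\ref{lemma: Liu}, Lemma~\ref{lemma: 2 derivative} and the short-time form of $\mathbb{G}$ in Lemma~\ref{lemma: G es} yields $\sqrt{\nu_0}\,\delta$ and $|\log\nu_0|\delta^2/\sqrt{\nu_0}$. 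The new term $\int\partial_x\mathbb{G}_{22}\,\frac{qa}{c_v}z_y$ is harmless: it is bounded by $\int_{t-\nu_0}^t (t-\tau)^{-1/2}\|z_y\|\,d\tau$.
\end{itemize}

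\textbf{Main obstacle.} The hard step is the linear $z$-contribution in $\mathcal{R}_1^u$, namely $\int_0^{t-2\nu_0}\!\int\partial_y\mathbb{G}_{22}\,\frac{qa}{c_v}z\,dy\,d\tau$ and its analogue with $z_y$. These terms are linear, not quadratic, so the $\delta^2$ mechanism does not apply, and crude use of the $\sqrt{\tau+1}$ decay of $z$ fails to close the estimate at order $\delta^*$. I would try to exploit the $z$-equation itself. Integrating it in time gives the dissipation bound
\[
\int_0^t\!\!\int K\phi(\theta)z\,dy\,d\tau\le\|z_0\|_{L^1}\le\delta^*.
\]
I would also write $z$ through $G_{44}$, as in Lemma~\ref{lemma: z rep}. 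Convolving $\partial_y\mathbb{G}_{22}$ with the $z$-diffusion kernel $\mathbb{G}_{44}$ (of speed $0$ and diffusivity $D$) produces a coupled wave whose size is governed by $\|z_0\|_{L^1}\le\delta^*$, plus contributions that are quadratic in the perturbation. This places the linear $z$-terms inside $C(\nu_0)\delta^*+O(1)\delta^2$. If this rewriting becomes too heavy, the fallback is to absorb the $z$ contributions into the initial-data part using the structural relation $(\hat{\mathbb{M}}_j^*)_{k4}=-q(\hat{\mathbb{M}}_j^*)_{k3}$ from \eqref{M 34}.
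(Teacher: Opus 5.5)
The paper proves this lemma only by appeal to \cite{WangHT2021}. Your skeleton (the representation of Lemma~\ref{lemma: formula uvt}, the initial-data part plus $\mathcal{R}^u_1,\mathcal{R}^u_2,\mathcal{R}^u_3$, and the estimates of that reference) therefore matches it. You are also right that the linear $z$-terms, which that citation does not cover, are the crux: $\|z(\cdot,\tau)\|_{L^1_x}$ need not decay, so absolute-value bounds on $\int_0^{t/2}\!\int\partial_y\mathbb{G}_{22}\,z$ give $O(\delta^*)$ with no $(1+t)^{-1/2}$ gain. Your resolution, however, has a genuine gap.

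The flux part actually needs no coupled-wave computation. The linearization written in $(v,u,E-qz,z)$ is block diagonal, so $\mathbb{G}_{24}=-q\mathbb{G}_{23}$ holds exactly for the full Green's function, not only for the $\hat{\mathbb{M}}_j^*$ in \eqref{M 34}. At the nonlinear level, $E-qz=c_v\theta+\frac{u^2}{2}$ satisfies the Navier--Stokes--Fourier energy equation with source $qK\phi(\theta)z$. Hence the $u$-representation is that of \cite{WangHT2021} with $E$ replaced by $E-qz$ (initial datum $c_v(\theta_0-1)+\frac{u_0^2}{2}$, of size $\delta^*$), plus the single term
\[
q\int_0^t\int_{\mathbb{R}}\mathbb{G}_{23}(x-y,t-\tau)\,K\phi(\theta)z(y,\tau)\,\mathrm{d}y\,\mathrm{d}\tau .
\]
The reaction part of your $G_{44}$-substitution leads to the same term. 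It is not quadratic: $K\phi(\theta)z\approx K\phi(1)z$ is linear in $z$ unless $\phi(1)=0$. Being a source, it cannot be absorbed into the initial data, so your fallback does not remove it either.

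The only control you propose for this term is the dissipation bound $\int_0^t M(\tau)\,\mathrm{d}\tau\le\delta^*$, with $M(\tau):=\int_{\mathbb{R}}K\phi(\theta)z(y,\tau)\,\mathrm{d}y$. That handles the $L^1_x$ bounds. It also handles the range $\tau\in[0,t/2]$, where $\|\mathbb{G}_{23}(\cdot,t-\tau)\|_{L^\infty_x}\le O(1)(t-\tau)^{-1/2}$ gives $O(1)\delta^*t^{-1/2}$.

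It does not handle $\tau\in[t/2,t]$ in the weighted $L^\infty$ norms. Integrability of $M$ alone only gives $\int_{t/2}^t\min\{1,(t-\tau)^{-1/2}\}M(\tau)\,\mathrm{d}\tau\le\delta^*$, since nothing prevents $M$ from concentrating near $\tau=t-1$. Using $\|z(\cdot,\tau)\|_{L^\infty_x}\le\delta(1+\tau)^{-1/2}$ instead gives $O(\delta\sqrt{t})$. Neither yields $\sqrt{1+t}\,\|u(\cdot,t)\|_{L^\infty_x}\le C(\nu_0)\delta^*+O(1)(\sqrt{\nu_0}\delta+\delta^2)$, and the same failure occurs for $\sqrt{t}\,\|u_x(\cdot,t)\|_{L^\infty_x}$.

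What is missing is decay in time of the reaction rate, e.g. $M(\tau)\le O(1)\delta^*(1+\tau)^{-1}$. This is exactly \eqref{es: M}, which the paper invokes only later, in the proof of Lemma~\ref{lemma: es z theta large t}. The monotonicity argument offered there bounds $|\mathcal{I}_3|$ in \eqref{eq: dM} only from above, so it does not show $\frac{\mathrm{d}}{\mathrm{d}\tau}M\le 0$; this decay is a genuine extra step your plan must supply.

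A smaller point concerns $\mathcal{R}^u_2$. The linear terms there must carry a factor $O(1)\sqrt{\nu_0}$ with $O(1)$ independent of $\nu_0$. This comes from a $\nu_0$-uniform short-time comparison of $H(\cdot;\frac{\mu}{v})$ with $\mathbb{G}_{22}$ and from $\|H_y\|_{L^1_x}\le O(1)(t-\tau)^{-1/2}$. It matters because the final continuity argument chooses $\nu_0$ with $O(1)\sqrt{\nu_0}\le\frac16$. Bounding the kernels on that window by constants depending on $\nu_0$, as you suggest, would not give this.
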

	Using the above estimates for $u(x,t)$, we next derive estimates for $\theta(x,t)$ and $z(x,t)$. The argument for $\theta$ is similar to the proof of Lemma 4.7 in \cite{WangHT2021}, with an additional term $q z(x,t)$. Therefore, we estimate $\theta$ and $z$ together.
	\begin{lemma}
		\label{lemma: es z theta large t}
		Let $(v, u, \theta, z)$ be the local solution constructed in Theorem \ref{thm: local exi}, and assume the condition \eqref{tau condition}. 
		Then, for $t>t_{\sharp}$, the following estimates hold:
		\begin{equation*}
			\left\{\begin{array}{l}
				\left\|z(x,t)\right\|_{L_x^1}\leq \delta^*,\\
				\|\sqrt{1+t}\,z(\cdot, t)\|_{L_x^{\infty}} \leq C\left(\nu_0\right) \delta^*+O(1)\left(\delta+\delta^2\right),\\
				\|\theta(\cdot, t)-1\|_{L_x^1} \leq O(1)\left(C(\nu_0) \delta^*+\sqrt{\nu_0}\delta+\delta^2\right)+O(1)\left(C(\nu_0)\delta^*+\sqrt{\nu_0}\delta+\delta^2\right)^2, \\
				\|\sqrt{1+t}\,(\theta(\cdot, t)-1)\|_{L_x^{\infty}} \leq O(1)\left(C(\nu_0) \delta^*+\sqrt{\nu_0}\delta+\delta^2\right)+O(1)\left(C(\nu_0)\delta^*+\sqrt{\nu_0}\delta+\delta^2\right)^2.
			\end{array}\right.
		\end{equation*}
	\end{lemma}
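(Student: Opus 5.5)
The $L^1$ bound for $z$ comes first, and it is independent of the Green's function. I integrate the fourth equation of \eqref{PDE,2} over $\mathbb{R}\times[0,t]$. Since $K\phi(\theta)z\ge 0$ (positivity of $z$ is preserved by the parabolic equation with nonnegative absorption), this gives
\[
\|z(\cdot,t)\|_{L_x^1}+\int_0^t\!\int_{\mathbb{R}}K\phi(\theta)z\,\mathrm{d}y\,\mathrm{d}\tau\le\|z_0\|_{L_x^1}<\delta^*,
\]
exactly as in Lemma~\ref{lemma: V U Theta Z k+1} and Lemma~\ref{lemma: smallness}. A useful by-product is the space-time bound $\int_0^t\!\int K\phi(\theta) z\le\delta^*$ on the reaction term.

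For the pointwise decay of $z$ I use the representation \eqref{z rep} of Lemma~\ref{lemma: z rep}. The initial-data terms are handled by Lemma~\ref{lemma: G es} and Remark~\ref{re: nu scale}. Each of $\mathbb{G}_{41},\mathbb{G}_{42},\mathbb{G}_{43}$ has no $\delta$-part and no $D$-heat-kernel part, since the fourth row of every $\hat{\mathbb{M}}_j^*$ vanishes except through $\hat{\mathbb{M}}_4^*$. Their regular parts are therefore bounded by $O(1)t^{-1/2}$ times diffusion waves, which gives a contribution $C(\nu_0)\delta^*(1+t)^{-1/2}$. The term $G_{44}$ acting on $z_0$ is a heat kernel, again giving $C(\nu_0)\delta^*(1+t)^{-1/2}$.

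For the reaction integral, I split at $t/2$. On $(0,t/2)$ I use $|G_{44}|\le O(1)(t-\tau)^{-1/2}$ together with the space-time $L^1$ bound above. On $(t/2,t)$ I use $\|z\|_{L^\infty}\le\delta(1+\tau)^{-1/2}$ and $\|z\|_{L^1}\le\delta^*$; the heat kernel in $L^1_y$ then yields $\int_{t/2}^t(1+\tau)^{-1/2}\,\mathrm{d}\tau\,\delta$, which is not decaying. To fix this, I write $\phi(\theta)z$ and use that $\phi$ is bounded near $\theta=1$. The key point is that the factor $\delta$ must be small, because the constant in the statement is $O(1)(\delta+\delta^2)$ and not $O(\delta^*)$. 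Taking the $L^\infty_y$ bound of $z$ and the $L^1_y$ bound of $G_{44}$ on $(t-1,t)$, and the $L^\infty_y$ bound of $G_{44}$ times $\|z\|_{L^1}$ on $(t/2,t-1)$, gives $O(1)\delta(1+t)^{-1/2}+O(1)\delta^*\,t^{-1/2}\log t$. I would accept the logarithm by replacing $\|z\|_{L^1}$ with the decaying remainder of the space-time integral, or else absorb it into the allowed $O(1)\delta$.

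The remainders $\mathcal{R}_i^z$ are quadratic: products of $(v-1)$, $(\theta-1)$, $u$ with $u_y,\theta_y,z_y$, or linear terms in $z$ multiplied by $\partial_y\mathbb{G}_{42}$ or $\partial_y\mathbb{G}_{43}$. I estimate them with the standard diffusion-wave convolution bounds of \cite{LiuTP2022, WangHT2021}, using $\mathcal{G}(\tau)<\delta$ and $|\partial_y\mathbb{G}^\dagger|\lesssim t^{-1}e^{-(x+\beta t)^2/Ct}$. This yields $O(1)\delta^2(1+t)^{-1/2}$. The $\delta$-singular pieces of $\partial_y\mathbb{G}_{42}$ appear only at $t-\tau\le 2\nu_0$ and are controlled through the local theory (Lemma~\ref{lemma: Liu}, Lemma~\ref{lemma: comparison}) on $\mathcal{R}_2^z,\mathcal{R}_3^z$, which contributes $O(1)\delta$. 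The linear terms $-\frac{qa}{c_v}z$ against $\partial_y\mathbb{G}_{42}$ are only $O(\delta)$ rather than quadratic; this is exactly why the bound reads $O(1)(\delta+\delta^2)$.

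For $\theta$, I follow \cite[Lemma 4.7]{WangHT2021} applied to $E-c_v$ via \eqref{E rep}. I then recover $\theta-1=\frac{1}{c_v}\bigl(E-c_v-\frac{u^2}{2}-qz\bigr)$. The new terms are the $q z_y$ contributions in $\mathcal{R}_i^\theta$ and the $qz_x$ term in $\partial_x\mathcal{R}_3^\theta$. Since $z$ is now controlled by the preceding step, I can bound these by $O(1)(C(\nu_0)\delta^*+\sqrt{\nu_0}\delta+\delta^2)$, with the squared term coming from $u^2$ and the quadratic remainders, using Lemma~\ref{lemma: es u large t}. The main obstacle is the linear-in-$z$ source coupling, both the reaction integral and $\partial_y\mathbb{G}_{42}\,z$. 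These are not quadratic, so they cannot gain a factor of $\delta$ and must be bounded using the $L^1$ dissipation of $z$ and the smallness $\sqrt{\nu_0}$ of the short-time windows.
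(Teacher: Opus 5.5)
There is a genuine gap in your treatment of the reaction integral $\int_0^t\!\int_{\mathbb{R}}G_{44}(x,t;y,\tau)K\phi(\theta)z\,\mathrm{d}y\,\mathrm{d}\tau$ on the window $t/2<\tau<t-1$.

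Your bound there is miscomputed. The estimate $\|G_{44}(x,t;\cdot,\tau)\|_{L^\infty}\lesssim(t-\tau)^{-1/2}$ against $\|K\phi(\theta)z(\cdot,\tau)\|_{L^1}\lesssim\delta^*$ integrates to $\delta^*\int_1^{t/2}s^{-1/2}\,\mathrm{d}s\sim\delta^*\sqrt{t}$, not $\delta^*t^{-1/2}\log t$. Neither fallback repairs this. Set $M(\tau):=\int_{\mathbb{R}}K\phi(\theta)z(y,\tau)\,\mathrm{d}y$. The space-time bound $\int_0^\infty M\,\mathrm{d}\tau\le\delta^*$ only gives $\int_{t/2}^{t-1}(t-\tau)^{-1/2}M\,\mathrm{d}\tau\le\delta^*$, with no decay, since nothing prevents $M$ from concentrating near $\tau=t-1$. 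The tail $\int_{t/2}^\infty M\,\mathrm{d}\tau$ tends to zero at no rate. Using $\|z\|_{L^\infty}\le\delta(1+\tau)^{-1/2}$ against $\|G_{44}\|_{L^1_y}=O(1)$ gives $\delta\sqrt{t}$. None of this can be absorbed into $O(1)\delta(1+t)^{-1/2}$: space-time integrability of the reaction is simply not enough.

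The paper supplies pointwise decay of $M$. It differentiates $M$, uses the $z$-equation and an integration by parts to argue $M'(\tau)\le 0$ for $\tau>\nu_0$, and combines this with $\int_0^\infty M\le\delta^*$ to get $(\tau-\nu_0)M(\tau)\le\delta^*$. Hence $M(\tau)\le O(1)\delta^*/(1+\tau)$ and $\int_{t/2}^{t-1}(t-\tau)^{-1/2}M\,\mathrm{d}\tau\lesssim\delta^*t^{-1/2}$.

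A simpler device is to use the sign of the term itself. It also sidesteps the paper's sign argument for $M'$, which would need a lower bound on $\int K^2\phi(\theta)^2z\,\mathrm{d}y$ that is not provided. Here $G_{44}=\mathcal{X}H(\cdot;D/v^2)+(1-\mathcal{X})\mathbb{G}_{44}\ge 0$ by the maximum principle, and $K\phi(\theta)z\ge0$. So in the representation of Lemma~\ref{lemma: z rep} the reaction term is nonpositive, and $0\le z(x,t)\le(\text{initial-data terms})+\sum_i\mathcal{R}_i^z(x,t)$. No upper bound on the reaction integral is then needed at all.

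Your account of the non-quadratic terms is also off, and that matters for the $\theta$ line. The $z$-rows of $F'(\bar U)$ and $B(\bar U)$ are $0$ and $(0,0,0,D)$. Hence $\partial_t\mathbb{G}_{4k}=D\partial_{xx}\mathbb{G}_{4k}$, with zero initial data for $k\le 3$ and initial data $\delta(x)$ for $k=4$. So $\mathbb{G}_{41}=\mathbb{G}_{42}=\mathbb{G}_{43}\equiv 0$ and $\mathbb{G}_{44}$ is the $D$-heat kernel; your own remark on the fourth rows of the $\hat{\mathbb{M}}_j$ already gives this. Consequently:
\begin{itemize}
\item The terms $\partial_y\mathbb{G}_{42}\,z$ that you blame for the $O(1)\delta$ vanish identically.
\item What is left of $\mathcal{R}_i^z$ is quadratic. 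The cutoff term $\nu_0^{-1}\mathcal{X}'[\mathbb{G}_{44}-H(\cdot;D/v^2)]z$ is $O(\delta^2)(1+t)^{-1/2}$ by Lemma~\ref{lemma: comparison}.
\item Together with the sign argument, you get $\|z(\cdot,t)\|_{L^\infty}\le(C(\nu_0)\delta^*+O(1)\delta^2)(1+t)^{-1/2}$.
\end{itemize}
You need a bound of that quality. In $\theta-1=\frac{1}{c_v}(E-c_v-\frac{u^2}{2}-qz)$, the term $\frac{q}{c_v}z$ estimated by $O(1)\delta(1+t)^{-1/2}$ does not fit into $O(1)(C(\nu_0)\delta^*+\sqrt{\nu_0}\delta+\delta^2)$. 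So your final step does not deliver the stated $\theta$ bound.

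In fact the $z$ line of the statement, with its $O(1)\delta$, already follows from $\mathcal{G}(\tau)<\delta$ and continuity in time. The real content is therefore this sharper $z$ estimate. Be aware that the paper's last display for $\|\theta-1\|_{L^\infty_x}$ silently drops the same $O(1)\delta$ contribution of $qz$.
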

	\begin{proof}
		By the integral representation of $z(x,t)$ given in Lemma \ref{lemma: z rep}, we have
		\begin{align*}
			z(x,t)
			=&\int_{\mathbb{R}} \mathbb{G}_{41}(x-y,t)(v(y,0)-1)\mathrm{d}y+\int_{\mathbb{R}} \mathbb{G}_{42}(x-y,t) u(y, 0)\,\mathrm{d}y\\
			&+\int_{\mathbb{R}} \mathbb{G}_{43}(x-y,t)(E(y,0)-c_v)\,\mathrm{d}y+\int_{\mathbb{R}} G_{44}(x,t;y,0)z(y,0)\,\mathrm{d}y\\
			&-\int_0^t\int_{\mathbb{R}} G_{44}(x,t;y,\tau)K\phi(\theta)z(y,\tau)\,\mathrm{d}y\,\mathrm{d}t+\sum_{i=1}^3 \mathcal{R}_i^{z},
		\end{align*}
		where $\mathcal{R}_i^z$ are given in Lemma \ref{lemma: z rep}. 
		Since the Green's function $\mathbb{G}$ has different estimates for short time $t\leq 1$ and large time $t\geq 1$, we split the proof into these two cases. However, the large-time estimates already incorporate the main ideas of the short-time analysis, so we only present the detailed proof for $t\geq 1$.

		In view of Lemma~\ref{lemma: G es}, for $t\geq 1$, $\mathbb{G}_{4 k}$ has the following estimates:
		\begin{align}
			\label{G4kx,tg1}
			& \left|\mathbb{G}_{4 k}(x, t)\right| \leq \sum_{j=1}^4 \frac{O(1) e^{-\frac{\left(x+\beta_j t\right)^2}{4 C t}}}{\sqrt{t}}+O(1) e^{-\sigma_0^* t-\sigma_0|x|}, \quad k=1,2,3,4, \nonumber\\
			& \left|\partial_x \mathbb{G}_{4k}(x, t)\right| \leq \sum_{j=1}^4 \frac{O(1) e^{-\frac{\left(x+\beta_j t\right)^2}{4 t}}}{t}+O(1) e^{-\sigma_0^* t-\sigma_0|x|}, \quad k=1,2,3,4,
		\end{align}
		and for $t\leq 1$
		\begin{align}
			\label{G4kx,tl1}
			& \left|\mathbb{G}_{4k}(x, t)\right| \leq O(1) e^{-\sigma_0^* t-\sigma_0 |x|}+O(1)t e^{-\sigma_0|x|}, \quad k=1,2,3, \nonumber\\
			& \left|\mathbb{G}_{44}(x, t)\right| \leq O(1) \frac{e^{-\frac{x^2}{4Dt}}}{\sqrt{4\pi D t}}+O(1) e^{-\sigma_0^* t-\sigma_0 |x|}+O(1)te^{-\sigma_0|x|},\nonumber\\
			&\left|\partial_x \mathbb{G}_{4k}(x, t)\right| \leq O(1) e^{-\sigma_0^* t-\sigma_0 |x|}+O(1)t e^{-\sigma_0|x|}, \quad k=1,2,3,\nonumber\\
			&\left|\partial_{xx} \mathbb{G}_{4k}(x, t)\right| \leq O(1) e^{-\sigma_0^* t-\sigma_0 |x|}+O(1)t e^{-\sigma_0|x|}, \quad k=1,2,3,\nonumber\\
			&\left|\partial_x \mathbb{G}_{44}(x,t)\right|\leq O(1) \left|\partial_x\left(\frac{e^{-\frac{x^2}{4Dt}}}{\sqrt{4\pi D t}}\right)\right|+ O(1) e^{-\sigma_0^* t-\sigma_0 |x|}+O(1)te^{-\sigma_0|x|}.
		\end{align}

         As per Remark~\ref{re: nu scale}, the $O(1)$ terms in the above estimates may depend on $\nu_0$.\\
		\noindent\textbf{Estimates of $\|z\|_{L_x^1}$:} The $L^1$ estimate is derived directly from the fourth equation of Eq.~\eqref{PDE,1}. Integrating both sides over $(0,t)\times \mathbb{R}$, we have
		\begin{align*}
			\int_{\mathbb{R}} z(x,t) \, \mathrm{d}x 
			+\int_0^t\int_{\mathbb{R}} K \phi(\theta) z(y,\tau) \, \mathrm{d}y\, \mathrm{d}\tau \leq \int_{\mathbb{R}} z_0(x) \, \mathrm{d}x.
		\end{align*}
		Under the initial condition $\|z_0\|_{L_x^1}<\delta^*$ given in \eqref{tau condition}, and since $K\phi(\theta) \geq 0$, it follows that
		\begin{align*}
			\|z(\cdot,t)\|_{L_x^1}\leq \delta^*,
		\end{align*}
		and 
		\begin{equation}
			\label{phiz Ltx}
			\int_0^t\int_{\mathbb{R}} K \phi(\theta) z(y,\tau) \, \mathrm{d}y\, \mathrm{d}\tau \leq \delta^*.
		\end{equation}
        Then, from the proof in \cite[Lemma 4.7]{WangHT2021} and above estimates, we infer that
		\begin{align*}
			\|\theta-1\|_{L_x^{1}}\leq& O(1)\left(C(\nu_0)\delta^*+\sqrt{\nu_0}\delta+\delta^2\right)+\frac{1}{2}\|u(\cdot,t)\|_{L_x^{\infty}}\|u(\cdot,x)\|_{L_x^1}+q\|z(\cdot,t)\|_{L_x^1}\\
			\leq &\left(C(\nu_0)\delta^*+\sqrt{\nu_0}\delta+\delta^2\right)+O(1)\frac{\left(C(\nu_0)\delta^*+\sqrt{\nu_0}\delta+\delta^2\right)^2}{\sqrt{1+t}}+O(1)\delta^*.
		\end{align*}

            We now make an important observation. Define $$M(\tau)=\int_{\mathbb{R}}K\phi(\theta)z(y,\tau)\,\mathrm{d}y.$$ Then
		\begin{align}
			\label{eq: dM}
			\frac{\mathrm{d}}{\mathrm{d}\tau}M(\tau)&=\int_{\mathbb{R}}K\phi^{\prime}(\theta)\theta_{\tau}z(y,\tau)\,\mathrm{d}y+\int_{\mathbb{R}}K\phi(\theta) z_{\tau}(y,\tau)\,\mathrm{d}y\nonumber\\
			&=\int_{\mathbb{R}}K\phi^{\prime}(\theta)\theta_{\tau}z(y,\tau)\,\mathrm{d}y+\int_{\mathbb{R}}K\phi(\theta) \left[\left(\frac{D}{v^2}z_y\right)_y-K\phi(\theta)z\right](y,\tau)\,\mathrm{d}y\nonumber\\
			&=\int_{\mathbb{R}}K\phi^{\prime}(\theta)\theta_{\tau}z(y,\tau)\,\mathrm{d}y-\int_{\mathbb{R}}K\phi^{\prime}(\theta)\theta_y \frac{D}{v^2}z_y\,\mathrm{d}y-\int_{\mathbb{R}}K^2\phi^{2}(\theta)z(y,\tau)\,\mathrm{d}y\nonumber\\
			&=:\mathcal{I}_1+\mathcal{I}_2+\mathcal{I}_3.
		\end{align}
	    By Lemma \ref{lemma: T+t}, we have the estimate
	    \begin{equation*}
	    	\left\|\theta_{\tau}(\cdot,\tau)\right\|_{L_x^1}\leq \frac{O(1)\delta}{\sqrt{\operatorname{min}(\tau,t_{\sharp})}},\quad \operatorname{max}(\tau,t_{\sharp})<t.
	    \end{equation*}
	    For $\tau\geq \nu_0$, as $\phi$ is Lipschitz, its derivative $\phi^{\prime}$ exists and is bounded a.e.. Hence,
	    \begin{align*}
	      &\left|\mathcal{I}_1\right|\leq O(1)\delta\left\|\theta_{\tau}(\cdot,\tau)\right\|_{L_x^1}\leq \frac{O(1)\delta^2}{\sqrt{\nu_0}},\\
	      &\left|\mathcal{I}_2\right|\leq O(1)\delta\left\|\theta_{y}(\cdot,\tau)\right\|_{L_x^{\infty}}\left\|z_{y}(\cdot,\tau)\right\|_{L_x^1}\leq \frac{O(1)\delta^2}{\sqrt{\tau}},\\
	      &\left|\mathcal{I}_3\right|\leq O(1)\delta.
	    \end{align*}
	    For sufficiently small $\delta\ll 1$, the leading-order term in \eqref{eq: dM} is $-O(\delta)$, while the higher-order term $O(\frac{\delta^2}{\sqrt{\nu_0}})$ is negligible. Thus, for all $\tau>\nu_0$, $M$ is monotone:
	    \begin{equation*}
	    	\frac{\mathrm{d}}{\mathrm{d}\tau}M(\tau)\leq 0.
	    \end{equation*}
	    It follows that
	    \begin{equation*}
	    	(\tau-\nu_0)M(\tau)=\int_{\nu_0}^{\tau}M(\tau)\,\mathrm{d}s\leq \int_{\nu_0}^{\tau}M(s)\,\mathrm{d}s\leq \delta^*.
	    \end{equation*}
	    If $\tau-\nu_0\geq 1$, since $\frac{1}{\tau-\nu_0}\leq \frac{2}{1+\tau-\nu_0}$, we have
	    \begin{equation*}
	    	M(\tau)\leq\frac{\delta^*}{\tau-\nu_0}\leq \frac{O(1)\delta^*}{1+\tau},
	    \end{equation*}
	    If $0<\tau-\nu_0<1$, then
	    \begin{equation*}
	    	M(\tau)\leq M(\nu_0)\leq \frac{2M(\nu_0)}{1+\tau-\nu_0}\leq \frac{O(1)\delta^*}{1+\tau}.
	    \end{equation*}
	    Therefore, we conclude for all $\nu_0<\tau<t$ that
	    \begin{equation}
	    	\label{es: M}
	    	M(\tau)\leq \frac{O(1)\delta^*}{1+\tau}.
	    \end{equation}
		
		\noindent \textbf{Estimate of $\left\|z(\cdot,t)\right\|_{L_x^{\infty}}$:}
		Since $t\geq t_{\sharp}\geq 4 \nu_0$, using estimates \eqref{G4kx,tg1} and the smallness condition \eqref{tau condition}, we have
		\begin{align*}
			\left|\int_{\mathbb{R}}\mathbb{G}_{41}(x-y,t)\left(v(y,0)-1\right)\,\mathrm{d}y\right| \leq & C(\nu_0)\int_{\mathbb{R}}\sum_{j=1}^4\frac{e^{-\frac{\left(x-y+\beta_j t\right)^2}{4 C t}}}{\sqrt{t}}\left|v(y,0)-1\right|\,\mathrm{d}y\\
			&+ C(\nu_0) \int_{\mathbb{R}}e^{-\sigma_0^* t-\sigma_0|x-y|}\left|v(y,0)-1\right|\,\mathrm{d}y\\
			\leq &C(\nu_0)\frac{\delta^*}{\sqrt{t}}\leq C(\nu_0)\frac{\delta^*}{\sqrt{1+t}}.
		\end{align*}
		By repeating this procedure, we derive similar bounds for the remaining initial integral terms
		\begin{align*}
			&\left|\int_{\mathbb{R}}\mathbb{G}_{42}(x-y,t)u(y,0)\,\mathrm{d}y\right| \leq C(\nu_0)\frac{\delta^*}{\sqrt{1+t}},\\
			&\left|\int_{\mathbb{R}}\mathbb{G}_{43}(x-y,t)\left(E(y,0)-c_v\right)\,\mathrm{d}y\right| \leq C(\nu_0)\frac{\delta^*}{\sqrt{1+t}},\\
			&\left|\int_{\mathbb{R}}G_{44}(x,t;y,0)z(y,0)\,\mathrm{d}y\right|=\left|\int_{\mathbb{R}}\mathbb{G}_{44}(x-y,t)z(y,0)\,\mathrm{d}y\right|\leq C(\nu_0)\frac{\delta^*}{\sqrt{1+t}}.
		\end{align*}
		Next, we estimate the integral remainder term. Using the definition of $\mathcal{X}(\frac{t-\tau}{\nu_0})$, we write
		\begin{align*}
			&\int_0^t\int_{\mathbb{R}} G_{44}(x,t;y,\tau)K\phi(\theta)z(y,\tau) \,\mathrm{d}y\,\mathrm{d}\tau\\=&\int_0^t\int_{\mathbb{R}} \mathcal{X}\left(\frac{t-\tau}{\nu_0}\right)H(x,t;y,\tau;\frac{D}{v^2})K\phi(\theta)z(y,\tau)\,\mathrm{d}y\,\mathrm{d}\tau\\
			&+\int_0^t\int_{\mathbb{R}}\left(1- \mathcal{X}\left(\frac{t-\tau}{\nu_0}\right)\right) \mathbb{G}_{44}(x-y,t-\tau)K\phi(\theta)z(y,\tau)\,\mathrm{d}y\,\mathrm{d}\tau\\
			=&\int_{t-2\nu_0}^{t-\nu_0}\int_{\mathbb{R}} \mathcal{X}\left(\frac{t-\tau}{\nu_0}\right)H(x,t;y,\tau;\frac{D}{v^2})K\phi(\theta)z(y,\tau)\,\mathrm{d}y\,\mathrm{d}\tau\\
			&+\int_{t-\nu_0}^{t}\int_{\mathbb{R}} H(x,t;y,\tau;\frac{D}{v^2})K\phi(\theta)z(y,\tau) \,\mathrm{d}y\,\mathrm{d}\tau\\
			&+\int_{0}^{t-2\nu_0}\int_{\mathbb{R}} \mathbb{G}_{44}(x-y,t-\tau)K\phi(\theta)z(y,\tau) \,\mathrm{d}y\,\mathrm{d}\tau\\
			&+\int_{t-2\nu_0}^{t-\nu_0}\int_{\mathbb{R}}\left(1- \mathcal{X}\left(\frac{t-\tau}{\nu_0}\right)\right) \mathbb{G}_{44}(x-y,t-\tau)K\phi(\theta)z(y,\tau) \,\mathrm{d}y\,\mathrm{d}\tau.
		\end{align*}
		To apply the Green's function estimates, we further split the interval ($0,t-2\nu_0$) into ($0,\frac{t-1+2\nu_0}{2}$), ($\frac{t-1+2\nu_0}{2},t-1$) and ($t-1,t-2\nu_0$). Using the estimates for $H$ in Lemma \ref{lemma: Liu}, the bounds for $\mathbb{G}_{44}$ \eqref{G4kx,tg1}\eqref{G4kx,tl1}, and $\mathcal{G}(\tau)<\delta$, each integral part can be bounded as follows
		\begin{align*}
			&\left|\int_0^t\int_{\mathbb{R}} G_{44}(x,t;y,\tau)K\phi(\theta)z(y,\tau)\,\mathrm{d}y\,\mathrm{d}\tau\right|\\ 
			\leq&O(1)\int_{t-2\nu_0}^{t-\nu_0}\int_{\mathbb{R}} \mathcal{X}\left(\frac{t-\tau}{\nu_0}\right)\left|H(x,t;y,\tau;\frac{D}{v^2})\right|z(y,\tau)\,\mathrm{d}y\,\mathrm{d}\tau\\
			&+O(1)\int_{t-\nu_0}^{t}\int_{\mathbb{R}} \left|H(x,t;y,\tau;\frac{D}{v^2})\right|z(y,\tau) \,\mathrm{d}y\,\mathrm{d}\tau\\
			&+O(1)\int_{0}^{\frac{t-1+2\nu_0}{2}}\int_{\mathbb{R}} \left|\mathbb{G}_{44}(x-y,t-\tau)\right|K\phi(\theta)z(y,\tau) \,\mathrm{d}y\,\mathrm{d}\tau\\
			&+O(1)\int_{\frac{t-1+2\nu_0}{2}}^{t-1}\int_{\mathbb{R}} \left|\mathbb{G}_{44}(x-y,t-\tau)\right|K\phi(\theta)z(y,\tau) \,\mathrm{d}y\,\mathrm{d}\tau\\
			&+O(1)\int_{t-1}^{t-2\nu_0}\int_{\mathbb{R}} \left|\mathbb{G}_{44}(x-y,t-\tau)\right|z(y,\tau)\,\mathrm{d}y\,\mathrm{d}\tau\\
			&+O(1)\int_{t-2\nu_0}^{t-\nu_0}\int_{\mathbb{R}}\left(1- \mathcal{X}\left(\frac{t-\tau}{\nu_0}\right)\right) \left|\mathbb{G}_{44}(x-y,t-\tau)\right|z(y,\tau) \,\mathrm{d}y\,\mathrm{d}\tau\\
			\leq & O(1)\int_{t-2\nu_0}^{t-\nu_0}\int_{\mathbb{R}}\frac{e^{\frac{-(x-y)^2}{C_*(t-\tau)}}}{\sqrt{t-\tau}}\frac{\delta}{\sqrt{1+\tau}}\,\mathrm{d}y\,\mathrm{d}\tau +O(1)\int_{t-\nu_0}^{t}\int_{\mathbb{R}}\frac{e^{\frac{-(x-y)^2}{C_*(t-\tau)}}}{\sqrt{t-\tau}}\frac{\delta}{\sqrt{1+\tau}}\,\mathrm{d}y\,\mathrm{d}\tau\\
			&+O(1)\int_{0}^{\frac{t-1+2\nu_0}{2}}\int_{\mathbb{R}} \left(\sum_{j=1}^4 \frac{O(1) e^{-\frac{\left(x-y+\beta_j(t-\tau)\right)^2}{4 C (t-\tau)}}}{\sqrt{t-\tau}}+ e^{-\sigma_0^* (t-\tau)-\sigma_0|x-y|}\right)z(y,\tau)\,\mathrm{d}y\,\mathrm{d}\tau\\
			&+O(1)\int_{\frac{t-1+2\nu_0}{2}}^{t-1}\int_{\mathbb{R}} \left(\sum_{j=1}^4 \frac{O(1) e^{-\frac{\left(x-y+\beta_j (t-\tau)\right)^2}{4 C (t-\tau)}}}{\sqrt{t-\tau}}+ e^{-\sigma_0^* (t-\tau)-\sigma_0|x-y|}\right)K\phi(\theta)z(y,\tau) \,\mathrm{d}y\,\mathrm{d}\tau\\
			&+O(1)\int_{t-1}^{t-\nu_0}\int_{\mathbb{R}} \left(\frac{e^{-\frac{(x-y)^2}{4D(t-\tau)}}}{\sqrt{4\pi D(t-\tau)}}+ e^{-\sigma_0^*(t-\tau)-\sigma_0 |x-y|}+ (t-\tau)e^{-\sigma_0|x-y|}\right)z(y,\tau)\,\mathrm{d}y\,\mathrm{d}\tau\\
			\leq &O(1)\int_{t-2\nu_0}^{t}\frac{\delta}{\sqrt{1+\tau}}\,\mathrm{d}\tau+O(1)\int_{0}^{\frac{t-1+2\nu_0}{2}} \left(\frac{1}{\sqrt{t-\tau}}+e^{-\sigma_0^* (t-\tau)}\right)M(\tau)\,\mathrm{d}\tau\\
			&+O(1)\int_{\frac{t-1+2\nu_0}{2}}^{t-1} \left(\frac{1}{\sqrt{t-\tau}}+e^{-\sigma_0^* (t-\tau)}\right)M(\tau)\,\mathrm{d}\tau+O(1)\int_{t-1}^{t-\nu_0}\left(1+e^{-\sigma_0^* (t-\tau)}\right)\frac{\delta}{\sqrt{1+\tau}}\,\mathrm{d}\tau\\
			\leq &O(1)\frac{\nu_0\delta}{\sqrt{1+t}}+\frac{O(1)}{\sqrt{1+t-2\nu_0}}\int_{0}^{\frac{t-1+2\nu_0}{2}} M(\tau)\,\mathrm{d}\tau+O(1)\int_{\frac{t-1+2\nu_0}{2}}^{t-1} \frac{1}{\sqrt{t-\tau}}\frac{\delta^*}{1+\tau}\,\mathrm{d}\tau\\
			\leq &O(1)\left(\frac{\nu_0\delta}{\sqrt{1+t}}+\frac{\delta}{\sqrt{1+t}}\right),
		\end{align*}
		in which we use the estimates $\int_{0}^{t}M(\tau)d\tau<\delta^*$ \eqref{phiz Ltx} and $M(\tau)\leq \frac{O(1)\delta^*}{1+\tau}$ \eqref{es: M}.\\
        
		Next, we estimate the remainder terms $\mathcal{R}_i^z$. We split the time integral at $\tau=\frac{t-1}{2}$, $\tau=t-1$ and $\tau=t-2\nu_0$ to apply derivative estimates separately to the short and long time scales. Using the bounds for $\partial_y \mathbb{G}_{4k}$ from \eqref{G4kx,tg1}\eqref{G4kx,tl1} and \emph{a priori} bounds $\|v-1\|_{L^\infty}, \|u\|_{L^\infty}, \dots, \|z_y\|_{L^\infty} \leq \frac{\delta}{\sqrt{1+\tau}}$ from $\mathcal{G}(\tau)<\delta$, we find
		\begin{align*}
			\left|\mathcal{R}_1^z\right|\leq & \left|\int_0^{\frac{t-1}{2}} \int_{\mathbb{R}} \partial_y \mathbb{G}_{42}(x-y, t-\tau)\left[\frac{a(v-1)^2}{v}+\frac{a(\theta-1)(1-v)}{v}-\frac{a u^2}{2 c_v}+\frac{\mu u_y(v-1)}{v}\right]\,\mathrm{d}y\,\mathrm{d}\tau\right| \\
			& +\left|\int_0^{\frac{t-1}{2}} \int_{\mathbb{R}} \partial_y \mathbb{G}_{43}(x-y, t-\tau)\left[\left(\frac{a(\theta-1)+a(1-v)}{v}\right) u+\frac{\nu \theta_y(v-1)}{v}+\left(\frac{\nu}{c_v}-\frac{\mu}{v}\right) u u_y \right.\right.\\
			&\left.\left.+\frac{qD(v^2-1)}{v^2}z_y\right]\,\mathrm{d}y\,\mathrm{d}\tau\right| +\left|\int_0^{\frac{t-1}{2}} \int_{\mathbb{R}} \partial_y \mathbb{G}_{44}(x-y, t-\tau)\frac{D(v^2-1)}{v^2}z_y\,\mathrm{d}y\,\mathrm{d}\tau\right|\\
			& +\left|\int_{\frac{t-1}{2}}^{t-1} \int_{\mathbb{R}} \partial_y \mathbb{G}_{42}(x-y, t-\tau)\left(\frac{a(v-1)^2}{v}+\frac{a(\theta-1)(1-v)}{v}-\frac{a u^2}{2 c_v}+\frac{\mu u_y(v-1)}{v}\right)\,\mathrm{d}y \,\mathrm{d}\tau\right| \\
			& +\left|\int_{\frac{t-1}{2}}^{t-1} \int_{\mathbb{R}} \partial_y \mathbb{G}_{43}(x-y, t-\tau)\left[\left(\frac{a(\theta-1)+a(1-v)}{v}\right) u+\frac{\nu \theta_y(v-1)}{v}+\left(\frac{\nu}{c_v}-\frac{\mu}{v}\right) u u_y \right.\right.\\
			&\left.\left.+\frac{qD(v^2-1)}{v^2}z_y\right]\,\mathrm{d}y\,\mathrm{d}\tau\right| +\left|\int_{\frac{t-1}{2}}^{t-1} \int_{\mathbb{R}} \partial_y \mathbb{G}_{44}(x-y, t-\tau)\frac{D(v^2-1)}{v^2}z_y\,\mathrm{d}y\,\mathrm{d}\tau\right|\\
			&+\left|\int_{t-1}^{t-2 \nu_0} \int_{\mathbb{R}} \partial_y \mathbb{G}_{42}(x-y, t-\tau)\left(\frac{a(v-1)^2}{v}+\frac{a(\theta-1)(1-v)}{v}-\frac{a u^2}{2 c_v}+\frac{\mu u_y(v-1)}{v}\right)\,\mathrm{d}y\,\mathrm{d}\tau\right| \\
			& +\left|\int_{t-1}^{t-2\nu_0} \int_{\mathbb{R}} \partial_y \mathbb{G}_{43}(x-y, t-\tau)\left[\left(\frac{a(\theta-1)+a(1-v)}{v}\right) u+\frac{\nu \theta_y(v-1)}{v}+\left(\frac{\nu}{c_v}-\frac{\mu}{v}\right) u u_y \right.\right.\\
			&\left.\left.+\frac{qD(v^2-1)}{v^2}z_y\right]\,\mathrm{d}y\,\mathrm{d}\tau\right| +\left|\int_{t-1}^{t-2\nu_0} \int_{\mathbb{R}} \partial_y \mathbb{G}_{44}(x-y, t-\tau)\frac{D(v^2-1)}{v^2}z_y\,\mathrm{d}y\,\mathrm{d}\tau\right|\\
			\leq & O(1)\int_0^{\frac{t-1}{2}}\int_{\mathbb{R}}\left(\sum_{j=1}^4 \frac{O(1) e^{-\frac{\left(x-y+\beta_j (t-\tau)\right)^2}{4 (t-\tau)}}}{t-\tau}+C(\nu_0) e^{-\sigma_0^*(t-\tau)-\sigma_0|x-y|}\right) \frac{\delta}{\sqrt{1+\tau}}\left(|v-1|\right.\\
			&\left.\qquad+|\theta-1|+|u|+|u_y|+|\theta_y|+|z_y|\right)\,\mathrm{d}y\,\mathrm{d}\tau\\
			&+O(1) \int_{\frac{t-1}{2}}^{t-1}\int_{\mathbb{R}} \left(\sum_{j=1}^4 \frac{O(1) e^{-\frac{\left(x-y+\beta_j (t-\tau)\right)^2}{4 (t-\tau)}}}{t-\tau}+C(\nu_0) e^{-\sigma_0^*(t-\tau)-\sigma_0|x-y|}\right) \frac{\delta}{\sqrt{1+\tau}}\left(|v-1|\right.\\
			&\left.\qquad+|\theta-1|+|u|+|u_y|+|\theta_y|+|z_y|\right) \,\mathrm{d}y\,\mathrm{d}\tau\\
			&+O(1) \int_{t-1}^{t-2 \nu_0} \int_{\mathbb{R}}\left(e^{-\sigma_0^*(t-\tau)-\sigma_0 |x-y|}+(t-\tau) e^{-\sigma_0|x-y|}\right)\frac{\delta}{\sqrt{1+\tau} }\left(|v-1|+|\theta-1|+|u|\right.\\
			&\left.\qquad+|u_y|+|\theta_y|+|z_y|\right)\,\mathrm{d}y \,\mathrm{d}\tau \\
			&+O(1) \int_{t-1}^{t-2 \nu_0}\int_{\mathbb{R}} \left(\frac{e^{-\frac{(x-y)^2}{4D(t-\tau)}}}{t-\tau}+e^{-\sigma_0^*(t-\tau)-\sigma_0 |x-y|}+O(1)(t-\tau)e^{-\sigma_0|x-y|}\right)\frac{\delta}{\sqrt{1+\tau} }|z_y|\,\mathrm{d}y\,\mathrm{d}\tau \\
			\leq & O(1) \int_0^{\frac{t-1}{2}}\left(\frac{1}{t-\tau}+e^{-\sigma_0^* (t-\tau)}\right) \frac{\delta^2}{\sqrt{1+\tau}} \,\mathrm{d}\tau+O(1) \int_{\frac{t-1}{2}}^{t-1}\frac{1}{\sqrt{t-\tau}} \frac{\delta^2}{\sqrt{1+\tau}\sqrt{\tau}} \,\mathrm{d}\tau\\
			&+O(1) \int_{\frac{t-1}{2}}^{t-1} e^{-\sigma_0^* (t-\tau)} \frac{\delta^2}{\sqrt{1+\tau}} \,\mathrm{d}\tau+O(1)\int_{t-1}^{t-2 \nu_0}\frac{1}{\sqrt{t-\tau}}\frac{\delta^2}{\sqrt{1+\tau} \sqrt{\tau}} \,\mathrm{d}\tau \\
			&+O(1)\int_{t-1}^{t-2 \nu_0} e^{-\sigma_0^* (t-\tau)}\frac{\delta^2}{\sqrt{1+\tau}}\,\mathrm{d}\tau\\
			\leq & O(1) \frac{\delta^2}{\sqrt{1+t}} .
		\end{align*} 
		For a more accurate estimate, since 
		$$\partial_\tau \mathbb{G}_{44}-\frac{qa}{c_v} \partial_y \mathbb{G}_{42}+\left(-\frac{q\nu}{c_v}+qD\right)\partial_y^2 \mathbb{G}_{43}+D\partial_y^2 \mathbb{G}_{44}=0,$$
		with the initial data $\mathbb{G}_{44}(x,0)=\delta(x)$, we have the explicit representation of $\mathbb{G}_{44}$
		\begin{align}
			\label{G44 rep}
			\mathbb{G}_{44}(x-y,t-\tau)=&H(x,t;y,\tau;D)-\int_{\tau}^t\int_{\mathbb{R}}H(\omega,s;y,\tau;D)\\
			&\qquad \left(\frac{qa}{c_v}\partial_{\omega}\mathbb{G}_{42}(x-\omega,t-s)+\left(\frac{q\nu}{c_v}-qD\right)\partial_{\omega}^2\mathbb{G}_{43}(x-\omega,t-s)\right)\,\mathrm{d}\omega\,\mathrm{d}s.\nonumber
		\end{align}
		Therefore, we substitute this into $\mathcal{R}_2^z$ and then follow the same strategy as for $\mathcal{R}_1^z$. Additionally, we use the comparison estimates for $H(x,t;y,\tau;D)-H(x,t;y,\tau;\frac{D}{v^2})$ in Lemma \ref{lemma: comparison} to obtain
		\begin{align*}
			&\left|\mathcal{R}_2^z(x, t)\right|  \\
			\leq & \left|\int_{t-2 \nu_0}^{t-\nu_0} \int_{\mathbb{R}} \partial_y \mathbb{G}_{42}(x-y, t-\tau)\left[\frac{a(v-1)^2}{v}+\frac{a(\theta-1)(1-v)}{v}-\frac{au^2}{2c_v}+\frac{\mu u_y(v-1)}{v}\right.\right.\\
			&\left.\left.\qquad-\mathcal{X}(\frac{t-\tau}{\nu_0})\frac{qa}{c_v}z\right](y,\tau)\,\mathrm{d}y\,\mathrm{d}\tau\right| \\
			&+\left|\int_{t-2 \nu_0}^{t-\nu_0} \int_{\mathbb{R}} \partial_y\mathbb{G}_{43}(x-y;t-\tau) \left[\frac{a(\theta-1)+a(1-v)}{v} u+\frac{\nu(v-1)}{v}\theta_y+\left(\frac{\nu}{c_v v}-\frac{\mu}{v}\right)uu_y\right.\right.\\
			&\left.\left.+\frac{qD(v^2-1)}{v^2}z_y\right](y,\tau)\,\mathrm{d}y\,\mathrm{d}\tau\right|+\left|\int_{t-2 \nu_0}^{t-\nu_0} \int_{\mathbb{R}} \partial_y\mathbb{G}_{43}(x-y;t-\tau)\mathcal{X}(\frac{t-\tau}{\nu_0})\left(\frac{q\nu}{c_v}-qD\right)z_y\,\mathrm{d}y\,\mathrm{d}\tau\right|\\
			&+\left|\int_{t-2 \nu_0}^{t-\nu_0} \int_{\mathbb{R}}\partial_y \mathbb{G}_{44}(x-y;t-\tau) \frac{D(v^2-1)}{v^2}z_y(y,\tau)\,\mathrm{d}y\,\mathrm{d}\tau\right| \\
			&+\int_{t-2 \nu_0}^{t-\nu_0} \int_{\mathbb{R}} \frac{1}{\nu_0}\left|H(x,t;y,\tau;D)-H(x,t;y,\tau;\frac{D}{v^2}) \right|z(y,\tau)\,\mathrm{d}y\,\mathrm{d} \tau\\
			& +\int_{t-2 \nu_0}^{t-\nu_0} \int_{\mathbb{R}} \frac{1}{\nu_0}\left|\int_\tau^t \int_{\mathbb{R}} H(\omega,s;y, \tau;D)\frac{qa}{c_v}\partial_{\omega}\mathbb{G}_{42}(x-\omega,t-s) \,\mathrm{d}\omega \mathrm{d}s\right|z(y,\tau)\,\mathrm{d}y\,\mathrm{d} \tau \\
			& +\int_{t-2 \nu_0}^{t-\nu_0} \int_{\mathbb{R}} \frac{1}{\nu_0}\left|\int_\tau^t \int_{\mathbb{R}} H(\omega,s;y, \tau;D)\left(\frac{q\nu}{c_v}-qD\right)\partial_{\omega}^2\mathbb{G}_{43}(x-\omega,t-s)\,\mathrm{d}\omega\,\mathrm{d}s\right|z(y, \tau)\,\mathrm{d}y \,\mathrm{d}\tau\\
			\leq & O(1) \int_{t-2\nu_0}^{t-\nu_0} \int_{\mathbb{R}}\left(e^{-\sigma_0^*(t-\tau)-\sigma_0 |x-y|}+(t-\tau) e^{-\sigma_0|x-y|}\right)\left[\frac{\delta}{\sqrt{1+\tau} }\left(|v-1|+|\theta-1|+|u|+|u_y|\right.\right.\\
			&\left.\left.\qquad+|\theta_y|+|z_y|\right)+z\right]\,\mathrm{d}y \,\mathrm{d}\tau \\
			&+O(1) \int_{t-2\nu_0}^{t-\nu_0}\int_{\mathbb{R}} \left(\frac{e^{-\frac{(x-y)^2}{4D(t-\tau)}}}{t-\tau}+e^{-\sigma_0^*(t-\tau)-\sigma_0 |x-y|}+O(1)(t-\tau)e^{-\sigma_0|x-y|}\right)\frac{\delta}{\sqrt{1+\tau} }|z_y|\,\mathrm{d}y\,\mathrm{d}\tau\\
			&+O(1)\int_{t-2\nu_0}^{t-\nu_0}\int_{\mathbb{R}} \frac{1}{\nu_0}\left\|D-\frac{D}{v^2}\right\|_{L_x^{\infty}}\frac{e^{-\frac{(x-y)^2}{t-\tau}}}{\sqrt{t-\tau}} z(y,\tau)\,\mathrm{d}y\,\mathrm{d}\tau\\
			&+O(1)\int_{t-2\nu_0}^{t-\nu_0}\int_{\mathbb{R}} \frac{1}{\nu_0}\int_{\tau}^{t}\int_{\mathbb{R}}\frac{e^{-\frac{(\omega-y)^2}{s-\tau}}}{\sqrt{s-\tau}}\left(e^{-\sigma_0^*(t-s)-\sigma_0 |x-\omega|}+(t-s) e^{-\sigma_0|x-\omega|}\right) z(y,\tau)\,\mathrm{d}\omega\,\mathrm{d}s \,\mathrm{d}y\,\mathrm{d}\tau\\
			\leq &O(1)\int_{t-2\nu_0}^{t-\nu_0}\left(e^{-\sigma_0^*(t-\tau)}+(t-\tau)\right)\frac{\delta^2}{\sqrt{1+\tau}}\,\mathrm{d}\tau+O(1)\int_{t-2\nu_0}^{t-\nu_0}\left(e^{-\sigma_0^*(t-\tau)}+(t-\tau)\right)\frac{\delta^2}{\sqrt{1+\tau}}\,\mathrm{d}\tau\\
			&+O(1)\int_{t-2\nu_0}^{t-\nu_0}\left(\frac{1}{t-\tau}+e^{-\sigma_0^*(t-\tau)}+(t-\tau)\right)\frac{\delta^2}{\sqrt{1+\tau}}\,\mathrm{d}\tau\\
			&+O(1)\int_{t-2\nu_0}^{t-\nu_0}\frac{1}{\nu_0}\delta \frac{\delta}{\sqrt{1+\tau}}\,\mathrm{d}\tau+O(1)\int_{t-2\nu_0}^{t-\nu_0}\int_{\tau}^{t}\frac{1}{\nu_0}\left(e^{-\sigma_0^*(t-s)}+(t-s)\right)\frac{\delta}{\sqrt{1+\tau}}\,\mathrm{d}s\,\mathrm{d}\tau\\
			\leq &O(1)\left(\frac{\nu_0\delta^2}{\sqrt{1+t}}+\frac{\nu_0^2\delta^2}{\sqrt{1+t}}+\frac{\delta^2}{\sqrt{1+t}}+\frac{\delta}{\sqrt{1+t}}+\frac{\nu_0\delta}{\sqrt{1+t}}\right).
		\end{align*}
		For $\mathcal{R}_3^z$, we repeat the arguments for $\mathcal{R}_1^z$ to obtain
		\begin{align*}
			&\left|\mathcal{R}_3^{z}(x,t)\right|\\
			\leq&O(1)\int_{t-2\nu_0}^{t-\nu_0}\int_{\mathbb{R}}\left(e^{-\sigma_0^*(t-\tau)-\sigma_0|x-y|}+(t-\tau)e^{-\sigma_0|x-y|}\right)\frac{\delta}{\sqrt{1+\tau}}\left(|v-1|+|\theta-1|+|u|+|u_y|\right)\,\mathrm{d}y \,\mathrm{d}\tau\\
			&+O(1)\int_{t-2\nu_0}^{t-\nu_0}\int_{\mathbb{R}}\left(e^{-\sigma_0^*(t-\tau)-\sigma_0|x-y|}+(t-\tau)e^{-\sigma_0|x-y|}\right)z(y,\tau)\,\mathrm{d}y \,\mathrm{d}\tau\\
			\leq &O(1)\int_{t-2\nu_0}^{t-\nu_0}\left(e^{-\sigma_0^*(t-\tau)}+(t-\tau)\right)\frac{\delta^2}{\sqrt{1+\tau}}\,\mathrm{d}\tau+\int_{t-2\nu_0}^{t-\nu_0}\left(e^{-\sigma_0^*(t-\tau)}+(t-\tau)\right)\frac{\delta}{\sqrt{1+\tau}}\,\mathrm{d}\tau\\
			\leq &O(1)\left(\frac{\nu_0\delta^2}{\sqrt{1+t}}+\frac{\nu_0^2\delta^2}{\sqrt{1+t}}+\frac{\nu_0\delta}{\sqrt{1+t}}+\frac{\nu_0^2\delta}{\sqrt{1+t}}\right).
		\end{align*}
		Combining the above estimates, for sufficiently small $\delta$ and $t\geq t_{\sharp}\geq 4\nu_0$, we obtain that
		\begin{equation}
			\label{z infty}
			\left\|z(\cdot,t)\right\|_{L_x^{\infty}}\leq C(\nu_0)\frac{\delta^*}{\sqrt{1+t}}+ O(1)\frac{\delta+\delta^2}{\sqrt{1+t}}.
		\end{equation}
		\noindent\textbf{Estimate of $\left\|\theta(\cdot,t)\right\|_{L_x^{\infty}}$:} 
		Using the representation of $E(x,t)$ from \eqref{E rep}, we express $\theta$ as
		\begin{align*}
			c_v(\theta(x,t)-1)
			=&\int_{\mathbb{R}} \mathbb{G}_{31}(x-y,t)(v(y,0)-1)\mathrm{d}y+\int_{\mathbb{R}} \mathbb{G}_{32}(x-y,t) u(y, 0)\,\mathrm{d}y\nonumber\\
			&+\int_{\mathbb{R}} G_{33}(x,t;y,0)(E(y,0)-c_v)\,\mathrm{d}y+\sum_{i=1}^3 \mathcal{R}_i^{\theta}-\frac{(u(x,t))^2}{2}-qz(x,t)
		\end{align*}
		Following the proof of \cite[Lemma 4.7]{WangHT2021}, we have that
		\begin{align*}
			&\left|\int_{\mathbb{R}}\mathbb{G}_{31}(x-y,t)\left(v(y,0)-1\right)\,\mathrm{d}y\right|\leq C(\nu_0)\frac{\delta^*}{\sqrt{1+t}},\\
			&\left|\int_{\mathbb{R}}\mathbb{G}_{32}(x-y,t)u(y,0)\,\mathrm{d}y\right| \leq C(\nu_0)\frac{\delta^*}{\sqrt{1+t}},\\
			&\left|\int_{\mathbb{R}}G_{33}(x,t;y,0)\left(E(y,0)-c_v\right)\,\mathrm{d}y\right| \leq C(\nu_0)\frac{\delta^*}{\sqrt{1+t}}.
		\end{align*}
		For the remainder term $\mathcal{R}_i^{\theta}$, we follow a similar procedure as for the estimates of $z(x,t)$ and apply the bounds in \cite{WangHT2021}, obtaining that
		\begin{align*}
			&|\mathcal{R}_1^{\theta}|\leq O(1)\frac{\delta^2}{\sqrt{1+t}},\\
			&|\mathcal{R}_2^{\theta}|\leq O(1)(\sqrt{\nu_0}\delta+\delta^2)\frac{1}{\sqrt{1+t}},\\
			&|\mathcal{R}_3^{\theta}|\leq O(1)\frac{\sqrt{\nu_0}\delta}{\sqrt{1+t}}.
		\end{align*}
		Therefore, we combine the above estimates with $\|u(\cdot,t)\|_{L_x^{\infty}}$ in Lemma~\ref{lemma: es u large t} and the bound for $\|z(\cdot,t)\|_{L_x^{\infty}}$ to derive that
		\begin{align}
			\label{theta infty}
			\|\theta-1\|_{L_x^{\infty}}\leq& \frac{C(\nu_0)\delta^*}{\sqrt{1+t}}+O(1)\frac{\sqrt{\nu_0}\delta+\delta^2}{\sqrt{1+t}}+\frac{\|u(\cdot,t)\|_{L_x^{\infty}}^2}{2}+q\|z(\cdot,t)\|_{L_x^{\infty}}\nonumber\\
			\leq &\frac{C(\nu_0)\delta^*}{\sqrt{1+t}}+O(1)\frac{\sqrt{\nu_0}\delta+\delta^2}{\sqrt{1+t}}+\left(\frac{C(\nu_0) \delta^*}{\sqrt{1+t}}+O(1)\frac{\sqrt{\nu_0}\delta+\delta^2}{\sqrt{1+t}}\right)^2+\frac{C(\nu_0)\delta^*}{\sqrt{1+t}}+ O(1)\frac{\delta+\delta^2}{\sqrt{1+t}}\nonumber\\
			\leq &\frac{C(\nu_0)\delta^*}{\sqrt{1+t}}+O(1)\frac{\sqrt{\nu_0}\delta+\delta^2}{\sqrt{1+t}}+\left(\frac{C(\nu_0) \delta^*}{\sqrt{1+t}}+O(1)\frac{\sqrt{\nu_0}\delta+\delta^2}{\sqrt{1+t}}\right)^2.
		\end{align}		
	\end{proof}
	\begin{lemma}
		\label{lemma: es z_x theta_x large t}
		Let $(v, u, \theta, z)$ be the local solution constructed in Theorem \ref{thm: local exi}. Also assume the condition~\eqref{tau condition}.
		Then, for $t>t_{\sharp}$, $z(x,t)$ and $\theta(x,t)$ satisfy the following first-order estimates:
		\begin{equation*}
			\left\{\begin{array}{l}
				\left\|z_x(\cdot,t)\right\|_{L_x^{1}}\leq C(\nu_0)\delta^*+ O(1)\left(\frac{|\log(\nu_0)|\delta^2}{\sqrt{\nu_0}}+\sqrt{\nu_0}\delta\right),\\
				\left\|\sqrt{t}z_x(\cdot,t)\right\|_{L_x^{\infty}}\leq C(\nu_0)\delta^*+ O(1)\left(\frac{|\log(\nu_0)|\delta^2}{\sqrt{\nu_0}}+\sqrt{\nu_0}\delta\right),\\
				\|\theta_x(\cdot, t)\|_{L_x^1} \leq C(\nu_0) \delta^*+O(1)\left(\frac{|\log(\nu_0)|\delta^2}{\sqrt{\nu_0}}+\sqrt{\nu_0}\delta\right), \\
				\|\sqrt{t}\,\theta_x(\cdot, t)\|_{L_x^{\infty}} \leq C(\nu_0) \delta^*+O(1)\left(\frac{|\log(\nu_0)|\delta^2}{\sqrt{\nu_0}}+\sqrt{\nu_0}\delta\right).
			\end{array}\right.
		\end{equation*}
	\end{lemma}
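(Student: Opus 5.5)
We will differentiate the representation formula for $z$ in Lemma~\ref{lemma: z rep}, and the formula for $E$ in Lemma~\ref{lemma: formula uvt}, with respect to $x$. Each resulting term is then bounded in $L^1_x$ and in weighted $L^\infty_x$, using the Green's function bounds \eqref{G4kx,tg1}, \eqref{G4kx,tl1} (rescaled as in Remark~\ref{re: nu scale}) together with the heat kernel bounds of Lemmas~\ref{lemma: Liu}--\ref{lemma: comparison2}. Throughout, the a priori hypothesis $\mathcal{G}(\tau)<\delta$ is used. The overall structure mirrors the proof of Lemma~\ref{lemma: es u large t} for $u_x$ in \cite{WangHT2021}, which already produces the loss $|\log\nu_0|\delta^2/\sqrt{\nu_0}$. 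For $\theta_x$, we write $c_v\theta_x=E_x-uu_x-qz_x$ and differentiate \eqref{E rep}. The terms without $z$ are handled exactly as in \cite[Lemma 4.8]{WangHT2021}, with $\partial_x\mathcal{R}_3^\theta$ expressed through the Corollary following Lemma~\ref{lemma: formula uvt}. The new contributions $qz_x$, $H_x\star z(\cdot,t-\nu_0)$ and $H_x\star K\phi(\theta)z$ are bounded by $\|z_x\|$ and $\delta^*$, so the $z_x$ estimate must be proved first.

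For $z_x$, the initial-data terms $\int\partial_x\mathbb{G}_{4k}(x-y,t)(\cdot)(y,0)\,dy$ give $C(\nu_0)\delta^*$ in $L^1$ and $C(\nu_0)\delta^*/\sqrt t$ in $L^\infty$, since $t\ge t_\sharp\ge4\nu_0$ and $|\partial_x\mathbb{G}_{4k}|\lesssim t^{-1}e^{-(x+\beta_jt)^2/Ct}$. The reaction term $\int_0^t\int\partial_xG_{44}K\phi(\theta)z$ is split at $t/2$, $t-1$, $t-2\nu_0$ and $t-\nu_0$.
\begin{itemize}
\item On $(0,t/2)$, we use $|\partial_x\mathbb{G}_{44}|\lesssim(t-\tau)^{-1}$ pointwise, respectively $(t-\tau)^{-1/2}$ in $L^1$, together with $\int_0^t M\le\delta^*$ from \eqref{phiz Ltx}.
\item On $(t/2,t-1)$, we use the decay $M(\tau)\le O(1)\delta^*/(1+\tau)$ from \eqref{es: M}.
\item On $(t-1,t)$, we use $|H_x|\lesssim(t-\tau)^{-1}e^{-(x-y)^2/C(t-\tau)}$, so the integrals $\int(t-\tau)^{-1/2}\,\|z\|_{L^\infty}\,d\tau$ and $\int(t-\tau)^{-1/2}\,\|z\|_{L^1}\,d\tau$ are finite and carry $\delta/\sqrt{1+t}$ and $\delta^*$ respectively.
\end{itemize}
For $\partial_x\mathcal{R}_1^z$ we move the derivative onto $\partial_{xy}\mathbb{G}_{4k}$, using the $t^{-3/2}$ bound for $t-\tau\ge1$ and the $\partial_{xx}\mathbb{G}_{4k}$, $k\le3$, bound of \eqref{G4kx,tl1} for $t-\tau\le1$. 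The nonlinear sources are quadratic, of the form $\delta\cdot(\ldots)/\sqrt{1+\tau}$, and the long-time integrals give $\delta^2/\sqrt t$ in $L^\infty$ and $\delta^2$ in $L^1$. For $\partial_x\mathcal{R}_2^z$ we substitute the representation \eqref{G44 rep} of $\mathbb{G}_{44}$ as before, and use the comparison estimate for $H_x(\cdot;D)-H_x(\cdot;D/v^2)$ from Lemma~\ref{lemma: comparison}; on $\tau\in(t-2\nu_0,t-\nu_0)$ this yields terms of order $\nu_0^{-1}\,|\log\nu_0|\,\delta\cdot\delta$ integrated over an interval of length $\nu_0$.

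We expect the main obstacle to be the $\partial_x\mathbb{G}_{44}$ and $H_y$ contributions with source $\frac{D(v^2-1)}{v^2}z_y$ near $\tau=t$ (in $\mathcal{R}_2^z$, and in the part of $\mathcal{R}_3^z$ that contains $\partial_{xy}$ of a heat-type kernel). There the kernel $\partial_{xy}$ behaves like $(t-\tau)^{-3/2}$ and is not integrable, while $z_y$ is only known in $L^\infty$, with size $\sqrt{\tau}^{-1}\delta$. To handle this, we will first try to mimic the $\partial_x\mathcal{R}_3^u$ treatment. We rewrite the $z_y$ term via the $z$ equation, using $\frac{D}{v^2}z_y=$ flux, and integrate by parts in $\tau$ through the backward heat equation. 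This converts $H_{xy}$ into $H_x$ acting on $z(\cdot,t-\nu_0)$ and $z_\tau$; these are controlled by $\|z_t\|_{L^1}\lesssim\delta/\sqrt{\min(\tau,t_\sharp)}$ from Lemma~\ref{lemma: T+t}, at the cost of the factor $|\log\nu_0|/\sqrt{\nu_0}$. We also use the fact that the fourth column of $\hat{\mathbb M}_j^*$ equals $q$ times the third, from \eqref{M 34}, to reduce the $\mathbb{G}_{43}$ and $\mathbb{G}_{44}$ singular parts near the diagonal to the heat kernel $\frac{1}{\sqrt{4\pi D t}}e^{-x^2/4Dt}$ of Theorem~\ref{thm: es for singular G}. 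Its difference from $H(\cdot;D/v^2)$ is again handled by the comparison Lemmas~\ref{lemma: comparison} and \ref{lemma: comparison2}. Collecting all terms gives the stated bounds, with a right-hand side of the form $C(\nu_0)\delta^*+O(1)\bigl(|\log\nu_0|\delta^2/\sqrt{\nu_0}+\sqrt{\nu_0}\delta\bigr)$.
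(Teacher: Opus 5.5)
Your outline follows the paper's proof in its essentials. You differentiate \eqref{z rep}, and the data terms give $C(\nu_0)\delta^*$ since $t\ge t_\sharp\ge 4\nu_0$. The reaction term goes through \eqref{phiz Ltx} and \eqref{es: M}, and $\partial_x\mathcal{R}_1^z$ through the $\partial_{xy}\mathbb{G}_{4k}$ bounds. The $\mathcal{X}'$-commutator in $\partial_x\mathcal{R}_2^z$ is handled by \eqref{G44 rep} and the $H_x(\cdot;D)-H_x(\cdot;D/v^2)$ comparison; the $\nu_0^{-1/2}$ there comes from $\|H_x(x,t;\cdot,\tau)\|_{L^1}\sim(t-\tau)^{-1/2}$ with $t-\tau\sim\nu_0$, not only from the interval length. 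Finally $\theta_x$ follows from the $z$-free estimate of Wang--Yu--Zhang plus the $z_x$ bound.

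\textbf{The step that fails: the reaction term on $(t-1,t)$.} You bound it by $\int_{t-1}^t(t-\tau)^{-1/2}\|z(\cdot,\tau)\|_{L^\infty}\,d\tau\lesssim\delta/\sqrt{1+t}$. This contributes $c\,\delta$ to $\sqrt t\,\|z_x\|_{L^\infty}$ with a fixed constant $c$. That is not of the form $C(\nu_0)\delta^*+O(1)\bigl(|\log\nu_0|\delta^2/\sqrt{\nu_0}+\sqrt{\nu_0}\delta\bigr)$. No choice of $\nu_0,\delta,\delta^*$ makes it small relative to $\delta$, so it cannot be absorbed in the continuity argument of the global theorem. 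The paper's own display for this term contains the same $\delta/\sqrt t$ and then drops it, so do not copy it. The fix is as follows.
\begin{itemize}
\item On $(t-\nu_0,t)$, use $\|z\|_{L^\infty}$; this gives $\sqrt{\nu_0}\,\delta/\sqrt{1+t}$.
\item On $(t-1,t-\nu_0)$, use $\sup_{x,y}|\partial_xG_{44}(x,t;y,\tau)|\lesssim(t-\tau)^{-1}$ against $M(\tau)=\|K\phi(\theta)z(\cdot,\tau)\|_{L^1}\lesssim\delta^*/(1+\tau)$ from \eqref{es: M}. This gives $|\log\nu_0|\,\delta^*/(1+t)$, which is absorbed into $C(\nu_0)\delta^*$.
\end{itemize}

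\textbf{Your ``main obstacle'' does not exist, and the tools you invoke for it are misapplied.}
\begin{itemize}
\item In \eqref{z rep}, the source $\frac{D(v^2-1)}{v^2}z_y$ meets $\partial_y\mathbb{G}_{44}$ only for $\tau\le t-2\nu_0$, and $(1-\mathcal{X})\partial_y\mathbb{G}_{44}$ only on $(t-2\nu_0,t-\nu_0)$. So it always appears with $t-\tau\ge\nu_0$.
\item On $(t-\nu_0,t)$ the kernel is $H(\cdot;D/v^2)$, which solves the backward equation with the exact coefficient. Testing the $z$-equation against it leaves no $z_y$-remainder at all.
\item On $(t-2\nu_0,t-\nu_0)$ the $\mathcal{X}H$ part leaves only the commutator $\nu_0^{-1}\mathcal{X}'(\mathbb{G}_{44}-H)z$, which contains no $z_y$.
\item Hence the worst $\partial_{xy}\mathbb{G}_{44}$ contribution costs $\int_{t-1}^{t-\nu_0}(t-\tau)^{-1}d\tau\sim|\log\nu_0|$ times $\delta^2$, which is admissible.
\item $\mathcal{R}_3^z$ contains only $\mathbb{G}_{42}$ and $\mathbb{G}_{43}$, whose short-time bounds \eqref{G4kx,tl1}, \eqref{G4kxx,tl1} are non-singular. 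So $\partial_x\mathcal{R}_3^z$ can be bounded directly, with the interval length supplying a factor $\nu_0$.
\end{itemize}
In fact the fourth rows of $F'(\bar U)$ and $B(\bar U)$ are $0$ and $(0,0,0,D)$. So the fourth row of $\mathbb{G}$ solves the scalar heat equation: $\mathbb{G}_{41}=\mathbb{G}_{42}=\mathbb{G}_{43}\equiv0$, and $\mathbb{G}_{44}$ is exactly the heat kernel of diffusivity $D$.

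The identity \eqref{M 34} concerns entries $(k,4)$ with $k\le 3$ and says nothing about $\mathbb{G}_{43}$ or $\mathbb{G}_{44}$. The integration by parts in $\tau$ through the backward equation is what the paper uses for $\partial_x\mathcal{R}_3^\theta$, in the Corollary after Lemma~\ref{lemma: formula uvt}; it is not needed for $z$.

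With that detour removed and the reaction term split as above, your plan gives the stated bounds.
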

	\begin{proof}
	From the representation formula~\eqref{z rep} for $z(x,t)$, we deduce that
    \begin{align}\label{z_x rep}
			z_x(x,t)
			=&\int_{\mathbb{R}} \partial_x\mathbb{G}_{41}(x-y,t)(v(y,0)-1)\mathrm{d}y+\int_{\mathbb{R}} \partial_x \mathbb{G}_{42}(x-y,t) u(y, 0)\,\mathrm{d}y\nonumber\\
			&+\int_{\mathbb{R}} \partial_x\mathbb{G}_{43}(x-y,t)(E(y,0)-c_v)\,\mathrm{d}y+\int_{\mathbb{R}} \partial_xG_{44}(x,t;y,0)z(y,0)\,\mathrm{d}y\nonumber\\
			&-\int_0^t\int_{\mathbb{R}} \partial_xG_{44}(x,t;y,\tau)K\phi(\theta)z(y,\tau)\,\mathrm{d}y\,\mathrm{d}\tau+\sum_{i=1}^3 \partial_x\mathcal{R}_i^{z},
		\end{align}
		where $\mathcal{R}_i^z$ defined as in Lemma \ref{lemma: z rep}. Recall the second-order derivative estimates of the Green's function from Lemma \ref{lemma: G es}. For large time $t\geq 1$, the second derivative $\partial_{xx}\mathbb{G}_{4 k}$ satisfies
		\begin{align}
			\label{G4kxx,tg1}
\left|\partial_{xx}\mathbb{G}_{4 k}(x, t)\right| &\leq \sum_{j=1}^4 \frac{O(1) e^{-\frac{\left(x+\beta_j t\right)^2}{4 C t}}}{t^{\frac{3}{2}}} +\sum_{j=1}^3 \frac{O(1) e^{-\frac{\left(x+\beta_j t\right)^2}{4 C t}}}{t^2}\nonumber\\& \qquad +\sum_{j=1}^3 \frac{O(1) e^{-\frac{\left(x+\beta_j t\right)^2}{4 C t}}}{t^{\frac{5}{2}}} + O(1) e^{-\sigma_0^* t-\sigma_0|x|}\nonumber\\
& \leq       \sum_{j=1}^4 \frac{O(1) e^{-\frac{\left(x+\beta_j t\right)^2}{4 C t}}}{t^{\frac{3}{2}}} + O(1) e^{-\sigma_0^* t-\sigma_0|x|} \qquad \text{for } k=1,2,3,4,
		\end{align}

		while for short time $t\leq 1$,
		\begin{align}
			\label{G4kxx,tl1}
			&\left|\partial_{xx} \mathbb{G}_{4k}(x, t)\right| \leq O(1) e^{-\sigma_0^* t-\sigma_0 |x|}+O(1)t e^{-\sigma_0|x|}, \quad k=1,2,3,\nonumber\\
			&\left|\partial_{xx} \mathbb{G}_{44}(x,t)\right|\leq O(1) \left|\partial_{xx}\left(\frac{e^{-\frac{x^2}{4Dt}}}{\sqrt{4\pi D t}}\right)\right|+ O(1) e^{-\sigma_0^* t-\sigma_0 |x|}+O(1)te^{-\sigma_0|x|}.
		\end{align}
		\noindent \textbf{Estimate of $\left\|z_x(\cdot,t)\right\|_{L_x^{\infty}}$:} For $t\geq t_{\sharp}\geq 4 \nu_0$, we start with the integral term involving the initial perturbation $v_0 - 1$. Using the large-time estimate of $\partial_x\mathbb{G}_{41}$ from \eqref{G4kx,tg1} and the initial smallness condition \eqref{tau condition}, we obtain that
		\begin{align*}
			\left|\int_{\mathbb{R}}\partial_x\mathbb{G}_{41}(x-y,t)\left(v(y,0)-1\right)\,\mathrm{d}y\right| \leq & C(\nu_0)\int_{\mathbb{R}}\sum_{j=1}^4\frac{e^{-\frac{\left(x-y+\beta_j t\right)^2}{4 C t}}}{t}\left|v(y,0)-1\right|\,\mathrm{d}y\\
			&+ C(\nu_0) \int_{\mathbb{R}}e^{-\sigma_0^* t-\sigma_0|x-y|}\left|v(y,0)-1\right|\,\mathrm{d}y\\
			\leq &C(\nu_0)\frac{\delta^*}{\sqrt{t}}.
		\end{align*}
		We also have
		\begin{align*}
			&\left|\int_{\mathbb{R}}\partial_x\mathbb{G}_{42}(x-y,t)u(y,0)\,\mathrm{d}y\right| \leq C(\nu_0)\frac{\delta^*}{\sqrt{t}},\\
			&\left|\int_{\mathbb{R}}\partial_x\mathbb{G}_{43}(x-y,t)\left(E(y,0)-c_v\right)\,\mathrm{d}y\right| \leq C(\nu_0)\frac{\delta^*}{\sqrt{t}},\\
			&\left|\int_{\mathbb{R}}\partial_x \mathbb{G}_{44}(x,t;y,0)z(y,0)\,\mathrm{d}y\right|=\left|\int_{\mathbb{R}}\partial_x\mathbb{G}_{44}(x-y,t)z(y,0)\,\mathrm{d}y\right|\leq C(\nu_0)\frac{\delta^*}{\sqrt{t}},
		\end{align*}
		following the similar argument as in Lemma~\ref{lemma: es z theta large t}.
		We split the interval ($0,t-2\nu_0$) into ($0,\frac{t-1+2\nu_0}{2}$), ($\frac{t-1+2\nu_0}{2},t-1$) and ($t-1,t-2\nu_0$) and use the Lipschitz continuity of $\phi$, the Green's function bounds in \eqref{G4kx,tg1} and \eqref{G4kx,tl1}, and \emph{a priori} bound $\mathcal{G}(\tau)<\delta$ from \eqref{tau condition}. In this way, we obtain that
		\begin{align*}
			&\left|\int_0^t\int_{\mathbb{R}}\partial_x G_{44}(x,t;y,\tau)K\phi(\theta)z(y,\tau)\,\mathrm{d}y\,\mathrm{d}\tau\right|\\ \leq&O(1)\int_{t-2\nu_0}^{t-\nu_0}\int_{\mathbb{R}} \mathcal{X}\left(\frac{t-\tau}{\nu_0}\right) \left|H_x(x,t;y,\tau;\frac{D}{v^2})\right|z(y,\tau)\,\mathrm{d}y\,\mathrm{d}\tau\\
			&+O(1)\int_{t-\nu_0}^{t}\int_{\mathbb{R}}  \left|H_x(x,t;y,\tau;\frac{D}{v^2})\right|z(y,\tau)\,\mathrm{d}y\,\mathrm{d}\tau\\
			&+O(1)\int_{0}^{\frac{t-1+2\nu_0}{2}}\int_{\mathbb{R}} \left|\partial_x\mathbb{G}_{44}(x-y,t-\tau)\right|K\phi(\theta)z(y,\tau)\,\mathrm{d}y\,\mathrm{d}\tau\\
			&+O(1)\int_{\frac{t-1+2\nu_0}{2}}^{t-1}\int_{\mathbb{R}} \left|\partial_x\mathbb{G}_{44}(x-y,t-\tau)\right|K\phi(\theta)z(y,\tau)\,\mathrm{d}y\,\mathrm{d}\tau\\
			&+O(1)\int_{t-1}^{t-2\nu_0}\int_{\mathbb{R}} \left|\partial_x\mathbb{G}_{44}(x-y,t-\tau)\right|z(y,\tau) \,\mathrm{d}y\,\mathrm{d}\tau\\
			&+O(1)\int_{t-2\nu_0}^{t-\nu_0}\int_{\mathbb{R}}\left(1- \mathcal{X}\left(\frac{t-\tau}{\nu_0}\right)\right) \left|\partial_x\mathbb{G}_{44}(x-y,t-\tau)\right|z(y,\tau) \,\mathrm{d}y\,\mathrm{d}\tau\\
			\leq & O(1)\int_{t-2\nu_0}^{t-\nu_0}\int_{\mathbb{R}}\frac{e^{\frac{-(x-y)^2}{C_*(t-\tau)}}}{t-\tau}\frac{\delta}{\sqrt{1+\tau}}\,\mathrm{d}y\,\mathrm{d}\tau +O(1)\int_{t-\nu_0}^{t}\int_{\mathbb{R}}\frac{e^{\frac{-(x-y)^2}{C_*(t-\tau)}}}{t-\tau}\frac{\delta}{\sqrt{1+\tau}}\,\mathrm{d}y\,\mathrm{d}\tau\\
			&+O(1)\int_{0}^{\frac{t-1+2\nu_0}{2}}\int_{\mathbb{R}} \left(\sum_{j=1}^4 \frac{O(1) e^{-\frac{\left(x-y+\beta_j(t-\tau)\right)^2}{4 C (t-\tau)}}}{t-\tau}+ e^{-\sigma_0^* (t-\tau)-\sigma_0|x-y|}\right)K\phi(\theta)z(y,\tau) \,\mathrm{d}y\,\mathrm{d}\tau\\
			&+O(1)\int_{\frac{t-1+2\nu_0}{2}}^{t-1}\int_{\mathbb{R}} \left(\sum_{j=1}^4 \frac{O(1) e^{-\frac{\left(x-y+\beta_j (t-\tau)\right)^2}{4 C (t-\tau)}}}{t-\tau}+ e^{-\sigma_0^* (t-\tau)-\sigma_0|x-y|}\right)K\phi(\theta)z(y,\tau)\,\mathrm{d}y\,\mathrm{d}\tau\\
			&+O(1)\int_{t-1}^{t-\nu_0}\int_{\mathbb{R}} \left(\frac{e^{-\frac{(x-y)^2}{4D(t-\tau)}}}{t-\tau}+ e^{-\sigma_0^*(t-\tau)-\sigma_0 |x-y|}+ (t-\tau)e^{-\sigma_0|x-y|}\right)z(y,\tau)\,\mathrm{d}y\,\mathrm{d}\tau\\
			\leq &O(1)\int_{t-2\nu_0}^{t}\frac{1}{\sqrt{t-\tau}}\frac{\delta}{\sqrt{1+\tau}}\,\mathrm{d}\tau+O(1)\int_{0}^{\frac{t-1+2\nu_0}{2}} \left(\frac{1}{t-\tau}+e^{-\sigma_0^* (t-\tau)}\right)M(\tau)\,\mathrm{d}\tau\\
			&+O(1)\int_{\frac{t-1+2\nu_0}{2}}^{t-1} \left(\frac{1}{\sqrt{t-\tau}}+e^{-\sigma_0^* (t-\tau)}\right) M(\tau)\,\mathrm{d}\tau\\
			&+O(1)\int_{t-1}^{t-\nu_0}\left(\frac{1}{\sqrt{t-\tau}}+e^{-\sigma_0^*(t-\tau)}+(t-\tau)\right)\frac{\delta}{\sqrt{1+\tau}}\,\mathrm{d}\tau
			\\
			\leq& \frac{O(1)\sqrt{\nu_0}\delta}{\sqrt{1+t}}+\frac{O(1)}{1+t-2\nu_0}\int_{0}^{\frac{t-1+2\nu_0}{2}}M(\tau)\,\mathrm{d}\tau+O(1)\int_{\frac{t-1+2\nu_0}{2}}^{t-1} \left(\frac{1}{\sqrt{t-\tau}}+e^{-\sigma_0^* (t-\tau)}\right)\frac{\delta^*}{1+\tau}\,\mathrm{d}\tau\\
			\leq& O(1)\left(\frac{\sqrt{\nu_0}\delta}{\sqrt{t}}+\frac{\delta}{\sqrt{t}}+\frac{\delta^*}{\sqrt{t}}\right).
		\end{align*}

		For the remainder terms $\partial_x\mathcal{R}_1^z$, we split the time integral at $\tau=\frac{t-1}{2}$, $\tau=t-1$ and $\tau = t-2\nu_0$, which allows us to apply short-time and long-time derivative estimates from \eqref{G4kxx,tg1} and \eqref{G4kxx,tl1} respectively. Using the \emph{a priori} estimates $\mathcal{G}(\tau)<\delta$ in \eqref{tau condition}, we find that
		\begin{align*}
			\left|\partial_x\mathcal{R}_1^z\right|\leq & \left|\int_0^{\frac{t-1}{2}} \int_{\mathbb{R}} \partial_{xy} \mathbb{G}_{42}(x-y, t-\tau)\left[\frac{a(v-1)^2}{v}+\frac{a(\theta-1)(1-v)}{v}-\frac{a u^2}{2 c_v}+\frac{\mu u_y(v-1)}{v}\right]\,\mathrm{d}y\,\mathrm{d}\tau\right| \\
			& +\left|\int_0^{\frac{t-1}{2}} \int_{\mathbb{R}} \partial_{xy} \mathbb{G}_{43}(x-y, t-\tau)\left[\left(\frac{a(\theta-1)+a(1-v)}{v}\right) u+\frac{\nu \theta_y(v-1)}{v}+\left(\frac{\nu}{c_v}-\frac{\mu}{v}\right) u u_y \right.\right.\\
			&\left.\left.+\frac{qD(v^2-1)}{v^2}z_y\right]\,\mathrm{d}y\,\mathrm{d}\tau\right| +\left|\int_0^{\frac{t-1}{2}} \int_{\mathbb{R}} \partial_{xy} \mathbb{G}_{44}(x-y, t-\tau)\frac{D(v^2-1)}{v^2}z_y\,\mathrm{d}y\,\mathrm{d}\tau\right|\\
			& +\left|\int_{\frac{t-1}{2}}^{t-1} \int_{\mathbb{R}} \partial_{xy} \mathbb{G}_{42}(x-y, t-\tau)\left(\frac{a(v-1)^2}{v}+\frac{a(\theta-1)(1-v)}{v}-\frac{a u^2}{2 c_v}+\frac{\mu u_y(v-1)}{v}\right)\,\mathrm{d}y \,\mathrm{d}\tau\right| \\
			& +\left|\int_{\frac{t-1}{2}}^{t-1} \int_{\mathbb{R}} \partial_{xy} \mathbb{G}_{43}(x-y, t-\tau)\left[\left(\frac{a(\theta-1)+a(1-v)}{v}\right) u+\frac{\nu \theta_y(v-1)}{v}+\left(\frac{\nu}{c_v}-\frac{\mu}{v}\right) u u_y \right.\right.\\
			&\left.\left.+\frac{qD(v^2-1)}{v^2}z_y\right]\,\mathrm{d}y\,\mathrm{d}\tau\right| +\left|\int_{\frac{t-1}{2}}^{t-1} \int_{\mathbb{R}} \partial_{xy} \mathbb{G}_{44}(x-y, t-\tau)\frac{D(v^2-1)}{v^2}z_y\,\mathrm{d}y\,\mathrm{d}\tau\right|\\
			&+\left|\int_{t-1}^{t-2 \nu_0} \int_{\mathbb{R}} \partial_{xy} \mathbb{G}_{42}(x-y, t-\tau)\left(\frac{a(v-1)^2}{v}+\frac{a(\theta-1)(1-v)}{v}-\frac{a u^2}{2 c_v}+\frac{\mu u_y(v-1)}{v}\right)\,\mathrm{d}y\,\mathrm{d}\tau\right| \\
			& +\left|\int_{t-1}^{t-2\nu_0} \int_{\mathbb{R}} \partial_{xy} \mathbb{G}_{43}(x-y, t-\tau)\left[\left(\frac{a(\theta-1)+a(1-v)}{v}\right) u+\frac{\nu \theta_y(v-1)}{v}+\left(\frac{\nu}{c_v}-\frac{\mu}{v}\right) u u_y \right.\right.\\
			&\left.\left.+\frac{qD(v^2-1)}{v^2}z_y\right]\,\mathrm{d}y\,\mathrm{d}\tau\right| +\left|\int_{t-1}^{t-2\nu_0} \int_{\mathbb{R}} \partial_{xy} \mathbb{G}_{44}(x-y, t-\tau)\frac{D(v^2-1)}{v^2}z_y\,\mathrm{d}y\,\mathrm{d}\tau\right|\\
			\leq & O(1)\int_0^{\frac{t-1}{2}}\int_{\mathbb{R}}\left(\sum_{j=1}^4 \frac{O(1) e^{-\frac{\left(x-y+\beta_j (t-\tau)\right)^2}{4 (t-\tau)}}}{(t-\tau)^{\frac{3}{2}}}+C(\nu_0) e^{-\sigma_0^*(t-\tau)-\sigma_0|x-y|}\right) \frac{\delta}{\sqrt{1+\tau}}\left(|v-1|+|\theta-1|\right.\\
			&\left.\qquad+|u|+|u_y|+|\theta_y|+|z_y|\right)\,\mathrm{d}y\,\mathrm{d}\tau\\
			&+O(1) \int_{\frac{t-1}{2}}^{t-1}\int_{\mathbb{R}} \left(\sum_{j=1}^4 \frac{O(1) e^{-\frac{\left(x-y+\beta_j (t-\tau)\right)^2}{4 (t-\tau)}}}{(t-\tau)^{\frac{3}{2}}}+C(\nu_0) e^{-\sigma_0^*(t-\tau)-\sigma_0|x-y|}\right) \frac{\delta}{\sqrt{1+\tau}}\left(|v-1|+|\theta-1|\right.\\
			&\left.\qquad+|u|+|u_y|+|\theta_y|+|z_y|\right) \,\mathrm{d}y\,\mathrm{d}\tau\\
			&+O(1) \int_{t-1}^{t-2 \nu_0} \int_{\mathbb{R}}\left(e^{-\sigma_0^*(t-\tau)-\sigma_0 |x-y|}+(t-\tau) e^{-\sigma_0|x-y|}\right)\frac{\delta}{\sqrt{1+\tau} }\left(|v-1|+|\theta-1|+|u|\right.\\
			&\left.\qquad+|u_y|+|\theta_y|+|z_y|\right)\,\mathrm{d}y \,\mathrm{d}\tau \\
			&+O(1) \int_{t-1}^{t-2 \nu_0}\int_{\mathbb{R}} \left(\frac{e^{-\frac{(x-y)^2}{4D(t-\tau)}}}{(t-\tau)^{\frac{3}{2}}}+e^{-\sigma_0^*(t-\tau)-\sigma_0 |x-y|}+O(1)(t-\tau)e^{-\sigma_0|x-y|}\right)\frac{\delta}{\sqrt{1+\tau} }|z_y|\,\mathrm{d}y\,\mathrm{d}\tau \\
			\leq & O(1) \int_0^{\frac{t-1}{2}}\left(\frac{1}{(t-\tau)^{\frac{3}{2}}}+e^{-\sigma_0^* (t-\tau)}\right) \frac{\delta^2}{\sqrt{1+\tau}} \,\mathrm{d}\tau+O(1) \int_{\frac{t-1}{2}}^{t-1}\left(\frac{1}{t-\tau}+e^{-\sigma_0^* (t-\tau)}\right) \frac{\delta^2}{\sqrt{1+\tau}\sqrt{\tau}} \,\mathrm{d}\tau\\
			&+O(1)\int_{t-1}^{t-2 \nu_0}\left(\frac{1}{t-\tau}+e^{-\sigma_0^* (t-\tau)}+(t-\tau)\right)\frac{\delta^2}{\sqrt{1+\tau} \sqrt{\tau}} \,\mathrm{d}\tau \\
			\leq & O(1) \left(\frac{\delta^2}{1+t}+\frac{\delta^2}{\sqrt{1+t}}\right)+O(1)\frac{\delta^2}{\sqrt{1+t}}\int_{\frac{t-1}{2}}^{t-1}\frac{1}{\sqrt{t-\tau}\sqrt{\tau}}\,\mathrm{d}\tau+O(1)\frac{\delta^2}{\sqrt{t}}\int_{t-1}^{t-2\nu_0}\frac{1}{t-\tau}\frac{1}{\sqrt{\tau}}\,\mathrm{d}\tau\\
			&+O(1)\frac{\delta^2}{\sqrt{t}}\int_{t-1}^{t-2\nu_0}\frac{1}{\sqrt{\tau}}\,\mathrm{d}\tau\\
			\leq &O(1)\frac{(1+|\log(\nu_0)|)\delta^2}{\sqrt{t}}.
		\end{align*}

    For $\partial_x\mathcal{R}_2^z$, we utilize the explicit representation for $\mathbb{G}_{44}$ in Eq.~\eqref{G44 rep}, together with comparison estimates for $H_x(x,t;y,\tau;D)-H_x(x,t;y,\tau;\frac{D}{v^2})$ from Lemma \ref{lemma: comparison2}, the estimates \eqref{G4kxx,tg1} and \eqref{G4kxx,tl1}, as well as the \emph{a priori} bounds in Eq.~\eqref{tau condition}. One thus deduces that
		\begin{align*}
			&\left|\partial_x\mathcal{R}_2^z(x, t)\right|  \\
			\leq & \left|\int_{t-2 \nu_0}^{t-\nu_0} \int_{\mathbb{R}} \partial_{xy} \mathbb{G}_{42}(x-y, t-\tau)\left[\frac{a(v-1)^2}{v}+\frac{a(\theta-1)(1-v)}{v}-\frac{au^2}{2c_v}+\frac{\mu u_y(v-1)}{v}\right.\right.\\
			&\left.\left.\qquad-\mathcal{X}(\frac{t-\tau}{\nu_0})\frac{qa}{c_v}z\right]\,\mathrm{d}y\,\mathrm{d}\tau\right| \\
			&+\left|\int_{t-2 \nu_0}^{t-\nu_0} \int_{\mathbb{R}} \partial_{xy}\mathbb{G}_{43}(x-y;t-\tau) \left[\frac{a(\theta-1)+a(1-v)}{v} u+\frac{\nu(v-1)}{v}\theta_y+\left(\frac{\nu}{c_v v}-\frac{\mu}{v}\right)uu_y\right.\right.\\
			&\left.\left.+\frac{qD(v^2-1)}{v^2}z_y\right]\,\mathrm{d}y\,\mathrm{d}\tau\right|+\left|\int_{t-2 \nu_0}^{t-\nu_0} \int_{\mathbb{R}} \partial_{xy}\mathbb{G}_{43}(x-y;t-\tau)\mathcal{X}(\frac{t-\tau}{\nu_0})\left(\frac{q\nu}{c_v}-qD\right)z_y\,\mathrm{d}y\,\mathrm{d}\tau\right|\\
			&+\left|\int_{t-2 \nu_0}^{t-\nu_0} \int_{\mathbb{R}}\partial_{xy} \mathbb{G}_{44}(x-y;t-\tau) \frac{D(v^2-1)}{v^2}z_y\,\mathrm{d}y\,\mathrm{d}\tau\right| \\
			&+\int_{t-2 \nu_0}^{t-\nu_0} \int_{\mathbb{R}} \frac{1}{\nu_0}\left|H_x(x,t;y,\tau;D)-H_x(x,t;y,\tau;\frac{D}{v^2}) \right|z(y,\tau)\,\mathrm{d}y\,\mathrm{d} \tau\\
			& +\int_{t-2 \nu_0}^{t-\nu_0} \int_{\mathbb{R}} \frac{1}{\nu_0}\left|\int_\tau^t \int_{\mathbb{R}} H_{\omega}(\omega,s;y, \tau;D)\frac{qa}{c_v}\partial_{\omega}\mathbb{G}_{42}(x-\omega,t-s) \,\mathrm{d}\omega \mathrm{d}s\right|z(y,\tau)\,\mathrm{d}y\,\mathrm{d} \tau \\
			& +\int_{t-2 \nu_0}^{t-\nu_0} \int_{\mathbb{R}} \frac{1}{\nu_0}\left|\int_\tau^t \int_{\mathbb{R}} H_{\omega}(\omega,s;y, \tau;D)\left(\frac{q\nu}{c_v}-qD\right)\partial_{\omega}^2\mathbb{G}_{43}(x-\omega,t-s)\,\mathrm{d}\omega\,\mathrm{d}s\right|z(y, \tau)\,\mathrm{d}y \,\mathrm{d}\tau\\
			\leq & O(1) \int_{t-2\nu_0}^{t-\nu_0} \int_{\mathbb{R}}\left(e^{-\sigma_0^*(t-\tau)-\sigma_0 |x-y|}+(t-\tau) e^{-\sigma_0|x-y|}\right)\left[\frac{\delta}{\sqrt{1+\tau} }\left(|v-1|+|\theta-1|+|u|+|u_y|\right.\right.\\
			&\left.\left.\qquad+|\theta_y|+|z_y|\right)+z\right]\,\mathrm{d}y \,\mathrm{d}\tau \\
			&+O(1) \int_{t-2\nu_0}^{t-\nu_0}\int_{\mathbb{R}} \left(\frac{e^{-\frac{(x-y)^2}{4D(t-\tau)}}}{(t-\tau)^{\frac{3}{2}}}+e^{-\sigma_0^*(t-\tau)-\sigma_0 |x-y|}+O(1)(t-\tau)e^{-\sigma_0|x-y|}\right)\frac{\delta}{\sqrt{1+\tau} }|z_y|\,\mathrm{d}y\,\mathrm{d}\tau\\
			& +O(1) \int_{t-2 \nu_0}^{t-\nu_0} \int_{\mathbb{R}} \frac{1}{\nu_0} \frac{e^{-\frac{(x-y)^2}{C_*(t-\tau)}}}{t-\tau}\left[|\log (t-\tau)| \sup _{t-2 \nu_0<s<t}\|v(\cdot, s)-1\|_{L^{\infty}}+\sup _{t-2 \nu_0<s<t}\|v(\cdot, s)-1\|_{BV}\right]\\
			&\qquad \frac{\delta}{\sqrt{1+\tau}} \,\mathrm{d}y\,\mathrm{d}\tau \\
			& +O(1) \int_{t-2 \nu_0}^{t-\nu_0} \int_{\mathbb{R}} \frac{1}{\nu_0} \frac{e^{-\frac{(x-y)^2}{C_*(t-\tau)}}}{t-\tau} \sqrt{t-\tau}\left[\sup _{t-2 \nu_0<s<t}\left\|\sqrt{s} u_x(\cdot, s)\right\|_{L^{\infty}}+\sup _{t-2 \nu_0<s<t}\|v(\cdot, s)-1\|_{L^1}\right]\\
			&\qquad \frac{\delta}{\sqrt{1+\tau}}\,\mathrm{d}y\,\mathrm{d}\tau \\
			& +O(1) \int_{t-2 \nu_0}^{t-\nu_0}\int_{\tau}^t \frac{1}{\nu_0}  \int_{\mathbb{R}}\int_{\mathbb{R}}\frac{e^{\frac{-(\omega-y)^2}{s-\tau}}}{s-\tau}\left(e^{-\sigma_0^*(t-s)-\sigma_0 |x-\omega|}+(t-s)e^{-\sigma_0|x-\omega|}\right)\,\mathrm{d}\omega\,\mathrm{d}y\frac{\delta}{\sqrt{1+\tau}}\,\mathrm{d}s \,\mathrm{d}\tau \\
			\leq &O(1)\int_{t-2\nu_0}^{t-\nu_0} e^{-\sigma_0^*(t-\tau)}\frac{\delta^2}{\sqrt{1+\tau}\sqrt{\tau}}\,\mathrm{d}\tau+O(1)\int_{t-2\nu_0}^{t-\nu_0}(t-\tau)\frac{\delta^2}{\sqrt{1+\tau}}\,\mathrm{d}\tau+O(1)\int_{t-2\nu_0}^{t-\nu_0} \frac{1}{t-\tau}\frac{\delta^2}{\sqrt{1+\tau}\sqrt{\tau}}\,\mathrm{d}\tau\\
			&+O(1)\int_{t-2\nu_0}^{t-\nu_0}\frac{1}{\nu_0}\frac{1}{\sqrt{t-\tau}}\left(|\log(t-\tau)|+1\right)\frac{\delta^2}{\sqrt{1+\tau}}\,\mathrm{d}\tau+O(1)\int_{t-2\nu_0}^{t-\nu_0}\frac{1}{\nu_0}\frac{\delta^2}{\sqrt{1+\tau}}\,\mathrm{d}\tau\\
			&+O(1)\int_{t-2\nu_0}^{t-\nu_0}\int_{\tau}^t\frac{1}{\nu_0}\frac{1}{\sqrt{s-\tau}}\left(e^{-\sigma_0^*(t-s)}+(t-s)\right)\frac{\delta}{\sqrt{1+\tau}}\,\mathrm{d}s\,\mathrm{d}\tau\\
			\leq &O(1)\left(\frac{\nu_0\delta^2}{\sqrt{t}}+\frac{\nu_0^2\delta^2}{\sqrt{t}}+\frac{\delta^2}{\sqrt{t}}\right)+O(1)\frac{|\log(\nu_0)|\delta^2}{\sqrt{\nu_0}\sqrt{t}}+O(1)\frac{\sqrt{\nu_0}\delta}{\sqrt{t}}.
		\end{align*}

        Finally, the proof of $\partial_x\mathcal{R}_3^z$ is similar to that of $\partial_x\mathcal{R}_1^z$. Let us write
        \begin{align*}
    &\left|\partial_x\mathcal{R}_3^{z}(x,t)\right| \leq I_{a} + I_b \\
    &:= \int_{t-\nu_0}^t \int_\R  \left|\partial_{xy}\mathbb{G}_{42}(x-y,t-\tau)\right|\left|\mathcal{M}_1(y,\tau)\right|\,\mathrm{d}y\,\mathrm{d}\tau+  \int_{t-2\nu_0}^{t-\nu_0} \int_\R \left|\partial_{xy}\mathbb{G}_{43}(x-y,t-\tau)\right|\left|\mathcal{M}_2(y,\tau)\right| 
    \,\mathrm{d} y\,\mathrm{d} \tau,
        \end{align*}
    where
    \begin{align*}
    &\mathcal{M}_1 := \frac{a(v-1)^2}{v} + a \frac{(\theta-1)(1-v)}{v} - \frac{au^2}{2c_v} + \frac{\mu u_y(v-1)}{v} - \frac{qaz}{c_v},\\
    &\mathcal{M}_2 := \frac{a(\theta-1)+a(1-v)}{v}u+ \frac{\nu (v-1)}{v}\theta_y + \left(\frac{\nu }{c_v v} - \frac{\mu}{v} \right)uu_y - \left( \frac{q\nu}{c_v}-qD \right)z_y.
\end{align*}

To control $I_a$, thanks to the hypothesis $\mathcal{G}(\tau)<\delta$, we have the \emph{a priori} bounds $\|v-1\|_{L^\infty}, \|u\|_{L^\infty}, \dots, \|z_y\|_{L^\infty} \leq \frac{\delta}{\sqrt{1+\tau}}$, where the spatial $L^\infty_x$-bounds are evaluated at time $\tau$. In this way, we obtain that 
\begin{align*}
&\left|\mathcal{M}_1(y,\tau)\right| + \left|\mathcal{M}_2(y,\tau)\right| \\
&\quad\leq O(1) \Big\{|v-1| + |\theta-1| + |u| + |u_y| + |z| + |u| + |z_y|+ |\theta_y|\Big\}  \leq O(1) \frac{\delta}{\sqrt{1+\tau}}.
\end{align*}
Thus, in view of the estimates in Eqs.~\eqref{G4kxx,tg1}, \eqref{G4kxx,tl1} for the second-order derivatives of the Green's matrix, we obtain that 
\begin{align*}
    I_a &\leq O(1)\int_{t-\nu_0}^t \int_{\R} \left\{
    e^{-\sigma_0^*|t-\tau|-\sigma_0|x-y|} + (t-\tau)e^{-\sigma_0|x-y|}\right\}\times \nonumber\\
   &\qquad\qquad\quad\qquad \times\left\{\left|\mathcal{M}_1(y,\tau)\right| + \left|\mathcal{M}_2(y,\tau)\right| \right\}\,\mathrm{d}y\,\mathrm{d}\tau \leq \frac{O(1)\,\delta}{\sqrt{t}},
\end{align*}
where the constant in $O(1)$ depends on $\nu_0$ (among other parameters).

The other integral over $\tau \in [t-2\nu_0, t-\nu_0]$ is similar:
		\begin{align*}
	I_b \leq&O(1)\int_{t-2\nu_0}^{t-\nu_0}\int_{\mathbb{R}}\left(e^{-\sigma_0^*(t-\tau)-\sigma_0|x-y|}+(t-\tau)e^{-\sigma_0|x-y|}\right)\frac{\delta}{\sqrt{1+\tau}}\left(|v-1|+|\theta-1|+|u|+|u_y|\right)\,\mathrm{d}y \,\mathrm{d}\tau\\
			&+O(1)\int_{t-2\nu_0}^{t-\nu_0}\int_{\mathbb{R}}\left(e^{-\sigma_0^*(t-\tau)-\sigma_0|x-y|}+(t-\tau)e^{-\sigma_0|x-y|}\right)\left(|z_y|+z\right)\,\mathrm{d}y \,\mathrm{d}\tau\\
			\leq &O(1)\int_{t-2\nu_0}^{t-\nu_0}\left(e^{-\sigma_0^*(t-\tau)}+(t-\tau)\right)\frac{\delta^2}{\sqrt{1+\tau}}\,\mathrm{d}\tau+\int_{t-2\nu_0}^{t-\nu_0}\left(e^{-\sigma_0^*(t-\tau)}+(t-\tau)\right)\frac{\delta}{\sqrt{\tau}}\,\mathrm{d}\tau\\
			\leq &O(1)\left(\frac{\nu_0\delta^2}{\sqrt{t}}+\frac{\nu_0^2\delta^2}{\sqrt{t}}+\frac{\nu_0\delta}{\sqrt{t}}+\frac{\nu_0^2\delta}{\sqrt{t}}\right).
		\end{align*}

        Combining the above estimates, for sufficiently small $\delta>0$ and $t\geq t_{\sharp}\geq 4\nu_0$ we have that
		\begin{equation}
			\left\|z_x(\cdot,t)\right\|_{L_x^{\infty}}\leq C(\nu_0)\frac{\delta^*}{\sqrt{t}}+ O(1)\left(\frac{|\log(\nu_0)|\delta^2}{\sqrt{\nu_0}\sqrt{t}}+\frac{\sqrt{\nu_0}\delta}{\sqrt{t}}\right).
		\end{equation}

        \smallskip
        \noindent\textbf{Estimate of $\left\|z_x(\cdot,t)\right\|_{L_x^{1}}$:}
		For $t\geq t_{\sharp}\geq 4\nu_0$, using the estimates in Eq.~\eqref{G4kx,tg1} and the smallness condition $\|v_0-1\|_{L_x^1}< \delta^*$ in Eq.~\eqref{tau condition}, we get
		\begin{align*}
			&\int_{\mathbb{R}}\left|\int_{\mathbb{R}}\partial_x\mathbb{G}_{41}(x-y,t)\left(v(y,0)-1\right)\,\mathrm{d}y\right|\,\mathrm{d}x \\ \leq & C(\nu_0)\int_{\mathbb{R}}\int_{\mathbb{R}}\sum_{j=1}^4\frac{e^{-\frac{\left(x-y+\beta_j t\right)^2}{4 C t}}}{t}\left|v(y,0)-1\right|\,\mathrm{d}y\,\mathrm{d}x+ C(\nu_0) \int_{\mathbb{R}}\int_{\mathbb{R}}e^{-\sigma_0^* t-\sigma_0|x-y|}\left|v(y,0)-1\right|\,\mathrm{d}y\,\mathrm{d}x\\
			\leq &C(\nu_0) \delta^*.
		\end{align*}
		Similarly, for the other $\mathbb{G}_{4j}$ terms we have that
		\begin{align*}
			&\int_{\mathbb{R}}\left|\int_{\mathbb{R}}\partial_x\mathbb{G}_{42}(x-y,t)u(y,0)\,\mathrm{d}y\right|\,\mathrm{d}x \leq C(\nu_0)\delta^*,\\
			&\int_{\mathbb{R}}\left|\int_{\mathbb{R}}\partial_x\mathbb{G}_{43}(x-y,t)\left(E(y,0)-c_v\right)\,\mathrm{d}y\right|\,\mathrm{d}x \leq C(\nu_0)\delta^*,\\
			&\int_{\mathbb{R}}\left|\int_{\mathbb{R}}\partial_x\mathbb{G}_{44}(x,t;y,0)z(y,0)\,\mathrm{d}y\right|\,\mathrm{d}x=\int_{\mathbb{R}}\left|\int_{\mathbb{R}}\mathbb{G}_{44}(x-y,t)z(y,0)\,\mathrm{d}y\right|\,\mathrm{d}x\leq C(\nu_0)\delta^*.
		\end{align*}
		For the integral remainder term, we split the time integral using the cutoff function $\mathcal{X}(\frac{t-\tau}{\nu_0})$. Applying the bounds  for $H_x$ in Lemma \ref{lemma: Liu}, estimates for $\partial_x\mathbb{G}_{44}$ in \eqref{G4kx,tg1}\eqref{G4kx,tl1}, and \emph{a priori} bound \eqref{tau condition}, we find that
		\begin{align*}
			&\int_{\mathbb{R}}\left|\int_0^t\int_{\mathbb{R}} \partial_x\mathbb{G}_{44}(x,t;y,\tau)K\phi(\theta)z(y,\tau) \,\mathrm{d}y\,\mathrm{d}\tau\right|\,\mathrm{d}x\\ =&\int_{\mathbb{R}}\left|\int_0^t\int_{\mathbb{R}}\left(\frac{t-\tau}{\nu_0}\right)H_x(x,t;y,\tau;\frac{D}{v^2})K\phi(\theta)z(y,\tau) \,\mathrm{d}y\,\mathrm{d}\tau\right|\,\mathrm{d}x\\
			&+\int_{\mathbb{R}}\left|\int_0^t\int_{\mathbb{R}}\left(1- \mathcal{X}\left(\frac{t-\tau}{\nu_0}\right)\right) \partial_x\mathbb{G}_{44}(x-y,t-\tau)K\phi(\theta)z(y,\tau) \,\mathrm{d}y\,\mathrm{d}\tau\right|\,\mathrm{d}x\\
			\leq&O(1)\int_{t-2\nu_0}^{t-\nu_0}\int_{\mathbb{R}}\int_{\mathbb{R}}\left|H_x(x,t;y,\tau;\frac{D}{v^2})\right|z(y,\tau) \,\mathrm{d}y\,\mathrm{d}x\,\mathrm{d}\tau\\
			&+O(1)\int_{t-\nu_0}^{t}\int_{\mathbb{R}}\int_{\mathbb{R}} \left|H_x(x,t;y,\tau;\frac{D}{v^2})\right|K\phi(\theta)z(y,\tau)\,\mathrm{d}y\,\mathrm{d}x\,\mathrm{d}\tau\\
			&+O(1)\int_{0}^{t-2\nu_0}\int_{\mathbb{R}}\int_{\mathbb{R}} \left|\partial_x\mathbb{G}_{44}(x-y,t-\tau)\right|K\phi(\theta)z(y,\tau)\,\mathrm{d}y \,\mathrm{d}x\,\mathrm{d}\tau\\
			&+O(1)\int_{t-2\nu_0}^{t-\nu_0}\int_{\mathbb{R}}\int_{\mathbb{R}} \left|\partial_x\mathbb{G}_{44}(x-y,t-\tau)\right|K\phi(\theta)z(y,\tau)\,\mathrm{d}y\,\mathrm{d}x \,\mathrm{d}\tau\\
			\leq &O(1)\int_{t-2\nu_0}^{t-\nu_0}\int_{\mathbb{R}}\int_{\mathbb{R}}\frac{e^{\frac{-(x-y)^2}{t-\tau}}}{t-\tau}z(y,\tau)\,\mathrm{d}y\,\mathrm{d}x\,\mathrm{d}\tau\\
			&+O(1)\int_{t-\nu_0}^{t}\int_{\mathbb{R}}\int_{\mathbb{R}} \frac{e^{\frac{-(x-y)^2}{t-\tau}}}{t-\tau}z(y,\tau) \,\mathrm{d}y\,\mathrm{d}x\,\mathrm{d}\tau\\
			&+O(1)\int_{0}^{t-2\nu_0}\int_{\mathbb{R}}\int_{\mathbb{R}} \left(\sum_{j=1}^4 \frac{ e^{-\frac{\left(x-y+\beta_j(t-\tau)\right)^2}{4 C (t-\tau)}}}{t-\tau}+ e^{-\sigma_0^* (t-\tau)-\sigma_0|x-y|}\right)K\phi(\theta)z(y,\tau)\,\mathrm{d}y \,\mathrm{d}x\,\mathrm{d}\tau\\
			&+O(1)\int_{t-2\nu_0}^{t-\nu_0}\int_{\mathbb{R}}\int_{\mathbb{R}} \left(\left|\partial_x\left(\frac{e^{-\frac{(x-y)^2}{4D(t-\tau)}}}{\sqrt{4\pi D(t-\tau)}}\right)\right|+ e^{-\sigma_0^*(t-\tau)-\sigma_0 |x-y|}+ (t-\tau)e^{-\sigma_0|x-y|}\right)\\
			&\qquad \qquad \cdot z(y,\tau)\,\mathrm{d}y\,\mathrm{d}x \,\mathrm{d}\tau\\
			\leq &O(1)\int_{t-2\nu_0}^{t} \frac{1}{\sqrt{t-\tau}}\delta \,\mathrm{d}\tau+O(1)\int_{0}^{t-2\nu_0}\left(\frac{1}{\sqrt{t-\tau}}+e^{-\sigma_0^* (t-\tau)}\right)M(\tau)\,\mathrm{d}\tau\\
			&+O(1)\int_{t-2\nu_0}^{t-\nu_0}\left(\frac{1}{\sqrt{t-\tau}}+e^{-\sigma_0^* (t-\tau)}+(t-\tau)\right)\delta\,\mathrm{d}\tau\\
			\leq &O(1){\nu_0}\delta+O(1)\int_{0}^{t-2\nu_0}\frac{1}{\sqrt{t-\tau}}\frac{\delta^*}{1+\tau}\,\mathrm{d}\tau+O(1)(\sqrt{\nu_0}+\nu_0+\nu_0^2)\delta\\
			\leq &O(1)(\sqrt{\nu_0}\delta+\nu_0\delta+\nu_0^2\delta)+C(\nu_0)\delta^*.
		\end{align*}

		We next bound the $L^1$-norms of $\partial_x\mathcal{R}_1^z$, $\partial_x\mathcal{R}_2^z$ and $\partial_x\mathcal{R}_3^z$. For $\partial_x\mathcal{R}_1^z$, we split the time integral as before, and note the bounds for $\partial_{xy}\mathbb{G}_{4k}$ from \eqref{G4kxx,tg1} \eqref{G4kxx,tl1} and the \emph{a priori} condition $\mathcal{G}(\tau)<\delta$ \eqref{tau condition}. This gives us 
		\begin{align*}
			&\int_{\mathbb{R}}\left|\partial_x\mathcal{R}_1^z\right|\,\mathrm{d}x\\
			\leq & \int_{\mathbb{R}}\left|\int_0^{t-1} \int_{\mathbb{R}} \partial_{xy} \mathbb{G}_{42}(x-y, t-\tau)\left[\frac{a(v-1)^2}{v}+\frac{a(\theta-1)(1-v)}{v}-\frac{a u^2}{2 c_v}+\frac{\mu u_y(v-1)}{v}\right]\,\mathrm{d}y\,\mathrm{d}\tau\right|\,\mathrm{d}x \\
			& +\int_{\mathbb{R}}\left|\int_0^{t-1} \int_{\mathbb{R}} \partial_{xy} \mathbb{G}_{43}(x-y, t-\tau)\left[\left(\frac{a(\theta-1)+a(1-v)}{v}\right) u+\frac{\nu \theta_y(v-1)}{v}+\left(\frac{\nu}{c_v}-\frac{\mu}{v}\right) u u_y \right.\right.\\
			&\left.\left.+\frac{qD(v^2-1)}{v^2}z_y\right]\,\mathrm{d}y\,\mathrm{d}\tau\right|\,\mathrm{d}x +\int_{\mathbb{R}}\left|\int_0^{t-1} \int_{\mathbb{R}} \partial_{xy} \mathbb{G}_{44}(x-y, t-\tau)\frac{D(v^2-1)}{v^2}z_y\,\mathrm{d}y\,\mathrm{d}\tau\right|\,\mathrm{d}x\\
			&+\int_{\mathbb{R}}\left|\int_{t-1}^{t-2 \nu_0} \int_{\mathbb{R}} \partial_{xy} \mathbb{G}_{42}(x-y, t-\tau)\left(\frac{a(v-1)^2}{v}+\frac{a(\theta-1)(1-v)}{v}-\frac{a u^2}{2 c_v}+\frac{\mu u_y(v-1)}{v}\right)\,\mathrm{d}y\,\mathrm{d}\tau\right|\,\mathrm{d}x \\
			& +\int_{\mathbb{R}}\left|\int_{t-1}^{t-2\nu_0} \int_{\mathbb{R}} \partial_{xy} \mathbb{G}_{43}(x-y, t-\tau)\left[\left(\frac{a(\theta-1)+a(1-v)}{v}\right) u+\frac{\nu \theta_y(v-1)}{v}+\left(\frac{\nu}{c_v}-\frac{\mu}{v}\right) u u_y \right.\right.\\
			&\left.\left.+\frac{qD(v^2-1)}{v^2}z_y\right]\,\mathrm{d}y\,\mathrm{d}\tau\right|\,\mathrm{d}x +\int_{\mathbb{R}}\left|\int_{t-1}^{t-2\nu_0} \int_{\mathbb{R}} \partial_{xy} \mathbb{G}_{44}(x-y, t-\tau)\frac{D(v^2-1)}{v^2}z_y\,\mathrm{d}y\,\mathrm{d}\tau\right|\,\mathrm{d}x\\
			\leq & O(1)\int_0^{t-1}\int_{\mathbb{R}}\int_{\mathbb{R}}\left(\sum_{j=1}^4 \frac{e^{-\frac{\left(x-y+\beta_j (t-\tau)\right)^2}{4 (t-\tau)}}}{(t-\tau)^{\frac{3}{2}}}+C(\nu_0) e^{-\sigma_0^*(t-\tau)-\sigma_0|x-y|}\right) \frac{\delta}{\sqrt{1+\tau}}\left(|v-1|+|\theta-1|\right.\\
			&\left.\qquad+|u|+|u_y|+|\theta_y|+|z_y|\right)\,\mathrm{d}y \,\mathrm{d}x \,\mathrm{d}\tau\\
			&+O(1) \int_{t-1}^{t-2 \nu_0}\int_{\mathbb{R}} \int_{\mathbb{R}}\left(e^{-\sigma_0^*(t-\tau)-\sigma_0 |x-y|}+(t-\tau) e^{-\sigma_0|x-y|}\right)\frac{\delta}{\sqrt{1+\tau} }\left(|v-1|+|\theta-1|+|u|+|u_y|\right.\\
			&\left.\qquad+|\theta_y|+|z_y|\right)\,\mathrm{d}y\,\mathrm{d}x \,\mathrm{d}\tau \\
			&+O(1) \int_{t-1}^{t-2 \nu_0}\int_{\mathbb{R}}\int_{\mathbb{R}} \left(\frac{e^{-\frac{(x-y)^2}{4D(t-\tau)}}}{(t-\tau)^{\frac{3}{2}}}+e^{-\sigma_0^*(t-\tau)-\sigma_0 |x-y|}+(t-\tau)e^{-\sigma_0|x-y|}\right)\frac{\delta}{\sqrt{1+\tau} }|z_y|\,\mathrm{d}y \,\mathrm{d}x \,\mathrm{d}\tau \\
			\leq & O(1) \int_0^{t-1}\left(\frac{1}{t-\tau}+e^{-\sigma_0^* (t-\tau)}\right) \frac{\delta^2}{\sqrt{1+\tau}} \,\mathrm{d}\tau+O(1)\int_{t-1}^{t-2 \nu_0}\left(\frac{1}{t-\tau}+e^{-\sigma_0^* (t-\tau)}+(t-\tau)\right)\frac{\delta^2}{\sqrt{1+\tau}} \,\mathrm{d}\tau \\
			\leq & O(1)\left(1+|\log(\nu_0)|\right)\delta^2.
		\end{align*} 
		For $\partial_x\mathcal{R}_2^z$, substituting the representation for $\mathbb{G}_{44}$ in Eq.~\eqref{G44 rep}, and combining the same estimates for $\partial_x\mathcal{R}_1^z$ with the comparison estimates for $H_x$ in Lemma~\ref{lemma: comparison2}, we deduce that
		\begin{align*}
			&\int_{\mathbb{R}}\left|\partial_x\mathcal{R}_2^z(x,t)\right|\,\mathrm{d}x\\
			\leq & \int_{t-2\nu_0}^{t-\nu_0}\int_{\mathbb{R}}\int_{\mathbb{R}} \left|\partial_{xy}\mathbb{G}_{42}(x-y, t-\tau)\right|\left|\frac{a(v-1)^2}{v}+\frac{a(\theta-1)(1-v)}{v}-\frac{au^2}{2c_v}+\frac{\mu u_y(v-1)}{v}-\mathcal{X}(\frac{t-\tau}{\nu_0})\frac{qa}{c_v}z\right|\\&\qquad \,\mathrm{d}y \,\mathrm{d}x \,\mathrm{d}\tau \\
			&+\int_{t-2 \nu_0}^{t-\nu_0}\int_{\mathbb{R}}\int_{\mathbb{R}} \left|\partial_{xy}\mathbb{G}_{43}(x-y;t-\tau)\right| \left|\frac{a(\theta-1)+a(1-v)}{v} u+\frac{\nu(v-1)}{v}\theta_y+\left(\frac{\nu}{c_v v}-\frac{\mu}{v}\right)uu_y\right.\\
			&\left.+\frac{qD(v^2-1)}{v^2}z_y\right|\,\mathrm{d}y \,\mathrm{d}x \,\mathrm{d}\tau+\int_{t-2 \nu_0}^{t-\nu_0}\int_{\mathbb{R}} \int_{\mathbb{R}} \left|\partial_{xy}\mathbb{G}_{43}(x-y;t-\tau)\right|\mathcal{X}(\frac{t-\tau}{\nu_0})\left|\left(\frac{q\nu}{c_v}-qD\right)z_y\right|\,\mathrm{d}y\,\mathrm{d}\tau\\
			&+\int_{t-2 \nu_0}^{t-\nu_0}\int_{\mathbb{R}}\int_{\mathbb{R}} \left|\partial_{xy}\mathbb{G}_{44}(x-y;t-\tau)\right| \left|\frac{D(v^2-1)}{v^2}z_y\right|\,\mathrm{d}y \,\mathrm{d}x \,\mathrm{d}\tau\\
			&+\int_{t-2 \nu_0}^{t-\nu_0}\int_{\mathbb{R}} \int_{\mathbb{R}} \frac{1}{\nu_0}\left|H_x(x,t;y,\tau;D)-H_x(x,t;y,\tau;\frac{D}{v^2}) \right|z(y,\tau)\,\mathrm{d}y \,\mathrm{d}x \,\mathrm{d} \tau\\
			& +\int_{t-2 \nu_0}^{t-\nu_0}\int_{\mathbb{R}} \int_{\mathbb{R}} \frac{1}{\nu_0}\left|\int_\tau^t \int_{\mathbb{R}} H_{\omega}(\omega,s;y, \tau;D)\frac{qa}{c_v}\partial_{\omega}\mathbb{G}_{42}(x-\omega,t-s) \,\mathrm{d}\omega \mathrm{d}s\right|z(y,\tau)\,\mathrm{d}y \,\mathrm{d}x \,\mathrm{d} \tau \\
			& +\int_{t-2 \nu_0}^{t-\nu_0} \int_{\mathbb{R}}\int_{\mathbb{R}} \frac{1}{\nu_0}\left|\int_\tau^t \int_{\mathbb{R}} H_{\omega}(\omega,s;y, \tau;D)\left(\frac{q\nu}{c_v}-qD\right)\partial_{\omega}^2\mathbb{G}_{43}(x-\omega,t-s)\,\mathrm{d}\omega\,\mathrm{d}s\right|z(y, \tau)\,\mathrm{d}y \,\mathrm{d}x \,\mathrm{d}\tau\\
			\leq & O(1) \int_{t-2\nu_0}^{t-\nu_0} \int_{\mathbb{R}} \int_{\mathbb{R}}\left(e^{-\sigma_0^*(t-\tau)-\sigma_0 |x-y|}+(t-\tau) e^{-\sigma_0|x-y|}\right)\left[\frac{\delta}{\sqrt{1+\tau} }\left(|v-1|+|\theta-1|+|u|+|u_y|\right.\right.\\
			&\left.\left.\qquad+|\theta_y|+|z_y|\right)+z+|z_y|\right](y,\tau)\,\mathrm{d}y \,\mathrm{d}x \,\mathrm{d}\tau \\
			&+O(1) \int_{t-2\nu_0}^{t-\nu_0}\int_{\mathbb{R}}\int_{\mathbb{R}} \left(\frac{e^{-\frac{(x-y)^2}{4D(t-\tau)}}}{(t-\tau)^{\frac{3}{2}}}+e^{-\sigma_0^*(t-\tau)-\sigma_0 |x-y|}+(t-\tau)e^{-\sigma_0|x-y|}\right)\frac{\delta}{\sqrt{1+\tau} }|z_y|(y,\tau)\,\mathrm{d}y \,\mathrm{d}x \,\mathrm{d}\tau\\
			& +O(1) \int_{t-2 \nu_0}^{t-\nu_0} \int_{\mathbb{R}}\int_{\mathbb{R}} \frac{1}{\nu_0} \frac{e^{-\frac{(x-y)^2}{C_*(t-\tau)}}}{t-\tau}\left[|\log (t-\tau)| \sup _{t-2 \nu_0<s<t}\|v(\cdot, s)-1\|_{L_x^{\infty}}+\sup _{t-2 \nu_0<s<t}\|v(\cdot, s)-1\|_{BV}\right]\\
			&\qquad z(y,\tau) \,\mathrm{d}y\,\mathrm{d}x\,\mathrm{d}\tau \\
			& +O(1) \int_{t-2 \nu_0}^{t-\nu_0} \int_{\mathbb{R}}\int_{\mathbb{R}} \frac{1}{\nu_0} \frac{e^{-\frac{(x-y)^2}{C_*(t-\tau)}}}{t-\tau} \sqrt{t-\tau}\left[\sup _{t-2 \nu_0<s<t}\left\|\sqrt{s} u_x(\cdot, s)\right\|_{L_x^{\infty}}+\sup _{t-2 \nu_0<s<t}\|v(\cdot, s)-1\|_{L_x^1}\right] \\
			&\qquad z(y,\tau) \,\mathrm{d}y\,\mathrm{d}x \,\mathrm{d}\tau \\
			& +O(1) \int_{t-2 \nu_0}^{t-\nu_0}\int_{\tau}^t \frac{1}{\nu_0}  \int_{\mathbb{R}}\int_{\mathbb{R}}\int_{\mathbb{R}}\frac{e^{\frac{-(\omega-y)^2}{s-\tau}}}{s-\tau}\left(e^{-\sigma_0^*(t-s)-\sigma_0 |x-\omega|}+(t-s)e^{-\sigma_0|x-\omega|}\right)z(y,\tau)\,\mathrm{d}\omega\,\mathrm{d}y\,\mathrm{d}x\,\mathrm{d}s \,\mathrm{d}\tau \\
			\leq &O(1)\int_{t-2\nu_0}^{t-\nu_0}\left(e^{-\sigma_0^*(t-\tau)}+(t-\tau)\right)\frac{\delta^2}{\sqrt{1+\tau}}\,\mathrm{d}\tau+O(1)\int_{t-2\nu_0}^{t-\nu_0}\left(e^{-\sigma_0^*(t-\tau)}+(t-\tau)\right)\delta \,\mathrm{d}\tau\\
			&+O(1)\int_{t-2\nu_0}^{t-\nu_0}\left(\frac{1}{t-\tau}+e^{-\sigma_0^*(t-\tau)}+(t-\tau)\right)\frac{\delta^2}{\sqrt{1+\tau}}\,\mathrm{d}\tau\\
			&+O(1)\int_{t-2\nu_0}^{t-\nu_0}\frac{1}{\nu_0}\left(\frac{1}{\sqrt{t-\tau}}\left[|\log(t-\tau)|+1+\sqrt{t-\tau}\right]\right)\delta^2 \,\mathrm{d}\tau\\
			&+O(1)\int_{t-2\nu_0}^{t-\nu_0}\int_{\tau}^{t}\frac{1}{\nu_0}\frac{1}{\sqrt{s-\tau}}\left(e^{-\sigma_0^*(t-s)}+(t-s)\right)\delta \,\mathrm{d}s\,\mathrm{d}\tau\\
			\leq &O(1)\left(\nu_0\delta^2+\nu_0^2\delta^2+\nu_0\delta+\nu_0^2\delta+\delta^2+\frac{|\log(\nu_0)|\delta^2}{\sqrt{\nu_0}}+\sqrt{\nu_0}\delta^2+\sqrt{\nu_0}\delta\right).
		\end{align*}

        Finally, for the $L^1_x$-norm of $\partial_x\mathcal{R}_3^z$, we bound that
               \begin{align*}
\int_\R\left|\partial_x\mathcal{R}_3^{z}(x,t)\right|\,\mathrm{d}x &\leq \int_{t-\nu_0}^t \int_\R\int_\R  \left|\partial_{xy}\mathbb{G}_{42}(x-y,t-\tau)\right|\left|\mathcal{M}_1(y,\tau)\right|\,\mathrm{d} y\,\mathrm{d} x\,\mathrm{d} \tau\nonumber\\
&\qquad +  \int_{t-2\nu_0}^{t-\nu_0} \int_\R \int_\R \left|\partial_{xy}\mathbb{G}_{43}(x-y,t-\tau)\right|\left|\mathcal{M}_2(y,\tau)\right| 
    \,\mathrm{d} y\,\mathrm{d} x\,\mathrm{d} \tau
        \end{align*}
    where, as before,
    \begin{align*}
    &\mathcal{M}_1 := \frac{a(v-1)^2}{v} + a \frac{(\theta-1)(1-v)}{v} - \frac{au^2}{2c_v} + \frac{\mu u_y(v-1)}{v} - \frac{qaz}{c_v},\\
    &\mathcal{M}_2 := \frac{a(\theta-1)+a(1-v)}{v}u+ \frac{\nu (v-1)}{v}\theta_y + \left(\frac{\nu }{c_v v} - \frac{\mu}{v} \right)uu_y - \left( \frac{q\nu}{c_v}-qD \right)z_y.
\end{align*}
To this end, we adapt the arguments for $\partial_x\mathcal{R}_1^z$. Recall that $\|v-1\|_{L^\infty}, \|u\|_{L^\infty}, \dots, \|z_y\|_{L^\infty} \leq \frac{\delta}{\sqrt{1+\tau}}$ (all evaluated at time $\tau$) due to the assumption $\mathcal{G}(\tau)<\delta$. We thus bound
\begin{align*}
&\left|\mathcal{M}_1(y,\tau)\right|\chi_{[t-\nu_0,t]}(\tau) +\left|\mathcal{M}_2(y,\tau)\right|\chi_{[t-2\nu_0,t-\nu_0]}(\tau) \\ &\qquad\leq O(1) \frac{\delta}{\sqrt{1+\tau}} \Big\{|v-1| + |\theta-1| + |u| + |u_y| + |\theta_y|\Big\}\chi_{[t-\nu_0,t]}(\tau) + O(1)\left\{z+|z_y|\right\}\chi_{[t-2\nu_0,t-\nu_0]}(\tau)\\
&\qquad \leq O(1) \left\{\frac{\delta^2}{\sqrt{1+\tau}} + \delta\right\}\chi_{[t-2\nu_0,t]}(\tau).
\end{align*} 
Also recall from Eqs.~\eqref{G4kxx,tg1}, \eqref{G4kxx,tl1} the pointwise bounds for $\partial_{xy}\mathbb{G}_{4j}(x-y,t-\tau)$ for $j \in \{2,3\}$. One thus obtains that 	\begin{align*}
\int_{\mathbb{R}}\left|\partial_x\mathcal{R}_3^{z}(x,t)\right|\,\mathrm{d}x &\leq O(1)\int_{t-2\nu_0}^{t}\left(e^{-\sigma_0^*(t-\tau)}+(t-\tau)\right)\left\{\frac{\delta^2}{\sqrt{1+\tau}}+\delta\right\}\,\mathrm{d}\tau \\
		&\leq O(1)\left(\nu_0\delta^2+\nu_0^2\delta^2+\nu_0\delta+\nu_0^2\delta\right).
		\end{align*}

		Therefore, for the sufficiently small $\delta$ and $t\geq t_{\sharp}\geq 4\nu_0$, we obtain the $L^{1}$-estimates for $z_x(x,t)$ as follows:
		\begin{equation}
			\left\|z_x(\cdot,t)\right\|_{L_x^{1}}\leq C(\nu_0)\delta^*+ O(1)\left(\frac{|\log(\nu_0)|\delta^2}{\sqrt{\nu_0}}+\sqrt{\nu_0}\delta\right).
		\end{equation}

		Finally, combining the $L^\infty \cap L^1$-estimates for $\theta_x$ (in absence of $z$) in \cite[Lemma 4.8]{WangHT2021}, the estimates for $z_x$ derived above, and the bounds for $u_x(x,t)$ in  Lemma~\ref{lemma: es u large t}, one arrives at the required estimates for $\theta_x$. This completes the proof.  
	\end{proof}

	The following Lemma is adapted from \cite[Lemma 4.9]{WangHT2021}; the proof is safely omitted here.
	\begin{lemma}
		\label{lemma: es v large t}
		Let $(v, u, \theta, z)$ be the local solution constructed in Theorem \ref{thm: local exi}, and assume that condition \eqref{tau condition} holds.
		Then, for $t>t_{\sharp}$, $v(x,t)$ satisfies the following estimates
		\begin{equation*}
			\left\{\begin{array}{l}
				\|v(\cdot, t)-1\|_{L_x^1} \leq C(\nu_0) \delta^*+O(1)\delta^2, \\
				\|\sqrt{1+t}\,(v(\cdot, t)-1)\|_{L_x^{\infty}} \leq C(\nu_0) \delta^*+O(1)\delta^2,\\
				\|v(\cdot,t)\|_{BV}\leq C(\nu_0) \delta^*+O(1)\frac{\delta^2}{\sqrt{\nu_0}}.
			\end{array}\right.
		\end{equation*}
	\end{lemma}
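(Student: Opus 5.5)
The plan is to adapt the proof of \cite[Lemma 4.9]{WangHT2021}, using the representation formula for $v-1$ from Lemma~\ref{lemma: formula uvt}. In that formula the only new ingredients compared with the $z$-free case are the source terms $-\frac{qa}{c_v}z$ in the $\partial_y\mathbb{G}_{12}$ integral and $\frac{q\nu}{c_v}z_y-\frac{qD}{v^2}z_y$ in the $\partial_y\mathbb{G}_{13}$ integral. I would treat the estimates of \cite{WangHT2021} as black boxes for all other terms. For the new terms I would insert the bounds on $z$ obtained in Lemmas~\ref{lemma: es z theta large t} and \ref{lemma: es z_x theta_x large t}, together with the dissipation bound \eqref{phiz Ltx} and the decay $M(\tau)\le O(1)\delta^*/(1+\tau)$ from \eqref{es: M}.

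\textbf{Step 1: the $L^1$ and weighted $L^\infty$ bounds.} The initial-data terms $\int\mathbb{G}_{1k}(x-y,t)(\cdot)(y,0)\,\mathrm{d}y$ are handled with Lemma~\ref{lemma: G es} and Remark~\ref{re: nu scale}. They give $C(\nu_0)\delta^*$ in $L^1$ and $C(\nu_0)\delta^*/\sqrt{1+t}$ in $L^\infty$; the $\delta$-mass of $\mathbb{G}_{11}$ only contributes $e^{-at/\mu}|v_0-1|$. For the Duhamel terms, I would split $\tau\in(0,t)$ at $\frac{t-1}{2}$ and at $t-1$, exactly as for $\mathcal{R}_1^z$. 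The quadratic sources are bounded by $\delta^2/(1+\tau)$ pointwise, or by $\delta^2/\sqrt{1+\tau}$ times an $L^1$ factor. The singular part of $\partial_y\mathbb{G}_{1k}$ for $t-\tau\le1$ comes from $\delta'$ and $\delta$ masses of the $\mathbb{G}^{*,1}$ component. These are harmless because they hit $u$ and $\theta$ terms, which \cite{WangHT2021} already treats. They are multiplied by $e^{-a(t-\tau)/\mu}$, so they reproduce the source at the same time level.

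\textbf{Step 2: the main obstacle, the linear $z$-terms.} The terms $\partial_y\mathbb{G}_{12}\,z$ and $\partial_y\mathbb{G}_{13}\,z_y$ are linear, not quadratic, so they cannot produce $\delta^2$ directly. I would first integrate by parts to move all derivatives onto $\mathbb{G}_{13}$. Then I would use the structure of \eqref{array G}: the fourth column of $\mathbb{G}$ is, up to a factor $q$, tied to the third, in the sense of $(\hat{\mathbb{M}}_j^*)_{k4}=-q(\hat{\mathbb{M}}_j^*)_{k3}$. This should let me rewrite the $z$-contributions to $v$ as $q$ times the $\mathbb{G}_{14}$ and $\mathbb{G}_{13}$ kernels acting on $z$. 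Those combinations can then be traded, through the $z$-equation, for $\int_0^t\!\int \mathbb{G}_{1k}\,K\phi(\theta)z$ plus initial data. The first piece is controlled by $\int_0^t M(\tau)\,\mathrm{d}\tau\le\delta^*$ in $L^1$. In $L^\infty$, the same splitting used for $z$ with \eqref{es: M} gives $O(1)\delta^*/\sqrt{1+t}$. If this exact cancellation fails, the fallback is to accept a bound $O(1)(\delta^*+\delta^2)$ from $\|z\|_{L^1}\le\delta^*$ and $\|z_y\|_{L^1}\lesssim C(\nu_0)\delta^*+\dots$, which is still of the stated form.

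\textbf{Step 3: the BV bound.} I would use $v_t=u_x$ on $(t-\nu_0,t)$ and split $v$ into its jump part and its absolutely continuous part, following Lemma~\ref{lemma: lip V}. The jumps satisfy the ODE along $\omega\in\mathcal{D}$, driven by the continuous flux $\frac{\mu u_x}{v}-p$. Since $v_t=\frac{v}{\mu}\bigl(\frac{\mu u_x}{v}-p\bigr)+\frac{a\theta}{\mu}$, the jump decays like $e^{-a\tau/\mu}$ up to a $\delta^2$ error. The absolutely continuous part is obtained by differentiating the representation. This uses $\partial_x\mathbb{G}_{1k}$ from Lemma~\ref{lemma: G es} for $t-\tau\ge\nu_0$, and $H_x$ with the comparison Lemma~\ref{lemma: comparison2} on the final $\nu_0$-window. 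The latter produces the factor $\delta^2/\sqrt{\nu_0}$, by the same computation as $\partial_x\mathcal{R}_2^z$. The $z$-contributions enter only through $\|z_x\|_{L^1}$ and $\|z\|_{L^1}$, which are already bounded by Lemma~\ref{lemma: es z_x theta_x large t}.
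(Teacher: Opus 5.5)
Your route is the one the paper intends: the paper gives no argument beyond citing \cite[Lemma 4.9]{WangHT2021}. You also correctly isolate the one genuinely new issue, namely that the terms $-\frac{qa}{c_v}z$ and $(\frac{q\nu}{c_v}-\frac{qD}{v^2})z_y$ in the representation of $v-1$ in Lemma~\ref{lemma: formula uvt} are linear in $z$. But Step~2 leaves the decisive point conditional, and your fallback cannot work, for three reasons:
\begin{itemize}
\item the direct estimate $\int_0^t\|\partial_y\mathbb{G}_{12}(\cdot,t-\tau)\|_{L^1}\|z(\cdot,\tau)\|_{L^1}\,\mathrm{d}\tau$ only gives $O(\sqrt{t})\,\delta^*$, and $\|z\|_{L^1}$ need not decay;
\item the corresponding $L^\infty$ estimate gives no $t^{-1/2}$ decay;
\item the bounds on $z$ and $z_x$ in Lemmas~\ref{lemma: es z theta large t} and \ref{lemma: es z_x theta_x large t} contain $O(1)\delta$ and $O(1)\sqrt{\nu_0}\delta$, which are linear in $\delta$ and so not of the stated form.
\end{itemize}
So the cancellation is indispensable. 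It does hold exactly, but not by integration by parts alone or by the (only approximate) symbol relation you invoke; it comes from the backward equation. The first row of \eqref{array G} gives $\partial_\tau\mathbb{G}_{13}=-\frac{a}{c_v}\partial_y\mathbb{G}_{12}-\frac{\nu}{c_v}\partial_y^2\mathbb{G}_{13}$ with $\mathbb{G}_{13}(\cdot,0)=0$. Pairing with $qz$, integrating by parts in $\tau$ and $y$, and using $z_\tau=(\frac{D}{v^2}z_y)_y-K\phi(\theta)z$ gives
\begin{align*}
&\int_{\R}\mathbb{G}_{13}(x-y,t)\,qz_0(y)\,\mathrm{d}y+\int_0^t\!\!\int_{\R}\Big[-\tfrac{qa}{c_v}\partial_y\mathbb{G}_{12}\,z+\partial_y\mathbb{G}_{13}\big(\tfrac{q\nu}{c_v}-\tfrac{qD}{v^2}\big)z_y\Big]\,\mathrm{d}y\,\mathrm{d}\tau\\
&\qquad=q\int_0^t\!\!\int_{\R}\mathbb{G}_{13}(x-y,t-\tau)\,K\phi(\theta)z(y,\tau)\,\mathrm{d}y\,\mathrm{d}\tau .
\end{align*}
Equivalently, $\mathbb{G}_{14}=-q\mathbb{G}_{13}$ exactly, since $\mathbb{G}_{14}+q\mathbb{G}_{13}$ solves the backward heat equation with coefficient $D$ and zero terminal data. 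Because $E_0-c_v=c_v(\theta_0-1)+\frac{u_0^2}{2}+qz_0$, this reaction integral is the only $z$-dependence left in $v-1$. Moreover $\mathbb{G}_{13}$ has no singular part, $\|\mathbb{G}_{13}(\cdot,s)\|_{L^1}=O(1)$ and $\|\mathbb{G}_{13}(\cdot,s)\|_{L^\infty}=O(1)\min(1,s^{-1/2})$. Hence \eqref{phiz Ltx} and \eqref{es: M} give $C(\nu_0)\delta^*$ in $L^1$ and $C(\nu_0)\delta^*/\sqrt{1+t}$ in $L^\infty$.

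Step~3 reintroduces the same problem. The $z$-contributions to $v_x$ must be taken from the cancelled formula, where they become $q\int_0^t\!\int\partial_x\mathbb{G}_{13}\,K\phi(\theta)z$ and are $O(1)\delta^*$ in $L^1$. Bounding them through $\|z\|_{L^1}$ and $\|z_x\|_{L^1}$ fails for the reasons above.

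The final-window argument also does not transfer. The representation of $v-1$ contains no heat kernel, since the effective kernel \eqref{eff G} only modifies $G_{22},G_{33},G_{44}$. So there is no $\nu_0$-window to which Lemma~\ref{lemma: comparison2} or the computation for $\partial_x\mathcal{R}_2^z$ applies. The real obstruction when you differentiate is that $\partial_x\partial_y\mathbb{G}_{12}$ carries a $\delta'(x-y)$-part from $\mathbb{G}^{*,1}$, weighted by $e^{-a(t-\tau)/\mu}$ (Lemma~\ref{lemma: G es}). This puts an $x$-derivative on the quadratic source $N$ paired with $\partial_y\mathbb{G}_{12}$, and hence on $u_x(v-1)$. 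You can handle it by writing $N=(v-1)(\sigma+a)-\frac{au^2}{2c_v}$, with the Lipschitz flux $\sigma=\frac{\mu u_x}{v}-p$ and $\sigma_x=u_t$. The bound $\|u_t\|_{L^1}\le O(1)\delta/\sqrt{t_\sharp}$ from Lemma~\ref{lemma: T+t}, together with $t_\sharp\ge 4\nu_0$, is then the natural origin of the factor $\delta^2/\sqrt{\nu_0}$.
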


With the above preparations, now we may deduce the main theorem of this section. 
    
	\begin{theorem}[Global existence]
 There exists a universal constant $\delta^*>0$ such that the following holds. Suppose that the initial data $(v_0,u_0,\theta_0,z_0)$ satisfies		\begin{equation*}
			\left\|v_0-1\right\|_{L_x^1}+\left\|v_0\right\|_{B V}+\left\|u_0\right\|_{L_x^1}+\left\|u_0\right\|_{B V}+\left\|\theta_0-1\right\|_{L_x^1}+\left\|\theta_0\right\|_{B V}+\left\|z_0\right\|_{L_x^1}+\left\|z_0\right\|_{B V} \leq \delta^*.
		\end{equation*}
	Then the local solution to Eq.~\eqref{PDE,2} constructed in Theorems~\ref{thm: local exi} and \ref{thm: sta} extends globally in time.

    Moreover, there exists a positive constant $C_3$ such that the global solution satisfies the following large-time behaviour:
		\begin{equation}
			\begin{aligned}
				& \|\sqrt{t+1}(v(\cdot, t)-1)\|_{L_x^{\infty}}+\|\sqrt{t+1} u(\cdot, t)\|_{L_x^{\infty}}+\|\sqrt{t+1}(\theta(\cdot, t)-1)\|_{L_x^{\infty}}+\|\sqrt{t+1}z(\cdot, t)\|_{L_x^{\infty}} \\
				& \quad+\left\|\sqrt{t} u_x(\cdot, t)\right\|_{L_x^{\infty}}+\left\|\sqrt{t} \theta_x(\cdot, t)\right\|_{L_x^{\infty}}+\left\|\sqrt{t} z_x(\cdot, t)\right\|_{L_x^{\infty}} \leq C_3 \delta^{*} \quad \text{for} \quad t \in(0,+\infty).
			\end{aligned}
		\end{equation}
	\end{theorem}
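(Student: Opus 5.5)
The proof is a continuity argument built on the stopping time $T$ from \eqref{stop time}. I will show $T=+\infty$. Suppose instead that $T<\infty$.

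\textbf{Step 1: the solution exists past $T$ with local control.} By Lemma~\ref{lemma: T+t}, the solution exists on $[0,T+t_\sharp)$ and obeys \eqref{es for T+t} there. Since $T>t_\sharp$, the hypotheses \eqref{tau condition} hold for every $t<T$. I fix $\nu_0$ with $4\nu_0\le t_\sharp$.

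\textbf{Step 2: improve the bound on $\mathcal{G}$.} For $t_\sharp<t<T$ I apply the large-time estimates: Lemma~\ref{lemma: es u large t} for $u$, Lemmas~\ref{lemma: es z theta large t} and \ref{lemma: es z_x theta_x large t} for $z$ and $\theta$, and Lemma~\ref{lemma: es v large t} for $v$. Each term of $\mathcal{G}(t)$ is then bounded by
\[
C(\nu_0)\delta^*+O(1)\Big(\sqrt{\nu_0}\,\delta+\tfrac{|\log\nu_0|}{\sqrt{\nu_0}}\,\delta^2\Big)+O(1)\,\delta,
\]
where the last term comes from the $z$-estimate \eqref{z infty}.

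Two quantities in $\mathcal{G}$ are not covered by those lemmas: the BV norms of $u,\theta,z$. I control these by the $L^1$ bounds on $u_x,\theta_x,z_x$ together with the $L^\infty$ bounds. For $t\le t_\sharp$, the bound \eqref{T geq t} gives $\mathcal{G}<\delta/5$ directly.

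\textbf{Step 3: choice of parameters.} I choose $\nu_0$ small first, then $\delta$ small depending on $\nu_0$, then $\delta^*\ll\delta$ so that $C(\nu_0)\delta^*$ is negligible. With these choices I get $\mathcal{G}(t)\le\delta/2$ for all $t<T$.

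\textbf{Main obstacle: the $O(1)\delta$ term from $z$.} This term is linear in $\delta$ with no small prefactor, so it cannot be absorbed as written. I will redo the $z$-estimates from Lemma~\ref{lemma: es z theta large t} and the $z_x$-estimates so that every linear contribution carries a factor of either $\sqrt{\nu_0}$ or $\delta^*$:
\begin{itemize}
\item The reaction integral involves $M(\tau)\le O(1)\delta^*/(1+\tau)$ and $\int_0^t M\le\delta^*$. For the near-time pieces I use $\|z\|_{L^1}\le\delta^*$ and $\|z\|_{L^\infty}\le 2C_\sharp\delta$ from \eqref{es for T+t}. This turns the $O(1)\delta$ into $O(\sqrt{\nu_0})\delta+C(\nu_0)\delta^*$.
\item The remaining linear terms in $\mathcal{R}_2^z$ and $\mathcal{R}_3^z$ involve $z$ and $z_y$ integrated over a time window of length $\nu_0$. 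Using $\|z\|_{L^\infty}$ and $\|z\|_{L^1}\le\delta^*$ on that window gives factors of $\nu_0$ or $\delta^*$.
\end{itemize}
The same reasoning removes the $q\|z\|$ contribution in the $\theta$ bound \eqref{theta infty}. If these refinements go through, the smallness requirement reduces to $C(\nu_0)\delta^*+O(\sqrt{\nu_0})\delta+O(\delta^2/\sqrt{\nu_0})<\delta/2$, which the parameter choice in Step 3 achieves.

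\textbf{Step 4: contradiction and conclusion.} Since $\mathcal{G}(t)\le\delta/2$ on $(0,T)$, continuity of $\mathcal{G}$ in time (from Remark~\ref{re: v regularity} and the time-regularity in Theorem~\ref{thm: local regularity}) shows $\mathcal{G}<\delta$ slightly beyond $T$. This contradicts the definition of $T$, so $T=\infty$. Uniqueness follows from Theorem~\ref{thm: sta} applied on successive time intervals.

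The decay estimate follows from the same bounds. Taking $\delta$ proportional to $\delta^*$ in Step 3, the improved bound $\mathcal{G}\le C_3\delta^*$ includes the weighted norms $\sqrt{t+1}\,\|\cdot\|_{L^\infty}$ and $\sqrt t\,\|\partial_x\cdot\|_{L^\infty}$, which are exactly the quantities in the statement.
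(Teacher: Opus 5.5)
Your skeleton (the stopping time $T$, Lemma~\ref{lemma: T+t}, the four large-time lemmas, and fixing $\nu_0$, then $\delta$, then $\delta^*$) is the paper's. You are also right that \eqref{z infty} contains a term $O(1)\delta/\sqrt{1+t}$ with no small prefactor, which the paper's closing inequality for $\mathcal{G}(T)$ simply omits. This term also reaches $\theta$ through $qz$ in \eqref{theta infty} and $\theta_x$ through the $qz_x$ term of $\partial_x\mathcal{R}_3^\theta$, and the derivation of the $z_x$ bound has an analogous $O(1)\delta/\sqrt{t}$.

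The gap is that your repair does not remove it. The term comes from the reaction integral over $t-1<\tau<t-2\nu_0$, a window of length of order one rather than $\nu_0$, where the bootstrap bound $\|z(\cdot,\tau)\|_{L^\infty_x}<\delta(1+\tau)^{-1/2}$ is inserted. Your substitutes, $\|z\|_{L^\infty_x}\le 2C_\sharp\delta$ and $\|z\|_{L^1_x}\le\delta^*$, do not decay in $\tau$. On that window they give $O(1)\delta$ or $C(\nu_0)\delta^*$ without the factor $(1+t)^{-1/2}$, so they cannot control $\sqrt{1+t}\,\|z(\cdot,t)\|_{L^\infty_x}$ or $\sqrt{t}\,\|z_x(\cdot,t)\|_{L^\infty_x}$. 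Your first bullet closes only if you use $M(\tau)\le O(1)\delta^*/(1+\tau)$ from \eqref{es: M} on all of $[0,t-2\nu_0]$, keeping the bootstrap bound only for the $H$-window of length $2\nu_0$.

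However, \eqref{es: M} is not proved. Its derivation gets $\frac{\mathrm{d}}{\mathrm{d}\tau}M\le 0$ from \eqref{eq: dM} by treating $\mathcal{I}_3$ as a dominant term of size $-O(\delta)$, but only an upper bound on $|\mathcal{I}_3|$ is available. In fact $|\mathcal{I}_3|\le K\|\phi(\theta)\|_{L^\infty_x}M(\tau)$, so $\mathcal{I}_3$ is small exactly when $M$ is. Meanwhile $\mathcal{I}_1=\int K\phi'(\theta)\theta_\tau z\,\mathrm{d}y$ is not controlled by $M$ and has no sign. So neither monotonicity nor the $1/(1+\tau)$ decay follows, and your main obstacle is still open.

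The missing ingredient is the sign structure of the $z$-equation. The fourth rows of $F'(\bar U)$ and $B(\bar U)$ are $(0,0,0,0)$ and $(0,0,0,D)$. Hence $\mathbb{G}_{41}=\mathbb{G}_{42}=\mathbb{G}_{43}\equiv 0$ and $\mathbb{G}_{44}$ is the heat kernel of diffusivity $D$, so $G_{44}\ge 0$. Since also $z\ge 0$ (already used wherever $\|z\|_{L^1_x}$ is computed as $\int z$), the reaction integral in \eqref{z rep} is nonpositive and can be dropped from the upper bound. This gives a bound of the form $\sqrt{1+t}\,\|z\|_{L^\infty_x}\le C(\nu_0)(\delta^*+\delta^2)+O(\sqrt{\nu_0})\,\delta$ without any information on $M$.

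For $z_x$, and hence $\theta_x$, no sign is available. There you still need a decaying, small-prefactor bound on $K\phi(\theta)z$. One option is exponential decay of $\|z\|_{L^1_x}$ when $\phi(1)>0$, and $\phi(\theta)\le\mathrm{Lip}(\phi)\,|\theta-1|$ when $\phi(1)=0$.

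Finally, your list of non-small linear terms is incomplete. In Lemma~\ref{lemma: formula uvt}, $\mathcal{R}_1^u$, $\mathcal{R}_1^\theta$ and the $v$-formula contain the linear sources $-\frac{qa}{c_v}z$ and $\bigl(\frac{q\nu}{c_v}-\frac{qD}{v^2}\bigr)z_y$. The $z$-free estimates imported into Lemma~\ref{lemma: es u large t} and into the $\mathcal{R}_i^\theta$ bounds do not treat these, so they also need an argument.
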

	\begin{proof}
The assertion follows from a standard continuity argument.
    
		Let $C_{\sharp}$, $t_{\sharp}$ and $\delta$ be the parameters as in Theorem~\ref{thm: local exi}. In view of Theorems~\ref{thm: local exi} and \ref{thm: sta}, the unique weak solution $(v, u, \theta, z)$ exists on $[0,t_{\sharp})$. Under the smallness assumption 
		\begin{equation}
			\label{ini last}
			\left\|v_0-1\right\|_{L_x^1}+\left\|v_0\right\|_{B V}+\left\|u_0\right\|_{L_x^1}+\left\|u_0\right\|_{B V}+\left\|\theta_0-1\right\|_{L_x^1}+\left\|\theta_0\right\|_{B V}+\left\|z_0\right\|_{L_x^1}+\left\|z_0\right\|_{B V} \leq \delta^*,
		\end{equation}
		we may apply Lemma~\ref{lemma: T+t} to define a stopping time $T$ as in Eq.~\eqref{stop time}, such that
		\begin{equation*}
			T>t_{\sharp},\quad \mathcal{G}(T)\geq \delta,\quad \mathcal{G}(\tau)<\delta \quad \text{for all} \quad\tau<T,
		\end{equation*}
		provided that $\delta^*$ is sufficiently small. By the definition of $\mathcal{G}(\tau)$ and Lemma~\ref{lemma: T+t}, the lifespan of the solution $(v, u, \theta, z)$ is larger than $T$. Thus, to establish the global existence of the
		solution, it suffices to show $T=+\infty$ for sufficient small $\delta^*$.

        Suppose for  contradiction that
		\begin{equation}
			\label{contra}
			T<+\infty \quad \text{and} \quad \mathcal{G}(T)\geq \delta, \quad \text{for arbitrary positive }  \delta^*.
		\end{equation}
		Applying Lemmas \ref{lemma: es u large t}--\ref{lemma: es v large t}, we obtain the following estimate at time $T$:
		\begin{equation*}
			\mathcal{G}(T) \leq C(\nu_0)\delta^*+O(1) \frac{\left|\log \left(\nu_0\right)\right|}{\sqrt{\nu_0}} \delta^2+O(1) \sqrt{\nu_0} \delta+\left(C\left(\nu_0\right) \delta^*+O(1) \frac{\left|\log \left(\nu_0\right)\right|}{\sqrt{\nu_0}} \delta^2+O(1) \sqrt{\nu_0} \delta\right)^2,
		\end{equation*}
		where $\nu_0$ is a small positive constant with $\nu_0\leq \frac{t_{\sharp}}{4}$. Note that $C_{\sharp}$ and $t_{\sharp}$ remain uniformly bounded as $\delta^*$ and $\delta$ become small, and the O(1) coefficients are independent of $\nu_0$, $\delta$, $\delta^*$. Therefore, we can first choose $\nu_0$ to be sufficiently small such that
		\begin{equation*}
			O(1)\sqrt{\nu_0}\delta\leq \frac{\delta}{6}.
		\end{equation*}

		Fix this $\nu_0$ and then choose $\delta$ so small that
		\begin{equation*}
			O(1)\frac{\left|\log \left(\nu_0\right)\right|}{\sqrt{\nu_0}} \delta^2\leq \frac{\delta}{6},
		\end{equation*}
		and then choose $\delta^*$ so small that
		\begin{equation}
			\label{relation delta}
			C(\nu_0)\delta^*\leq \frac{\delta}{6}.
		\end{equation}
        Then we have
		\begin{equation*}
			\mathcal{G}(T)<\delta,
		\end{equation*}
		which contradicts the assumption~\eqref{contra}. Hence, $T=+\infty$.
		
		Therefore, for sufficiently small $\delta^*>0$  that verifies Eq.~\eqref{ini last}, there exists $\delta$ such that $\mathcal{G}(\tau)<\delta$ for all $\tau>0$. This implies the global existence, uniqueness and the large time behaviour of the weak solution. The positive constant $C_3$ is directly determined by Eq.~\eqref{relation delta}.

The proof is now complete.  
	\end{proof}
	
	\newpage
	\appendix
	\section{}\label{appendix, new}

In the appendix, we collect several lengthy yet direct computations omitted from the main text. We first tabulate the coefficients in the expansion of $\lambda_j$ at infinity as in \S\ref{sec: high freq}, Eq.~\eqref{appro lambda}. See Yu--Wang--Zhang \cite{WangHT2021}. 	\begin{align*}
		\beta_1^*= & \frac{v p_v}{\mu}, \\
		A_{1,1}= & -\frac{v^3\left(\nu \theta_e p_v^2+\mu p p_e p_v\right)}{\nu \mu^3 \theta_e}, \\
		A_{1,2}= & \frac{v^3\left(\mu^2 p^2 v^2 p_e^2 p_v+2 \nu^2 v^2 \theta_e^2 p_v^3+3 \nu \mu p v^2 \theta_e p_e p_v^2+\mu^2 p v^2 p_e p_v^2-\nu^2 \mu^2 \theta_e^2 p_v^2+\nu \mu^3(-p) \theta_e p_e p_v\right)}{\nu^2 \mu^5 \theta_e^2}, \\
		A_{1,3}= & -\frac{v^3 p_v\left(\mu^3 p^3 v^4 p_e^3+\mu^2 p^2 v^2 p_e^2\left(3 v^2 p_v\left(2 \nu \theta_e+\mu\right)-2 \nu \mu^2 \theta_e\right)+\nu^3 \theta_e^3 p_v\left(\mu^4+5 v^4 p_v^2-4 \mu^2 v^2 p_v\right)\right)}{\nu^3 \mu^7 \theta_e^3}\\
		& -\frac{v^3 p_v\left(\mu p p_e\left(\nu^2 \mu^4 \theta_e^2+v^4 p_v^2\left(10 \nu^2 \theta_e^2+4 \nu \mu \theta_e+\mu^2\right)-2 \nu \mu^2 v^2 \theta_e p_v\left(3 \nu \theta_e+\mu\right)\right)\right)}{\nu^3 \mu^7 \theta_e^3},\\
		\alpha_2^*= & \frac{\mu}{v},\\
		\beta_2^*= & \frac{v\left(\mu p p_e+\nu \theta_e p_v-\mu p_v\right)}{\mu\left(\mu-\nu \theta_e\right)}, \\
		A_{2,1}= & \frac{v^3\left(\mu^3 p^2 p_e^2-\mu p p_e p_v\left(\nu^2 \theta_e^2-3 \nu \mu \theta_e+2 \mu^2\right)+p_v^2\left(\mu-\nu \theta_e\right)^3\right)}{\mu^3\left(\mu-\nu \theta_e\right)^3}, \\
		A_{2,2}= & \frac{v^3\left(\mu p p_e+p_v\left(\nu \theta_e-\mu\right)\right)\left(2 \mu^4 p^2 v^2 p_e^2+\mu p p_e\left(\mu-\nu \theta_e\right)\left(\mu^3\left(\mu-\nu \theta_e\right)-v^2 p_v\left(\nu \theta_e-2 \mu\right)^2\right)\right)}{\mu^5\left(\mu-\nu \theta_e\right)^5} \\
		& +\frac{v^3\left(\mu p p_e+p_v\left(\nu \theta_e-\mu\right)\right)\left(p_v\left(2 v^2 p_v-\mu^2\right)\left(\mu-\nu \theta_e\right)^4\right)}{\mu^5\left(\mu-\nu \theta_e\right)^5}, \\
		A_{2,3}= & \frac{v^3\left(5 \mu^7 p^4 v^4 p_e^4+\mu^3 p^3 v^2 p_e^3\left(\mu-\nu \theta_e\right)\left(4 \mu^4\left(\mu-\nu \theta_e\right)+v^2 p_v\left(\nu^3 \theta_e^2-6 \nu^2 \mu \theta_e^2+15 \nu \mu^2 \theta_e-20 \mu^3\right)\right)\right)}{\mu^7\left(\mu-\nu \theta_e\right)^7} \\
		& +\frac{v^3 \mu^2 p^2 p_e^2\left(\mu-\nu \theta_e\right)^2\left(\mu^5\left(\mu-\nu \theta_e\right)^2+3 v^4 p_v^2\left(-2 \nu^3 \theta_e^3+9 \nu^2 \mu \theta_e^2-15 \nu \mu^2 \theta_e+10 \mu^3\right)\right)}{\mu^7\left(\mu-\nu \theta_e\right)^7} \\
		& -\frac{v^3 \mu^2 p^2 p_e^2\left(\mu-\nu \theta_e\right)^2 2 \mu^2 v^2 p_v\left(-\nu^3 \theta_e^3+5 \nu^2 \mu \theta_e^2-10 \nu \mu^2 \theta_e+6 \mu^3\right)}{\mu^7\left(\mu-\nu \theta_e\right)^7}\\
		& -\frac{v^3 \mu p p_e p_v\left(\mu-\nu \theta_e\right)^3\left(\mu^4\left(\mu-\nu \theta_e\right)^2\left(2 \mu-\nu \theta_e\right)+v^4 p_v^2\left(-10 \nu^3 \theta_e^3+36 \nu^2 \mu \theta_e^2-45 \nu \mu^2 \theta_e+20 \mu^3\right)\right)}{\mu^7\left(\mu-\nu \theta_e\right)^7} \\
		& -\frac{v^3 \mu p p_e p_v\left(\mu-\nu \theta_e\right)^3\left(2 \mu^2 v^2 p_v\left(3 \nu^3 \theta_e^2-11 \nu^2 \mu \theta_e^2+14 \nu \mu^2 \theta_e-6 \mu^3\right)\right)}{\mu^7\left(\mu-\nu \theta_e\right)^7} \\
		& +\frac{v^3 p_v^2\left(\mu^4+5 v^4 p_v^2-4 \mu^2 v^2 p_v\right)\left(\mu-\nu \theta_e\right)^7}{\mu^7\left(\mu-\nu \theta_e\right)^7}.\\
		\alpha_3^*= & \frac{\nu \theta_e}{v}, \\
		\beta_3^*= & \frac{p v p_e}{\nu \theta_e-\mu}, \\
		A_{3,1}= & \frac{p v^3 p_e\left(\nu p \theta_e p_e+p_v\left(\nu \theta_e-\mu\right)\right)}{\nu \theta_e\left(\nu \theta_e-\mu\right)^3}, \\
		A_{3,2}= & \frac{p v^3 p_e\left(2 \nu^2 p^2 v^2 \theta_e^2 p_e^2+p p_e\left(\mu-\nu \theta_e\right)\left(\nu^2 \theta_e^2\left(\mu-\nu \theta_e\right)+v^2 p_v\left(\mu-4 \nu \theta_e\right)\right)\right)}{\nu^2 \theta_e^2\left(\nu \theta_e-\mu\right)^5} \\
		& +\frac{p v^3 p_e\left(p_v\left(\mu-\nu \theta_e\right)^2\left(\nu \theta_e\left(\nu \theta_e-\mu\right)+v^2 p_v\right)\right)}{\nu^2 \theta_e^2\left(\nu \theta_e-\mu\right)^5},\\
		A_{3,3}= & \frac{p v^3 p_e\left(5 \nu^3 p^3 v^4 \theta_e^3 p_e^2-p^2 v^2 p_e^2\left(\mu-\nu \theta_e\right)\left(4 \nu^3 \theta_e^3\left(\nu \theta_e-\mu\right)+v^2 p_v\left(15 \nu^2 \theta_e^2-6 \nu \mu \theta_e+\mu^2\right)\right)\right)}{\nu^3 \theta_e^3\left(\nu \theta_e-\mu\right)^7} \\
		& +\frac{p v^3 p_e p p_e\left(\mu-\nu \theta_e\right)^2\left(\nu^3 \theta_e^3\left(\mu-\nu \theta_e\right)^2-3 v^4 p_v^2\left(\mu-3 \nu \theta_e\right)+2 \nu v^2 \theta_e p_v\left(4 \nu^2 \theta_e^2-5 \nu \mu \theta_e+\mu^2\right)\right)}{\nu^3 \theta_e^3\left(\nu \theta_e-\mu\right)^7} \\
		& +\frac{p v^3 p_e\left(-p_v\left(\mu-\nu \theta_e\right)^3\left(\nu \theta_e\left(\nu \theta_e-\mu\right)+v^2 p_v\right)^2\right)}{\nu^3 \theta_e^3\left(\nu \theta_e-\mu\right)^7}.
	\end{align*}

    Next, the matrices $M_j^{*,k}$ in Lemma~\ref{lemma: M star} are as follows:
	\begin{align*}
		& M_1^{*, 0}=\left(\begin{array}{cccc}
			1 & 0 & 0 & 0 \\
			0 & 0 & 0 & 0\\
			0 & 0 & 0 & 0\\
			0 & 0 & 0 & 0
		\end{array}\right), \quad M_1^{*, 1}=\left(\begin{array}{cccc}
			0 & \frac{v}{\mu} & 0 & 0\\
			-\frac{v p_v}{\mu} & 0 & 0 & 0\\
			-\frac{u v p_v}{\mu} & 0 & 0 & 0\\
			0 & 0 & 0 & 0
		\end{array}\right), \\
		&M_1^{*, 2}=\left(\begin{array}{cccc}
			-\frac{v^2 p_v}{\mu^2} & -\frac{u v^2 p_e}{\nu \mu \theta_e} & \frac{v^2 p_e}{\nu \mu \theta_e} & \frac{-q v^2 p_e}{\nu \mu \theta_e}\\
			0 & \frac{v^2 p_v}{\mu^2} & 0 & 0\\
			-\frac{p v^2 p_v}{\nu \mu \theta_e} & \frac{u v^2 p_v}{\mu^2} & 0 & 0\\
			0 & 0 & 0 & 0
		\end{array}\right), \\
		& M_1^{*, 3}=\left(\begin{array}{cccc}
			0 & -\frac{v^3\left(p \mu p_e+2 \nu p_v \theta_e\right)}{\nu \mu^3 \theta_e} & 0 & 0\\
			\frac{v^3 p_v\left(p \mu p_e+2 \nu p_v \theta_e\right)}{\nu \mu^3 \theta_e} & \frac{u v^3 p_e p_v}{\nu \mu^2 \theta_e} & -\frac{v^3 p_e p_v}{\nu \mu^2 \theta_e} & \frac{q v^3 p_e p_v}{\nu \mu^2 \theta_e}\\
			\frac{u v^3 p_v\left(p \mu p_e+2 \nu p_v \theta_e\right)}{\nu \mu^3 \theta_e} & -\frac{v^3\left(p p_v-u^2 p_e p_v\right)}{\nu \mu^2 \theta_e} & -\frac{u v^3 p_e p_v}{\nu \mu^2 \theta_e} &\frac{q u v^3 p_e p_v}{\nu \mu^2 \theta_e}\\
			0 & 0 & 0 & 0
		\end{array}\right), \\
		& M_1^{*, 4}=(\xi_1,\xi_2),\quad
		\xi_1=\left(\begin{array}{cc}
			\frac{v^4\left(p p_e p_v \mu^2+2 p \nu p_e p_v \theta_e \mu+3 \nu^2 p_v^2 \theta_e^2\right)}{\nu^2 \mu^4 \theta_e^2} & \frac{u v^4 p_e\left(p \mu p_e+\mu p_v+2 \nu p_v \theta_e\right)}{\nu^2 \mu^3 \theta^2}\\
			0 & \frac{-2 p \mu p_e p_v v^3-3 \nu p_v^2 \theta_e v^4}{\nu \mu^4 \theta_e}\\
			\frac{p v^4 p_v\left(p \mu p_e+\mu p_v+2 \nu p_v \theta_e\right)}{\nu^2 \mu^3 \theta_e^2} & \frac{v^4\left(p u p_e p_v \mu^2-2 p u \nu p_e p_v \theta_e \mu-3 u \nu^2 p_v^2 \theta_e^2\right)}{\nu^2 \mu^4 \theta_e^2}\\
			0 & 0
		\end{array}\right),\\
		&\xi_2=\left(\begin{array}{cc}
			-\frac{v^4 p_e\left(p \mu p_e+\mu p_v+2 \nu p_v \theta_e\right)}{\nu^2 \mu^3 \theta_e^2} & \frac{q v^4 p_e\left(p \mu p_e+\mu p_v+2 \nu p_v \theta_e\right)}{\nu^2 \mu^3 \theta_e^2}\\
			0 & 0\\
			-\frac{p v^4 p_e p_v}{\nu^2 \mu^2 \theta_e^2} & \frac{q p v^4 p_e p_v}{\nu^2 \mu^2 \theta_e^2}\\
			0 & 0
		\end{array}\right)\\
		& M_2^{*, 0}=\left(\begin{array}{cccc}
			0 & 0 & 0 & 0\\
			0 & 1 & 0 & 0\\
			0 & u & 0 & 0\\
			0 & 0 & 0 & 0
		\end{array}\right), \quad M_2^{*, 1}=\left(\begin{array}{cccc}
			0 & -\frac{v}{\mu} & 0 & 0\\
			\frac{v p_v}{\mu} & -\frac{u v p_e}{\mu-\nu \theta_e} & \frac{v p_e}{\mu-\nu \theta_e} & -\frac{qv p_e}{\mu-\nu \theta_e}\\
			\frac{u v p_v}{\mu} & \frac{v\left(p-u^2 p_e\right)}{\mu-\nu \theta_e} & \frac{u v p_e}{\mu-\nu \theta_e} & -\frac{qu v p_e}{\mu-\nu \theta_e}\\
			0 & 0 & 0 & 0
		\end{array}\right), \\
		& M_2^{*, 2}=\left(\begin{array}{cccc}
			\frac{v^2 p_v}{\mu^2} & -\frac{u v^2 p_e}{\mu^2-\nu \nu \theta_e} & \frac{v^2 p_e}{\mu^2-\nu \mu \theta_e}  & -\frac{qv^2 p_e}{\mu^2-\nu \mu \theta_e}\\
			0 & \frac{v^2\left(p \mu^2 p_e-p_v\left(\mu-\nu \theta_e\right)^2\right)}{\mu^2\left(\mu-\nu \theta_e\right)^2} & 0 & 0\\
			-\frac{p v^2 p_v}{\mu^2-\nu \mu \theta_e} & \frac{u v^2\left(2 p \mu^2 p_e-p_v\left(\mu-\nu \theta_e\right)^2\right)}{\mu^2\left(\mu-\nu \theta_e\right)^2} & -\frac{p v^2 p_e}{\left(\mu-\nu \theta_e\right)^2} & \frac{qp v^2 p_e}{\left(\mu-\nu \theta_e\right)^2}\\
			0 & 0 & 0 & 0
		\end{array}\right), \\
		& M_2^{*, 3}=(\zeta_1,\zeta_2),\quad 
		\zeta_1=\left(\begin{array}{cc}
			0 & \frac{v^3\left(2 p_v+\frac{p \mu p_e\left(\nu \theta_e-2 \mu\right)}{\left(\mu-\nu \theta_e\right)^2}\right)}{\mu^3}\\
			-\frac{v^3 p_v\left(2 p_v+\frac{p \mu p_e\left(\nu \theta_e-2 \mu\right)}{\left(\mu-\nu \theta_e\right)^2}\right)}{\mu^3} & -\frac{u v^3 p_e\left(2 p \mu^2 p_e-p_v\left(2 \mu^2-3 \nu \theta_e \mu+\nu^2 \theta_e^2\right)\right)}{\mu^2\left(\mu-\nu \theta_e\right)^3}\\
			-\frac{u v^3 p_v\left(2 p_v\left(\mu-\nu \theta_e\right)^2+p \mu p_e\left(\nu \theta_e-2 \mu\right)\right)}{\mu^3\left(\mu-\nu \theta_e\right)^2} & \frac{v^3\left(p-u^2 p_e\right)\left(2 p \mu^2 p_e-p_v\left(2 \mu^2-3 \nu \theta_e \mu+\nu^2 \theta_e^2\right)\right)}{\mu^2\left(\mu-\nu \theta_e\right)^3}\\
			0 & 0
		\end{array}\right),\\
		&\zeta_2=\left(\begin{array}{cc}
			0 & 0\\
			\frac{v^3 p_e\left(2 p \mu^2 p_e-p_v\left(2 \mu^2-3 \nu \theta_e \mu+\nu^2 \theta_e^2\right)\right)}{\mu^2\left(\mu-\nu \theta_e\right)^3} & -\frac{q v^3 p_e\left(2 p \mu^2 p_e-p_v\left(2 \mu^2-3 \nu \theta_e \mu+\nu^2 \theta_e^2\right)\right)}{\mu^2\left(\mu-\nu \theta_e\right)^3}\\
			\frac{u v^3 p_e\left(2 p \mu^2 p_e-p_v\left(2 \mu^2-3 \nu \theta_e \mu+\nu^2 \theta_e^2\right)\right)}{\mu^2\left(\mu-\nu \theta_e\right)^3} & -\frac{qu v^3 p_e\left(2 p \mu^2 p_e-p_v\left(2 \mu^2-3 \nu \theta_e \mu+\nu^2 \theta_e^2\right)\right)}{\mu^2\left(\mu-\nu \theta_e\right)^3}\\
			0 & 0
		\end{array}\right),\\
		& M_2^{*, 4}=\left(\eta_1, \eta_2, \eta_3\right), \quad \eta_1=\left(\begin{array}{c}
			\frac{v^4 p_v\left(\frac{p \mu p_e\left(3 \mu-2 \nu \theta_e\right)}{\left(\mu-\nu \theta_e\right)^2}-3 p_v\right)}{\mu^4} \\
			0 \\
			-\frac{p v^4 p_v\left(p \mu p_e\left(3 \mu-\nu \theta_e\right)+p_v\left(-3 \mu^2+5 \nu \theta_e \mu-2 \nu^2 \theta_e^2\right)\right)}{\mu^3\left(\mu-\nu \theta_e\right)^3}\\
			0
		\end{array}\right), \\
		& \eta_2=\left(\begin{array}{c}
			-\frac{u v^4 p_e\left(p \mu p_e\left(3 \mu-\nu \theta_e\right)+p_v\left(-3 \mu^2+5 \nu \theta_e \mu-2 \nu^2 \theta_e^2\right)\right)}{\mu^3\left(\mu-\nu \theta_e\right)^3} \\
			\frac{v^4\left(3 p^2 p_e^2 \mu^4+2 p p_e p_v\left(-3 \mu^3+6 \nu_e \mu^2-4 \nu^2 \theta_e^2 \mu+\nu^3 \theta_e^3\right) \mu+3 p_v^2\left(\mu-\nu \theta_e\right)^4\right)}{\mu^4\left(\mu-\nu \theta_e\right)^4}\\
			\frac{u v^4\left(6 p^2 p_e^2 \mu^4+p p_e p_v\left(-9 \mu^3+16 \nu \theta_e \mu^2-9 \nu^2 \theta_e^2 \mu+2 \nu^3 \theta_e^3\right) \mu+3 p_v^2\left(\mu-\nu \theta_e\right)^4\right)}{\mu^4\left(\mu-\nu \theta_e\right)^4} 
		\end{array}\right),\\
		&\eta_3=\left(\begin{array}{cc}
			\frac{v^4 p_e\left(p \mu p_e\left(3 \mu-\nu \theta_e\right)+p_v\left(-3 \mu^2+5 \nu \theta_e \mu-2 \nu^2 \theta_e^2\right)\right)}{\mu^3\left(\mu-\nu \theta_e\right)^3} & -\frac{qv^4 p_e\left(p \mu p_e\left(3 \mu-\nu \theta_e\right)+p_v\left(-3 \mu^2+5 \nu \theta_e \mu-2 \nu^2 \theta_e^2\right)\right)}{\mu^3\left(\mu-\nu \theta_e\right)^3} \\
			0 & 0\\
			\frac{p v^4 p_e\left(p_v\left(3 \mu^2-4 \nu \theta_e \mu+\nu^2 \theta_e^2\right)-3 p \mu^2 p_e\right)}{\mu^2\left(\mu-\nu \theta_e\right)^4} & -\frac{q p v^4 p_e\left(p_v\left(3 \mu^2-4 \nu \theta_e \mu+\nu^2 \theta_e^2\right)-3 p \mu^2 p_e\right)}{\mu^2\left(\mu-\nu \theta_e\right)^4}\\
			0 & 0
		\end{array}\right)
	\end{align*}

	The following are the matrices $M_j^0$ and $M_j^1$ in Lemma~\ref{lemma: regular M} and Theorem~\ref{thm: es for regular G}:
	\begin{align*}
		& M_1^0=\left(\begin{array}{cccc}
			\frac{p p_e}{p p_e-p_v} & -\frac{u p_e}{p p_e-p_v} & \frac{p_e}{p p_e-p_v} & 0\\
			0 & 0 & 0 & 0\\
			-\frac{p p_v}{p p_e-p_v} & \frac{u p_v}{p p_e-p_v} & -\frac{p_v}{p p_e-p_v} & 0\\
			0 & 0 & 0 & 0
		\end{array}\right), \\
		& M_2^0=\left(\begin{array}{cccc}
			-\frac{p_v}{2 p p_e-2 p_v} & \frac{u p_e-\sqrt{p p_e-p_v}}{2 p p_e-2 p_v} & -\frac{p_e}{2 p p_e-2 p_v} & 0\\
			\frac{p_v}{2 \sqrt{p p_e-p_v}} & \frac{1}{2}-\frac{u p_e}{2 \sqrt{p p_e-p_v}} & \frac{p_e}{2 \sqrt{p p_e-p_v}} & 0\\
			\frac{\left(p+u \sqrt{p p_e-p_v}\right) p_v}{2 p p_e-2 p_v} & \frac{-p_e \sqrt{p p_e-p_v} u^2-p_v u+p \sqrt{p p_e-p_v}}{2 p p_e-2 p_v} & \frac{p_e\left(p+u \sqrt{p p_e-p_v}\right)}{2 p p_e-2 p_v} & 0\\
			0 & 0 & 0 & 0
		\end{array}\right), \\
		& M_3^0=\left(\begin{array}{cccc}
			-\frac{p_v}{2 p p_e-2 p_v} & \frac{u p_e+\sqrt{p p_e-p_v}}{2 p p_e-2 p_v} & -\frac{p_e}{2 p p_e-2 p_v} & 0\\
			-\frac{p_v}{2 \sqrt{p p_e-p_v}} & \frac{1}{2}\left(\frac{u p_e}{\sqrt{p p_e-p_v}}+1\right) & -\frac{p_e}{2 \sqrt{p p_e-p_v}} & 0\\
			\frac{\left(p-u \sqrt{p p_e-p_v}\right) p_v}{2 p p_e-2 p_v} & -\frac{-p_e \sqrt{p p_e-p_v} u^2+p_v u+p \sqrt{p p_e-p_v}}{2 p p_e-2 p_v} & \frac{p_e\left(p-u \sqrt{p p_e-p_v}\right)}{2 p p_e-2 p_v} & 0\\
			0 & 0 & 0 & 0
		\end{array}\right) .
	\end{align*}
	\begin{align*}
		& M_1^1=\left(\begin{array}{cccc}
			0 & \frac{p \nu p_e \theta_e}{v\left(p_v-p p_e\right)^2} & 0 & 0\\
			-\frac{p \nu p_e p_v \theta_e}{v\left(p_v-p p_e\right)^2} & \frac{u \nu p_e p_v \theta_e}{v\left(p_v-p p_e\right)^2} & -\frac{\nu p_e p_v \theta_e}{v\left(p_v-p p_e\right)^2} & 0\\
			-\frac{p u \nu p_e p_v \theta_e}{v\left(p_v-p p_e\right)^2} & -\frac{\nu\left(p-u^2 p_e\right) p_v \theta_e}{v\left(p_v-p p_e\right)^2} & -\frac{u \nu p_e p_v \theta_e}{v\left(p_v-p p_e\right)^2} & 0\\
			0 & 0 & 0 & 0
		\end{array}\right), \\
		& M_2^1=\left(\xi_1, \xi_2, \xi_3\right), \\
		& \xi_1=\left(\begin{array}{c}
			\frac{p_v\left(\mu p_v-p p_e\left(\mu+3 \nu \theta_e\right)\right)}{4 v\left(p p_e-p_v\right)^{5/2}} \\
			\frac{p \nu p_e p_v \theta_e}{2 v\left(p_v-p p_e\right)^2} \\
			\frac{p p_v\left(p_e\left(p \mu+\nu\left(p+2 u \sqrt{p p_e-p_v}\right) \theta_e\right)-p_v\left(\mu-2 \nu \theta_e\right)\right)}{4 v\left(p p_e-p_v\right)^{5/2}}\\
			0
		\end{array}\right), \\
		& \xi_2=\left(\begin{array}{c}
			\frac{p_e\left(-2 p \nu \sqrt{p p_e-p_v} \theta_e-u p_v\left(\mu-2 \nu \theta_e\right)+p u p_e\left(\mu+\nu \theta_e\right)\right)}{4 v\left(p p_e-p_v\right)^{5/2}} \\
			-\frac{\left(p_e\left(p \sqrt{p p_e-p_v} \mu+\left(2 u \nu p_v-p \nu \sqrt{p p_e-p_v}\right) \theta_e\right)-\mu \sqrt{p p_e-p_v} p_v\right)}{4 v\left(p_v-p p_e\right)^2} \\
			-\frac{\left(2 p^2 u\left(\mu-\nu \theta_e\right) p_e^2+u p_v\left(\nu\left(5 p+2 u \sqrt{p p_e-p_v}\right) \theta_e-3 p \mu\right) p_e+p_v\left(u \mu p_v-2 p \nu \sqrt{p p_e-p_v} \theta_e\right)\right)}{4 v\left(p p_e-p_v\right)^{5/2}}\\
			0
		\end{array}\right), \\
		& \xi_3=\left(\begin{array}{cc}
			-\frac{p_e\left(p p_e\left(\mu+\nu \theta_e\right)-p_v\left(\mu-2 \nu \theta_e\right)\right)}{4 v\left(p p_e-p_v\right)^{5 / 2}} & 0 \\
			\frac{\nu p_e p_v\theta_e}{2 v\left(p_v-p p_e\right)^2} & 0\\
			\frac{p_e\left(p_e\left(\mu-\nu \theta_e\right) p^2+p_v\left(2 \nu\left(2 p+u \sqrt{p p_e-p_v}\right) \theta_e-p \mu\right)\right)}{4 v\left(p p_e-p_v\right)^{5/2}} & 0\\
			0 & 0
		\end{array}\right),\\
		& M_3^1=\left(\zeta_1, \zeta_2, \zeta_3\right), \\
		& \zeta_1=\left(\begin{array}{c}
			\frac{p_v\left(p p_e\left(\mu+3 \nu \theta_e\right)-\mu p_v\right)}{4 v\left(p p_e-p_v\right)^{5/2}} \\
			\frac{p \nu p_e p_v \theta_e}{2 v\left(p_v-p p_e\right)^2} \\
			\frac{p_v\left(p p_v\left(\mu-2 \nu \theta_e\right)-p p_e\left(p \mu+\nu\left(p-2u \sqrt{p p_e-p_v}\right) \theta_e\right)\right)}{4 v\left(p p_e-p_v\right)^{5/2}}\\
			0
		\end{array}\right), \\
		& \zeta_2=\left(\begin{array}{c}
			-\frac{p_e\left(2 p \nu \sqrt{p p_e-p_v} \theta_e-u p_v\left(\mu-2\nu \theta_e\right)+p u p_e\left(\mu+\nu \theta_e\right)\right)}{4 v\left(p p_e-p_v\right)^{5/2}} \\
			\frac{\left(p_e\left(p \mu \sqrt{p p_e-p_v}-\nu\left(\sqrt{p p_e-p_v} p+2 u p_v\right) \theta_e\right)-\mu \sqrt{p p_e-p_v} p_v\right)}{4 v\left(p_v-p p_e\right)^2} \\
			\frac{\left(2 p^2 u\left(\mu-\nu \theta_e\right) p_e^2+u p_v\left(\nu\left(5 p-2 u \sqrt{p p_e-p_v}\right) \theta_e-3 p \mu\right) p_e+p_v\left(u \mu p_v+2 p \nu \sqrt{p p_e-p_v} \theta_e\right)\right)}{4 v\left(p p_e-p_v\right)^{5/2}}\\
			0
		\end{array}\right), \\
		& \zeta_3=\left(\begin{array}{cc}
			\frac{p_e\left(p p_e\left(\mu+\nu \theta_e\right)-p_v\left(\mu-2 \nu \theta_e\right)\right)}{4 v\left(p p_e-p_v\right)^{5/2}} & 0\\
			\frac{\nu p_e p_v \theta_e}{2 v\left(p_v-p p_e\right)^2} & 0\\
			\frac{p_e\left(p_e\left(\nu \theta_e-\mu\right) p^2+p_v\left(p \mu+\left(2 u \nu \sqrt{p p_e-p_v}-4 p \nu\right) \theta_e\right)\right)}{4 v\left(p p_e-p_v\right)^{5/2}} & 0\\
			0 & 0
		\end{array}\right) .
	\end{align*}

	\medskip
	\noindent
	{\bf Acknowledgement}. 
The research of SL is supported by NSFC Projects 12201399, 12331008, and 12411530065, Young Elite Scientists Sponsorship Program by CAST 2023QNRC001, National Key Research $\&$ Development Programs 2023YFA1010900 and 2024YFA1014900, Shanghai Rising-Star Program 24QA2703600, Shanghai Qiguang Scholarship, and the Shanghai Frontier Research Institute for Modern Analysis. 

The research of HW is supported by National Key R$\&$D Program 2022YFA1007300 and NSFC 12371220.

The research of JY is partially supported by National Key Research $\&$ Development Programs 2023YFA1010900 and 2024YFA1014900.

\end{document}